\definecolor{dark-red}{rgb}{0.4,0.15,0.15}
\numberwithin{equation}{section} %
\theoremstyle{plain}
\theoremstyle{definition}
\newaliascnt{theorem}{equation}  
\newtheorem{theorem}[theorem]{Theorem}  
\newaliascnt{proposition}{equation}  
\newtheorem{proposition}[proposition]{Proposition}
\newaliascnt{lemma}{equation}  
\newtheorem{lemma}[lemma]{Lemma}
\newaliascnt{corollary}{equation}  
\newtheorem{corollary}[corollary]{Corollary}
\newaliascnt{claim}{equation}  
\newaliascnt{convention}{equation}  
\newtheorem{convention}[convention]{Convention}
\newaliascnt{conjecture}{equation}  
\newaliascnt{question}{equation}  
\newaliascnt{defn}{equation}  
\newtheorem{defn}[defn]{Definition}
\newaliascnt{example}{equation}  
\newtheorem{example}[example]{Example}
\theoremstyle{remark}
\newaliascnt{remark}{equation}  
\newtheorem{remark}[remark]{Remark}
\newcommand{\aref}[1]{\autoref{#1}}
\newcommand{\KGL}{\mathbf{KGL}}
\newcommand{\MGL}{\mathbf{MGL}}
\newcommand{\KW}{\mathbf{KW}}
\newcommand{\KQ}{\mathbf{KQ}}
\newcommand{\KO}{\mathbf{KO}}
\newcommand{\MZ}{\mathbf{MZ}}
\newcommand{\Sm}{\mathbf{Sm}}
\newcommand{\SH}{\mathbf{SH}}
\newcommand{\USp}{\mathbf{USp}}
\newcommand{\KSp}{\mathbf{KSp}}
\newcommand{\U}{\mathbf{U}}
\newcommand{\Char}{\operatorname{char}}
\newcommand{\Sq}{\mathsf{Sq}}
\newcommand{\QQ}{\mathsf{Q}}
\newcommand{\spec}{\operatorname{Spec}}
\newcommand{\One}{\mathbf{1}}
\newcommand{\et}{\text{\'et}}
\newcommand{\AAA}{{\mathcal A}}
\newcommand{\id}{\operatorname{id}}
\newcommand{\im}{\operatorname{im}}
\newcommand{\tor}{\text{tor}}
\newcommand{\coker}{\operatorname{coker}}
\newcommand{\holim}{\operatorname{holim}}
\newcommand{\Spec}{\operatorname{Spec}}
\newcommand{\SC}{\mathsf{sc}}
\newcommand{\Pic}{\operatorname{Pic}}
\newcommand{\Br}{\operatorname{Br}}
\newcommand{\Div}{\operatorname{Div}}
\newcommand{\rk}{\operatorname{rk}}
\newcommand{\Cliff}{\operatorname{Cl}}
\newcommand{\eff}{\mathrm{eff}}
\newcommand{\tensor}{\otimes}
\newcommand{\GW}{\mathbf{GW}}
\newcommand{\OFS}{{\OO}_{F,\mathcal{S}}}
\newcommand{\OF}{{\OO}_{F}}
\newcommand{\ts}{t_{\mathcal{S}}}
\newcommand{\sls}{s_{\mathcal{S}}}
\newcommand{\wt}[1]{\widetilde{#1}}
\newcommand{\E}{\mathsf{E}}
\newcommand{\f}{\mathsf{f}}
\newcommand{\s}{\mathsf{s}}
\newcommand{\dd}{\mathsf{d}}
\newcommand{\cd}{\mathsf{cd}}
\newcommand{\vcd}{\mathsf{vcd}}
\newcommand{\Mil}{\mathsf{M}}
\newcommand{\pr}{\mathsf{pr}}
\newcommand{\inc}{\mathsf{inc}}
\newcommand{\Gm}{\mathbf{G}_m}
\newcommand{\N}{\mathbb{N}}
\newcommand{\Z}{\mathbb{Z}}
\newcommand{\Q}{\mathbb{Q}}
\newcommand{\C}{\mathbb{C}}
\newcommand{\R}{\mathbb{R}}
\newcommand{\FF}{\mathbb{F}}
\newcommand{\OO}{{\mathcal O}}
\newcommand{\KMW}{K^{\mathsf{MW}}}
\newcommand{\GL}{\mathbf{GL}}
\begin{document}
\title{Hermitian $K$-theory, Dedekind $\zeta$-functions, and quadratic forms over rings of integers in number fields}
\subjclass[2010]{Primary: 	11R42, 14F42, 19E15, 19F27}
\keywords{Motivic homotopy theory, slice filtration, motivic cohomology, algebraic $K$-theory, hermitian $K$-theory, higher Witt-theory, 
quadratic forms over rings of integers, special values of Dedekind $\zeta$-functions of number fields}

\author{Jonas Irgens Kylling}
\address{Department of Mathematics, University of Oslo, Norway}
\email{jonasik@math.uio.no}
\author{Oliver R\"ondigs}
\address{Mathematisches Institut, Universit\"at Osnabr\"uck, Germany}
\email{oroendig@mathematik.uni-osnabrueck.de}
\author{Paul Arne {\O}stv{\ae}r}
\address{Department of Mathematics, University of Oslo, Norway}
\email{paularne@math.uio.no}
\begin{abstract}
We employ the slice spectral sequence, the motivic Steenrod algebra, 
and Voevodsky's solutions of the Milnor and Bloch-Kato conjectures to calculate the hermitian $K$-groups of rings of integers in number fields.
Moreover, 
we relate the orders of these groups to special values of Dedekind $\zeta$-functions for totally real abelian number fields.
Our methods apply more readily to the examples of algebraic $K$-theory and higher Witt-theory, 
and give a complete set of invariants for quadratic forms over rings of integers in number fields.
\end{abstract}
\maketitle

\section{Introduction}
\label{section:introduction}

The themes explored in this paper are Karoubi's hermitian $K$-theory \cite{Karoubi}, 
Lichtenbaum's conjectures on special values of $\zeta$-functions \cite{Lichtenbaum:values1}, \cite{Lichtenbaum:values2}, 
Milnor's conjecture on quadratic forms \cite{Milnor}, \cite{MilnorHusemoller} extended to arithmetic Dedekind domains \cite{Hahn},
and Voevodsky's slice filtration \cite{Voevodsky:open}. 
By explicit slice spectral sequence calculations we identify the hermitian $K$-groups of rings of integers in number fields in terms of motivic cohomology groups.
Voevodsky's proof of Milnor's conjecture on Galois cohomology \cite{Voevodsky:Z/2} combined with Wiles's proof of the main conjecture in Iwasawa theory \cite{Wiles} 
allow us to relate the orders of hermitian $K$-groups to special values of $\zeta$-functions for totally real abelian number fields.
While these beautiful links between number theory and homotopy theory are traditionally expressed in terms of algebraic $K$-theory, 
recent calculations of universal motivic invariants have brought hermitian $K$-theory into focus \cite{April1}. 
One expects that motivic homotopy theory has more to offer in this direction since hermitian $K$-theory is in a precise sense closer to the motivic sphere than algebraic $K$-theory.
A shadow of this is witnessed by the Betti realization functor sending algebraic $K$-theory to topological unitary $K$-theory and hermitian $K$-theory to topological orthogonal $K$-theory.
Via the $J$-homomorphism and the Adams conjecture, 
the latter $8$-periodic theory gives rise to cyclic summands in the stable homotopy groups of the topological sphere whose orders are related to Bernoulli numbers \cite{MR0482758}, 
\cite[Chapters 1, 5]{Ravenel:green}.
Our calculation of the hermitian $K$-groups represents a decisive first step in a quest to establish a motivic analogue of this result over number fields.

Suppose $F$ is a number field with $r_{1}$ (resp.~$r_{2}$) number of real (resp.~pairs of complex conjugate) embeddings into the complex numbers $\C$.  
Let $\mathcal{S}$ be a (not necessarily finite) set of places in $F$ containing the archimedean and dyadic ones.
We denote the ring of $\mathcal{S}$-integers in $F$ by $\OO_{F,\mathcal{S}}$.
Classically, 
the zeroth hermitian $K$-group $\KQ_{0}(\OO_{F,\mathcal{S}})$ is the Grothendieck-Witt ring of symmetric bilinear forms on $\OO_{F,\mathcal{S}}$ \cite{MilnorHusemoller}.
When $\mathcal{S}$ is finite, 
then $\KQ_{n}(\OO_{F,\mathcal{S}})$ is a finitely generated abelian group for all $n\geq 0$. 
Its odd torsion subgroup is the invariant part of the odd torsion subgroup of the algebraic $K$-group $\KGL_{n}(\OO_{F,\mathcal{S}})$ for the involution $M\mapsto {^{t}M}^{-1}$ on $\GL(\OO_{F,\mathcal{S}})$, 
see \cite[Propositions 3.2, 3.13]{BKO}.
Our main focus is on the two-primary subgroup of $\KQ_{n}(\OO_{F,\mathcal{S}})$.
We also identify its $\ell$-primary subgroup, 
for $\ell$ any odd prime number, 
by making use of the Rost-Voevodsky solution of the Bloch-Kato conjecture \cite{Voevodsky:Z/l}. 

In the first main result of this paper we identify the mod $2$ hermitian $K$-groups ${\KQ_{n}(\OO_{F,\mathcal{S}};\Z/2)}$ up to extensions of motivic cohomology groups of Dedekind domains as defined in 
\cite{MR2103541}, \cite{MR1811558}, \cite{Spitzweck}.
Our method of proof reveals for $n\geq 1$ the existence of an $8$-fold periodicity isomorphism 
\begin{equation}
\label{equation:8foldway}
{\KQ_{n}(\OO_{F,\mathcal{S}};\Z/2)}\cong {\KQ_{n+8}(\OO_{F,\mathcal{S}};\Z/2)}. 
\end{equation}
Moreover, 
for all $k\geq 0$, 
we show the vanishing result 
\begin{equation}
\label{equation:vanishingKQ5}
{\KQ_{8k+5}(\OO_{F,\mathcal{S}};\Z/2)}
=
0.
\end{equation}
By the universal coefficient short exact sequence 
\[
0
\to
{\KQ_{n}(\OO_{F,\mathcal{S}})}/2
\to
{\KQ_{n}(\OO_{F,\mathcal{S}};\Z/2)}
\to
{_{2}}{\KQ_{n-1}(\OO_{F,\mathcal{S}})}
\to
0,
\]
the vanishing in \eqref{equation:vanishingKQ5} implies the multiplication by $2$ map on the abelian group ${\KQ_{n}(\OO_{F,\mathcal{S}})}$ is injective when $n=8k+4$ and surjective when $n=8k+5$.

To state our main result for ${\KQ_{n}(\OO_{F,\mathcal{S}};\Z/2)}$,
let $h^{p,q}$ (resp.~$h^{p,q}_{+}$) denote the degree $p$ and weight $q$ mod $2$ (resp.~positive mod $2$) motivic cohomology of $\OO_{F,\mathcal{S}}$ (see \aref{appendixB}).
As usual $\rho$ is the class of $-1$ in $h^{1,1}$.
Let $h^{p,q}/\rho^{i}$ denote the cokernel of $\rho^{i}\colon h^{p-i,q-i} \to h^{p,q}$ and $\ker(\rho^{i}_{p,q})$ denote the kernel of $\rho^{i}\colon h^{p,q} \to h^{p+i,q+i}$.
We denote the Picard group of $\OO_{F,\mathcal{S}}$ by $\Pic(\OO_{F,\mathcal{S}})$, 
and its Brauer group by $\Br(\OO_{F,\mathcal{S}})$.
If $A$ is an abelian group, we let ${_{2}}A$ denote its subgroup of elements of exponent $2$ and $\rk_{2}A$ its $2$-rank.

\begin{theorem}
\label{theorem:mod2hermitiankgroupsintroduction}
Let $\OFS$ be the ring of $\mathcal{S}$-integers in a number field $F$.
The mod $2$ hermitian $K$-groups of $\OO_{F,\mathcal{S}}$ are computed up to extensions by the following filtrations of length $l$. 
\vspace{-0.1in}
\begin{table}[bht]
\begin{center}
\begin{tabular}
[c]{p{0.4in}|p{0.1in}|p{4.0in}}\hline
$n\geq 0$ & $l$ & ${\KQ_{n}(\OO_{F,\mathcal{S}};\Z/2)}$ \\ \hline
$8k$         & $3$ & $\f_{0}/\f_{1}=h^{0,4k}$, $\f_{1}/\f_{2}=h^{1,4k+1}\oplus \ker(\rho_{2,4k+1})$, $\f_{2}=h^{2,4k+2}/\rho$ \\
$8k+1$     & $2$ & $\f_{0}/\f_{1}=h^{0,4k+1}\oplus \ker(\rho_{1,4k+1})$, $\f_{1}=h^{1,4k+2}\oplus \ker(\rho_{2,4k+2})$ \\
$8k+2$     & $3$ & $\f_{0}/\f_{1}=h^{0,4k+1}$, $\f_{1}/\f_{2}=h^{0,4k+2}\oplus h^{1,4k+2}$, $\f_{2}=h^{2,4k+3}$ \\
$8k+3$     & $2$ & $\f_{0}/\f_{1}=h^{0,4k+2}$, $\f_{1}=h^{1,4k+3}$ \\
$8k+4$     & $2$ & $\f_{0}/\f_{1}=h^{0,4k+3}$, $\f_{1}=h^{4,4k+4}$ \\
$8k+5$     & $0$ & $0$ \\
$8k+6$     & $2$ & $\f_{0}/\f_{1}=\ker(\rho_{2,4k+4})$, $\f_{1}=h^{4,4k+5}/\rho^{3}$ \\
$8k+7$     & $2$ & $\f_{0}/\f_{1}=\ker(\rho^{2}_{1,4k+4})$, $\f_{1}=\ker(\rho_{2,4k+5})$\\ \hline
\end{tabular}
\end{center}
\caption{The mod $2$ hermitian $K$-groups of $\OO_{F,\mathcal{S}}$}
\label{table:KQ2groupsOF}
\end{table}
\end{theorem}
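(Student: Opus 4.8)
The plan is to run the slice spectral sequence for the very effective Hermitian $K$-theory spectrum $\kq$ over $\OO_{F,\mathcal{S}}$, localized at $2$ and with mod $2$ coefficients. First I would recall the slice description: the slices of $\kq$ are built from (shifts of) the motivic Eilenberg--MacLane spectra $\M\Z/2$, with the $d_1$-differentials governed by the motivic Steenrod operations $\Sq^2$, $\Sq^2\Sq^1$ (plus multiplication by $\rho$ and $\tau$) exactly as in the computation of $\pi_{*,*}\kq$ over fields of characteristic not $2$; this is the input coming from Voevodsky's solution of the Milnor conjecture. The $E_2$-page is thus expressed in the motivic cohomology groups $h^{p,q} = H^{p,q}(\OO_{F,\mathcal{S}};\Z/2)$, which for a Dedekind domain of arithmetic type are concentrated in a narrow range of bidegrees (by the results of \cite{MR2103541}, \cite{MR1811558}, \cite{Spitzweck} recalled in \aref{appendixB}): essentially $h^{0,q}$, $h^{1,q}$, and $h^{2,q}$ are the only possibly nonzero groups, with $h^{2,q}$ controlled by $\Pic$ and $\Br$. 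This sparseness is what makes the spectral sequence collapse at a finite page and yields the finite-length filtrations in Table~\ref{table:KQ2groupsOF}.

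The key steps, in order, are: (1) identify the slices $\s_q\kq$ and the first differential, reducing the $E_2$-page to a bicomplex of copies of $h^{p,q}$ and maps among them given by $\Sq^2$, $\Sq^2\Sq^1$, $\rho$, and $\tau$; (2) compute the homology of this bicomplex over a Dedekind domain of arithmetic type using the concentration of motivic cohomology in degrees $\le 2$ together with the known structure of $h^{*,*}$ and the action of $\rho$, producing the groups $h^{p,q}/\rho^i$ and $\ker(\rho^i_{p,q})$ that appear; (3) check that there is no room for higher differentials --- again by the degree-$\le 2$ vanishing --- so $E_2 = E_\infty$ up to the indicated reindexing; (4) read off the associated graded of $\KQ_n(\OO_{F,\mathcal{S}};\Z/2) = \pi_n(\kq_{\OO_{F,\mathcal{S}}};\Z/2)$ along the resulting filtration, organizing the answer by the residue of $n$ modulo $8$; and (5) observe that the pattern of slices is $8$-periodic in the weight, which simultaneously gives the periodicity \eqref{equation:8foldway} and, in the slot $n = 8k+5$, leaves no surviving $E_\infty$-terms, proving the vanishing \eqref{equation:vanishingKQ5}.

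The main obstacle I expect is step (2): carefully bookkeeping the $\rho$- and $\tau$-linear algebra on the $E_2$-page over a ring rather than a field. Over a field the analogous computation is classical, but over $\OO_{F,\mathcal{S}}$ one must handle the extra motivic cohomology classes in degree $2$ (the $\Pic$ and $\Br$ contributions), verify that the differentials $\Sq^2$, $\Sq^2\Sq^1$ interact with these classes as dictated by the module structure over the motivic cohomology of the base, and correctly compute kernels and cokernels of iterated multiplication by $\rho$ — the subtlety being that $\rho$ need not act injectively or with the expected cokernel when there are real places, so the invariants $\ker(\rho^i_{p,q})$ and $h^{p,q}/\rho^i$ genuinely depend on $r_1$ and $\mathcal{S}$. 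A secondary point requiring care is the precise edge-map identification in step (4): matching the slice filtration weight to the stated weights $4k, 4k+1, \dots$ and confirming that the length-$l$ filtrations in the table are exactly the ones induced by the slice tower, with no collapse or further splitting beyond what is claimed. Once the field case is in hand and the Dedekind-domain motivic cohomology is imported, the remaining work is bookkeeping, but it is the kind of bookkeeping where an off-by-one in weight or a missed $\rho$-torsion class changes the answer.
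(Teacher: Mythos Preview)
Your outline has the right shape but contains a factual error that breaks the collapse argument, and it omits a substantial piece of work the paper carries out.

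The error: you assert that over $\OO_{F,\mathcal{S}}$ the mod $2$ motivic cohomology $h^{p,q}$ is concentrated in $p\le 2$, and you use this both to control the $E^2$-page and to conclude there is no room for higher differentials. This is false whenever $F$ has a real embedding: for $3\le p\le q$ one has $h^{p,q}(\OO_{F,\mathcal{S}})\cong(\Z/2)^{r_1}$ via comparison with the real closures (see \aref{lem:number} and \aref{thm:hOFS}). So the $E^1$-page is not sparse in the way you need. The paper instead computes the $E^2$-page by hand (\aref{thm:KQ2-E2}): the $\dd^1$-differentials are quintuples of Steenrod operations including $\Sq^3\Sq^1$, $\Sq^2+\rho\Sq^1$, $\tau\Sq^1$, and $\tau$ (\aref{thm:diff-kq-mod2}), not just $\Sq^2$ and $\Sq^2\Sq^1$; these act on the high-degree cohomology as iterated $\rho$-multiplication, and only after computing the resulting homology does one check that the specific pattern of surviving classes forbids $d^r$ for $r\ge 2$.

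The omission: you do not address convergence. The slice spectral sequence for $\KQ$ is not obviously convergent --- $\KQ$ is not slice complete --- and the paper devotes all of \aref{sec:slice} to this. Convergence over $\OO_{F,\mathcal{S}}$ is reduced to convergence for $\KW/2^n$ via the Wood cofiber sequence (\aref{proposition:KQ-strong-conv}), which in turn requires proving a form of Milnor's conjecture on quadratic forms over $\OO_{F,\mathcal{S}}$ (\aref{theorem:milnor-ofs}) and analyzing the filtration of the Witt ring by powers of the fundamental ideal (\aref{theorem:KWmod2nconvergence2}); this is genuine content, not bookkeeping. Finally, note that the paper works with Voevodsky's slice filtration on the periodic spectrum $\KQ$, not the very effective filtration on $\kq$; the slices \eqref{equation:hermitianktheoryslices} and the differentials are computed for the former, so if you really intend to use $\kq$ you would have to redo those identifications.
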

\vspace{-0.35in}

\begin{remark}
In \aref{table:KQ2groupsOF}:
$h^{0,q}\cong\Z/2$,
$\ker(\rho_{2,1})=h^{2,1}\cong\Pic(\OO_{F,\mathcal{S}})/2$,
$h^{1,q}\cong\OO_{F,\mathcal{S}}^{\times}/2\oplus {_{2}}\Pic(\OO_{F,\mathcal{S}})$, 
$h^{2,q}\cong\Pic(\OO_{F,\mathcal{S}})/2\oplus {_{2}}\Br(\OO_{F,\mathcal{S}})$ for $q>1$, 
$h^{4,q}/\rho^{3}\cong h^{3,q-1}/\rho^{2}\cong (\Z/2)^{t_{\mathcal{S}}^{+}-t_{\mathcal{S}}}$,
$\ker(\rho_{2,q}) \cong \im (h^{2,q}_{+}\to h^{2,q})$ for $q>1$,
$\ker(\rho^{2}_{1,q}) \cong \im (h^{1,q}_{+}\to h^{1,q})$, 
and
$\ker(\rho_{1,q}) \subseteq \im (h^{1,q}_{+}\to h^{1,q})$. 
Here $t_{\mathcal{S}}^{+}$ is the 2-rank of the narrow Picard group $\Pic_{+}(\OO_{F,\mathcal{S}})$ and $t_{\mathcal{S}}=\rk_{2}\Pic(\OO_{F,\mathcal{S}})$.
The $2$-rank of $\ker(\rho^{2}_{1,q})$ (resp.~$\ker(\rho_{2,q})$) equals $r_{2}+s_{\mathcal{S}}+t_{\mathcal{S}}^{+}$ (resp.~$s_{\mathcal{S}}+t_{\mathcal{S}}-1$), 
where $s_{\mathcal{S}}$ is the number of finite primes in $\mathcal{S}$.
This determines the abelian group ${\KQ_{n}(\OO_{F,\mathcal{S}};\Z/2)}$ up to extensions, 
e.g., 
there is a short exact sequence
\begin{equation*}
\label{equation:sesKQ}
0 \to h^{1,4k+2}\oplus h^{2,4k+2} \to {\KQ_{8k+1}(\OO_{F,\mathcal{S}};\Z/2)} \to h^{0,4k+1}\oplus \ker(\rho_{1,4k+1})\to 0.
\end{equation*}
\end{remark}

\begin{remark}
More generally, 
\aref{theorem:mod2hermitiankgroups} identifies the homotopy groups $\pi_{p,q}\KQ/2$ for the motivic spectrum $\KQ$ representing hermitian $K$-theory over $\OFS$.
When $q\equiv 0,1,2,3\bmod 4$ and $n=p-2q$, 
$\pi_{p,q}\KQ$ coincides with Karoubi's hermitian $K$-groups $\KQ_{n}=\KO_{n}$, $\USp_{n}$, $\KSp_{n}$, and $\U_{n}$ \cite{Karoubi}. 

Calculating the cup-product map $\rho\colon h^{1,q} \to h^{2,q+1}$ is a challenging arithmetic problem.
(See also \aref{lem:rhoh11}.)
While $\rho^{n}\neq 0$ for all $n\geq 1$ if $F$ admits a real embedding, 
it is unknown when $\rho^{2}\neq 0$ over totally imaginary quadratic number fields.
\end{remark}

Moreover,
for every integer $n\geq 1$ we calculate the mod $2^{n}$ algebraic $K$-groups, hermitian $K$-groups, and higher Witt-groups of $\OO_{F,\mathcal{S}}$.
When $\mathcal{S}$ is finite we use this to identify the corresponding $2$-adically completed groups.
With this in hand we are ready to discuss the analytic aspect of these $K$-groups in connection with Dedekind $\zeta$-functions.
For complex numbers $s$ with real part $\mathrm{Re}(s)>1$, recall that 
\begin{equation}
\label{equation:DedekindF}
\zeta_{F}(s)
:=
\sum_{\mathscr{I}\neq 0} (\# \OO_{F}/\mathscr{I})^{-s},
\end{equation}
where the summation is over all nonzero ideals $\mathscr{I}\subseteq\OO_{F}$ and $\# \OO_{F}/\mathscr{I}$ is the absolute norm.
The sum in \eqref{equation:DedekindF} diverges for $s=1$, 
but work of Hecke shows $\zeta_{F}$ admits a meromorphic extension to all of $\C$ which is holomorphic except for a simple pole at $s=1$ with residue expressed by the analytic class number formula
\cite[Chapter VII, \S5]{MR1697859}.

The functional equation relating the values of $\zeta_{F}$ at $s$ and $1-s$ implies the multiplicity ${d}_{q}$ of the zero of $\zeta_{F}$ at $1-q$ ($0<q\in\N$) is equal to $r_{1}+r_{2}-1$ if $q=1$, 
$r_{2}$ if $q$ is even, 
and $r_{1}+r_{2}$ is $q>1$ is odd.
In \cite{borel}, 
Borel showed that ${d}_{q}$ coincides with the rank of $K_{2q-1}(\OO_{F})$ for $q>0$.
Note that ${d}_{q}=0$ occurs only if $F$ is totally real and $q$ even, when $\zeta_{F}(1-q)\in\Q$ by the Siegel-Klingen theorem \cite{MR0133304}
(in this case the Borel regular map is trivial and $\zeta_{F}(1-q)$ equals the leading term $\zeta^{\ast}_{F}(1-q)$).
 
The Birch-Tate conjecture relates the special value $\zeta_{F}(-1)\neq 0$ to algebraic $K$-groups by the formula
\begin{equation}
\label{equation:BTconjecture}
\zeta_{F}(-1)
=
\pm
\frac{\# K_{2}(\OO_{F})}{w_{2}(F)}.
\end{equation}
Here $w_{q}(F)$ is the largest natural number $N$ such that the absolute Galois group of $F$ acts trivially on $\mu_{N}^{\otimes q}$, 
i.e., 
the order of the {\et}ale cohomology group $H^{0}_{\et}(F;\Q/\Z(q))$.
Wiles's proof of the main conjecture in Iwasawa theory \cite{Wiles},
which identifies the characteristic power series of certain inverse limits of $p$-class groups with $p$-adic $L$-series,
implies \eqref{equation:BTconjecture} for totally real abelian number fields via the formula
\begin{equation}
\label{equation:mainconjecture}
\zeta_{F}(1-q)
=
\pm
\frac{\#\prod_{p}H^{2}_{\et}(\OO_{F}[\frac{1}{p}];\Z_{p}(q))}{w_{q}(F)}.
\end{equation}
The factors on the right hand side of \eqref{equation:mainconjecture} identify with motivic cohomology groups via the Milnor and Bloch-Kato conjectures \cite{Voevodsky:Z/2}, \cite{Voevodsky:Z/l} 
(see \cite{MR2103541}).
Following \cite{Rognes-Weibel} for the two-primary part, 
this implies 
\begin{equation}
\label{equation:zetaKtheory}
\zeta_{F}(1-q)
=
\pm
2^{r_{1}}
\frac{\# K_{2q-2}(\OO_{F})}{\# K_{2q-1}(\OO_{F})}
\end{equation}
for $q \geq 2$ even, see \cite[Theorem 0.11]{zbMATH02143527} (the sign in \eqref{equation:zetaKtheory} is positive if $r_{1}$ is even and negative if $r_{1}$ is odd according to the functional equation).
We relate special values of $\zeta$-functions to hermitian $K$-groups in the following amelioration of \cite[Theorem 5.9]{BKSO}.

\begin{theorem}
\label{theorem:zetahermitiankgroupsintroduction}
For $k\geq 0$ and $F$ a totally real abelian number field with ring of $2$-integers $\OF[\frac{1}{2}]$, 
the Dedekind $\zeta$-function of $F$ takes the values
\begin{align*}
\zeta_{F}(-1-4k) 
= & \;
\frac{\# h^{2,4k+3}}{\# h^{1,4k+3}} \cdot \frac{\#\KQ_{8k+2}(\OF[\frac{1}{2}];\Z_{2})}{\#\KQ_{8k+3}(\OF[\frac{1}{2}];\Z_{2})}  \\
\zeta_{F}(-3-4k) 
= & \;
2^{r_{1}}\cdot\#\ker(\rho_{2,4k+4})\cdot\frac{\#\KQ_{8k+6}(\OF[\frac{1}{2}];\Z_{2})}{\#\KQ_{8k+7}(\OF[\frac{1}{2}];\Z_{2})}
\end{align*}
up to odd multiples.
\end{theorem}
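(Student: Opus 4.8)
The strategy is to match the known expression of $\zeta_{F}(1-q)$ for even $q\geq 2$ in terms of $2$-adic algebraic $K$-theory against the slice spectral sequence computation of the $2$-adic hermitian $K$-groups of $\OFS$, reconciling the two by a comparison of orders. Throughout, $\mathcal{S}$ is the set of archimedean and dyadic places, so $\OFS=\OF[\tfrac12]$. First I would specialize \eqref{equation:zetaKtheory} --- equivalently \eqref{equation:mainconjecture} via the Milnor and Bloch--Kato conjectures --- to $q=4k+2$ and $q=4k+4$, both even and $\geq 2$, so that, writing $\doteq$ for equality up to a rational number with odd numerator and denominator (which in particular absorbs the sign),
\[
\zeta_{F}(1-q)\;\doteq\;2^{r_{1}}\,\frac{\#K_{2q-2}(\OF;\Z_{2})}{\#K_{2q-1}(\OF;\Z_{2})}.
\]
These groups are finite since $F$ is totally real (by Borel the rank $d_{q}=r_{2}=0$ for $q$ even, and even higher $K$-groups of rings of integers are finite). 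As the $K$-groups $K_{m}(\FF)$ of a finite field $\FF$ of characteristic $2$ are $2$-adically trivial for all $m\geq 1$ --- they vanish for $m$ even and are finite of odd order for $m$ odd --- the localization sequence for $\OF\hookrightarrow\OFS$ in degrees $2q-2$ and $2q-1$ shows $K_{n}(\OF;\Z_{2})\cong K_{n}(\OFS;\Z_{2})$ there, and then the $2$-adic Quillen--Lichtenbaum isomorphisms \cite{Rognes-Weibel} rewrite the right-hand side as $2^{r_{1}}\#H^{2}_{\et}(\OFS;\Z_{2}(q))/\#H^{1}_{\et}(\OFS;\Z_{2}(q))$.

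On the hermitian side I would use that the slice spectral sequence of the motivic spectrum $\KQ$ over $\OFS$ has $E_{2}$-page the mod $2^{n}$ motivic cohomology of $\OFS$, whence its $2$-adic limit --- the source of the mod $2^{n}$ and $2$-adically completed hermitian $K$-group computations of this paper --- expresses $\#\KQ_{8k+2}(\OFS;\Z_{2})$, $\#\KQ_{8k+3}(\OFS;\Z_{2})$, $\#\KQ_{8k+6}(\OFS;\Z_{2})$, $\#\KQ_{8k+7}(\OFS;\Z_{2})$, all finite for $F$ totally real, as products over the filtrations of \aref{theorem:mod2hermitiankgroupsintroduction} (orders being multiplicative in extensions) of orders of motivic cohomology groups $H^{p}(\OFS;\Z_{2}(j))$, which by Beilinson--Lichtenbaum coincide with the corresponding étale cohomology groups. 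The weights entering are $j\in\{4k+1,4k+2,4k+3\}$ for $\zeta_{F}(-1-4k)$ and $j\in\{4k+3,4k+4,4k+5\}$ for $\zeta_{F}(-3-4k)$.

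It remains to carry out the arithmetic matching, which is the heart of the proof and the step I expect to be the main obstacle. After cancelling the motivic cohomology groups common to $\KQ_{8k+2}$ and $\KQ_{8k+3}$ one must establish
\[
\frac{\#\KQ_{8k+2}(\OFS;\Z_{2})}{\#\KQ_{8k+3}(\OFS;\Z_{2})}\;\doteq\;2^{r_{1}}\,\frac{\#h^{1,4k+3}}{\#h^{2,4k+3}}\cdot\frac{\#H^{2}_{\et}(\OFS;\Z_{2}(4k+2))}{\#H^{1}_{\et}(\OFS;\Z_{2}(4k+2))},
\]
and, for $\KQ_{8k+6}/\KQ_{8k+7}$, the analogous identity obtained by replacing $4k+2$ with $4k+4$ and the factor $2^{r_{1}}\#h^{1,4k+3}/\#h^{2,4k+3}$ with $1/\#\ker(\rho_{2,4k+4})$; combined with the first paragraph these give the two claimed formulas. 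For this I would invoke the explicit descriptions from the Remark following \aref{theorem:mod2hermitiankgroupsintroduction} and from \aref{appendixB}: the stable values $h^{1,j}\cong\OFS^{\times}/2\oplus{}_{2}\Pic(\OFS)$ and $h^{2,j}\cong\Pic(\OFS)/2\oplus{}_{2}\Br(\OFS)$ for $j>1$; the identification $h^{4,j}/\rho^{3}\cong(\Z/2)^{\ts^{+}-\ts}$ together with the $2$-ranks $r_{2}+\sls+\ts^{+}$ and $\sls+\ts-1$ of $\ker(\rho^{2}_{1,j})$ and $\ker(\rho_{2,j})$; and the relation between consecutive weights of $H^{\ast}_{\et}(\OFS;\Z_{2}(j))$ governed by the Bockstein and by $w_{j}(F)=\#H^{0}_{\et}(F;\Q/\Z(j))$. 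The delicate point is that the reality factor $2^{r_{1}}$ --- which is internal to the $\KQ$-ratio at $q=4k+2$, where the $\KO$-pattern contributes $\KO_{8k+2}=\Z/2$, but absent at $q=4k+4$, where $\KO_{8k+6}=0$ --- must be tracked through these filtrations and shown, together with the $\ker(\rho^{i})$-contributions, to reassemble exactly into the stated corrections; this rests on the reflection- and Gras-type relations among the $2$-ranks of the narrow and ordinary $\mathcal{S}$-class groups, $r_{1}$, $r_{2}$, and the orders $\#H^{i}_{\et}(\OFS;\Z_{2}(j))$, which hold because $F$ is totally real abelian, so that the Iwasawa main conjecture \cite{Wiles} applies.
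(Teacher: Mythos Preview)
Your overall strategy---start from Kolster's formula \eqref{equation:zetaetalecohomology} expressing $\zeta_{F}(1-2k)$ via $2$-adic \'etale cohomology, compute the $2$-adic hermitian $K$-groups via the slice spectral sequence, and match orders---is the same as the paper's. However, the execution has a genuine gap.

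You invoke the filtrations of \aref{theorem:mod2hermitiankgroupsintroduction}, but that theorem computes the \emph{mod $2$} groups $\KQ_{n}(\OFS;\Z/2)$, whose filtration quotients are mod $2$ cohomology groups $h^{p,q}$. The statement concerns the $2$-adic groups $\KQ_{n}(\OFS;\Z_{2})$, which are obtained as $\varprojlim_{n}\KQ_{*}(\OFS;\Z/2^{n})$; their filtration quotients are not those of \aref{theorem:mod2hermitiankgroupsintroduction} but those recorded in \aref{theorem:integralKQ} (Table~\ref{table:KQ2adicgroupsOF}), which involve mod $2^{n}$ cohomology $H^{p,q}_{n}$ and kernels of specific operations such as $\ker(\tau\pr,\Sq^{2})_{2,4k+2}$ and $\ker(\Sq^{2}\pr)_{2,4k+4}$. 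One cannot recover the order of a $2$-adic group from its mod $2$ reduction alone, so the order identities you need do not follow from the mod $2$ table.

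Once the correct $2$-adic filtrations are used, the matching is not carried out by counting $2$-ranks or by invoking ``reflection- and Gras-type relations''; rather, the paper writes down explicit short exact sequences that compute $\#\ker(\tau\pr,\Sq^{2})_{2,4k+2}$, $\#\ker(\tau\pr)_{1,4k+2}$, $\#\ker(\Sq^{2}\pr)_{2,4k+4}$, etc., directly in terms of $\#H^{p,q}$ and $\#h^{p,q}$ (see the proof of \aref{theorem:nice}). The only arithmetic input beyond the slice computations is Kolster's formula itself; no further class-group identities are needed, and the totally real abelian hypothesis enters solely through Wiles's theorem.
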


Our calculations depend on strong convergence of the slice spectral sequences for mod $2^{n}$ reduced higher Witt-theory $\KW/2^{n}$ over $\OFS$, 
$n\geq 1$.
Showing strong convergence turns out to be related to finding a complete set of invariants for quadratic forms over $\OFS$ involving the fundamental ideal, 
the Brauer group, 
the Clifford invariant, 
and motivic cohomology groups.
In \aref{theorem:milnor-ofs} we show a form of Milnor's conjecture for quadratic forms \cite{Milnor} over $\OFS$ by generalizing the proof given in \cite{slices}.
Our formulation of the Milnor conjecture for quadratic forms involves the slice filtration for higher Witt-theory, 
the element $-1\in h^{0,1}$, 
and the mod $2$ Picard group $h^{2,1}$ of $\OFS$.

\subsection*{Outline of proofs}
Our approach is based on applications of the Milnor and Bloch-Kato conjectures on Galois cohomology and $K$-theory \cite{Voevodsky:Z/2}, \cite{Voevodsky:Z/l}, 
the motivic Steenrod algebra \cite{HKO}, \cite{Voevodsky:steenrod},
the slice filtration \cite{Voevodsky:open}, 
and the identification of the slices of hermitian $K$-theory in \cite{slices}, \cite{April1}.

Recall that the slice filtration for  a motivic spectrum $\E$ gives rise to distinguished triangles in the stable motivic homotopy category
\begin{equation}
\label{equation:slicefiltrationofE}
\f_{q+1}(\E) \to \f_{q}(\E) \to \s_{q}(\E) \to \Sigma^{1,0}\f_{q+1}(\E),
\end{equation}
where $\{\f_{q}(\E)\}$ exhausts $\E$, and the slices $\s_{q}(\E)$ are uniquely determined up to isomorphism by \eqref{equation:slicefiltrationofE}, 
see \cite[Theorem 2.2]{Voevodsky:open}.
Applying the motivic homotopy groups $\pi_{\ast,w}$ to \eqref{equation:slicefiltrationofE} yields an exact couple and an associated slice spectral sequence in weight $w$, 
with $E^{1}$-page 
\begin{equation}
\label{equation:slicespectralsequenceofE}
E^{1}_{p,q,w}(\E)=\pi_{p,w}\s_{q}(\E)\implies \pi_{p,w}(\E).
\end{equation}
The slice $d^{1}$-differential in \eqref{equation:slicespectralsequenceofE} is induced by $\dd^{1}\colon\s_{q}(\E) \to \Sigma^{1,0}\s_{q+1}(\E)$ obtained from \eqref{equation:slicefiltrationofE}.
Here the slices of $\E$ are modules over the zero slice $\s_{0}(\One)$ of the motivic sphere spectrum by \cite[\S6 (iv),(v)]{GRSO}, \cite[Theorem 3.6.13(6)]{Pelaez}.
Let $\f_{q}\pi_{p,w}(\E)$ denote the image of $\pi_{p,w}\f_{q}(\E)$ in $\pi_{p,w}(\E)$.
While $\{\f_{q}\pi_{p,n}(\E)\}$ is an exhaustive filtration of $\pi_{p,n}(\E)$, 
convergence of \eqref{equation:slicespectralsequenceofE} is unclear in general (see \aref{lem:sc-convergence}).

Over any field $F$ of characteristic $\Char(F)\neq 2$ the slices of the motivic spectra of algebraic $K$-theory $\KGL$, 
hermitian $K$-theory $\KQ$, 
and higher Witt-theory $\KW$ are identified in \cite{slices}:
\begin{equation}
\label{equation:algebraicktheoryslices}
\s_{q}(\KGL)
\simeq
\Sigma^{2q,q}\MZ,
\end{equation}
\begin{equation}
\label{equation:hermitianktheoryslices}
\s_{q}(\KQ)
\simeq
\begin{cases}
\Sigma^{2q,q}\MZ
\vee  
\bigvee_{i<\frac{q}{2}}\Sigma^{2i+q,q}\MZ/2 & q\equiv 0\bmod 2 \\
\bigvee_{i<\frac{q+1}{2}}\Sigma^{2i+q,q}\MZ/2 & q\equiv 1\bmod 2, \\
\end{cases}
\end{equation}
\begin{equation}
\label{equation:wittheoryslices}
\s_{q}(\KW)
\simeq
\bigvee_{i\in\Z}\Sigma^{2i+q,q}\MZ/2.
\end{equation}
Here $\MZ$ (resp.~$\MZ/2$) denotes the integral (resp.~mod $2$) motivic cohomology or Eilenberg-MacLane spectrum.
The slices are motives or $\MZ$-modules by \cite[Theorem 2.7]{April1} and the slice $\dd^{1}$-differentials are maps between Eilenberg-MacLane spectra.
These facts make the slice spectral sequence amenable to calculations over base schemes affording an explicit description of the action of the motivic Steenrod algebra on its motivic cohomology ring.
More generally, 
using Spitzweck's work of motivic cohomology in \cite{Spitzweck}, 
after localization the isomorphisms in \eqref{equation:algebraicktheoryslices}, \eqref{equation:hermitianktheoryslices}, and \eqref{equation:wittheoryslices} hold over Dedekind domains of mixed characteristic 
with no residue fields of characteristic $2$,
see \cite[\S2.3]{April1}.

We investigate the convergence properties of \eqref{equation:slicespectralsequenceofE} for $\KGL$, $\KQ$, and $\KW$. 
Earlier convergence results include \cite[Theorem 4]{Levine:sss}, 
\cite[Theorem 3.50]{April1}, 
and 
\cite[Lemma 7.2]{Voevodsky:open}.
Assuming $\vcd(F)<\infty$ and $\Char(F)\neq 2$, 
we show the slice spectral sequence for $\KQ/2^{n}$ is conditionally convergent \cite{Boardman}.
In the proof we note the filtrations of $W(F)/2^{n}$ by the powers of $I(F)/2^{n}$ and of $_{2^{n}}W(F)$ by the powers of $_{2^{n}}I(F)$ are both exhaustive, Hausdorff, and complete.
The Wood cofiber sequence \cite[Theorem 3.4]{slices} identifying $\KGL$ with the cofiber of the Hopf map $\eta$ on $\KQ$ is also used in the argument.
This confirms a special case of Levine's conjecture on convergence of the fundamental ideal completed slice tower \cite{Levine:sss}.

The most technical part of the paper concerns the calculations of the slice $\dd^{1}$-differentials for the mod $2^{n}$ reductions of $\KW$ and $\KQ$.
We give succinct formulas for the $\dd^{1}$'s as quintuples of motivic Steenrod operations generated by $\Sq^{1}$ and $\Sq^{2}$ and motivic cohomology classes of the base scheme.
Over rings of $\mathcal{S}$-integers $\OO_{F,\mathcal{S}}$ in a number field $F$ we show by calculation that the slice spectral sequence for $\KQ/2^{n}$ collapses, 
so that it converges strongly, 
and we identify its $E^{\infty}=E^{n+1}$-page.
This finishes the proof of \aref{theorem:mod2hermitiankgroupsintroduction}. 
Using the Bloch-Kato conjecture for odd primes and finite generation we also deduce an integral calculation of the hermitian $K$-groups of $\OO_{F,\mathcal{S}}$.

Throughout the paper we use the following notation.
\vspace{0.1in}

\begin{tabular}{l|l} 
$\KGL$, $\KQ$, $\KW$ & algebraic $K$-theory, hermitian $K$-theory, higher Witt-theory \\
$\One$, $\MZ$  & motivic sphere, motivic cohomology \\
$\f_{q}(\E)$, $\s_{q}(\E)$ & $q$th effective cover, $q$th slice of a motivic spectrum $\E$ \\
$\Sigma^{p,q}$ & motivic $(p,q)$-suspension \\
$\SH$, $\SH^{\eff}$ & stable motivic homotopy category of a field $F$ or $\OO_{F,\mathcal{S}}$, effective version\\
$K^{\Mil}_{*}$, $k^{\Mil}_{*}$, $\KMW_{*}$ & integral and mod $2$ Milnor $K$-theory, integral Milnor-Witt $K$-theory \\
$\pi_{p,q}(\E) = \E_{p,q}$ & bidegree $(p,q)$ motivic homotopy group of $\E$ \\
$H^{p,q}(X;A)$ & bidegree $(p,q)$ motivic cohomology of $X$ with $A$-coefficients \\
$\OO_{F,\mathcal{S}}$ & ring of $\mathcal{S}$-integers in a number field $F$,  $\mathcal{S} \supset \{2,\infty\}$ \\
$h^{p,q}$, $H_{n}^{p,q}$ & bidegree $(p,q)$ mod $2$ and mod $2^{n}$ motivic cohomology of $F$, $\OO_{F,\mathcal{S}}$ \\
$\rho$, $\tau$ &  the class of $-1$ in $h^{1,1}$ and $h^{0,1}$, respectively \\
$h^{p,q}/\rho^{i}$, $\ker(\rho^{i}_{p,q})$ & cokernel of $\rho^{i}: h^{p-i,q-i} \to h^{p,q}$, kernel of $\rho^{i}: h^{p,q} \to h^{p+i,q+i}$ \\
$\cd_{2}(F)$, $\vcd(F)$ & mod $2$ cohomological and virtual cohomological dimension of a field $F$ \\
\end{tabular}

We use matrices to represent maps between suspensions of free $\MZ/2$-modules;
the entries will be ordered according to the simplicial degrees of the summands. 
\vspace{-0.1in}

\subsection*{Guide to the paper}
\aref{sec:slice} recalls some basic properties of the slice filtration in $\SH$.
Our calculations rely on convergence results shown here for the slice spectral sequences of the mod $2^{n}$ reductions of $\KGL$ (see \aref{theorem:KGL2ncc}), 
$\KW$ (see \aref{theorem:KWmod2nconvergence} and \aref{theorem:KWmod2nconvergence2}), 
and $\KQ$ (see \aref{theorem:best-conv} and \aref{theorem:best-conv2}).
Moreover, 
we study multiplicative properties of the slice spectral sequence for pairings of motivic Moore spectra.
This part allows us to circumvent the lack of an algebra structure on the slice spectral sequence for calculations with mod $2$ reductions of motivic spectra.

One of the points of \aref{section:akt} is to show that our methods provide effective tools for calculations of algebraic $K$-groups
(see \aref{thm:KGL2-E2}, \aref{theorem:KGLnumberfields}, and \aref{theorem:integralKGL}).

In \aref{section:hwtahkt} we first identify the mod $2$ higher Witt-groups and the mod $2$ hermitian $K$-groups of $\OO_{F,\mathcal{S}}$ 
(see \aref{theorem:mod2hermitiankgroupsintroduction}, \aref{thm:KW2-E2}, and \aref{thm:KQ2-E2}).
Second, 
we extend these calculations to mod $2^{n}$ coefficients for all $n\geq 1$ (see \aref{thm:KW2-E2n} and \aref{thm:E2-KQ2n}), 
and consequently to $2$-adic coefficients.
The corresponding calculations for odd-primary coefficients are straightforward.
Using this we deduce an integral calculation of the homotopy groups of $\KQ$ over $\OFS$ in terms of motivic cohomology groups 
(see \aref{theorem:integralKQ} for the odd-primary calculations).

\aref{section:svDzf} relates the orders of the hermitian $K$-groups to special values of Dedekind $\zeta$-functions of number fields 
(see \aref{theorem:zetahermitiankgroupsintroduction} and \aref{theorem:nice}).
We perform some technical work in \aref{section:multstrKQ2n} where we determine the multiplicative structure on the graded slices $\s_*(\KQ/2^{n})$ for $n\geq 2$ (see \aref{thm:KQ2n-mult});
this part is needed to determine extension problems arising in the slice spectral sequence for $\KQ/2^{n}$ (see \aref{theorem:integralKQ} and \aref{section:svDzf}).
In \aref{section:mcatSa} we review background on motivic cohomology and the mod $2$ motivic Steenrod algebra over fields and rings of $\mathcal{S}$-integers 
--- with focus on low weights and coefficient rings ---
which is used throughout our calculations.
Finally,
in \aref{sec:tables} we give charts and tables summarizing the calculations in the main body of the paper.

\subsection*{Relation to other works}
Our results are more complete than the calculations of hermitian $K$-groups in \cite{BK}, \cite{BKO}, \cite{BKO2}, and \cite{BKSO} in the sense that we consider arbitrary number fields.
Our motivic homotopy-theoretic techniques apply more readily to algebraic $K$-theory than to higher Witt-theory and hermitian $K$-theory;
we use this to revisit some of the results in \cite{Kahn97}, \cite{Levine99}, and \cite{Rognes-Weibel} based on the Bloch-Lichtenbaum spectral sequence \cite{BL} 
(which is unpublished and may forever remain so).

\subsection*{Acknowledgements}
The authors acknowledge hospitality and support from Institut Mittag-Leffler in Djursholm and the Hausdorff Research Institute for Mathematics in Bonn, 
and funding by the RCN Frontier Research Group Project number~250399 "Motivic Hopf Equations."
R\"ondigs is grateful for support from the DFG priority program ``Homotopy theory and algebraic geometry''. {\O}stv{\ae}r is supported by a Friedrich Wilhelm Bessel Research Award from the Alexander von Humboldt Foundation and a Nelder Visiting Fellowship from Imperial College London.

\section{The slice spectral sequence and its convergence}
\label{section:sss}
\label{sec:slice}
In this section we discuss the slice filtration in $\SH$ developed in \cite{GRSO}, \cite{Pelaez}, and \cite{Voevodsky:open}.
Over fields for which the filtration of the Witt ring by the powers of the fundamental ideal is complete, exhaustive, and Hausdorff, 
we show conditional convergence of the slice spectral sequence for the mod $2^{n}$ reduction of hermitian $K$-theory (see \aref{theorem:best-conv}).
This lays the foundation for our calculations.
For later reference we summarize results on the multiplicative structure of the slices and the slice spectral sequence.

\subsection{Convergence of the slice spectral sequence}
With reference to \eqref{equation:slicefiltrationofE}, 
recall that a motivic spectrum $\E$ is slice complete \cite[Definition 3.8]{April1} if the homotopy limit
\[
\holim_{q\to\infty}\f_{q}(\E) 
\]
is contractible.
The algebraic $K$-theory spectrum $\KGL$ is slice complete over fields \cite[Lemma 3.11]{April2}.
Recall the coeffective cover $\f^{q-1}(\E)$
--- see \cite[\S3.1]{April1} ---
is defined by the cofiber sequence
\[
\f_{q}(\E) \to \E \to \f^{q-1}(\E),
\]
and the slice completion $\SC(\E)$
--- see \cite[Definition 3.1]{April1} ---
is defined as the homotopy limit
\[
\SC(\E) = \holim_{q \to\infty}\f^{q}(\E).
\]
The slice completion is related to the convergence of the slice spectral sequence \eqref{equation:slicespectralsequenceofE}.

\begin{lemma}[\protect{\cite[Section 3]{April2}}]
For every motivic spectrum $\E$ we have the following.
\begin{itemize}
\item A cofiber sequence
\begin{equation}
\label{equation:coeffcof}
\s_{q}(\E) \to \f^{q}(\E) \to \f^{q-1}(\E) \to \Sigma^{1,0}\s_{q}(\E).
\end{equation}
\item A commutative diagram
\begin{equation}
\label{equation:coeffcof2}
\begin{tikzcd}
\s_{q}(\E) \ar[r]\ar[d] & \f^{q}(\E) \ar[d]\ar[ld] \\
\Sigma^{1,0}\f_{q+1}(\E) \ar[r] & \Sigma^{1,0}\s_{q+1}(\E).
\end{tikzcd}
\end{equation}
\item A map of cofiber sequences
\begin{equation}
\label{equation:coeffcof3}
\begin{tikzcd}
\s_{q}(\E) \ar[r]\ar[d, "="] & \ar[d] \f^{q}(\E)  \ar[r]\ar[d] & \f^{q-1}(\E)\ar[d]\ar[r] & \Sigma^{1,0}\s_{q}(\E)\ar[d, "="] \\
\s_{q}(\E) \ar[r] & \Sigma^{1,0}\f_{q+1}(\E) \ar[r] & \Sigma^{1,0}\f_{q}(\E) \ar[r] &
\Sigma^{1,0}\s_{q}(\E),
\end{tikzcd}
\end{equation}
which induces an isomorphism from the slice spectral sequence \eqref{equation:slicespectralsequenceofE} to the spectral sequence obtained from \eqref{equation:coeffcof}, 
up to reindexing.
\end{itemize}
\label{lem:coeffdiff}
\end{lemma}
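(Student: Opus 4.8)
The plan is to construct the three structures directly from the defining cofiber sequences and the octahedral axiom, never inventing a new argument but merely recording the formal consequences of having two compatible filtrations on $\E$: the effective cover tower $\{\f_q(\E)\}$ and the coeffective cover tower $\{\f^q(\E)\}$. I would first establish \eqref{equation:coeffcof}. Recall $\f^{q-1}(\E)$ is the cofiber of $\f_q(\E)\to\E$ and $\f^q(\E)$ is the cofiber of $\f_{q+1}(\E)\to\E$; comparing these via the map $\f_{q+1}(\E)\to\f_q(\E)$ gives a map $\f^q(\E)\to\f^{q-1}(\E)$ whose fiber, by the octahedral axiom applied to the composable pair $\f_{q+1}(\E)\to\f_q(\E)\to\E$, is identified with the cofiber of $\f_{q+1}(\E)\to\f_q(\E)$, namely $\s_q(\E)$. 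That produces exactly the cofiber sequence \eqref{equation:coeffcof}, and its connecting map $\f^{q-1}(\E)\to\Sigma^{1,0}\s_q(\E)$ is the one induced on cofibers.

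Second, for \eqref{equation:coeffcof2} I would produce the diagonal $\f^q(\E)\to\Sigma^{1,0}\f_{q+1}(\E)$ and check the triangle commutes. The left vertical is the composite of the connecting map $\s_q(\E)\to\Sigma^{1,0}\f_{q+1}(\E)$ from \eqref{equation:slicefiltrationofE}; the right vertical $\Sigma^{1,0}\f_{q+1}(\E)\to\Sigma^{1,0}\s_{q+1}(\E)$ is the suspension of $\f_{q+1}(\E)\to\s_{q+1}(\E)$; the diagonal comes from the same octahedron used above, which furnishes a map $\f^q(\E)\to\Sigma^{1,0}\f_{q+1}(\E)$ fitting into a morphism of triangles. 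Commutativity of both the upper-left triangle and the lower-right triangle in \eqref{equation:coeffcof2} is part of the data the octahedral axiom hands us — one simply reads it off the enhanced octahedron relating the three cofiber sequences $\f_{q+1}\to\f_q$, $\f_{q+1}\to\E$, $\f_q\to\E$.

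Third, \eqref{equation:coeffcof3} assembles the previous two into a single map of cofiber sequences. The top row is \eqref{equation:coeffcof}; the bottom row is obtained by applying $\Sigma^{1,0}$ to the effective cover cofiber sequence \eqref{equation:slicefiltrationofE} shifted appropriately, i.e. $\s_q(\E)\to\Sigma^{1,0}\f_{q+1}(\E)\to\Sigma^{1,0}\f_q(\E)\to\Sigma^{1,0}\s_q(\E)$ (the rotation of \eqref{equation:slicefiltrationofE}). The left and right vertical maps are identities, and the two middle verticals are the diagonal from \eqref{equation:coeffcof2} together with the map $\f^{q-1}(\E)\to\Sigma^{1,0}\f_q(\E)$ obtained by the same recipe one step down in $q$; the commutativity of the resulting ladder is again a consequence of the octahedral diagram, so no new content is needed. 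Passing to $\pi_{p,w}$ turns the bottom row into the exact couple defining \eqref{equation:slicespectralsequenceofE} and the top row into the exact couple of \eqref{equation:coeffcof}; since the comparison map is an isomorphism on $E^1$-terms (the $\s_q(\E)$ columns) and commutes with the $d^1$'s, by the standard comparison theorem for spectral sequences arising from maps of exact couples it is an isomorphism from $E^1$ onward, up to the stated reindexing (the coeffective spectral sequence is naturally indexed by the filtration $q\mapsto\f^q$, which runs in the opposite direction).

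The only real subtlety — the step I would flag as the main obstacle — is bookkeeping the octahedral axiom carefully enough that all three diagrams come from a \emph{single} coherent octahedron, so that the commutativities asserted in \eqref{equation:coeffcof2} and \eqref{equation:coeffcof3} hold on the nose rather than merely up to homotopy that one would then have to track. In a model where $\SH$ is presented by a stable $\infty$-category this is automatic: the data of the two towers and the fiber sequence $\f_{q+1}\to\f_q\to\s_q$ produces a functorial diagram of cofiber sequences, and \eqref{equation:coeffcof}--\eqref{equation:coeffcof3} are obtained by restricting that diagram. I would therefore phrase the proof in those terms, citing the octahedral/$3\times 3$-diagram formalism, and reduce the verification of \eqref{equation:coeffcof3} to the observation that both exact couples are computed from the same diagram, whence the induced map of spectral sequences is an isomorphism. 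The reindexing claim is then just the remark that $\pi_{p,w}\f^q(\E)$ sits in filtration degree controlled by $q$ with the opposite variance from $\pi_{p,w}\f_q(\E)$, so the two spectral sequences agree after the substitution $q\mapsto$ (bottom filtration degree) $-q$.
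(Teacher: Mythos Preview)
Your proposal is correct and takes essentially the same approach as the paper: both derive \eqref{equation:coeffcof} and \eqref{equation:coeffcof2} from the octahedral axiom applied to the composable pair $\f_{q+1}(\E)\to\f_q(\E)\to\E$ (the paper phrases this as filling in a $3\times 3$ diagram with rows $\f_{q+1}\to\E\to\f^q$, $\f_q\to\E\to\f^{q-1}$, $\s_q\to\ast\to\Sigma^{1,0}\s_q$), then obtain \eqref{equation:coeffcof3} as a formal consequence, and conclude the spectral sequence isomorphism from the identification of $E^1$-pages. Your remarks on coherence via the stable $\infty$-categorical enhancement are more explicit than the paper's brief invocation of Verdier's (TR4), but the underlying argument is the same.
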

\begin{proof}
Both \eqref{equation:coeffcof} and \eqref{equation:coeffcof2} are obtained by filling in the diagram
\[
\begin{tikzcd}
\f_{q+1}(\E) \ar[r]\ar[d] & \E \ar[r]\ar[d,"\id"] & \f^{q}(\E)\ar[d] \\
\f_{q}(\E) \ar[r]\ar[d] & \E \ar[r]\ar[d] & \f^{q-1}(\E)\ar[d] \\
\s_{q}(\E) \ar[r] & * \ar[r] & \Sigma^{1,0}\s_{q}(\E)
\end{tikzcd}
\]
similarly to Verdier's octahedral axiom (TR4) \cite{zbMATH01573275} (because of $*$ all squares in this diagram commute).
Moreover, 
\eqref{equation:coeffcof2} implies \eqref{equation:coeffcof3}, 
and the last claim follows since the spectral sequences have isomorphic $E^{1}$-pages.
\end{proof}

\begin{lemma}[\protect{\cite[\S4]{April2}}]
\label{lem:sc-convergence}
The slice spectral sequence for $\E\in\SH$ is conditionally convergent to
\begin{equation}
\label{equation:ssscc}
E^{1}_{p,q,w}(\E)
= 
\pi_{p,w}\s_{q} (\E) 
\Rightarrow 
\pi_{p,w}\SC(\E).
\end{equation}
For a fixed $w$ this is a half plane spectral sequence with entering $d^{r}$-differentials of degree $(-1,r+1)$.
\end{lemma}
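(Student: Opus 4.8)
The plan is to reduce everything to Boardman's formalism for spectral sequences attached to a tower of spectra \cite{Boardman}. First I would package the coeffective covers into a tower: the cofiber sequences $\f_{q+1}(\E)\to\f_{q}(\E)\to\s_{q}(\E)$ of \eqref{equation:slicefiltrationofE}, after applying the coeffective-cover construction, give by \aref{lem:coeffdiff} a tower
\[
\cdots \to \f^{q+1}(\E) \to \f^{q}(\E) \to \f^{q-1}(\E) \to \cdots
\]
whose successive cofibers are (a shift of) the slices $\s_{q}(\E)$, via \eqref{equation:coeffcof}. By definition $\SC(\E)=\holim_{q\to\infty}\f^{q}(\E)$ is the inverse limit of this tower, and the colimit in the other direction is $\E$ itself since $\{\f_{q}(\E)\}$ exhausts $\E$ and $\f^{q}(\E)=\mathrm{cofib}(\f_{q}(\E)\to\E)$ has the property that the map $\E\to\holim_q \s_{<q}(\E)$... more precisely, $\operatorname{colim}_{q\to-\infty}\f^q(\E)\simeq\E$ because $\f_q(\E)\to\E$ becomes $0$ in the limit $q\to-\infty$ as the effective-cover filtration is exhausting. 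Then I would apply the homotopy groups $\pi_{p,w}(-)$ to this tower. Boardman's general machinery associates to any tower of spectra a spectral sequence, and by \aref{lem:coeffdiff} (the last bullet, the isomorphism of $E^1$-pages and the identification of $d^1$) this spectral sequence is, up to the stated reindexing, exactly the slice spectral sequence \eqref{equation:slicespectralsequenceofE} with $E^1_{p,q,w}=\pi_{p,w}\s_q(\E)$.

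Next I would invoke Boardman's definition of conditional convergence: the spectral sequence of a tower $\{Y_q\}$ converges conditionally to $\pi_*\holim_q Y_q$ precisely when $\lim_q \pi_*(Y_q) = 0 = \lim^1_q \pi_*(Y_q)$ — equivalently, when the homotopy groups of the inverse limit are computed by the tower, which is automatic here since we define the abutment to be $\pi_{p,w}\SC(\E)=\pi_{p,w}\holim_q\f^q(\E)$. In other words, conditional convergence to $\pi_{p,w}\SC(\E)$ holds tautologically once one has built the correct tower with the correct inverse limit: the content is entirely in the identification of the tower (done in \aref{lem:coeffdiff}) and in matching Boardman's abutment with $\SC(\E)$. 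So the proof is essentially a citation of \cite{Boardman} once the tower is in hand.

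Finally I would record the indexing statement. For fixed weight $w$, the $E^1$-term $\pi_{p,w}\s_q(\E)$ is supported where $q$ ranges over all integers but, by the connectivity of slices (each $\s_q(\E)$ is an $\MZ$-module, hence its homotopy vanishes in a half-plane determined by $q$ and $w$), the nonzero entries lie in a half-plane; this makes it a half-plane spectral sequence in the sense of \cite{Boardman}. The differential $d^r$ in the slice spectral sequence runs $\s_q\to\Sigma^{1,0}\s_{q+r}$ at the level of the tower's $r$-th derived exact couple, so on $\pi_{p,w}$ it lowers $p$ by $1$ and raises $q$ by $r$; translating into Boardman's $(s,t)$-conventions for an entering-differential half-plane spectral sequence this is exactly degree $(-1,r+1)$, as claimed.

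The main obstacle is bookkeeping rather than mathematics: one must carefully match the variance and shifts in \eqref{equation:coeffcof}–\eqref{equation:coeffcof3} against Boardman's sign and index conventions so that the reindexing in \aref{lem:coeffdiff} is made precise and the differential degree comes out as $(-1,r+1)$; and one must be careful that "conditional convergence" is being asserted to $\pi_{p,w}\SC(\E)$ and not to $\pi_{p,w}(\E)$ — the two agree exactly when $\E$ is slice complete, which is a separate hypothesis not assumed here. Apart from that, the statement is a formal consequence of \aref{lem:coeffdiff} and \cite{Boardman}.
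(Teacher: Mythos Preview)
Your overall strategy is correct and matches what the paper intends: this lemma is cited from \cite{April2} rather than proved in the present paper, and the argument there is indeed to use the coeffective tower $\{\f^q(\E)\}$ from \aref{lem:coeffdiff}, identify its homotopy inverse limit with $\SC(\E)$ by definition, and invoke Boardman's formalism so that conditional convergence to $\pi_{p,w}\SC(\E)$ is tautological. Your care in distinguishing $\SC(\E)$ from $\E$ is exactly right.

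However, your justification of the half-plane property is wrong. The fact that slices are $\s_0(\One)$-modules (equivalently $\MZ$-modules) imposes no connectivity whatsoever; witness $\s_q(\KW)$ in \eqref{equation:wittheoryslices}, which is an infinite wedge of $\MZ/2$'s extending in both simplicial directions. The correct reason the spectral sequence is half-plane for fixed $w$ is that $E^1_{p,q,w}=\pi_{p,w}\s_q(\E)=[\Sigma^{p,w}\One,\s_q(\E)]$ vanishes for $q<w$: the sphere $\Sigma^{p,w}\One$ is $w$-effective (it lies in $\Sigma^{2w,w}\SH^{\eff}$), while by construction $\f_{q+1}\s_q(\E)=0$, so $\s_q(\E)$ is right-orthogonal to all $(q+1)$-effective spectra. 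Hence for $q+1\leq w$ the mapping group is zero. This is the structural input from the slice filtration that makes the spectral sequence half-plane; your $\MZ$-module argument does not supply it.

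A minor point: your computation gives $d^r$ degree $(-1,r)$ in $(p,q)$-coordinates (since $d^1\colon\pi_{p,w}\s_q\to\pi_{p-1,w}\s_{q+1}$), and you then wave at a reindexing to reach the paper's stated $(-1,r+1)$. That step deserves to be written out explicitly rather than asserted; as it stands the two numbers visibly disagree and you have not said which convention effects the shift.
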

\begin{remark}
\label{cor:strong-conv}
If $E^{r}(\E)=E^{\infty}(\E)$ for some $r\geq 1$, 
then \eqref{equation:ssscc} converges strongly to $\pi_{p,w}\SC(\E)$ by conditional convergence and \cite[Theorem 7.1]{Boardman}.
This applies to all the examples considered in this paper.
\end{remark}

\subsection{Convergence for higher Witt-theory}
Following \cite{slices} we discuss the slice filtration for $\KW/2^{n}$ over a field $F$ of characteristic different than $2$.
Recall that $\f_{q}\pi_{p,w}(\E)$ denotes the image of $\pi_{p,w}\f_{q}(\E)$ in $\pi_{p,w}(\E)$.
The slice spectral sequence for $\E$ converges if for all $p,w,q\in\Z$ we have 
\[
\bigcap_{i \geq 0} \f_{q+i}\pi_{p,w}\f_{q}(\E) = 0.
\]
In this case the exact sequence of \cite[Lemma 5.6]{Boardman}, \cite[Lemma 7.2]{Voevodsky:open}
\[
0 
\to \f_{q}\pi_{p,w}(\KW/2^{n})/\f_{q+1}\pi_{p,w}(\KW/2^{n})
\to E^{\infty}_{p,q,w}(\KW/2^{n})
\to \bigcap_{i\geq 1}\f_{q+i}\pi_{p-1,w}\f_{q+1}(\KW/2^{n}) \to \]
\[
\to \bigcap_{i\geq 0}\f_{q+i}\pi_{p-1,w}\f_{q}(\KW/2^{n}),
\]
yields the short exact sequence
\begin{equation}
\label{equation:sesE00}
0 \to \f_{q+1}\pi_{p,w}(\KW/2^{n}) \to \f_{q}\pi_{p,w}(\KW/2^{n}) \to E^{\infty}_{p,q,w}(\KW/2^{n}) \to 0.
\end{equation}

\begin{lemma}
\label{lem:E/n-vanish}
For all integers $p,w,q\in\Z$, 
assume $\pi_{p,w}\f_{q}(\E)$ is a finite abelian group and  
\[
\bigcap_{i \geq 0} \f_{q+i}\pi_{p,w}\f_{q}(\E) = 0.
\]
Then for $n\geq 2$ we have
\[
\bigcap_{i \geq 0} \f_{q+i}\pi_{p,w}\f_{q}(\E/n) = 0.
\]
\end{lemma}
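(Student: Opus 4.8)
The statement is a finite-coefficient reduction: knowing that the integral effective tower has vanishing $\bigcap_i \f_{q+i}\pi_{p,w}\f_q(\E)$ for all tridegrees (equivalently, strong convergence of the integral slice spectral sequences in the sense described in \aref{lem:sc-convergence}), together with finiteness of each $\pi_{p,w}\f_q(\E)$, we want the same vanishing for the mod $n$ reduction $\E/n$. First I would fix a tridegree $(p,w,q)$ and set $P_m := \f_{q+m}\pi_{p,w}\f_q(\E)$ and $P_m' := \f_{q+m}\pi_{p,w}\f_q(\E/n)$ — these are decreasing filtrations of $\pi_{p,w}\f_q(\E)$ and $\pi_{p,w}\f_q(\E/n)$ respectively. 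The idea is to relate the two filtrations through the cofiber sequence $\f_q(\E)\xrightarrow{n}\f_q(\E)\to \f_q(\E/n)\to \Sigma^{1,0}\f_q(\E)$, which is obtained by smashing the Moore cofiber sequence with the effective cover functor (the effective cover commutes with $-\wedge \mathrm{Moore}$ up to the relevant bookkeeping, since $\f_q$ is a localization and the Moore spectrum is a finite cell spectrum).

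\textbf{Key steps.} The plan is: (1) From the long exact sequence on $\pi_{\ast,w}$ associated to $\f_{q+m}(\E)\xrightarrow{n}\f_{q+m}(\E)\to \f_{q+m}(\E/n)$, together with the map of these sequences to the one for $m=0$, deduce that $P_m'$ sits in an extension whose outer terms are controlled by $P_m$ and $P_m$ in degree $p-1$. Concretely, the image of $\pi_{p,w}\f_{q+m}(\E/n)$ in $\pi_{p,w}\f_q(\E/n)$ is squeezed between (the mod $n$ reduction of $P_m$) and (the $n$-torsion of $\f_{q+m}\pi_{p-1,w}\f_q(\E)$): there is a short exact sequence $0\to P_m/nP_m \to P_m' \to {}_n(\f_{q+m}\pi_{p-1,w}\f_q(\E))\to 0$, at least after passing to the associated images inside $\pi_{p,w}\f_q(\E/n)$. (2) Take the intersection over $m\geq 0$. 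The left term gives $\bigcap_m P_m/nP_m$; the right term gives $\bigcap_m {}_n P_m^{(p-1)}$ where $P_m^{(p-1)}:=\f_{q+m}\pi_{p-1,w}\f_q(\E)$. (3) Invoke the hypothesis: $\bigcap_m P_m = 0$ and $\bigcap_m P_m^{(p-1)}=0$. Using finiteness of $\pi_{p,w}\f_q(\E)$ and $\pi_{p-1,w}\f_q(\E)$, a decreasing filtration of a finite group with zero intersection is \emph{eventually} zero, i.e. $P_m=0=P_m^{(p-1)}$ for $m\gg 0$. Hence $P_m'=0$ for $m\gg 0$ as well, so $\bigcap_m P_m'=0$. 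Note the hypothesis $n\geq 2$ is only needed so that the Moore spectrum $S/n$ exists in the usual sense; finiteness is what makes "zero intersection" upgrade to "eventually zero," which is the real engine.

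\textbf{Main obstacle.} The delicate point is step (1): making precise the exactness that lets one sandwich $P_m'$ between a quotient and a torsion subgroup coming from the integral filtration. One must be careful that "$\f_{q+m}\pi_{p,w}$ of the cofiber" is not literally the cofiber of the $\f_{q+m}\pi_{p,w}$'s — there are connecting maps and one has to chase the commuting ladder comparing the $(q+m)$-level tower to the $q$-level tower, using that all maps in sight are maps of exact couples. An alternative, perhaps cleaner, route avoids this entirely: since finiteness already forces the integral filtration $\{P_m\}$ to stabilize at $0$, one has $\f_{q+M}\pi_{p,w}\f_q(\E)=0$ and $\f_{q+M}\pi_{p-1,w}\f_q(\E)=0$ for some $M=M(p,w,q)$; then the five-lemma applied to the ladder of long exact sequences for $\f_{q+M}(\E)\xrightarrow{n}\f_{q+M}(\E)\to\f_{q+M}(\E/n)$ mapping to the $q$-level version shows directly that $\pi_{p,w}\f_{q+M}(\E/n)\to\pi_{p,w}\f_q(\E/n)$ is zero, i.e. $P_M'=0$. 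This is the version I would actually write up, as it sidesteps the extension bookkeeping and isolates the one genuinely necessary input, namely "eventually zero" for finite groups.
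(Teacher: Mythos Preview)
Your strategy is the same as the paper's --- reduce via the universal coefficient (Moore) cofiber sequence and exploit finiteness to turn ``intersection is zero'' into ``eventually zero'' --- and the paper's one-line proof is, if anything, terser than yours. So the overall plan is sound.

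There is, however, a genuine gap in the execution. In your alternative route you invoke the five-lemma to conclude that if both outer vertical maps in the ladder of universal coefficient sequences
\[
\begin{tikzcd}
0 \ar[r] & \pi_{p,w}\f_{q+M}(\E)/n \ar[r]\ar[d,"0"] & \pi_{p,w}\f_{q+M}(\E/n) \ar[r]\ar[d] & {}_{n}\pi_{p-1,w}\f_{q+M}(\E) \ar[r]\ar[d,"0"] & 0 \\
0 \ar[r] & \pi_{p,w}\f_{q}(\E)/n \ar[r] & \pi_{p,w}\f_{q}(\E/n) \ar[r] & {}_{n}\pi_{p-1,w}\f_{q}(\E) \ar[r] & 0
\end{tikzcd}
\]
are zero, then so is the middle one. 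The five-lemma does not say this, and indeed it is false in general: when the outer maps vanish, the middle map factors as a (possibly nonzero) map ${}_{n}\pi_{p-1,w}\f_{q+M}(\E)\to \pi_{p,w}\f_{q}(\E)/n$. The same issue undermines your claimed short exact sequence for $P_m'$ in step (1): one only gets $P_m'\cap\bigl(\pi_{p,w}\f_q(\E)/n\bigr)\supseteq \overline{P_m}$, not equality.

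The fix is a two-step factorization, which your setup already makes available since the hypotheses hold at \emph{every} level $q$. Choose $N$ so that $\f_{q+N}(\E)\to\f_q(\E)$ is zero on both $\pi_{p,w}$ and $\pi_{p-1,w}$, then choose $N'$ so that $\f_{q+N+N'}(\E)\to\f_{q+N}(\E)$ is zero on $\pi_{p-1,w}$. The second choice forces the image of $\pi_{p,w}\f_{q+N+N'}(\E/n)\to\pi_{p,w}\f_{q+N}(\E/n)$ to land in the subgroup $\pi_{p,w}\f_{q+N}(\E)/n$; the first choice then kills that subgroup under the map to $\pi_{p,w}\f_q(\E/n)$. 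Hence $P'_{N+N'}=0$, and the intersection vanishes. This is the step that actually replaces the five-lemma, and with it your write-up goes through.
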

\begin{proof}
Follows from the universal coefficient sequence since $\f_{q+i}\pi_{p,w}/n(\E)$ and ${}_n \f_{q+i}\pi_{p,w}(\E)$ are finite.
\end{proof}

\begin{lemma}
\label{lem:E00terms1}
If $\# F^{\times}/2 <\infty$ then the slice spectral sequence for $\KW/2^{n}$ converges.
\end{lemma}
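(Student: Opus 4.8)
The plan is to verify, for all $p,w,q\in\Z$, the identity $\bigcap_{i\geq 0}\f_{q+i}\pi_{p,w}\f_q(\KW/2^{n})=0$, which by definition says that the slice spectral sequence for $\KW/2^{n}$ converges. Two consequences of the hypothesis $\#F^{\times}/2<\infty$ drive the argument: all the homotopy groups in sight turn out to be finite, and the fundamental-ideal filtration on the Witt ring $W(F)$ is Hausdorff. I would first reduce to $n=1$. Smashing $\KW$ with the cofiber sequences of Moore spectra $\One/2\to\One/2^{n}\to\One/2^{n-1}$ and applying the triangulated functor $\f_q(-)$ yields long exact sequences on $\pi_{p,w}$ compatible with the tower maps $\f_{q+i}\to\f_q$; granting the finiteness established below, a descending chain of subgroups of a finite group stabilizes, so the argument of \aref{lem:E/n-vanish} shows that if the outer terms of such a sequence have vanishing $\bigcap_i\im$ then so does the middle term, and an induction on $n$ reduces everything to $n=1$.

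For the finiteness: by \eqref{equation:wittheoryslices} and the vanishing of $2$ on $\MZ/2$, the spectrum $\s_q(\KW/2^{n})$ is a wedge of shifts of $\MZ/2$, all of weight $q$, so $\pi_{p,w}\s_q(\KW/2^{n})$ is a finite direct sum of mod $2$ motivic cohomology groups $h^{a,b}$ of $F$; each such group is finite because $\#F^{\times}/2<\infty$ (in the range $0\leq a\leq b$ it equals $H^{a}_{\et}(F;\Z/2)$, a subquotient of $(F^{\times}/2)^{\otimes a}$, and it vanishes otherwise), and $\pi_{p,w}\s_q(\KW/2^{n})=0$ whenever $q<w$ for weight reasons. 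Fixing $w$, the coeffective cover $\f^{q-1}(\KW/2^{n})$ then has only the finitely many visible slices $\s_w,\dots,\s_{q-1}$ in weight $w$, so $\pi_{*,w}\f^{q-1}(\KW/2^{n})$ is finite; and $\pi_{p,w}(\KW/2^{n})$ is finite because $\pi_{p,w}\KW$ is a finitely generated abelian group — by the Bott and $\eta$ periodicities it is, up to isomorphism, either $0$ or the Witt group $W(F)$, and $W(F)$ is generated by the classes $\langle a\rangle$ with $a\in F^{\times}/(F^{\times})^{2}$, a finite set. The defining cofiber sequence $\f_q(\KW/2^{n})\to\KW/2^{n}\to\f^{q-1}(\KW/2^{n})$ now presents $\pi_{p,w}\f_q(\KW/2^{n})$ as an extension of a subgroup of $\pi_{p,w}(\KW/2^{n})$ by a quotient of $\pi_{p+1,w}\f^{q-1}(\KW/2^{n})$, hence finite.

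Given this, the tower $\{\pi_{p,w}\f_q(\KW/2^{n})\}_q$ is Mittag--Leffler, so for each $q$ the descending chain $\{\f_{q+i}\pi_{p,w}\f_q(\KW/2^{n})\}_i$ stabilizes, and it remains to identify the stable value with $0$. Pushing it forward along $\pi_{p,w}\f_q(\KW/2^{n})\to\pi_{p,w}(\KW/2^{n})$ places its image inside $\bigcap_q\f_q\pi_{p,w}(\KW/2^{n})$, while the kernel of this map is a quotient of the finite group $\pi_{p+1,w}\f^{q-1}(\KW/2^{n})$. Thus it suffices to check that the slice filtration on $\pi_{p,w}(\KW/2^{n})$ is Hausdorff, together with the corresponding assertion on the finite coeffective layers. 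For the former one invokes the identification, established over fields in \cite{slices}, of the slice filtration on $\pi_{*,*}\KW$ with the fundamental-ideal filtration $\{I^{m}(F)\}_m$ on $W(F)$ in the bidegrees where it is $W(F)$: this filtration is Hausdorff by the Arason--Pfister Hauptsatz, and since $W(F)$ is finitely generated — and signatures of $m$-fold Pfister forms are divisible by $2^{m}$ — the induced filtrations on $W(F)/2^{n}$ and on ${}_{2^{n}}W(F)$ have finite length, hence are complete and Hausdorff, which is exactly the required vanishing.

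The main obstacle is this last step: transporting the slice filtration to the $I$-adic filtration of $W(F)$ and then checking, bidegree by bidegree, that the two-torsion and the finitely many coeffective contributions created by the mod $2^{n}$ reduction do not add an extra inverse-limit term. Everything in the finiteness step is routine once $\#F^{\times}/2<\infty$ is used; the substantive ingredient is the arithmetic of quadratic forms governing $W(F)$, which is the reason the statement needs a finiteness hypothesis on $F$ at all.
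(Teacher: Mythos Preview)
Your argument has a genuine gap at the decisive final step. You correctly establish that every $\pi_{p,w}\f_q(\KW/2^{n})$ is finite, and you correctly observe that the stable image $\bigcap_i\f_{q+i}\pi_{p,w}\f_q(\KW/2^{n})$ lands, under the map $\pi_{p,w}\f_q(\KW/2^{n})\to\pi_{p,w}(\KW/2^{n})$, inside $\bigcap_q\f_q\pi_{p,w}(\KW/2^{n})$. But even granting that the latter is zero, this only shows the stable image lies in $K_q:=\ker\bigl(\pi_{p,w}\f_q(\KW/2^{n})\to\pi_{p,w}(\KW/2^{n})\bigr)$, not that it vanishes. Your phrase ``together with the corresponding assertion on the finite coeffective layers'' is a placeholder, not an argument: there is no evident statement about $\f^{q-1}$ that forces an element of $K_q$ lying in every $\f_{q+i}\pi_{p,w}\f_q$ to be zero. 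Abstractly the implication you need is false: take a tower of finite groups with $A_0=0$ and $A_q=\Z/2$ for $q\geq 1$ with identity transition maps; then the filtration on the ``abutment'' $A_0$ is trivially Hausdorff, yet $\bigcap_i\operatorname{im}(A_{1+i}\to A_1)=\Z/2\neq 0$. So Hausdorffness at the abutment level does not propagate to the required vanishing at every $\f_q$-level, and the Arason--Pfister input you invoke in the last paragraph addresses only the former.

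The paper's proof takes a different and much shorter route. Rather than working with $\KW/2^{n}$, it works with $\KW$: from $\#F^{\times}/2<\infty$ one gets that each $\pi_{p,0}\s_q(\KW)$ is finite and each $\pi_{p,0}\f_q(\KW)$ is finitely generated (the base case $q\leq 0$ being $W(F)$ or $0$), and then \aref{lem:E/n-vanish} is invoked to pass from $\KW$ to $\KW/2^{n}$. The second hypothesis of that lemma, namely $\bigcap_i\f_{q+i}\pi_{p,0}\f_q(\KW)=0$, is what your argument is effectively trying to reprove; in the paper it is supplied by the structural results on the slice tower of $\KW$ over fields established in \cite{slices} (e.g., the transition maps $\pi_{l,0}\f_{q+i}(\KW)\to\pi_{l,0}\f_q(\KW)$ vanish for $l\not\equiv 0\bmod 4$, and the remaining case is controlled by the fundamental-ideal filtration). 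If you want to repair your approach, the cleanest fix is to import exactly this input for $\KW$ and then run your $2$-out-of-$3$ reduction, rather than attempting to deduce convergence for $\KW/2$ from abutment-level Hausdorffness alone.
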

\begin{proof}
By $\eta$-periodicity $\Sigma^{1,1}\KW \simeq \KW$ \cite[Example 2.3]{slices} we may assume $w=0$.
When $q \leq 0$ we have
\[
\pi_{p, 0}\f_{q}(\KW)
=
\pi_{p, 0}(\KW)
=
\begin{cases}
W(F) &  p \equiv 0 \bmod 4 \\
0 & \text{otherwise}.
\end{cases}
\]
By the assumption $\pi_{p,0}\s_{q}(\KW)$ and $\pi_{p,0}\f_{q}(\KW)$ are finitely generated abelian groups for all $p,q\in\Z$.
We conclude using \aref{lem:E/n-vanish}.
\end{proof}

\begin{lemma}
\label{lem:E00terms2}
If $\vcd(F)<\infty$ then the slice spectral sequence for $\KW/2^{n}$ converges.
\end{lemma}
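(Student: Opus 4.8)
The plan is to run the argument of \aref{lem:E00terms1}, but with its finite-generation input --- unavailable here, since e.g.~$W(\Q)$ is not finitely generated, so that \aref{lem:E/n-vanish} does not apply directly --- replaced by the completeness and Hausdorff properties of the fundamental ideal filtration that a finite virtual cohomological dimension provides.

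First I would use the $\eta$-periodicity $\Sigma^{1,1}\KW\simeq\KW$ \cite[Example~2.3]{slices} to reduce to weight $w=0$. There $\pi_{p,0}\KW=W(F)$ for $p\equiv 0\bmod 4$ and $\pi_{p,0}\KW=0$ otherwise, so the universal coefficient sequence expresses $\pi_{p,0}(\KW/2^{n})$ in terms of $W(F)/2^{n}$ and ${}_{2^{n}}W(F)$. Just as in \aref{lem:E00terms1}, for $q\leq 0$ the coeffective cover $\f^{q-1}(\KW)$ is assembled from the slices $\s_{q'}(\KW)$ with $q'\leq q-1<0$, each of which has trivial homotopy in weight $0$ because motivic cohomology vanishes in negative weights; hence $\pi_{p,0}\f_{q}(\KW)\to\pi_{p,0}\KW$ is an isomorphism, and the same holds after reduction mod $2^{n}$.

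Next I would identify the slice filtration on $\pi_{*,0}(\KW/2^{n})$ with a reindexing of the fundamental ideal filtration. From the description \eqref{equation:wittheoryslices} of $\s_{q}(\KW)$, the formulas for the slice $\dd^{1}$-differentials in \cite{slices}, Voevodsky's proof of the Milnor conjecture, and the form of Milnor's conjecture for quadratic forms established in \aref{theorem:milnor-ofs}, the filtration $\f_{\bullet}\pi_{p,0}\KW$ agrees, up to a shift, with the filtration of $W(F)$ by the powers $I^{q}(F)$; passing to mod $2^{n}$ coefficients and to $2^{n}$-torsion yields the filtrations of $W(F)/2^{n}$ by $I^{q}(F)/2^{n}$ and of ${}_{2^{n}}W(F)$ by ${}_{2^{n}}I^{q}(F)$. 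The convergence criterion $\bigcap_{i\geq 0}\f_{q+i}\pi_{p,0}\f_{q}(\KW/2^{n})=0$ then becomes, for $q\leq 0$, the Hausdorffness of these filtrations; for $q>0$ it follows from the $q\leq 0$ case, since the kernel of $\pi_{p,0}\f_{q}(\KW)\to\pi_{p,0}\KW$ is a quotient of $\pi_{p+1,0}\f^{q-1}(\KW)$, built from finitely many slices and killed at a finite stage of the slice tower.

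The crux, and the main obstacle, is thus checking that the filtrations of $W(F)/2^{n}$ by $I^{q}(F)/2^{n}$ and of ${}_{2^{n}}W(F)$ by ${}_{2^{n}}I^{q}(F)$ are exhaustive, Hausdorff, and complete. Exhaustiveness is clear and Hausdorffness would follow from the Arason--Pfister Hauptsatz $\bigcap_{q}I^{q}(F)=0$ together with $2^{n}W(F)\subseteq I^{n}(F)$; the delicate point is completeness, and there the hypothesis $\vcd(F)<\infty$ is essential. When $F$ is formally real one has $I^{q}(F)\neq0$ for all $q$, so convergence cannot be gotten from the filtration eventually vanishing, as it would if $\cd_{2}(F)<\infty$; instead one must exploit that $I^{q}(F)/I^{q+1}(F)\cong k^{\Mil}_{q}(F)/2$ stabilizes in high weight for fields of finite virtual cohomological dimension, so that the fundamental-ideal and $2$-adic topologies on $W(F)$ coincide after reduction mod $2^{n}$ and these mod $2^{n}$ and torsion filtrations have finite length. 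This is the input that fails in infinite virtual cohomological dimension and distinguishes \aref{lem:E00terms2} from the formal behavior of the slice tower.
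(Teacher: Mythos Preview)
Your outline has the right endpoint --- the hypothesis $\vcd(F)<\infty$ is used to make the $I$-adic filtration on $W(F)/2^{n}$ and on ${}_{2^{n}}W(F)$ finite in length --- but two of the steps you rely on are not available at this point in the argument, and one of them is the actual content of the lemma.

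First, the identification of $\f_{q}\pi_{p,0}(\KW/2^{n})$ with the images of $I^{q}$ in $W(F)/2^{n}$ and with ${}_{2^{n}}I^{q}$ is \aref{theorem:convergence1}, which is proved \emph{after} \aref{lem:E00terms2} and by essentially the same diagram chases. What is known from \cite{slices} is the integral identification $\f_{q}\pi_{0,0}(\KW)\cong I^{q}$; passing to $\KW/2^{n}$ is not formal, because the universal coefficient sequence
\[
0\to \pi_{p,0}\f_{q}(\KW)/2^{n}\to \pi_{p,0}\f_{q}(\KW/2^{n})\to {}_{2^{n}}\pi_{p-1,0}\f_{q}(\KW)\to 0
\]
has a nontrivial right-hand term, and the groups $\pi_{p-1,0}\f_{q}(\KW)$ for $p\equiv 1,2,3\bmod 4$ are nonzero direct sums of $h^{\ast,\ast}$'s (see \cite[Corollary~6.15]{slices}). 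The paper's proof handles exactly this: it shows the key isomorphism \eqref{equation:mod2imageisomorphism} by combining the vanishing of the maps $\pi_{l,0}\f_{q+i}(\KW)\to\pi_{l,0}\f_{q}(\KW)$ for $l\not\equiv 0\bmod 4$ \cite[Lemma~6.13]{slices} with the split exact sequence of \cite[Corollary~6.16]{slices}, and only then invokes $I^{q+1}=2I^{q}$ torsion-free for $q\gg 0$. Your reference to \aref{theorem:milnor-ofs} is also misplaced: that result is for $\OFS$ and is proved later, using the machinery you are trying to establish.

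Second, your reduction from $q>0$ to $q\le 0$ does not go through. You observe that $\ker\bigl(\pi_{p,0}\f_{q}(\KW/2^{n})\to\pi_{p,0}(\KW/2^{n})\bigr)$ is hit by $\pi_{p+1,0}\f^{q-1}(\KW/2^{n})$, built from finitely many slices; but nothing prevents an element of this kernel from also lying in $\im\bigl(\pi_{p,0}\f_{q+i}(\KW/2^{n})\to\pi_{p,0}\f_{q}(\KW/2^{n})\bigr)$ for all $i$. Being in the image of a low slice boundary says nothing about liftability to high effective covers. The paper avoids this issue entirely by working with the images in $\pi_{p-1,0}\f_{q}(\KW)/2^{n}$ via \eqref{equation:mod2imageisomorphism}, where the integral structure from \cite{slices} gives direct control.
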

\begin{proof}
As in \aref{lem:E00terms1} we may assume $w = 0$ and prove the vanishing
\begin{equation}
\label{equation:vanishing}
\bigcap_{i\geq 1}\f_{q+i}\pi_{p-1,0}\f_{q+1}(\KW/2^{n})=0,
\end{equation}
where $\f_{q+i}\pi_{p-1,0}\f_{q+1}(\KW/2^{n})$ is defined as the image 
$$
\im(\pi_{p-1,0}\f_{q+i}\f_{q+1}(\KW/2^{n})\to\pi_{p-1,0}\f_{q+1}(\KW/2^{n})).
$$
Owing to the natural isomorphism $\f_{m}\f_{n}\simeq \f_{m}$ for $n<m$ we may identify the latter with 
$$
\im(\pi_{p-1,0}\f_{q+i}(\KW/2^{n})\to\pi_{p-1,0}\f_{q+1}(\KW/2^{n})),
i\geq 1.
$$
Since $\pi_{p,0}\KW=0$ when $p\not\equiv 0\bmod 4$, 
we may assume $p\equiv 0,1\bmod 4$.
We claim there is an isomorphism
\begin{equation}
\label{equation:mod2imageisomorphism}
\im(\pi_{p-1,0}(\f_{q+i}(\KW) \to \f_{q}(\KW/2^{n})))
\cong 
\im(\pi_{p-1,0}(\f_{q+i}(\KW/2^{n}) \to \f_{q}(\KW/2^{n}))).
\end{equation}
To prove \eqref{equation:mod2imageisomorphism} we use the naturally induced commutative diagram of universal coefficient short exact sequences
\begin{equation}
\label{equation:mod2diagrams}
\begin{tikzcd}
0 \ar[r] & \pi_{p-1,0}\f_{q+i}(\KW)/2^{n} \ar[r]\ar[d] & \pi_{p-1,0}\f_{q+i}(\KW/2^{n}) \ar[r]\ar[d] &  {}_{2^{n}}\pi_{p-2,0}\f_{q+i}(\KW) \ar[r]\ar[d]& 0 \\
0 \ar[r] & \pi_{p-1,0}\f_{q}(\KW)/2^{n} \ar[r] & \pi_{p-1,0}\f_{q}(\KW/2^{n}) \ar[r] & {}_{2^{n}}\pi_{p-2,0}\f_{q}(\KW) \ar[r] & 0.
\end{tikzcd}
\end{equation}
According to \cite[Lemma 6.13]{slices} the map 
\begin{equation}
\label{equation:lem163}
\pi_{l,0}\f_{q + i}(\KW) \to \pi_{l,0}\f_{q}(\KW)
\end{equation}
is trivial for $l \equiv 1, 2, 3 \bmod 4$.
Hence the rightmost vertical map in \eqref{equation:mod2diagrams} is trivial.
By \cite[Corollary 6.15]{slices} there are isomorphisms 
\begin{equation*}
{}_{2^{n}}\pi_{m,0}\f_{q}(\KW)
\cong
\pi_{m,0}\f_{q}(\KW)
\cong
\pi_{m,0}\f_{q}(\KW)/2^{n}
\cong
h^{q-i,q} \oplus h^{q-i-4,q} \oplus \cdots, 
\end{equation*}
for $m\equiv i \bmod 4$, $q\geq 0$, and $i=1,2,3$.
When $q \geq 1$, 
\cite[Corollary 6.16]{slices} shows there is a naturally split short exact sequence
\begin{equation}
0 \to h^{q-4,q-1} \oplus h^{q-8,q-1} \oplus \cdots \to \pi_{0,0}\f_{q}(\KW) \to \f_{q}\pi_{0,0}(\KW) = I^{q} \to 0.
\label{equation:split-effective-KW}
\end{equation}
This identifies the outer terms of the short exact sequences in \eqref{equation:mod2diagrams}.
The naturally induced diagram
\begin{equation}
\label{equation:slicemod2diagram}
\begin{tikzcd}
\s_{q+i-1}(\KW/2^{n}) \ar[r]\ar[d] &
\Sigma^{1,0}\f_{q+i}(\KW/2^{n}) \ar[d] & \\
\s_{q+i-1}(\Sigma^{1,0}\KW) \ar[r] &
\Sigma^{1,0}\f_{q+i}(\Sigma^{1,0}\KW)
\end{tikzcd}
\end{equation}
in $\SH(F)$ yields the commutative diagram
\begin{equation}
\label{equation:homotopyslicemod2diagram}
\begin{tikzcd}
\pi_{p,0}\s_{q+i-1}(\KW/2^{n}) \ar[r]\ar[d] &
\pi_{p-1,0}\f_{q+i}(\KW/2^{n}) \ar[d] & \\
\pi_{p-1,0}\s_{q+i-1}(\KW) \ar[r] &
\pi_{p-2,0}\f_{q+i}(\KW). 
\end{tikzcd}
\end{equation}
In \eqref{equation:homotopyslicemod2diagram} the left vertical map is a split surjection by \eqref{equation:wittheoryslices} and \eqref{equation:KW/2slices}.
Moreover, 
the lower horizontal map is surjective for $p\equiv 0,1,3\bmod 4$ by \eqref{equation:lem163}.
It follows that 
\begin{equation}
\label{equation:homotopyslicemod2surjection}
\pi_{p,0}\s_{q+i-1}(\KW/2^{n}) \to \pi_{p-2,0}\f_{q+i}(\KW) 
\end{equation}
is surjective for $p\equiv 0,1,3\bmod 4$.
Since $\f_{q+i}(\KW/2^{n})\to\f_{q}(\KW/2^{n})$ factors through $\f_{q+i}(\KW/2^{n})\to\f_{q+i-1}(\KW/2^{n})$ for all $i\geq 1$, 
the image of the upper horizontal map in \eqref{equation:homotopyslicemod2diagram} injects into the kernel of the middle map in \eqref{equation:mod2diagrams},
i.e., 
\begin{equation}
\label{equation:mod2inclusion}
\im(\pi_{p,0}\s_{q+i-1}(\KW/2^{n}) \to \pi_{p-1,0}\f_{q+i}(\KW/2^{n}))
\subseteq
\ker(\pi_{p-1,0}\f_{q+i}(\KW/2^{n})\to\pi_{p-1,0}\f_{q}(\KW/2^{n})).
\end{equation}

From \eqref{equation:homotopyslicemod2surjection} and \eqref{equation:mod2inclusion} we deduce a naturally induced surjection
\begin{equation}
\label{equation:mod2surjectioninclusion}
\ker(\pi_{p-1,0}\f_{q+i}(\KW/2^{n})\to\pi_{p-1,0}\f_{q}(\KW/2^{n}))
\to
{}_{2^{n}}\pi_{p-2,0}\f_{q+i}(\KW)
\cong \pi_{p-2,0}\f_{q+i}(\KW).
\end{equation}
Combined with \eqref{equation:mod2diagrams} this proves \eqref{equation:mod2imageisomorphism}.
Note that $\f_{q+i}(\KW) \to \f_{q}(\KW/2^{n})$ factors as the composite of the canonical maps $\f_{q+i}(\KW) \to \f_{q}(\KW)$ and $\f_{q}(\KW) \to \f_{q}(\KW/2^{n})$.
Using \eqref{equation:lem163} this readily implies \eqref{equation:vanishing} for $p \equiv 0\bmod 4$. %

For $p\equiv 1\bmod 4$ we show
\begin{equation}
\label{equation:kw-p=1}
\bigcap_{i \geq 1} \im(\pi_{p-1,0}\f_{q + i}(\KW) \to \pi_{p-1,0}\f_{q}(\KW)/2^{n}) = 0.
\end{equation}
To begin we first note there is a short exact sequence
\begin{align*}
0 \to \f_{q+i}\pi_{0,0}\f_{q}(\KW) \bigcap 2^{n} \pi_{0,0}\f_{q} (\KW)
&\to \f_{q+i}\pi_{0,0}\f_{q}(\KW) \\
& \to \im(\f_{q+i}\pi_{0,0}\f_{q}(\KW) \to \pi_{0,0}\f_{q}(\KW)/2^{n})
\to 
0.
\end{align*}
For $i \gg 0$ we claim 
\begin{equation}
\label{equation:id-final}
\f_{q+i}\pi_{0,0}\f_{q}(\KW) \bigcap 2^{n} \pi_{0,0}\f_{q}(\KW)
\to \f_{q+i}\pi_{0,0}\f_{q}(\KW)
\end{equation}
is the identity map.
Hence the Milnor exact sequence implies the vanishing in \eqref{equation:kw-p=1}.
Now the leftmost terms in \eqref{equation:split-effective-KW} for $\f_{q+i}(\KW)$ map trivially to \eqref{equation:split-effective-KW} for $\f_{q}(\KW)$.
Thus the image of $\pi_{0,0}\f_{q + i}(\KW)$ in $\pi_{0,0}\f_{q}(\KW)$ is contained in the direct summand $I^{q}$.
From \cite[Lemma 2.1]{AE} we get $I^{i+1} = 2I^{i}$, 
where $I^{i}$ is torsion free for $i \gg 0$ (here we use the assumption $\vcd(F) < \infty$, see also the proof of \aref{theorem:KWmod2nconvergence}).
Hence for $i \gg 0$ the image of $\pi_{0,0}\f_{q + i}(\KW)$ in $\pi_{0,0}\f_{q}(\KW)$ is a multiple of $2^{n}$.
\end{proof}

\begin{theorem}
\label{theorem:convergence1}
For $p\equiv 0\bmod 4$ and $w\in\Z$ there are isomorphisms
\begin{align}
\f_{q} \pi_{p+w,w}(\KW/2^{n}) &\cong \im (I^{q-w} \to W(F)/2^{n}) \label{equation:KW2-filt1}, \\
\f_{q} \pi_{p+w+1,w}(\KW/2^{n}) &\cong {}_{2^{n}} I^{q-w}  \label{equation:KW2-filt2}.
\end{align}
By convention $I^{q-w} = W(F)$ for $q\leq w$.
\end{theorem}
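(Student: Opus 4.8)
The plan is to reduce to weight zero, dispose of the range $q\le 0$ by hand, and for $q\ge 1$ transport the integral computation of the effective covers of $\KW$ from \cite{slices} through the universal coefficient sequences, using the convergence secured in \aref{lem:E00terms1} and \aref{lem:E00terms2}. \emph{Step 1 (reduction).} Since $\Sigma^{1,1}\KW\simeq\KW$ one has $\f_q(\KW)\simeq\Sigma^{w,w}\f_{q-w}(\KW)$, hence $\f_q\pi_{a,b}(\KW/2^n)=\f_{q-w}\pi_{a-w,b-w}(\KW/2^n)$; applied to $(a,b)=(p+w,w)$ and $(a,b)=(p+w+1,w)$ this reduces \eqref{equation:KW2-filt1} and \eqref{equation:KW2-filt2} to the two statements $\f_q\pi_{p,0}(\KW/2^n)\cong\im(I^q\to W(F)/2^n)$ for $p\equiv 0\bmod 4$ and $\f_q\pi_{p,0}(\KW/2^n)\cong{}_{2^n}I^q$ for $p\equiv 1\bmod 4$, $q\in\Z$. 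Combining $\Sigma^{1,1}\KW\simeq\KW$ with Bott periodicity $\Sigma^{8,4}\KW\simeq\KW$ also gives $\Sigma^{4,0}\KW\simeq\KW$, so $\pi_{p,0}\f_q(\KW)$ and the structure map to $\pi_{p,0}\KW$ depend only on $p$ modulo $4$; in particular \eqref{equation:split-effective-KW} holds for every $p\equiv 0\bmod 4$. \emph{Step 2 ($q\le 0$).} Here $\KW$ is effective, so $\f_q(\KW)\simeq\KW$, $\f_q(\KW/2^n)\simeq\KW/2^n$, and $\f_q\pi_{p,0}(\KW/2^n)=\pi_{p,0}(\KW/2^n)$; the universal coefficient sequence and $\pi_{*,0}\KW$ (namely $W(F)$ in degrees $\equiv 0\bmod 4$, zero otherwise) give $W(F)/2^n$ resp.\ ${}_{2^n}W(F)$, which is the claim under the convention $I^q=W(F)$.

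\emph{Step 3 ($q\ge 1$).} Since $\f_q$ is triangulated, $\f_q(\KW/2^n)\simeq\f_q(\KW)/2^n$, so there are universal coefficient sequences
\[
0\to\pi_{p,0}\f_q(\KW)/2^n\to\pi_{p,0}\f_q(\KW/2^n)\to{}_{2^n}\pi_{p-1,0}\f_q(\KW)\to 0
\]
mapping compatibly, via $\f_q(\KW)\to\KW$, to the analogous sequence for $\KW$. When $p\equiv 1\bmod 4$ one has $\pi_{p,0}\KW=0$, so the left-hand term dies in $\pi_{p,0}(\KW/2^n)$ and $\f_q\pi_{p,0}(\KW/2^n)$ is carried isomorphically onto the image of ${}_{2^n}\pi_{p-1,0}\f_q(\KW)$ in ${}_{2^n}\pi_{p-1,0}\KW={}_{2^n}W(F)$; as \eqref{equation:split-effective-KW} exhibits $\pi_{p-1,0}\f_q(\KW)$ (with $p-1\equiv 0\bmod 4$) as a direct sum of $I^q$ and a $2$-torsion group mapping to zero in $W(F)$, this image is exactly ${}_{2^n}I^q$. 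When $p\equiv 0\bmod 4$ the left-hand term gives $\f_q\pi_{p,0}(\KW/2^n)\supseteq\im(I^q\to W(F)/2^n)$ at once; for the reverse inclusion I would pass to the coeffective cover, writing $\f_q\pi_{p,0}(\KW/2^n)=\ker\bigl(W(F)/2^n\to\pi_{p,0}\f^{q-1}(\KW/2^n)\bigr)$, identify $\pi_{p,0}\f^{q-1}(\KW)$ via the exact sequence $0\to W(F)/I^q\to\pi_{p,0}\f^{q-1}(\KW)\to\pi_{p-1,0}\f_q(\KW)\to 0$, and then use the convergence‑driven short exact sequence \eqref{equation:sesE00} together with induction on $q$ (equivalently, check $|E^\infty_{p,q,0}(\KW/2^n)|=|\im(I^q\to W(F)/2^n)\,/\,\im(I^{q+1}\to W(F)/2^n)|$ and invoke that $\{\im(I^\bullet\to W(F)/2^n)\}$ is exhaustive and separated because $I^{q+1}=2I^q$ for $q\gg 0$ by \cite[Lemma 2.1]{AE}).

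\emph{Main obstacle.} The hard point is the last one: the universal coefficient sequences determine $\pi_{p,0}\f_q(\KW/2^n)$ only up to an extension, and a priori the ``Bockstein summand'' ${}_{2^n}\pi_{p-1,0}\f_q(\KW)$ --- which for $p\equiv 0\bmod 4$ is $h^{q-3,q}\oplus h^{q-7,q}\oplus\cdots$ and is genuinely nonzero, for instance when $q=3$ since $h^{0,3}\cong\Z/2$ --- could contribute classes mapping nontrivially into $W(F)/2^n$. Ruling this out amounts to showing that the canonical copy of $W(F)/I^q$ inside $\pi_{p,0}\f^{q-1}(\KW)$ is not $2$-divisible, i.e.\ that the extension $0\to W(F)/I^q\to\pi_{p,0}\f^{q-1}(\KW)\to\pi_{p-1,0}\f_q(\KW)\to 0$ splits off its $2$-torsion quotient. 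This is the substantive input, and I expect it to follow from the \emph{naturality} of the splittings in \eqref{equation:split-effective-KW} (that is, from \cite[Corollaries 6.15, 6.16]{slices}), which reflects that the slice $\dd^1$-differentials of $\KW$ are $\MZ/2$-linear maps of $\MZ/2$-module spectra.
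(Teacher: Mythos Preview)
Your reduction to weight zero, the case $q\le 0$, and the argument for \eqref{equation:KW2-filt2} are correct and match the paper. The gap is exactly where you locate it: for \eqref{equation:KW2-filt1} you must show the Bockstein piece ${}_{2^n}\pi_{-1,0}\f_q(\KW)$ contributes nothing to the image in $W(F)/2^n$, and neither of your proposed fixes closes this cleanly. Your counting route via \eqref{equation:sesE00} requires knowing $E^\infty_{0,q,0}(\KW/2^n)$, which in the paper is only computed in \aref{thm:KW2-E2} and \aref{thm:KW2-E2n}, well after this theorem. Your coeffective-cover route reduces to the injectivity of $(W(F)/I^q)/2^n\to\pi_{0,0}\f^{q-1}(\KW)/2^n$, i.e.\ vanishing of the $\Tor$-connecting map for the extension $0\to W(F)/I^q\to\pi_{0,0}\f^{q-1}(\KW)\to\pi_{-1,0}\f_q(\KW)\to 0$; but the natural splittings of \cite[Corollaries 6.15, 6.16]{slices} concern the \emph{effective} covers $\f_q(\KW)$, and it is not clear they transfer to this coeffective extension.

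The paper's resolution is short and reuses the mechanism already set up in the proof of \aref{lem:E00terms2}. One uses the connecting map $\s_{q-1}(\KW/2^n)\to\Sigma^{1,0}\f_q(\KW/2^n)$ of the slice triangle: on $\pi_{1,0}$ it lands in $\ker\bigl(\pi_{0,0}\f_q(\KW/2^n)\to\pi_{0,0}(\KW/2^n)\bigr)$ because two consecutive maps in a triangle compose to zero, and it \emph{surjects} onto ${}_{2^n}\pi_{-1,0}\f_q(\KW)$ via the Bockstein, because the map $\s_{q-1}(\KW/2^n)\to\s_{q-1}(\Sigma^{1,0}\KW)$ is split surjective on homotopy and $\pi_{0,0}\s_{q-1}(\KW)\to\pi_{-1,0}\f_q(\KW)$ is onto by \cite[Lemma 6.13]{slices}. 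Hence any $x\in\pi_{0,0}\f_q(\KW/2^n)$ differs from an element of $\pi_{0,0}\f_q(\KW)/2^n$ by something in $\ker(\alpha)$, so $\alpha(x)\in\im(I^q\to W(F)/2^n)$ by \cite[Corollary 6.11]{slices}. This is the missing idea; once you see that the slice itself furnishes the correction terms, the obstacle you identified dissolves without any splitting of the coeffective extension or any appeal to the $E^\infty$-page.
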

\begin{proof}
We may assume $p=w=0$ by $(4,0)$- and $(1,1)$-periodicity of $\KW$ \cite[\S6.3]{slices}.
To show \eqref{equation:KW2-filt1} we consider the commutative diagram of universal coefficient short exact sequences
\[
\begin{tikzcd}
  0 \ar[r] & \pi_{0,0}\f_{q}(\KW)/2^{n} \ar[r]\ar[d] & \pi_{0,0}\f_{q}(\KW/2^{n}) \ar[r]\ar[d, "\alpha"] & _{2^{n}}\pi_{-1,0}\f_{q}(\KW) \ar[r]\ar[d]& 0 \\
  0 \ar[r] & \pi_{0,0}(\KW)/2^{n} \ar[r, "\cong"] & \pi_{0,0}(\KW/2^{n}) \ar[r] & _{2^{n}}\pi_{-1,0}(\KW)=0 \ar[r] & 0.
\end{tikzcd}
\]
Recall that $\pi_{0,0}(\KW)$ is the Witt ring $W(F)$.
As in \eqref{equation:homotopyslicemod2surjection} there is a surjection
\begin{equation*}
\pi_{1,0}\s_{q-1}(\KW/2^{n}) 
\to 
\pi_{-1,0}\f_{q}(\KW),
\end{equation*}
and as in \eqref{equation:mod2inclusion} there is a natural inclusion
\begin{equation*}
\im(\pi_{1,0}\s_{q-1}(\KW/2^{n}) \to \pi_{0,0}\f_{q}(\KW/2^{n}))
\subseteq
\ker(\pi_{0,0}\f_{q}(\KW/2^{n}) \to \pi_{0,0}(\KW/2^{n})).
\end{equation*}
Similarly to \eqref{equation:mod2surjectioninclusion} and \eqref{equation:mod2imageisomorphism}, 
we obtain a naturally induced surjection 
\begin{equation*}
\ker(\pi_{0,0}\f_{q}(\KW/2^{n})\to\pi_{0,0}(\KW/2^{n}))
\to
\pi_{-1,0}\f_{q}(\KW),
\end{equation*}
and an isomorphism
\begin{equation*} 
\im(\pi_{0,0}(\f_{q}(\KW/2^{n}) \to \KW/2^{n}))
\cong 
\im(\pi_{0,0}(\f_{q}(\KW) \to \KW/2^{n})).
\end{equation*}
The latter group identifies with $\im(I^{q} \to W(F)/2^{n})$ by \cite[Corollary 6.11]{slices}.

To prove \eqref{equation:KW2-filt2}, 
recall from \cite[Corollary 6.16]{slices} the split short exact sequence
\begin{equation}
0 \to h^{q-4,q-1} \oplus h^{q-8,q-1} \oplus \cdots \to \pi_{0,0}\f_{q}(\KW) \to \f_{q}\pi_{0,0}(\KW) = I^{q} \to 0.
\label{equation:split-effective-KW-2}
\end{equation}
By \cite[Lemma 6.4]{slices} we have $\pi_{0,0}\f_{0}(\KW)\cong\f_{0}\pi_{0,0}(\KW)\cong W(F)$.
The mod $2^{n}$ universal coefficient exact sequence shows there is a commutative diagram with surjective vertical maps
\[
\begin{tikzcd}
\pi_{1,0}\f_{q}(\KW/2^{n}) \ar[r]\ar[d] & \pi_{1,0}(\KW/2^{n}) \ar[d, "\cong"] \\
_{2^{n}} \pi_{0,0}\f_{q}(\KW) \ar[r] & _{2^{n}} \pi_{0,0}(\KW).
\end{tikzcd}
\]
The right vertical map is an isomorphism since $\pi_{1,0}(\KW)=0$.
Thus the image of $\pi_{1,0}\f_{q}(\KW/2^{n})$ in $\pi_{1,0}(\KW/2^{n})$ coincides with $_{2^{n}}\f_{q}\pi_{0,0}(\KW)$, 
and our claim follows from \eqref{equation:split-effective-KW-2}.
\end{proof}

\begin{theorem}
\label{theorem:KWmod2nconvergence}
Assuming $\vcd(F)<\infty$ the filtrations of $W(F)/2^{n}$ by $I^{q}(F)/2^{n}$ and of ${}_{2^{n}} W(F)$ by ${}_{2^{n}} I^{q}(F)$ are exhaustive, Hausdorff, and complete.
Hence the slice spectral sequence for $\KW/2^{n}$ is strongly convergent.
\end{theorem}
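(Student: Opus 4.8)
The approach is to analyze the two filtrations directly and then feed the outcome into the convergence formalism of \aref{sec:slice}. Write $\bar{I}^q$ for the image of $I^q(F)$ in $W(F)/2^n$, with the convention $I^q(F)=W(F)$ for $q\le 0$, so that $\{\bar{I}^q\}_q$ and $\{{}_{2^n}I^q(F)\}_q$ are decreasing filtrations of $W(F)/2^n$ and ${}_{2^n}W(F)$. Exhaustiveness is immediate: for $q\le 0$ one has $\bar{I}^q=W(F)/2^n$ and ${}_{2^n}I^q(F)={}_{2^n}W(F)$, so each filtration equals its ambient group in nonpositive degrees.

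For the Hausdorff and completeness properties I would extract from $\vcd(F)<\infty$ two structural facts about powers of the fundamental ideal. First, by \cite[Lemma 2.1]{AE} there is an integer $N$ with $I^{i+1}(F)=2I^i(F)$ for all $i\ge N$; iterating gives $I^{N+k}(F)=2^kI^N(F)$, so $I^q(F)\subseteq 2^nW(F)$ once $q\ge N+n$, and hence $\bar{I}^q=0$ for $q\gg 0$. Second, the torsion subgroup $W(F)_{\tor}$ has bounded exponent when $\vcd(F)<\infty$; combined with the stabilization $I^{N+k}(F)=2^kI^N(F)$ this shows the torsion subgroup of $I^q(F)$ equals $2^{q-N}$ times that of $I^N(F)$, which vanishes once $q-N$ exceeds that exponent, so $I^q(F)$ is torsion-free and ${}_{2^n}I^q(F)=0$ for $q\gg 0$. (This second point is the one place where finiteness of the virtual cohomological dimension is genuinely used, as flagged in the proof of \aref{lem:E00terms2}.) Thus both filtrations have \emph{finite length}: they equal the ambient group for $q\le 0$ and vanish for $q\gg 0$. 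A finite-length filtration is automatically Hausdorff, with $\bigcap_q F^q=0$, and complete, since $\{M/F^qM\}_q$ stabilizes so that $M\to\lim_q M/F^qM$ is an isomorphism with vanishing $\lim^1$; this proves the first assertion.

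Strong convergence then follows formally. By \aref{lem:E00terms2} the slice spectral sequence for $\KW/2^n$ converges, i.e.\ $\bigcap_{i\ge 0}\f_{q+i}\pi_{p,w}\f_q(\KW/2^n)=0$ for all $p,q,w$, so the short exact sequences \eqref{equation:sesE00} identify $E^\infty_{p,q,w}(\KW/2^n)$ with the successive quotients of the abutment filtration $q\mapsto\f_q\pi_{p,w}(\KW/2^n)$. By \aref{theorem:convergence1}, in the two residue classes of $p-w$ modulo $4$ in which $\pi_{p,w}(\KW/2^n)$ is nonzero this abutment filtration is, up to reindexing, one of the filtrations $\{\bar{I}^{q-w}\}$ or $\{{}_{2^n}I^{q-w}(F)\}$ treated above; in particular it is exhaustive, Hausdorff, complete, and of finite length in each bidegree. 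Together with conditional convergence (\aref{lem:sc-convergence}), \cite[Theorem 7.1]{Boardman} upgrades this to strong convergence, exactly as in \aref{cor:strong-conv}.

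The step I expect to be the main obstacle is not the spectral-sequence formalism --- that is routine once the abutment filtration has been identified through \aref{theorem:convergence1} --- but the two inputs on the powers $I^q(F)$: the stabilization $I^{i+1}(F)=2I^i(F)$ for $i\gg 0$ and, above all, the eventual torsion-freeness of $I^q(F)$. Both belong to the structure theory of Witt rings of fields of finite virtual cohomological dimension and are imported from \cite{AE}; the remaining care is purely bookkeeping, namely matching the reindexing in \aref{theorem:convergence1} across the residue classes $p-w\equiv 0,1\bmod 4$ and checking that its identifications respect the filtration inclusions.
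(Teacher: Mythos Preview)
Your overall architecture matches the paper's exactly: show both filtrations have finite length (hence are exhaustive, Hausdorff, and complete), then invoke \aref{theorem:convergence1} and the Boardman formalism for strong convergence. The paper's proof of the convergence step is terser than yours but identical in content.

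The one point of divergence, and the one gap, is precisely the step you flagged as the crux: the eventual torsion-freeness of $I^q(F)$. You assert that $W(F)_{\tor}$ has bounded exponent when $\vcd(F)<\infty$ and then deduce torsion-freeness from the stabilization $I^{N+k}=2^kI^N$. The bounded-exponent claim is true, but you do not justify it, and the standard arguments for it actually pass \emph{through} torsion-freeness of $I^q$ (for instance, once $I^n(F)$ is torsion-free, $W(F)_{\tor}$ injects into $W(F)/I^n(F)$, which has exponent $2^n$ by the Milnor conjecture). So as written your argument is missing its key input.

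The paper closes this gap differently and more directly: it passes to $F(\sqrt{-1})$, uses the Milnor conjecture on quadratic forms together with the Arason--Pfister Hauptsatz to conclude $I^q(F(\sqrt{-1}))=0$ for $q>\vcd(F)$, and then cites \cite[Corollary~35.27]{EKM}, which gives \emph{both} $I^q(F)=2I^{q-1}(F)$ and torsion-freeness of $I^q(F)$ in one stroke. This bypasses \cite{AE} entirely for the torsion-freeness statement and avoids any appeal to bounded exponent of $W(F)_{\tor}$.
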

\begin{proof}
We claim our assumption implies $I^{q}(F(\sqrt{-1}))=0$ for $q\geq \vcd(F)+1$.
By the Milnor conjecture on quadratic forms over fields \cite{MR2276765}, \cite{slices},
we find $I^{q}(F(\sqrt{-1}))=I^{q+i}(F(\sqrt{-1}))$ for $i\geq 1$ since the \'etale cohomology group $H^{q}_{\et}(F(\sqrt{-1});\mu_{2})=0$.
It follows that $I^{q}(F(\sqrt{-1}))=0$ by the Arason-Pfister Haupsatz \cite{Arason-Pfister}.
Thus by \cite[Corollary 35.27]{EKM} we deduce $I^{q}(F)=2I^{q-1}(F)$ is torsion free for $q \gg 0$.
Hence both filtrations in question are finite, and therefore complete and Hausdorff.
The filtrations are exhaustive since $I^{0} = W(F)$.
Our last claim follows in combination with \aref{theorem:convergence1}.
\end{proof}
\begin{remark}
The filtration of ${}_{2^{n}} W(F)$ by ${}_{2^{n}} I^{q}(F)$ is always Hausdorff.
In the filtration of $W(F)/2^{n}$ by $I^{q}(F)/2^{n}$ there is a possibly nonzero $\lim^1$-term that obstructs the Hausdorff condition.
\end{remark}

The proofs of \aref{lem:E00terms2} and \aref{theorem:convergence1} are based on results shown over fields in \cite[\S6]{slices}. 
In the following we extend these results to rings of $\mathcal{S}$-integers in number fields,
assuming $\{2,\infty\}\subset\mathcal{S}$.

\begin{theorem}
\label{thm:KW-OFS}
Over $\OFS$ the $0$th slice spectral sequence for $\KW$ collapses at its $E^2$-page, 
and there are isomorphisms 
\[
E^{\infty}_{p,q,0}(\KW) 
\cong 
\begin{cases}
h^{q,q} & p \equiv 0 \bmod 4, q \neq 2 \\
h^{2,2}/\tau & p \equiv 0 \bmod 4, q = 2 \\
h^{2,1} & p \equiv 3 \bmod 4, q = 1 \\
0 & \text{otherwise}.
\end{cases}
\]
\end{theorem}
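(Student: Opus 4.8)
The plan is to compute the $E^1$-page of the weight-zero slice spectral sequence for $\KW$ over $\OFS$ from the slice description \eqref{equation:wittheoryslices}, identify the $d^1$-differential with a matrix of motivic Steenrod operations, and show that the resulting complex has the stated cohomology, after which a degree count forces collapse at $E^2$. By $\eta$-periodicity $\Sigma^{1,1}\KW\simeq\KW$ we may work in weight $w=0$. The slices are $\s_q(\KW)\simeq\bigvee_{i\in\Z}\Sigma^{2i+q,q}\MZ/2$, so $\pi_{p,0}\s_q(\KW)=\bigoplus_{i}h^{q-2i-(p-2i),\,q}=\bigoplus_{i}h^{q-p+?,q}$; concretely $\pi_{p,0}\s_q(\KW)\cong\bigoplus_{j\equiv p\bmod 2}h^{j,q}$ when $p\equiv q\bmod 2$ and vanishes otherwise, since $\pi_{p,0}\Sigma^{2i+q,q}\MZ/2=H^{2i+q-p,q}(\OFS;\Z/2)=h^{2i+q-p,q}$. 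Over $\OFS$ the mod $2$ motivic cohomology $h^{j,q}$ is concentrated in degrees $j\in\{0,1,2\}$ (cohomological dimension considerations; see the appendix on motivic cohomology of rings of $\mathcal{S}$-integers), which already makes each $E^1$-column a finite direct sum and in particular finite.

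The $d^1$-differential $\dd^1\colon\s_q(\KW)\to\Sigma^{1,0}\s_{q+1}(\KW)$ is a map between wedges of mod $2$ Eilenberg–MacLane spectra, hence given by a matrix whose entries are stable cohomology operations of the appropriate bidegree, i.e. compositions of $\Sq^1$, $\Sq^2$, and multiplication by the classes $\tau\in h^{0,1}$ and $\rho\in h^{1,1}$ of $\OFS$. The formulas for these differentials are exactly the ones recorded earlier in the paper (the "quintuples of motivic Steenrod operations" referred to in the outline, specialized to $\KW$ over fields in \cite{slices}, and extended to Dedekind domains via \cite{Spitzweck}, \cite{April1}). First I would write out the $d^1$-complex in weight $0$ explicitly: in each fixed total degree $p$ it is a complex of $\Z/2$-vector spaces built from the groups $h^{0,q},h^{1,q},h^{2,q}$ with differentials assembled from $\Sq^1,\Sq^2,\rho,\tau$. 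Using the known structure of the mod $2$ motivic Steenrod algebra acting on $h^{*,*}(\OFS)$ — in particular $\Sq^1$ is the Bockstein, $\Sq^2$ on weight-one and weight-two classes is controlled by cup product with $\tau$ and $\rho$, and the relations among $\rho,\tau$ in low weight recorded in the appendix — I would compute the homology of this complex column by column. The answer should be: in degrees $p\equiv 0\bmod 4$ the surviving class in slice filtration $q$ is all of $h^{q,q}$ for $q\neq 2$ and the quotient $h^{2,2}/\tau$ for $q=2$ (the $\tau$-divisible part being killed by an incoming differential), in degrees $p\equiv 3\bmod 4$ only the class $h^{2,1}$ in filtration $q=1$ survives, and everything else is exact.

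Once the $E^2$-page is identified with the displayed groups, collapse at $E^2$ follows for bidegree reasons: the higher differentials $d^r$ ($r\geq 2$) have degree $(-1,r+1)$ in the $(p,q)$-indexing, so $d^r\colon E^r_{p,q,0}\to E^r_{p-1,q+r,0}$; but the $E^2$-terms are supported only on the two lines $p\equiv 0\bmod 4$ (all $q\geq 0$, with $q=0,1$ contributing $h^{q,q}$ and so on) and $p\equiv 3\bmod 4$ (only $q=1$), and no $d^r$ with $r\geq 2$ can connect a class on one of these lines to a nonzero class on the other — a differential out of the $p\equiv 0$ line lands in $p\equiv 3\bmod 4$ which is nonzero only at $q=1$, and the source at $q\leq 0$ maps to $q\geq r+1\geq 3$, a contradiction, while differentials into the $p\equiv 3$, $q=1$ spot would come from $p\equiv 0\bmod 4$ at $q=1-r<0$ where the $E^2$-term is $h^{q,q}$ with $q<0$, which is $W$-torsion-free Witt-type data that the differential formula kills. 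I would make this last point precise using the convergence results already established — \aref{theorem:KWmod2nconvergence} via \aref{theorem:convergence1} gives strong convergence, and \aref{cor:strong-conv} then lets us read off $E^\infty=E^2$ from the vanishing of higher differentials. The main obstacle I anticipate is the explicit computation of the $d^1$-homology in the $q=2$ column, where the interaction of $\Sq^2$ with the class $\tau$ produces the quotient $h^{2,2}/\tau$ rather than the naive $h^{2,2}$; getting the $\tau$-bookkeeping right, and correctly matching it against the analogous phenomenon over fields in \cite[\S6]{slices}, is the delicate step, and it is also where the restriction to $\OFS$ (as opposed to a general Dedekind domain) enters through the precise low-weight structure of $h^{*,*}$ described in \aref{section:mcatSa}.
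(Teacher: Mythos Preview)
Your overall strategy matches the paper's: reduce to the field computation in \cite[Theorem 6.3]{slices} via base change, identify the $d^1$-differentials as Steenrod operations, compute the $E^2$-page, and observe collapse for degree reasons. But there is a genuine gap in your identification of what is new over $\OFS$.

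The key phenomenon you miss is the Picard group $h^{2,1}\cong\Pic(\OFS)/2$. Over a field this group vanishes, and the field computation gives $E^2_{p,q,0}(\KW)=h^{q,q}$ for $p\equiv 0\bmod 4$ and zero otherwise. Over $\OFS$ the only change to the $E^1$-page is the appearance of an extra summand $h^{2,1}$ in $E^1_{p,1,0}$ for the relevant parities of $p$. The $d^1$-differential on this summand depends on $p\bmod 4$: for one congruence class it is literally multiplication by $\tau\colon h^{2,1}\to h^{2,2}$ (this is the ``$\tau$'' entry in the triple $(\Sq^3\Sq^1,\Sq^2+\rho\Sq^1,\tau)$ from \cite[Theorem 5.3]{slices}), and \aref{lem:pic-tau} shows this map is injective, so $h^{2,1}$ dies and $h^{2,2}$ becomes $h^{2,2}/\tau$ on the $E^2$-page. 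For the other congruence class the differential on $h^{2,1}$ is trivial, and $h^{2,1}$ survives to $E^2$ at $p\equiv 3\bmod 4$, $q=1$. Your explanation that $h^{2,2}/\tau$ arises from ``$\Sq^2$ interacting with $\tau$'' is not the mechanism; you never mention $h^{2,1}$, and you give no account of the surviving $h^{2,1}$ term at $p\equiv 3\bmod 4$.

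Two smaller points. First, your claim that $h^{j,q}(\OFS)$ is concentrated in $j\in\{0,1,2\}$ is false when $F$ has real places: for $3\le j\le q$ one has $h^{j,q}(\OFS)\cong(\Z/2)^{r_1}$ (see \aref{thm:hOFS} and \aref{lem:number}). The $E^1$-columns are still finite, but for the correct reason that $h^{j,q}=0$ outside $0\le j\le q$ (and $h^{2,1}$). Second, you invoke \aref{theorem:KWmod2nconvergence} and \aref{cor:strong-conv} for collapse, but collapse at $E^2$ is a statement about the pages of the spectral sequence and follows directly from the vanishing of higher differentials for degree reasons; convergence is a separate issue (and in the paper's logical order, \aref{thm:KW-OFS} is an input to the convergence results over $\OFS$, not a consequence).
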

\begin{proof}
The proof is similar to the calculations for fields in \cite[Theorem 6.3]{slices} with the exceptions that $E^{\infty}_{4p,2,0}(\KW)(\OFS) \cong h^{2,2}/\tau$ by \aref{lem:pic-tau} and 
$E^{\infty}_{4p+3,1,0}(\KW)(\OFS) \cong h^{2,1}\cong \Pic(\OFS)/2$.
The $\dd^{1}$-differentials take the same form as in \cite[Theorem 5.3]{slices} by base change, 
see the proof of \aref{thm:KW2-diff}.
Thus $E^{1}_{p,q,0}(\KW)(F)$ and $E^{1}_{p,q,0}(\KW)(\OFS)$ agree in all degrees with the exception of 
$$
E^{1}_{2p+1,1,0}(\KW)(\OFS) \cong h^{2,1}\oplus h^{1,1}\oplus h^{0,1}.
$$
The summand $h^{2,1}$ of $E^{1}_{4p+1,1,0}(\KW)(\OFS)$ supports a $d^{1}$-differential given by $\tau$-multiplication, 
which is injective by \aref{lem:pic-tau}.
The $d^{1}$-differential on the summand $h^{2,1}$ of $E^{1}_{4p+2,1,0}(\KW)(\OFS)$ is trivial.
This yields the claimed $E^{2}=E^{\infty}$-page along the lines of \cite[Theorem 6.3]{slices}.
\end{proof}

We refer to \cite{Deglise-Jin-Khan} for the construction of the ``defect of purity'' transformation
\[
\Sigma^{-2,-1}i^{\ast}(-) \to i^{!}(-).
\]
Following Quillen's purity theorem for algebraic $K$-theory we will make use of the following special case of absolute purity for hermitian $K$-theory.

\begin{theorem}
\label{theorem:KQKWpurity}
Let $i\colon x\to\Spec(\OFS)$ be the inclusion of a closed point $x\not\in\mathcal{S}$.
Then in $\SH(k(x))$ there exist absolute purity isomorphisms 
\begin{equation}
\label{equation:KQ-purity}
\Sigma^{-2,-1}i^{\ast}(\KQ) \xrightarrow{\simeq} i^{!}(\KQ)
\end{equation}
and
\begin{equation}
\label{equation:KW-purity}
\Sigma^{-2,-1}i^{\ast}(\KW) \xrightarrow{\simeq} i^{!}(\KW).
\end{equation}
\end{theorem}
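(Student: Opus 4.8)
The plan is to deduce absolute purity for $\KQ$ and $\KW$ from absolute purity for algebraic $K$-theory, using the structural identifications that relate these spectra. First I would recall Quillen's purity/dévissage for $\KGL$: for the inclusion $i\colon x\to\Spec(\OFS)$ of a closed point $x\not\in\mathcal{S}$ with residue field $k(x)$ of characteristic $\neq 2$, one has an absolute purity isomorphism $\Sigma^{-2,-1}i^{\ast}(\KGL)\xrightarrow{\simeq} i^{!}(\KGL)$ in $\SH(k(x))$; this is the motivic incarnation of Thomason--Trobaugh/Quillen localization, and in the Dedekind setting it follows from the work of D\'eglise--Jin--Khan on the defect-of-purity transformation together with absolute purity for $\KGL$ (e.g.\ via the projective bundle / localization sequence identifying $i^{!}\KGL$ with a Tate twist of $i^{\ast}\KGL$). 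I would state this as the input and reduce everything else to it.

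Next I would bring in the Wood cofiber sequence. Over $\OFS$ (with $\tfrac12\in\OFS$, so no residue characteristic $2$ issues, using \cite[\S2.3]{April1}) one has the cofiber sequence $\Sigma^{1,1}\KQ\xrightarrow{\eta}\KQ\to\KGL$, and dually $\KGL\to\KQ\to\Sigma^{?}\KQ$ type sequences relating $\KW$ to the cofiber of $\eta$ as well; concretely $\KW$ is obtained from $\KQ$ by inverting $\eta$, and there is a cofiber sequence computing $\KGL$ from $\KQ$. The key point is that both $i^{\ast}$ and $i^{!}$ are exact functors of triangulated categories, the defect-of-purity transformation $\Sigma^{-2,-1}i^{\ast}(-)\to i^{!}(-)$ is a natural transformation of exact functors (compatible with the $\Sigma^{1,1}$-suspension and the $\eta$-multiplication since these are defined by maps in $\SH$), and $\Sigma^{-2,-1}$ commutes with everything. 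So I would run the purity transformation on the Wood triangle: in the commuting ladder
\[
\begin{tikzcd}
\Sigma^{-2,-1}i^{\ast}\Sigma^{1,1}\KQ \ar[r]\ar[d] & \Sigma^{-2,-1}i^{\ast}\KQ \ar[r]\ar[d] & \Sigma^{-2,-1}i^{\ast}\KGL \ar[d] \\
i^{!}\Sigma^{1,1}\KQ \ar[r] & i^{!}\KQ \ar[r] & i^{!}\KGL
\end{tikzcd}
\]
the rightmost vertical map is an isomorphism by purity for $\KGL$, and the left vertical map is the $\Sigma^{1,1}$-suspension of the middle one. Then a formal argument — either a ``$\eta$-Bott periodicity'' / connectivity induction, or passing to the $\eta$-completion and $\eta$-inversion — upgrades ``cofiber is an iso'' to ``the map itself is an iso.'' For $\KW$ I would instead invert $\eta$: since $\KW=\KQ[\eta^{-1}]$ and both $i^{\ast}$, $i^{!}$ preserve the relevant homotopy colimit along $\eta$-multiplication (and $\Sigma^{1,1}\KW\simeq\KW$ by $\eta$-periodicity, \cite[Example 2.3]{slices}), the purity map for $\KW$ is the $\eta$-inverted purity map for $\KQ$, hence an isomorphism as soon as it is for $\KQ$.

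The cleanest route I expect: establish \eqref{equation:KW-purity} first by inverting $\eta$ in an already-known purity statement whose $\eta$-inversion gives $\KW$ — but since we are trying to prove $\KQ$-purity in the first place, the honest order is (i) purity for $\KGL$ (input), (ii) use the cofiber sequence $\Sigma^{1,1}\KQ\xrightarrow{\eta}\KQ\to\KGL$ plus the fact that $i^{\ast},i^{!}$ are triangulated and the defect map is natural, to get that the cofiber of the defect map on $\KQ$ agrees with the cofiber on $\Sigma^{1,1}\KQ$, i.e.\ the cofiber $C$ satisfies $C\simeq \Sigma^{1,1}C$; then (iii) show $C$ is contractible. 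Step (iii) is the main obstacle: a priori $\Sigma^{1,1}C\simeq C$ does not force $C\simeq\ast$. I would resolve it by a connectivity/finiteness argument: over $k(x)$ (a number-field residue field, hence of finite $\vcd$) the relevant homotopy groups of $i^{\ast}\KQ$ and $i^{!}\KQ$ are $\KQ$- and $\KW$-groups of finite fields, which are known and finite, and the $\Sigma^{1,1}$-periodicity of $C$ combined with the effectivity bounds (the defect map respects the slice/effective filtration, and $C$ must then lie in arbitrarily positive slices) forces $C=\ast$ — alternatively, one checks the map on $\KW=\KQ[\eta^{-1}]$ directly using the slice identification \eqref{equation:wittheoryslices} and purity for $\MZ/2$ (which follows from Spitzweck's absolute purity for motivic cohomology over Dedekind bases), and then reads off $\KQ$-purity from $\KW$-purity and $\KGL$-purity via the cofiber sequence the other way. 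So I anticipate spending most of the effort on the contractibility of the cofiber, and would phrase it via: the defect-of-purity transformation is an isomorphism on $\MZ/2$ and on $\MZ$ (Spitzweck absolute purity), hence on all slices of $\KQ$ by \eqref{equation:hermitianktheoryslices}, hence on $\KQ$ itself by slice-completeness of $\KQ$ over $k(x)$ (cf.\ the slice-completeness discussion preceding \aref{lem:coeffdiff}) together with the convergence results of \aref{theorem:best-conv}; the same argument with \eqref{equation:wittheoryslices} and the convergence of \aref{theorem:KWmod2nconvergence} gives \eqref{equation:KW-purity}.
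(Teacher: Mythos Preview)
Your approach for $\KW$ from $\KQ$ matches the paper exactly: the paper's proof of \eqref{equation:KW-purity} is the one-line observation that $\KW=\KQ[\eta^{-1}]$ and $i^{!}$ commutes with sequential colimits (citing \cite[Proposition 5.4.7.7]{MR2522659}).

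For $\KQ$ itself, however, the paper does not argue at all: it simply cites \cite{Deglise-Jin-Khan} as establishing \eqref{equation:KQ-purity} directly. D\'eglise--Jin--Khan prove absolute purity for $\KQ$ (not just for $\KGL$), so no reduction via the Wood cofiber sequence is needed. Your proposal to bootstrap from $\KGL$-purity is therefore a genuinely different, and much longer, route.

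That route also has a real gap you partially flag. Knowing that the cofiber $C$ of the purity map on $\KQ$ satisfies $\Sigma^{1,1}C\simeq C$ does not by itself force $C\simeq\ast$; you need an independent input. Your fallback via slices and Spitzweck's purity for $\MZ$ and $\MZ/2$ is the right instinct, but it requires two nontrivial facts you have not secured: first, compatibility of the defect-of-purity transformation with the slice filtrations on $\SH(\OFS)$ and $\SH(k(x))$ (so that purity on slices assembles to purity on $\f_q$ and then on $\SC$); second, slice completeness of $\KQ$ over $k(x)$. The paper only establishes conditional convergence for $\KQ/2^{n}$ (\aref{theorem:best-conv}), not slice completeness of $\KQ$ itself, and the interaction of $i^{!}$ with the slice tower is not addressed there. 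These are not insurmountable, but they are exactly the kind of technical points that the direct citation to \cite{Deglise-Jin-Khan} is meant to bypass.
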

\begin{proof}
This is shown for $\KQ$ in \cite{Deglise-Jin-Khan}.
The case of higher Witt-theory follows since $\KW = \KQ[\frac{1}{\eta}]$ and $i^{!}$ commutes with sequential colimits \cite[Proposition 5.4.7.7]{MR2522659}.
\end{proof}

\begin{theorem}
\label{theorem:MZpurity}
With the notation in \aref{theorem:KQKWpurity} there is an absolute purity isomorphism 
\begin{equation}
\label{equation:MZ-purity}
\Sigma^{-2,-1}i^{\ast} \s_{\ast}(\KW) \xrightarrow{\simeq} i^{!}\s_{\ast}(\KW).
\end{equation}
\end{theorem}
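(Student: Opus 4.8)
The plan is to deduce \eqref{equation:MZ-purity} from the absolute purity isomorphism \eqref{equation:KW-purity} for $\KW$ established in \aref{theorem:KQKWpurity} by passing to slices. The key observation is that the slice functors $\s_{\ast}$ are compatible with the exceptional pullback $i^{!}$ and the ordinary pullback $i^{\ast}$ in a way that turns a purity statement for a spectrum into a purity statement for its slices. First I would recall that $i^{\ast}\colon \SH(\OFS)\to\SH(k(x))$ is a symmetric monoidal left adjoint and, being pullback along a morphism of the base, preserves effectivity and commutes with the effective cover and slice functors $\f_{q}$, $\s_{q}$ up to canonical equivalence; this is the base-change compatibility of the slice filtration used already in the proofs of \aref{thm:KW-OFS} and \aref{theorem:KWmod2nconvergence}. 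So the left-hand side of \eqref{equation:MZ-purity} is $\Sigma^{-2,-1}\s_{\ast}(i^{\ast}\KW)$, and since $i^{\ast}\KW$ over $k(x)$ is again higher Witt-theory, its slices are the ones recorded in \eqref{equation:wittheoryslices}.

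Second, and this is the crux, I would show that the ``defect of purity'' transformation $\Sigma^{-2,-1}i^{\ast}(-)\to i^{!}(-)$ of \cite{Deglise-Jin-Khan} commutes with the slice functors, so that applying $\s_{\ast}$ to \eqref{equation:KW-purity} yields \eqref{equation:MZ-purity}. Concretely: $i^{!}$ is a right adjoint, hence preserves homotopy limits; combined with the fact that $i^{!}$ commutes with sequential colimits (as used in the proof of \aref{theorem:KQKWpurity}, via \cite[Proposition 5.4.7.7]{MR2522659}), one gets that $i^{!}$ commutes with the tower $\{\f_{q}\}$ and therefore with the slices — here I would invoke that the slices land in $\MZ$-modules and that $i^{!}$ of an $\MZ$-module is an $i^{!}\MZ$-module, together with purity for $\MZ$ itself (the absolute purity isomorphism $\Sigma^{-2,-1}i^{\ast}\MZ\simeq i^{!}\MZ$ over Dedekind bases with no residue characteristic $2$, cf.\ \cite{Spitzweck}, \cite{MR2103541}). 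Applying $\s_{q}$ to the map of distinguished triangles \eqref{equation:slicefiltrationofE} for $\E=\KW$ and for $\E=i^{!}\KW\simeq\Sigma^{-2,-1}i^{\ast}\KW$, and using that $i^{\ast}$ commutes with slices on the nose, the five lemma (or rather the fact that an equivalence of towers induces an equivalence of the associated graded) upgrades \eqref{equation:KW-purity} to an equivalence on each slice, and hence on $\s_{\ast}(\KW)=\bigvee_{q}\s_{q}(\KW)$.

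Third, I would assemble this: since $i^{\ast}$ commutes with $\s_{\ast}$, the natural transformation $\Sigma^{-2,-1}i^{\ast}\s_{\ast}(\KW)\to i^{!}\s_{\ast}(\KW)$ is identified with $\s_{\ast}$ applied to $\Sigma^{-2,-1}i^{\ast}(\KW)\to i^{!}(\KW)$; the latter is an equivalence by \eqref{equation:KW-purity}, and $\s_{\ast}$ — being an endofunctor (wedge of exact functors) — carries equivalences to equivalences. This gives \eqref{equation:MZ-purity}. The main obstacle is the second step: one must be careful that $i^{!}$ genuinely commutes with the slice tower, which is not purely formal because $i^{!}$ need not preserve effective objects. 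The way around this is precisely the purity equivalence \eqref{equation:KW-purity} itself, which lets us replace $i^{!}\KW$ by the manifestly ``slice-controlled'' object $\Sigma^{-2,-1}i^{\ast}\KW$ before taking slices; alternatively, one argues slice-locally using that the slices of $\KW$ are $\MZ/2$-modules and invoking absolute purity for the motivic Eilenberg--MacLane spectrum over $\OFS$. I would present the argument in the second form to keep it self-contained, citing \cite{Deglise-Jin-Khan} and \cite{Spitzweck} for the inputs.
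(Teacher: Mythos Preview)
Your proposal eventually lands on the right argument, but only after a detour that does not work as written and is in any case unnecessary. The paper's proof is one line: by \eqref{equation:wittheoryslices} each $\s_{q}(\KW)$ is a wedge of suspensions of $\MZ/2$, so the statement reduces immediately to absolute purity for motivic cohomology over the Dedekind base, which is \cite[Corollary 3.2]{Spitzweck}. Purity for $\KW$ itself (\aref{theorem:KQKWpurity}) plays no role, and there is no need to discuss how $i^{!}$ interacts with the slice tower.

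The main thread of your proposal has a genuine gap. You want to apply $\s_{\ast}$ to the equivalence $\Sigma^{-2,-1}i^{\ast}\KW\simeq i^{!}\KW$ and conclude. But that computes $\s_{\ast}(i^{!}\KW)$, whereas the theorem asks for $i^{!}(\s_{\ast}\KW)$; these agree only if $i^{!}$ commutes with the slice functors, which is precisely the nontrivial point you flagged and did not resolve. Your proposed fix --- replace $i^{!}\KW$ by $\Sigma^{-2,-1}i^{\ast}\KW$ and then take slices --- stays on the wrong side of this distinction. The ``second form'' you mention at the end (argue slice-locally via purity for $\MZ$) is the correct argument and is exactly what the paper does; once you adopt it, the reference to \cite{Deglise-Jin-Khan} becomes irrelevant, since only \eqref{equation:wittheoryslices} and Spitzweck's purity for $\MZ$ are used.
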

\begin{proof}
Combine \eqref{equation:wittheoryslices} with absolute purity for motivic cohomology in \cite[Corollary 3.2]{Spitzweck}. 
\end{proof}

\begin{lemma}
\label{lemma:KWOFS}
There are isomorphisms 
\begin{equation}
\label{equation:KWOFS}
\pi_{p,q}(\KW)(\OFS)
\cong
\begin{cases}
W(\OFS) & p-q\equiv 0\bmod 4 \\
\Pic(\OFS)/2 & p-q\equiv 3\bmod 4 \\
0 & \text{otherwise}.
\end{cases}
\end{equation}
\end{lemma}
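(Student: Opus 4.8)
The plan is to compute $\pi_{\ast,\ast}(\KW)(\OFS)$ by localization and absolute purity, reducing to the known homotopy of $\KW$ over $F$ and over the finite residue fields. By $\eta$-periodicity $\Sigma^{1,1}\KW \simeq \KW$ \cite[Example 2.3]{slices} it suffices to treat weight $q=0$, so $\pi_{p,q}(\KW)(\OFS)$ depends only on $p-q$. Fix a cofinal increasing sequence $\mathcal{S} = \mathcal{S}_0 \subset \mathcal{S}_1 \subset \cdots$ of finite sets of places; then each $\Spec\OO_{F,\mathcal{S}_n} \subseteq \Spec\OFS$ is open with finite closed complement and $\varprojlim_n \Spec\OO_{F,\mathcal{S}_n} = \Spec F$ (the set of all closed points outside $\mathcal{S}$ need not underlie a closed subscheme, hence the finitary approach). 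The localization triangles $(i_n)_\ast (i_n)^!\KW \to \KW \to (j_n)_\ast (j_n)^\ast\KW$, the absolute purity isomorphisms $\Sigma^{-2,-1}i_x^\ast\KW \xrightarrow{\simeq} i_x^!\KW$ of \aref{theorem:KQKWpurity}, and continuity of $\KW$ under filtered colimits of rings, assemble in the colimit into a long exact sequence
\[
\cdots \to \bigoplus_{x \notin \mathcal{S}} \pi_{p+2,q+1}(\KW)(k(x)) \to \pi_{p,q}(\KW)(\OFS) \to \pi_{p,q}(\KW)(F) \to \bigoplus_{x \notin \mathcal{S}} \pi_{p+1,q+1}(\KW)(k(x)) \to \cdots.
\]

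Next I would substitute the computation $\pi_{m,n}(\KW)(k) \cong W(k)$ for $m - n \equiv 0 \bmod 4$ and $0$ otherwise, valid over any field $k$ with $\Char(k) \neq 2$ (see \cite{slices} and the proof of \aref{lem:E00terms1}); here it applies to $k = F$ and to each $k(x)$, which has odd characteristic since the dyadic places lie in $\mathcal{S}$. Using $\eta$-periodicity over $k(x)$ to reindex, the long exact sequence collapses: $\pi_{p,q}(\KW)(\OFS) = 0$ when $p - q \equiv 1, 2 \bmod 4$, and for $p - q \equiv 0 \bmod 4$ there is a four-term exact sequence
\[
0 \to \pi_{p,q}(\KW)(\OFS) \to W(F) \xrightarrow{\ \partial\ } \bigoplus_{x \notin \mathcal{S}} W(k(x)) \to \pi_{p-1,q}(\KW)(\OFS) \to 0,
\]
with $p - 1 - q \equiv 3 \bmod 4$ and $\partial$ the total second residue homomorphism (the motivic localization boundary for $\KW$ recovers the classical Gersten--Witt residue). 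It remains to compute $\ker\partial$ and $\coker\partial$.

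The kernel is $W(\OFS)$ by the classical exactness of $0 \to W(\OFS) \to W(F) \to \bigoplus_{x \notin \mathcal{S}} W(k(x))$ for Dedekind domains \cite{MilnorHusemoller}; this settles $p - q \equiv 0 \bmod 4$. For $p - q \equiv 3 \bmod 4$ I would show $\coker\partial \cong \Pic(\OFS)/2$. Since $I^2(k(x)) = 0$, the target carries a two-step fundamental-ideal filtration, and $\partial$ respects fundamental-ideal filtrations. On the top quotient the composite $W(F) \xrightarrow{\partial} \bigoplus_x W(k(x)) \to \bigoplus_{x \notin \mathcal{S}} \Z/2$ (rank mod $2$) is induced by the valuation map $F^{\times}/2 \to \Div(\OFS)/2$, whose cokernel is $\Pic(\OFS)/2$ (right exactness for $F^{\times} \to \Div(\OFS) \to \Pic(\OFS) \to 0$, using $\Pic(F) = 0$). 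On the bottom quotient $\partial$ becomes, by the Milnor conjecture on quadratic forms \cite{MR2276765}, the mod $2$ \'etale residue $H^2_{\et}(F;\Z/2) \to \bigoplus_{x \notin \mathcal{S}} H^1_{\et}(k(x);\Z/2)$, whose cokernel identifies by the Gysin sequence for $\Spec\OFS$ with $\ker(H^3_{\et}(\OFS;\Z/2) \to H^3_{\et}(F;\Z/2))$; this vanishes (by cohomological dimension when $F$ is totally imaginary, and in general because restriction to the real places identifies both sides with $(\Z/2)^{r_{1}}$ in degrees $\geq 3$). Hence $\coker\partial \cong \Pic(\OFS)/2$. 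Alternatively, once $\KW$ is known to represent Balmer's triangular Witt groups over $\OFS$, the lemma follows from the Gersten--Witt spectral sequence of the Dedekind domain $\OFS$.

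I expect the main obstacle to be this last step, the identification of $\coker\partial$ with $\Pic(\OFS)/2$: it rests on the compatibility of the motivic localization boundary with the classical second residue homomorphism and on a filtration argument feeding off the Milnor conjecture for quadratic forms and the \'etale cohomology of $\Spec\OFS$. By contrast, the collapse of the long exact sequence itself is routine given the purity isomorphism of \aref{theorem:KQKWpurity}.
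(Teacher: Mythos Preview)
Your approach is correct and follows the same underlying strategy as the paper: reduce to weight~$0$ by $\eta$-periodicity, set up a localization long exact sequence relating $\KW$ over $\OFS$, $F$, and the residue fields, and then identify the kernel and cokernel of the total second residue map $W(F)\to\bigoplus_{x\notin\mathcal S}W(k(x))$. The paper's proof, however, is a two-line citation: it invokes \cite[Corollary~92]{Balmer} for the localization/vanishing input (in place of your hand-built sequence via absolute purity from \aref{theorem:KQKWpurity}) and, crucially, cites the classical Knebusch--Milnor exact sequence
\[
0\to W(\OFS)\to W(F)\to\bigoplus_{x\notin\mathcal S}W(k(x))\to\Pic(\OFS)/2\to 0
\]
from \cite[p.~93]{MilnorHusemoller}, \cite[p.~227]{Scharlau}, which already identifies the cokernel as $\Pic(\OFS)/2$. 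Your filtration argument through the Milnor conjecture and the \'etale Gysin sequence is a valid re-derivation of this classical fact, but it is unnecessary here; what you flagged as the ``main obstacle'' is exactly the content of the Knebusch--Milnor sequence. One minor point: your setup with $\mathcal S=\mathcal S_0$ finite tacitly assumes $\mathcal S$ is finite; the paper's citation covers the general case directly.
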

\begin{proof}
Combine the exact sequence and vanishing in \cite[Corollary 92]{Balmer} with the Knebusch-Milnor exact sequence
\begin{equation}
\label{equation:KMes}
0
\to
W(\OFS)
\to
W(F)
\to
\bigoplus_{x\not\in\mathcal{S}} W(k(x))
\to
\Pic(\OFS)/2 
\to
0
\end{equation}
from \cite[p.93]{MilnorHusemoller}, \cite[p.227]{Scharlau}.
\end{proof}

\begin{lemma}
\label{lem:lemma65}
Over $\OFS$ the slice filtration for $\KW$ induces a commutative diagram
\[
\begin{tikzcd}
\pi_{0,0}\f_{1}(\KW) \ar[r]\ar[d] & \pi_{0,0}\f_{0}(\KW)\ar[d]\\
I(\OFS) \ar[r] & W(\OFS).
\end{tikzcd}
\]
Here $I(\OFS)$ is the kernel of the rank map $\rk_{2}\colon W(\OFS) \to\Z/2$.
By multiplicativity of the slice filtration this yields an inclusion $I^{q}(\OFS) \subseteq\f_{q}\pi_{0,0}(\KW)(\OFS)$.
\end{lemma}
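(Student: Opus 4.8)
The plan is to reduce the statement to the known description of the slice filtration on $\pi_{0,0}(\KW)$ over the fraction field $F$, transported back to $\OFS$ along the restriction $j^{\ast}$ to the generic point. The two things to produce are the left-hand vertical map of the square together with its commutativity, and then the inclusion $I^{q}(\OFS)\subseteq\f_{q}\pi_{0,0}(\KW)(\OFS)$.

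First I would identify the right-hand vertical map. The slice formula \eqref{equation:wittheoryslices} together with the vanishing of motivic cohomology in negative weights shows $\s_{q}(\KW)$ has trivial weight-$0$ homotopy for $q<0$, so the canonical map $\f_{0}(\KW)\to\KW$ induces an isomorphism on $\pi_{\ast,0}$; combined with \aref{lemma:KWOFS} this gives $\pi_{0,0}\f_{0}(\KW)(\OFS)\xrightarrow{\ \cong\ }\pi_{0,0}(\KW)(\OFS)\cong W(\OFS)$ (the required convergence of the weight-$0$ slice spectral sequence to $\pi_{0,0}(\KW)(\OFS)$ is furnished by the collapse in \aref{thm:KW-OFS} via \aref{lem:sc-convergence}). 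Next I would feed the cofibre sequence $\f_{1}(\KW)\to\f_{0}(\KW)\to\s_{0}(\KW)\to\Sigma^{1,0}\f_{1}(\KW)$ into $\pi_{\ast,0}$: by \eqref{equation:wittheoryslices} one has $\pi_{1,0}\s_{0}(\KW)(\OFS)=\bigoplus_{i}h^{2i-1,0}=0$ and $\pi_{0,0}\s_{0}(\KW)(\OFS)=\bigoplus_{i}h^{2i,0}=h^{0,0}\cong\Z/2$, so $\pi_{0,0}\f_{1}(\KW)(\OFS)\hookrightarrow\pi_{0,0}\f_{0}(\KW)(\OFS)\cong W(\OFS)$ is injective with image $\ker(e)$, where $e\colon W(\OFS)\to\Z/2$ is the edge map induced by $\f_{0}(\KW)\to\s_{0}(\KW)$.

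The crux is to identify $e$ with the mod $2$ rank $\rk_{2}$. Since restriction to the generic point commutes with the effective covers and the slices (both being stable under smooth base change, with $\Spec F\to\Spec\OFS$ a cofiltered limit of open immersions), $j^{\ast}$ carries $e$ to the corresponding edge map over $F$, which is the mod $2$ rank with kernel $\f_{1}\pi_{0,0}(\KW)(F)=I(F)$ by \cite[Lemma 6.4, Corollary 6.11]{slices}. Under the identifications above the left-hand $j^{\ast}$ is the injection $W(\OFS)\hookrightarrow W(F)$ from the Knebusch--Milnor sequence \eqref{equation:KMes}, which intertwines the rank maps, and the right-hand $j^{\ast}\colon h^{0,0}(\OFS)\to h^{0,0}(F)$ is an isomorphism; hence $e=\rk_{2}$. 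As $\rk_{2}$ is surjective (e.g.\ $\langle 1\rangle\mapsto 1$) this yields $\pi_{0,0}\f_{1}(\KW)(\OFS)\cong\ker(\rk_{2})=I(\OFS)$, which provides the left vertical arrow and the commutativity of the square, and in particular $\f_{1}\pi_{0,0}(\KW)(\OFS)=I(\OFS)$.

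For the displayed inclusion: the slices of $\KW$ are $\s_{0}(\One)$-modules and the effective covers are multiplicative, so on $\pi_{0,0}$ the pairings $\f_{a}(\KW)\wedge\f_{b}(\KW)\to\f_{a+b}(\KW)$ give $\f_{a}\pi_{0,0}(\KW)(\OFS)\cdot\f_{b}\pi_{0,0}(\KW)(\OFS)\subseteq\f_{a+b}\pi_{0,0}(\KW)(\OFS)$; in particular each $\f_{q}\pi_{0,0}(\KW)(\OFS)$ is an ideal of $W(\OFS)$, and iterating yields $I^{q}(\OFS)=I(\OFS)^{q}\subseteq\f_{q}\pi_{0,0}(\KW)(\OFS)$. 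I expect the main obstacle to be the identification $e=\rk_{2}$: a priori $e$ is only defined modulo the slice filtration, so one genuinely has to import the field computation through the base change $j^{\ast}$ and verify the rank compatibility along $W(\OFS)\hookrightarrow W(F)$; a secondary point is the convergence input needed for the first isomorphism, which is why \aref{thm:KW-OFS} is invoked.
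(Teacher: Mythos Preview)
Your proposal is correct and follows essentially the same approach as the paper: both arguments reduce to the known field case \cite[Lemma 6.4]{slices} via the generic point, using that $W(\OFS)\hookrightarrow W(F)$ is injective and that $\pi_{0,0}\s_{0}(\KW)(\OFS)\to\pi_{0,0}\s_{0}(\KW)(F)\cong h^{0,0}$ is an isomorphism, and then invoke multiplicativity of the slice filtration for the inclusion $I^{q}(\OFS)\subseteq\f_{q}\pi_{0,0}(\KW)(\OFS)$. Your write-up is simply more explicit about the intermediate computations (the vanishing of $\pi_{1,0}\s_{0}(\KW)$ and the identification of the edge map $e$); the invocation of \aref{thm:KW-OFS} for convergence is unnecessary here since the isomorphism $\pi_{0,0}\f_{0}(\KW)\cong\pi_{0,0}(\KW)$ follows directly from exhaustiveness together with the vanishing of $\pi_{0,0}\s_{q}(\KW)$ for $q<0$.
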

\begin{proof}
This is shown over $F$ in \cite[Lemma 6.4]{slices}.
Our first claim follows since $W(\OFS) \to W(F)$ is injective --- see \cite[Corollary IV.3.3]{MilnorHusemoller}, \cite[Theorem 6.1.6]{Scharlau} ---
and $\pi_{0,0}\s_{0}(\KW)(\OFS) \to \pi_{0,0}\s_{0}(\KW)(F) \cong h^{0,0}$ is an isomorphism.
The second claim follows as in \cite[Corollary 6.5]{slices}.
\end{proof}

\begin{lemma}
\label{lemma:important}
The composite map 
\[
\pi_{-1,0}\f_{1}(\KW) \to \pi_{-1,0}\s_{1}(\KW) 
\cong h^{2,1} \oplus h^{0,1} 
\overset{\pr}{\to} h^{2,1}
\]
is an isomorphism.
\end{lemma}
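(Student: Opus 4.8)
The plan is to compute both $\pi_{-1,0}\f_{1}(\KW)$ and the relevant part of $\pi_{-1,0}\s_{1}(\KW)$ over $\OFS$ directly from the slice filtration, and to check that the projection to the $h^{2,1}$-summand identifies them. First I would record what the source is: by \aref{theorem:convergence1} with $p\equiv 0\bmod 4$, $w=0$, and the convention $I^{q}=W$ for $q\leq 0$, together with the $\eta$-periodicity $\Sigma^{1,1}\KW\simeq\KW$, the group $\pi_{-1,0}\f_{1}(\KW)$ sits in the slice filtration as a subquotient governed by $_{2^{n}}I^{1}$-type terms; more usefully, over $\OFS$ one invokes \aref{lemma:KWOFS} and \aref{lemma:KWOFS}'s input (the Knebusch--Milnor sequence \eqref{equation:KMes}) to see $\pi_{-1,0}\KW=0$, so the boundary map $\pi_{-1,0}\f_{1}(\KW)\to\pi_{-1,0}\s_{1}(\KW)$ receives a contribution precisely from the failure of $\pi_{-1,0}\f_{0}(\KW)$ to see everything, and one uses \aref{lem:lemma65} to locate $I(\OFS)\subseteq\f_{1}\pi_{0,0}(\KW)$.

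Second, I would analyze the target. By the slice computation \eqref{equation:wittheoryslices}, $\s_{1}(\KW)\simeq\bigvee_{i\in\Z}\Sigma^{2i+1,1}\MZ/2$, so $\pi_{-1,0}\s_{1}(\KW)$ is a sum of motivic cohomology groups $h^{2i+2,2i+1}$ of $\OFS$ in weight $1$ shifted appropriately; the nonzero contributions in the relevant range are exactly $h^{2,1}$ and $h^{0,1}$, which matches the displayed decomposition. This uses the low-weight motivic cohomology of $\OFS$ reviewed in \aref{section:mcatSa} (in particular $h^{2,1}\cong\Pic(\OFS)/2$ and $h^{0,1}\cong\Z/2$, via \aref{lem:pic-tau}). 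I would then note that this is the same computation as over $F$ in \cite[\S6]{slices} \emph{except} in weight/degree $(2,1)$, where the Picard group enters; compare with \aref{thm:KW-OFS}, whose proof already isolates that the only discrepancy between $E^1_{\ast,\ast,0}(\KW)(F)$ and $E^1_{\ast,\ast,0}(\KW)(\OFS)$ is the extra $h^{2,1}$ summand in filtration $q=1$, and that this summand survives to $E^\infty$ in the appropriate spot ($p\equiv 3\bmod4$, $q=1$) while its $\tau$-multiple copy is killed by the injective $\tau$-differential.

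Third, I would splice these together. The map in question factors the connecting homomorphism of \eqref{equation:slicefiltrationofE} for $q=0$ through $\s_1$; since $\pi_{-1,0}\f_0(\KW)=\pi_{-1,0}\KW=0$ (by \aref{lemma:KWOFS}) and $\pi_{0,0}\f_0(\KW)\cong W(\OFS)$ surjects onto $\pi_{0,0}\s_0(\KW)\cong h^{0,0}$ with kernel exactly $\f_1\pi_{0,0}(\KW)\supseteq I(\OFS)$ (by \aref{lem:lemma65}), the long exact sequence identifies $\pi_{-1,0}\f_1(\KW)$ with a subgroup of $\pi_{-1,0}\s_1(\KW)$; comparing ranks/orders against \aref{thm:KW-OFS} (which forces the $E^\infty$ contribution at $(p,q)=(3,1)$ to be exactly $h^{2,1}$ and the $h^{0,1}$-part to be cancelled by a $d^1$ out of an adjacent slice) shows the composite lands isomorphically onto the $h^{2,1}$-summand. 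I expect the main obstacle to be bookkeeping the $d^1$-differentials carefully enough to be sure the $h^{0,1}$-summand of $\pi_{-1,0}\s_1(\KW)$ is \emph{not} hit by $\pi_{-1,0}\f_1(\KW)$ — i.e.\ that the image is exactly the complementary summand rather than a diagonal subgroup; this is precisely the content secured by the injectivity of the $\tau$-differential in \aref{lem:pic-tau} used in the proof of \aref{thm:KW-OFS}, so the argument reduces to quoting that computation and matching indices.
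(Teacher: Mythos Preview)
Your argument rests on the claim that $\pi_{-1,0}\KW(\OFS)=0$, which you attribute to \aref{lemma:KWOFS}. But that lemma says precisely the opposite: $\pi_{-1,0}\KW(\OFS)\cong\Pic(\OFS)/2\cong h^{2,1}$, since $-1\equiv 3\bmod 4$. Over a field this group vanishes, but over $\OFS$ it is exactly the Picard contribution you are trying to detect; the entire point of the lemma is to account for this new term. With $\pi_{-1,0}\f_0(\KW)\cong h^{2,1}$ nonzero, the long exact sequence for $\f_1\to\f_0\to\s_0$ no longer gives you control over $\pi_{-1,0}\f_1(\KW)$ in the way you describe. There is also a confusion of exact sequences in your third step: the map $\pi_{-1,0}\f_1(\KW)\to\pi_{-1,0}\s_1(\KW)$ comes from the cofiber sequence $\f_2\to\f_1\to\s_1$, not from $\f_1\to\f_0\to\s_0$, so even granting your (false) vanishing input the conclusion ``identifies $\pi_{-1,0}\f_1(\KW)$ with a subgroup of $\pi_{-1,0}\s_1(\KW)$'' does not follow from the sequence you invoke; in fact your hypotheses would force $\pi_{-1,0}\f_1(\KW)=0$.

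The paper's proof is substantially different and does not proceed by a direct slice--filtration calculation over $\OFS$. Instead it invokes absolute purity for $\KW$ and for the slices (\aref{theorem:KQKWpurity}, \aref{theorem:MZpurity}) and runs localization sequences comparing $\OFS$ with its closed points $i\colon x\hookrightarrow\Spec\OFS$ and generic point $j\colon\Spec F\hookrightarrow\Spec\OFS$. A diagram chase through the resulting $i_!i^!$/$j_*j^*$ grid shows that $\pi_{-2,0}\f_2(\KW)\cong h^{0,1}$ and that the composite to $h^{2,1}$ is surjective, with injectivity coming from the analysis of $\pi_{3,0}\f_1(\KW)\xrightarrow{\cong}\pi_{3,0}\f_0(\KW)\cong h^{2,1}$. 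The purity and localization machinery is not optional here: it is what lets one transport the known field computation to $\OFS$ while correctly picking up the Picard group term that your approach inadvertently set to zero.
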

\begin{proof}
In the proof we make use of \aref{lemma:KWOFS}.
The naturally induced map 
\[
\pi_{3,0}\f_{1}(\KW) \to \pi_{3,0}\f_{0}(\KW),
\]
where $\pi_{3,0}\f_{0}(\KW) \cong h^{2,1}$,
follows from the long exact sequence
\[
\pi_{4,0}\f_{0}(\KW) \to \pi_{4,0}\s_{0}(\KW) \to \pi_{3,0}\f_{1}(\KW) \to \pi_{3,0}\f_{0}(\KW) \to \pi_{3,0}\s_{0}(\KW) = 0,
\]
and the natural surjection $W(\OFS) \cong \pi_{4,0}\f_{0}(\KW) \to \pi_{4,0}\s_{0}(\KW) \cong h^{0,0}$ by \aref{lem:lemma65}.

For a closed point $i\colon x \to \Spec \OFS$ there is a commutative diagram
\[
\begin{tikzcd}
\pi_{-1,0}i_*i^{\ast}\Sigma^{-2,-1}(\KW) &  \ar[l, "=" above] \pi_{-1,0}i_*i^{\ast}\Sigma^{-2,-1}(\KW) \ar[r, "\cong"] & \pi_{-1,0}i_!i^{!}(\KW) \\
\pi_{-1,0}\f_{0}i_*i^{\ast}\Sigma^{-2,-1}(\KW)\ar[d, "f_{1}"]\ar[u, "g_{1}"] &  \ar[l] \pi_{-1,0}i_*i^{\ast}\Sigma^{-2,-1}\f_{1}(\KW) \ar[r]\ar[d, "f_{2}"]\ar[u, "g_{2}"] & \pi_{-1,0}i_!i^{!}\f_{1}(\KW) \ar[d, "f_3"]\ar[u, "g_3"] \\
\pi_{-1,0}\s_{0}i_*i^{\ast}\Sigma^{-2,-1}(\KW) &  \ar[l, "h_{1}" above] \pi_{-1,0}i_*i^{\ast}\Sigma^{-2,-1}\s_{1}(\KW) \ar[r, "\cong"] & \pi_{-1,0}i_!i^{!}\s_{1}(\KW).
\end{tikzcd}
\]
Here $h_{1}$ is obtained using \cite[Lemma 4.2.23]{Kelly}.
Absolute purity as in \aref{theorem:KQKWpurity} and \aref{theorem:MZpurity} imply the indicated isomorphisms.
The maps $g_{1}$ and $g_{2}$ are isomorphisms since $\pi_{-1,0}\f_{0}(-) = \pi_{-1,0}(-)$.
Since $\pi_{q,0}i_!i^{!}\s_{0}(\KW) = 0$ when $q=-1,-2$ we get a bijection $\pi_{-1,0}i_!i^{!}\f_{1}(\KW)\to\pi_{-1,0}i_!i^{!}\f_{0}(\KW)$.
It follows that $g_{3}$ is an isomorphism by contemplating the diagram with exact rows
\[
\begin{tikzcd}
0 \ar[r] & \pi_{0,0}\f_{0}(\KW) \ar[r]\ar[d, "\cong"] & \pi_{0,0}j_*j^{\ast}\f_{0}(\KW)\ar[r]\ar[d, "\cong"] & \oplus_{x\not\in\mathcal S}\pi_{-1,0}i_!i^{!}\f_{0}(\KW) \ar[r]\ar[d] & \pi_{-1,0}\KW \ar[r]\ar[d, "\cong"] & 0 \\
0 \ar[r] & \pi_{0,0}\KW \ar[r] & \pi_{0,0}j_*j^{\ast}(\KW) \ar[r] & \oplus_{x\not\in\mathcal S}\pi_{-1,0}i_!i^{!}(\KW) \ar[r] & \pi_{-1,0}\KW \ar[r] & 0.
\end{tikzcd}
\]
To show that $h_{1}$ is an isomorphism we reduce to showing $h_{1}': \pi_{0,0}i^{\ast}\s_{0}(\KW) \to \pi_{0,0}\s_{0}i^{\ast}(\KW)$ is an isomorphism, 
and conclude using the naturally induced commutative diagram
\[
\begin{tikzcd}
\pi_{0,0}\s_{0}i^{\ast}(\KW) & \ar[l, "h_{1}'" above] \pi_{0,0}i^{\ast}\s_{0}(\KW) & \pi_{0,0}\s_{0}(\KW)\ar[l,"\cong" above] \\
\pi_{0,0}\f_{0}i^{\ast}(\KW)\ar[u]\ar[dr, "\cong"] & \ar[l] \pi_{0,0}i^{\ast}\f_{0}(\KW) \ar[u]\ar[d]& \pi_{0,0}\f_{0}(\KW) \ar[u]\ar[l]\ar[d, "\cong"] \\
& \pi_{0,0}i^{\ast}(\KW) & \pi_{0,0}\KW. \ar[l]
\end{tikzcd}
\]
Here $1\in \pi_{0,0}\KW$ maps to the units in $\pi_{0,0}\s_{0}i^{\ast}(\KW)$ and $\pi_{0,0}i^{\ast}\s_{0}(\KW)$.
The map $f_{1}$ is surjective since it identifies with the rank map $\rk_{2}\colon W(\OFS)\to \Z/2$ as in the proof of \cite[Lemma 6.4]{slices}.
It follows that $f_{2}$ and $f_{3}$ are surjective.

Next we consider the commutative diagram with exact rows and columns
\[
\begin{tikzcd}
\oplus_{x\not\in\mathcal S}\pi_{-1,0}i_!i^{!}\f_{1}(\KW) \ar[r, "f_3"]\ar[d] & \oplus_{x\not\in\mathcal S}\pi_{-1,0}i_!i^{!}\s_{1}(\KW) \ar[r]\ar[d] & \oplus_{x\not\in\mathcal S}\pi_{-2,0}i_!i^{!}\f_{2}(\KW) \ar[d] \\
\pi_{-1,0}\f_{1}(\KW) \ar[r]\ar[d] & \pi_{-1,0}\s_{1}(\KW) \cong h^{2,1}\oplus h^{0,1} \ar[r]\ar[d] & \pi_{-2,0}\f_{2}(\KW) \ar[d] \\
\pi_{-1,0}j_*j^{\ast}\f_{1}(\KW) = 0 \ar[r] & \pi_{-1,0}j_*j^{\ast}\s_{1}(\KW) \cong h^{0,1} \ar[r, "\cong"] &\pi_{-2,0}j_*j^{\ast}\f_{2}(\KW)\cong h^{0,1}.
\end{tikzcd}
\]
Since $f_{3}$ is surjective we conclude $\pi_{-2,0}i_!i^{!}\f_{2}(\KW) = 0$.
Chasing the latter diagram shows 
$$
\pi_{-2,0}\f_{2}(\KW) 
\cong 
h^{0,1}
$$
and the composite map 
\[\pi_{-1,0}\f_{1}(\KW) \to \pi_{-1,0}\s_{1}(\KW) \cong h^{2,1} \oplus h^{0,1} \to h^{2,1}
\]
is surjective.
\end{proof}

\begin{lemma}
\label{lem:OFS-KW-conv}
Over $\OFS$ we have
\[
\bigcap_{i\geq0} \f_{q+i}\pi_{p,w}\f_{q}(\KW) = 0.
\]
Hence the slice spectral sequence for $\KW$ is convergent.
\end{lemma}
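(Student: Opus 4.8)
The statement to be shown is the vanishing
\[
\bigcap_{i\geq 0}\f_{q+i}\pi_{p,w}\f_{q}(\KW)=0
\]
over $\OFS$, which by the discussion preceding \aref{lem:E00terms1} is exactly the condition that the slice spectral sequence for $\KW$ converges. The plan is to follow the strategy used over fields in \cite[\S6]{slices} and reproduced over $\OFS$ in \aref{lem:E00terms2} and \aref{theorem:convergence1}, with the necessary inputs now supplied by \aref{lemma:KWOFS}, \aref{lem:lemma65}, and \aref{lemma:important}. By $\eta$-periodicity $\Sigma^{1,1}\KW\simeq\KW$ we reduce immediately to $w=0$. Since $\pi_{p,0}\KW$ vanishes unless $p\equiv 0,3\bmod 4$ by \aref{lemma:KWOFS}, and since for $q\leq 0$ we have $\f_{q}(\KW)\simeq\KW$, it suffices to treat $q\geq 1$ and $p\equiv 0,3\bmod 4$.

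\textbf{The case $p\equiv 3\bmod 4$.} Here I would argue as over fields: the relevant groups $\pi_{p-1,0}\f_{q+i}(\KW)$ for $p\equiv 3\bmod 4$, i.e.\ the groups $\pi_{m,0}\f_{q+i}(\KW)$ with $m\equiv 2\bmod 4$, map trivially to $\pi_{m,0}\f_{q}(\KW)$ for $i\geq 1$ by the analogue over $\OFS$ of \cite[Lemma 6.13]{slices}, whose proof goes through by base change from $F$ using the $\dd^{1}$-differentials in the form of \cite[Theorem 5.3]{slices} (cf.\ the proof of \aref{thm:KW-OFS}). This forces the image appearing in $\bigcap_{i\geq 0}\f_{q+i}\pi_{p,0}\f_{q}(\KW)$ to vanish. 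One must also account for the extra summand $h^{2,1}\cong\Pic(\OFS)/2$ in low weight, using \aref{lemma:important} to identify the relevant map $\pi_{-1,0}\f_{1}(\KW)\to h^{2,1}$ as an isomorphism, so that this summand is already captured in $\f_{1}\pi_{0,0}(\KW)$ and contributes nothing to the intersection for $q\geq 2$.

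\textbf{The case $p\equiv 0\bmod 4$.} This is the main obstacle and is handled by the filtration-of-the-Witt-ring argument. By \aref{lem:lemma65} we have $I^{q}(\OFS)\subseteq\f_{q}\pi_{0,0}(\KW)$, and the reverse containment is established as over $F$; thus $\f_{q}\pi_{0,0}(\KW)=I^{q}(\OFS)$. One then needs the analogue of \cite[Lemma 2.1]{AE}, namely $I^{q+1}(\OFS)=2I^{q}(\OFS)$ with $I^{q}(\OFS)$ torsion-free for $q\gg 0$; this follows from $\vcd(F)<\infty$ together with the triviality of $I^{q}$ over the totally imaginary field $F(\sqrt{-1})$ and the behaviour of the fundamental ideal of $\OFS$ under the localization sequence \eqref{equation:KMes}, exactly as in the proof of \aref{theorem:KWmod2nconvergence}. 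Hence for $i\gg 0$ the image of $\pi_{0,0}\f_{q+i}(\KW)$ in $\pi_{0,0}\f_{q}(\KW)=I^{q}(\OFS)$ lands in $2^{N}I^{q}(\OFS)$ for every $N$, and since $I^{q}(\OFS)$ is a finitely generated torsion-free abelian group (number field input) $\bigcap_{N}2^{N}I^{q}(\OFS)=0$. Combining the two cases gives the vanishing of the intersection in all degrees, and hence, by the criterion recalled before \eqref{equation:sesE00}, the slice spectral sequence for $\KW$ over $\OFS$ converges. I expect the delicate point to be verifying that the low-weight discrepancies in the $E^{1}$-page over $\OFS$ versus over $F$ (the extra $h^{2,1}$ and the $h^{2,2}/\tau$ in \aref{thm:KW-OFS}) do not introduce new contributions to $\bigcap_{i}\f_{q+i}\pi_{p,0}\f_{q}(\KW)$, which is precisely what \aref{lemma:important} and \aref{lem:lemma65} are designed to control.
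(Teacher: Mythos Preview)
Your reduction to $p\equiv 0,3\bmod 4$ is not valid. The vanishing of $\pi_{p,0}(\KW)$ for $p\equiv 1,2\bmod 4$ says nothing about $\bigcap_i \f_{q+i}\pi_{p,0}\f_{q}(\KW)$ for $q>0$: the intersection lives in $\pi_{p,0}\f_{q}(\KW)$, not in the abutment, and these groups are typically nonzero even when $\pi_{p,0}(\KW)=0$. The paper treats all four residues $p=0,1,2,3$ separately, and the cases $p=1,2$ require real work.

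A second gap is in your $p\equiv 0$ argument: you write $\pi_{0,0}\f_{q}(\KW)=I^{q}(\OFS)$, but this is false. What one has (as in \eqref{equation:split-effective-KW}) is a split surjection $\pi_{0,0}\f_{q}(\KW)\twoheadrightarrow \f_{q}\pi_{0,0}(\KW)=I^{q}$ with nontrivial kernel $h^{q-4,q-1}\oplus h^{q-8,q-1}\oplus\cdots$. Controlling the image of $\pi_{0,0}\f_{q+i}(\KW)$ in the abutment $W(\OFS)$ is not the same as controlling it in $\pi_{0,0}\f_{q}(\KW)$. The paper instead argues, for $p=0,1$, that $E^{\infty}_{p+1,q,0}(\KW)=0$ (\aref{thm:KW-OFS}) forces $\f_{q+1}\pi_{p,0}\f_{q}(\KW)\hookrightarrow \pi_{p,0}\f_{q-1}(\KW)$, and then runs a downward induction on $q$; the base case for $p=0$ uses the injection $W(\OFS)\hookrightarrow W(F)$ and the known field result. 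For $p=2,3$ the paper analyzes the exact couple directly, using \aref{lemma:important} to get $\pi_{3,0}\f_{2}(\KW)=0$ and invoking the surjectivity statement of \aref{lem:lemma69} for $p=3$. Your $p\equiv 3$ sketch (which inexplicably discusses $\pi_{p-1,0}$) does not supply either of these ingredients.
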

\begin{proof}
By $\eta$-periodicity of $\KW$ \cite[Example 2.3]{slices} we are reduced to showing
\begin{equation}
\label{equation:toshow}
\bigcap_{i\geq0} \f_{q+i}\pi_{p,0}\f_{q}(\KW) = 0
\end{equation}
for $p = 0, 1, 2, 3$.

When $p = 0, 1$, \aref{thm:KW-OFS} shows $E^{\infty}_{p+1,q,0}(\KW) = 0$.
As in \cite[Corollary 6.16]{slices} we deduce that $\f_{q+1}\pi_{p,0}\f_{q}(\KW) \to \pi_{p,0}\f_{q-1}(\KW)$ is injective.
Hence, we obtain the injection
\begin{equation}
\label{equation:KWinjection}
\bigcap_i \f_{q+i} \pi_{p,0}\f_{q+1}(\KW)
\to
\bigcap_i \f_{q+i} \pi_{p,0}\f_{q}(\KW).
\end{equation}
The target of \eqref{equation:KWinjection} is trivial by induction. 
When $p = 0$ recall that the natural map $W(\OFS)\to W(F)$ is injective \cite[Corollary IV.3.3]{MilnorHusemoller} and that it factors via the slice filtration,
i.e.,
there is a canonical map $\f_{q}(\KW_{\OFS}) \to j_*\f_{q}(\KW_{F})$ for the generic point $j:\Spec F \to \Spec \OFS$.
The group $\pi_{1,0}(\KW) = 0$ according to \aref{lemma:KWOFS}.

When $p = 2, 3$ we consider the unrolled exact couple for $\KW$
\[
\begin{tikzcd}
& \vdots \ar[d] & \vdots \ar[d] & \vdots \ar[d] \\
& \pi_{4,0}\s_{1}(\KW) \ar[d, "0"] & \pi_{4,0}\s_{0}(\KW) \ar[d, "0"] & \pi_{4,0}\s_{-1}(\KW)\ar[d, "0"] \\
\dots \ar[r] & \pi_{3,0}\f_{2}(\KW) \ar[r]\ar[d, "0"] & \pi_{3,0}\f_{1}(\KW) \ar[r, "\cong"]\ar[d] &  \pi_{3,0}\f_{0}(\KW)\ar[d, "0"] \\
& \pi_{3,0}\s_{2}(\KW) \ar[d] & \pi_{3,0}\s_{1}(\KW) \ar[d] &  \pi_{3,0}\s_{0}(\KW)\ar[d, "0"] \\
\dots \ar[r] & \pi_{2,0}\f_3(\KW) \ar[r]\ar[d] & \pi_{2,0}\f_{2}(\KW) \ar[r]\ar[d] &  \pi_{2,0}\f_{0}(\KW)\ar[d, "0"] \\
& \vdots & \vdots & \vdots
\end{tikzcd}
\]
Inspection of the differentials shows the indicated trivial maps,
see \cite[Theorem 5.3]{slices}.
By \aref{lemma:important} there is an isomorphism
$$
\coker(\pi_{3,0}\f_{1}(\KW) \to \pi_{3,0}\s_{1}(\KW)) 
\cong 
h^{0,1}.
$$
This implies the vanishing $\pi_{3,0}\f_{2}(\KW) = 0$.
Since $E^{\infty}_{3,q,0}(\KW) = 0$ for $q > 1$ according to \aref{thm:KW-OFS} and $\pi_{3,0}\f_{1}(\KW) \to \ker(d^1_{3,1,0})$ is surjective,
a diagram chase implies the map
\[
\f_{q+1}\pi_{2,0}\f_{q}(\KW) \to \pi_{2,0}\f_{q-1}(\KW)
\]
is injective for all $q$.
This implies \eqref{equation:toshow} when $p = 2$.

When $p = 3$ and $q>2$ we show in \aref{lem:lemma69} that there is a naturally induced surjection
\[
\pi_{4,0}\f_{q-1}(\KW) \to \pi_{4,0}s_{q-1}(\KW) \to E^{\infty}_{4,q-1,0}(\KW).
\]
This implies the map $\f_{q+1}\pi_{3,0}\f_{q}(\KW) \to \pi_{3,0}\f_{q-1}(\KW)$ is injective (in fact, it is trivial).
\end{proof}

To proceed we recall some facts about the Clifford invariant for rings of $\mathcal S$-integers from \cite{Hahn}.
Any quadratic space $(P, \phi)$ over $\OFS$ has an associated Clifford algebra $C(P, \phi)$. 
This is a graded Azumaya algebra; in particular, it defines an element of the Brauer-Wall group $BW(\OFS)$.
The Clifford invariant is the induced group homomorphism $\Cliff\colon W(\OFS)\to BW(\OFS)$, 
see \cite[p.~206]{Hahn}.
Over the field $F$,
the restriction of the Clifford invariant to the square of the fundamental ideal $I^2(F)$ factors through the $2$-torsion subgroup ${}_{2}\Br(F)$ of the Brauer group, 
see \cite[p.~207, Theorem 13.14]{Hahn}, \cite[Lemma 4.4]{MilnorHusemoller}.
From these considerations we obtain the commutative diagram
\[
\begin{tikzcd}
\Cliff^{-1}({}_{2}\Br(\OFS))\cap I^2(F) \ar[r, hook]\ar[d, hook]\ar[rrr, bend left=10] &W(\OFS)\ar[d, hook] \ar[r, "\Cliff"] & BW(\OFS)\ar[d] & \ar[l, hook]{}_{2}\Br(\OFS)\ar[d, hook] \\
I^2(F)\ar[r, hook]\ar[rrr, bend right=10] & W(F) \ar[r, "\Cliff"] & BW(F) &\ar[l, hook] {}_{2}\Br(F).
\end{tikzcd}
\]

\begin{lemma}
\label{lem:lemma69}
There is a naturally induced isomorphism
\[
\f_{q}\pi_{0,0}(\KW)/\f_{q+1}\pi_{0,0}(\KW) \cong E^{\infty}_{0,q,0}(\KW).
\]
\end{lemma}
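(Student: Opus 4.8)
The content of the lemma is that the slice spectral sequence for $\KW$ over $\OFS$ computes the associated graded of the slice filtration on $\pi_{0,0}(\KW)\cong W(\OFS)$ in bidegree $(0,q,0)$; equivalently, the edge homomorphism $\f_q\pi_{0,0}(\KW)/\f_{q+1}\pi_{0,0}(\KW)\to E^\infty_{0,q,0}(\KW)$ is an isomorphism. The plan is to deduce this from the convergence statement \aref{lem:OFS-KW-conv} by means of Boardman's exact sequence, exactly as \eqref{equation:sesE00} was obtained for $\KW/2^{n}$.

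First I would invoke \aref{lem:OFS-KW-conv}, which gives $\bigcap_{i\geq 0}\f_{q+i}\pi_{p,w}\f_{q}(\KW)=0$ for all $p,w,q$ over $\OFS$; in particular this holds for $p=-1$, $w=0$. Feeding the convergence condition into the exact sequence of Boardman recalled before \eqref{equation:sesE00}, now for the motivic spectrum $\KW$ itself and in bidegree $(p,w)=(0,0)$, produces
\[
0\to\f_q\pi_{0,0}(\KW)/\f_{q+1}\pi_{0,0}(\KW)\to E^\infty_{0,q,0}(\KW)\to\bigcap_{i\geq 1}\f_{q+i}\pi_{-1,0}\f_{q+1}(\KW)\to\bigcap_{i\geq 0}\f_{q+i}\pi_{-1,0}\f_{q}(\KW).
\]
The third term is the $p=-1$, $w=0$ instance of \aref{lem:OFS-KW-conv} with $q$ shifted to $q+1$, hence zero, so the leftmost arrow is the asserted isomorphism. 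Naturality is automatic, since every map in Boardman's exact sequence is natural in the filtered object and the slice tower is functorial. One can then make both sides explicit by combining \aref{thm:KW-OFS} with the inclusion $I^{q}(\OFS)\subseteq\f_q\pi_{0,0}(\KW)$ from \aref{lem:lemma65}.

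The one genuine subtlety is logical rather than mathematical: the case $p\equiv 3\bmod 4$ of \aref{lem:OFS-KW-conv} refers forward to the present lemma for $q\geq 2$, so one must arrange that the two are not proved circularly. The resolution is that \aref{lem:OFS-KW-conv} uses only the \emph{surjectivity} of $\pi_{0,0}\f_q(\KW)\twoheadrightarrow E^\infty_{0,q,0}(\KW)$ for $q\geq 2$, and this half can be established before any convergence in the $\pi_{-1,0}$-line is invoked: run the long exact sequences of the slice triangles starting from $\pi_{-1,0}\f_1(\KW)\cong h^{2,1}$ and $\pi_{-1,0}\f_2(\KW)=0$ (supplied by \aref{lemma:important} and its proof), compare with the generic point $j\colon\Spec F\to\Spec\OFS$, where the transition maps in the tower $\{\pi_{-1,0}\f_m(\KW)(F)\}$ are eventually trivial by \cite[Lemma 6.13]{slices}, and control the closed points $x\notin\mathcal{S}$ by absolute purity (\aref{theorem:KQKWpurity}, \aref{theorem:MZpurity}), which together with the $(1,1)$-periodicity of $\KW$ identifies $\pi_{-1,0}i_{*}i^{!}\f_m(\KW)$ with $\pi_{0,0}\f_{m-1}(\KW_{k(x)})$ --- a group that is $0$ or $\Z/2$ depending on $m$ modulo $4$, since $k(x)$ is finite. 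Carrying out this residue-field bookkeeping is the delicate part of the argument; once it is done, the displayed exact sequence closes the proof for $q\geq 2$, and the cases $q<2$ are read off directly from \aref{thm:KW-OFS} and \aref{lemma:KWOFS}.
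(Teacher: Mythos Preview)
Your main framework---deduce the isomorphism from Boardman's exact sequence plus the convergence in \aref{lem:OFS-KW-conv}---is circular, as you yourself note: the $p\equiv 3\bmod 4$, $q>2$ case of \aref{lem:OFS-KW-conv} invokes exactly the surjectivity half of the present lemma at index $q-1$. Your proposed resolution is to establish that surjectivity independently; but since the injectivity half is free from \cite[Lemma 5.6]{Boardman}, this surjectivity \emph{is} the lemma, and once it is in hand the convergence detour contributes nothing. So the proposal really collapses to your last paragraph's sketch of a direct surjectivity argument, and that sketch is too thin to carry the weight---in particular it does not single out the genuinely hard case $q=2$.

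The paper's proof does not invoke \aref{lem:OFS-KW-conv} at all. Injectivity comes from \cite[Lemma 5.6]{Boardman} as you say, but surjectivity is established case by case using arithmetic input your sketch does not engage. For $q>2$ the argument uses the signature map and \cite[Corollary IV.4.5]{MilnorHusemoller} to show $I^3(\OFS)=I^3(F)\cong 8\Z^{r_1}$, so that $I^q(F)$ is generated by forms over $\OFS$; combined with the field case \cite[Lemma 6.9]{slices} and the comparison diagram \eqref{equation:E-inf-iso} this gives surjectivity. The case $q=1$ is immediate from $\pi_{3,0}\f_2(\KW)=0$. The delicate case is $q=2$: the paper identifies $\f_2\pi_{0,0}(\KW)$ with $\Cliff^{-1}({}_2\Br(\OFS))\cap I^2(F)$ via a localization chase through the diagram \eqref{equation:diag-cliff}, and then uses \cite[Theorem 14.6]{Hahn} to show that the Clifford invariant surjects onto ${}_2\Br(\OFS)\cong h^{2,2}/\tau = E^\infty_{0,2,0}(\KW)$. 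This Clifford/Brauer input is the heart of the argument and does not emerge from purity and residue-field bookkeeping alone.
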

\begin{proof}
This is shown over $F$ in \cite[Lemma 6.9]{slices}.
By \aref{lem:lemma65} there are maps 
$$
I^{q}(\OFS)/I^{q+1}(\OFS) \to \f_{q}\pi_{0,0}(\KW)/\f_{q+1}\pi_{0,0}(\KW)(\OFS),
$$
and a commutative diagram
\begin{equation}
\label{equation:E-inf-iso}
\begin{tikzcd}
I^{q}(\OFS)/I^{q+1}(\OFS) \ar[r]\ar[d] & \f_{q}\pi_{0,0}(\KW)/\f_{q+1}\pi_{0,0}(\KW)(\OFS) \ar[hook, r]\ar[d, hook] & E^{\infty}_{0,q,0}(\KW)(\OFS) \ar[d] \\
I^{q}(F)/I^{q+1}(F) \ar[r, "\cong"] & \f_{q}\pi_{0,0}(\KW)/\f_{q+1}\pi_{0,0}(\KW)(F) \ar[r, "\cong"] & h^{q,q}(F).
\end{tikzcd}
\end{equation}
The injections in \eqref{equation:E-inf-iso} follow from \cite[Lemma 5.6]{Boardman} and the isomorphisms follow from \cite[Lemma 6.9]{slices}.
In \eqref{equation:E-inf-iso} we want to show there is a naturally induced surjection
\[
\f_{q}\pi_{0,0}(\KW)/\f_{q+1}\pi_{0,0}(\KW)\to E^{\infty}_{0,q,0}(\KW).
\] 

When $q > 2$, \aref{thm:KW-OFS} and \aref{thm:hOFS} imply $E^{\infty}_{0,q,0}(\KW)(\OFS) \cong h^{q,q}(F)$.
Moreover,  
$I^{q}(F)$ is generated by forms defined over $\OFS$.
Indeed, 
by \cite[Corollary IV.4.5]{MilnorHusemoller} the image of $W(\OFS) \cap I^2(F)$ by the signature map is $4\Z^{r_{1}}$.
It follows that $\sigma(I(\OFS)) \supset 4\Z^{r_{1}}$ and $I^3(\OFS) = I^3(F) \cong 8\Z^{r_{1}}$.
Hence the leftmost vertical map in \eqref{equation:E-inf-iso} is surjective.
This implies $\f_{q}\pi_{0,0}(\KW)/\f_{q+1}\pi_{0,0}(\KW) \to h^{q,q}$ is surjective.

When $q = 1$ we note the map $\pi_{4,0}\s_{1}(\KW) \to \pi_{3,0}\f_{2}(\KW)$ is trivial. 
It follows that $\pi_{4,0}\f_{1}(\KW) \to \pi_{4,0}\s_{1}(\KW) \cong E^{\infty}_{0,1,0}(\KW)$ is surjective.

In the more complicated case $q = 2$ we first show there is an injection 
$$
\Cliff^{-1}({}_{2}\Br(\OFS))\cap I^2(F) \cong \f_{2}\pi_{0,0}(\KW) \hookrightarrow \pi_{0,0}(\KW) \cong W(\OFS).
$$
For this we consider the commutative diagram with exact rows and columns obtained by localization and the slice filtration
\begin{equation}
\label{equation:diag-cliff}
\begin{tikzcd}
h^{0,1}(\OFS) = \pi_{1,0}\s_{1}(\KW) \ar[r, "0"] \ar[d, "\cong"] & \pi_{0,0}\f_{2}(\KW) \ar[r, hook]\ar[d, hook] & \pi_{0,0}\f_{1}(\KW) \ar[d, hook] \\
h^{0,1}(F) = \pi_{1,0}j_*j^{\ast}\s_{1}(\KW) \ar[r, "0"] \ar[d, "0"] & \pi_{0,0}j_*j^{\ast}\f_{2}(\KW) \ar[r, hook]\ar[d] & \pi_{0,0}j_*j^{\ast}\f_{1}(\KW)\ar[d] \\
\oplus_{x\not\in\mathcal S}\pi_{1,0}\s_{1}(i_!i^{!}\s_{1}(\KW)) \ar[r] \ar[d] &  \oplus_{x\not\in\mathcal S}\pi_{0,0}i_!i^{!}\f_{2}(\KW) \ar[r]\ar[d] & \oplus_{x\not\in\mathcal S}\pi_{0,0}i_!i^{!}\f_{1}(\KW)\\
\pi_{2,0}\s_{1}(\KW) \ar[r, hook] & \pi_{1,0}\f_{2}(\KW).
\end{tikzcd}
\end{equation}
An inspection of the slice differentials as in \cite[Theorem 5.3]{slices} yields the indicated injective and trivial maps.
Suppose $x \in \Cliff^{-1}({}_{2}\Br(\OFS)) \cap I^2(F) \subseteq \pi_{0,0}j_*j^{\ast}\f_{2}(\KW) \cong I^2(F) = \Cliff^{-1}({}_{2}\Br(F))\cap I^2(F)$ maps to 
$y \in \oplus_{x\not\in\mathcal S}\pi_{0,0}i_!i^{!}\f_{2}(\KW)$.
(The isomorphism is \cite[Corollary 6.16]{slices}, and the equality holds because the Clifford-invariant surjects onto ${}_{2}\Br(F)$ by \cite[Theorem 14.6]{Hahn}). 
Then $y$ maps trivially to $\oplus_{x\not\in\mathcal S}\pi_{0,0}i_!i^{!}\f_{0}(\KW)$ and $\pi_{1,0}\f_{2}(\KW)$, 
so it is the image of some element $z \in \oplus_{x\not\in\mathcal S}\pi_{1,0}i_!i^{!}\s_{1}(\KW)$.
Suppose $z$ maps to $w$ under the injective map to $\pi_{2,0}\s_{1}(\KW)$.
By commutativity of \eqref{equation:diag-cliff} $w$ goes to zero in $\pi_{1,0}\f_{2}(\KW)$.
Hence 
we get $y=0$, 
which implies $x$ is in the image of $\pi_{0,0}\f_{2}(\KW)$.

The commutative diagram of short exact sequences (see \aref{lem:pic-tau} and its proof)
\[
\begin{tikzcd}
0 \ar[r] & h^{2,1}(\OFS) \ar[r, "\tau"]\ar[d] & h^{2,2}(\OFS)\ar[r]\ar[d] & {}_{2} H_{\et}^{2}(\OFS;\Gm) \ar[r]\ar[d] & 0 \\
0 \ar[r]& 0 \ar[r, "\tau"] & h^{2,2}(F)\ar[r] & {}_{2} H_{\et}^{2}(F;\Gm) \ar[r] & 0
\end{tikzcd}
\]
yields a naturally induced injection $E^{\infty}_{0,2,0}(\KW) \cong h^{2,2}(\OFS)/\tau \to h^{2,2}(F)$.
Here we use the injection of Brauer groups $H_{\et}^{2}(\OFS;\Gm) \to H_{\et}^{2}(F;\Gm)$ from \cite[p.~107]{MR559531}.
Hence we may identify 
$$
\f_{2}\pi_{0,0}\KW \to \f_{2}\pi_{0,0}\KW/\f_{3}\pi_{0,0}\KW \to E^{\infty}_{0,2,0}(\KW)
$$ 
with the upper horizontal map in the commutative diagram
\[
\begin{tikzcd}
\f_{2}\pi_{0,0}\KW \cong \Cliff^{-1}({}_{2}\Br(\OFS))\cap I^2(F)  \ar[r, "\Cliff"]\ar[d, hook] & {}_{2} H_{\et}^{2}(\OFS;\Gm)  \ar[d, hook] \cong h^{2,2}(\OFS)/\tau\\
\f_{2}\pi_{0,0}j_*j^{\ast}(\KW) \cong \Cliff^{-1}({}_{2}\Br(F))\cap I^2(F) \ar[r, "\Cliff"] & {}_{2} H_{\et}^{2}(F;\Gm) \cong h^{2,2}(F).
\end{tikzcd}
\]
By \cite[Theorem 14.6]{Hahn} the horizontal maps are surjective,
and we conclude the naturally induced map 
$$
\f_{2}\pi_{0,0}\KW/\f_{3}\pi_{0,0}\KW \to E^{\infty}_{0,2,0}(\KW)
$$ 
is surjective.
\end{proof}

From the proof of \aref{lem:lemma69} we obtain the following form of Milnor's conjecture for quadratic forms \cite{Milnor} over the Dedekind domain $\OFS$.
\begin{theorem}
\label{theorem:milnor-ofs}
Set $I_{q}(\OFS) = \f_{q}\pi_{0,0}(\KW)(\OFS)$ for all integers $q$.
Then $I_{q}(\OFS) \cong W(\OFS)$ for $q\leq 0$, $I_{1}(\OFS)\cong\ker(\rk_{2})$, $I_{2}(\OFS) \cong \Cliff^{-1}({}_{2}\Br(\OFS))\cap I^2(F)$, and $I_{q}(\OFS) = I_{1}^q(\OFS)$ for $q>2$.
For $\tau\in h^{0,1}$ the class of $-1$ and every $q\geq 0$ there is an isomorphism
\begin{equation}
\label{equation:MCOFS}
I_{q}(\OFS)/I_{q+1}(\OFS) \cong h^{q,q}(\OFS)/\tau.
\end{equation}
\end{theorem}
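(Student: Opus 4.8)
The plan is to read off \aref{theorem:milnor-ofs} from the slice-filtration analysis of $\KW$ over $\OFS$ already carried out in \aref{lem:lemma65}, \aref{lem:lemma69}, and \aref{thm:KW-OFS}, together with standard facts about the Witt ring of $\OFS$; there is essentially no new computation to perform. I would first settle the identifications of the filtration steps $I_{q}(\OFS)=\f_{q}\pi_{0,0}(\KW)(\OFS)$. For $q\leq 0$ one has $\f_{q}(\KW)=\f_{0}(\KW)=\KW$, so $I_{q}(\OFS)=\pi_{0,0}(\KW)(\OFS)=W(\OFS)$ by \aref{lemma:KWOFS}. For $q=1$, the commutative square of \aref{lem:lemma65} shows the image of $\pi_{0,0}\f_{1}(\KW)(\OFS)$ in $\pi_{0,0}(\KW)(\OFS)=W(\OFS)$ factors through $I(\OFS)=\ker(\rk_{2})$, while the inclusion $I^{1}(\OFS)\subseteq \f_{1}\pi_{0,0}(\KW)(\OFS)$ from the same lemma gives the reverse containment; hence $I_{1}(\OFS)=\ker(\rk_{2})$. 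For $q=2$ I would simply extract from the proof of \aref{lem:lemma69} the fact, proved there via the localization diagram \eqref{equation:diag-cliff}, absolute purity (\aref{theorem:KQKWpurity}, \aref{theorem:MZpurity}), and the surjectivity of the Clifford invariant onto ${}_{2}\Br(\OFS)$ from \cite{Hahn}, that $I_{2}(\OFS)\cong\Cliff^{-1}({}_{2}\Br(\OFS))\cap I^2(F)$ as a subgroup of $W(\OFS)$.

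For $q>2$ I would show $I_{q}(\OFS)=I^{q}(\OFS)$ by a sandwich argument. \aref{lem:lemma65} gives $I^{q}(\OFS)\subseteq I_{q}(\OFS)$. The canonical map $\f_{q}(\KW_{\OFS})\to j_{*}\f_{q}(\KW_{F})$ for the generic point $j$, together with injectivity of $W(\OFS)\to W(F)$, gives $I_{q}(\OFS)\subseteq I_{q}(F)$, and $I_{q}(F)=I^{q}(F)$ over the field by the split exact sequence \eqref{equation:split-effective-KW}. Finally the signature computation in the proof of \aref{lem:lemma69} ($\sigma(I(\OFS))\supseteq 4\Z^{r_{1}}$, whence $I^{3}(\OFS)=I^{3}(F)$, and then $I^{q}(F)=2^{q-3}I^{3}(F)=2^{q-3}I^{3}(\OFS)\subseteq I^{q}(\OFS)$ using $\langle 1,1\rangle\in I(\OFS)$ and $I^{q}(F)=2I^{q-1}(F)$ for $q\geq 3$ since $\vcd(F)=2$) forces $I^{q}(\OFS)=I^{q}(F)$ for all $q\geq 3$. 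Since $I_{1}(\OFS)=I(\OFS)$, this reads $I_{q}(\OFS)=I^{q}(\OFS)=I_{1}(\OFS)^{q}$.

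For the graded isomorphism \eqref{equation:MCOFS}, \aref{lem:lemma69} yields $I_{q}(\OFS)/I_{q+1}(\OFS)\cong E^{\infty}_{0,q,0}(\KW)$, and \aref{thm:KW-OFS} evaluates the right side as $h^{q,q}(\OFS)$ for $q\neq 2$ and as $h^{2,2}(\OFS)/\tau$ for $q=2$. To phrase this uniformly as $h^{q,q}(\OFS)/\tau$ it remains to observe that $\tau$-multiplication $h^{q,q-1}(\OFS)\to h^{q,q}(\OFS)$ is zero for $q\neq 2$: its source vanishes for $q\leq 1$ for weight reasons, and for $q\geq 3$ because $h^{q,q-1}(\OFS)=0$ --- the Gersten/localization sequence over $\OFS$ reduces this to $h^{q,q-1}(F)=0$ (a field in degree above its weight) and to the vanishing of the residue terms $h^{q-1,q-2}(k(x))$ at the finite primes $x\not\in\mathcal{S}$ for $q\geq 3$ (finite fields have mod $2$ cohomological dimension $1$); see \aref{appendixB}.

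The bulk of the real work is already done inside the proof of \aref{lem:lemma69}, so the point requiring care here is organizational: extracting the $q=2$ case as an honest subgroup of $W(\OFS)$ cut out by the Clifford invariant, and repackaging the case-split output of \aref{thm:KW-OFS} into the single formula $h^{q,q}(\OFS)/\tau$. The one genuinely new small input is the vanishing $h^{q,q-1}(\OFS)=0$ for $q\geq 3$, which is what makes the $/\tau$ harmless away from the isolated weight $2$; I expect that --- or, if one prefers, the $q=2$ identification --- to be the only place where one cannot just quote an earlier statement verbatim.
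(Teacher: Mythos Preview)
Your proposal is correct and follows the paper's approach exactly: the theorem is a repackaging of the proof of \aref{lem:lemma69} together with \aref{thm:KW-OFS} and \aref{lem:lemma65}, and you have unpacked this correctly, including the sandwich $I^{q}(\OFS)\subseteq I_{q}(\OFS)\subseteq I_{q}(F)=I^{q}(F)=I^{q}(\OFS)$ for $q>2$ via the signature argument.

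One small slip: in your justification of $h^{q,q-1}(\OFS)=0$ for $q\geq 3$, the Gysin sequence gives
\[
\bigoplus_{x\not\in\mathcal S} h^{q-2,q-2}(k(x))\to h^{q,q-1}(\OFS)\to h^{q,q-1}(F)=0,
\]
so the residue terms are $h^{q-2,q-2}(k(x))$, not $h^{q-1,q-2}(k(x))$. For $q=3$ this is $h^{1,1}(k(x))\cong\Z/2\neq 0$, so ``cohomological dimension $1$'' does not kill it. The clean fix is to invoke \aref{thm:hOFS} directly: the integral groups $H^{q,q-1}(\OFS)$ and $H^{q+1,q-1}(\OFS)$ vanish for $q\geq 3$ (they lie outside $1\leq p\leq q$ and are not $(2,1)$), and the universal coefficient sequence then gives $h^{q,q-1}(\OFS)=0$. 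This is all the paper is using when it remarks that $h^{q,q}/\tau=h^{q,q}$ for $q\neq 2$.
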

\begin{remark}
Note that $h^{q,q}/\tau = h^{q,q}$ for $q \neq 2$.
Via the Clifford invariant the proof of \aref{lem:lemma69} identifies $I_{2}(\OFS)$ with the kernel of the Arf invariant \cite[p.~206]{Hahn}.
\end{remark}

By now familiar arguments this allows us to conclude the following generalization of \aref{theorem:convergence1}.
\begin{theorem}
\label{theorem:convergence2}
Over $\OFS$ there are isomorphisms for $n\geq 1$, $p\equiv 0\bmod 4$, and $w\in\Z$
\begin{align*}
\f_{q} \pi_{p+w,w}(\KW/2^{n}) &\cong h^{2,1}(\OFS)\bullet\im (I_{q-w}(\OFS) \to W(\OFS)/2^{n}), q \leq 1 \\
\f_{q} \pi_{p+w,w}(\KW/2^{n}) &\cong \im (I_{q-w}(\OFS) \to W(\OFS)/2^{n}), q > 1 \\
\f_{q} \pi_{p+w+1,w}(\KW/2^{n}) &\cong {}_{2^{n}} I_{q-w}(\OFS) \\
\f_{q} \pi_{p+w+2,w}(\KW/2^{n}) &= 0 \\
\f_{0} \pi_{p+w+3,w}(\KW/2^{n}) &\cong \f_{1} \pi_{p+w+3,w}(\KW/2^{n}) \cong h^{2,1}(\OFS) \\
\f_{q} \pi_{p+w+3,w}(\KW/2^{n}) & = 0, q > 1.  \\
\end{align*}
\end{theorem}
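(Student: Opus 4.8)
The plan is to run the proof of \aref{theorem:convergence1} with the field-level inputs from \cite[\S6]{slices} replaced throughout by the $\OFS$-statements already established: \aref{lem:lemma65} in place of \cite[Lemma 6.4]{slices}, \aref{lemma:important} and \aref{lem:lemma69} in place of \cite[Corollaries 6.11, 6.15, 6.16]{slices}, \aref{theorem:milnor-ofs} in place of the Milnor conjecture for quadratic forms over $F$, and \aref{lemma:KWOFS} for the coefficient groups $\pi_{*,0}(\KW)(\OFS)$. As in \aref{theorem:convergence1}, the $(4,0)$- and $(1,1)$-periodicity of $\KW$ \cite[\S6.3]{slices} --- which persists for $\KW/2^{n}$ --- reduces us to computing $\f_{q}\pi_{r,0}(\KW/2^{n})$ over $\OFS$ for $r\in\{0,1,2,3\}$, with $I_{q-w}(\OFS)=I_{q}(\OFS)$ and the convention $I_{q}(\OFS)=W(\OFS)$ for $q\leq 0$ from \aref{theorem:milnor-ofs}. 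The engine is the ladder of mod $2^{n}$ universal coefficient short exact sequences \eqref{equation:mod2diagrams}, whose outer columns are computed via \aref{theorem:milnor-ofs} and \aref{lemma:KWOFS}, together with the surjectivity of the slice maps $\pi_{*,0}\s_{q+i-1}(\KW/2^{n})\to\pi_{*-2,0}\f_{q+i}(\KW)$ established exactly as in \eqref{equation:homotopyslicemod2surjection}--\eqref{equation:mod2imageisomorphism}, which identifies $\im(\pi_{r,0}(\f_{q}(\KW/2^{n})\to\KW/2^{n}))$ with $\im(\pi_{r,0}(\f_{q}(\KW)\to\KW/2^{n}))$.

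For $r\equiv 1\bmod 4$ I would repeat the argument for \eqref{equation:KW2-filt2}: $\pi_{1,0}(\KW)(\OFS)=0$ by \aref{lemma:KWOFS} makes the right-hand vertical map in the universal coefficient square an isomorphism onto ${}_{2^{n}}\pi_{0,0}(\KW)={}_{2^{n}}W(\OFS)$, and the natural splitting of $0\to h^{q-4,q-1}\oplus\cdots\to\pi_{0,0}\f_{q}(\KW)\to\f_{q}\pi_{0,0}(\KW)\to 0$ over $\OFS$ --- the $\OFS$-analogue of \eqref{equation:split-effective-KW}, obtained from \aref{theorem:milnor-ofs} and \cite[Corollary 6.16]{slices} --- shows the image of ${}_{2^{n}}\pi_{0,0}\f_{q}(\KW)$ is precisely ${}_{2^{n}}\f_{q}\pi_{0,0}(\KW)={}_{2^{n}}I_{q}(\OFS)$. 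For $r\equiv 2\bmod 4$ there is nothing to prove: \aref{lemma:KWOFS} gives $\pi_{1,0}(\KW)(\OFS)=0=\pi_{2,0}(\KW)(\OFS)$, so $\pi_{2,0}(\KW/2^{n})(\OFS)=0$ by universal coefficients and every filtration subgroup vanishes.

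For $r\equiv 0\bmod 4$ the integral group is $W(\OFS)$ and $\pi_{-1,0}(\KW)(\OFS)\cong\Pic(\OFS)/2\cong h^{2,1}(\OFS)$ is $2$-torsion, so the universal coefficient sequence reads $0\to W(\OFS)/2^{n}\to\pi_{0,0}(\KW/2^{n})\to h^{2,1}(\OFS)\to 0$. On the subgroup $W(\OFS)/2^{n}$ the filtration is $\im(I_{q}(\OFS)\to W(\OFS)/2^{n})$ by \aref{lem:lemma65}, \aref{lem:lemma69}, and \aref{theorem:milnor-ofs}, exactly as over $F$; the quotient copy of $h^{2,1}(\OFS)$ lifts into $\f_{1}$ because $\pi_{-1,0}\f_{1}(\KW)\to h^{2,1}$ is an isomorphism (\aref{lemma:important}) but not into $\f_{2}$, since $\pi_{-1,0}\f_{2}(\KW)=\pi_{3,0}\f_{2}(\KW)=0$ by the proof of \aref{lem:OFS-KW-conv} and the relevant $2^{n}$-torsion of $\pi_{-2,0}\f_{q}(\KW)$ is likewise controlled there. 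This yields $h^{2,1}(\OFS)\bullet\im(I_{q}(\OFS)\to W(\OFS)/2^{n})$ for $q\leq 1$ and $\im(I_{q}(\OFS)\to W(\OFS)/2^{n})$ for $q>1$. The case $r\equiv 3\bmod 4$ is dual: $\pi_{3,0}(\KW)(\OFS)\cong h^{2,1}(\OFS)$, $\pi_{2,0}(\KW)(\OFS)=0$, so $\pi_{3,0}(\KW/2^{n})(\OFS)\cong h^{2,1}(\OFS)$; the isomorphism $\pi_{3,0}\f_{1}(\KW)\cong\pi_{3,0}\f_{0}(\KW)$ and the vanishing $\pi_{3,0}\f_{2}(\KW)=0$ from the proof of \aref{lem:OFS-KW-conv} give $\f_{0}\pi_{3,0}(\KW/2^{n})\cong\f_{1}\pi_{3,0}(\KW/2^{n})\cong h^{2,1}(\OFS)$ and $\f_{q}\pi_{3,0}(\KW/2^{n})=0$ for $q>1$.

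The step I expect to be the main obstacle is the low-weight bookkeeping peculiar to $\OFS$: the first slice $\s_{1}(\KW)$ carries the extra summand $h^{2,1}$ (cf.\ the proof of \aref{thm:KW-OFS}), so the surjectivity of $\pi_{*,0}\s_{q+i-1}(\KW/2^{n})\to\pi_{*-2,0}\f_{q+i}(\KW)$ must be rechecked in the weights where this summand and its $\tau$-multiplication differential (\aref{lem:pic-tau}) interact, and one must verify that the extension recorded by the $\bullet$-notation in the $q\leq 1$ range of the $r\equiv 0$ case is the genuine (generally nonsplit) one rather than a direct sum. Once these contributions are pinned down, the remaining steps are formal consequences of \aref{theorem:milnor-ofs}, \aref{lem:lemma69}, \aref{lemma:important}, and the universal coefficient formalism, precisely as over fields.
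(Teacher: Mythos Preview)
Your proposal is correct and is exactly the approach the paper takes: the paper's entire proof is the sentence ``By now familiar arguments this allows us to conclude the following generalization of \aref{theorem:convergence1}'', and your plan---rerunning the proof of \aref{theorem:convergence1} with \aref{lemma:KWOFS}, \aref{lem:lemma65}, \aref{lemma:important}, \aref{lem:OFS-KW-conv}, \aref{lem:lemma69}, and \aref{theorem:milnor-ofs} substituted for the field inputs from \cite[\S6]{slices}---is precisely what those ``familiar arguments'' are. You have in fact supplied more detail than the paper, including correctly flagging the low-weight $h^{2,1}$ bookkeeping as the only place where the $\OFS$ argument genuinely differs from the field case.
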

Here we write $A\bullet B$ for an abelian group extension of $B$ by $A$,
i.e., 
there is an exact sequence
$$
0
\to
A
\to
A\bullet B
\to
B
\to 
0.
$$

\begin{theorem}
\label{theorem:KWmod2nconvergence2}
The filtrations of $W(\OFS)/2^{n}$ by $I_{q}(\OFS)/2^{n}$ and of ${}_{2^{n}} W(\OFS)$ by ${}_{2^{n}} I_{q}(\OFS)$ are exhaustive, Hausdorff, and complete.
Hence the slice spectral sequence for $\KW/2^{n}$ over $\OFS$ is strongly convergent.
\end{theorem}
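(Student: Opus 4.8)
The plan is to mimic the proof of \aref{theorem:KWmod2nconvergence}, using \aref{theorem:milnor-ofs} in place of the field-level description of the fundamental ideal filtration and \aref{theorem:convergence2} in place of \aref{theorem:convergence1}. By \aref{theorem:milnor-ofs} one has $I_{0}(\OFS) = W(\OFS)$, $I_{q}(\OFS) = I(\OFS)^{q}$ for $q > 2$, and every subquotient $I_{q}(\OFS)/I_{q+1}(\OFS) \cong h^{q,q}(\OFS)/\tau$ is annihilated by $2$. Exhaustivity of both filtrations is immediate from $I_{0}(\OFS) = W(\OFS)$, so the task reduces to checking that they become trivial modulo $2^{n}$ in high degrees, i.e.\ that $\im(I_{q}(\OFS)\to W(\OFS)/2^{n}) = 0$ and ${}_{2^{n}}I_{q}(\OFS) = 0$ for $q \gg 0$. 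Granting this, both filtrations are Hausdorff and complete, and strong convergence of the slice spectral sequence follows in combination with \aref{theorem:convergence2} and the conditional convergence of \aref{lem:OFS-KW-conv}, exactly as in the field case.

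The torsion filtration is the easy one. As shown in the proof of \aref{lem:lemma69}, $I^{3}(\OFS) = I^{3}(F) \cong 8\Z^{r_{1}}$ (this uses $\vcd(F) < \infty$, valid since $F$ is a number field), and since $I^{q}(\OFS)/I^{q+1}(\OFS)$ is killed by $2$ for $q \geq 3$ one gets $2^{q-3}I^{3}(\OFS) \subseteq I^{q}(\OFS) \subseteq I^{3}(\OFS)$; in particular $I^{q}(\OFS)$ is a free abelian group of rank $r_{1}$ for every $q \geq 3$. Hence ${}_{2^{n}}I_{q}(\OFS) = 0$ for $q \geq 3$, so the filtration of ${}_{2^{n}}W(\OFS)$ by ${}_{2^{n}}I_{q}(\OFS)$ is eventually zero, and therefore Hausdorff and complete.

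For the quotient filtration I would prove $I^{q}(\OFS) = 2I^{q-1}(\OFS)$ for $q \gg 0$, so that $I^{q}(\OFS) = 2^{\,q-q_{1}}I^{q_{1}}(\OFS) \subseteq 2^{n}W(\OFS)$ once $q \geq q_{1}+n$. By \aref{thm:hOFS} and the localization sequence, $h^{q,q}(\OFS) \cong h^{q,q}(F)$ for $q \geq 3$ (the residue fields of $\OFS$ have odd characteristic, so their mod $2$ motivic cohomology vanishes in degrees $\geq 2$), and for $q \gg 0$ the latter is $(\Z/2)^{r_{1}}$, since in high cohomological degree the mod $2$ \'etale cohomology of $F$ is concentrated in the contributions of the real places. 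Choosing $q_{1} \geq 3$ past this threshold: for $q > q_{1}$ the subgroup $I^{q}(\OFS)$ of the free rank-$r_{1}$ group $I^{q-1}(\OFS)$ contains $2I^{q-1}(\OFS)$ (the quotient being killed by $2$) and has quotient $I^{q-1}(\OFS)/I^{q}(\OFS) \cong h^{q-1,q-1}(\OFS) \cong (\Z/2)^{r_{1}}$, hence equals $2I^{q-1}(\OFS)$; iterating gives the claim. The main obstacle, and the one genuinely new feature compared with the field case, is precisely this step: the $I$-adic filtration of $W(\OFS)$ does \emph{not} become stationary over $\Z$ when $F$ has a real embedding, so one must compare $\rk_{2}h^{q,q}(\OFS)$ with $\rk_{\Z}I^{q}(\OFS)$ in high degree — using $I^{3}(\OFS) = I^{3}(F)$ and the computation $h^{q,q}(\OFS) \cong (\Z/2)^{r_{1}}$ for large $q$ — to see that the $I$-adic and $2$-adic filtrations on $W(\OFS)$ agree there. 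If invoking $h^{q,q}(\OFS) \cong (\Z/2)^{r_{1}}$ for large $q$ turns out to be awkward to cite, one may instead reduce to finite $\mathcal{S}$, where $W(\OFS)$ is finitely generated and all relevant groups are finite, by writing $\OFS$ as the filtered colimit of the $\OO_{F,\mathcal{S}'}$ over finite $\mathcal{S}' \subseteq \mathcal{S}$ and using that $\KW$ and the slice filtration commute with such colimits.
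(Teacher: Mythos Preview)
Your proposal is correct and follows essentially the approach the paper intends: the paper states \aref{theorem:KWmod2nconvergence2} without proof, relying on the reader to transport the argument of \aref{theorem:KWmod2nconvergence} to $\OFS$ using \aref{theorem:milnor-ofs} and \aref{theorem:convergence2} in place of their field-level analogues. Your explicit verification that $I_q(\OFS)$ is torsion free for $q\ge 3$ (via $I^3(\OFS)=I^3(F)\cong 8\Z^{r_1}$ from the proof of \aref{lem:lemma69}) and that $I_q(\OFS)=2I_{q-1}(\OFS)$ for $q\ge 4$ (by comparing the rank of $I_{q-1}(\OFS)$ with $\#\,h^{q-1,q-1}(\OFS)=2^{r_1}$, which is available directly from \aref{thm:hOFS}) is exactly the ``by now familiar'' computation the paper omits; the fallback reduction to finite $\mathcal{S}$ is unnecessary here.
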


\subsection{Convergence for hermitian $K$-theory}
In this section we combine the convergence results for $\KGL$ and $\KW$ to conclude conditional convergence of the slice spectral sequences for $\KQ$.
Recall that $\KW$ is obtained by inverting the Hopf map $\eta$ on $\KQ$ while $\KGL$ and $\KQ$ are related via the Wood cofiber sequence \cite[Theorem 3.4]{slices}
\begin{equation}
\label{equation:wood}
\Sigma^{1,1} \KQ \xrightarrow{\eta} \KQ \to \KGL.
\end{equation}

Recall from \cite[Lemma 2.1]{slices} the canonical isomorphism in $\SH$
\begin{equation}
\label{equation:fqsigma}
\f_{q+k}\Sigma^{0,k}(\E) 
\simeq
\Sigma^{0,k}\f_{q}(\E).
\end{equation}

\begin{defn}
\label{defn:extendedslice}
For $i \geq 0$ define $\s^{q+i}_{q}(\E)$ by the cofiber sequence
\[
\f_{q+i}(\E) \to \f_{q}(\E) \to \s^{q+i}_{q}(\E).
\]
\end{defn}

\begin{lemma}
For every motivic spectrum $\E$ there are cofiber sequences
\begin{equation}
\label{equation:extended-slices-cof}
\s^{q + i}_{q+1}(\E) \to \s^{q+i}_{q}(\E) \to \s_{q}(\E).
\end{equation}
\end{lemma}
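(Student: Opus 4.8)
The plan is to obtain \eqref{equation:extended-slices-cof} as an instance of the octahedral axiom, in the style of the proof of \aref{lem:coeffdiff}. Throughout I take $i\geq 1$ --- the term $\s^{q+i}_{q+1}(\E)$ is only defined in this range by \aref{defn:extendedslice} --- so that $\f_{q+1}(\E)$ sits between $\f_{q+i}(\E)$ and $\f_q(\E)$ in the slice tower and all three terms of \eqref{equation:extended-slices-cof} make sense.

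First I would assemble the three cofiber sequences that feed into the argument. Applying \aref{defn:extendedslice} with base index $q$ gives $\f_{q+i}(\E)\to\f_q(\E)\to\s^{q+i}_q(\E)$, and with base index $q+1$ and shift $i-1\geq 0$ it gives $\f_{q+i}(\E)\to\f_{q+1}(\E)\to\s^{q+i}_{q+1}(\E)$. The third is the defining cofiber sequence \eqref{equation:slicefiltrationofE} of the slice, $\f_{q+1}(\E)\to\f_q(\E)\to\s_q(\E)$. The one structural input I would check is that the canonical map $\f_{q+i}(\E)\to\f_q(\E)$ factors as the composite of the canonical tower maps $\f_{q+i}(\E)\to\f_{q+1}(\E)\to\f_q(\E)$: this is immediate from the universal property of the effective covers \cite[Theorem 2.2]{Voevodsky:open}, since for $m\geq m'$ the map $\f_m(\E)\to\f_{m'}(\E)$ is the essentially unique lift of $\f_m(\E)\to\E$ along $\f_{m'}(\E)\to\E$, and such lifts compose.

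Then I would set up a $3\times 3$ diagram whose two rows are the extended-slice cofiber sequences with base indices $q+i\to q+1$ (top) and $q+i\to q$ (middle), with vertical maps the identity on $\f_{q+i}(\E)$, the canonical map $\f_{q+1}(\E)\to\f_q(\E)$, and the induced map $\s^{q+i}_{q+1}(\E)\to\s^{q+i}_q(\E)$ on cofibers. Filling in the cofiber row, as in the proof of \aref{lem:coeffdiff} (inserting a zero object so that all squares commute), produces a cofiber sequence $\ast\to\s_q(\E)\to\operatorname{cofib}(\s^{q+i}_{q+1}(\E)\to\s^{q+i}_q(\E))$ --- whose middle column recovers exactly \eqref{equation:slicefiltrationofE} --- and hence an identification $\operatorname{cofib}(\s^{q+i}_{q+1}(\E)\to\s^{q+i}_q(\E))\simeq\s_q(\E)$. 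Reading off the right-hand column of the diagram then yields the asserted cofiber sequence \eqref{equation:extended-slices-cof} together with its natural maps. The argument is purely formal triangulated-category bookkeeping, so I do not anticipate a genuine obstacle; the only points needing a little care are the index range (that $\s^{q+i}_{q+1}(\E)$ presupposes $i\geq 1$) and confirming that the maps produced are the ones induced by the tower, which the explicit diagram --- rather than a black-box appeal to (TR4) --- makes transparent, and which is what is needed for the iterated application of this lemma later.
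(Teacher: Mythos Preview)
Your proposal is correct and is essentially the paper's own argument: the paper simply says ``This follows from \aref{defn:extendedslice} and the octahedral axiom,'' applied to the factorization $\f_{q+i}(\E)\to\f_{q+1}(\E)\to\f_q(\E)$, which is exactly what your $3\times3$ diagram unpacks.
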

\begin{proof}
This follows from \aref{defn:extendedslice} and the octahedral axiom \cite[Definition 7.1.4]{Hovey}.
\end{proof}

\begin{lemma}
\label{lem:KGLvanishing}
For integers $p, w, q, i\in\Z$,  $l \geq 0$, and $k \gg 0$ we have
\begin{equation}
\label{equation:0vanishing}
\pi_{p+k,w}\s^{q+i}_{q}(\KGL) = 0, 
\end{equation}
\begin{equation}
\label{equation:1vanishing}
\pi_{p-l,w}\f_{q+k}(\KGL) = 0.
\end{equation}
\end{lemma}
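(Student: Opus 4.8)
The statement concerns the algebraic $K$-theory spectrum $\KGL$, whose slices are $\s_q(\KGL)\simeq\Sigma^{2q,q}\MZ$ by \eqref{equation:algebraicktheoryslices}. The key structural input is the motivic cohomology of the base (a ring of $\mathcal S$-integers, or more generally a field of finite cohomological dimension): $H^{p,q}(\spec\OFS;\Z)$ vanishes in a range determined by $p$ relative to $q$. First I would record that, since $\pi_{p+k,w}\Sigma^{2q,q}\MZ = H^{2q-p-k,\,q-w}(\OFS;\Z)$, this group vanishes once $k$ is large: for fixed $p,w,q$ and $k\to\infty$ the cohomological degree $2q-p-k\to-\infty$, so the group is $0$ for $k\gg 0$. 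This immediately gives $\pi_{p+k,w}\s_q(\KGL)=0$ for $k\gg 0$. The same applies to each finitely many slices $\s_q,\ldots,\s_{q+i-1}$ appearing in $\s^{q+i}_q(\KGL)$, and feeding this into the iterated cofiber sequences \eqref{equation:extended-slices-cof} (an induction on $i$, using the long exact sequences in $\pi_{*,w}$) yields \eqref{equation:0vanishing}.

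For \eqref{equation:1vanishing} I would use slice-completeness of $\KGL$ together with the convergence results already established. One route: recall $\KGL$ is slice complete over the relevant base (cited in the excerpt, and over $\OFS$ via \aref{theorem:KGL2ncc} and the surrounding discussion), so $\holim_q\f_q(\KGL)\simeq *$, and $\f_q(\KGL)$ is the fiber of $\KGL\to\f^{q-1}(\KGL)$. One computes $\pi_{*,w}\f^{q-1}(\KGL)$ from the finite filtration with graded pieces the slices $\s_0(\KGL),\ldots,\s_{q-1}(\KGL)$, i.e. shifted copies of $\MZ$. Since $\pi_{p-l,w}\KGL$ and $\pi_{p-l-1,w}\KGL$ are the (known) algebraic $K$-groups, and the relevant motivic cohomology groups $\pi_{p-l,w}\s_j(\KGL)=H^{2j-p+l,\,j-w}(\OFS;\Z)$ vanish for $j$ sufficiently large relative to $p-l$ — here one uses finite cohomological dimension so that $H^{a,b}$ with $a>b$ (or $a$ much larger than $b$) is zero — the map $\pi_{p-l,w}\f_{q+k}(\KGL)\to\pi_{p-l,w}\KGL$ becomes an isomorphism onto a fixed subgroup and then, comparing with $\f_{q+k+1}$, the transition maps stabilize; slice completeness forces the limit, hence $\pi_{p-l,w}\f_{q+k}(\KGL)$, to vanish for $k\gg 0$. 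Concretely: the cofiber sequence $\f_{q+k+1}(\KGL)\to\f_{q+k}(\KGL)\to\s_{q+k}(\KGL)$ shows the difference between consecutive $\f$'s is a shift of $\MZ$, whose homotopy in the relevant bidegree vanishes for $k\gg 0$ by the cohomological dimension bound, so the tower $\{\pi_{p-l,w}\f_{q+k}(\KGL)\}_k$ is eventually constant; slice completeness plus the $\lim^1$ vanishing (the tower is pro-constant) identifies this constant value with $\pi_{p-l,w}\holim\f_q(\KGL)=0$.

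The main obstacle I anticipate is being careful about exactly which vanishing range for motivic cohomology of $\OFS$ is available and ensuring it suffices — one needs $H^{a,b}(\OFS;\Z)=0$ both for $a$ large (degree reasons, which is automatic, giving \eqref{equation:0vanishing} cleanly) and for the $\f_{q+k}$ statement a uniformity as $b=q-w+k$ grows while $a=2(q+k)-p+l$ also grows; here the point is $a-b=(q+k)-p+l-w\to\infty$, and $\OFS$ has finite mod-$\ell$ cohomological dimension for all $\ell$, plus the integral motivic cohomology $H^{a,b}(\OFS;\Z)$ is finitely generated with a known structure (it is a $K$-group of $\OFS$ up to reindexing via Beilinson–Lichtenbaum / the Rognes–Weibel computation), so it vanishes once $a-b$ is large enough. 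I would invoke this known computation of $H^{*,*}(\OFS;\Z)$ rather than reprove it. The rest is a routine induction on $i$ for \eqref{equation:0vanishing} and a $\lim/\lim^1$ argument using slice-completeness of $\KGL$ for \eqref{equation:1vanishing}.
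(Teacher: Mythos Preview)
Your proposal is correct and matches the paper's approach. For \eqref{equation:0vanishing} the paper does exactly what you do: observes $\pi_{p+k,w}\s_q(\KGL)=H^{2q-p-k,q-w}$ vanishes for $k\gg 0$ and inducts on $i$ via the cofiber sequences \eqref{equation:extended-slices-cof}. For \eqref{equation:1vanishing} the paper simply invokes strong convergence of the slice spectral sequence for $\f_{q+k}(\KGL)$ (citing \aref{theorem:KGL2ncc}); your explicit eventual-constancy plus $\lim/\lim^1$ argument from slice completeness unpacks exactly this, and is the same proof written out in more detail.
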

\begin{proof}
We note that $\pi_{n+k,w}\s_{q}(\KGL) = H^{2q-n-k, q - w - k} = 0$ for $k \gg 0$.
Induction on $i$ using \eqref{equation:extended-slices-cof} proves \eqref{equation:0vanishing} for all $i$.
Moreover, 
\eqref{equation:1vanishing} follows because the slice spectral sequence for $\f_{q+k}(\KGL)$ is strongly convergent (see \aref{theorem:KGL2ncc}).
\end{proof}

\begin{proposition}
\label{proposition:KQ-conv}
The slice spectral sequences for $\KQ$ and $\KQ/2^{n}$ are convergent, 
i.e.,
\[
\bigcap_{i\geq0} \f_{q+i}\pi_{\ast,\ast}\f_{q}(\KQ) 
= 
\bigcap_{i\geq0} \f_{q+i}\pi_{\ast,\ast}\f_{q}(\KQ/2^{n}) 
= 
0.
\]
\end{proposition}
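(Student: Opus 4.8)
The plan is to deduce the vanishing $\bigcap_{i\geq 0}\f_{q+i}\pi_{\ast,\ast}\f_{q}(\KQ)=0$, and its mod $2^{n}$ analogue, from the convergence of the slice spectral sequences for algebraic $K$-theory and for higher Witt-theory, using the Wood cofiber sequence \eqref{equation:wood} as the bridge. Since smashing \eqref{equation:wood} with the mod $2^{n}$ Moore spectrum gives a cofiber sequence $\Sigma^{1,1}\KQ/2^{n}\xrightarrow{\eta}\KQ/2^{n}\to\KGL/2^{n}$, I would run a single argument, valid simultaneously for $\KQ$ and for $\KQ/2^{n}$. The inputs are \aref{lem:KGLvanishing} (vanishing of $\pi_{p-m,w-m}\f_{q+i}(\KGL)$ in a fixed bidegree once $i\gg 0$), the strong convergence of the slice spectral sequences for $\KGL$ and $\KGL/2^{n}$ (\aref{theorem:KGL2ncc}), and the convergence of the slice spectral sequences for $\KW$ and $\KW/2^{n}$ over $\OFS$ (\aref{lem:OFS-KW-conv} and \aref{theorem:KWmod2nconvergence2}).

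First I would apply the effective cover functors to \eqref{equation:wood} and invoke the shift isomorphism \eqref{equation:fqsigma} to obtain, for every $i\geq 0$, a cofiber sequence $\Sigma^{1,1}\f_{q+i-1}(\KQ)\xrightarrow{\eta}\f_{q+i}(\KQ)\to\f_{q+i}(\KGL)$. By \aref{lem:KGLvanishing} the group $\pi_{p-m,w-m}\f_{q+i}(\KGL)$ vanishes, for each fixed $m\geq 0$, once $i$ is large, so chasing the associated long exact homotopy sequences shows: for each $M\geq 0$ there is an index $i_{0}(M)$ such that, for $i\geq i_{0}(M)$, the $M$-fold Hopf multiplication $\pi_{p-M,w-M}\f_{q+i-M}(\KQ)\xrightarrow{\eta^{M}}\pi_{p,w}\f_{q+i}(\KQ)$ is surjective. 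Hence, if $x\in\pi_{p,w}\f_{q}(\KQ)$ lies in the image of $\pi_{p,w}\f_{q+i}(\KQ)\to\pi_{p,w}\f_{q}(\KQ)$ for every $i$, then for each $M$ one may write $x=\eta^{M}\bar y_{M}$ for some $\bar y_{M}\in\pi_{p-M,w-M}\f_{q-M}(\KQ)$, where $\eta^{M}\colon\Sigma^{M,M}\f_{q-M}(\KQ)\to\f_{q}(\KQ)$ is the iterated Hopf map on effective covers coming from \eqref{equation:wood} and \eqref{equation:fqsigma}.

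Then I would pass to $\KW=\KQ[\tfrac{1}{\eta}]$. The $\eta$-periodicity equivalence $\Sigma^{1,1}\KW\simeq\KW$ together with \eqref{equation:fqsigma} makes $\eta^{M}\colon\Sigma^{M,M}\f_{q-M}(\KW)\xrightarrow{\simeq}\f_{q}(\KW)$ an equivalence; combined with $\bigcap_{i}\f_{q+i}\pi_{p,w}\f_{q}(\KW)=0$ (\aref{lem:OFS-KW-conv}, resp.\ \aref{theorem:KWmod2nconvergence2}) and naturality of $\KQ\to\KW$ along the slice tower, this forces the image of $x$ in $\pi_{p,w}\f_{q}(\KW)$ to vanish, and hence each $\bar y_{M}$ to lie in the kernel of $\pi_{p-M,w-M}\f_{q-M}(\KQ)\to\pi_{p-M,w-M}\f_{q-M}(\KW)$. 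The remaining point is that this kernel — the discrepancy between $\KQ$ and its $\eta$-inversion — is, by iterated use of \eqref{equation:wood}, assembled from homotopy groups of effective covers of $\KGL$; and because the weight $w-M$ and the effective level $q-M$ descend in tandem, the motivic cohomology groups of $\OFS$ that occur lie in the (finite) range where they can be nonzero only for $M$ bounded, so for $M$ large one concludes $x=\eta^{M}\bar y_{M}=0$. I expect this last step — reconciling the effective-cover filtration of $\pi_{p,w}\f_{q}(\KQ)$ with the $\eta$-adic filtration, i.e.\ quantifying how the $\KGL$-error terms between $\KQ$ and $\KW$ die off in all the bidegrees that arise — to be the main obstacle; the rest is a diagram chase on the long exact sequences attached to \eqref{equation:wood}. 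Finally, the mod $2^{n}$ assertion follows verbatim with $\KGL/2^{n}$ and $\KW/2^{n}$ in place of $\KGL$ and $\KW$ throughout.
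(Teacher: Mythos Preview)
Your overall strategy --- bridging $\KQ$ to $\KGL$ and $\KW$ via the Wood cofiber sequence and using the vanishing of $\pi_{\ast,\ast}\f_{q+i}(\KGL)$ for $i\gg 0$ together with convergence for $\KW$ --- matches the paper's. The difference is the \emph{direction} in which you run the argument, and that is where the gap lies.

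You show that $\eta^{M}\colon\pi_{p-M,w-M}\f_{q+i-M}(\KQ)\to\pi_{p,w}\f_{q+i}(\KQ)$ is surjective for $i$ large, hence $x=\eta^{M}\bar y_{M}$ with $\bar y_{M}\in\pi_{p-M,w-M}\f_{q-M}(\KQ)$ in the kernel of the map to $\KW$. Your final step claims this kernel is built from $\KGL$-groups that vanish once $M$ is large. This is not correct. The kernel of $\pi_{p-M,w-M}\f_{q-M}(\KQ)\to\pi_{p-M,w-M}\f_{q-M}(\KW)$ is filtered by the kernels of the successive colimit structure maps, each of which is controlled by a group of the form $\pi_{p-M+k+2,\,w-M+k+1}\f_{q-M+k+1}(\KGL)$. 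Unwinding via Bott periodicity and the slice description of $\KGL$, for suitable $k$ these contain contributions from $H^{p-2w+M,\,p-2w+M}\cong k^{\Mil}_{p-2w+M}$, which over $\OFS$ with $r_{1}>0$ equals $(\Z/2)^{r_{1}}$ for all large $M$. So the kernel does \emph{not} shrink as $M\to\infty$, and knowing $\bar y_{M}$ lies in it does not force $\eta^{M}\bar y_{M}=0$: if $\bar y_{M}$ dies only at some stage $k_{M}>M$ of the colimit, you learn nothing about $x$ itself.

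The paper runs the argument in the \emph{forward} direction. Rather than lifting $x$ backward along $\eta$, it shows that for $i\gg 0$ the composition
\[
\f_{q+i}\pi_{p,w}\f_{q}(\KQ)\hookrightarrow\pi_{p,w}\f_{q}(\KQ)\longrightarrow\pi_{p+1,w+1}\f_{q+1}(\KQ)
\]
is \emph{injective}, and likewise at every subsequent stage. The mechanism is a small double-complex diagram chase (\aref{lem:double-complex}) applied to the grid formed by the Wood cofiber sequences (rows) and the extended slices $\s^{q+i}_{q}$ (columns); the hypothesis $A^{k,k}=0$ is supplied by \aref{lem:KGLvanishing}. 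Iterating this injectivity sends $x$ faithfully into $\pi_{p,w}\f_{q}(\KW)$, where it vanishes by the convergence for $\KW$, and hence $x=0$. The asymmetry is that the $\KGL$-obstruction to \emph{injectivity} at stage $l$ is $\pi_{p+l+2,\,w+l+1}\f_{q+l+i}(\KGL)$ with $i\gg 0$ fixed in advance, and these do vanish by \aref{lem:KGLvanishing}; whereas the $\KGL$-terms controlling your backward kernel are of a different shape and need not vanish. To repair your argument you would essentially have to prove this forward injectivity on high-filtration elements, which is exactly the paper's step.
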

\begin{proof}
By applying $\pi_{p,w}$ to the diagram of cofiber sequences
\[
\begin{tikzcd}
&\vdots\ar[d] & \vdots\ar[d] & \Sigma^{-2,-1}\s^{q+i+1}_{q+1}(\KGL) \ar[d] \\
\dots\ar[r] & \f_{q+i}(\KQ) \ar[r]\ar[d] &  \Sigma^{-1,-1}\f_{q+i+1}(\KQ) \ar[r]\ar[d] & \Sigma^{-1,-1}\f_{q+i+1}(\KGL) \ar[d] \ar[r] & \dots \\
\dots\ar[r] & \f_{q}(\KQ) \ar[r]\ar[d] &  \Sigma^{-1,-1}\f_{q+1}(\KQ) \ar[r]\ar[d] & \Sigma^{-1,-1}\f_{q+1}(\KGL) \ar[d]  \ar[r] & \dots \\
\dots\ar[r] & \s^{q+i}_{q}(\KQ) \ar[r]\ar[d] &  \Sigma^{-1,-1}\s^{q+i+1}_{q+1}(\KQ) \ar[r]\ar[d] & \Sigma^{-1,-1}\s^{q+i+1}_{q+1}(\KGL)  \ar[r]\ar[d] & \dots \\
&\vdots & \vdots & \vdots
\end{tikzcd}
\]
we obtain a double complex $A^{\ast,\ast}$ with $A^{0,0} = \pi_{p,w}\f_{q}(\KQ)$ and exact rows and columns.
\aref{lem:double-complex} applies to $A^{\ast,\ast}$ since $A^{2+3k, 2+3k} = \pi_{p+2+k,w+1}\s^{q+i+1}_{q+1}(\KGL)= 0$ for $k \gg 0$ according to \aref{lem:KGLvanishing}.
Thus for $i \gg 0$, 
so that $\pi_{p+2,w+1}\f_{q+i}(\KGL) = 0$,
the composition
\[
\f_{q+i}\pi_{p,w}\f_{q}(\KQ) \to \pi_{p,w}\f_{q}(\KQ) \to \pi_{p+1,w+1}\f_{q+1}(\KQ)
\]
is injective.
More generally,
\[
\f_{q+l+i}\pi_{p+l,w+l}\f_{q}(\KQ) \to \pi_{p+l,w+l}\f_{q+l}(\KQ) \to \pi_{p+l+1,w+l+1}\f_{q+l+1}(\KQ)
\]
is injective for $l \geq 0$ because $\pi_{p+l+2,w+l+1}\f_{q+l+i}(\KGL) = 0$ for $i \gg 0$, 
see \aref{lem:KGLvanishing}.

Since $\bigcap_{i\geq0} \f_{q+i}\pi_{\ast,\ast}\f_{q}(\KW) = 0$,
any $x \in \bigcap_{i\geq0} \f_{q+i}\pi_{p,w}\f_{q}(\KQ)$ maps trivially under the composition
\[
\pi_{p,w}\f_{q}(\KQ) \to
\pi_{p+1,w+1}\f_{q+1}(\KQ) \to
\pi_{p+2,w+2}\f_{q+2}(\KQ) \to \cdots
\to \pi_{p,w}\f_{q}(\KW).
\]
But since $\f_{q+i}\pi_{p,w}\f_{q}(\KQ)$ maps injectively under this composition for $i \gg 0$,
this implies $x=0$.

A verbatim argument applies to $\KQ/2^{n}$.
\end{proof}

\begin{lemma}
\label{lem:double-complex}
Suppose $A^{\ast,\ast}$ is a double complex with exact rows and columns such that $A^{k,k} = 0$ for some $k>0$,
and with differentials $d_h^{p,q} : A^{p,q} \to A^{p+1,q}$, $d_v^{p,q}: A^{p,q} \to A^{p,q-1}$.
Then for any $x \in A^{0,1}$ in the kernel of the composite map $A^{0,1} \to A^{0,0} \to A^{1,0}$,
there exists an element $y \in A^{-1,1}$ such that $x$ and $y$ have the same image in $A^{0,0}$.
\end{lemma}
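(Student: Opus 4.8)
The plan is to establish the lemma by a two‑phase diagonal diagram chase: first climb a staircase of ``anti‑diagonal'' entries $A^{n,n+1}$ starting from $x$ and heading towards the vanishing entry $A^{k,k}$, then descend the same staircase to peel off the required element $y\in A^{-1,1}$. Throughout I write $d_h$, $d_v$ for the two families of differentials, so that $d_h d_h=0$, $d_v d_v=0$ and $d_h d_v=\pm d_v d_h$; the signs are irrelevant here because every quantity I want to vanish will turn out to be $\pm$ a composite containing $d_h d_h$ or $d_v d_v$.

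\emph{Phase 1: building the staircase.} I would construct, by induction on $n$, elements $e_n\in A^{n,n+1}$ for $0\le n\le k-1$ with $e_0=x$ and $d_v(e_n)=d_h(e_{n-1})$ for $1\le n\le k-1$. Given $e_n$, one has $d_v\bigl(d_h(e_n)\bigr)=\pm d_h\bigl(d_v(e_n)\bigr)$, which vanishes: for $n=0$ because $d_h\bigl(d_v(x)\bigr)=0$ is exactly the hypothesis on $x$, and for $n\ge 1$ because $d_h\bigl(d_v(e_n)\bigr)=d_h\bigl(d_h(e_{n-1})\bigr)=0$. Hence $d_h(e_n)\in\ker\bigl(d_v\colon A^{n+1,n+1}\to A^{n+1,n}\bigr)$, and exactness of the $(n+1)$st column at $A^{n+1,n+1}$ provides $e_{n+1}\in A^{n+1,n+2}$ with $d_v(e_{n+1})=d_h(e_n)$.

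\emph{Phase 2: descending the staircase.} Since $A^{k,k}=0$, exactness of the $k$th row at $A^{k-1,k}$ yields $e_{k-1}\in\im\bigl(d_h\colon A^{k-2,k}\to A^{k-1,k}\bigr)$. I would then prove, by downward induction on $n$ from $k-1$ to $0$, that $e_n\in\im\bigl(d_h\colon A^{n-1,n+1}\to A^{n,n+1}\bigr)+\ker\bigl(d_v\colon A^{n,n+1}\to A^{n,n}\bigr)$ for all $0\le n\le k-1$. For the inductive step, write $e_n=d_h(u)+w$ with $u\in A^{n-1,n+1}$ and $d_v(w)=0$; substituting into $d_h(e_{n-1})=d_v(e_n)$ and using $d_v(w)=0$ gives $d_h(e_{n-1})=d_v\bigl(d_h(u)\bigr)=\pm d_h\bigl(d_v(u)\bigr)$, so $d_h\bigl(e_{n-1}\mp d_v(u)\bigr)=0$, and exactness of the $n$th row at $A^{n-1,n}$ expresses $e_{n-1}$ as $\pm d_v(u)$ plus an element of $\im\bigl(d_h\colon A^{n-2,n}\to A^{n-1,n}\bigr)$, while $\pm d_v(u)\in\ker(d_v)$ because $d_v d_v=0$. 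Taking $n=0$ gives $x=e_0=d_h(y)+z$ with $y\in A^{-1,1}$ and $d_v(z)=0$, whence $d_v(x)=d_v\bigl(d_h(y)\bigr)$; that is, $x$ and $y$ have the same image in $A^{0,0}$, as required.

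The only genuinely delicate point is orienting the chase correctly: the staircase has to run along the entries $A^{n,n+1}$ precisely so that the horizontal differential leaving it lands on the main diagonal, where the hypothesis $A^{k,k}=0$ forces it to die — a staircase one slot over would march off to infinity without ever meeting a vanishing group. After that, the remaining work is bookkeeping: keeping the ``image plus kernel'' decomposition stable along the downward induction, and the routine observation that all sign ambiguities cancel. The edge case $k=1$ (where Phase 1 is empty and one reads off $x\in\im(d_h\colon A^{-1,1}\to A^{0,1})$ directly) is subsumed by the same argument.
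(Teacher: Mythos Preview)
Your proof is correct and follows essentially the same two-phase staircase chase as the paper. The paper builds lifts $x_k'\in A^{k,k+1}$ (your $e_k$) in exactly the same way, then in the downward pass modifies each $x_k'$ to an element $y_k=x_k'-d_v(x_{k+1}'')$ with $d_h(y_k)=0$; this is just a different bookkeeping of your ``$\mathrm{im}(d_h)+\ker(d_v)$'' decomposition, and the final lift $y\in A^{-1,1}$ arises the same way.
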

\begin{proof}
We set $x_{1} = d^{0,1}_h(x)$ and inductively $x_{k+1} = d^{k,k+1}_h(x_{k}')$.
Here $x_{k}' \in A^{k,k+1}$ is a lift of $x_{k}$,
i.e.,
$d^{k,k+1}_v(x_{k}') = x_{k}$.
Note that $x_k \in A^{k,k}$,
$x_k' \in A^{k,k+1}$,
$d^{k,k}_v(x_k) = 0$,
and $x_{n} = 0$ for some $n\gg 0$.
Next we construct elements $y_k \in A^{k,k+1}$ such that $d^{k,k+1}_v(y_{k}) = d^{k,k+1}_v(x_{k}') = x_{k}$ and $d^{k,k+1}_h(y_{k}) = 0$.
First we set $y_{n-1} = x_{n-1}'$ for $n$ as above.
Since $d^{k+1,k+2}_h(y_{k+1}) = 0$,
there exists an $x_{k+1}''$ such that $d^{k,k+2}_h(x_{k+1}'') = y_{k+1}$.
To conclude we set $y_{k} = x_{k}' - d^{k,k+2}_v(x_{k+1}'')$ and $y = x_{-1}''$.
\end{proof}

\begin{proposition}
\label{proposition:KQ-strong-conv}
Let $(\E,\E')$ be shorthand for $(\KQ,\KW)$ or $(\KQ/2^{n},\KW/2^{n})$.
Then we have
\begin{equation}
\label{equation:KQlimKW}
\lim_{q} \pi_{p,w}\f_{q}(\E) \cong \lim_{q} \pi_{p,w}\f_{q}(\E'),
{\lim_{q}}^1 \pi_{p,w}\f_{q}(\E) \cong {\lim_{q}}^1 \pi_{p,w}\f_{q}(\E').
\end{equation}
It follows that $\E\to\SC(\E)$ is a $\pi_{\ast,\ast}$-isomorphism if and only if $\E'\to\SC(\E')$ is a $\pi_{\ast,\ast}$-isomorphism.
\end{proposition}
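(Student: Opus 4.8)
The plan is to deduce the final assertion from \eqref{equation:KQlimKW}, and to prove \eqref{equation:KQlimKW} by showing that the comparison map $\f_{q}(\E)\to\f_{q}(\E')$ is a $\pi_{p,w}$-isomorphism once $q$ is large relative to $p-w$. For the reduction: applying $\holim_{q\to\infty}$ to the defining cofiber sequences $\f_{q}(\E)\to\E\to\f^{q-1}(\E)$ and reindexing produces a cofiber sequence $\holim_{q}\f_{q}(\E)\to\E\to\SC(\E)$, so $\E\to\SC(\E)$ is a $\pi_{\ast,\ast}$-isomorphism exactly when $\holim_{q}\f_{q}(\E)$ is $\pi_{\ast,\ast}$-trivial; by the Milnor short exact sequence this is equivalent to the vanishing of $\lim_{q}\pi_{p,w}\f_{q}(\E)$ and ${\lim_{q}}^{1}\pi_{p,w}\f_{q}(\E)$ for all $p,w$. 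Granting \eqref{equation:KQlimKW}, this vanishing holds for $\E$ if and only if it holds for $\E'$, which is the last statement. By the usual universal coefficient/Moore spectrum reduction (smashing the Wood cofiber sequence \eqref{equation:wood} and the identification $\KW=\KQ[\frac{1}{\eta}]$ with $\One/2^{n}$) it suffices to prove \eqref{equation:KQlimKW} for $(\E,\E')=(\KQ,\KW)$; the case $(\KQ/2^{n},\KW/2^{n})$ then follows verbatim.

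To compare the two slice towers I would use the $\eta$-Bockstein picture: $\KGL=\operatorname{cofib}(\eta)$ by \eqref{equation:wood}, while $\KW=\KQ[\frac{1}{\eta}]=\colim_{k}(\KQ\xrightarrow{\eta}\Sigma^{-1,-1}\KQ\xrightarrow{\eta}\cdots)$. Since $\f_{q}$ preserves filtered homotopy colimits (the category of $q$-effective spectra being compactly generated), \eqref{equation:fqsigma} gives $\f_{q}(\KW)\simeq\colim_{k}\Sigma^{-k,-k}\f_{q+k}(\KQ)$, with $k$-th structure map induced by the Wood map $\Sigma^{1,1}\f_{q+k}(\KQ)\xrightarrow{\eta}\f_{q+k+1}(\KQ)$, whose cofiber is $\f_{q+k+1}(\KGL)$. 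Forming cofibers of the composites $\f_{q}(\KQ)\to\Sigma^{-k,-k}\f_{q+k}(\KQ)$ and applying the octahedral axiom, the cofiber $C_{q,k}$ of $\f_{q}(\KQ)\to\Sigma^{-k,-k}\f_{q+k}(\KQ)$ acquires a finite filtration $\ast=C_{q,0}\to C_{q,1}\to\cdots\to C_{q,k}$ whose successive cofibers are $\Sigma^{-j,-j}\f_{q+j}(\KGL)$ for $1\le j\le k$, and since cofibers commute with filtered colimits, $\operatorname{cofib}(\f_{q}(\KQ)\to\f_{q}(\KW))\simeq\colim_{k}C_{q,k}$. Thus the homotopy-theoretic difference between the two towers is controlled entirely by effective covers of $\KGL$.

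The crux is then the vanishing estimate $\pi_{p,w}\Sigma^{-j,-j}\f_{q+j}(\KGL)=\pi_{p+j,w+j}\f_{q+j}(\KGL)=0$ for every $j\ge1$, once $q$ exceeds a bound depending only on $p-w$ and $\cd_{2}(\OFS)$. I expect this to follow from \aref{lem:KGLvanishing} together with Bott periodicity $\Sigma^{2,1}\KGL\simeq\KGL$, which gives $\f_{q+j}(\KGL)\simeq\Sigma^{2j,j}\f_{q}(\KGL)$ and hence $\pi_{p+j,w+j}\f_{q+j}(\KGL)\cong\pi_{p-j,w}\f_{q}(\KGL)$: the only slices of $\f_{q+j}(\KGL)$ are the $\Sigma^{2m,m}\MZ$ with $m\ge q+j$, cf. \eqref{equation:algebraicktheoryslices}, and $\pi_{p+j,w+j}\Sigma^{2m,m}\MZ=H^{2m-p-j,\,m-w-j}(\OFS;\Z)$ vanishes for all such $m$ as soon as $q$ is large, uniformly in $j\ge1$, by finite cohomological dimension. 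Given this, an induction on $k$ along the filtration of $C_{q,k}$ shows $\pi_{p,w}C_{q,k}=0$ for all $k$, hence $\pi_{p,w}\operatorname{cofib}(\f_{q}(\KQ)\to\f_{q}(\KW))=\colim_{k}\pi_{p,w}C_{q,k}=0$, and the same for $\pi_{p+1,w}$. Consequently $\f_{q}(\KQ)\to\f_{q}(\KW)$ is a $\pi_{p,w}$-isomorphism for $q\gg0$, so the towers $\{\pi_{p,w}\f_{q}(\KQ)\}_{q}$ and $\{\pi_{p,w}\f_{q}(\KW)\}_{q}$ are isomorphic beyond some index and therefore share the same $\lim$ and $\lim^{1}$, establishing \eqref{equation:KQlimKW}; the mod $2^{n}$ case runs identically with $\MZ$ replaced by $\MZ/2^{n}$.

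The main obstacle will be making the $\KGL$-vanishing uniform in $j$: \aref{lem:KGLvanishing} is phrased for a single effective cover, and one must check that the implicit bound on $q$ can be chosen independently of the filtration level $j$ occurring in $C_{q,k}$ — this is where finite cohomological dimension of $\OFS$ and the explicit form \eqref{equation:algebraicktheoryslices} of the slices of $\KGL$ are really used. Everything else — the reduction to \eqref{equation:KQlimKW}, the identification $\f_{q}(\KW)\simeq\colim_{k}\Sigma^{-k,-k}\f_{q+k}(\KQ)$, the filtration of $C_{q,k}$, and the passage through $\colim_{k}$ and through $\lim_{q}$, $\lim^{1}_{q}$ — is routine manipulation of the cofiber sequences already in play.
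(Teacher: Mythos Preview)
Your argument is correct and follows essentially the same route as the paper: both reduce to showing that for $q\gg 0$ the comparison $\f_{q}(\KQ)\to\f_{q}(\KW)$ is a $\pi_{p,w}$-isomorphism, and both do this by controlling the successive cofibers $\Sigma^{-j,-j}\f_{q+j}(\KGL)$ via Bott periodicity (reducing to $\pi_{p-j,w}\f_{q}(\KGL)$) together with \aref{lem:KGLvanishing}. The paper phrases this as showing each map in the chain $\pi_{p,w}\f_{q}(\KQ)\to\pi_{p+1,w+1}\f_{q+1}(\KQ)\to\cdots$ is an isomorphism via the Wood long exact sequence, while you package the same information into a filtration of the cofiber $C_{q,k}$; the content is identical, and your worry about uniformity in $j$ is exactly what \aref{lem:KGLvanishing}(2) addresses.
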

\begin{proof}
The Wood cofiber sequence \eqref{equation:wood} induces the commutative diagram
\begin{equation}
\label{equation:KQtower}
\begin{tikzcd}[column sep=10pt]
\vdots\ar[d] & \ar[d] \vdots & \ar[d] \vdots & \vdots \ar[d] & \vdots & \vdots \ar[d] \\
\pi_{p,w}\f_{q+l}(\KQ) \ar[r]\ar[d] & \pi_{p+1,w+1}\f_{q+l+1}(\KQ) \ar[r]\ar[d] & \dots \ar[r]\ar[d] & \pi_{p+k,w+k}\f_{q+k+l}(\KQ) \ar[r]\ar[d] & \dots \ar[r] & \pi_{p,w}\f_{q+l}(\KW)\ar[d] \\
\vdots\ar[d] & \ar[d] \vdots & \ar[d] \vdots & \vdots \ar[d] & \vdots & \vdots \ar[d] \\
\pi_{p,w}\f_{q}(\KQ) \ar[r] & \pi_{p+1,w+1}\f_{q+1}(\KQ) \ar[r] & \dots \ar[r] & \pi_{p+k,w+k}\f_{q+k}(\KQ) \ar[r] & \dots \ar[r] & \pi_{p,w}\f_{q}(\KW), \\
\end{tikzcd}
\end{equation}
where each horizontal map is part of a long exact sequence
\begin{align*}
\dots
\to \pi_{p+k+2,w+k+1}\f_{q+k+1}(\KGL)
\to \pi_{p+k,w+k}\f_{q+k}(\KQ)
&\to 
\pi_{p+k+1,w+k+1}\f_{q+k+1}(\KQ) \\
&\to \pi_{p+k+1,w+k+1}\f_{q+k+1}(\KGL)
\to \cdots.
\end{align*}
By Bott periodicity for $\KGL$ \cite[Theorem 6.8]{Voevodsky:icm} and \eqref{equation:fqsigma} there are isomorphisms
\begin{align*}
\pi_{p+k+1,w+k+1}\f_{q+k+1}(\KGL)
&=
\pi_{p,w}\Sigma^{-(k+1),-(k+1)}\f_{q+k+1}(\KGL) \\
&\cong
\pi_{p,w}\f_{q}(\Sigma^{-(k+1),-(k+1)}\KGL) \\
&\cong
\pi_{p,w}\f_{q}(\Sigma^{k+1,0}\KGL) \\
&\cong
\pi_{p-k-1,w}\f_{q}(\KGL).
\end{align*}
According to \aref{lem:KGLvanishing} we have $\pi_{p-l,w}\f_{q}(\KGL) = 0$ for all $l \geq 0$ and $q \gg 0$.
Thus the horizontal maps in \eqref{equation:KQtower} are isomorphisms,
and the inverse systems $\{\pi_{p,w}\f_{q}(\KQ) \}_{q}$ and $\{\pi_{p,w}\f_{q}(\KW) \}_{q}$ are levelwise isomorphic for $q\gg 0$.
This proves the isomorphisms in \eqref{equation:KQlimKW}.

The final claim follows from the Milnor exact sequence
\[
0 \to {\lim_{q}}^1 \pi_{p+1,w}\f_{q}(\E)
\to \pi_{p,w}\holim_{q} \f_{q}(\E)
\to \lim_{q} \pi_{p,w}\f_{q}(\E)
\to 0.
\]

A verbatim argument applies to $(\KQ/2^{n},\KW/2^{n})$.
\end{proof}

We are ready to state our main convergence result for fields.
\begin{theorem}
\label{theorem:best-conv}
Suppose the filtrations $\{\im(I^{q}(F) \to W(F)/2^{n})\}_{q}$ of $W(F)/2^{n}$ and $\{{}_{2^{n}} I^{q}(F)\}_{q}$ of ${}_{2^{n}} W(F)$ are exhaustive, 
Hausdorff, 
and complete (e.g., if $\vcd(F) < \infty$ or $F^{\times}/2$ is finite).
Then the slice spectral sequence for $\KQ/2^{n}$, $n\geq 1$, is conditionally convergent with abutment $\pi_{\ast,\ast}(\KQ/2^{n})$.
\end{theorem}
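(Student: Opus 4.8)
The plan is to deduce conditional convergence of the slice spectral sequence for $\KQ/2^{n}$ from the corresponding statement for $\KW/2^{n}$ together with the structural comparison maps established above. Recall that conditional convergence in the sense of Boardman \cite{Boardman} means precisely that $\holim_{q} \f_{q}(\KQ/2^{n})$ has trivial homotopy, i.e.\ $\KQ/2^{n}$ is slice complete; equivalently, by the Milnor exact sequence, that $\lim_{q}\pi_{p,w}\f_{q}(\KQ/2^{n}) = 0$ and ${\lim_{q}}^{1}\pi_{p,w}\f_{q}(\KQ/2^{n}) = 0$ for all $p,w$. So the whole argument reduces to verifying those two vanishing statements.

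First I would invoke \aref{proposition:KQ-strong-conv} with $(\E,\E') = (\KQ/2^{n},\KW/2^{n})$: it gives canonical isomorphisms $\lim_{q}\pi_{p,w}\f_{q}(\KQ/2^{n}) \cong \lim_{q}\pi_{p,w}\f_{q}(\KW/2^{n})$ and ${\lim_{q}}^{1}\pi_{p,w}\f_{q}(\KQ/2^{n}) \cong {\lim_{q}}^{1}\pi_{p,w}\f_{q}(\KW/2^{n})$, and hence that $\KQ/2^{n} \to \SC(\KQ/2^{n})$ is a $\pi_{\ast,\ast}$-isomorphism if and only if $\KW/2^{n} \to \SC(\KW/2^{n})$ is. So it suffices to prove slice completeness of $\KW/2^{n}$ under the stated hypotheses. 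Next I would observe that the hypotheses --- that $\{\im(I^{q}(F)\to W(F)/2^{n})\}_{q}$ and $\{{}_{2^{n}}I^{q}(F)\}_{q}$ are exhaustive, Hausdorff, and complete --- are exactly what \aref{theorem:convergence1} translates into statements about the filtration $\f_{q}\pi_{p,w}(\KW/2^{n})$ of $\pi_{p,w}(\KW/2^{n})$: for $p\equiv 0\bmod 4$ these filtration pieces are $\im(I^{q-w}\to W(F)/2^{n})$ and ${}_{2^{n}}I^{q-w}$ respectively (and for $p\not\equiv 0\bmod 4$ the relevant homotopy groups already vanish by $\eta$-periodicity, as in \aref{lem:E00terms1}). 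Combined with \aref{lem:E00terms2}, which gives $\bigcap_{i\geq 0}\f_{q+i}\pi_{p,w}\f_{q}(\KW/2^{n}) = 0$ when $\vcd(F)<\infty$ --- or with the analogue under $\# F^{\times}/2 < \infty$ from \aref{lem:E00terms1} --- the short exact sequence \eqref{equation:sesE00} shows the filtration on $\pi_{p,w}(\KW/2^{n})$ has the $E^{\infty}$-terms as subquotients. Completeness and Hausdorffness of that filtration, which is precisely the stated hypothesis via \aref{theorem:convergence1}, then force $\lim$ and ${\lim}^{1}$ over $q$ to vanish, so $\KW/2^{n}$ is slice complete.

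Finally I would transport this back: by \aref{proposition:KQ-strong-conv} the map $\KQ/2^{n}\to\SC(\KQ/2^{n})$ is a $\pi_{\ast,\ast}$-isomorphism, equivalently $\holim_{q}\f_{q}(\KQ/2^{n})$ is contractible, equivalently (\aref{lem:sc-convergence} and its surrounding discussion) the slice spectral sequence $E^{1}_{p,q,w}(\KQ/2^{n}) = \pi_{p,w}\s_{q}(\KQ/2^{n}) \Rightarrow \pi_{p,w}\SC(\KQ/2^{n}) = \pi_{p,w}(\KQ/2^{n})$ is conditionally convergent with the asserted abutment. The parenthetical "e.g." cases follow since $\vcd(F)<\infty$ yields the filtration hypothesis by \aref{theorem:KWmod2nconvergence}, and $\# F^{\times}/2 < \infty$ yields it by \aref{lem:E00terms1} together with the finiteness of the relevant Witt-theoretic groups.

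The main obstacle I anticipate is not any single deep step but rather the bookkeeping in passing between the two formulations of "convergence": \aref{lem:sc-convergence} phrases conditional convergence in terms of the coeffective tower and $\SC(\E)$, whereas \aref{proposition:KQ-conv} and the Witt-theory results are phrased in terms of the effective tower $\{\f_{q}(\E)\}$ and the vanishing of $\bigcap_{i}\f_{q+i}\pi_{\ast,\ast}\f_{q}(\E)$. Pinning down that a $\pi_{\ast,\ast}$-isomorphism $\E\to\SC(\E)$ is equivalent to conditional convergence of \eqref{equation:slicespectralsequenceofE} with abutment $\pi_{\ast,\ast}(\E)$ --- and checking that \aref{proposition:KQ-strong-conv} genuinely supplies this equivalence for $\KQ/2^{n}$ --- is where care is needed. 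Once that identification is in place, the deduction is formal given \aref{proposition:KQ-strong-conv}, \aref{theorem:convergence1}, and \aref{lem:E00terms2} (or \aref{lem:E00terms1}).
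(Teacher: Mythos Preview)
Your proposal is correct and follows essentially the same approach as the paper: reduce to $\KW/2^{n}$ via \aref{proposition:KQ-strong-conv}, identify the slice filtration on $\pi_{\ast,\ast}(\KW/2^{n})$ with the given filtrations via \aref{theorem:convergence1}, and conclude using \aref{lem:sc-convergence}. The paper's one-line proof cites \aref{theorem:KWmod2nconvergence} where you instead unpack its ingredients \aref{lem:E00terms1}/\aref{lem:E00terms2}, but the content is the same; your closing paragraph correctly flags the one genuinely delicate point (passing between the filtration $\{\f_{q}\pi_{\ast,\ast}\}$ and the tower $\{\pi_{\ast,\ast}\f_{q}\}$), which the paper leaves implicit.
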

\begin{proof}
Follows from \aref{lem:sc-convergence}, \aref{theorem:convergence1}, \aref{theorem:KWmod2nconvergence}, and \aref{proposition:KQ-strong-conv}.
\end{proof}

Similarly we obtain a generalization of \aref{theorem:best-conv} to rings of $\mathcal{S}$-integers based on \aref{theorem:convergence2} and \aref{theorem:KWmod2nconvergence2}.
\begin{theorem}
\label{theorem:best-conv2}
Over $\OFS$ the slice spectral sequence for $\KQ/2^{n}$, $n\geq 1$, is conditionally convergent with abutment $\pi_{\ast,\ast}(\KQ/2^{n})$.
\end{theorem}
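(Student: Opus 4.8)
The plan is to repeat the proof of \aref{theorem:best-conv} essentially verbatim, feeding it the $\OFS$-analogues of its inputs. Conditional convergence of the slice spectral sequence for $\KQ/2^{n}$ costs nothing: by \aref{lem:sc-convergence} the slice spectral sequence of \emph{any} motivic spectrum is conditionally convergent, here with a priori abutment $\pi_{\ast,\ast}\SC(\KQ/2^{n})$. Thus the only content is to check that the slice-completion map $\KQ/2^{n}\to\SC(\KQ/2^{n})$ is an isomorphism on bigraded homotopy groups, which permits rewriting the abutment as $\pi_{\ast,\ast}(\KQ/2^{n})$.

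To carry this out I would invoke \aref{proposition:KQ-strong-conv} with $(\E,\E')=(\KQ/2^{n},\KW/2^{n})$, which reduces the statement ``$\KQ/2^{n}\to\SC(\KQ/2^{n})$ is a $\pi_{\ast,\ast}$-isomorphism'' to the same statement for $\KW/2^{n}$. The latter is exactly what the higher Witt-theory inputs of this section deliver over $\OFS$: \aref{theorem:convergence2} identifies the effective filtration on $\pi_{\ast,\ast}(\KW/2^{n})$ with the $I_{q}(\OFS)$-adic filtration on $W(\OFS)/2^{n}$ and the ${}_{2^{n}}I_{q}(\OFS)$-filtration on ${}_{2^{n}}W(\OFS)$ (together with the $h^{2,1}(\OFS)$-extensions appearing in low weights), and \aref{theorem:KWmod2nconvergence2} shows these filtrations are exhaustive, Hausdorff, and complete --- indeed eventually constant, since $I_{q}(\OFS)=I^{q}(F)$ is torsion free for $q\gg 0$. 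Strong convergence of the slice spectral sequence for $\KW/2^{n}$ then forces $\pi_{\ast,\ast}\SC(\KW/2^{n})\cong\pi_{\ast,\ast}(\KW/2^{n})$ compatibly with the slice-completion map, which finishes the reduction and hence the theorem.

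I do not expect a genuine obstacle; \aref{theorem:best-conv2} is a repackaging of \aref{theorem:convergence2}, \aref{theorem:KWmod2nconvergence2}, and \aref{proposition:KQ-strong-conv}, all already established. The one point deserving a line of care is that \aref{proposition:KQ-strong-conv} was phrased over a general base: one should observe that its proof uses only the Wood cofiber sequence \eqref{equation:wood}, the vanishing of \aref{lem:KGLvanishing} (which rests on strong convergence of the $\KGL$-towers, \aref{theorem:KGL2ncc}), and Bott periodicity for $\KGL$, and that each of these is available over $\OFS$ because $\mathcal{S}\supset\{2,\infty\}$ excludes residue fields of characteristic $2$. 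Note also that, in contrast with the collapse result proved later by explicit computation, \aref{theorem:best-conv2} only asserts conditional convergence with the stated abutment, so no analysis of higher differentials for $\KQ/2^{n}$ is needed at this stage.
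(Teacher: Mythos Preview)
Your proposal is correct and matches the paper's approach: the paper simply remarks that \aref{theorem:best-conv2} follows as in \aref{theorem:best-conv}, using \aref{theorem:convergence2} and \aref{theorem:KWmod2nconvergence2} in place of their field analogues, together with \aref{lem:sc-convergence} and \aref{proposition:KQ-strong-conv}. Your extra care in verifying that the ingredients of \aref{proposition:KQ-strong-conv} (the Wood cofiber sequence, \aref{lem:KGLvanishing}, \aref{theorem:KGL2ncc}, Bott periodicity) remain valid over $\OFS$ is a welcome elaboration of what the paper leaves implicit.
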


\subsection{Multiplicative structure and pairings of slice spectral sequences}
Let us begin by constructing a pairing of slice spectral sequences based on the motivic version
\begin{equation}
\label{equation:okapairing1}
\One/4 \wedge \One/2 \to \One/2
\end{equation}
of Oka's module action of the mod 4 by the mod 2 Moore spectrum \cite[\S6]{Oka}.
If $\E$ is a motivic ring spectrum, 
i.e., 
a monoid in $\SH$, 
then \eqref{equation:okapairing1} induces the more general pairing
\begin{equation}
\label{equation:okapairing2}
\E/4 \wedge \E/2 \to \E/2.
\end{equation}
The slice filtration of $\E$ gives rise to an Eilenberg-MacLane system in the sense of \cite{MR0077480} and hence to an exact couple.
By \cite[Proposition 2.24]{April1}, \eqref{equation:okapairing2} induces a pairing of slice spectral sequences
\begin{equation}
\label{equation:R-pairing}
E^r_{p,q,w}(\E/4) \otimes E^r_{p',q',w'}(\E/2) \to E^r_{p+p',q+q',w+w'}(\E/2),
\end{equation}
satisfying the Leibniz rule $d^{r}(a\cdot b)=d^{r}(a)\cdot b + (-1)^{p}a\cdot d^{r}(b)$ for $a\in E^r_{p,q,w}(\E/4)$, $b\in E^r_{p',q',w'}(\E/2)$.
Here $\E/4$ is a motivic ring spectrum, 
and the groups $E^r_{p,q}(\E/4)$ form the $E^r$-page of an algebra spectral sequence whose differentials satisfy the Leibniz rule.
On the level of slices there is a multiplication map
\[
\s_{m}(\One) \wedge \s_{q}(\E) \to \s_{q+m}(\E).
\]
Recall that all the positive slices of the motivic sphere spectrum contain $\MZ/2$ as a direct summand up to $\Gm$-suspensions \cite[Corollary 2.13]{April1}. 
More precisely,
we have
\[
\s_m(\One) 
\simeq 
\Sigma^{m,m}\MZ/2 \vee \cdots.
\]
Any differential entering or exiting the direct summand $h^{m,m} \hookrightarrow \pi_{0,0}\s_m(\One)$ is trivial \cite[Theorem 4.8]{April1}.
By the Leibniz rule with respect to the differentials \cite[Proposition 2.24]{April1},
there is an induced pairing
\begin{equation}
\label{equation:h-mult}
h^{m,m} \tensor E^{r}_{p,q,w}(\E) \to E^{r}_{p,q+m,w}(\E) .
\end{equation}
Under this pairing we have $d^r(x \cdot y) = x \cdot d^r(y)$ for all $x \in h^{m,m}$, $y \in E^{r}_{p,q,w}(\E) $.

\begin{lemma}
\label{lem:<-1>}
Over a field $F$ of characteristic $\Char(F)\neq 2$,
the canonical map $\pi_{0,0}\f_{1}(\One) \to \pi_{0,0}\s_{1}(\One)$ sends $\langle -1\rangle - \langle 1 \rangle\in GW(F)$ to $\rho\in h^{1,1}$.
\end{lemma}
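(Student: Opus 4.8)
The plan is to compute the image of $\langle -1 \rangle - \langle 1 \rangle$ under the canonical map $\pi_{0,0}\f_1(\One) \to \pi_{0,0}\s_1(\One)$ by exploiting the identification of the low-weight part of the motivic sphere with Milnor--Witt $K$-theory. Recall from Morel's computation that $\pi_{0,0}(\One) \cong GW(F)$ and, more relevantly, that the first slice $\s_1(\One)$ has $\pi_{0,0}\s_1(\One)$ containing the summand $h^{1,1} \cong k^{\Mil}_1(F) \cong F^\times/2$, with $\rho$ corresponding to the class of $-1$. The element $\langle -1 \rangle - \langle 1 \rangle$ lies in the fundamental ideal $I(F) \subseteq GW(F)$, hence in $\pi_{0,0}\f_1(\One)$ via the inclusion $I(F) \subseteq \f_1\pi_{0,0}(\One)$ coming from multiplicativity of the slice filtration (this is the analogue for the sphere of \aref{lem:lemma65}).

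The key step is to identify the composite $I(F) \hookrightarrow \pi_{0,0}\f_1(\One) \to \pi_{0,0}\s_1(\One)$ with the canonical projection $I(F) \to I(F)/I^2(F) \cong k^{\Mil}_1(F) = h^{1,1}$. This follows from the fact that the slice filtration on $\One$ restricted to $\pi_{0,0}$ is compatible with the fundamental ideal filtration on $GW(F)$: the $q$th effective cover contributes $I^q(F)$ in weight zero, and the associated graded is the zeroth homotopy of the slice, which in the range relevant here is detected by $h^{q,q}$. For $q=1$ this is exactly the first residue $GW(F) \supseteq I(F) \twoheadrightarrow I(F)/I^2(F)$. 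Since the Pfister form $\langle\langle -1 \rangle\rangle = \langle 1 \rangle - \langle -1 \rangle$ (or its negative, depending on sign conventions) maps to the symbol $\{-1\}$ in $k^{\Mil}_1(F)$ under the Milnor map $I/I^2 \xrightarrow{\cong} k^{\Mil}_1$, we conclude that $\langle -1 \rangle - \langle 1 \rangle \mapsto \{-1\} = \rho$.

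Concretely I would proceed as follows. First, recall the unit map $\One \to \KQ$ and note it is compatible with slice filtrations, so that the class $\langle -1 \rangle - \langle 1 \rangle \in \pi_{0,0}\f_1(\One)$ maps to the corresponding class in $\pi_{0,0}\f_1(\KQ) \cong I(F)$, and on slices to $\pi_{0,0}\s_1(\KQ)$; combined with the description of $\s_1(\KQ)$ in \eqref{equation:hermitianktheoryslices} this reduces the claim to the analogous statement for $\KQ$, or better yet one works directly with $\One$. Second, use that $\pi_{0,0}\s_1(\One) \twoheadleftarrow \pi_{0,0}\f_1(\One)$ and the summand $h^{1,1}$ is hit, with $\f_2\pi_{0,0}(\One) \supseteq I^2(F)$ mapping to zero in this summand. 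Third, invoke Morel's identification of the relevant piece of the slice tower of $\One$ with the $\tau$-Bockstein / Milnor--Witt data, under which $\langle -1 \rangle = \langle 1 \rangle + \eta[-1]$ in $\KMW_0$, and the boundary $[-1] \in \KMW_1$ maps to $\rho \in k^{\Mil}_1 = h^{1,1}$.

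\textbf{Main obstacle.} The delicate point is bookkeeping the sign and the precise comparison between three filtrations: the fundamental ideal filtration on $GW(F)$, the $\gamma$-filtration appearing in Morel's $\KMW$ picture, and the slice filtration on $\One$. One must be careful that the isomorphism $\pi_{0,0}\s_1(\One) \supseteq h^{1,1}$ is normalized so that the class of $\langle -1 \rangle - \langle 1 \rangle$ — rather than $\langle 1 \rangle - \langle -1 \rangle$ — maps to $+\rho$ and not $-\rho$ (a non-issue mod $2$, but worth stating cleanly). I expect this sign/normalization reconciliation, rather than any substantive homotopy-theoretic input, to be where the proof needs care; all the structural inputs (slice tower of $\One$, its low slices, the Milnor isomorphism $I/I^2 \cong k^{\Mil}_1$) are available from \cite{April1} and Morel's work.
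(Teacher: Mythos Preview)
Your proposal is essentially correct and uses the same ingredients as the paper: Morel's identification $\pi_{-n,-n}(\One)\cong\KMW_n(F)$, the formula $\langle u\rangle = 1+\eta[u]$ in $\KMW_0$, and Levine's result $\f_q\pi_{0,0}(\One)\cong I^q$. The difference is organizational. The paper packages everything into one commutative square built from the Hopf map: applying $\f_1$ and $\s_1$ to $\eta\colon\Sigma^{1,1}\One\to\One$ gives a diagram whose $\pi_{0,0}$ is
\[
\begin{tikzcd}
\KMW_1(F)\cong\pi_{-1,-1}(\One) \ar[r,"\cdot\eta"]\ar[d] & \pi_{0,0}\f_1(\One)\ar[d]\\
H^{1,1}(F;\Z) \ar[r,"\pr"] & h^{1,1},
\end{tikzcd}
\]
so that $[-1]\in\KMW_1$ maps across to $\eta[-1]=\langle -1\rangle-\langle 1\rangle$ and down-across to $\rho$. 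This bypasses the need to first identify the map $I\to h^{1,1}$ abstractly and then argue it is the Milnor map; the $\eta$-diagram gives that identification for free via $\s_1(\eta)=\pr^\infty_2$.

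Your route --- first arguing $\pi_{0,0}\f_1(\One)\cong I(F)$, then that the map to $\pi_{0,0}\s_1(\One)$ factors through $I/I^2\cong k^\Mil_1$, and finally invoking $\eta[-1]$ --- works, but the middle step (that the induced map $I/I^2\to h^{1,1}$ is the Milnor isomorphism rather than, say, zero) is exactly what your ``Third'' point proves, and that point \emph{is} the paper's diagram. So your proposal is the paper's proof with the key step postponed; tightening the exposition around the $\eta$-square would make it match. Your worry about signs is, as you note, immaterial mod $2$.
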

\begin{proof}
In the proof we will make use of Milnor-Witt $K$-theory $\KMW_{\ast}(F)$ of $F$ (see \cite{morelmotivicpi0}).
Consider the commutative diagram
\[
\begin{tikzcd}
\Sigma^{1,1}\f_{0}(\One) \ar[r, "\simeq"]\ar[d] & \f_{1}(\Sigma^{1,1}\One) \ar[r, "\f_{1}(\eta)"]\ar[d] & \f_{1}(\One)\ar[d] \\
\Sigma^{1,1}\s_{0}(\One) \ar[r, "\simeq"]\ar[d] & \s_{1}(\Sigma^{1,1}\One) \ar[r, "\s_{1}(\eta)"]& \s_{1}(\One)\ar[d] \\
\Sigma^{1,1}\MZ \ar[rr, "\pr^{\infty}_{2}"] && \Sigma^{1,1}\MZ/2.
\end{tikzcd}
\]
See \cite[Lemma 2.1]{slices} for the upper leftmost square and \cite[Lemma 2.32]{April1} for the bottom square.
Applying $\pi_{0,0}$ we get the commutative diagram
\[
\begin{tikzcd}
\pi_{-1,-1}(\One) \ar[r]\ar[d] & \f_{1}\pi_{1,1}(\One)\ar[d] \\
H^{1,1}(F;\Z) \ar[r] & h^{1,1}.
\end{tikzcd}
\]
The left vertical map identifies with the quotient map $\KMW_{1}(F) \to \KMW_{1}(F) /\eta$ under the isomorphism $\pi_{-1,-1}(\One) \cong \KMW_{1}(F)$.
Recall $\eta \in \KMW_{-1}(F)$ corresponds to the Hopf map in $\pi_{1,1}(\One)$ and $\f_{n}\pi_{0,0}(\One) \cong I^{n}$ over perfect fields \cite[Theorem 1]{Levine:GW}.
(For a general field use base change and \cite[Lemma 2.5]{slices}).
That is, 
the isomorphism $\KMW_{0}(F) \to GW(F)$ given by $1+\eta[u] \mapsto \langle u \rangle$ induces a commutative diagram with surjective maps
\begin{equation}
\label{equation:KMWetadiagram}
\begin{tikzcd}
\KMW_{1} (F) \ar[r]\ar[d, "\cdot\eta"] & \KMW_{1}(F) /\eta \ar[d]\ar[rd] \\
I \ar[r]& I/I^{2} \ar[r] & \KMW_{1}(F) /(\eta, 2) \cong h^{1,1}.
\end{tikzcd}
\end{equation}
A diagram chase in \eqref{equation:KMWetadiagram} shows $\langle -1\rangle - \langle 1 \rangle$ maps to $\rho$.
\end{proof}

To state our next result we set $\Lambda: = \{x \not\in \mathcal S\}^{-1}\Z$ --- the localization of $\Z$ inverting every rational prime not in $\mathcal S\cap \Z$ ---
and let $\E_{\Lambda}$ (resp.~$A_{\Lambda}$) be the corresponding localization of the motivic spectrum $\E$ (resp.~abelian group $A$), 
see also \cite[Definition 2.1, Remark 2.2]{April1}.
\begin{lemma}
\label{lem:<-1>2}
Over $\OFS$ the canonical map $\pi_{0,0}\f_{1}(\KQ_{\Lambda}) \to \pi_{0,0}\s_{1}(\KQ_{\Lambda})$ sends $\langle -1\rangle - \langle 1 \rangle\in GW(\OFS)_{\Lambda}$ to $\rho\in h^{1,1}$.
\end{lemma}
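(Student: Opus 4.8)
The plan is to deduce the statement from its field version, \aref{lem:<-1>}, by naturality along the generic point $j\colon\Spec F\to\Spec\OFS$. Since $j$ is a cofiltered limit of open immersions, pullback along $j$ commutes with effective covers and slices up to canonical isomorphism, and by \cite[\S2.3]{April1} the slice identifications in \eqref{equation:hermitianktheoryslices} --- in particular $\s_{0}(\KQ_{\Lambda})\simeq\MZ$ and $\s_{1}(\KQ_{\Lambda})\simeq\Sigma^{1,1}\MZ/2$ --- hold over $\OFS$. Restriction along $j$ together with these identifications produces a commutative square
\[
\begin{tikzcd}
\pi_{0,0}\f_{1}(\KQ_{\Lambda})(\OFS) \ar[r]\ar[d] & \pi_{0,0}\s_{1}(\KQ_{\Lambda})(\OFS) \ar[d] \\
\pi_{0,0}\f_{1}(\KQ_{\Lambda})(F) \ar[r] & \pi_{0,0}\s_{1}(\KQ_{\Lambda})(F).
\end{tikzcd}
\]
From $\s_{0}(\KQ)\simeq\MZ$ and $\pi_{1,0}\MZ=H^{-1,0}(\OFS;\Z)=0$ the top-left group is $\ker(GW(\OFS)_{\Lambda}\to H^{0,0}(\OFS;\Z)_{\Lambda})\subseteq GW(\OFS)_{\Lambda}$, so it contains $\langle-1\rangle-\langle 1\rangle$; the left vertical map is the restriction of the base-change homomorphism $GW(\OFS)_{\Lambda}\to GW(F)_{\Lambda}$ and fixes $\langle-1\rangle-\langle 1\rangle$; the right vertical map is the restriction $h^{1,1}(\OFS)_{\Lambda}\to h^{1,1}(F)_{\Lambda}$ and fixes $\rho$; and the bottom map sends $\langle-1\rangle-\langle 1\rangle$ to $\rho$. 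Chasing the square, the image of $\langle-1\rangle-\langle 1\rangle$ under the map in the statement and the image of $\rho$ agree in $h^{1,1}(F)_{\Lambda}$.

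Two inputs remain. First, the field version must be upgraded from $\One$ to $\KQ$: the unit map $\One\to\KQ$ induces a morphism of slice towers, hence a commutative square relating $\pi_{0,0}\f_{1}(\One)\to\pi_{0,0}\s_{1}(\One)$ to $\pi_{0,0}\f_{1}(\KQ)\to\pi_{0,0}\s_{1}(\KQ)\cong h^{1,1}$; since $\One\to\KQ$ induces the identity on $\pi_{0,0}=GW(F)$ and a nonzero, hence identity, map on the bottom $\Sigma^{1,1}\MZ/2$-summand of $\s_{1}$ (using that the slices of $\KQ$ are $\s_{\ast}(\One)$-modules, see \cite[\S6]{GRSO}, \cite[Corollary 2.13]{April1}), \aref{lem:<-1>} gives that over any field of characteristic $\neq 2$ the map $\pi_{0,0}\f_{1}(\KQ)\to\pi_{0,0}\s_{1}(\KQ)$ sends $\langle-1\rangle-\langle 1\rangle$ to $\rho$. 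Second, the restriction $h^{1,1}(\OFS)_{\Lambda}\to h^{1,1}(F)_{\Lambda}$ must be injective; writing $h^{1,1}=H^{1}_{\et}(-;\mu_{2})$, the localization sequence for $\Spec F\hookrightarrow\Spec\OFS$ with closed complement the set $Z$ of points $x\not\in\mathcal{S}$ shows the kernel is the image of $H^{1}_{Z}(\Spec\OFS;\mu_{2})$, which vanishes by absolute purity since it is a sum of degree $-1$ cohomology groups of the residue fields, and passing to $\Lambda$-localization changes nothing because these groups are $2$-torsion and $2$ is not invertible in $\Lambda$. Combining the two inputs with the square shows that the map in the statement sends $\langle-1\rangle-\langle 1\rangle$ to $\rho$.

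I expect the real obstacle to be bookkeeping rather than any new idea: one must confirm that the identifications used freely over fields --- $\pi_{0,0}\f_{1}(\KQ)\cong\ker(\mathrm{rank})\subseteq GW$, $\pi_{0,0}\s_{1}(\KQ)\cong h^{1,1}$, and the compatibility of $\One\to\KQ$ with the $GW$-structure on $\pi_{0,0}$ --- persist over the Dedekind domain $\OFS$ after $\Lambda$-localization, and that pullback along $j$ is compatible with the slice filtration in the manner asserted. Granting these, the argument is the diagram chase above built on \aref{lem:<-1>}.
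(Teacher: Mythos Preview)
Your proposal is correct and follows essentially the same strategy as the paper: reduce to the field case via naturality along the generic point $j\colon\Spec F\to\Spec\OFS$, upgrade \aref{lem:<-1>} from $\One$ to $\KQ$ using the unit map (the paper does this in one line by citing that $\pi_{0,0}(\One)(F)\to\pi_{0,0}(\KQ)(F)$ is an isomorphism), and conclude using injectivity of $h^{1,1}(\OFS)\to h^{1,1}(F)$. One small slip: $\s_{1}(\KQ_{\Lambda})$ is not $\Sigma^{1,1}\MZ/2$ but $\bigvee_{i\leq 0}\Sigma^{2i+1,1}\MZ/2$ by \eqref{equation:hermitianktheoryslices}; this is harmless for your argument since the extra summands contribute nothing to $\pi_{0,0}$, so $\pi_{0,0}\s_{1}(\KQ_{\Lambda})\cong h^{1,1}$ still holds over both $F$ and $\OFS$.
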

\begin{proof}
Since $\pi_{0,0}(\One)(F) \to \pi_{0,0}(\KQ)(F)$ is an isomorphism the corresponding statement holds over $F$ by \aref{lem:<-1>}.
Comparing $\OFS$ with $F$ we have the commutative diagram
\[
\begin{tikzcd}
\pi_{0,0}\f_{1}(\KQ_{\Lambda})(\OFS) \ar[r]\ar[d, hook] & \pi_{0,0}\f_{1}(\KQ_{\Lambda})(F) \ar[d, hook] \\
\pi_{0,0}(\KQ_{\Lambda})(\OFS) \ar[r]\ar[d, "\cong"] & \pi_{0,0}(\KQ_{\Lambda})(F) \ar[d, "\cong"] \\
GW(\OFS)_{\Lambda} \ar[r, hook] & GW(F)_{\Lambda}.
\end{tikzcd}
\]
The maps $GW(\OFS) \to GW(F)$ and $h^{1,1}(\OFS) \to h^{1,1}(F)$ are injective (see \cite[Corollary IV.3.3]{MilnorHusemoller} and \aref{section:mcatSa}).
Hence $\langle -1\rangle - \langle 1 \rangle\in GW(\OFS)$ maps to $\rho\in h^{1,1}$, since this holds over $F$.
\end{proof}

\begin{remark}
If absolute purity holds for the sphere $\One$ --- see \aref{theorem:KQKWpurity} for $\KQ$ --- then localization implies \aref{lem:<-1>} holds over $\OFS$.
By comparing the slice spectral sequences for $\One$ and $\KQ$ one obtains a version of \aref{lem:<-1>} for $\eta$-completions.
\end{remark}

\section{Algebraic $K$-theory $\KGL$}
\label{section:akt}
Throughout we work over a base field $F$ of $\Char(F)\neq 2$ or the ring of $\mathcal{S}$-integers in a number field. 
Recall from \cite[\S5]{slices} the slice calculation
\begin{equation}
\label{equation:slicesKGL2n}
\s_{q}(\KGL/2^{n})
\simeq
\Sigma^{2q,q}\MZ/2^{n}.
\end{equation}
Since $\mathcal{S}$ contains all dyadic primes, 
the same calculation holds over $\OO_{F,\mathcal{S}}$ by \cite[Theorem 2.19]{April1}.
Recall the slice spectral sequence for $\KGL/2^{n}$ converges conditionally over fields $F$ by \cite[Lemma 3.11]{April2} and over rings of $\mathcal{S}$-integers in number fields by the 
Wood cofiber sequence \eqref{equation:wood} and \aref{theorem:best-conv2}.
We give an alternate proof in \aref{theorem:KGL2ncc}.

By the proof of \cite[Lemma 5.1]{slices} the slice $\dd^{1}$-differential for $\KGL/2$ is the first Milnor operation
\begin{equation}
\label{equation:KGL-d1}
\QQ_{1} = \Sq^{3} + \Sq^{2}\Sq^{1}.
\end{equation}
In weight $w=0$ we obtain $E^{1}_{p,q,0}(\KGL/2)=h^{2q-p,q}$, 
which vanishes if $2q<p$ or $q>p$ with the possible exception of the mod $2$ Picard group $h^{2,1}\cong\Pic(\OO_{F,\mathcal{S}})/2$.

Over a general base scheme there is a canonical orientation map $\Phi\colon\MGL \to \KGL$ of motivic ring spectra. 
By passing to effective covers we obtain the factorization 
$$
\MGL \xrightarrow{\Psi} \f_{0}(\KGL) \to \KGL.
$$
In the Lazard ring $\mathbb{L}=\Z[x_{1},x_{2},\dotsc]$ the generator $x_{n}$ can be viewed as a map $\Sigma^{2n,n}\One \to \MGL$ defined over the integers. 
By complex realization we have $\Phi(x_n)=0$ and hence $\Psi(x_n)=0$ for all $n\geq 2$.
Hence $\Psi$ defines a map $\Theta\colon \MGL/(x_{2},x_3,\dotsc)\MGL \to \f_{0}(\KGL)$ of quotients, 
following \cite[\S5]{Spitzweck:slices}. 

\begin{proposition}
\label{prop:landweber-kglmod2}
Let $S$ be a Dedekind domain containing $\tfrac{1}{2}$. 
Then $\Theta/2$ induces an equivalence 
$$
\MGL/(2,x_{2},x_{3},\dotsc)
\xrightarrow{\simeq} 
\f_{0}(\KGL/2).
$$
\end{proposition}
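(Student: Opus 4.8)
The plan is to compare slices: both $\MGL/(2,x_{2},x_{3},\dotsc)$ and $\f_{0}(\KGL/2)\simeq\f_{0}(\KGL)/2$ (apply the triangulated functor $\f_{0}$ to $\KGL\xrightarrow{2}\KGL\to\KGL/2$) are effective motivic spectra over $S$, and a map of effective, slice-complete motivic spectra that induces an equivalence on every slice is an equivalence — induct up the coeffective tower \eqref{equation:coeffcof} (with base case $\f^{-1}=0$ for effective spectra) to see that $\f^{q}(\Theta/2)$ is an equivalence for all $q$, then pass to $\holim$ to get $\SC(\Theta/2)$. So it suffices to show (a) $\Theta/2$ is an equivalence on $\s_{q}$ for all $q$, and (b) both sides are slice-complete over $S$.

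For the slices of the source I would invoke the Hopkins--Morel--Hoyois description of the slices of $\MGL$, available over the Dedekind domain $S$ by Spitzweck's work \cite{Spitzweck}: $\bigoplus_{q}\s_{q}(\MGL)\simeq\MZ\otimes_{\Z}\Z[x_{1},x_{2},\dotsc]$ with $x_{i}$ in bidegree $(2i,i)$. Since $x_{1},x_{2},\dotsc$ is a regular sequence, applying the slice functor to the defining cofiber sequences $\Sigma^{2i,i}\MGL\xrightarrow{x_{i}}\MGL\to\MGL/x_{i}$ ($i\geq2$), reducing mod $2$, and passing to the colimit over the number of killed generators shows $\s_{q}(\MGL/(2,x_{2},x_{3},\dotsc))\simeq\Sigma^{2q,q}\MZ/2$ for $q\geq0$ and $0$ for $q<0$. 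On the target side $\s_{q}(\f_{0}(\E))=\s_{q}(\E)$ for $q\geq0$ and vanishes for $q<0$, so \eqref{equation:slicesKGL2n} gives $\s_{q}(\f_{0}(\KGL/2))\simeq\Sigma^{2q,q}\MZ/2$ for $q\geq0$ and $0$ otherwise. The two spectra therefore have abstractly isomorphic slices.

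Next, $\Theta/2$ realizes these isomorphisms. Over $S$ the bidegree $(0,0)$ part of the mod $2$ motivic Steenrod algebra is $\Z/2$, so $\operatorname{End}_{\SH(S)}(\Sigma^{2q,q}\MZ/2)$ in bidegree $(0,0)$ is $\Z/2$ and any self-map of $\Sigma^{2q,q}\MZ/2$ is either null or the identity; it is enough to exclude the null case. For $q=0$: $\Psi\colon\MGL\to\f_{0}(\KGL)$ is the effective cover of the ring map $\Phi$, hence a map of motivic ring spectra, and therefore induces on $\s_{0}$ the identity of $\MZ\simeq\s_{0}(\One)$; thus $\s_{0}(\Theta/2)=\operatorname{id}$. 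For $q\geq1$ I would propagate by multiplication by $x_{1}$: on the source, multiplication by $x_{1}$ induces an isomorphism $\s_{q}(\MGL/(2,x_{2},\dotsc))\xrightarrow{\simeq}\Sigma^{-2,-1}\s_{q+1}(\MGL/(2,x_{2},\dotsc))$, since the total slice is the free module $\MZ/2[x_{1}]$; on the target, multiplication by the Bott class $\beta=\Psi(x_{1})\in\pi_{2,1}\f_{0}(\KGL)$ induces the corresponding isomorphism, this being the effective restriction of Bott periodicity for $\KGL$ together with \eqref{equation:fqsigma}. As $\Theta$ is compatible with multiplication by $x_{1}$ (via $\Psi$), the resulting square forces $\s_{q+1}(\Theta/2)$ to be an isomorphism whenever $\s_{q}(\Theta/2)$ is; induction from $q=0$ finishes (a).

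For (b), the slices $\Sigma^{2q,q}\MZ/2$ become arbitrarily highly connected as $q\to\infty$ (here $\dim S\leq1$ enters), and arguing as in \cite{April2} and \aref{theorem:KGL2ncc} one gets that both $\MGL/(2,x_{2},x_{3},\dotsc)$ and $\f_{0}(\KGL/2)$ are slice-complete over $S$; for the former one additionally uses that slice-completeness of $\MGL$ over $S$ is inherited by its mod $2$ quotients by the regular sequence, slice-completeness being stable under the relevant cofibers and colimits. Together with (a) this yields that $\SC(\Theta/2)$ is an equivalence, hence so is $\Theta/2$. The step I expect to be the main obstacle is securing the slice computation for $\MGL$ over an arbitrary Dedekind domain $S\ni\tfrac12$ and checking that killing the regular sequence $(2,x_{2},x_{3},\dotsc)$ behaves as expected at the level of slices; the $q=0$ normalization and the $x_{1}$-propagation are then formal, and the slice-completeness bookkeeping is routine given $\dim S\le 1$.
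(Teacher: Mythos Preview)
The paper's proof is a two-line citation of \cite[Proposition~5.4]{Spitzweck:slices} together with \cite[Theorem~11.3]{Spitzweck}; your proposal is essentially an attempt to reconstruct the content of those references from scratch. Your step~(a) --- computing both slice towers and propagating the $q=0$ identification along multiplication by $x_{1}$/the Bott class --- is the right idea and is the core of the argument.

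The gap is in step~(b). You invoke \aref{theorem:KGL2ncc} for slice-completeness of $\f_{0}(\KGL/2)$, but in this paper that theorem is proved (via \aref{cor:slicefiltration-kglmod2}) as a \emph{consequence} of the very proposition you are proving, so the citation is circular. Your fallback heuristic, that the slices $\Sigma^{2q,q}\MZ/2$ become highly connected and therefore the spectrum is slice-complete, is not valid in general: high connectivity of $\s_{q}(\E)$ does not by itself force high connectivity of $\f_{q}(\E)$, since the descending induction along $\f_{q+1}(\E)\to\f_{q}(\E)\to\s_{q}(\E)$ has no base case without already assuming $\holim_{q}\f_{q}(\E)\simeq 0$. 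For the source $\MGL/(2,x_{2},\dotsc)$ one can supply this via the explicit Hopkins--Morel description of $\f_{q}(\MGL)$ over Dedekind domains (this is exactly what \cite[Theorem~11.3]{Spitzweck} provides), but for $\f_{0}(\KGL/2)$ you have given no non-circular argument. That missing ingredient is precisely what the cited results of Spitzweck establish, which is why the paper can simply defer to them.
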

\begin{proof}
This follows from \cite[Proposition 5.4]{Spitzweck:slices} in combination with \cite[Theorem 11.3]{Spitzweck}.
\end{proof}

\begin{corollary}
\label{cor:slicefiltration-kglmod2}
Let $S$ be a Dedekind domain containing $\tfrac{1}{2}$. 
Then 
$$
\f_{q}(\KGL/2)=\Sigma^{2q,q}\f_{0}(\KGL/2)
$$ 
is $q$-connective.
\end{corollary}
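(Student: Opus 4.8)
The plan is to separate the assertion into the \emph{shape} $\f_q(\KGL/2)\simeq\Sigma^{2q,q}\f_0(\KGL/2)$, which I would obtain from Bott periodicity, and the \emph{connectivity}, which I would read off from \aref{prop:landweber-kglmod2}. For the shape, start from Bott periodicity $\Sigma^{2,1}\KGL\simeq\KGL$ \cite[Theorem 6.8]{Voevodsky:icm}; since $\Sigma^{2,1}$ is exact and $\KGL/2$ is the cofibre of multiplication by $2$ on $\KGL$, this descends to equivalences $\Sigma^{2q,q}(\KGL/2)\simeq\KGL/2$ for every $q\geq 0$. Now factor $\Sigma^{2q,q}=\Sigma^{q,0}\circ\Sigma^{q,q}$. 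The simplicial suspension $\Sigma^{q,0}$ and its inverse preserve the subcategory $\SH^{\eff}(q)$, hence commute with the effective cover functor $\f_q$; the $\mathbf{G}_m$-suspension $\Sigma^{q,q}$ interacts with effective covers by \eqref{equation:fqsigma}, giving $\f_q\Sigma^{q,q}\simeq\Sigma^{q,q}\f_0$. Composing these yields
\begin{align*}
\f_q(\KGL/2) &\simeq \f_q\bigl(\Sigma^{2q,q}(\KGL/2)\bigr) \simeq \Sigma^{q,0}\f_q\bigl(\Sigma^{q,q}(\KGL/2)\bigr) \\
&\simeq \Sigma^{q,0}\Sigma^{q,q}\f_0(\KGL/2) = \Sigma^{2q,q}\f_0(\KGL/2).
\end{align*}

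For the connectivity, I would invoke \aref{prop:landweber-kglmod2}, which identifies $\f_0(\KGL/2)$ with $\MGL/(2,x_2,x_3,\dotsc)$. Since $\MGL$ is connective and the quotient is built as a sequence of cofibres of maps whose source ($\MGL$ for the class $2$, resp.\ $\Sigma^{2i,i}\One$ for $x_i$, $i\geq 2$) and target (always the previous quotient) are connective, each stage remains connective; being a filtered colimit of connective motivic spectra, $\f_0(\KGL/2)$ is connective. Applying $\Sigma^{2q,q}$ to a connective motivic spectrum produces a $q$-connective one, so $\f_q(\KGL/2)=\Sigma^{2q,q}\f_0(\KGL/2)$ is $q$-connective.

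I expect the only work to be routine bookkeeping: reconciling the precise indexing of \eqref{equation:fqsigma} with the factorisation $\Sigma^{2q,q}=\Sigma^{q,0}\circ\Sigma^{q,q}$, confirming that $\f_q$ commutes with simplicial suspensions and their inverses, and verifying that forming the quotient $\MGL/(2,x_2,\dotsc)$ preserves connectivity. None of these presents a substantive obstacle; the whole statement is a short formal consequence of \aref{prop:landweber-kglmod2} together with Bott periodicity.
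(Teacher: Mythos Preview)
Your proposal is correct and is precisely the unpacking of the paper's one-line proof, which reads in full: ``Follows from \aref{prop:landweber-kglmod2} and Bott periodicity for $\KGL$ \cite[Theorem 6.8]{Voevodsky:icm}.'' Your two ingredients---Bott periodicity combined with \eqref{equation:fqsigma} and the $\Sigma^{q,0}$-equivariance of $\f_q$ for the shape, and the connectivity of $\MGL/(2,x_2,\dotsc)$ transported through $\Sigma^{2q,q}$ for the connectivity---are exactly what the paper intends.
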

\begin{proof}
Follows from \aref{prop:landweber-kglmod2} and Bott periodicity for $\KGL$ \cite[Theorem 6.8]{Voevodsky:icm}.
\end{proof}

\begin{theorem}
\label{theorem:KGL2ncc}
Let $S$ be a Dedekind domain containing $\tfrac{1}{2}$ and let $n\geq 1$.
Over $S$ the motivic spectrum $\KGL/2^{n}$ is slice complete and its slice spectral sequence is conditionally convergent with abutment $\pi_{\ast,\ast}(\KGL/2^{n})$.
\end{theorem}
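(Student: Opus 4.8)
The plan is to reduce the whole assertion to slice completeness of $\KGL/2^{n}$ and then prove that by induction on $n$. For the reduction, I would note that applying $\holim_{q\to\infty}$ to the defining cofiber sequences $\f_{q+1}(\E)\to\E\to\f^{q}(\E)$ produces a cofiber sequence $\holim_{q}\f_{q}(\E)\to\E\to\SC(\E)$, so that $\E$ is slice complete precisely when $\E\to\SC(\E)$ is an equivalence. Since \aref{lem:sc-convergence} already provides conditional convergence of the slice spectral sequence with abutment $\pi_{\ast,\ast}\SC(\E)$, it then suffices to prove that $\KGL/2^{n}$ is slice complete for every $n\geq 1$.

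For the base case $n=1$, I would invoke \aref{cor:slicefiltration-kglmod2}, which identifies $\f_{q}(\KGL/2)$ with $\Sigma^{2q,q}\f_{0}(\KGL/2)$ and shows this spectrum is $q$-connective, the point being that $\f_{0}(\KGL/2)\simeq\MGL/(2,x_{2},x_{3},\dotsc)$ by \aref{prop:landweber-kglmod2} and hence connective. Thus $\pi_{p,w}\f_{q}(\KGL/2)=0$ whenever $p-w<q$, so for each fixed $(p,w)$ the towers $\{\pi_{p,w}\f_{q}(\KGL/2)\}_{q}$ and $\{\pi_{p+1,w}\f_{q}(\KGL/2)\}_{q}$ are eventually zero; their limits and $\lim^{1}$-terms vanish, and the Milnor exact sequence then forces $\pi_{p,w}\holim_{q}\f_{q}(\KGL/2)=0$ for all $(p,w)$. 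Hence $\holim_{q}\f_{q}(\KGL/2)\simeq\ast$ and $\KGL/2$ is slice complete.

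For the inductive step I would apply the octahedral axiom to the factorization $2^{n}=2\circ 2^{n-1}$ of the degree map on $\One$ to obtain a cofiber sequence $\One/2^{n-1}\to\One/2^{n}\to\One/2$, and then smash with $\KGL$ to get $\KGL/2^{n-1}\to\KGL/2^{n}\to\KGL/2$. Since each effective cover functor $\f_{q}$ is exact and $\holim$ preserves cofiber sequences, this induces a cofiber sequence
\[
\holim_{q}\f_{q}(\KGL/2^{n-1})\to\holim_{q}\f_{q}(\KGL/2^{n})\to\holim_{q}\f_{q}(\KGL/2).
\]
By the induction hypothesis and the base case the two outer terms are contractible, hence so is the middle one. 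This proves $\KGL/2^{n}$ slice complete for all $n\geq 1$, and combined with the first paragraph it yields conditional convergence of its slice spectral sequence with abutment $\pi_{\ast,\ast}(\KGL/2^{n})$.

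I expect no serious obstacle: the one substantive input, the connectivity of $\f_{0}(\KGL/2)$ via the Landweber-type presentation $\MGL/(2,x_{2},x_{3},\dotsc)$, is supplied by \aref{prop:landweber-kglmod2} and \aref{cor:slicefiltration-kglmod2}, and everything else is formal. This is exactly what makes the argument work over an arbitrary Dedekind domain containing $\tfrac{1}{2}$, bypassing the Wood cofiber sequence and \aref{theorem:best-conv2}.
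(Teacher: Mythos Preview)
Your proof is correct and follows essentially the same approach as the paper, just with the details spelled out: the paper's one-line proof ``We may assume $n=1$ and conclude using \aref{cor:slicefiltration-kglmod2}'' is precisely your inductive reduction via the cofiber sequence $\KGL/2^{n-1}\to\KGL/2^{n}\to\KGL/2$ followed by the connectivity argument for $n=1$. Your explicit identification of slice completeness with $\E\simeq\SC(\E)$ and the Milnor sequence step make the logic transparent.
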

\begin{proof}
We may assume $n=1$ and conclude using \aref{cor:slicefiltration-kglmod2}.
\end{proof}

\begin{theorem}
\label{thm:KGL2-E2}
In the slice spectral sequence for $\KGL/2$ there are isomorphisms
\[
E^{2}_{p,q,w}(\KGL/2) 
\cong 
\begin{cases}
h^{2,1}                 & (p-w,q)=(0,1) \\ 
h^{2q-p,q-w}/\rho^3        & p-w-q \equiv 0, 1 \bmod 4 \\
\ker (\rho_{2q-p,q-w}^{3}) & p-w-q \equiv 2, 3 \bmod 4.
\end{cases}
\]
\end{theorem}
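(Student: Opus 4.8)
The plan is to read the $E^{1}$-page off the slice calculation, to identify the $d^{1}$-differential with the first Milnor operation, and to compute the homology of the resulting complex from the explicit structure of the mod $2$ motivic cohomology of $\OFS$ and of the motivic Steenrod action on it. By \eqref{equation:slicesKGL2n}, $\s_{q}(\KGL/2)\simeq\Sigma^{2q,q}\MZ/2$, so $E^{1}_{p,q,w}(\KGL/2)=\pi_{p,w}\Sigma^{2q,q}\MZ/2=h^{2q-p,\,q-w}$; abbreviate $a=2q-p$ and $b=q-w$, so that $b-a=p-w-q$ is the quantity governing the case distinction. By \eqref{equation:KGL-d1} the differential $d^{1}\colon E^{1}_{p,q,w}\to E^{1}_{p-1,q+1,w}$ is $\QQ_{1}=\Sq^{3}+\Sq^{2}\Sq^{1}\colon h^{a,b}\to h^{a+3,b+1}$. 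Hence, for fixed $w$, the $E^{1}$-page breaks into cochain complexes
\[
\cdots\to h^{a-3,b-1}\xrightarrow{\ \QQ_{1}\ }h^{a,b}\xrightarrow{\ \QQ_{1}\ }h^{a+3,b+1}\to\cdots,
\]
one for each value of $p+q$ (equivalently of $3b-a$), and $E^{2}_{p,q,w}$ is the cohomology of such a complex at $h^{a,b}$.

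The computation rests on two inputs, both developed in \aref{section:mcatSa} (see also \aref{appendixB}). First, the shape of the mod $2$ motivic cohomology of $\OFS$: $h^{a,b}(\OFS)=0$ unless either $0\le a\le b$, in which case $h^{a,b}\cong H^{a}_{\et}(\OFS;\Z/2)$ with $\tau\colon h^{a,b}\to h^{a,b+1}$ an isomorphism, or $(a,b)=(2,1)$, in which case $h^{2,1}\cong\Pic(\OFS)/2$; moreover the Bockstein $\Sq^{1}$ vanishes on $h^{1,1}(\OFS)$. Second, the motivic Steenrod action: $\Sq^{1}\tau=\rho$ and $\Sq^{2}\tau=0$, and, via the motivic Cartan formula --- whose correction term for $\Sq^{3}$ produces $\QQ_{1}(\tau^{2})=\rho^{3}$ --- the identity
\[
\QQ_{1}(\tau^{k}y_{0})=\tbinom{k}{2}\,\rho^{3}\tau^{k-2}y_{0}\qquad(y_{0}\in h^{a,a}(\OFS)),
\]
obtained by iterating $\QQ_{1}(\tau\cdot x)=\tau\QQ_{1}(x)+\rho^{2}\Sq^{1}(x)$ along a $\tau$-tower and using that $\QQ_{1}$ and $\Sq^{1}$ annihilate $h^{a,a}(\OFS)$. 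Since $\tbinom{k}{2}$ is odd exactly when $k\equiv 2,3\bmod 4$, it follows that in the Beilinson--Lichtenbaum range $0\le a\le b$, where every class of $h^{a,b}$ is $\tau^{b-a}y_{0}$ with $y_{0}\in h^{a,a}$, the map $\QQ_{1}\colon h^{a,b}\to h^{a+3,b+1}$ vanishes if $b-a\equiv 0,1\bmod 4$, and for $b-a\equiv 2,3\bmod 4$ equals multiplication by $\rho^{3}$ composed with the inverse of the isomorphism $\tau^{2}\colon h^{a,b-2}\xrightarrow{\ \cong\ }h^{a,b}$. In the latter case $\ker(\QQ_{1}|_{h^{a,b}})=\ker(\rho^{3}\colon h^{a,b}\to h^{a+3,b+3})$, and in the former case $\im(\QQ_{1}\colon h^{a-3,b-1}\to h^{a,b})=\im(\rho^{3}\colon h^{a-3,b-3}\to h^{a,b})$.

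It remains to assemble the cohomology. By the first input, every nonzero $E^{1}$-entry is either of Beilinson--Lichtenbaum type $0\le a\le b$ or is the isolated entry $(a,b)=(2,1)$. For a Beilinson--Lichtenbaum type entry $h^{a,b}$ with $b-a\equiv 0,1\bmod 4$, the outgoing $\QQ_{1}$ vanishes and the incoming one (from $h^{a-3,b-1}$, whose difference $b-a$ is $\equiv 2,3\bmod 4$) has image $\im(\rho^{3}\colon h^{a-3,b-3}\to h^{a,b})$, whence $E^{2}=h^{a,b}/\rho^{3}=h^{2q-p,\,q-w}/\rho^{3}$; for such an entry with $b-a\equiv 2,3\bmod 4$, the incoming $\QQ_{1}$ vanishes and the outgoing one has kernel $\ker(\rho^{3}_{a,b})$, whence $E^{2}=\ker(\rho^{3}_{2q-p,\,q-w})$. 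Finally $h^{2,1}$ has $h^{-1,0}=0$ and $h^{5,2}=0$ as its neighbours in the $d^{1}$-complex, so $E^{2}=h^{2,1}$; this entry occurs at $(p-w,q)=(0,1)$ (and its Bott-periodic translates), where $h^{2,1}$ is not $\tau^{2}$-divisible so the displayed reduction of $\QQ_{1}$ does not apply, although $\ker(\rho^{3}_{2,1})=h^{2,1}$ holds anyway because $h^{5,4}=0$.

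The main obstacle is the second input: pinning down $\QQ_{1}$ on $h^{*,*}(\OFS)$. This combines the motivic Cartan formula for $\Sq^{3}$ --- responsible for $\QQ_{1}(\tau^{2})=\rho^{3}$ --- with the low-weight arithmetic of motivic cohomology of $\OFS$, in particular the vanishing of the Bockstein on $h^{1,1}(\OFS)$ and of $h^{a,b}(\OFS)$ below the diagonal (except at $(2,1)$), all carried out in \aref{section:mcatSa}. Granting these, the homology computation above is purely formal.
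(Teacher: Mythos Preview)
Your proof is correct and follows essentially the same approach as the paper: identify $E^{1}_{p,q,w}=h^{2q-p,q-w}$ from \eqref{equation:slicesKGL2n}, use that $d^{1}=\QQ_{1}$ acts on a class $\tau^{k}y_{0}$ (with $y_{0}\in h^{a,a}$) as $\rho^{3}\tau^{-2}$ precisely when $k\equiv 2,3\bmod 4$ and trivially otherwise, and take homology. The paper simply cites \aref{tbl:Sq-action} for this action, whereas you derive it via the iteration $\QQ_{1}(\tau x)=\tau\QQ_{1}(x)+\rho^{2}\Sq^{1}(x)$; both routes give the same 4-periodic pattern.

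One small inaccuracy: the claim that $\Sq^{1}$ annihilates $h^{1,1}(\OFS)$ is not true in general, since the Bockstein $h^{1,1}\to h^{2,1}\cong\Pic(\OFS)/2$ detects the ${}_{2}\Pic(\OFS)$ summand of $h^{1,1}$. However this does not affect your argument: the only place $\Sq^{1}(y_{0})\in h^{2,1}$ could enter is through $\rho^{2}\Sq^{1}(y_{0})$ or $\rho^{2}\tau\Sq^{1}(y_{0})$, and both vanish because $\rho\cdot h^{2,1}\subset h^{3,2}=0$ (and $\tau h^{2,1}\subset\ker(\rho_{2,2})$ by \aref{lem:pic-tau}). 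So your formula $\QQ_{1}(\tau^{k}y_{0})=\tbinom{k}{2}\rho^{3}\tau^{k-2}y_{0}$ holds for the right reason, just not quite the one you stated.
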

\begin{proof}
See \aref{fig:KGL2-E1} for the $E^{1}$-page when $w=0$.
By \aref{tbl:Sq-action} the first Milnor operation $\QQ_{1}$ acts on $h^{2q - p,q-w}$ as multiplication by $\rho^3$ times a $\tau$-multiple when $p -w - q \equiv 2, 3 \bmod 4$, 
and trivially otherwise.
Note that $E^{2}_{p,q,w}(\KGL/2)$ is the homology of the complex
\[
E^{1}_{p+1,q-1,w} (\KGL/2)
\xrightarrow{d^{1}_{p+1,q-1,w}}  
E^{1}_{p,q,w} (\KGL/2)
\xrightarrow{d^{1}_{p,q,w}} 
E^{1}_{p-1,q+1,w}(\KGL/2).
\]
Depending on $p-w-q\bmod 4$, 
if $d^{1}_{p,q,w}$ is multiplication by $\rho^3$ times a $\tau$-multiple then $d^{1}_{p+1,q-1,w}$ is trivial, and vice versa.
Thus the $E^{2}$-page takes the claimed form.
\end{proof}

\begin{remark}
Note that $E^{r}_{\ast,\ast,*}(\KGL/2)$ is not an algebra spectral sequence.
This follows since $d^{1}(\tau^{2})\neq 0$, while a Leibniz rule would imply $d^{1}(\tau^{2}) = 2 \tau d^{1}(\tau) = 0$.
\end{remark}

\begin{example}
\label{ex:KGL2-R}
If $F$ is a real closed field, 
then $h^{*,*}=\FF_{2}[\tau,\rho]$ and the proof of \aref{thm:KGL2-E2} implies
\[
E^{\infty}_{p,q,0}(\KGL/2)
\cong
\begin{cases}
\Z/2\{\rho^{2q-p}\tau^{p-q}\} & q - p \equiv 0, 3 \bmod 4, \mathrm{ and } \; 2q - p = 0, 1, 2 \\
0 & \mathrm{otherwise}.
\end{cases}
\]
In the abutment, 
this gives the $8$-periodicity $\Z/2$, $\Z/2$, $\Z/4$, $\Z/2$, $\Z/2$, $0$, $0$, $0$ for the mod 2 $K$-groups of the real numbers, 
see e.g., 
\cite[Theorem 4.9]{MR772065}.
\end{example}

Recall there is an isomorphism $\KGL_{p,q}(F)\cong K_{p-2q}(F)$ \cite[\S6.2]{Voevodsky:icm}.

\begin{example}
\label{example:cd2KGL}
If $\cd_{2}(F)\leq 2$ and $n\geq 1$ there is an isomorphism $K_{2n-1}(F;\Z/2)\cong h^{1,n}$ and a short exact sequence
\begin{equation*}
0
\to
h^{2,n}
\to
K_{2n-2}(F;\Z/2)
\to
h^{0,n}
\to
0.
\end{equation*}
\end{example}

\begin{theorem}
\label{theorem:KGLnumberfields}
The mod 2 algebraic $K$-groups of $\OO_{F,\mathcal{S}}$ are computed up to extensions by the following filtrations of length $l$.
\begin{table}[bht]
\begin{center}
\begin{tabular}
[c]{p{0.4in}|p{0.1in}|p{2.5in}}\hline
$n\geq 0$ & $l$ & $K_{n}(\OO_{F,\mathcal{S}};\Z/2)$ \\ \hline
$8k$        & $2$ & $\f_{0}/\f_{1}=h^{0,4k}$, $\f_{1}=\ker(\rho_{2,4k+1})$ \\
$8k+1$    & $1$ & $\f_{0}=h^{1,4k+1}$ \\
$8k+2$    & $2$ & $\f_{0}/\f_{1}=h^{0,4k+1}$, $\f_{1}=h^{2,4k+2}$ \\
$8k+3$    & $2$ & $\f_{0}/\f_{1}=h^{1,4k+2}$, $\f_{1}=h^{3,4k+3}/\rho^{3}$ \\
$8k+4$    & $2$ & $\f_{0}/\f_{1}=h^{2,4k+3}$, $\f_{1}=h^{4,4k+4}/\rho^{3}$ \\
$8k+5$    & $2$ & $\f_{0}/\f_{1}=\ker(\rho^{2}_{1,4k+3})$, $\f_{1}=h^{3,4k+4}/\rho^{3}$ \\
$8k+6$    & $2$ & $\f_{0}/\f_{1}=\ker(\rho_{2,4k+4})$, $\f_{1}=h^{4,4k+5}/\rho^{3}$ \\
$8k+7$    & $1$ & $\f_{0}= \ker(\rho^{2}_{1,4k+4})$ \\ \hline
\end{tabular}
\end{center}
\caption{The mod 2 algebraic $K$-groups of $\OO_{F,\mathcal{S}}$}
\label{table:KGL2groupsOF}
\end{table}
\\ \noindent
In Table \ref{table:KGL2groupsOF}:
$h^{0,q}\cong\Z/2$, 
$h^{1,q}\cong\OO_{F,\mathcal{S}}^{\times}/2\oplus {_{2}}\Pic(\OO_{F,\mathcal{S}})/2$, 
$\ker(\rho_{2,1})\cong h^{2,1}\cong\Pic(\OO_{F,\mathcal{S}})/2$, 
$h^{2,q}\cong\Pic(\OO_{F,\mathcal{S}})/2\oplus {_{2}}\Br(\OO_{F,\mathcal{S}})$ for $q>1$, 
$h^{3,q}/\rho^{3}\cong (\Z/2)^{r_{1}-1}$,
$h^{4,q}/\rho^{3}\cong h^{3,q-1}/\rho^{2}\cong (\Z/2)^{t_{\mathcal{S}}^{+}-t_{\mathcal{S}}}$,
$ \ker(\rho^{2}_{1,q}) \cong \ker(\rho^{3}_{1,q})\cong \im (h^{1,q}_{+}\to h^{1,q})$,
$\ker(\rho_{2,q}) \cong \ker(\rho^{2}_{2,q}) \cong \ker(\rho^{3}_{2,q})\cong \im (h^{2,q}_{+}\to h^{2,q})$ for $q>1$.
Here $t_{\mathcal{S}}^{+}$ is the 2-rank of the narrow Picard group $\Pic_{+}(\OO_{F,\mathcal{S}})$ and $t_{\mathcal{S}}$ is the 2-rank of $\Pic(\OO_{F,\mathcal{S}})$.
Moreover, 
$\ker(\rho^{3}_{1,q})$ has 2-rank $r_{2}+s+t_{\mathcal{S}}^{+}$, 
where $r_{2}$ is the number of pairs of complex embeddings of $F$ and $s_{\mathcal{S}}$ is the number of finite primes in $\mathcal{S}$, 
while $\ker(\rho^{3}_{2,q})$ has 2-rank $s_{\mathcal{S}}+t_{\mathcal{S}}-1$.
\end{theorem}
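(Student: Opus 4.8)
The plan is to compute $\pi_{n,0}(\KGL/2)\cong K_{n}(\OFS;\Z/2)$ by running the slice spectral sequence for $\KGL/2$ over $\OFS$ in weight $w=0$, which suffices since $\KGL$ is $(2,1)$-periodic, and then identifying its $E^{\infty}$-page. By \aref{theorem:KGL2ncc} this spectral sequence converges conditionally with abutment $\pi_{*,0}(\KGL/2)$, and its $E^{2}$-page is supplied by \aref{thm:KGL2-E2}: away from the exceptional spot $(p,q)=(0,1)$ carrying $h^{2,1}$, one has $E^{2}_{p,q,0}\cong h^{2q-p,q}/\rho^{3}$ when $p-q\equiv 0,1\bmod 4$ and $E^{2}_{p,q,0}\cong\ker(\rho^{3}_{2q-p,q})$ when $p-q\equiv 2,3\bmod 4$. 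First I would feed into this the description of $h^{*,*}(\OFS)$ recalled in \aref{section:mcatSa} (and already used in the Introduction): the identifications $h^{0,q}\cong\Z/2$, $h^{1,q}\cong\OFS^{\times}/2\oplus {}_{2}\Pic(\OFS)$, $h^{2,1}\cong\Pic(\OFS)/2$, $h^{2,q}\cong\Pic(\OFS)/2\oplus {}_{2}\Br(\OFS)$ for $q>1$; the vanishing $h^{p,q}=0$ for $p>q+1$; and the behaviour of the $\rho$-towers, which stabilise after at most three steps so that $h^{m,q}/\rho^{3}=0$ and $\ker(\rho^{3}_{m,q})=0$ for $m\geq 5$, together with the explicit values $h^{3,q}/\rho^{3}\cong(\Z/2)^{r_{1}-1}$, $h^{4,q}/\rho^{3}\cong h^{3,q-1}/\rho^{2}\cong(\Z/2)^{t_{\mathcal{S}}^{+}-t_{\mathcal{S}}}$, and the $2$-rank formulas for $\ker(\rho^{3}_{1,q})$ and $\ker(\rho^{3}_{2,q})$.

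Next I would prove the spectral sequence collapses at $E^{2}$. With the above input, the $E^{2}$-page in weight $0$ is supported on the finitely many antidiagonals $0\leq 2q-p\leq 4$ (the exceptional class at $(0,1)$ also lies on $2q-p=2$). A $d^{r}$-differential with $r\geq 2$ decreases $p$ by one and increases $q$ by $r$, hence raises $2q-p$ by more than $4$; its source or its target is therefore zero. So $E^{2}=E^{\infty}$, and by \aref{cor:strong-conv} the spectral sequence converges strongly. Since $h^{m,q}=0$ for $q<m-1$, the slices vanish for $q\gg 0$, so for each $n$ the resulting filtration of $K_{n}(\OFS;\Z/2)$ has only the at most three graded pieces $E^{\infty}_{n,q,0}$ with $n/2\leq q\leq n/2+2$.

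Finally I would read off Table~\ref{table:KGL2groupsOF} one residue class modulo $8$ at a time: for each contributing weight $q$ the residue of $n-q$ modulo $4$ dictates via \aref{thm:KGL2-E2} whether the piece is $h^{2q-n,q}/\rho^{3}$ or $\ker(\rho^{3}_{2q-n,q})$, and relabelling the nonzero pieces $\f_{q}\supseteq\f_{q+1}\supseteq\cdots$ as $\f_{0}\supseteq\f_{1}\supseteq\cdots$ produces the stated filtrations; their closed-form entries follow from the identifications $\ker(\rho^{3}_{2,q})\cong\ker(\rho^{2}_{2,q})\cong\ker(\rho_{2,q})$ for $q>1$ and $\ker(\rho^{3}_{1,q})\cong\ker(\rho^{2}_{1,q})$, from $h^{3,q}/\rho^{3}\cong(\Z/2)^{r_{1}-1}$ and $h^{4,q}/\rho^{3}\cong(\Z/2)^{t_{\mathcal{S}}^{+}-t_{\mathcal{S}}}$, and from the $2$-rank formulas above, which are the Poitou--Tate and narrow class group input recalled in \aref{section:mcatSa}. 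A handful of sporadic cases need separate low-degree checks: the small values $k=0$ (e.g.\ $h^{3,1}=h^{3,2}=h^{4,2}=0$ and $\ker(\rho_{2,1})=h^{2,1}$, immediate from $h^{p,q}=0$ for $p>q+1$ and the special $(0,1)$-term), and fields with few or no real embeddings, where the $\rho$-towers are short and the bookkeeping of which weights survive changes accordingly.

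The main obstacle is the confinement of the $E^{2}$-page to the band $0\leq 2q-p\leq 4$, i.e.\ the stabilisation $h^{m,q}/\rho^{3}=0=\ker(\rho^{3}_{m,q})$ for $m\geq 5$; once this is established, collapse is automatic and everything downstream is bookkeeping with residues modulo $8$ and modulo $4$. The whole argument rests on \aref{thm:KGL2-E2} together with the structure of the \'etale cohomology of rings of $\mathcal{S}$-integers collected in \aref{section:mcatSa}.
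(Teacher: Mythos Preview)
Your proposal is correct and follows essentially the same approach as the paper: compute the $E^{2}$-page from \aref{thm:KGL2-E2}, observe that over $\OFS$ it is confined to the band $0\leq 2q-p\leq 4$ (so that all higher differentials vanish for degree reasons), and then read off the filtration using the arithmetic input from \aref{section:mcatSa}, in particular \eqref{equation:pmc} and \eqref{equation:sesPic}. The paper's proof is simply a terse version of what you wrote.
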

\begin{proof}
Combining \aref{thm:KGL2-E2} with \eqref{equation:pmc} and \eqref{equation:sesPic} we deduce \aref{table:KGL2groupsOF}.
The 2-rank formulas follow from \eqref{equation:pmc}, 
see \cite[Lemma 13]{MR1928650}.
\end{proof}

\begin{remark}
\aref{theorem:KGLnumberfields} is in agreement with \cite[Theorem 7.8]{Rognes-Weibel}.
The calculations in \cite{Rognes-Weibel} are based on the unpublished work \cite{BL} (cf.~the comments in \cite{Suslin-GSS}).
\end{remark}

\begin{lemma}
\label{lem:KGL2-E284}
There is an isomorphism $E^{\infty}_{8,4,0}(\KGL/4) \cong H^{0,4}(\OO_{F,\mathcal{S}};\Z/4)$.
\end{lemma}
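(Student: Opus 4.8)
The plan is to show that the slot $(p,q,w)=(8,4,0)$ supports no nonzero slice differential and receives none, so that $E^{\infty}_{8,4,0}(\KGL/4)=E^{1}_{8,4,0}(\KGL/4)$, and then to read off the $E^{1}$-term. By the slice calculation \eqref{equation:slicesKGL2n} one has $\s_{q}(\KGL/4)\simeq\Sigma^{2q,q}\MZ/4$ over $\OFS$, so $E^{1}_{p,q,0}(\KGL/4)=\pi_{p,0}\s_{q}(\KGL/4)=H^{2q-p,q}(\OFS;\Z/4)$; in particular $E^{1}_{8,4,0}(\KGL/4)=H^{0,4}(\OFS;\Z/4)$. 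A differential $d^{r}$ entering $(8,4,0)$ would come from $E^{r}_{9,4-r,0}(\KGL/4)$, a subquotient of $H^{2(4-r)-9,\,4-r}(\OFS;\Z/4)=H^{-1-2r,\,4-r}(\OFS;\Z/4)$, which is zero for $r\geq1$ since motivic cohomology vanishes in negative cohomological degree. Hence $E^{r}_{8,4,0}(\KGL/4)$ is a subgroup of $H^{0,4}(\OFS;\Z/4)$ for every $r$, and it remains only to kill the differentials leaving $(8,4,0)$.

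First I would treat $d^{1}$ as in the proof of \aref{thm:KGL2-E2}: the mod $4$ slice $d^{1}$-differential of $\KGL/4$ is the $\Z/4$-analogue of the operation $\QQ_{1}=\Sq^{3}+\Sq^{2}\Sq^{1}$ governing $d^{1}$ for $\KGL/2$, and on the ``bottom line'' classes in $H^{0,\ast}(\OFS;\Z/4)$ it acts trivially, just as in the regime $p-w-q\equiv0\bmod4$ there — and here $8-0-4\equiv0\bmod4$. As a consistency check, the map of slice spectral sequences induced by the cofiber sequence $\KGL/2\xrightarrow{\cdot 2}\KGL/4\to\KGL/2$ carries the permanent cycle $h^{0,4}=E^{\infty}_{8,4,0}(\KGL/2)$ — permanent by \aref{thm:KGL2-E2} and the collapse of the $\KGL/2$ spectral sequence used for \aref{theorem:KGLnumberfields} — isomorphically onto the subgroup $2\cdot H^{0,4}(\OFS;\Z/4)$, which is therefore killed by every differential, while the generator reduces mod $2$ to a permanent cycle; this makes the explicit computation of the mod $4$ slice $d^{1}$ the cleanest route to $E^{2}_{8,4,0}(\KGL/4)=H^{0,4}(\OFS;\Z/4)$.

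Higher differentials I would then handle by a degree-range argument exactly as for $\KGL/2$. The mod $4$ slice $d^{1}$ is, in motivic cohomology degrees $2q-p\equiv2,3\bmod4$, a $\tau$-multiple of multiplication by $\rho^{3}$, which is an isomorphism once $2q-p$ is large because $\rho$-multiplication stabilizes under the standing hypothesis $\vcd(F)<\infty$; consequently $E^{2}_{\ast,\ast,0}(\KGL/4)$ is concentrated in first degrees $2q-p\leq4$ (indeed the spectral sequence collapses at $E^{2}$). A differential $d^{r}$ leaving $(8,4,0)$ with $r\geq2$ lands at first degree $2(4+r)-7=2r+1\geq5$, hence is zero; and it is trivially zero for $r\geq5$ since $H^{2r+1,\,4+r}(\OFS;\Z/4)=0$ on a one-dimensional regular base. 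Collecting the steps gives $E^{\infty}_{8,4,0}(\KGL/4)\cong H^{0,4}(\OFS;\Z/4)$.

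The main obstacle I anticipate is the input used above: pinning down the mod $4$ (more generally mod $2^{n}$) slice $d^{1}$ of $\KGL$ — the analogue of ``$d^{1}=\QQ_{1}$'' — and verifying both its vanishing on the bottom line and its $\rho^{3}$-behavior in high degree. This is a computation with the motivic Steenrod algebra and Bockstein operations with $\Z/4$-coefficients over $\OFS$, of the kind developed in \aref{section:mcatSa}; phrased homotopically it amounts to showing that the edge homomorphism $K_{8}(\OFS;\Z/4)\to H^{0,4}(\OFS;\Z/4)$ is surjective.
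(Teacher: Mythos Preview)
The paper's proof is much shorter and rests on a structural fact you overlook: $\KGL/4$ is a motivic ring spectrum, because the mod~$4$ Moore spectrum $\One/4$ carries a ring structure (Oka), unlike $\One/2$. The slice spectral sequence for $\KGL/4$ is therefore an algebra spectral sequence, and writing $\widetilde{\tau}$ for the generator of $E^{1}_{2,1,0}(\KGL/4)=H^{0,1}(\OFS;\Z/4)$, the Leibniz rule gives
\[
d^{r}(\widetilde{\tau}^{4})=4\,\widetilde{\tau}^{3}\,d^{r}(\widetilde{\tau})=0
\]
for every $r\geq1$. Combined with the vanishing of incoming differentials (which you correctly note), this is the entire argument. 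The remark immediately preceding the lemma, that $E^{r}(\KGL/2)$ is \emph{not} an algebra spectral sequence, is the cue that something special happens at~$4$.

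Your approach instead tries to identify the mod~$4$ slice $d^{1}$ explicitly and then kill higher differentials by a vanishing range on $E^{2}$. This can be completed, but it is more labor than necessary and you yourself flag it as unfinished: the ``$\Z/4$-analogue of $\QQ_{1}$'' is not made precise, and the $E^{2}$-concentration in $2q-p\leq4$ that you invoke is essentially the content of \aref{corollary:2adicsss}, proved only later via comparison with~$\R$. Your mod~$2$ comparison does settle $d^{1}$ once one observes that $\pr^{4}_{2}\colon H^{3,5}(\OFS;\Z/4)\to h^{3,5}$ is an isomorphism; among higher differentials, the only one with a nonzero $E^{1}$-target is $d^{3}$, landing in $H^{7,7}(\OFS;\Z/4)\cong(\Z/2)^{r_{1}}$, which one must then argue dies on a later page. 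All of this is bypassed by the Leibniz rule.
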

\begin{proof}
The $d^{1}$-differential is trivial on the mod $4$ class $\widetilde{\tau}^{4}$ generating $E^{1}_{8,4,0}(\KGL/4)\cong H^{0,4}(\OO_{F,\mathcal{S}};\Z/4)$.
Here $\widetilde{\tau}$ is the generator of $H^{0,1}(\OO_{F,\mathcal{S}};\Z/4) \cong \mu_4(\OO_{F,\mathcal{S}}^{\times})$.
Since $E^{r}_{\ast,\ast,\ast}(\KGL/4)$ is an algebra spectral sequence, 
this follows from the Leibniz rule,  
i.e., 
$d^{1}(\widetilde{\tau}^4) = 4\widetilde{\tau}^3 d^{1}(\widetilde{\tau}) = 0$.
\end{proof}

Since $\KGL$ is a motivic ring spectrum, 
see e.g., \cite{KGLstrict}, 
\eqref{equation:R-pairing} yields a pairing of slice spectral sequences 
\begin{equation}
\label{equation:KGLpairing}
E^r_{p,q,w}(\KGL/4) \otimes E^r_{p',q',w'}(\KGL/2)
\to 
E^r_{p+p',q+q',w+w'}(\KGL/2).
\end{equation}
The generator $\widetilde{\tau}^4$ commutes with the differentials and acts as multiplication by $\tau^4 \in h^{0,4}$ under the pairing \eqref{equation:KGLpairing}. 
Thus it induces a periodicity isomorphism on $E^r_{p,q,w}(\KGL/2)$ in the range $q\leq p$.
For the abutment we deduce the following result.

\begin{corollary}
\label{corollary:KGLnumberfields}
For $n\geq 1$ the permanent cycle $\widetilde{\tau}^{4}$ induces an 8-fold periodicity isomorphism 
\begin{equation*}
K_{n}(\OO_{F,\mathcal{S}};\Z/2)
\cong 
K_{n+8}(\OO_{F,\mathcal{S}};\Z/2). 
\end{equation*}
\end{corollary}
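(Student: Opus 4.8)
The plan is to promote the discussion preceding the statement --- that $\widetilde{\tau}^{4}$ is a permanent cycle acting as $\tau^{4}$-multiplication on $E^{r}_{\ast,\ast,\ast}(\KGL/2)$ --- from a statement about pages of the slice spectral sequence to one about its abutment. By \aref{lem:KGL2-E284} the class $\widetilde{\tau}^{4}$ determines an element of $\pi_{8,0}(\KGL/4)$ of slice filtration $4$, so since $\KGL/4$ is a motivic ring spectrum the module pairing \eqref{equation:KGLpairing} furnishes a filtration-preserving map $K_{n}(\OO_{F,\mathcal{S}};\Z/2) = \pi_{n,0}(\KGL/2) \to \pi_{n+8,0}(\KGL/2) = K_{n+8}(\OO_{F,\mathcal{S}};\Z/2)$ whose effect on the associated graded groups $E^{\infty}_{n,q,0}(\KGL/2) \to E^{\infty}_{n+8,q+4,0}(\KGL/2)$ is multiplication by $\widetilde{\tau}^{4}$. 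Because $\widetilde{\tau}^{4}$ is a permanent cycle, the Leibniz rule for the pairing \eqref{equation:KGLpairing} shows this multiplication commutes with every $d^{r}$, so it suffices to analyse it on the $E^{1}$-page, where it is the map $\tau^{4}\colon h^{2q-n,q} \to h^{2q-n,q+4}$.

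I would then invoke the structure of the mod $2$ motivic cohomology of $\OO_{F,\mathcal{S}}$ recorded in \aref{section:mcatSa}: by Beilinson--Lichtenbaum the group $h^{a,b}$ vanishes for $a > b+1$, and $\tau$-multiplication $h^{a,b}\to h^{a,b+1}$ is an isomorphism for $b\geq a$ (both sides being canonically $H^{a}_{\et}(\OO_{F,\mathcal{S}};\Z/2)$, which is independent of $b$). Hence $\tau^{4}$ is an isomorphism on $E^{1}_{n,q,0}(\KGL/2)$ whenever $q\leq n$. Since $\tau$ commutes with $\rho$ in $h^{\ast,\ast}$, multiplication by $\tau^{4}$ preserves both images and kernels of the operations $\rho^{3}$, so by \aref{thm:KGL2-E2} it induces isomorphisms on the groups $h^{2q-n,q}/\rho^{3}$ and $\ker(\rho^{3}_{2q-n,q})$, and hence on $E^{2}_{n,q,0}(\KGL/2) = E^{\infty}_{n,q,0}(\KGL/2)$, again in the range $q\leq n$.

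It remains to observe that for $n\geq 1$ every nonzero associated graded piece of $K_{n}(\OO_{F,\mathcal{S}};\Z/2)$ occurs at $q\leq n$; this is immediate from \aref{table:KGL2groupsOF}, each entry there being a $\rho^{3}$-sub/quotient of some $h^{a,b}$ with $a\leq b$. It is precisely here that the hypothesis $n\geq 1$ is used: for $n=0$ the extra contribution $E^{\infty}_{0,1,0}(\KGL/2)\cong h^{2,1}$ sits at $q=1>0$, where $\tau$ is only injective (its cokernel involving ${}_{2}\Br(\OO_{F,\mathcal{S}})$), and accordingly $K_{0}$ is generally not $8$-periodic. Granting the range statement, multiplication by $\widetilde{\tau}^{4}$ is an isomorphism on every associated graded group; since the slice spectral sequences for $\KGL/2$ over $\OO_{F,\mathcal{S}}$ are conditionally convergent with $E^{2}=E^{\infty}$ (\aref{theorem:KGL2ncc}, \aref{thm:KGL2-E2}), they converge strongly by \aref{cor:strong-conv}, so the filtrations on $K_{n}$ and $K_{n+8}$ are exhaustive, Hausdorff and complete, and a filtered map inducing an isomorphism on associated gradeds is itself an isomorphism. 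The main obstacle I anticipate is not any single hard computation but the bookkeeping in this last step: checking that the $E^{\infty}$-level $\widetilde{\tau}^{4}$-multiplication genuinely computes the associated graded of the map on $K$-groups, and that for $n\geq 1$ no graded piece escapes the periodicity range $q\leq n$.
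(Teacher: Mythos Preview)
Your proposal is correct and follows essentially the same approach as the paper: the paper's proof simply cites Theorem~\ref{theorem:KGLnumberfields} together with the discussion of \eqref{equation:KGLpairing}, which already records that $\widetilde{\tau}^{4}$ acts as $\tau^{4}$-multiplication and gives a periodicity isomorphism on $E^{r}_{p,q,w}(\KGL/2)$ in the range $q\leq p$. You have spelled out the steps that the paper leaves implicit --- why $\tau^{4}$ is an isomorphism on each $E^{1}$-term in that range (via \aref{corollary:tau-iso-ints}), why this persists to $E^{2}=E^{\infty}$, why the range $q\leq n$ captures all nonzero graded pieces for $n\geq 1$ (the exceptional $h^{2,1}$ at $(p,q)=(0,1)$ being the sole obstruction at $n=0$), and how strong convergence lifts the associated-graded isomorphism to the abutment --- but the underlying argument is the same.
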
 
\begin{proof}
Follows from Theorem \ref{theorem:KGLnumberfields} and the discussion of \eqref{equation:KGLpairing}.
\end{proof}

More generally,
comparing with real closed fields, see \aref{ex:KGL2-R}, the techniques in \cite[\S4,5]{RO} yield a periodicity result for fields of finite virtual cohomological dimension.
\begin{corollary}
\label{corollary:KGLfields}
Let $F$ be a field of $\Char(F)\neq 2$ and assume $\vcd(F)<\infty$.
The permanent cycle $\widetilde{\tau}^{4}$ induces an 8-fold periodicity isomorphism 
\begin{equation*}
K_{n}(F;\Z/2)\cong K_{n+8}(F;\Z/2) 
\end{equation*}
for all $n\geq\vcd(F)-1$.
\end{corollary}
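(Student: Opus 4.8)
\end{corollary}

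The plan is to run the argument behind \aref{corollary:KGLnumberfields} over an arbitrary field $F$ with $\Char(F)\neq 2$ and $\vcd(F)<\infty$, replacing the explicit computation of \aref{theorem:KGLnumberfields} by a structural comparison with real closed fields, in the spirit of \aref{ex:KGL2-R} and \cite[\S4,5]{RO}. First I would note that the identity $d^{1}(\widetilde{\tau}^{4})=4\widetilde{\tau}^{3}d^{1}(\widetilde{\tau})=0$ used in \aref{lem:KGL2-E284} is purely algebraic and holds over any base, so $\widetilde{\tau}^{4}\in E^{2}_{8,4,0}(\KGL/4)$ is a permanent cycle; via the pairing \eqref{equation:KGLpairing} it then induces a map of spectral sequences $\widetilde{\tau}^{4}\colon E^{r}_{p,q,0}(\KGL/2)\to E^{r}_{p+8,q+4,0}(\KGL/2)$ commuting with every differential, which on the $E^{2}$-page is cup product with $\tau^{4}$ by \aref{thm:KGL2-E2}. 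Over a field the weight-$0$ slice spectral sequence of $\KGL/2$ is supported in the cone $q\le p\le 2q$, so it has only finitely many nonzero terms in each total degree; combined with slice completeness and conditional convergence of $\KGL/2$ over $F$ (\aref{theorem:KGL2ncc}), this yields strong convergence (cf.\ \aref{cor:strong-conv}), hence a complete, exhaustive, Hausdorff filtration on $\pi_{n,0}(\KGL/2)\cong K_{n}(F;\Z/2)$ with associated graded $E^{\infty}_{n,\ast,0}(\KGL/2)$. It therefore suffices to show that $\widetilde{\tau}^{4}$ is an isomorphism $E^{\infty}_{n,\ast,0}(\KGL/2)\to E^{\infty}_{n+8,\ast,0}(\KGL/2)$ for every $n\ge\vcd(F)-1$.

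For this I would analyse the $E^{2}$-page. On the support $0\le 2q-p\le q$ one lies in the Beilinson--Lichtenbaum range, so $h^{2q-p,q}\cong H^{2q-p}_{\et}(F;\mu_{2}^{\otimes q})$; since $\mu_{2}^{\otimes j}\cong\Z/2$ as Galois modules, cup product with $\tau^{4}$ is the canonical identification of these groups, and it commutes with cup product by $\rho$, so it identifies $h^{2q-p,q}/\rho^{3}$ with $h^{2q-p,q+4}/\rho^{3}$ and $\ker(\rho^{3}_{2q-p,q})$ with $\ker(\rho^{3}_{2q-p,q+4})$. Consequently $\widetilde{\tau}^{4}$ embeds $E^{2}_{n,\ast,0}(\KGL/2)$ isomorphically onto the part of $E^{2}_{n+8,\ast,0}(\KGL/2)$ in slice filtration $q\le n+4$, and the only possible failure of surjectivity is the ``excess'' at the top of the tower: the four groups $E^{2}_{n+8,q,0}(\KGL/2)$ with $n+4<q\le n+8$, which by \aref{thm:KGL2-E2} are $\ker(\rho^{3}\colon H^{n+2}_{\et}(F;\Z/2)\to H^{n+5}_{\et}(F;\Z/2))$, $\ker(\rho^{3}\colon H^{n+4}_{\et}(F;\Z/2)\to H^{n+7}_{\et}(F;\Z/2))$, $H^{n+6}_{\et}(F;\Z/2)/\rho^{3}$, and $H^{n+8}_{\et}(F;\Z/2)/\rho^{3}$. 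Here I would invoke $\vcd(F)<\infty$ through the comparison with $F(\sqrt{-1})$ and its real closures exactly as in \cite[\S4,5]{RO}: this forces cup product by $\rho$ to be an isomorphism $H^{m}_{\et}(F;\Z/2)\to H^{m+1}_{\et}(F;\Z/2)$ for $m>\vcd(F)$, whence all four excess terms vanish as soon as $n\ge\vcd(F)-1$, the binding case being the first (its vanishing needs $\rho$ injective starting from $H^{n+2}_{\et}(F;\Z/2)$, i.e.\ $n+2>\vcd(F)$). With the excess gone, $\widetilde{\tau}^{4}$ is an isomorphism of $E^{2}$-terms in the relevant region, and since it commutes with the differentials a comparison of spectral sequences promotes it to an isomorphism on $E^{\infty}$ there; strong convergence then gives $K_{n}(F;\Z/2)\cong K_{n+8}(F;\Z/2)$ for $n\ge\vcd(F)-1$.

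The main obstacle is the boundary bookkeeping in this last step, especially at the extreme value $n=\vcd(F)-1$: outgoing slice differentials from total degree $n$ land in total degree $n-1$, which may drop below $\vcd(F)-1$, so one must verify that neither these differentials (which do commute with $\widetilde{\tau}^{4}$, and which could be nonzero if the $\KGL/2$-slice spectral sequence does not collapse at $E^{2}$) nor the $\lim^{1}$-terms prevent an isomorphism of associated graded groups from lifting to the abutment at the edge. This is precisely where the hypothesis $\vcd(F)<\infty$ and the $\rho$-periodicity of \'etale cohomology are indispensable, and where one should follow the real-closed-field comparison of \cite[\S4,5]{RO} rather than a direct computation. \aref{ex:KGL2-R} (the case $\vcd(F)=0$) and \aref{corollary:KGLnumberfields} serve as consistency checks on the range $n\ge\vcd(F)-1$.
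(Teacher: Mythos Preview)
Your proposal is correct and follows essentially the same approach as the paper. The paper's own ``proof'' is in fact just the sentence preceding the corollary: it points to the comparison with real closed fields (\aref{ex:KGL2-R}) and the techniques in \cite[\S4,5]{RO}, without spelling out any details. You have expanded this into a genuine sketch---identifying $\widetilde{\tau}^{4}$ as a permanent cycle, analysing the $E^{2}$-page via \aref{thm:KGL2-E2}, isolating the four ``excess'' terms at the top of the target column, and showing they vanish precisely when $n\ge\vcd(F)-1$ using that $\rho$ is an isomorphism on $H^{m}_{\et}(F;\Z/2)$ for $m>\vcd(F)$. Your honest flagging of the boundary bookkeeping at $n=\vcd(F)-1$ (where outgoing differentials land in column $\vcd(F)-2$, on which the $E^{2}$-isomorphism is not yet established) is exactly the point the paper also leaves to \cite{RO}; neither the paper nor you claim a self-contained argument for that edge case.
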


Our next aim is to determine $\KGL/2^{n}$ for any $n\geq 1$ over rings of $\mathcal{S}$-integers.
We first calculate the differentials in the slice spectral sequence over $\R$, 
and then transfer these to $\OFS$ under the real embeddings of $F$.
As in the mod $2$ case the slice spectral sequence collapses at its $E^2$-page.
This is made possible by the following description of the mod $2^{n}$ motivic cohomology of $\R$.

\begin{lemma}
\label{lem:coh-R-mod2n}
Set $H_n^{a,b} = H^{a,b}(\R; \Z/2^{n})$,
and let
$(\pr^{2^{n}}_{2})^{a,b} : H^{a, b}_n \to h^{a, b}$,
$(\inc^{2}_{2^{n}})^{a,b} : h^{a, b} \to H^{a, b}_n$,
$(\partial^{2^{n}}_{2})^{a,b} : H^{a,b}_n \to h^{a+1,b}$ and
$(\partial^{2}_{2^{n}})^{a,b} : h^{a,b}_n \to H^{a+1,b}_n$
be the maps induced by the short exact sequences 
\[
0 \to \Z/2 \to \Z/2^{n+1} \to \Z/2^{n} \to 0
\text{ and }
0 \to \Z/2^{n} \to \Z/2^{n+1} \to \Z/2 \to 0.
\]
\begin{itemize}
\item 
If $a-b \equiv 1 \bmod 2$, 
$(\inc^{2}_{2^{n}})^{a,b}, (\partial^{2^{n}}_{2})^{a,b}$ and $(\partial^{2}_{2^{n}})^{a,b}$ are isomorphisms, and $(\pr^{2^{n}}_{2})^{a,b}$ is trivial.
\item 
If $a>0$, $a-b \equiv 0 \bmod 2$,
$(\pr^{2^{n}}_{2})^{a,b}$ is an isomorphism, $(\inc^{2}_{2^{n}})^{a,b}, (\partial^{2^{n}}_{2})^{a,b}$ and $(\partial^{2}_{2^{n}})^{a,b}$ are trivial.
\item 
If $a=0$ and $b \equiv 0 \bmod 2$,
$(\partial^{2^{n}}_{2})^{a,b}$ and $(\partial^{2}_{2^{n}})^{a,b}$ are trivial, 
and there are nonsplit extensions
\[
0 \to h^{0, b} \xrightarrow{\inc^{2}_{2^{n}}} H^{0, b}_{n} \to H_{n-1}^{0,b} \to 0
\text{ and }
0 \to H_{n-1}^{0,b} \to H^{0, b}_{n} \xrightarrow{\pr^{2^{n}}_{2}} h^{0, b} \to 0.
\]
\end{itemize}
\end{lemma}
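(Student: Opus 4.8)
The plan is to reduce the whole computation to Galois cohomology of the order-two group $\Gamma=\mathrm{Gal}(\C/\R)$, where it becomes elementary. First I would record that $\R$ is real closed, so $h^{*,*}=h^{*,*}(\R)=\FF_{2}[\tau,\rho]$ with $\tau$ in bidegree $(0,1)$ and $\rho$ in bidegree $(1,1)$ (as in \aref{ex:KGL2-R}); thus $h^{a,b}\cong\FF_{2}$ for $0\le a\le b$ and $h^{a,b}=0$ otherwise. By the Milnor/Beilinson--Lichtenbaum theorem \cite{Voevodsky:Z/2}, for $m\ge1$ and $a\le b$ there are natural isomorphisms
\[
H^{a,b}(\R;\Z/2^{m})\cong H^{a}_{\et}(\R;\mu_{2^{m}}^{\otimes b})\cong H^{a}(\Gamma;\mu_{2^{m}}^{\otimes b}),
\]
compatible with the coefficient maps, while $H^{a,b}(\R;\Z/2^{m})=0$ for $a>b$ and (by the same truncation) for $a<0$. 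Here $\mu_{2^{m}}^{\otimes b}$ is $\Z/2^{m}$ with $\Gamma$ acting through $(-1)^{b}$, so it is the trivial module for $b$ even and the sign module for $b$ odd.

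Next I would compute the cohomology of the cyclic group of order two with these coefficients: $H^{0}(\Gamma;\mu_{2^{m}}^{\otimes b})\cong\Z/2^{m}$ for $b$ even and $\cong\Z/2$ for $b$ odd, and $H^{a}(\Gamma;\mu_{2^{m}}^{\otimes b})\cong\Z/2$ for all $a\ge1$; combining this with the displayed isomorphisms and the truncation $a\le b$ shows $H^{a,b}(\R;\Z/2^{m})\cong\Z/2^{m}$ when $a=0$ and $b$ is a nonnegative even integer, $\cong\FF_{2}$ when $0\le a\le b$ and $(a,b)$ is not of that shape, and $0$ otherwise. Each of $\pr^{2^{n}}_{2}$, $\inc^{2}_{2^{n}}$, $\partial^{2^{n}}_{2}$, $\partial^{2}_{2^{n}}$ is the map induced on $H^{*}(\Gamma;-)$ by the corresponding map of coefficient modules (the inclusion $\mu_{2}\hookrightarrow\mu_{2^{n}}$, the reduction $\mu_{2^{n}}\twoheadrightarrow\mu_{2}$, and the two connecting maps of the coefficient short exact sequences in the statement, suitably indexed), so one has only to carry out a bare-hands check on $H^{*}(\Gamma;\Z/2^{m})$ and $H^{*}(\Gamma;\Z/2^{m}(-))$ of reduction, inclusion, and these Bocksteins. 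For $a>0$ source and target are copies of $\FF_{2}$, and the check gives precisely the dichotomy of the first two bullets: when $a-b$ is odd the connecting maps are isomorphisms and reduction is trivial, whereas when $a-b$ is even reduction is an isomorphism and the connecting maps are trivial.

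The delicate case, and the one I expect to be the main obstacle, is the bottom row $a=0$, $b$ even, where $H^{0,b}(\R;\Z/2^{m})\cong\mu_{2^{m}}^{\otimes b}\cong\Z/2^{m}$ is a genuine cyclic group of order $2^{m}$. Here one must see that the reduction and multiplication-by-$2$ maps are the evident surjections and inclusions between these $\Z/2^{m}$'s, and that the connecting maps out of (and into) $h^{1,b}$ vanish; granting this, the four-term exact sequences coming from $0\to\Z/2\to\Z/2^{n}\to\Z/2^{n-1}\to0$ and $0\to\Z/2^{n-1}\to\Z/2^{n}\to\Z/2\to0$ collapse to the two non-split extensions claimed, and $\partial^{2^{n}}_{2}$, $\partial^{2}_{2^{n}}$ vanish there by exactness. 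Pinning down these orders and the compatibility of the chosen generators across the coefficient maps is the only genuinely non-formal input; concretely one can note that $H^{0,b}(\R;\Z/2^{m})$ is generated by the $b$-th power of the mod $2^{m}$ Bott-type element, which has order exactly $2^{m}$.

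Finally, I would remark that the computation can alternatively be organised as an induction on $n$ avoiding Beilinson--Lichtenbaum: the base case is the mod $2$/mod $4$ comparison, in which the connecting map is the motivic Bockstein $\Sq^{1}$; over $\R$ one has $\Sq^{1}(\tau)=\rho$ — since the coefficient map $\mu_{4}(\R)\to\mu_{2}(\R)$, being $\zeta\mapsto\zeta^{2}$, is zero — and $\Sq^{1}(\rho)=0$ because $h^{2,1}(\R)=0$, so the Leibniz rule gives $\Sq^{1}(\rho^{a}\tau^{b-a})=(b-a)\rho^{a+1}\tau^{b-a-1}$ and hence $\Sq^{1}\colon h^{a,b}\to h^{a+1,b}$ is an isomorphism for $a-b$ odd and zero for $a-b$ even. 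Feeding this, together with the inductive description of the mod $2^{n-1}$ groups, into the long exact sequences attached to the two coefficient sequences reproduces all three cases and exhibits $\partial^{2^{n}}_{2}$ and $\partial^{2}_{2^{n}}$ as the relevant higher Bocksteins.
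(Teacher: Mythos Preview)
Your proposal is correct. Your primary route---identifying $H^{a,b}(\R;\Z/2^{m})$ with $H^{a}(\mathrm{Gal}(\C/\R);\mu_{2^{m}}^{\otimes b})$ via Beilinson--Lichtenbaum and then computing the cyclic-group cohomology by hand---is genuinely different from the paper's. The paper argues by induction on $n$, using the factorizations $\Sq^{1}=\pr^{2^{n}}_{2}\circ\partial^{2}_{2^{n}}=\partial^{2^{n}}_{2}\circ\inc^{2}_{2^{n}}$ together with the explicit action of $\Sq^{1}$ on $h^{*,*}(\R)=\FF_{2}[\tau,\rho]$ (and the vanishing $[\MZ/2,\Sigma^{2,0}\MZ/2]=0$ to rule out a stray term in the diagram chase); it invokes Galois cohomology only at the end, to see that the extensions at $(0,b)$ with $b$ even are nonsplit. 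Your final paragraph is essentially this same inductive argument.

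What each approach buys: the Galois-cohomology route is cleaner and gives all the groups and maps at once, but it requires Beilinson--Lichtenbaum with $\Z/2^{m}$ coefficients for all $m$ (which does follow from the mod $2$ case, though you might want to say so explicitly). The paper's inductive route stays inside motivic cohomology and the Steenrod algebra, using only the mod $2$ Milnor conjecture, which is more in line with the rest of the paper's toolkit; the cost is a slightly fussier diagram chase and the separate appeal to Galois cohomology for nonsplitting.
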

\begin{proof}
This follows by induction on $n$ using the diagrams with exact rows,
\[
\begin{tikzcd}
  & & & & h^{a+1, b} \ar[d, "\inc^{2}_{2^{n}}"] \\
  h^{a-1,b} \ar[r, "\partial^{2}_{2^{n}}"]\ar[rd, "\Sq^1"]  & H_n^{a,b} \ar[r]\ar[d, "\pr^{2^{n}}_{2}"] & H_{n+1}^{a,b} \ar[r, "\pr^{2^{n+1}}_{2}"] & h^{a, b} \ar[r, "\partial^{2}_{2^{n}}"]\ar[rd, "0"] & H^{a+1,b}_n \ar[d, "\partial^{2^{n}}_{2}"] \\
  & h^{a,b} & & & h^{a+2, b}
\end{tikzcd}
\]
when $a-b \equiv 0 \bmod 2$, 
in which case $\Sq^1 : h^{a,b} \to h^{a+1,b}$ is trivial,
and
\[
\begin{tikzcd}
  h^{a-2,b}\ar[d, "\partial^{2}_{2^{n}}"] \ar[rd, "0" ]& & & h^{a,b} \ar[d, "\inc^{2}_{2^{n}}"]\ar[rd, "\Sq^1"] & \\
  H_n^{a-1,b} \ar[r, "\partial^{2^{n}}_{2}"]\ar[d, "\pr^{2^{n}}_{2}"]  & h^{a,b} \ar[r, "\inc^{2}_{2^{n+1}}"] & H_{n+1}^{a,b} \ar[r] & H_n^{a, b} \ar[r, "\partial^{2^{n}}_{2}"] & h^{a+1,b} \\
  h^{a-1,b}
\end{tikzcd}
\]
when $a-b\equiv 1 \bmod 2$, 
in which case $\Sq^1:h^{a,b} \to h^{a+1,b}$ is an isomorphism, $a \geq 0$.
In the above we used that $[\MZ/2, \Sigma^{2,0}\MZ/2] = 0$ (see \aref{section:mcatSa}).
The extensions are nontrivial by a standard Galois cohomology calculation.
\end{proof}

\begin{remark}
In fact there is an isomorphism of algebras
\[
H^{*,*}(\R;\Z/2^{n}) \cong \Z/2^{n}[u,\tau,\rho]/(2\rho,2\tau,\tau^2),
\]
where $u, \tau$, and $\rho$ are the generators of $H^{0,2}(\R;\Z/2^{n})$, $H^{0,1}(\R;\Z/2^{n})$, and $H^{1,1}(\R;\Z/2^{n})$.
\end{remark}

\begin{lemma}
If a $d^1$-differential for $\KGL/2$ over the real numbers is surjective then the corresponding $d^1$-differential for $\KGL/2^{n}$ is also surjective.
\end{lemma}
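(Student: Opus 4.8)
The plan is to obtain the statement from naturality of the slice spectral sequence in the coefficients, transporting surjectivity of the differential between $\KGL/2$ and $\KGL/2^{n}$ along the canonical maps relating the mod $2$ and mod $2^{n}$ Moore spectra, and then to read off the behaviour of those maps on $E^{1}$-pages from \aref{lem:coh-R-mod2n}. First I would fix a bidegree $(p,q,w)$, set $a=2q-p$, $b=q-w$, and write $h^{a,b}=H^{a,b}(\R;\Z/2)$ and $H^{a,b}_{n}=H^{a,b}(\R;\Z/2^{n})$. By \eqref{equation:slicesKGL2n} the $q$th slice of $\KGL/2^{m}$ is $\Sigma^{2q,q}\MZ/2^{m}$, so the slice differential $\dd^{1}\colon\s_{q}(\KGL/2^{m})\to\Sigma^{1,0}\s_{q+1}(\KGL/2^{m})$ induces on $\pi_{p,w}$ the map $E^{1}_{p,q,w}\to E^{1}_{p-1,q+1,w}$, identified with a natural map $h^{a,b}\to h^{a+3,b+1}$ for $\KGL/2$ and $H^{a,b}_{n}\to H^{a+3,b+1}_{n}$ for $\KGL/2^{n}$; note $(a+3)-(b+1)$ has the same parity as $a-b$, and recall that over $\R$ both $h^{c,d}$ and $H^{c,d}_{n}$ are nonzero exactly for $0\le c\le d$ (cf.\ the remark after \aref{lem:coh-R-mod2n}).

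Next I would bring in the two comparison maps: the reduction $\KGL/2^{n}\to\KGL/2$ and the map $\KGL/2\to\KGL/2^{n}$ induced by $\Z/2\xrightarrow{\,\cdot 2^{n-1}\,}\Z/2^{n}$, of the shape $\id_{\KGL}\wedge(\One/2^{n}\to\One/2)$ resp.\ $\id_{\KGL}\wedge(\One/2\to\One/2^{n})$. By functoriality of the slice filtration they induce morphisms of slice spectral sequences commuting with $\dd^{1}$, and --- using that $\s_{q}(\E\wedge\One/2^{m})\simeq\s_{q}(\E)\wedge\One/2^{m}$ naturally in $m$, so that \eqref{equation:slicesKGL2n} is natural in the Moore spectrum --- the induced maps on $E^{1}$-terms are precisely the coefficient maps $\pr^{2^{n}}_{2}\colon H^{a,b}_{n}\to h^{a,b}$ and $\inc^{2}_{2^{n}}\colon h^{a,b}\to H^{a,b}_{n}$ described in \aref{lem:coh-R-mod2n}. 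Then the proof splits on the parity of $a-b$. If $a-b$ is odd, $\inc^{2}_{2^{n}}$ is an isomorphism on both $h^{a,b}$ and $h^{a+3,b+1}$; from $\dd^{1}_{\KGL/2^{n}}\circ\inc^{2}_{2^{n}}=\inc^{2}_{2^{n}}\circ\dd^{1}_{\KGL/2}$ and surjectivity of $\dd^{1}_{\KGL/2}$ we get surjectivity of $\dd^{1}_{\KGL/2^{n}}\circ\inc^{2}_{2^{n}}$, hence of $\dd^{1}_{\KGL/2^{n}}$. If $a-b$ is even we may assume $a\ge 0$ (if $a<0$ then $h^{a,b}=0$, surjectivity of $\dd^{1}_{\KGL/2}$ forces $h^{a+3,b+1}=0$, hence $H^{a+3,b+1}_{n}=0$, and $\dd^{1}_{\KGL/2^{n}}$ is vacuously surjective); then $\pr^{2^{n}}_{2}$ is an isomorphism on the target $H^{a+3,b+1}_{n}$ (as $a+3>0$) and surjective on the source $H^{a,b}_{n}$ --- an isomorphism if $a>0$, the surjection of the nonsplit extension $0\to H^{0,b}_{n-1}\to H^{0,b}_{n}\to h^{0,b}\to 0$ if $a=0$ (where $b$ is even). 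From $\pr^{2^{n}}_{2}\circ\dd^{1}_{\KGL/2^{n}}=\dd^{1}_{\KGL/2}\circ\pr^{2^{n}}_{2}$, surjectivity of the right-hand side, and injectivity of $\pr^{2^{n}}_{2}$ on the target, $\dd^{1}_{\KGL/2^{n}}$ is surjective.

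The heart of the matter, and the step I expect to require the most care, is the second paragraph: checking that the canonical maps between $\KGL/2$ and $\KGL/2^{n}$ really induce $\pr^{2^{n}}_{2}$ and $\inc^{2}_{2^{n}}$ on the $E^{1}$-pages, that is, the compatibility of the slice identification \eqref{equation:slicesKGL2n} with smashing against Moore spectra. Once that is in place the rest is a short diagram chase fed by the explicit description of $\pr^{2^{n}}_{2}$ and $\inc^{2}_{2^{n}}$ over $\R$ in \aref{lem:coh-R-mod2n}; the only further bookkeeping is to see that the degenerate subcases (vanishing source or target of the differential) are subsumed in the same two arguments, which holds because $h^{c,d}$ and $H^{c,d}_{n}$ have the same support over $\R$.
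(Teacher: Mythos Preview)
Your proof is correct and follows essentially the same approach as the paper: set up the commutative ladder of slice spectral sequences induced by the canonical maps $\KGL/2 \to \KGL/2^{n} \to \KGL/2$, identify the induced maps on $E^{1}$-pages with $\inc^{2}_{2^{n}}$ and $\pr^{2^{n}}_{2}$, and split on the parity of $a-b=q-p+w$ using \aref{lem:coh-R-mod2n}. Your write-up is more careful than the paper's about the edge cases ($a<0$, $a=0$) and about precisely which map needs to be injective on the target, but the argument is the same.
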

\begin{proof}
The canonical maps $\KGL/2 \to \KGL/2^{n} \to \KGL/2$ induce a commutative diagram
\[
\begin{tikzcd}
E^{1}_{p,q,w}(\KGL/2) = h^{2q-p,q-w} \ar[r, "d^1"]\ar[d] & h^{2q-p+3,q-w+1} = E^{1}_{p-1,q+1,w}(\KGL/2) \ar[d] \\
E^{1}_{p,q,w}(\KGL/2^{n}) = H_n^{2q-p,q-w} \ar[r, "d^1"]\ar[d] & H_n^{2q-p+3,q-w+1} = E^{1}_{p-1,q+1,w}(\KGL/2^{n}) \ar[d] \\
E^{1}_{p,q,w}(\KGL/2) = h^{2q-p,q-w} \ar[r, "d^1"] & h^{2q-p+3,q-w+1} = E^{1}_{p-1,q+1,w}(\KGL/2).
\end{tikzcd}
\]
If $q - p + w \equiv 0 \bmod 2$ the lower vertical maps are surjections and the lower left vertical map is an isomorphism (cf.~\aref{lem:coh-R-mod2n}).
If $q - p + w \not\equiv 0 \bmod 2$ the upper vertical maps are isomorphisms.
\end{proof}

\begin{corollary}
\label{corollary:2adicsss}
The $E^2=E^{\infty}$-page of the slice spectral sequence for $\KGL/2^{n}$ over $\OO_{F,\mathcal{S}}$ is given by
\[
E^{\infty}_{p,q,w}(\KGL/2^{n})
\cong 
\begin{cases}
\bar{H}^{2q-p,q-w}(\OO_{F,\mathcal{S}};\Z/2^{n}) & p - w - q \equiv 0, 1 \bmod 4 \\
\wt{H}^{2q-p,q-w}(\OO_{F,\mathcal{S}};\Z/2^{n}) & p - w - q \equiv 2, 3 \bmod 4.
\end{cases}
\]
Here we set 
$$
\bar{H}^{p,q}(\OO_{F,\mathcal{S}};\Z/2^{n}) 
:= 
\begin{cases}\coker(H^{p-3,q-1}(\OO_{F,\mathcal{S}};\Z/2^{n}) \to \bigoplus^{r_{1}} H^{p-3,q-1}(\R;\Z/2^{n})) &  p \geq 3 \\
H^{p,q} & p < 3
\end{cases}
$$ 
and
$$
\wt{H}^{p,q}(\OO_{F,\mathcal{S}};\Z/2^{n}) 
:= 
\ker(H^{p,q}(\OO_{F,\mathcal{S}};\Z/2^{n}) \to \bigoplus^{r_{1}} H^{p,q}(\R;\Z/2^{n})).
$$
In particular, 
$E^{\infty}_{p,q,w}(\KGL/2^{n}) = 0$ for $2q - p \geq 5$.
\end{corollary}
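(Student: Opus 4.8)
The plan is to import the mod $2^{n}$ calculation from the real numbers via the $r_{1}$ real embeddings of $F$, in parallel with the argument just given for $\KGL/2$ and, on abutments, with \aref{corollary:KGLnumberfields}. Everything relevant is natural in the base: the slices $\s_{q}(\KGL/2^{n})=\Sigma^{2q,q}\MZ/2^{n}$, the effective covers $\f_{q}$, and hence the slice $\dd^{1}$-differential all base change along $\Spec(\R)\to\Spec(\OFS)$, so for each real place there is a map of slice spectral sequences which on $E^{1}$-pages is the restriction $H^{a,b}(\OFS;\Z/2^{n})\to\bigoplus^{r_{1}}H^{a,b}(\R;\Z/2^{n})$ in the appropriate bidegree. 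The inputs I would feed in are: the mod $2$ answer over $\R$ (\aref{ex:KGL2-R}); the lemma that a surjective $\KGL/2$-differential over $\R$ forces the corresponding $\KGL/2^{n}$-differential to be surjective; the ring $H^{\ast,\ast}(\R;\Z/2^{n})$ and the comparison maps of \aref{lem:coh-R-mod2n}; and the description of $H^{\ast,\ast}(\OFS;\Z/2^{n})$ in low degrees and via the real places from \aref{section:mcatSa}.

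First I would fix the $\dd^{1}$-differentials over $\R$ for all $n$: starting from \aref{ex:KGL2-R}, whose surviving classes lie in the strip $2q-p\in\{0,1,2\}$, the surjectivity transfer and \aref{lem:coh-R-mod2n} show that every $\dd^{1}$ over $\R$ is an isomorphism, a surjection $\Z/2^{n}\twoheadrightarrow\Z/2$, or zero, and that $E^{\infty}_{p,q,w}(\KGL/2^{n})(\R)=0$ whenever $2q-p\geq 3$. Second I would transfer to $\OFS$: by \aref{thm:KGL2-E2} the $\dd^{1}$ leaving $E^{1}_{p,q,w}$ is essential exactly when $p-w-q\equiv 2,3\bmod 4$ and the one entering it exactly when $p-w-q\equiv 0,1\bmod 4$, so $E^{2}_{p,q,w}(\KGL/2^{n})(\OFS)$ is the cokernel of the incoming $\dd^{1}$ in the first case and the kernel of the outgoing $\dd^{1}$ in the second. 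Writing these $\OFS$-differentials through the restriction maps and the $\R$-differentials, and inserting the structure of $H^{\ast,\ast}(\OFS;\Z/2^{n})$ in cohomological degrees $\geq 3$ relative to the real places (together with the Artin--Verdier/Poitou--Tate duality contribution near degree $3$ and the Picard/Brauer description at degree $2$), I would identify the cokernels with $\bar{H}^{2q-p,q-w}(\OFS;\Z/2^{n})$ and the kernels with $\wt{H}^{2q-p,q-w}(\OFS;\Z/2^{n})$; the anomalous entry carrying the mod $2^{n}$ Picard group (the $h^{2,1}$ term of \aref{thm:KGL2-E2}) falls under the $\wt{H}$ case since the corresponding $H^{\ast,\ast}(\R;\Z/2^{n})$ vanishes. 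Third, the resulting $E^{2}$-page, which uses only $\dd^{1}$, is supported in $2q-p\leq 4$, and since a $\dd^{r}$ with $r\geq 2$ raises $2q-p$ by $2r+1\geq 5$ there is no room for higher differentials, so $E^{2}=E^{\infty}$; the vanishing for $2q-p\geq 5$ is part of the identification, as $\bar{H}$ and $\wt{H}$ are zero in that range. (Alternatively $E^{2}=E^{\infty}$ follows from the $\wt{\tau}^{4}$-periodicity of the pairing \eqref{equation:KGLpairing}.)

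The hard part will be the second step. Identifying $\coker(\dd^{1})$ and $\ker(\dd^{1})$ over $\OFS$ with $\bar{H}$ and $\wt{H}$ is not a componentwise statement over the real places: naturality of $\dd^{1}$ only gives $\ker(\dd^{1}_{\OFS})\subseteq$ the preimage under restriction of $\ker(\dd^{1}_{\R})$, and restriction $H^{i,b}(\OFS;\Z/2^{n})\to\bigoplus^{r_{1}}H^{i,b}(\R;\Z/2^{n})$ is an isomorphism only for $i\geq 4$, with a (possibly $2^{n}$-divisible) kernel at $i=3$ and a cokernel at $i=2$ coming from arithmetic duality. It is exactly these extra contributions that must be tracked to see that the kernels collapse onto $\wt{H}$ and the cokernels swell to $\bar{H}$; doing this cleanly likely requires the Beilinson--Lichtenbaum identification $H^{a,b}(\OFS;\Z/2^{n})\cong H^{a}_{\et}(\OFS;\mu_{2^{n}}^{\otimes b})$ for $a\leq b$ and the duality exact sequences for \'etale cohomology of number rings. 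The other delicate point is pinning down the mod $2^{n}$ differentials over $\R$ on the nose via \aref{lem:coh-R-mod2n}, not merely up to the surjectivity statement. Granting these, the remaining case-by-case bookkeeping runs parallel to the mod $2$ proof of \aref{theorem:KGLnumberfields}, which supplies the template for the case division and the explicit groups.
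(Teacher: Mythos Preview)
Your overall strategy---compare to $\R$ via the real embeddings, use the surjectivity lemma to pin down $d^{1}$ over $\R$, transfer to $\OFS$ via naturality, then argue $E^{2}=E^{\infty}$ for degree reasons---is exactly the paper's approach. Where you diverge is in the ``hard part'': you assert that the restriction map $H^{i,b}(\OFS;\Z/2^{n})\to\bigoplus^{r_{1}}H^{i,b}(\R;\Z/2^{n})$ is an isomorphism only for $i\geq 4$, with a residual kernel at $i=3$ that forces you to invoke arithmetic duality and Beilinson--Lichtenbaum. This is a factual error. The restriction is already an isomorphism for $i\geq 3$: this is recorded in \aref{section:mcatSa} (see \aref{lem:number} for mod $2$; the mod $2^{n}$ case follows by the coefficient long exact sequence and the five lemma, and is used explicitly in the proof of \aref{lem:Hhstruct}).

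This single fact makes the transfer step essentially trivial. The $d^{1}$-differential raises cohomological degree by $3$, so for $2q-p\geq 0$ its target lies in degree $\geq 3$, where the bottom horizontal map in
\[
\begin{tikzcd}
H^{2q-p,q-w}(\OFS;\Z/2^{n}) \ar[r]\ar[d,"d^{1}"] & \bigoplus^{r_{1}}H^{2q-p,q-w}(\R;\Z/2^{n}) \ar[d,"\oplus d^{1}"] \\
H^{2q-p+3,q-w+1}(\OFS;\Z/2^{n}) \ar[r,"\cong"] & \bigoplus^{r_{1}}H^{2q-p+3,q-w+1}(\R;\Z/2^{n})
\end{tikzcd}
\]
is an isomorphism. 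Hence $d^{1}_{\OFS}$ is literally the composite of restriction and $d^{1}_{\R}$, and the kernel/cokernel identifications with $\wt{H}$ and $\bar{H}$ drop out immediately---no tracking of ``extra contributions,'' no Poitou--Tate, no étale input beyond what is already in the appendix. Your concern that ``naturality of $\dd^{1}$ only gives $\ker(\dd^{1}_{\OFS})\subseteq$ the preimage under restriction of $\ker(\dd^{1}_{\R})$'' is moot once the bottom map is an isomorphism: the square then gives equality. The paper's proof is three sentences for this reason.
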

\begin{proof}
For $p \geq 3$ the real embeddings of $F$ induce an isomorphism (see \aref{section:mcatSa})
\[
H^{p,q}(\OO_{F,\mathcal{S}};\Z/2^{n}) 
\overset{\cong}{\rightarrow} 
\bigoplus^{r_{1}} H^{p,q}(\R;\Z/2^{n}).
\]
We have a commutative diagram
\[
\begin{tikzcd}
H^{2q - p, q - w}(\OO_{F,\mathcal{S}}; \Z/2^{n}) \ar[r] \ar[d, "d^1"] & \bigoplus^{r_{1}} H^{2q-p,q-w}(\R;\Z/2^{n})\ar[d, "\bigoplus^{r_{1}}d^1"] \\
H^{2q-p+3,q-w+1}(\OO_{F,\mathcal{S}};\Z/2^{n}) \ar[r, "\cong"] & \bigoplus^{r_{1}} H^{2q-p,q-w}(\R;\Z/2^{n}),
\end{tikzcd}
\]
where the lower horizontal map is an isomorphism for $2q - p \geq 0$.
This determines all the differentials, and the $E^2$-page takes the above form. 
For degree reasons this is the $E^{\infty}$-page.
\end{proof}

When $\mathcal{S}$ is a finite set, 
finite generation of $\KGL_{\ast,\ast}(\OO_{F,\mathcal{S}})$ implies the inverse limit
\begin{equation}
\label{equation:inverselimitsss}
\lim_{n} E^{r}_{p,q,w}(\KGL/2^{n})
\end{equation}
defines a spectral sequence with the $d^{r}$-differentials given by inverse limits of the slice $d^{r}$-differentials.
Its $E^{\infty}_{p,q,w}$-term is given by the inverse limit $\lim_{n} E^{\infty}_{p,q,w}(\KGL/2^{n})$ identified in \aref{corollary:2adicsss}.
Due to collapse at the $E^{2}=E^{\infty}$-page for each $n\geq 1$ the inverse limit spectral sequence converges strongly to the $2$-adic algebraic $K$-groups $K_{\ast}(\OO_{F,\mathcal{S}};\Z_{2})$.
In this way we deduce two-primary calculations first carried out in \cite{Kahn97}, \cite{Levine99}, and \cite[Theorem 0.6]{Rognes-Weibel}.
Moreover, 
by Voevodsky's proof of the Bloch-Kato conjecture at every odd prime $\ell$ \cite{Voevodsky:Z/l}, 
assuming $\ell$ is invertible in $\OFS$ and $\mathcal{S}$ is finite,
a straightforward calculation with the slice spectral sequence for $\KGL$ yields isomorphisms for $\ell$-adic coefficients 
\begin{equation}
\label{equation:oddiso}
K_{2n-m}(\OO_{F,\mathcal{S}};\Z_{\ell})
\overset{\cong}{\rightarrow} 
H^{m,n}(\OO_{F,\mathcal{S}};\Z_{\ell})
\end{equation}
for $n\geq 2$, $m=1,2$ (see \aref{section:mcatSa} for a review of the integral motivic cohomology groups of $\OO_{F,\mathcal{S}}$).
\aref{corollary:2adicsss},
localization and purity, 
and \eqref{equation:oddiso} conspire to give an integral calculation for arbitrary $\mathcal{S}$ containing the infinite primes.

\begin{theorem}
\label{theorem:integralKGL}
For $n\geq 2$, $m=1,2$, 
the map 
\begin{equation}
\label{equation:integraliso}
K_{2n-m}(\OO_{F,\mathcal{S}})
\rightarrow
H^{m,n}(\OO_{F,\mathcal{S}})
\end{equation}
is an isomorphism when $2n-m\equiv 0,1,2,7\bmod 8$, 
a surjection with kernel $(\Z/2)^{r_{1}}$ when $2n-m\equiv 3\bmod 8$, 
and an injection with cokernel $(\Z/2)^{r_{1}}$ when $2n-m\equiv 6\bmod 8$.
Finally, 
when $n\equiv 3\bmod 4$, 
there is an exact sequence
\begin{equation}
\label{equation:integralexact}
0
\to
K_{2n-2}(\OO_{F,\mathcal{S}})
\to
H^{2,n}(\OO_{F,\mathcal{S}})
\to
(\Z/2)^{r_{1}}
\to
K_{2n-1}(\OO_{F,\mathcal{S}})
\to
H^{1,n}(\OO_{F,\mathcal{S}})
\to
0.
\end{equation}
\end{theorem}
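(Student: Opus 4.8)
The plan is to assemble the integral statement from the odd--primary, the two--primary, and the rational information, and then propagate the result to arbitrary $\mathcal{S}$ by a colimit argument. First I would reduce to $\mathcal{S}$ finite: both $K_{\ast}(-)$ and the motivic cohomology of Dedekind domains commute with the filtered colimit $\OFS = \colim_{\mathcal{S}'\subseteq\mathcal{S}\text{ finite}}\OO_{F,\mathcal{S}'}$, and $r_{1}$ is independent of $\mathcal{S}$, so the claimed descriptions of the kernels, cokernels and the six--term sequence \eqref{equation:integralexact} are preserved under passage to the colimit. With $\mathcal{S}$ finite the groups $K_{2n-m}(\OFS)$ and $H^{m,n}(\OFS)$ are finitely generated, so by the arithmetic fracture square it suffices to check \eqref{equation:integraliso} after $\otimes\Q$ and after $\ell$--adic completion for every prime $\ell$. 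Rationally the map is an isomorphism by Borel's theorem: for $n\geq 2$ both sides are $\Q^{d_{n}}$ when $2n-m$ is odd ($m=1$) and both vanish when $2n-m$ is even ($m=2$). The comparison map \eqref{equation:integraliso} itself is the $\ell$--adic Chern class, equivalently the edge map of the slice spectral sequence coming from $\f_{n}(\KGL)\to\s_{n}(\KGL)=\Sigma^{2n,n}\MZ$, which identifies the slice--degree $n$ subquotient of $K_{2n-m}$ with a subquotient of $E^{1}_{2n-m,n,0}(\KGL)=H^{m,n}$.

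For odd $\ell$, enlarge $\mathcal{S}$ to $\mathcal{S}_{\ell}:=\mathcal{S}\cup\{v\mid \ell\}$, so that $\ell$ becomes invertible in $\OO_{F,\mathcal{S}_{\ell}}$ and \eqref{equation:oddiso} applies there, giving $K_{2n-m}(\OO_{F,\mathcal{S}_{\ell}};\Z_{\ell})\xrightarrow{\cong}H^{m,n}(\OO_{F,\mathcal{S}_{\ell}};\Z_{\ell})$. Now compare the localization (d\'evissage) sequence in $K$--theory for $\Spec\OO_{F,\mathcal{S}_{\ell}}\hookrightarrow\Spec\OFS$ with the Gysin sequence in motivic cohomology; the boundary terms in both are built from the residue fields $k(v)$, which are finite of characteristic $\ell$. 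Quillen's computation of $K_{\ast}(\FF_{q})$ and the analogous computation of $H^{\ast,\ast}(\FF_{q};\Z_{\ell})$ agree under the natural transformation to motivic cohomology, so comparing the two long exact sequences yields $K_{2n-m}(\OFS;\Z_{\ell})\xrightarrow{\cong}H^{m,n}(\OFS;\Z_{\ell})$ for all $n\geq 2$, $m=1,2$, and all residues. Thus the $(\Z/2)^{r_{1}}$ terms and the sequence \eqref{equation:integralexact} are purely $2$--primary phenomena.

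For $\ell=2$ I would use \aref{corollary:2adicsss}: the slice spectral sequence for $\KGL/2^{k}$ over $\OFS$ collapses at $E^{2}=E^{\infty}$, with $E^{\infty}_{p,q,0}(\KGL/2^{k})$ equal to $\bar{H}^{2q-p,q}$ or $\wt{H}^{2q-p,q}$ according to $p-q\bmod 4$, and these vanish unless $0\leq 2q-p\leq 4$. Passing to the inverse limit over $k$ (finite generation forces the Mittag--Leffler condition, so the $\lim^{1}$--terms vanish and the limit spectral sequence converges strongly to $K_{\ast}(\OFS;\Z_{2})$ by \aref{theorem:KGL2ncc} and \aref{theorem:best-conv2}) gives, for $p=2n-m$, a finite filtration of $K_{2n-m}(\OFS;\Z_{2})$ whose slice--degree $n$ graded piece is a subquotient of $H^{m,n}(\OFS;\Z_{2})$ and whose only other possibly nonzero pieces sit in slice degrees $n\pm 1$, where they are cokernels resp.\ kernels of the comparison $H^{\ast,\ast}(\OFS;\Z_{2})\to\bigoplus^{r_{1}}H^{\ast,\ast}(\R;\Z_{2})$. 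Using the low--degree $2$--adic motivic cohomology of $\OFS$ (reviewed in \aref{section:mcatSa}) and of $\R$ (\aref{lem:coh-R-mod2n}, together with the $8$--periodic picture of \aref{ex:KGL2-R}), I would evaluate these terms in each residue class of $2n-m$ modulo $8$: they recombine with $H^{m,n}(\OFS;\Z_{2})$ to give an isomorphism when $2n-m\equiv 0,1,2,7$, a surjection with kernel $(\Z/2)^{r_{1}}$ when $2n-m\equiv 3$, an injection with cokernel $(\Z/2)^{r_{1}}$ when $2n-m\equiv 6$, and, for $n\equiv 3\bmod 4$ (so $2n-m\equiv 4,5$), a single $(\Z/2)^{r_{1}}$ linking $K_{2n-2}$ and $K_{2n-1}$, which produces the six--term exact sequence \eqref{equation:integralexact}; here one checks that the connecting map is the stated one by naturality of the slice filtration under the real embeddings $\OFS\to\R$.

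The main obstacle is this last paragraph: matching the $E^{\infty}$--page of the $2$--adic limit spectral sequence with $H^{\ast,\ast}(\OFS)$ in each of the eight residue classes, and in particular pinning down the $(\Z/2)^{r_{1}}$ error terms and the connecting homomorphism in \eqref{equation:integralexact}. This amounts to a careful but unavoidable computation with the $2$--adic \'etale cohomology of number rings and of $\R$ and the restriction maps between them --- essentially an arithmetic (Poitou--Tate) duality input packaged through the slice filtration --- and it is exactly at this point that the agreement with the calculations of \cite{Rognes-Weibel} is established.
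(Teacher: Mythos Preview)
Your approach is essentially the same as the paper's. The paper's ``proof'' is the one--sentence summary preceding the theorem: \aref{corollary:2adicsss} for the $2$--adic part, \eqref{equation:oddiso} for odd primes, and localization and purity to extend to arbitrary~$\mathcal{S}$; the preceding paragraph spells out that for finite $\mathcal{S}$ one takes the inverse limit of the mod~$2^{k}$ slice spectral sequences (which all collapse at $E^{2}$), using finite generation to kill $\lim^{1}$--terms. Your proposal unpacks exactly these ingredients, and the only organizational differences are that you reduce to finite~$\mathcal{S}$ by a filtered--colimit argument rather than via localization and purity, and that you are explicit about the rational part via Borel and about the d\'evissage needed to descend from $\OO_{F,\mathcal{S}_{\ell}}$ to $\OFS$ at odd~$\ell$ (the paper leaves the latter implicit in the phrase ``localization and purity'').
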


\section{Higher Witt-theory and hermitian $K$-theory}
\label{section:hwtahkt}
Throughout we work over a base field $F$ of $\Char(F)\neq 2$ or the ring of $\mathcal{S}$-integers in a number field. 
From the identification \eqref{equation:wittheoryslices} in \cite[Theorem 4.28]{slices} of the $q$th slice of higher Witt-theory we obtain
\begin{equation}
\label{equation:KW/2slices}
\s_{q}(\KW/2)\simeq \bigvee_{j\in\Z} \Sigma^{q+j,q}\MZ/2.
\end{equation}
Since $(\Sq^1, \id)$ is a nontrivial automorphism of $\MZ/2\vee\Sigma^{1,0}\MZ/2$,
the wedge product decomposition of $\s_{q}(\KW/2)$ is only unique up to $\MZ$-module isomorphisms.
Similar observations apply to $\KQ$ and its mod $2$ slices in \eqref{equation:hermitianktheoryslices}.
For the purpose of systematic calculations we fix an explicit choice:
\begin{convention}
\label{convention:kq-to-kt-slices}
Let $\E$ denote $\KQ$ or $\KW$.
\begin{enumerate}
\item The canonical maps $\E \to \E/2$ and $\E/2 \to \Sigma^{1,0}\E$ induce either inclusions or projections on the slice summands.
\item The naturally induced map $\s_{q}(\KQ/2) \to \s_{q}(\KW/2)$ is compatible with the canonical map of cofiber sequences from $\KQ \to \KQ \to \KQ/2$ to $\KW \to \KW \to \KW/2$.
\end{enumerate}
\end{convention}
In particular, 
the top summand $\Sigma^{2q,q}\MZ/2$ of $\s_{q}(\KQ/2)$ maps by $(\Sq^1,\id)$ to $\s_{q}(\KW/2)$ if $q$ is even. 
All other summands of $\s_{q}(\KQ/2)$ map by the identity to $\s_{q}(\KW/2)$.

Over a field $F$ of $\Char(F)\neq 2$, 
recall that $\pi_{p,q}\KW \cong W(F)$ if $p \equiv q \bmod 4$ and $\pi_{p,q}\KW$ is trivial in all other degrees.
By the mod $2$ universal coefficient sequence we deduce
\[
\pi_{p,q}(\KW/2)
\cong 
\begin{cases}
W(F)/2 & p \equiv q \bmod 4 \\
{}_{2}W(F) & p \equiv q+1 \bmod 4 \\
0 & \mathrm{otherwise.}
\end{cases}
\]

\begin{theorem}
\label{thm:KW2-diff}
The restriction of the slice $\dd^{1}$-differential to the summand $\Sigma^{q + j,q}\MZ/2$ of $\s_{q}(\KW/2)$ in \eqref{equation:KW/2slices} is given by 
\begin{equation}
\dd^{1}(\KW/2)(q,j)
= 
\begin{cases}
(\Sq^{3}\Sq^{1},0,\Sq^{2},0,0) & j\equiv 0\bmod 4 \\
(\Sq^{3}\Sq^{1},\Sq^{2}\Sq^{1}+\Sq^{3},\Sq^{2},\rho+\tau\Sq^{1},0) & j\equiv 1\bmod 4 \\
(\Sq^{3}\Sq^{1},0,\Sq^{2}+\rho\Sq^{1},0,\tau) & j\equiv 2\bmod 4 \\
(\Sq^{3}\Sq^{1},\Sq^{2}\Sq^{1}+\Sq^{3},\Sq^{2}+\rho\Sq^{1},\tau\Sq^{1},\tau) & j\equiv 3\bmod 4. 
\end{cases}
\label{equation:KW2-diff1}
\end{equation}
Figure \ref{figure:KW2differentials} shows the slice $\dd^{1}$-differentials for $\KW/2$.
Each dot is a suspension of $\MZ/2$.
The simplicial degree (resp.~weight) is indicated horizontally (resp.~vertically).
\begin{figure}

\begin{center}
\begin{tikzpicture}[scale=1,line width=1pt]

{\draw[fill]     
(-4,-1) circle (1pt) (-3,-1) circle (1pt) (-2,-1) circle (1pt) (-1,-1) circle (1pt) (0,-1) circle (1pt) 
(1,-1) circle (1pt) (2,-1) circle (1pt) (3,-1) circle (1pt) (4,-1) circle (1pt) (5,-1) circle (1pt) 
(-5,-1) circle (0pt) node[left=-1pt] {$q-1$}
;}
{\draw[fill]     
(-4,0) circle (1pt) (-3,0) circle (1pt) (-2,0) circle (1pt) (-1,0) circle (1pt) (0,0) circle (1pt) 
(1,0) circle (1pt) (2,0) circle (1pt) (3,0) circle (1pt) (4,0) circle (1pt) (5,0) circle (1pt) 
(-5,0) circle (0pt) node[left=-1pt] {$q$}
;}
{\draw[fill]     
(-4,1) circle (1pt) (-3,1) circle (1pt) (-2,1) circle (1pt) (-1,1) circle (1pt) (0,1) circle (1pt) 
(1,1) circle (1pt) (2,1) circle (1pt) (3,1) circle (1pt) (4,1) circle (1pt) (5,1) circle (1pt) 
(-5,1) circle (0pt) node[left=-1pt] {$q+1$}
;}
{\draw[fill]     
(-4,2) circle (1pt) (-3,2) circle (1pt) (-2,2) circle (1pt) (-1,2) circle (1pt) (0,2) circle (1pt) 
(1,2) circle (1pt) (2,2) circle (1pt) (3,2) circle (1pt) (4,2) circle (1pt) (5,2) circle (1pt) 
(-5,2) circle (0pt) node[left=-1pt] {$q+2$}
;}
{\draw[fill]     
(-4,3) circle (1pt) (-3,3) circle (1pt) (-2,3) circle (1pt) (-1,3) circle (1pt) (0,3) circle (1pt) 
(1,3) circle (1pt) (2,3) circle (1pt) (3,3) circle (1pt) (4,3) circle (1pt) (5,3) circle (1pt) 
(-5,3) circle (0pt) node[left=-1pt] {$q+3$}
;}
{\draw[fill]     
(-4,-2) circle (0pt) node[below=-1pt] {$4i-4$}
(-2,-2) circle (0pt) node[below=-1pt] {$4i-2$}
(0,-2) circle (0pt) node[below=-1pt] {$4i$} 
(2,-2) circle (0pt) node[below=-1pt] {$4i+2$}
(4,-2) circle (0pt) node[below=-1pt] {$4i+4$}
;}
{\draw[fill]     
(-2,-1) circle (0pt) node[below=1pt] {$\Sq^2$}
;}
{\draw[fill,green]     
(0,-1) circle (0pt) node[below=1pt] {$\Sq^2+\rho\Sq^1$}
;}
{\draw[fill,red]     
(2,-1) circle (0pt) node[below=1pt] {$\tau$}
;}
{\draw[fill,blue]     
(4,-1) circle (0pt) node[below=1pt] {$\Sq^3\Sq^1$}
;}

{\draw[red,->] 
(-3,-1) -- (-4,0)
;}
{\draw[green,->] 
(-3,-1) -- (-2,0)
;}
{\draw[blue,->] 
(-3,-1) -- (0,0)
;}
{\draw[->] 
(-1,-1) -- (0,0)
;}
{\draw[blue,->] 
(-1,-1) -- (2,0)
;}
{\draw[red,->] 
(1,-1) -- (0,0)
;}
{\draw[green,->] 
(1,-1) -- (2,0)
;}
{\draw[blue,->] 
(1,-1) -- (4,0)
;}
{\draw[->] 
(3,-1) -- (4,0)
;}
{\draw[blue] 
(3,-1) -- (5.5,-.16)
;}
{\draw[red,->]
(5,-1) -- (4,0)
;}
{\draw[green]
(5,-1) -- (5.5,-.5)
;}
{\draw[blue,->] 
(-4,-.67) -- (-2,0)
;}
{\draw[blue,->] 
(-4,.67) -- (-3,1)
;}
{\draw[->] 
(-4,0) -- (-3,1)
;}
{\draw[blue,->] 
(-4,0) -- (-1,1)
;}
{\draw[red,->] 
(-2,0) -- (-3,1)
;}
{\draw[green,->] 
(-2,0) -- (-1,1)
;}
{\draw[blue,->] 
(-2,0) -- (1,1)
;}
{\draw[->] 
(0,0) -- (1,1)
;}
{\draw[blue,->] 
(0,0) -- (3,1)
;}
{\draw[red,->] 
(2,0) -- (1,1)
;}
{\draw[green,->] 
(2,0) -- (3,1)
;}
{\draw[blue,->] 
(2,0) -- (5,1)
;}
{\draw[->] 
(4,0) -- (5,1)
;}
{\draw[blue] 
(4,0) -- (5.5,.5)
;}
{\draw[red,->]
(5.5,0.5) -- (5,1)
;}
{\draw[->] 
(-3,1) -- (-2,2)
;}
{\draw[blue,->] 
(-3,1) -- (0,2)
;}
{\draw[blue,->] 
(-4,1.33) -- (-2,2)
;}
{\draw[red,->] 
(-1,1) -- (-2,2)
;}
{\draw[green,->] 
(-1,1) -- (0,2)
;}
{\draw[blue,->] 
(-1,1) -- (2,2)
;}
{\draw[->] 
(1,1) -- (2,2)
;}
{\draw[blue,->] 
(1,1) -- (4,2)
;}
{\draw[red,->] 
(3,1) -- (2,2)
;}
{\draw[green,->] 
(3,1) -- (4,2)
;}
{\draw[blue] 
(3,1) -- (5.5,1.83)
;}
{\draw
(5,1) -- (5.5,1.5)
;}
{\draw[blue] 
(5,1) -- (5.5,1.17)
;}
\end{tikzpicture}
\end{center}

\caption{Slice $\dd^{1}$-differentials for $\KW/2$}
\label{figure:KW2differentials}
\end{figure}
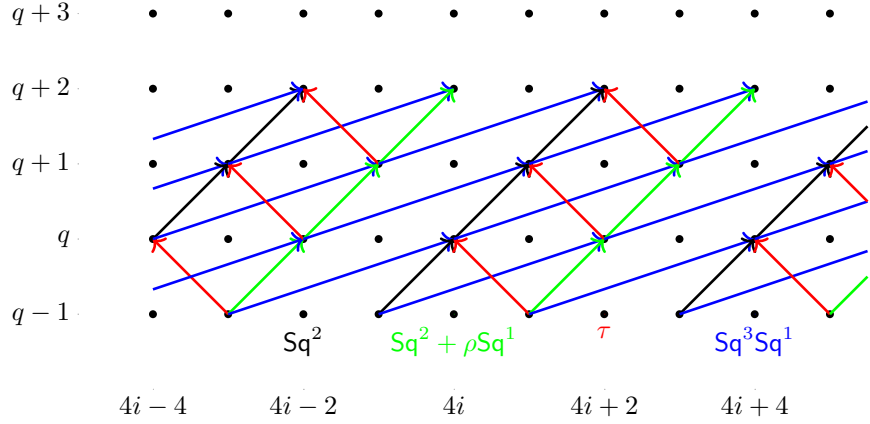
\end{theorem}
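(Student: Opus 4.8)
The plan is to compute the slice $\dd^1$-differential for $\KW/2$ by reducing everything to the known integral/mod-$2$ slice differential for $\KW$ established in \cite{slices}. Recall that by \eqref{equation:wittheoryslices} the slices of $\KW$ are $\s_q(\KW)\simeq\bigvee_{i\in\Z}\Sigma^{2i+q,q}\MZ/2$, and the slice $\dd^1$-differentials $\dd^1(\KW)\colon \s_q(\KW)\to\Sigma^{1,0}\s_{q+1}(\KW)$ are matrices of motivic Steenrod operations; these are recorded in \cite[Theorem 5.3]{slices}. First I would pass from $\KW$ to $\KW/2$ using the cofiber sequence $\KW\xrightarrow{2}\KW\to\KW/2$, which induces on $q$th slices a cofiber sequence $\s_q(\KW)\xrightarrow{2=0}\s_q(\KW)\to\s_q(\KW/2)\to\Sigma^{1,0}\s_q(\KW)$ — the multiplication by $2$ is null on the $\MZ/2$-module slices — so that $\s_q(\KW/2)\simeq\s_q(\KW)\vee\Sigma^{1,0}\s_q(\KW)$. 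This splitting, reindexed as a wedge over $j\in\Z$ of $\Sigma^{q+j,q}\MZ/2$, is exactly the decomposition \eqref{equation:KW/2slices}, with $j$ even summands coming from the ``$\s_q(\KW)$'' part and $j$ odd summands from the ``$\Sigma^{1,0}\s_q(\KW)$'' part.

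Next I would determine the $\dd^1$-differential on $\s_\ast(\KW/2)$ as a $2\times 2$ block matrix with respect to this splitting. The diagonal blocks are $\dd^1(\KW)$ (in appropriate internal degree), so they contribute the operations $\Sq^3\Sq^1$, $\Sq^2$, $\Sq^3+\Sq^2\Sq^1$, etc.\ exactly as in \eqref{equation:KW2-diff1}. The off-diagonal block — the ``connecting'' contribution — is governed by the secondary structure: it is detected by the Bockstein-type maps and the motivic cohomology classes $\rho\in h^{1,1}$ and $\tau\in h^{0,1}$ of the base. Concretely, the terms $\rho+\tau\Sq^1$ (for $j\equiv 1$), $\tau$ (for $j\equiv 2,3$), and $\tau\Sq^1$ (for $j\equiv 3$) arise from the interaction of the slice differential of $\KW$ with the mod-$2$ Moore-spectrum structure, i.e.\ from the identity $\Sq^1\tau=\rho$ combined with the fact that the integral slice differential of $\KW$ — before mod-$2$ reduction — involves a $\tau$-multiple (cf.\ the parenthetical ``times a $\tau$-multiple'' remark in the proof of \aref{thm:KGL2-E2}, and \cite[Theorem 5.3, Corollary 6.11]{slices}). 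I would make this precise by writing the integral slice $\dd^1$ for $\KW$ itself (landing in $\Sigma^{1,0}\s_{q+1}$, with components in integral motivic cohomology $\MZ$), applying $-/2$, and tracking how the connecting homomorphism $\MZ/2\to\Sigma^{1,0}\MZ$ in the cofiber sequence $\MZ\xrightarrow{2}\MZ\to\MZ/2$ interacts with those components; this is where the $\tau$ and $\rho$ terms get produced. Since $[\MZ/2,\Sigma^{2,0}\MZ/2]=0$ and $[\MZ/2,\Sigma^{1,0}\MZ/2]$ is generated by $\Sq^1$ (reviewed in \aref{section:mcatSa}), there is no room for ambiguity beyond what \aref{convention:kq-to-kt-slices} already fixes, so the four cases of \eqref{equation:KW2-diff1} are forced by the periodicity $j\mapsto j+4$ (coming from $(4,0)$-periodicity of the slice pattern) together with \aref{convention:kq-to-kt-slices}, which pins down the inclusion/projection normalization of each summand.

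Finally, the base-change remark: over $\OFS$ (rather than over a field) the formulas are unchanged because the slice spectra and their differentials are stable under the base change along $\Spec\OFS\to\Spec\Z[\tfrac12]$ — this is the content of \cite[\S2.3]{April1} and is invoked in the proof of \aref{thm:KW-OFS} — so one only needs the operations $\Sq^1,\Sq^2$ and the classes $\rho,\tau$ to exist and satisfy $\Sq^1\tau=\rho$, which they do over any base with $\tfrac12$ in it. Figure \ref{figure:KW2differentials} is then just a visual transcription of \eqref{equation:KW2-diff1}, recording for each residue class of $j\bmod 4$ which arrows (length $3$ for $\Sq^3\Sq^1$, length $2$ for $\Sq^2$-type, length $0$ for the $\tau$-multiplication component, etc.) emanate from the corresponding dot.

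\textbf{Main obstacle.} The technical heart is pinning down the off-diagonal (connecting) component of the block matrix — i.e.\ proving that exactly the terms $\rho+\tau\Sq^1$, $\tau$, and $\tau\Sq^1$ appear, in exactly these degrees, and with no additional lower-order corrections. This requires carefully lifting the mod-$2$ slice differential to an integral statement about $\dd^1(\KW)$ with $\MZ$-coefficients, keeping track of the $\tau$-multiples hidden in \cite[Theorem 5.3]{slices}, and then chasing the long exact sequence in $[\,-,\,-\,]$ induced by $\MZ\xrightarrow{2}\MZ\to\MZ/2$; the normalization forced by \aref{convention:kq-to-kt-slices}(1)--(2) is what ultimately removes the remaining freedom.
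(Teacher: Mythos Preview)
Your block-matrix setup is correct: writing $\s_q(\KW/2)\simeq \s_q(\KW)\vee\Sigma^{1,0}\s_q(\KW)$ and identifying the diagonal blocks with $\dd^1(\KW)$ is exactly how the paper begins, and this immediately gives the formulas for $j\equiv 0,2\bmod 4$ (the even-to-odd components vanish by naturality). The gap is in your treatment of the off-diagonal block for odd $j$.

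You assert that the vanishing of $[\MZ/2,\Sigma^{2,0}\MZ/2]$ together with \aref{convention:kq-to-kt-slices} leaves ``no room for ambiguity,'' but the relevant cross terms live in bidegrees $(3,1)$ and $(1,1)$, where \aref{lem:steenrod-alg} gives $\AAA^{3,1}\supset h^{0,0}\{\Sq^2\Sq^1\}\oplus h^{0,0}\{\Sq^3\}$ and $\AAA^{1,1}=h^{1,1}\oplus h^{0,0}\{\tau\Sq^1\}$. So the odd-to-even components are a priori $a\Sq^2\Sq^1+b\Sq^3$ and $\phi+c\tau\Sq^1$ with $a,b,c\in\Z/2$ and $\phi\in h^{1,1}$ undetermined. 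The paper pins these down by three separate arguments you do not mention: first, imposing $\dd^1\circ\dd^1=0$ yields the relations $a=b=c=c'$ and $a'=b'$, $\phi+\phi'+c'\rho=0$; second, comparing with $\s_q(\KQ/2)\to\s_q(\KW/2)$ via \aref{convention:kq-to-kt-slices}(2), where the top summand maps by $(\Sq^1,\id)$, forces $b=b'=1$ and hence all coefficients equal $1$; third, identifying $\phi=\rho$ requires a substantial separate computation (\aref{lem:phi=rho}) over $\R$ and the $\Q_\ell$, comparing the resulting $E^2$-page against the known Witt groups and $\KQ_2$.

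Your proposed mechanism for the cross terms --- lifting $\dd^1(\KW)$ to an integral statement with $\MZ$-coefficients and chasing the Bockstein for $\MZ\xrightarrow{2}\MZ\to\MZ/2$ --- cannot work as stated: the slices of $\KW$ are already wedges of $\MZ/2$ by \eqref{equation:wittheoryslices}, so there is no $\MZ$-valued slice differential to lift to. The $\tau$ appearing in $\dd^1(\KW)$ is multiplication by the class $\tau\in h^{0,1}$ as an operation $\MZ/2\to\Sigma^{0,1}\MZ/2$, not a hidden integral structure.
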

\begin{proof}
According to \cite[Theorem 6.3]{slices} the corresponding slice $\dd^{1}$-differential of $\KW$ is given by 
\[
\dd^{1}(\KW)(q,j)
= 
\begin{cases}
(\Sq^3\Sq^{1},\Sq^{2},0) & j \equiv 0 \bmod 4 \\
(\Sq^3\Sq^{1},\Sq^{2}+\rho\Sq^{1},\tau) & j \equiv 2 \bmod 4.
\end{cases}
\]
From the homotopy cofiber sequence $\KW\to \KW/2\to \Sigma^{1,0}\KW\to \Sigma^{1,0}\KW$ we get $\dd^{1}(\KW)(q,j)=\dd^{1}(\KW/2)(q,j)$ when $j$ is even.
This proves \eqref{equation:KW2-diff1} for $j\equiv 0, 2 \bmod 4$.
When $j$ is odd, 
$\pr\circ \dd^{1}(\KW)(q,j) = \pr\circ \dd^{1}(\KW/2)(q,j)$, 
where $\pr$ is the projection of $\Sigma^{1,0}\s_{q+1}(\KW/2)$ onto the odd summands.
Hence, by \aref{lem:steenrod-alg}, we have
\begin{equation}
\label{equation:KW2-diff2}
\dd^{1}(\KW/2)(q,j)
=
\begin{cases}
(\Sq^{3}\Sq^{1},a\Sq^{2}\Sq^{1}+b\Sq^{3},\Sq^{2},\phi+c\tau\Sq^{1},0) & j\equiv 1\bmod 4 \\
(\Sq^{3}\Sq^{1},a^{\prime}\Sq^{2}\Sq^{1}+b^{\prime}\Sq^{3},\Sq^{2}+\rho\Sq^{1},
\phi^{\prime}+c^{\prime}\tau\Sq^{1},\tau) & j\equiv 3\bmod 4, 
\end{cases}
\end{equation}
where $a$, $b$, $c$, $a^{\prime}$, $b^{\prime}$, $c^{\prime}\in h^{0,0} \cong \Z/2$ and $\phi,\phi^{\prime}\in h^{1,1}$.
Since the slice $\dd^{1}$-differential squares to zero, the product of the matrices
\[ 
\begin{pmatrix} 
\Sq^{3}\Sq^{1} & 0  & 0   & 0   & 0  \\
a\Sq^{2}\Sq^{1}+b\Sq^{3} & \Sq^{3}\Sq^{1}  & 0  & 0  & 0 \\
\Sq^{2}  & 0 & \Sq^{3}\Sq^{1}  & 0    & 0 \\ 
\phi+c\tau\Sq^{1}  & \Sq^{2}  & a^{\prime}\Sq^{2}\Sq^{1}+b^{\prime}\Sq^{3} & \Sq^{3}\Sq^{1}  & 0 \\
0 & 0  & \Sq^{2}+\rho\Sq^{1}  & 0  & \Sq^{3}\Sq^{1} \\
0 & 0  & \phi^{\prime} +c^{\prime} \tau\Sq^{1} & \Sq^{2}+\rho\Sq^{1} & a\Sq^{2}\Sq^{1}+b\Sq^{3} \\
0 & 0  &  \tau  & 0  & \Sq^{2}\\
0 & 0  &  0  & \tau  & \phi+c\tau\Sq^{1}\\
0 & 0 & 0 & 0 & 0
\end{pmatrix} 
\]
and $(\Sq^{3}\Sq^{1},a^{\prime}\Sq^{2}\Sq^{1}+b^{\prime}\Sq^{3},\Sq^{2}+\rho\Sq^{1},\phi^{\prime}+c^{\prime}\tau\Sq^{1},\tau)$ is zero. 
That is, 
the matrix
\[
\begin{pmatrix} 0 \\ 0 \\ 0 \\ 
(\phi+\phi^{\prime}+c^{\prime} \rho)\Sq^{3}\Sq^{1}+(b^{\prime}+a^{\prime}) \Sq^{2}\Sq^{3} \\ 0\\   
(a+b)\rho \Sq^{2}+(b+c^{\prime})\tau\Sq^{3}+(a+c^{\prime})\tau\Sq^{2}\Sq^{1} +(b+c^{\prime})\rho^{2}\Sq^{1}\\ 0\\ 
\tau(\phi+\phi^{\prime}+c\rho)+(c+c^{\prime})\tau^{2}\Sq^{1} \\ 0
\end{pmatrix} 
\]
has zero entries.
This implies $\phi+\phi^{\prime}+c^{\prime}\rho=a^{\prime}+b^{\prime}=0$, and $a=b=c=c^{\prime}$.

Next we consider the commutative diagram for $q$ even
\[
\begin{tikzcd}
\s_{q}(\KQ/2) \ar[r]\ar[d, "\dd^{1}(\KQ/2)(q)"] & \s_{q}(\KW/2)\ar[d, "\dd^{1}(\KW/2)(q)"] \\
\Sigma^{1,0}\s_{q+1}(\KQ/2) \ar[r] & \Sigma^{1,0}\s_{q+1}(\KW/2).
\end{tikzcd}
\]
Here the upper horizontal map restricts to $(\Sq^{1},\id)$ on the top summand $\Sigma^{2q,q}\MZ/2$ of $\s_{q}(\KQ/2)$.
The top summand of $\Sigma^{1,0}\s_{q+1}(\KQ/2)$ is $\Sigma^{2q+3,q+1}\MZ/2$.
Hence $\Sigma^{2q,q}\MZ/2$ maps trivially to the summand $\Sigma^{2q+4,q+1}\MZ/2$ of $\Sigma^{1,0}\s_{q+1}(\KW/2)$, 
i.e.,
\[
0 = 
\begin{cases}
(a\Sq^{2}\Sq^{1} + b\Sq^3)\Sq^{1} + \Sq^3\Sq^{1}\id & q \equiv 0\bmod 4 \\
(a'\Sq^{2}\Sq^{1} + b'\Sq^3)\Sq^{1} + \Sq^3\Sq^{1}\id & q \equiv 2 \bmod 4.
\end{cases}
\]
It follows that $b=b'=1$, and thus $a=a'=b=b'=c=c'=1$.

The relation $\phi=\rho\in h^{1,1}$ is shown in \aref{lem:phi=rho}.

Finally,
a base change argument as in \cite[Lemma 5.1]{slices} extends the result to $\OFS$.
\end{proof}

\begin{theorem}
\label{thm:KW2-E2}
Over fields $F$ of $\Char(F)\neq 2$ and $q'=q-w$, 
the $E^{2}$-page of the slice spectral sequence for $\KW/2$ is given by
\[
E^{2}_{p,q,w}(\KW/2)
\cong 
\begin{cases}
h^{q',q'}/\rho & p\equiv w\bmod 4 \\
\ker(\rho_{q',q'}) & p\equiv w+1\bmod 4 \\
0 & \mathrm{otherwise}.
\end{cases}
\]
The same identifications hold over the ring of $\mathcal{S}$-integers in a number field with the exceptions
\[
E^{2}_{p,w+2,w}(\KW/2) 
\cong 
\begin{cases}
h^{2,2}/(\rho,\tau) & p - w \equiv 0 \bmod 4 \\
\ker(\rho_{2,2})/\tau & p - w \equiv 1 \bmod 4 \\
0 & p - w \equiv 2, 3 \bmod 4,
\end{cases}
\]
\[
E^{2}_{p,w+1,w}(\KW/2) 
\cong 
\begin{cases}
h^{1,1}\oplus h^{2,1} & p - w \equiv 0 \bmod 4 \\
\ker(\rho_{1,1}) & p - w \equiv 1 \bmod 4 \\
0 & p - w \equiv 2 \bmod 4 \\
h^{2,1} & p - w \equiv 3 \bmod 4.
\end{cases}
\]
\end{theorem}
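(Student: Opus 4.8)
The plan is to compute the homology of the slice $d^1$-differential directly from the explicit formulas of \aref{thm:KW2-diff}, following the template of the analogous (integral) computation for $\KW$ in \cite[Theorem 6.3]{slices}. By $\eta$-periodicity $\Sigma^{1,1}\KW\simeq\KW$ \cite[Example 2.3]{slices}, which passes to $\KW/2$, I first reduce to $w=0$, so that $q'=q$. By \eqref{equation:KW/2slices} the weight-zero $E^1$-page is $E^1_{p,q,0}(\KW/2)=\bigoplus_{j\in\Z}h^{q+j-p,\,q}$, and the slice differential $E^1_{p,q,0}(\KW/2)\to E^1_{p-1,q+1,0}(\KW/2)$ is the quintuple of \aref{thm:KW2-diff}, whose shape is Figure \ref{figure:KW2differentials}: the summand $\Sigma^{q+j,q}\MZ/2$ maps to the summands indexed by $j+2,j+1,j,j-1,j-2$ of $\Sigma^{1,0}\s_{q+1}(\KW/2)$ via $\Sq^3\Sq^1$, $\Sq^2\Sq^1+\Sq^3$, $\Sq^2$ (or $\Sq^2+\rho\Sq^1$), $\rho+\tau\Sq^1$ (or $\tau\Sq^1$), and $\tau$, with coefficients read off $j\bmod 4$. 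Since $d^1$ preserves $p+q$, the calculation splits into the complexes running along lines of constant $p+q$.

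Over a field $F$ with $\Char(F)\neq 2$, I would compute the homology of each such complex by a change of basis isolating acyclic subquotients built out of the leading operations $\Sq^3\Sq^1$ and $\Sq^2$, just as in \cite[Theorem 6.3]{slices}. The new features relative to the integral case are the doubling of each slice summand under reduction mod $2$ and the $\Sq^1$-Bockstein cross-terms appearing for odd $j$; together these produce, in addition to the old cokernel-of-$\rho$ contribution in degrees $p\equiv w\bmod 4$, a new kernel-of-$\rho$ contribution in degrees $p\equiv w+1\bmod 4$, and nothing in the remaining residues. After cancelling the acyclic part one is left with a two-term complex whose only nontrivial map is multiplication by $\rho$ on the diagonal class $h^{q,q}$, yielding $E^2_{p,q,0}(\KW/2)\cong h^{q,q}/\rho$ for $p\equiv 0\bmod 4$ and $\cong\ker(\rho_{q,q})$ for $p\equiv 1\bmod 4$. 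This step uses the structure of the mod $2$ motivic cohomology ring and the motivic Steenrod action recalled in \aref{section:mcatSa}, in particular the vanishing of $\Sq^1$ and $\Sq^2$ on the diagonal groups.

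Over $\OFS$, the slice $d^1$-differentials take the same form by base change (see the proofs of \aref{thm:KW2-diff} and \aref{thm:KW-OFS}), so the computation is identical except in the weights $q=w+1$ and $q=w+2$, where the $E^1$-page acquires summands $h^{2,1}\cong\Pic(\OFS)/2$ absent over a field. For $q=w+1$ the extra $h^{2,1}$ summand of $E^1_{4p+1,1,0}(\KW/2)$ supports the injective $\tau$-multiplication differential of \aref{lem:pic-tau}, while on $E^1_{4p+2,1,0}(\KW/2)$ it is a permanent cycle; chasing this as in the proof of \aref{thm:KW-OFS} gives the asserted groups $h^{1,1}\oplus h^{2,1}$, $\ker(\rho_{1,1})$, $0$, $h^{2,1}$ for $p-w\equiv 0,1,2,3\bmod 4$. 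For $q=w+2$ one additionally invokes the short exact sequence $0\to h^{2,1}\xrightarrow{\tau}h^{2,2}\to{}_2H^2_{\et}(\OFS;\Gm)\to 0$ from \aref{lem:pic-tau} to replace $h^{2,2}/\rho$ by $h^{2,2}/(\rho,\tau)$ in degrees $p-w\equiv 0\bmod 4$ and $\ker(\rho_{2,2})$ by $\ker(\rho_{2,2})/\tau$ in degrees $p-w\equiv 1\bmod 4$, and to see that the $p-w\equiv 2,3\bmod 4$ slots vanish.

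The main obstacle is the bookkeeping in the second paragraph: proving that the ``bulk'' of the $E^1$-complex, away from the diagonal weight, is acyclic requires tracking the full quintuple differential against the doubled slice summands, and one must verify that the $\Sq^1$ cross-terms contribute precisely the $p\equiv w+1\bmod 4$ groups without leaving spurious survivors or hidden extensions. The low-weight corrections over $\OFS$ add a further wrinkle, since there the groups $h^{2,1}$ and $h^{2,2}$ no longer vanish and the relevant $\tau$-multiplication differentials must be pinned down via \aref{lem:pic-tau}.
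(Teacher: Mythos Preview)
Your outline matches the paper's approach: reduce to $w=0$, feed the quintuple differentials of \aref{thm:KW2-diff} into $E^1_{p,q,0}(\KW/2)\cong\bigoplus_i h^{i,q}$, and compute homology. The paper's implementation, however, is more concrete than your ``change of basis isolating acyclic subquotients.'' The key device is to evaluate every Steenrod operation appearing in the quintuple on $\tau$-powers via \aref{tbl:Sq-action}, so that each entry of the differential becomes a monomial $\rho^a\tau^b$ (with $b$ possibly negative, which is fine in the range where $\tau$ is invertible). The resulting matrix $(d^1_{p,q,w}(i,j))_{i,j}$ is then $4$-periodic in both directions, and the paper reduces to a single $5\times 4$ block (\aref{tbl:KW2-diffs}) whose kernel and image are computed by hand (\aref{tbl:KW2-answer}). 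Your description of the mechanism is correct in spirit but leaves this reduction implicit; without it, the claim that ``after cancelling the acyclic part one is left with a two-term complex'' is not obviously justifiable.

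For the $\OFS$ corrections your strategy is right, but be careful: the analogy with \aref{thm:KW-OFS} is not literal, since for $\KW/2$ every slice summand is doubled and the relevant $h^{2,1}$ now sits in the summand $j=p+1$, whose residue mod $4$ (and hence whose outgoing quintuple) depends on $p\bmod 4$ differently than in the integral case. In particular, the $\tau$-multiplication differential you invoke from \aref{lem:pic-tau} enters at a different $p$-residue than in \aref{thm:KW-OFS}; this is why the surviving $h^{2,1}$ summands land at $p-w\equiv 0,3\bmod 4$ rather than $1,3\bmod 4$. The paper handles this by rerunning the explicit matrix computation with the extra $h^{2,1}$ column inserted.
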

\begin{proof}
Note that $E^{1}_{p,q,w}(\KW/2) \cong \bigoplus_{i=0}^{q'}h^{i,q'}$ by \eqref{equation:KW/2slices}.
The slice differentials 
$$
d^{1}_{p,q,w}\colon\bigoplus_{i} h^{i,q'} \to \bigoplus_{i} h^{i,q'+1}
$$
from \aref{thm:KW2-diff} are given by matrices with entries $0$ and $\rho^{a}\tau^{b}$, 
where $a\in\N$, $b\in\Z$, 
cf.~\aref{tbl:Sq-action}.
When $b<0$ this makes sense in the range where multiplication by $\tau$ is an isomorphism on the mod $2$ motivic cohomology ring $h^{*,*}$, 
see \aref{section:mcatSa}.

Let $d^{1}_{p,q,w}(i,j)$ be the restriction of $d^{1}_{p,q,w}$ to $h^{i,q'} \to h^{j,q'+1}$. 
For $i,j\leq q$ we have 
\begin{equation}
d^{1}_{p,q,w}(i,j)=0 \text{ if } \vert i-j\vert > 4,\
d^{1}_{p,q,w}(i,j)=d^{1}_{p,q,w}(i+4,j+4),\
d^{1}_{p,q+1,w}(i,j)=d^{1}_{p,q,w}(i,j).
\label{equation:KW-diff-periodicity}
\end{equation}
From \eqref{equation:KW-diff-periodicity} we deduce the repetitive form of the matrix $(d^{1}_{p,q,w}(i,j))_{i,j}$ as indicated in \aref{fig:KW2outline1}. 
Every void box indicates a trivial map.

\begin{figure}[h!]

\begin{center}
\begin{tikzpicture}[yscale=-1,font=\footnotesize,line width=2pt]
        \draw[help lines] (0,0) grid (6.5,6.5);
        {\node[label=right:$h^{q'+1,q'+1}$] at (-2.0,.5) {};}
        {\node[label=right:$h^{q',q'+1}$] at (-2.0,1.5) {};}
        {\node[label=right:$\vdots$] at (-1.5,4.5) {};}
        \foreach \i in {1,...,2} {\node[label=right:$h^{q' -\i,q'+1}$] at (-2.,\i+1.5) {};}

        {\node[label=above:$h^{q',q'}$] at (.5,0.) {};}
        {\node[label=above:$\cdots$] at (4+.5,0) {};}
        \foreach \i in {1,...,3} {\node[label=above:$h^{q'-\i,q'}$] at (\i+.5,0.) {};}
        
{\draw[black,] (0,0) -- (4,0);}
{\draw[black,] (4,0) -- (4,4);}
{\draw[black,] (0,0) -- (0,4);}
{\draw[black,] (0,4) -- (4,4);}
{\node[label=center:$\rho^{2} \tau^{-1}$] at (1.5,0.5) {};}
{\node[label=center:$\rho^{4} \tau^{-3}$] at (3.5,0.5) {};}
{\node[label=center:$\rho$] at (1.5,1.5) {};}
{\node[label=center:$\rho^{2} \tau^{-1}$] at (2.5,1.5) {};}
{\node[label=center:$\rho^{3} \tau^{-2}$] at (3.5,1.5) {};}
{\node[label=center:$\tau$] at (1.5,2.5) {};}
{\node[label=center:$\rho^{2} \tau^{-1}$] at (3.5,2.5) {};}
{\node[label=center:$\tau$] at (2.5,3.5) {};}
{\draw[black,] (4,4) -- (7,4);}
{\draw[black,] (4,4) -- (4,7);}
{\node[label=center:$\rho^{2} \tau^{-1}$] at (5.5,4.5) {};}
{\node[label=center:$\rho$] at (5.5,5.5) {};}
{\node[label=center:$\rho^{2} \tau^{-1}$] at (6.5,5.5) {};}
{\node[label=center:$\tau$] at (5.5,6.5) {};}
{\draw[red,dashed] (0,0) -- (4,0);}
{\draw[red,dashed] (4,0) -- (4,5);}
{\draw[red,dashed] (0,0) -- (0,5);}
{\draw[red,dashed] (0,5) -- (4,5);}
        \end{tikzpicture}
        \end{center}

\caption{The matrix $(d^{1}_{p,q,0}(i,j))_{i,j\leq q}$ for $p \equiv 0, 3 \bmod 4$. 
The entries for $p\equiv 0\bmod 4$ are shown. 
The dotted red rectangle shows the matrix for $d^{1}_{p,q,w}$ when $q' = 3$.}
\label{fig:KW2outline1}
\end{figure}
The $d^{1}$-differential exiting $E^{1}_{p,q,w}(\KW/2)$ is given by a $(q'+2)\times (q'+1)$ matrix similar to the one in \aref{fig:KW2outline1}.
Since $h^{i,q'}$ is trivial for $i<0$ this matrix is of size $(4N+1)\times 4N$ for some $N$.
\aref{tbl:KW2-diffs} displays all details for $q'=3$.
All other differentials are determined by \aref{tbl:KW2-diffs} and \eqref{equation:KW-diff-periodicity}.
\begin{table}[h!]
\begin{center}

\begin{tabular}{>{$}c<{$}|>{$}c<{$}}
p-w\equiv 0\bmod 4, \begin{pmatrix}
0 & \rho^{2} \tau^{-1} & 0 & \rho^{4} \tau^{-3}\\
0 & \phi & \rho^{2} \tau^{-1} & \rho^{3} \tau^{-2}\\
0 & \tau & 0 & \rho^{2} \tau^{-1}\\
0 & 0 & \tau & \phi + \rho\\
0 & 0 & 0 & 0\end{pmatrix} 
& 
p-w\equiv 1\bmod 4, \begin{pmatrix}
\phi & 0 & \rho^{3} \tau^{-2} & \rho^{4} \tau^{-3}\\
0 & 0 & \rho^{2} \tau^{-1} & 0\\
0 & 0 & \phi + \rho & 0\\
0 & 0 & \tau & 0\\
0 & 0 & 0 & \tau\end{pmatrix}\\
\\ \hline
\\
p-w\equiv 2\bmod 4, \begin{pmatrix}
0 & 0 & 0 & \rho^{4} \tau^{-3}\\
\tau & \phi + \rho & \rho^{2} \tau^{-1} & \rho^{3} \tau^{-2}\\
0 & 0 & 0 & 0\\
0 & 0 & 0 & \phi\\
0 & 0 & 0 & \tau\end{pmatrix} 
& 
p-w\equiv 3\bmod 4, \begin{pmatrix}
\phi + \rho & \rho^{2} \tau^{-1} & \rho^{3} \tau^{-2} & \rho^{4} \tau^{-3}\\
\tau & 0 & \rho^{2} \tau^{-1} & 0\\
0 & \tau & \phi & \rho^{2} \tau^{-1}\\
0 & 0 & 0 & 0\\
0 & 0 & 0 & 0\end{pmatrix}
\end{tabular}
\end{center}
\caption{The matrix for $d^{1}_{p,q,w}$, $N=1$}
\label{tbl:KW2-diffs}
\end{table}
  
Using \aref{fig:KW2outline1} and \aref{tbl:KW2-diffs} it is straightforward to determine the kernel, image, and homology in each column $E^{1}_{p,*,w}(\KW/2)$. 
We summarize the calculations in \aref{tbl:KW2-answer}.
As an example, 
we identify the kernel of multiplication by $(d^{1}_{p,q,0}(i,j))_{i,j}$ when $p\equiv 1 \bmod 4$.
In each column corresponding to $h^{i,q}$ where $q\equiv i \bmod 4$ we obtain $\ker(\phi_{i,q})$ if $i=q$, 
and if $i\neq q$ and for $x \in h^{i,q}$ the element $(\rho^4\tau^{-4},\phi\tau^{-1},1)x$ is in the kernel.

\begin{table}[h!]
\begin{center}
\begin{tabular}{>{$} c <{$} | >{$} l <{$} }
p-w \bmod 4 & \ker (d^{1}_{p,q,w}) \\
\hline
0 & \oplus_{q'-i \equiv 0} h^{i,q'}\oplus \oplus_{q'-i \equiv 3} (\rho^{2}\tau^{-2},\phi + \rho,1)h^{i,q'} \\
1 & \ker (\phi_{q',q'}) \oplus \oplus_{q'-i \equiv 1} h^{i,q'}\oplus \oplus_{q'- i \equiv 0, i < q'} (\rho^4\tau^{-4},\phi\tau^{-1},1)h^{i,q'} \\
2 & \oplus_{q'-i \equiv 1} ((\phi + \rho)\tau^{-1},1)h^{i,q'}\oplus \oplus_{q'-i \equiv 2} (\rho^{2}\tau^{-2},1)h^{i,q'} \\
3 & \oplus_{q'-i \equiv 2} (\rho^{2}\tau^{-2},\phi\tau^{-1},1)h^{i,q'}\oplus \oplus_{q'-i \equiv 3} (\rho^{2}\tau^{-2},1)h^{i,q'} \\
\hline
& \im(d^{1}_{p+1,q-1,w}) \\
\hline
0 & \im(\phi_{q'-1,q'-1})\oplus \oplus_{q'-1-i \equiv 2} (\rho^3\tau^{-2},\rho^{2}\tau^{-1},\phi + \rho,\tau) h^{i,q'-1}\oplus \oplus_{q'-1-i \equiv 3} (\rho^4\tau^{-3},\tau) h^{i,q'-1} \\
1 & \oplus_{q'-1-i \equiv 0} \tau h^{i,q'-1}\oplus \oplus_{q'-1-i \equiv 3} (\rho^4\tau^{-3},\phi,\tau)h^{i,q'-1} \\
2 & \oplus_{q'-1-i \equiv 0} (\phi + \rho,\tau)\tau h^{i,q'-1}\oplus \oplus_{q'-1-i \equiv 1} (\rho^{2}\tau^{-1},\tau) h^{i,q'-1} \\
3 & \oplus_{q'-1-i \equiv 1} (\rho^{2}\tau^{-1},\phi,\tau) h^{i,q'-1}\oplus \oplus_{q'-1-i \equiv 2} (\rho^{2}\tau^{-1},\tau) h^{i,q'-1} \\
\end{tabular}
\end{center}
\caption{The homology of the column $E^{1}_{p,\ast,w}(\KW/2)$}
\label{tbl:KW2-answer}
\end{table}
\end{proof}

\begin{lemma}
\label{lem:phi=rho}
Over a field $F$ of characteristic $\Char(F)\neq 2$ we have $\phi=\rho\in h^{1,1}$.
\end{lemma}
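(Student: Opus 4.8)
The plan is to identify $\phi\in h^{1,1}$ by exploiting the ring map $u\colon\One\to\KW$ — available since $\KW$ is a motivic ring spectrum — together with \aref{lem:<-1>}, which already records that the canonical map $\pi_{0,0}\f_{1}(\One)\to\pi_{0,0}\s_{1}(\One)$ carries $\langle -1\rangle-\langle 1\rangle$ to $\rho$. First I would transport this to $\KW$: the map $u$ induces a morphism of slice towers compatible with the distinguished summands $\Sigma^{1,1}\MZ/2$ of $\s_{1}(\One)$ and $\s_{1}(\KW)$ \cite[Corollary 2.13]{April1}, so under $\pi_{0,0}\f_{1}(\KW)\cong I(F)\to\pi_{0,0}\s_{1}(\KW)$ — the identification coming from \cite[Corollary 6.16]{slices} — the element $\langle -1\rangle-\langle 1\rangle$ again maps to $\rho$, now sitting in the summand $\Sigma^{1,1}\MZ/2$ of $\s_{1}(\KW)$.

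Next I would use the module structure of the slices over $\s_{\ast}(\One)$ recalled around \eqref{equation:h-mult}. Because $\langle -1\rangle-\langle 1\rangle$ has slice filtration at least one, multiplication by it refines to a map of slice towers $\f_{q}(\KW/2)\to\f_{q+1}(\KW/2)$ inducing on slices the multiplication by its slice class $\rho$; since $\rho$ lies in the image of $\pi_{0,0}\f_{1}$ it is a $d^{1}$-cycle, so by the Leibniz rule this multiplication commutes with the slice differential $\dd^{1}(\KW/2)$. The decisive step is then to match this intrinsic $\rho$-multiplication with the coefficient $\phi$ in the component of $\dd^{1}(\KW/2)(q,j)$, $j\equiv 1\bmod 4$, landing in the summand $\Sigma^{q+j-1,q+1}\MZ/2$ of $\s_{q+1}(\KW/2)$. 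For this I would write $\KW/2=\KW\wedge\One/2$ so that on the smash-product decomposition of the slices one has $\dd^{1}(\KW/2)=\dd^{1}(\KW)\wedge\id_{\One/2}$, and then conjugate by the unipotent $\MZ/2$-module automorphism — built from $\Sq^{1}$'s — relating this decomposition to the one pinned down in \aref{convention:kq-to-kt-slices}; tracking the $\rho\Sq^{1}$-term present in $\dd^{1}(\KW)(q,j)$ for $j\equiv 2\bmod 4$ \cite[Theorem 6.3]{slices} through this conjugation produces exactly a $\rho$ in the $\phi$-slot, giving $\phi=\rho$.

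The main obstacle is precisely this last comparison: the wedge decompositions of $\s_{q}(\KW/2)$ are only unique up to $\MZ$-module automorphisms (as the text emphasises right after \eqref{equation:KW/2slices}), and $\phi$ is by construction the residue of $\rho$ after normalising the differential to the form of \aref{thm:KW2-diff}, so one must verify that no admissible automorphism can absorb this $\rho$. An alternative that sidesteps the bookkeeping is a base-change argument: the differential is defined over $\Spec\Z[\tfrac12]$, whence $\phi\in h^{1,1}(\Z[\tfrac12])=\langle -1,2\rangle$; computing the slice spectral sequence for $\KW/2$ over $\FF_{3}$ and over $\FF_{5}$ — where $\vcd\le 1$ and the known values of the homotopy groups of $\KW/2$ force every differential — shows $\phi$ restricts nontrivially over $\FF_{3}$ (where $\rho\neq 0$ but $[2]=0$) and trivially over $\FF_{5}$ (where $\rho=0$ but $[2]\neq 0$), which together with \aref{lem:<-1>} pins $\phi$ down to $\rho$.
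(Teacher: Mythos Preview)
Your proposal offers two routes; they need to be addressed separately.

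\textbf{Approach 1 (structural, via $\rho$-multiplication and $u\colon\One\to\KW$).} This has a genuine gap. The claim that $\dd^{1}(\KW/2)=\dd^{1}(\KW)\wedge\id_{\One/2}$ is not correct: applying the slice functor to the cofiber sequence $\KW\xrightarrow{2}\KW\to\KW/2$ only shows that, in any splitting $\s_{q}(\KW/2)\simeq\s_{q}(\KW)\vee\Sigma^{1,0}\s_{q}(\KW)$, the differential is \emph{upper triangular}, with an a priori undetermined cross-term $\Sigma^{1,0}\s_{q}(\KW)\to\Sigma^{1,0}\s_{q+1}(\KW)$ from odd to even summands. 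This cross-term is precisely where $\phi$ lives, and you do not compute it. The subsequent ``conjugation by $\Sq^{1}$'s'' is too vague: different sections of $\s_{q}(\KW/2)\to\Sigma^{1,0}\s_{q}(\KW)$ change the cross-term, and you do not identify a section for which it vanishes, nor track a specific one. The reference to \aref{lem:<-1>} does not help here, since that lemma concerns the edge map $\f_{1}\to\s_{1}$, not the differential.

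\textbf{Approach 2 (base change to $\Z[\tfrac12]$ and finite fields).} This is sound in outline and genuinely different from the paper's proof, but contains a factual slip: over $\FF_{3}$ one has $2=-1$, so $[2]=\rho\neq 0$, contrary to your parenthetical. Fortunately the argument survives. Writing $\phi=a\rho+b[2]$ in $h^{1,1}(\Z[\tfrac12])$: over $\FF_{5}$ one has $\rho=0$, $[2]\neq 0$, and since $\#(W(\FF_{5})/2)=4$ while $\phi\neq 0$ would force $\#E^{2}_{0,\ast,0}\leq 2$, you get $b=0$; over $\FF_{3}$ one has $\#(W(\FF_{3})/2)=2$, and $\phi=0$ would force $\#E^{2}_{0,\ast,0}=4$ with no room for higher differentials, so $\phi|_{\FF_{3}}\neq 0$, giving $a+b=1$ and hence $a=1$. (If you want the parenthetical to be literally true, replace $\FF_{3}$ by $\FF_{7}$, where $2=3^{2}$ is a square so $[2]=0$ while $\rho\neq 0$.) The reference to \aref{lem:<-1>} at the end is not needed.

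\textbf{Comparison with the paper.} The paper instead works over $\R$ (where the $E^{\infty}$-vs-abutment count forces $\phi=\rho$), then writes $\phi=\rho+\rho'$ over $\Q$ via the injection $h^{1,1}(\Q)\hookrightarrow h^{1,1}(\R)\oplus\bigoplus_{\ell}h^{1,1}(\Q_{\ell})$, and eliminates $\rho'$ by invoking the slice spectral sequence for $\KQ/2$ over $\Q_{\ell}$ together with the known value of $\KQ_{2}$ from \aref{lem:KQ2}. Your approach 2 is more elementary---only $\KW/2$ is needed, and Witt groups of finite fields are simpler than those of $p$-adic fields---while the paper's route has the advantage of staying in characteristic zero until the very last step. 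Both ultimately rely on the differential being defined over the base and determined by restriction to well-chosen fields.
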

\begin{proof}
Over the real numbers, 
$W(\R)\cong\Z$ and $I^{q}=(2^{q})$ via the index.
By \eqref{equation:sesE00} and \eqref{equation:KW2-filt1} this implies 
\begin{equation}
E^{\infty}_{0,q,0}(\KW/2)
\cong
\begin{cases}
\Z/2 & q=0 \\
0 & q>0.
\end{cases}
\end{equation}
Moreover, 
by the proof of \aref{thm:KW2-E2}, 
we have 
\begin{equation}
\label{equation:E2page}
E^{2}_{p,q,0}(\KW/2)
\cong 
\begin{cases}
h^{q,q}/\phi h^{q-1,q-1} & p \equiv 0\bmod 4 \\
\ker(\phi\colon h^{q,q} \to h^{q+1,q+1}) & p \equiv 1\bmod 4 \\
0 & \mathrm{otherwise.}
\end{cases}
\end{equation}
Here $h^{*,*}\cong\FF_{2}[\rho,\tau]$, 
where $\vert\tau\vert=(0,1)$, $\vert\rho\vert=(1,1)$, 
see \aref{tbl:mot}.
It follows that $\phi=\rho\in h^{1,1}$ over $\R$.
Over $\Q$ we compare with the completions $\R$ and $\Q_{\ell}$, 
$\ell$ a prime number, 
via the injective map
\begin{equation}
\label{equation:injective}
h^{1,1}(\Q) \to h^{1,1}(\R)\oplus \bigoplus_{\ell}h^{1,1}(\Q_{\ell}).
\end{equation}
According to \cite[Theorems 2.2, 2.29]{Lam} and \cite[Theorem 6.6]{Scharlau} the Witt-group and the nonzero powers of the fundamental ideal of the $\ell$-adic completion of $\Q$ are given by:
\begin{center}
\begin{tabular}{>{$} l <{$} | >{$} l <{$}   | >{$} l <{$}  | >{$} l <{$} }
\Q_{\ell} & W(\Q_{\ell}) & I  & I^{2} \\ \hline
\ell=2 & \Z/8\oplus(\Z/2)^{2} & \Z/4\oplus(\Z/2)^{2} &  \Z/2 \\
\ell \equiv 1 \bmod 4 & (\Z/2)^4 & (\Z/2)^{3} & \Z/2 \\
\ell \equiv 3 \bmod 4 & (\Z/4)^{2} & \Z/4\oplus\Z/2 & \Z/2
\end{tabular}
\captionof{table}{The Witt-group of $\Q_{\ell}$ and nonzero powers of the fundamental ideal}
\label{tbl:Witt}
\label{tbl:wittQ}
\end{center}
From \eqref{equation:injective} and the calculation over $\R$ it follows that $\phi=\rho+\rho'$ over $\Q$.  
Over the $\ell$-adic completions we identify the groups in \eqref{equation:E2page} with the corresponding mod $2$ Milnor $K$-groups given in \cite[Example 1.7]{Milnor}, \cite[\S3.3]{MR3073932}:
\begin{center}
\begin{tabular}{>{$} l <{$} | >{$} l <{$} }
\Q_{\ell} & k^{\Mil}_{*}  \\ \hline
\ell=2 & \FF_{2}[\rho,x,y]/(x^{2},y^{2},\rho^{2}+xy,\rho x, \rho y) \\
\ell \equiv 1 \bmod 4 & \FF_{2}[u,\ell]/(u^{2},\ell^{2})  \\
\ell \equiv 3 \bmod 4 & \FF_{2}[\rho,\ell]/(\rho^{2},\ell(\rho-\ell))
\end{tabular}
\captionof{table}{The mod $2$ Milnor $K$-groups of $\Q_{\ell}$}
\label{tbl:MilnorK}
\end{center}
Here $u$ is a nonsquare in the Teichm{\"u}ller lift $\FF_{\ell}^{\times}\subseteq\Q_{\ell}^{\times}$.
For $\ell=2$ we write $x$ and $y$ for the square classes of $2$ and $5$, 
respectively.

If $\ell \equiv 1 \bmod 4$ we have $\rho=0$ over $\Q_{\ell}$. 
By \eqref{equation:KW2-filt1} we obtain $E^{\infty}_{0,1,0}(\KW/2)\cong (\Z/2)^{2}$. 
Hence the image of $\rho'\colon h^{0,0}\to h^{1,1}$ is trivial, 
i.e., 
$\rho'=0$ over $\Q_{\ell}$ when $\ell \equiv 1 \bmod 4$.
The same analysis is inconclusive over $\Q_{2}$ and $\Q_{\ell}$ for $\ell \equiv 3 \bmod 4$ in the sense that we are unable to determine the value of $\rho'$ by comparing with 
\eqref{equation:sesE00}, \eqref{equation:KW2-filt1}, and \eqref{equation:KW2-filt2}.
The filtration of the abutment is the same regardless of the value of $\rho'$ 
(that is, up to isomorphism of each filtration quotient).

Next we show $\rho'=0$ over $\Q$.
Over $\Q_{\ell}$ the slice spectral sequence for $\KQ/2$ converges strongly to $\pi_{*,*}(\KQ/2)$ by \aref{theorem:best-conv}.
The slice $\dd^{1}$-differentials are identified in terms of those of $\KW/2$ in \aref{thm:diff-kq-mod2}, 
with the caveat that one should replace $\rho$ by $\rho+\rho'$ (before concluding $\rho'=0$).
Its $E^{2}$-page with $\rho' = 0$ is identified in \aref{thm:KQ2-E2}.
To show $\rho' = 0$ we consider weight zero and $E^{2}_{2,\ast,0}(\KQ/2)$.
The proof of \aref{thm:KQ2-E2} shows $E^{2}_{2,i,0}(\KQ/2)=E^{2}_{2,i,0}(\KW/2)$ if $i \geq 4$,
and $E^{2}_{2,i,0}(\KQ/2)=0$ if $i\leq 0$.
We have $E^{2}_{2,i,0}(\KQ/2) \neq 0$ only if $i=1,2,3$ by \aref{thm:KW2-E2}.
Calculations as in \aref{thm:KQ2-E2} show
\[
E^{2}_{2,0,0}(\KQ/2) \cong 0,\;
E^{2}_{2,1,0}(\KQ/2) \cong \ker(\rho_{0,1}'),\;
E^{2}_{2,2,0}(\KQ/2) \cong h^{1,2} \oplus h^{0,2},\;
E^{2}_{2,3,0}(\KQ/2) \cong  h^{2,3}/(\rho\rho').
\]
If $\rho' \neq 0$ this filtration is too small to produce $\KQ_{2}(\Q_{\ell};\Z/2)$,
see \aref{lem:KQ2}.
Thus we conclude $\rho'=0$.

Via \eqref{equation:injective} we conclude the claim for $\Q$ and hence for any field of characteristic zero using base change.
For fields of positive odd characteristic one may proceed as in the proof of \cite[Lemma 5.1]{slices}.
\end{proof}

Next we calculate the $E^2$-page of the slice spectral sequence for $\KQ/2$ over fields and rings of $\mathcal{S}$-integers in number fields.
Specializing to fields with $\vcd(F) \leq 2$ or $\OFS$ the spectral sequence collapses, and we deduce a calculation of the mod $2$ hermitian $K$-groups up to extensions.

The slices \eqref{equation:hermitianktheoryslices} of $\KQ$ were identified in \cite[Theorem 5.18]{slices}.
It follows there is an isomorphism 
\begin{equation}
\s_{q}(\KQ/2) 
\simeq
\bigvee_{j \leq q}\Sigma^{q+j,q}\MZ/2.
\label{equation:KQ2-slices}
\end{equation}
\begin{remark}
We shall refer to the summands $\Sigma^{q+j,q}\MZ/2$ of $\s_{q}(\KQ/2)$ as even or odd when $j$ is even respectively odd.
These are summands arising from $\s_{q}(\KQ)$ or $\Sigma^{1,0}\s_{q}(\KQ)$ in the cofiber sequence defining $\s_{q}(\KQ/2)$.
As it turns out it is often easier to compute with the even summands.
\end{remark}

\begin{theorem}
\label{thm:diff-kq-mod2}
The restriction of the slice $\dd^{1}$-differential to the summand $\Sigma^{q+j,q}\MZ/2$ of $\s_{q}(\KQ/2)$ in \eqref{equation:KQ2-slices} is given by
\begin{align*}
\dd^{1}(\KQ/2)(q,j) 
& =  
\begin{cases}
(\Sq^{3}\Sq^{1},0,\Sq^{2},0,0) & q>j\equiv 0\bmod 4 \\
(\Sq^{3}\Sq^{1},\Sq^{2}\Sq^{1}+\Sq^{3},\Sq^{2},\rho+\tau\Sq^{1},0) & q>j\equiv 1\bmod 4 \\
(\Sq^{3}\Sq^{1},0,\Sq^{2}+\rho\Sq^{1},0,\tau) & q>j\equiv 2\bmod 4 \\
(\Sq^{3}\Sq^{1},\Sq^{2}\Sq^{1}+\Sq^{3},\Sq^{2}+\rho\Sq^{1},\tau\Sq^{1},\tau) & q>j\equiv 3\bmod 4, 
\end{cases} \\
\dd^{1}(\KQ/2)(q,q) & =  
\begin{cases}
(0,\Sq^{2}\Sq^{1},\Sq^{2}+\rho\Sq^{1},0,0) & q\equiv 0\bmod 4 \\
(0,\Sq^{2}\Sq^{1}+\Sq^{3},\Sq^{2},\rho+\tau\Sq^{1},0) & q\equiv 1\bmod 4 \\
(0,\Sq^{2}\Sq^{1},\Sq^{2}+\rho\Sq^{1},\tau\Sq^{1},\tau) & q\equiv 2\bmod 4 \\
(0,\Sq^{2}\Sq^{1}+\Sq^{3},\Sq^{2}+\rho\Sq^{1},\tau\Sq^{1},\tau) & q\equiv 3\bmod 4. 
\end{cases} 
\end{align*}
Here the $i$th component of $\dd^{1}(\KQ/2)(q,j)$ is a map $\Sigma^{q+j,q}\MZ/2\to\Sigma^{q+j+i,q+1}\MZ/2$. 
\end{theorem}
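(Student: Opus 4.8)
The plan is to deduce both formulas from three ingredients already in place: the integral slice $\dd^{1}$-differential of $\KQ$ computed in \cite[\S5]{slices}, the differential $\dd^{1}(\KW/2)$ of \aref{thm:KW2-diff}, and the two structural maps packaged in \aref{convention:kq-to-kt-slices} --- the cofiber sequence $\KQ\xrightarrow{2}\KQ\to\KQ/2$, which identifies the slice tower of $\KQ/2$ with the mod-$2$ reduction of that of $\KQ$, and the $\eta$-inverting map $\KQ/2\to\KW/2$. As in the proof of \aref{thm:KW2-diff}, \aref{lem:steenrod-alg} restricts each matrix entry of $\dd^{1}(\KQ/2)$ to be a prescribed Steenrod operation together with coefficients in $h^{0,0}$ and $h^{1,1}$, so the real content is to pin down those coefficients and to see which entries must vanish for lack of a target summand.

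First I would treat the summands $\Sigma^{q+j,q}\MZ/2$ with $j<q$. By \aref{convention:kq-to-kt-slices}(2) the comparison map $\s_{q}(\KQ/2)\to\s_{q}(\KW/2)$ is the identity on each such summand and underlies a map of slice towers, hence it intertwines the slice $\dd^{1}$-differentials. For $j<q$ all five target summands appearing in $\dd^{1}(\KW/2)(q,j)$ are already present in $\Sigma^{1,0}\s_{q+1}(\KQ/2)$, so the commuting square forces $\dd^{1}(\KQ/2)(q,j)$ to coincide with $\dd^{1}(\KW/2)(q,j)$, with the coefficient $\phi$ replaced by its value $\rho$ from \aref{lem:phi=rho}. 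This gives the first block of the theorem.

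Next I would handle the top summand $j=q$, which for $q$ even is the mod-$2$ reduction of the integral summand $\Sigma^{2q,q}\MZ$ of $\s_{q}(\KQ)$ and for $q$ odd is the shift $\Sigma^{1,0}$ of the top $\MZ/2$-summand of $\s_{q}(\KQ)$. Since the slice tower of $\KQ/2$ is the mod-$2$ reduction of that of $\KQ$, the restriction of $\dd^{1}(\KQ/2)$ to this summand is the reduction of $\dd^{1}(\KQ)$ on the corresponding integral summand; as any map $\MZ\to\Sigma^{a,b}\MZ/2$ with $a>0$ factors through $\MZ\to\MZ/2$, this is once more a matrix of Steenrod operations, which I read off from \cite[\S5]{slices}. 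Its leading entry vanishes for degree reasons --- the internal degree $2q+4$ it would occupy exceeds the range of weight-$(q+1)$ summands present in $\s_{q+1}(\KQ/2)$ --- which accounts for the absence of the $\Sq^{3}\Sq^{1}$ that appears in $\dd^{1}(\KW/2)$. The remaining $h^{0,0}$- and $h^{1,1}$-coefficients are pinned down exactly as in \aref{thm:KW2-diff}: the relation $\dd^{1}\circ\dd^{1}=0$ applied to the explicit matrices eliminates most of them, the commuting square relating $\dd^{1}(\KQ/2)(q)$ to $\dd^{1}(\KW/2)(q)$ --- whose left vertical map on the top summand is $(\Sq^{1},\id)$ when $q$ is even --- fixes the rest, and the surviving $h^{1,1}$-entry is $\rho$ by \aref{lem:phi=rho}. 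A base-change argument as in \cite[Lemma 5.1]{slices} transports the identities from a convenient prime field to an arbitrary $F$ with $\Char(F)\neq 2$ and thence to $\OFS$.

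The main obstacle will be the top-summand computation across the four residues of $q\bmod 4$: unlike the interior summands it does not descend from $\KW$, it genuinely mixes the integral slice of $\KQ$ with its mod-$2$ reduction, and reconciling the two constraints --- nilpotence of $\dd^{1}$ and compatibility with $\dd^{1}(\KW/2)$ --- is delicate precisely because of the $\Sq^{1}$ in the comparison map for even $q$. Everything else reduces to routine matrix manipulation with the Steenrod operations displayed in the statement.
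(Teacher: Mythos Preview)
Your treatment of $j<q$ agrees with the paper's. For $j=q$ the paper takes a shorter path than you propose: it never invokes the integral $\dd^{1}(\KQ)$, and it needs neither the nilpotence constraint $\dd^{1}\circ\dd^{1}=0$ nor a second appeal to \aref{lem:phi=rho}. Everything is read off directly from $\dd^{1}(\KW/2)$ via the commuting square \eqref{equation:d1KQ2KW2}. For $q$ even the upper horizontal map on the top summand is $(\Sq^{1},\id)$ while the lower horizontal map is a split inclusion (since $q+1$ is odd), so commutativity immediately yields the closed formula
\[
\dd^{1}(\KQ/2)(q,q)=\dd^{1}(\KW/2)(q,q)+\dd^{1}(\KW/2)(q,q+1)\circ\Sq^{1},
\]
and the stated $5$-tuple drops out of the Adem relations. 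For $q$ odd the roles reverse --- the upper map is the identity on $\Sigma^{2q,q}\MZ/2$ and it is the lower map that carries the $(\Sq^{1},\id)$ --- so $\dd^{1}(\KQ/2)(q,q)$ is simply $\dd^{1}(\KW/2)(q,q)$ with its leading $\Sq^{3}\Sq^{1}$ deleted, that entry targeting a summand absent from $\s_{q+1}(\KQ/2)$. Since $\phi=\rho$ is already built into $\dd^{1}(\KW/2)$, no further unknowns remain.

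Your assertion that the top-summand differential ``is the reduction of $\dd^{1}(\KQ)$'' is also imprecise. For $q$ even, precomposing with $\pr\colon\MZ\to\MZ/2$ does not pin down a Steenrod operation uniquely, and it says nothing about the components of $\dd^{1}(\KQ/2)(q,q)$ landing in the \emph{odd} summands of $\Sigma^{1,0}\s_{q+1}(\KQ/2)$ --- in particular the nonzero $\Sq^{2}\Sq^{1}$-entry for $q\equiv 0\bmod 4$ is invisible to $\dd^{1}(\KQ)$. You can recover from this with the extra constraints you list, but at that point the $\KW/2$-comparison alone already does the whole job.
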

\begin{proof}
This follows from \aref{thm:KW2-diff} by applying \aref{convention:kq-to-kt-slices} to the commutative diagram
\begin{equation}
\label{equation:d1KQ2KW2}
\begin{tikzcd}
\s_{q}(\KQ/2) \ar[r] \ar[d, "\dd^{1}(\KQ/2)"] & \s_{q}(\KW/2) \ar[d, "\dd^{1}(\KW/2)"] \\
\Sigma^{1,0} \s_{q+1}(\KQ/2) \ar[r] & \Sigma^{1,0}\s_{q+1}(\KW/2).
\end{tikzcd}
\end{equation}

Suppose $q$ is even.
When $q<j$ it is immediate that $\dd^{1}(\KQ/2)(q,j)=\dd^{1}(\KW/2)(q,j)$.
When $q=j$ the top horizontal map in \eqref{equation:d1KQ2KW2} equals $(\Sq^{1},\id)$, while the lower horizontal map is an inclusion.
Hence we obtain $\dd^{1}(\KQ/2)(q,q)=\dd^{1}(\KW/2)(q,q)+\dd^{1}(\KW/2)(q,q+1)\Sq^{1}$;
e.g., 
if $q \equiv 0\bmod 4$,
\begin{align*}
(0,\Sq^{3}\Sq^{1},0,\Sq^{2},0,0) &+ 
(\Sq^{3}\Sq^{1},\Sq^{2}\Sq^{1}+\Sq^{3},\Sq^{2},\rho+\tau\Sq^{1},0,0)\Sq^{1} \\
& = 
(0,0,\Sq^{2}\Sq^{1},\Sq^{2} + \rho\Sq^{1},0,0).
\end{align*}

Suppose $q$ is odd.
When $j<q-1$ it is immediate that $\dd^{1}(\KQ/2)(q,j)=\dd^{1}(\KW/2)(q,j)$.
Note that $\dd^{1}(\KQ/2)(q,q-1)$ takes the asserted form since $\Sq^3\Sq^{1}\Sq^{1}=0$.
If $j=q$, 
then $\Sigma^{2q+2,q+1}\MZ/2$ maps by the identity under the lower horizontal map in \eqref{equation:d1KQ2KW2}.
Thus $\dd^{1}(\KQ/2)(q,q)$ agrees with $\dd^{1}(\KW/2)(q,q)$ except for on the summand $\Sigma^{2q+3,q+1}\MZ/2$ of $\s_{q+1}(\KW/2)$ (this is not a summand of $\s_{q+1}(\KQ/2)$).
Finally, 
note that $\s_{q}(\KQ/2)\to\s_{q}(\KW/2)$ is a split monomorphism.
\end{proof}

\begin{theorem}
\label{thm:KQ2-E2}
The groups $E^{2}_{p,q,w}(\KQ/2)$ over fields $F$ of characteristic different than $2$ and rings of $\mathcal{S}$-integers in number fields are given by \aref{tbl:KQ2-answer1} and \aref{tbl:KQ2-answer4}.
(In the tables $p-w$ and $q-p+w$ are congruence classes modulo $4$ with the exception of $q-p+w$ in \aref{tbl:KQ2-answer4}, and $a=2q-p$, $q'=q-w$):

\begin{center}
\captionof{table}{The group $E^{2}_{p,q,w}(\KQ/2)$ is trivial if $p/2>q$ and for $q+w \leq p$ it is given by:}

\begin{tabular}{>{$}c <{$}| >{$}c <{$}| >{$}c <{$}}
$\backslashbox{$p - w$}{$q - p + w$}$ & 0 & 1\\
\hline 0 & h^{a, q'}/\rho^5 & \ker(\rho^2_{a, q'}) \oplus h^{a - 1, q'}/\rho^2\\
1 & \ker(\rho_{a, q'}) \oplus h^{a - 1, q'}/\rho^2 & \ker(\rho^2_{a - 1, q'}) \oplus h^{a - 2, q'}/\rho^2\\
2 & h^{a - 1, q'}/\rho^3 \oplus h^{a - 2, q'}/\rho & h^{a - 2, q'}/\rho^3\\
3 & h^{a - 2, q'}/\rho^2 & \ker(\rho^3_{a, q'})\\
\hline  & 2 & 3\\
\hline 0 & \ker(\rho^2_{a - 1, q'}) \oplus h^{a - 2, q'}/\rho & h^{a - 2, q'}/\rho^2\\
1 & \ker(\rho^5_{a - 2, q'}) & \ker(\rho^2_{a, q'})\\
2 & \ker(\rho^2_{a, q'}) & h^{a, q'}/\rho^3 \oplus \ker(\rho^2_{a - 1, q'})\\
3 & \ker(\rho_{a, q'}) \oplus \ker(\rho^3_{a - 1, q'}) & h^{a - 1, q'}/\rho^2 \oplus \ker(\rho^3_{a - 2, q'})
\end{tabular}

\label{tbl:KQ2-answer1}
\end{center}

\begin{center}
\captionof{table}{For $q+w \geq p+1$ the group $E^{2}_{p,q,w}(\KQ/2)$ is given by:}

\begin{tabular}{>{$}c <{$}| >{$}c <{$}| >{$}c <{$}}
$\backslashbox{$p - w$}{$q - p + w$}$ & 1 & > 1\\
\hline 0 & h^{a - 1, q'}/\rho^2 & h^{q', q'}/\rho\\
1 & \ker(\rho_{a - 1, q'}) \oplus h^{a - 2, q'}/\rho^2 & \ker(\rho_{q', q'})\\
2 & h^{a - 2, q'}/\rho^3 & 0\\
3 & 0 & 0
\end{tabular}

\label{tbl:KQ2-answer4}
\end{center}
\end{theorem}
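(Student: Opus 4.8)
The plan is to compute, for each triple $(p,q,w)$, the homology of the three-term complex
\[
E^{1}_{p+1,q-1,w}(\KQ/2) \xrightarrow{d^{1}} E^{1}_{p,q,w}(\KQ/2) \xrightarrow{d^{1}} E^{1}_{p-1,q+1,w}(\KQ/2),
\]
in direct analogy with the proof of \aref{thm:KW2-E2}. First I would unwind the $E^{1}$-page from the slice decomposition \eqref{equation:KQ2-slices}: applying $\pi_{p,w}$ to $\s_{q}(\KQ/2)\simeq\bigvee_{j\leq q}\Sigma^{q+j,q}\MZ/2$ gives $E^{1}_{p,q,w}(\KQ/2)\cong\bigoplus_{0\leq\ell\leq\min(q',\,2q-p)}h^{\ell,q'}$ with $q'=q-w$, the summand $h^{\ell,q'}$ sitting at the internal index $j=\ell+p-q$; thus the underlying group is the $\KW/2$-group of \aref{thm:KW2-E2} \emph{truncated at $j=q$}, i.e.\ one keeps only the indices not exceeding the slice number. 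The next step is to translate the differential: by \aref{thm:diff-kq-mod2} the $d^{1}$ restricted to a summand with $j<q$ is the generic $\KW/2$-differential $\dd^{1}(\KW/2)(q,j)$, while on the top summand $j=q$ it is the special differential $\dd^{1}(\KQ/2)(q,q)$ recorded there. Via the action of the motivic Steenrod algebra on $h^{\ast,\ast}$ (\aref{tbl:Sq-action}), each quintuple entry becomes multiplication by a monomial $\rho^{a}\tau^{b}$, so $d^{1}$ is the monomial matrix of \aref{fig:KW2outline1} and \aref{tbl:KW2-diffs}, triangularly truncated at $j=q$ and with its near-diagonal entries replaced according to the $\dd^{1}(\KQ/2)(q,q)$ formulas; here I would also invoke \aref{lem:phi=rho} to put the entries in final form with $\rho$ rather than $\phi$.

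Then I would compute the homology column by column. Because the generic differentials are $4$-periodic in $j$ and the modification is concentrated at the diagonal $j=r$ of each slice $r$, the answer depends on $p-w\bmod 4$, on $q-p+w\bmod 4$, and — crucially — on the sign of $m:=q-p+w$, which governs how far the diagonal truncation reaches into the relevant part of the complex. When $m\geq 2$ neither the truncation nor any special differential is visible, so one simply reads off the $\KW/2$ answer of \aref{thm:KW2-E2}, producing the ``$>1$'' column of \aref{tbl:KQ2-answer4}. When $m=1$ the source column $E^{1}_{p+1,q-1,w}$ is truncated by one term and its top differential becomes the special $\dd^{1}(\KQ/2)(q-1,q-1)$; when $m\leq 0$ the slice-$q$ column is itself truncated and $\dd^{1}(\KQ/2)(q,q)$ acts on its top summand, with further special differentials at slices $q-1,q-2,\dots$ entering as $m$ decreases (until $2q-p<0$, where the complex is empty and $E^{2}_{p,q,w}(\KQ/2)=0$, matching ``trivial if $p/2>q$''). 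In each of these finitely many cases the kernel, image, and homology of the truncated monomial matrix are elementary linear algebra over $h^{\ast,\ast}$, carried out exactly as in \aref{tbl:KW2-answer}; I would organize it through the ``even vs.\ odd summand'' subcomplexes flagged after \eqref{equation:KQ2-slices} (the even summands are the easy ones, and for $q$ even the top summand maps to $\KW/2$ by $(\Sq^{1},\id)$), and use the $\s_{0}(\One)$-linearity of $\rho$-multiplication — via the pairing \eqref{equation:h-mult} and the triviality of differentials into or out of the $h^{m,m}$-summands — to identify the homology with the cokernels $h^{\bullet,\bullet}/\rho^{i}$ and kernels $\ker(\rho^{i}_{\bullet,\bullet})$ displayed in \aref{tbl:KQ2-answer1} and \aref{tbl:KQ2-answer4}. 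As an independent consistency check one may compare against the $\KGL/2$ computation of \aref{thm:KGL2-E2} through the Wood cofiber sequence, which on slices yields $\Sigma^{1,1}\s_{q-1}(\KQ/2)\xrightarrow{\eta}\s_{q}(\KQ/2)\to\Sigma^{2q,q}\MZ/2$.

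Finally I would treat the ring of $\mathcal{S}$-integers. The only deviation from the field case occurs in the slices $q=w+1$ and $q=w+2$ (weights $q'=1,2$), where $h^{1,1}$, $h^{2,1}\cong\Pic(\OFS)/2$, $h^{2,2}$ and the $\tau$-multiplication $h^{2,1}\xrightarrow{\tau}h^{2,2}$ behave as in the $\OFS$-part of \aref{thm:KW2-E2}, in \aref{thm:KW-OFS}, \aref{lem:pic-tau} and \aref{thm:hOFS}. Re-running the column computations in those two weights with these exceptional inputs, and using the injections $h^{p,q}(\OFS)\hookrightarrow h^{p,q}(F)$ together with comparison along the generic point $j\colon\Spec F\to\Spec\OFS$ to control the $\rho$-$\tau$ maps, shows that the entries of \aref{tbl:KQ2-answer1} and \aref{tbl:KQ2-answer4}, read with $h^{\bullet,\bullet}$ the mod $2$ motivic cohomology of $\OFS$, remain valid; all other weights go through verbatim. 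The main obstacle is the sheer volume of the case analysis — many quadruples $(p-w\bmod 4,\ q-p+w\bmod 4,\ \operatorname{sign}m,\ \text{weight})$, each requiring the homology of a truncated monomial matrix — together with the bookkeeping needed to keep the internal indices consistent and to pin down precisely how the special near-diagonal differentials $\dd^{1}(\KQ/2)(r,r)$ interact with the truncation; the ``even/odd'' decomposition and systematic comparison with the already-established $\KW/2$ computation are what make this manageable.
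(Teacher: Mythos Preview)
Your proposal is essentially correct and follows the paper's approach: identify $E^{1}_{p,q,w}(\KQ/2)\cong\bigoplus_{i=0}^{\min\{2q-p,\,q'\}}h^{i,q'}$ from \eqref{equation:KQ2-slices}, write $d^{1}$ as a monomial matrix via \aref{thm:diff-kq-mod2} and \aref{tbl:Sq-action}, observe that for $q-p+w>1$ the answer coincides with the $\KW/2$ computation of \aref{thm:KW2-E2}, and handle the remaining cases $q-p+w\leq 1$ by explicit linear algebra.

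The one organisational difference worth noting is how the matrix is decomposed. The paper does \emph{not} use the even/odd subcomplex idea you propose; instead it splits the matrix for $d^{1}_{p,q,w}$ into a ``top block'' $M^{1}_{p,q,w}$ (the rows indexed by $i\in[a',a]$ near the truncation, where the special differentials $\dd^{1}(\KQ/2)(q,q)$ live) and a ``periodic block'' $M^{2}_{p,q,w}$ built from copies of the $\KW/2$ matrices in \aref{tbl:KW2-diffs}. It then checks that $\ker d^{1}\cong\ker M^{1}\oplus\ker M^{2}$ and $\im d^{1}\cong\im M^{1}\oplus\im M^{2}$ by inspection, so that the $M^{2}$ contribution is read directly off \aref{tbl:KW2-answer} and only the small $M^{1}$ piece (tabulated in \aref{tbl:KQ2-diffs} and \aref{tbl:KQ2-kerim}) needs fresh work. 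Your even/odd scheme is less clean because the odd summands do not form a subcomplex (odd $j$ maps to both even and odd targets under $\dd^{1}(\KW/2)$), so you would get a filtration rather than a direct sum; it can still be pushed through, but the $M^{1}/M^{2}$ split isolates the deviation from $\KW/2$ more efficiently. The Wood--$\KGL/2$ consistency check you mention is not used in the paper's proof.
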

\begin{proof}
Taking homotopy groups in \eqref{equation:KQ2-slices} yields
\[
E^1_{p,q,w}(\KQ/2)
\cong
\bigoplus_{i=0}^{\min\{2q - p, q'\}} h^{i, q - w}.
\]

If $q \geq p-w + 1$ the canonically induced map $E^1_{p,q,w}(\KQ/2) \to E^1_{p,q,w}(\KW/2)$ is an isomorphism.
Moreover, 
the entering and exiting $d^{1}$-differentials for $\KQ/2$ and $\KW/2$ coincide when $q>p-w+1$,
see \aref{thm:diff-kq-mod2}.
Thus $E^2_{p,q,w}(\KQ/2) \cong E^2_{p,q,w}(\KW/2)$ in this region.
By \aref{thm:KW2-E2} we obtain the second column in \aref{tbl:KQ2-answer4}.

If $q \leq p-w$ we proceed as in \aref{thm:KW2-E2} by writing $d^{1}_{p,q,w}$ as a matrix $(d^{1}_{p,q,w}(i,j))_{i,j}$, 
where
\[
d^{1}_{p,q,w}(i,j)\colon h^{i,q'} \to h^{j,q'+1}.
\]
To determine this matrix we combine \aref{thm:diff-kq-mod2} with \aref{tbl:Sq-action}.
Set $a= 2q-p$, $a'=a-b$, and 
\begin{align*}
b  &= (q-p \bmod 4) \in \{0,1,2,3\} \\
A  &= 2(q-1)-(p+1) = a-3 \\
B  &= ((q-1)-(p+1)\bmod 4) = (b+2 \bmod 4) \\
A' &= A-B = a-3-(b+2\bmod 4).
\end{align*}
Note that $h^{a,q'}$ is the top summand of $E^{1}_{p,q,w}(\KQ/2)$, and likewise for $h^{A',q'-1}$ and $E^{1}_{p+1,q-1,w}(\KQ/2)$.
The entry $d^{1}_{p,q,w}(i,j)$ is possibly nontrivial only when $(i,j) \in [0,b+4n)\times[0,b + 4n + 1)$, 
where $n \geq \lceil (2q-p-b)/4 \rceil$.
Thus $(d^{1}_{p,q,w}(i,j))_{i,j}$ is a $(b+4n)\times(b + 4n + 1)$-matrix.
To simplify we consider its submatrices $M^{1}_{p,q} := (d^{1}_{p,q}(i,j))_{i,j}$ for $i \in [a',a]$ given in  \aref{tbl:KQ2-diffs},
and $M^{2}_{p,q,w} := (d^{1}_{p,q,w}(i,j))_{i,j}$ for $i \in [0,a')$.
Here $M^{1}_{p,q,w}$ is a $(b+1)\times (b+4n+1)$-matrix, cf.~\aref{tbl:KQ2-diffs}, 
and $M^{2}_{p,q,w}$ is the block matrix obtained by inserting one of the matrices in \aref{tbl:KW2-diffs} along its diagonal and zero entries elsewhere.
In the first block of $M^{2}_{p,q}$ we remove the first column in the corresponding matrix from \aref{tbl:KW2-diffs}.
That is, $M^2_{p,q,w}(i, j) = M_{p-w \bmod 4}(i \bmod 4,j+1 \bmod 4)$ if $\vert i - (j+1) \vert \leq 4$ and $0$ otherwise.
Here $M_{p-w}$ is the $(p-w \bmod 4)$th matrix in \aref{tbl:KW2-diffs}.

It is helpful to note the equality $d^1_{p,q,0} = d^1_{p+w,q,w}$.
Indeed,
we have
\[
d^1_{p+w,q,w}:\pi_{p+w,w}(\bigvee_{j\leq q}\Sigma^{q+j,q}\MZ/2) \to E^1_{p-1,q+1,w},
\]
and
\[
\pi_{p+w,w}(\bigvee_{j\leq q}\Sigma^{q+j,q}\MZ/2)
= 
\bigoplus_{j \leq q} h^{q + j - p -w, q - w}
= 
\bigoplus_{j \leq q} \tau^{j - p} h^{q + j - p-w, q + j - p-w}.
\]
Here the $d^1$-differential depends only on the powers of $\tau$, and the integers $q$ and $j$.
The decompositions 
$$
\ker(d^{1}_{p,q,w})\cong\ker(M^{1}_{p,q,w})\oplus\ker(M^{2}_{p,q,w}),
$$ and 
$$
\im(d^{1}_{p,q,w})\cong\im(M^{1}_{p,q,w})\oplus\im(M^{2}_{p,q,w})
$$ 
follow by inspection.
To determine the $E^{2}$-page we identify the kernels, images and homologies for all the matrices $M^{1}_{p,q,w}$ in \aref{tbl:KQ2-diffs}, 
and likewise for $M^{2}_{p,q,w}$. 
A part of the calculation for $M^{2}_{p,q,w}$ was carried out in \aref{thm:KW2-E2}, see \aref{tbl:KW2-answer}.
The kernels and images together with the values of $a'$ and $A'$ are determined by \aref{tbl:KQ2-kerim}.
Using this data we deduce \aref{tbl:KQ2-answer1}.
If $q=p-w+1$, 
the entering $d^{1}$-differential is given by \aref{tbl:KQ2-diffs} and the exiting $d^{1}$-differential by \aref{tbl:KW2-diffs}.
By combining \aref{tbl:KW2-answer} (or \aref{tbl:KQ2-kerim} for $q-p+w\equiv 0\bmod 4$) with \aref{tbl:KQ2-kerim} for $q-p+w\equiv 1\bmod 4$, 
we deduce the first column in \aref{tbl:KQ2-answer4}.
The $E^2$-page in weight $w=0, 1, 2, 3$ is shown in \aref{fig:KQ2-E2}, \aref{fig:KQE2_{1}}, \aref{fig:KQE2_{2}}, and \aref{fig:KQE2_3}, respectively.
\end{proof}

Combining the ring structure on $\KQ$ in \cite{PW} with \eqref{equation:R-pairing} we obtain a pairing of spectral sequences
\begin{equation}
E^{r}_{p,q,w}(\KQ/4) \otimes E^{r}_{p',q',w'}(\KQ/2) \to E^{r}_{p+p',q+q',w+w'}(\KQ/2).
\label{equation:KQ2-pairing}
\end{equation}
The proof of \aref{lem:KGL2-E284} shows the group $E^{r}_{8,4,0}(\KQ/4)$ is isomorphic to $H^{0,4}(F;\Z/4)$, 
generated by $\widetilde{\tau}^4$ for all $r\geq 1$.
Here $\widetilde{\tau}^4$ commutes with the $d^{1}$-differential and it defines an $(8,4,0)$-periodicity element on the $E^{r}$-pages under the paring \eqref{equation:KQ2-pairing}.
The generator $\widetilde{\tau}^4 \in H^{0,4}(F;\Z/4)$ acts as $\tau^4 \in h^{0,4}$, 
and we obtain:
\begin{lemma}
\label{lem:KQ2-E284}
There is an isomorphism $E^{\infty}_{8,4,0}(\KQ/4) \cong H^{0,4}(F; \Z/4)$.
\end{lemma}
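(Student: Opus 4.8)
The plan is to run the argument of \aref{lem:KGL2-E284} with $\KGL$ replaced by $\KQ$, the one difference being that here $\widetilde{\tau}$ itself is not visible in the $\Z/4$-coefficient spectral sequence and one has to work with its square. First I would pin down the $E^1$-term. Applying the slice functors to the cofiber sequence $\KQ \xrightarrow{4} \KQ \to \KQ/4$ and using \eqref{equation:hermitianktheoryslices} gives $\s_4(\KQ/4) \simeq \Sigma^{8,4}\MZ/4 \vee \bigvee_{i<2}\Sigma^{2i+4,4}(\MZ/2 \vee \Sigma^{1,0}\MZ/2)$; taking $\pi_{8,0}$, every summand other than $\Sigma^{8,4}\MZ/4$ contributes a motivic cohomology group in strictly negative first degree and hence vanishes, so $E^1_{8,4,0}(\KQ/4) \cong \pi_{8,0}\Sigma^{8,4}\MZ/4 \cong H^{0,4}(F;\Z/4)$. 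The same computation for $q = 2$ shows $E^1_{4,2,0}(\KQ/4) \cong \pi_{4,0}\Sigma^{4,2}\MZ/4 \cong H^{0,2}(F;\Z/4)$, and under the multiplicative structure of the slice spectral sequence the generator of $E^1_{8,4,0}(\KQ/4)$ is $\widetilde{\tau}^4 = (\widetilde{\tau}^2)^2$, where $\widetilde{\tau}^2$ generates $E^1_{4,2,0}(\KQ/4)$ and $\widetilde{\tau}$ is the generator of $H^{0,1}(F;\Z/4)$.

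Next I would show $\widetilde{\tau}^4$ is a permanent cycle. Since $\KQ$ is a motivic ring spectrum \cite{PW} and the mod $4$ Moore spectrum carries a ring structure (Oka \cite{Oka}, cf.~\eqref{equation:okapairing1}), $\KQ/4$ is a motivic ring spectrum, so by \cite[Proposition 2.24]{April1} its slice spectral sequence $E^r_{\ast,\ast,\ast}(\KQ/4)$ is multiplicative with Leibniz-rule differentials. It then suffices to show $\widetilde{\tau}^2 \in E^1_{4,2,0}(\KQ/4)$ survives to $E^\infty$, since a square of a permanent cycle is again a permanent cycle. Permanence of $\widetilde{\tau}^2$ can be read off the mod $4$ slice $\dd^1$-differential on the top summand $\Sigma^{4,2}\MZ/4$ of $\s_2(\KQ/4)$ (the mod $4$ analogue of \aref{thm:diff-kq-mod2}, cf.~\aref{thm:KQ2n-mult}), or deduced by naturality of the slice spectral sequence along the map $\KQ \to \KGL$ of \eqref{equation:wood} from the corresponding fact for $\KGL/4$ already used in the proof of \aref{lem:KGL2-E284}.

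Finally I would rule out incoming differentials: a $d^r$-differential into tridegree $(8,4,0)$ emanates from $E^r_{9,4-r,0}(\KQ/4)$, and a degree count with \eqref{equation:hermitianktheoryslices} shows each summand of $\s_{4-r}(\KQ/4)$ contributes to $\pi_{9,0}$ only motivic cohomology groups in strictly negative first degree, so $E^1_{9,4-r,0}(\KQ/4) = 0$ for all $r \geq 1$. Thus $E^1_{8,4,0}(\KQ/4)$ is generated by the permanent cycle $\widetilde{\tau}^4$ and receives no differentials, so it equals $E^\infty_{8,4,0}(\KQ/4) \cong H^{0,4}(F;\Z/4)$; in particular $\widetilde{\tau}^4$ is the $(8,4,0)$-periodicity element used in \eqref{equation:KQ2-pairing}. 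The main obstacle is precisely the permanence of $\widetilde{\tau}^2$: unlike the $\KGL/4$ case, $\widetilde{\tau}$ does not occur among the slice summands with $\Z/4$-coefficients (the odd slices of $\KQ$ are wedges of copies of $\MZ/2$), so the one-line computation $d^r(\widetilde{\tau}^4) = 4\widetilde{\tau}^3 d^r(\widetilde{\tau}) = 0$ is not available, and one must argue with $\widetilde{\tau}^2$ instead.
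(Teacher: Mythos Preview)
Your identification of the obstacle is right --- $\widetilde{\tau}$ is not present in the $\KQ/4$ spectral sequence --- but your proposed substitute fails: $\widetilde{\tau}^2 \in E^1_{4,2,0}(\KQ/4)$ is \emph{not} a permanent cycle. The $\dd^1$-differential on the top summand of $\s_2(\KQ/4)$ for $q\equiv 2\bmod 4$ (computed later in \aref{thm:KQ2n-diff}) contains a component $\tau\circ\pr^{2^n}_2$, sending $\widetilde{\tau}^2\mapsto\tau\cdot\tau^2=\tau^3\neq 0$ in $h^{0,3}$; equivalently, $E^2_{4,2,0}(\KQ/2^n)=\ker\bigl((\tau\partial,\tau\pr)\colon H^{0,2}_n\to h^{1,3}\oplus h^{0,3}\bigr)$ (cf.\ \aref{fig:KQ2n-E2-w0}), and $\widetilde{\tau}^2$ is not in this kernel. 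Neither of your justifications helps: reading off $\dd^1$ gives exactly the nonzero answer above, and naturality along $f\colon\KQ\to\KGL$ misses it because $\s_3(f)=(\inc^2_{2^n},\partial^2_{2^n},0,\ldots)$ annihilates the lower summands of $\s_3(\KQ/4)$ where $\tau^3$ lives (\aref{lem:sqhopf}).

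The fix is close to your outline. For $d^1$, Leibniz still yields $d^1(\widetilde{\tau}^4)=2\,\widetilde{\tau}^2\,d^1(\widetilde{\tau}^2)$; since $q=5$ is odd, $E^1_{7,5,0}(\KQ/4)$ is an $\FF_2$-vector space, so this product vanishes --- no permanence of $\widetilde{\tau}^2$ is needed here, and this is presumably what the paper intends by ``the proof of \aref{lem:KGL2-E284} shows''. For $r\geq 2$, however, $\widetilde{\tau}^2$ is dead on $E^2$ and Leibniz is no longer available; one must instead check that the targets $E^2_{7,4+r,0}(\KQ/4)$ vanish. For $q>p=7$ this holds because the map to $\KW/2^n$ is an $E^2$-isomorphism there and $p-w\equiv 3\bmod 4$ forces $E^2_{7,q,0}(\KW/2^n)=0$ (\aref{thm:KW2-E2n}); the remaining cases $q=6,7$ follow from the explicit $E^2$-computation (over number fields, directly from \aref{cor:E2-F}). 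Your step 4 on incoming differentials is correct.
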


Recall the slice spectral sequence for $\KQ/2^{n}$ is conditionally convergent over $F$ when $\vcd(F)<\infty$ by \aref{theorem:best-conv}.

\begin{theorem}
\label{theorem:KQ2-period}
If $F$ is a field of characteristic $\Char(F)\neq 2$ and $\vcd(F)<\infty$, 
the $E^{2}$-page $E^{2}_{p,q,w}(\KQ/2)$ of the slice spectral sequence for $\KQ/2$ is $(8,4,0)$-periodic for $p-2w \geq\vcd(F)-\delta_{F}$.
Here $\delta_{F}=2$ if $\vcd(F)+w\equiv 0\bmod 4$, 
and $\delta_{F}=1$ if $\vcd(F)+w\not\equiv 0\bmod 4$.
The periodicity isomorphism is induced by multiplication by $\widetilde{\tau}^{4} \in E^{2}_{8,4,0}(\KQ/4)$ in the pairing \eqref{equation:KQ2-pairing}.

When $\vcd(F)=2$ the mod $2$ hermitian $K$-groups of $F$ are given up to extensions as follows.
\begin{center}

\begin{tabularx}{\textwidth}{>{$}l <{$}|>{$}l <{$}|>{$}l <{$}  }
\hline n \geq 0 & l & \KQ_{n,{0}}(F; \Z/2)\\
\hline 8k & 3 & \f_{0}/\f_{1} = h^{0, 4k} , \f_{1}/\f_{2} = \ker(\rho_{2, 4k + 1}) \oplus h^{1, 4k + 1} , \f_{2} = h^{2, 4k + 2}/\rho\\
8k + 1 & 2 & \f_{0}/\f_{1} = \ker(\rho_{1, 4k + 1}) \oplus h^{0, 4k + 1} , \f_{1} = \ker(\rho_{2, 4k + 2}) \oplus h^{1, 4k + 2}\\
8k + 2 & 3 & \f_{0}/\f_{1} = h^{0, 4k + 1} , \f_{1}/\f_{2} = h^{1, 4k + 2} \oplus h^{0, 4k + 2} , \f_{2} = h^{2, 4k + 3}\\
8k + 3 & 2 & \f_{0}/\f_{1} = h^{0, 4k + 2} , \f_{1} = h^{1, 4k + 3}\\
8k + 4 & 2 & \f_{0}/\f_{1} = h^{0, 4k + 3} , \f_{1} = h^{4, 4k + 4}\\
8k + 5 & 0 & 0\\
8k + 6 & 1 & \f_{0} = \ker(\rho_{2, 4k + 4})\\
8k + 7 & 2 & \f_{0}/\f_{1} = \ker(\rho^2_{1, 4k + 4}) , \f_{1} = \ker(\rho_{2, 4k + 5})\\
\hline 
\end{tabularx}

\vskip1ex

\begin{tabularx}{\textwidth}{>{$}l <{$}|>{$}l <{$}|>{$}l <{$}  }
\hline n \geq 0 & l & \KQ_{n + 2,{1}}(F; \Z/2)\\
\hline 8k & 3 & \f_{0}/\f_{1} = \ker(\rho_{0, 4k}) , \f_{1}/\f_{2} = \ker(\rho^2_{1, 4k + 1}) \oplus h^{0, 4k + 1} , \f_{2} = \ker(\rho_{2, 4k + 2})\\
8k + 1 & 2 & \f_{0}/\f_{1} = h^{0, 4k + 1} , \f_{1} = h^{1, 4k + 2}\\
8k + 2 & 1 & \f_{0} = h^{0, 4k + 2}\\
8k + 3 & 1 & \f_{0} = h^{3, 4k + 3}\\
8k + 4 & 1 & \f_{0} = \ker(\rho_{2, 4k + 3})\\
8k + 5 & 2 & \f_{0}/\f_{1} = \ker(\rho^2_{1, 4k + 3}) , \f_{1} = h^{3, 4k + 4}/\rho^3 \oplus \ker(\rho_{2, 4k + 4})\\
8k + 6 & 3 & \f_{0}/\f_{1} = \ker(\rho^3_{0, 4k + 3}) , \f_{1}/\f_{2} = \ker(\rho_{2, 4k + 4}) \oplus \ker(\rho^2_{1, 4k + 4}) , \f_{2} = \ker(\rho_{2, 4k + 5})\\
8k + 7 & 2 & \f_{0}/\f_{1} = \ker(\rho^2_{1, 4k + 4}) \oplus h^{0, 4k + 4} , \f_{1} = \ker(\rho_{2, 4k + 5}) \oplus h^{1, 4k + 5}/\rho\\
\hline 
\end{tabularx}

\vskip1ex

\begin{tabularx}{\textwidth}{>{$}l <{$}|>{$}l <{$}|>{$}l <{$}  }
\hline n \geq 0 & l & \KQ_{n + 4,{2}}(F; \Z/2)\\
\hline 8k & 1 & \f_{0} = h^{0, 4k + 1}\\
8k + 1 & 0 & 0\\
8k + 2 & 1 & \f_{0} = h^{2, 4k + 2}\\
8k + 3 & 2 & \f_{0}/\f_{1} = \ker(\rho^2_{1, 4k + 2}) , \f_{1} = h^{2, 4k + 3}/\rho^2\\
8k + 4 & 3 & \f_{0}/\f_{1} = \ker(\rho^2_{0, 4k + 2}) , \f_{1}/\f_{2} = h^{2, 4k + 3} \oplus \ker(\rho^2_{1, 4k + 3}) , \f_{2} = h^{3, 4k + 4}/\rho^3 \oplus h^{2, 4k + 4}/\rho\\
8k + 5 & 2 & \f_{0}/\f_{1} = \ker(\rho_{1, 4k + 3}) \oplus \ker(\rho^3_{0, 4k + 3}) , \f_{1} = h^{2, 4k + 4}/\rho^2 \oplus \ker(\rho^2_{1, 4k + 4})\\
8k + 6 & 3 & \f_{0}/\f_{1} = \ker(\rho^2_{0, 4k + 3}) , \f_{1}/\f_{2} = \ker(\rho^2_{1, 4k + 4}) \oplus h^{0, 4k + 4} , \f_{2} = h^{2, 4k + 5}/\rho^2\\
8k + 7 & 2 & \f_{0}/\f_{1} = \ker(\rho^2_{0, 4k + 4}) , \f_{1} = \ker(\rho^2_{1, 4k + 5})\\
\hline 
\end{tabularx}

\vskip1ex

\begin{tabularx}{\textwidth}{>{$}l <{$}|>{$}l <{$}|>{$}l <{$}  }
\hline n \geq 0 & l & \KQ_{n + 6,{3}}(F; \Z/2)\\
\hline 8k & 1 & \f_{0} = \ker(\rho_{2, 4k + 1})\\
8k + 1 & 2 & \f_{0}/\f_{1} = h^{1, 4k + 1} , \f_{1} = h^{2, 4k + 2}/\rho^2\\
8k + 2 & 3 & \f_{0}/\f_{1} = \ker(\rho^2_{0, 4k + 1}) , \f_{1}/\f_{2} = \ker(\rho_{2, 4k + 2}) \oplus h^{1, 4k + 2} , \f_{2} = h^{2, 4k + 3}/\rho^2\\
8k + 3 & 3 & \f_{0}/\f_{1} = h^{1, 4k + 2} \oplus \ker(\rho^2_{0, 4k + 2}) , \f_{1}/\f_{2} = h^{2, 4k + 3} \oplus h^{1, 4k + 3}/\rho , \f_{2} = h^{3, 4k + 4}/\rho^3\\
8k + 4 & 3 & \f_{0}/\f_{1} = \ker(\rho_{0, 4k + 2}) , \f_{1}/\f_{2} = h^{1, 4k + 3} \oplus \ker(\rho^3_{0, 4k + 3}) , \f_{2} = h^{2, 4k + 4}/\rho^2\\
8k + 5 & 3 & \f_{0}/\f_{1} = \ker(\rho^2_{0, 4k + 3}) , \f_{1}/\f_{2} = h^{1, 4k + 4} , \f_{2} = h^{5, 4k + 5}/\rho^5\\
8k + 6 & 1 & \f_{0} = \ker(\rho^3_{0, 4k + 4})\\
8k + 7 & 0 & 0\\
\hline 
\end{tabularx}

\end{center}
\end{theorem}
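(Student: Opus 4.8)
The plan is to deduce the $(8,4,0)$-periodicity from an elementary calculation in motivic cohomology, and then to obtain the four tables for $\vcd(F)=2$ by combining this periodicity with a collapse argument and strong convergence. First I would record the role of the periodicity operator. By \aref{lem:KQ2-E284} the class $\widetilde{\tau}^{4}$ generates $E^{2}_{8,4,0}(\KQ/4)$, survives the spectral sequence, and acts under the pairing \eqref{equation:KQ2-pairing} as multiplication by $\tau^{4}\in h^{0,4}$. Under the shift $(p,q,w)\mapsto(p+8,q+4,w)$ the quantity $a:=2q-p$ is invariant, $q':=q-w$ increases by $4$, the residues of $p-w$ and $q-p+w$ modulo $4$ are unchanged, and $p-2w=2q'-a$ is preserved. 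Since every entry of \aref{tbl:KQ2-answer1} and \aref{tbl:KQ2-answer4} is a finite direct sum of groups of the form $h^{a,q'}/\rho^{i}$ or $\ker(\rho^{i}_{a,q'})$ with $i\in\{1,2,3,5\}$, the periodicity assertion reduces to showing that multiplication by $\tau^{4}$ induces isomorphisms $h^{a,q'}/\rho^{i}\to h^{a,q'+4}/\rho^{i}$ and $\ker(\rho^{i}_{a,q'})\to\ker(\rho^{i}_{a,q'+4})$ in the asserted range.

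The key inputs are: (i) the Beilinson--Lichtenbaum isomorphism, a consequence of the Milnor conjecture \cite{Voevodsky:Z/2}, giving $h^{a,b}(F)\cong H^{a}_{\et}(F;\Z/2)$ for $a\leq b$, $h^{a,b}(F)=0$ for $a>b$, and in particular that $\tau\colon h^{a,b}\to h^{a,b+1}$ is an isomorphism when $a\leq b$; and (ii) for $F$ formally real with $\vcd(F)=d<\infty$, the long exact sequence comparing $H^{\ast}_{\et}(F;\Z/2)$ with $H^{\ast}_{\et}(F(\sqrt{-1});\Z/2)$, which shows that cup product with $\rho$ maps $H^{n}_{\et}(F;\Z/2)$ onto $H^{n+1}_{\et}(F;\Z/2)$ for $n\geq d$ and bijectively for $n\geq d+1$. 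Feeding these through the four congruence cases of \aref{thm:diff-kq-mod2} and the entries of \aref{tbl:KQ2-answer1} and \aref{tbl:KQ2-answer4}, one finds that $\tau^{4}$-multiplication can fail to be an isomorphism on the relevant $\rho$-quotient or $\rho$-kernel only when the top of the pertinent $\rho$-tower lies at height $<\vcd(F)$; rewriting this obstruction in terms of $a$ and $q'$ and using $p-2w=2q'-a$ produces exactly the bound $p-2w\geq\vcd(F)-\delta_{F}$, with $\delta_{F}=2$ if $\vcd(F)+w\equiv 0\bmod 4$ and $\delta_{F}=1$ otherwise, the $\bmod 4$ dependence coming from which of the four cases one is in. This establishes the first assertion.

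Now assume $\vcd(F)=2$. Then $h^{a,b}(F)$ is governed by the groups $H^{a}_{\et}(F;\Z/2)$ with $a\leq 2$ together with the $\rho$-stabilisation in (i) and (ii), so every entry of \aref{tbl:KQ2-answer1} and \aref{tbl:KQ2-answer4} simplifies to one of the groups appearing in the statement. In particular $E^{2}_{p,q,w}(\KQ/2)$ is supported, for each fixed $n=p-2w$ and each fixed $w$, on a segment of at most three consecutive slice filtration degrees. I would first observe that this sparseness kills every $d^{r}$ with $r\geq 3$ for degree reasons; the only higher differential that could survive is a $d^{2}$ from the bottom of one such segment to the top of the next, and that one vanishes by comparison with a real closed field---where the analogue of \aref{ex:KGL2-R} for $\KQ/2$ gives $E^{2}=E^{\infty}$---pulled back along the real embeddings of $F$, together with the $\widetilde{\tau}^{4}$-periodicity of the first part, following the method of \cite[\S4,5]{RO}. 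Thus the slice spectral sequence for $\KQ/2$ over $F$ degenerates at $E^{2}$; since it is conditionally convergent by \aref{theorem:best-conv}, it then converges strongly to $\pi_{\ast,\ast}(\KQ/2)$ by \aref{cor:strong-conv}. Reading off the $E^{\infty}=E^{2}$-page in the weights $w=0,1,2,3$, using $\KQ_{m,w}(F;\Z/2)=\pi_{m+2w,w}(\KQ/2)$ and the identifications of the stabilised $\rho$-towers, yields the four tables.

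The main obstacle is the bookkeeping in the second paragraph: establishing the precise boundary $p-2w\geq\vcd(F)-\delta_{F}$ together with the $\bmod 4$ dependence of $\delta_{F}$ requires working case by case through \aref{tbl:KQ2-answer1} and \aref{tbl:KQ2-answer4}, locating in each the last weight at which the finiteness of $\vcd(F)$ can obstruct $\tau^{4}$-multiplication. Once the $E^{2}$-page is made explicit, the collapse and the extraction of the tables in the $\vcd(F)=2$ case are comparatively routine.
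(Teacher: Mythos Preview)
Your reduction of the periodicity claim to checking that $\tau^{4}$ induces isomorphisms $h^{a,q'}/\rho^{i}\to h^{a,q'+4}/\rho^{i}$ and $\ker(\rho^{i}_{a,q'})\to\ker(\rho^{i}_{a,q'+4})$, with the cohomological degree $a$ held fixed, is only valid in the range $q+w\leq p$ governed by \aref{tbl:KQ2-answer1}: there $a=2q-p$ is genuinely invariant under $(p,q,w)\mapsto(p+8,q+4,w)$, and the $\tau^{4}$-map on $E^{2}$ is summandwise the one you describe. In the range $q+w>p$ (\aref{tbl:KQ2-answer4}) the $E^{2}$-entry is built from $h^{q',q'}$, whose cohomological degree is $q'=q-w$ itself; under the shift the entry at $(p+8,q+4,w)$ is built from $h^{q'+4,q'+4}$, not $h^{q',q'+4}$, so your reduction does not describe the periodicity map there. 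The paper's argument is to observe that for $p-2w>\vcd(F)$ the condition $q+w>p$ forces $q'>p-2w>\vcd(F)$, and then every $\rho$-kernel and $\rho$-cokernel appearing in \aref{tbl:KQ2-answer4} vanishes by the Arason sequence; so periodicity is trivial in that range, while in the range $q+w\leq p$ it follows exactly as you say. The boundary strip $p-2w\in\{\vcd(F)-1,\vcd(F)\}$ (and $\vcd(F)-2$ when $\vcd(F)+w\equiv 0\bmod 4$) is then handled by direct comparison of the two tables, which is where the $\bmod 4$ dependence of $\delta_{F}$ arises.

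For the $\vcd(F)=2$ collapse your argument is more elaborate than needed. The phantom $d^{2}$ you propose to kill by comparison with a real closed field is already zero for degree reasons: in each weight the tables show that the highest nonzero $q$ in column $p-1$ of $E^{2}$ sits at most one step above the lowest nonzero $q$ in column $p$, so any $d^{2}$ from column $p$ lands strictly above the support of column $p-1$. Thus $E^{2}=E^{\infty}$ with no further input (this is the ``for degree reasons'' recorded in the parallel proof of \aref{theorem:mod2hermitiankgroups}), and the tables are read off from \aref{thm:KQ2-E2} together with strong convergence via \aref{theorem:best-conv} and \aref{cor:strong-conv}.
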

\begin{proof}
We make the following observations:
\begin{itemize}
\item If $q + w \leq p$, the group $E^{2}_{p,q,w}(\KQ/2)$ is identified in \aref{tbl:KQ2-answer1}.
\item The direct summands of $E^{2}_{p,q,w}(\KQ/2)$ are subquotients of $h^{2q-p-i,q-w}$, for $0\leq i\leq 2$. 
Such a subquotient is trivial if $2q-p-2 > \vcd(F)$.
\end{itemize}
Now assume $p - 2w >\vcd(F)$.
\begin{itemize}
\item If $q \leq\frac{1}{2}(p+\vcd(F)+2)$, then $q + w \leq p$, and $(8,4,0)$-periodicity follows.
\item
If $q >\frac{1}{2}(p +\vcd(F)+2)$, then $2q-p>\vcd(F)+2$, and hence $E^{2}_{p,q,w}(\KQ/2)=0$.
\end{itemize}
It remains to consider degrees with $p = \vcd(F) + 2w - \delta$, $\delta = 0, 1$, 
i.e., 
compare $E^2_{p,q,w}$ for $q \in (\vcd(F) + w - \delta, \vcd + w + 1 - (\delta/2)]$ with $E^2_{p+8,q+4,w}$.
If $q$ is not in this interval, 
$E^2_{p,q,w}$ is either zero or determined by the $(8,4,0)$-periodic \aref{tbl:KQ2-answer1}, as observed above.
It remains to show the $E^{2}$-page in degrees $(p,q, w)\in\{(\vcd(F) + 2w,\vcd(F)+w+1,w),(\vcd(F)+2w-1,\vcd(F)+w,w)\}$ and $(p+8,q+4,w)$ are isomorphic via the map $\tau^4$.
This follows by inspection of \aref{tbl:KQ2-answer1} and \aref{tbl:KQ2-answer4} in \aref{thm:KQ2-E2}.
The sharper bound for $\vcd(F) + w\equiv 0\bmod 4$ follows by inspection of the degrees $(\vcd(F)+2w-2,\vcd(F)+w-1, w)$ and $(\vcd(F)+2w-2,\vcd(F)+w, w)$.
\end{proof}

\begin{corollary}
\label{corollary:KQfields}
Suppose $F$ is a field of $\Char(F)\neq 2$ and $\vcd(F)<\infty$.
The permanent cycle $\widetilde{\tau}^{4}$ induces an 8-fold periodicity isomorphism 
\begin{equation*}
{\KQ_{p,w}(F;\Z/2)}
\cong
{\KQ_{p+8,w}(F;\Z/2)}
\end{equation*}
for all $p-2w\geq\vcd(F)-1$.
\end{corollary}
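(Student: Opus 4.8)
The plan is to deduce the eight-fold periodicity from the $(8,4,0)$-periodicity of the $E^2$-page proved in \aref{theorem:KQ2-period}, using that the periodicity class is a permanent cycle. First I would invoke \aref{lem:KQ2-E284}: the class $\widetilde{\tau}^4$ generates $E^r_{8,4,0}(\KQ/4)\cong H^{0,4}(F;\Z/4)$ and is a permanent cycle, its triviality under $d^r$ being forced by the Leibniz rule $d^r(\widetilde{\tau}^4)=4\widetilde{\tau}^3 d^r(\widetilde{\tau})=0$ in the algebra spectral sequence of $\KQ/4$. Via the module pairing \eqref{equation:KQ2-pairing}, multiplication by $\widetilde{\tau}^4$ then defines a morphism of spectral sequences
\[
\phi^r\colon E^r_{p,q,w}(\KQ/2)\longrightarrow E^r_{p+8,q+4,w}(\KQ/2)
\]
commuting with every differential; on $E^\infty$ it is the associated graded of multiplication by a class $\beta\in\KQ_{8,0}(F;\Z/4)$ detecting $\widetilde{\tau}^4$, this self-map realizing the periodicity on the abutment. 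By \aref{theorem:KQ2-period}, $\phi^2$ is an isomorphism for $p-2w\geq\vcd(F)-\delta_F$, hence a fortiori for $p-2w\geq\vcd(F)-1$.

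Next I would promote this to an isomorphism on $E^\infty$ in the range $p-2w\geq\vcd(F)-1$. Here I would use the support bound from the proof of \aref{theorem:KQ2-period}: $E^2_{p,q,w}(\KQ/2)=0$ unless $0\leq 2q-p\leq\vcd(F)+2$. Since each $d^r$ with $r\geq 2$ lowers $p-2w$ by $1$ and raises $2q-p$ by at least $5$, only finitely many differentials into or out of a fixed tridegree can be nonzero, so $E^\infty=E^{r_0}$ for some $r_0$ depending only on $\vcd(F)$; together with conditional convergence (\aref{theorem:best-conv}) and \aref{cor:strong-conv} the spectral sequence converges strongly and, in each tridegree, the slice filtration on $\pi_{p,w}(\KQ/2)$ is finite. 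A page-by-page five-lemma argument — using that a differential emanating from or landing in the stable range either vanishes by the support bound or joins two tridegrees at which $\phi^2$ is already known to be an isomorphism (this is exactly where the value of $\delta_F$ is used) — then shows $\phi^r$ is an isomorphism for $p-2w\geq\vcd(F)-1$ and all $r\geq 2$, hence on $E^\infty$. Passing to the finite filtration on $\pi_{\ast,\ast}(\KQ/2)$ yields $\KQ_{p,w}(F;\Z/2)\cong\KQ_{p+8,w}(F;\Z/2)$ for $p-2w\geq\vcd(F)-1$.

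The hard part is this last step: ensuring that higher differentials do not erode the periodicity range back below $\vcd(F)-1$. When $\vcd(F)\leq 2$ — the case relevant to number fields — the point is automatic, since then $2q-p\in[0,4]$ on $E^2$ while every $d^r$ ($r\geq 2$) shifts $2q-p$ by at least $5$, so $E^2=E^\infty$ and the conclusion follows directly from \aref{theorem:KQ2-period} (this also recovers the $\vcd(F)=2$ tables there). For larger $\vcd(F)$ I would control the a priori possible higher differentials in the stable range by comparison with real closed fields, along the lines of \cite[\S4,5]{RO} already invoked for the algebraic $K$-theory analogue \aref{corollary:KGLfields}.
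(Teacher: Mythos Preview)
Your approach matches the paper's: multiplication by the permanent cycle $\widetilde{\tau}^4$ commutes with all differentials, so the $(8,4,0)$-periodicity of \aref{theorem:KQ2-period} on $E^2$ persists to $E^\infty$, and convergence via \aref{theorem:best-conv} passes this to the abutment. The paper's two-sentence proof does not spell out your support-bound/five-lemma analysis at the boundary column or the real-closed-field comparison for large $\vcd$; you have simply made explicit what the paper leaves implicit.
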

\begin{proof}
Multiplication by $\widetilde{\tau}^4$ commutes with the differentials, so the periodicity in \aref{theorem:KQ2-period} carries over to the $E^{\infty}$-page.
We conclude by reference to \aref{theorem:best-conv}.
\end{proof}
 
\begin{example}
\label{ex:KQ2-C}
Let $\overline{F}$ be an algebraically closed field, or more generally a quadratically closed field, of $\Char(\overline{F})\neq 2$.
Then $h^{*,*}=\FF_{2}[\tau]$ and \aref{thm:KQ2-E2} implies isomorphisms for $2w \leq p$
\[
E^{\infty}_{p,q,w}(\KQ/2)
\cong
\begin{cases}
  h^{0, q - w} & 2q - p = 0, 2 \text{ and } p \equiv 0,2 \bmod 8 \\
  h^{0, q - w} & 2q - p = 1 \text{ and } p \equiv 1,3 \bmod 8 \\
  h^{0, q - w} & 2q - p = 2 \text{ and } p \equiv 2,4 \bmod 8 \\
  0 & \text{otherwise}.
\end{cases}
\]
For $2w > p$ we have isomorphisms 
\[
E^{\infty}_{p,q,w}(\KQ/2)
\cong
\begin{cases}
  h^{0, 0} & q = w \text{ and } p - w \equiv 0,1 \bmod 4 \\
  0 & \text{otherwise}.
\end{cases}
\]
Over $\C$,
this gives the $8$-periodicity $\Z/2$, $\Z/2$, $\Z/4$, $\Z/2$, $\Z/2$, $0$, $0$, $0$ for the mod 2 $K$-groups of the real numbers, 
see e.g., 
\cite[Theorem 4.9]{MR772065}.
\end{example}

\begin{example}
\label{ex:KQ2-R}
If $F$ is a real closed field, 
then $h^{*,*}=\FF_{2}[\tau,\rho]$. 
In the following table we use the notation in \aref{theorem:convergence2} and determine the filtration quotients for the group $\KQ_{8k+2,0}(F;\Z/2)$.
\begin{center}

\begin{tabularx}{\textwidth}{>{$}l <{$} | >{$}l <{$}| >{$}l <{$}| >{$}l <{$}| >{$}l <{$} }
\hline 
n \geq 0 & \KQ_{n,0}(F;\Z/2) &  \KQ_{n+2,1}(F;\Z/2) &  \KQ_{n+4,2}(F;\Z/2) &  \KQ_{n+6,3}(F;\Z/2)\\
\hline 
8k & h^{1,4k+1}\bullet h^{0,4k} & h^{0,4k+1} & h^{0,4k+1} & 0\\
8k+1 & h^{1,4k+2}\bullet h^{0,4k+1} & h^{1,4k+2} \bullet h^{0,4k+1} & 0 & h^{1,4k+1}\\
8k+2 & h^{0,4k+1}, h^{1,4k+2} \oplus h^{0,4k+2} , h^{2,4k+3} & h^{0,4k+2} & h^{2,4k+2} & h^{1,4k+2}\\
8k+3 & h^{1,4k+3}\bullet h^{0,4k+2} & h^{3,4k+3} & 0 & h^{2,4k+3}\bullet h^{1,4k+2}\\
8k+4 & h^{4,4k+4}\bullet h^{0,4k+3} & 0 & h^{2,4k+3} & h^{1,4k+3}\\
8k+5 & 0 & 0 & 0 & h^{1,4k+4}\\
8k+6 & 0 & 0 & h^{0,4k+4} & 0\\
8k+7 & 0 & h^{0,4k+4} & 0 & 0\\
\hline 
\end{tabularx}

\end{center}
\end{example}

We are ready to prove \aref{theorem:mod2hermitiankgroupsintroduction} stated in the introduction.

\begin{theorem}
\label{theorem:mod2hermitiankgroups}
The mod $2$ hermitian $K$-groups of $\OO_{F,\mathcal{S}}$ are computed up to extensions by the following filtrations of length $l$. 
\begin{center}

\begin{tabularx}{\textwidth}{>{$}l <{$}|>{$}l <{$}|>{$}l <{$}  }
\hline n \geq 0 & l & \KQ_{n,{0}}(\mathcal{O}_{F,S}; \Z/2)\\
\hline 8k & 3 & \f_{0}/\f_{1} = h^{0, 4k} , \f_{1}/\f_{2} = \ker(\rho_{2, 4k + 1}) \oplus h^{1, 4k + 1} , \f_{2} = h^{2, 4k + 2}/\rho\\
8k + 1 & 2 & \f_{0}/\f_{1} = \ker(\rho_{1, 4k + 1}) \oplus h^{0, 4k + 1} , \f_{1} = \ker(\rho_{2, 4k + 2}) \oplus h^{1, 4k + 2}\\
8k + 2 & 3 & \f_{0}/\f_{1} = h^{0, 4k + 1} , \f_{1}/\f_{2} = h^{1, 4k + 2} \oplus h^{0, 4k + 2} , \f_{2} = h^{2, 4k + 3}\\
8k + 3 & 2 & \f_{0}/\f_{1} = h^{0, 4k + 2} , \f_{1} = h^{1, 4k + 3}\\
8k + 4 & 2 & \f_{0}/\f_{1} = h^{0, 4k + 3} , \f_{1} = h^{4, 4k + 4}\\
8k + 5 & 0 & 0\\
8k + 6 & 2 & \f_{0}/\f_{1} = \ker(\rho_{2, 4k + 4}) , \f_{1} = h^{4, 4k + 5}/\rho^3\\
8k + 7 & 2 & \f_{0}/\f_{1} = \ker(\rho^2_{1, 4k + 4}) , \f_{1} = \ker(\rho_{2, 4k + 5})\\
\hline 
\end{tabularx}

\vskip1ex

\begin{tabularx}{\textwidth}{>{$}l <{$}|>{$}l <{$}|>{$}l <{$}  }
\hline n \geq 0 & l & \KQ_{n + 2,{1}}(\mathcal{O}_{F,S}; \Z/2)\\
\hline 8k & 3 & \f_{0}/\f_{1} = \ker(\rho_{0, 4k}) , \f_{1}/\f_{2} = \ker(\rho^2_{1, 4k + 1}) \oplus h^{0, 4k + 1} , \f_{2} = \ker(\rho_{2, 4k + 2})\\
8k + 1 & 2 & \f_{0}/\f_{1} = h^{0, 4k + 1} , \f_{1} = h^{1, 4k + 2}\\
8k + 2 & 1 & \f_{0} = h^{0, 4k + 2}\\
8k + 3 & 1 & \f_{0} = h^{3, 4k + 3}\\
8k + 4 & 2 & \f_{0}/\f_{1} = \ker(\rho_{2, 4k + 3}) , \f_{1} = h^{3, 4k + 4}/\rho^2\\
8k + 5 & 3 & \f_{0}/\f_{1} = \ker(\rho^2_{1, 4k + 3}) , \f_{1}/\f_{2} = h^{3, 4k + 4}/\rho^3 \oplus \ker(\rho_{2, 4k + 4}) , \f_{2} = h^{4, 4k + 5}/\rho^3\\
8k + 6 & 3 & \f_{0}/\f_{1} = \ker(\rho^3_{0, 4k + 3}) , \f_{1}/\f_{2} = \ker(\rho_{2, 4k + 4}) \oplus \ker(\rho^2_{1, 4k + 4}) , \f_{2} = h^{3, 4k + 5}/\rho^2 \oplus \ker(\rho_{2, 4k + 5})\\
8k + 7 & 3 & \f_{0}/\f_{1} = \ker(\rho^2_{1, 4k + 4}) \oplus h^{0, 4k + 4} , \f_{1}/\f_{2} = \ker(\rho_{2, 4k + 5}) \oplus h^{1, 4k + 5}/\rho , \f_{2} = h^{3, 4k + 6}/\rho^2\\
\hline 
\end{tabularx}

\vskip1ex

\begin{tabularx}{\textwidth}{>{$}l <{$}|>{$}l <{$}|>{$}l <{$}  }
\hline n \geq 0 & l & \KQ_{n + 4,{2}}(\mathcal{O}_{F,S}; \Z/2)\\
\hline 8k & 1 & \f_{0} = h^{0, 4k + 1}\\
8k + 1 & 0 & 0\\
8k + 2 & 2 & \f_{0}/\f_{1} = h^{2, 4k + 2} , \f_{1} = h^{3, 4k + 3}/\rho^2\\
8k + 3 & 3 & \f_{0}/\f_{1} = \ker(\rho^2_{1, 4k + 2}) , \f_{1}/\f_{2} = h^{2, 4k + 3}/\rho^2 , \f_{2} = h^{3, 4k + 4}/\rho^2\\
8k + 4 & 4 & \f_{0}/\f_{1} = \ker(\rho^2_{0, 4k + 2}) , \f_{1}/\f_{2} = h^{2, 4k + 3} \oplus \ker(\rho^2_{1, 4k + 3}) , \f_{2}/\f_{3} = h^{3, 4k + 4}/\rho^3 \oplus h^{2, 4k + 4}/\rho , \f_{3} = h^{4, 4k + 5}/\rho^3\\
8k + 5 & 3 & \f_{0}/\f_{1} = \ker(\rho_{1, 4k + 3}) \oplus \ker(\rho^3_{0, 4k + 3}) , \f_{1}/\f_{2} = h^{2, 4k + 4}/\rho^2 \oplus \ker(\rho^2_{1, 4k + 4}) , \f_{2} = h^{3, 4k + 5}/\rho^2\\
8k + 6 & 4 & \f_{0}/\f_{1} = \ker(\rho^2_{0, 4k + 3}) , \f_{1}/\f_{2} = \ker(\rho^2_{1, 4k + 4}) \oplus h^{0, 4k + 4} , \f_{2}/\f_{3} = h^{2, 4k + 5}/\rho^2 , \f_{3} = h^{6, 4k + 6}/\rho^5\\
8k + 7 & 2 & \f_{0}/\f_{1} = \ker(\rho^2_{0, 4k + 4}) , \f_{1} = \ker(\rho^2_{1, 4k + 5})\\
\hline 
\end{tabularx}

\vskip1ex

\begin{tabularx}{\textwidth}{>{$}l <{$}|>{$}l <{$}|>{$}l <{$}  }
\hline n \geq 0 & l & \KQ_{n + 6,{3}}(\mathcal{O}_{F,S}; \Z/2)\\
\hline 8k & 1 & \f_{0} = \ker(\rho_{2, 4k + 1})\\
8k + 1 & 2 & \f_{0}/\f_{1} = h^{1, 4k + 1} , \f_{1} = h^{2, 4k + 2}/\rho^2\\
8k + 2 & 3 & \f_{0}/\f_{1} = \ker(\rho^2_{0, 4k + 1}) , \f_{1}/\f_{2} = \ker(\rho_{2, 4k + 2}) \oplus h^{1, 4k + 2} , \f_{2} = h^{2, 4k + 3}/\rho^2\\
8k + 3 & 3 & \f_{0}/\f_{1} = h^{1, 4k + 2} \oplus \ker(\rho^2_{0, 4k + 2}) , \f_{1}/\f_{2} = h^{2, 4k + 3} \oplus h^{1, 4k + 3}/\rho , \f_{2} = h^{3, 4k + 4}/\rho^3\\
8k + 4 & 3 & \f_{0}/\f_{1} = \ker(\rho_{0, 4k + 2}) , \f_{1}/\f_{2} = h^{1, 4k + 3} \oplus \ker(\rho^3_{0, 4k + 3}) , \f_{2} = h^{2, 4k + 4}/\rho^2\\
8k + 5 & 3 & \f_{0}/\f_{1} = \ker(\rho^2_{0, 4k + 3}) , \f_{1}/\f_{2} = h^{1, 4k + 4} , \f_{2} = h^{5, 4k + 5}/\rho^5\\
8k + 6 & 1 & \f_{0} = \ker(\rho^3_{0, 4k + 4})\\
8k + 7 & 0 & 0\\
\hline 
\end{tabularx}

\end{center}
\end{theorem}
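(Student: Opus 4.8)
The proof runs the slice spectral sequence \eqref{equation:slicespectralsequenceofE} for $\KQ/2$ over $\OFS$. By \aref{theorem:best-conv2} it is conditionally convergent with abutment $\pi_{\ast,\ast}(\KQ/2)$; its $E^{1}$-page is governed by the slice computation \eqref{equation:KQ2-slices} and the $\dd^{1}$-differentials of \aref{thm:diff-kq-mod2}, and its $E^{2}$-page is the content of \aref{thm:KQ2-E2}. The theorem therefore reduces to two assertions: first, that the spectral sequence degenerates at $E^{2}$ (so that it converges strongly by \aref{cor:strong-conv}); and second, that the $E^{\infty}$-entries supplied by \aref{thm:KQ2-E2} assemble, in each congruence class of $n$ modulo $8$ and each weight $w\in\{0,1,2,3\}$, into the filtration quotients of $\pi_{n+2w,w}(\KQ/2)$ listed in the four tables.

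For degeneration I would argue as follows. A $\dd^{r}$-differential raises $2q-p$ by $2r+1$, hence by at least $5$ for $r\geq 2$; and by \aref{thm:KQ2-E2} combined with the description of $h^{\ast,\ast}(\OFS)$ in \aref{section:mcatSa} --- motivic cohomology in degree $\geq 3$ is carried by the real places, and the pertinent $\rho$-power cokernels stabilise to zero because $\mathcal{S}$ contains a finite place --- the $E^{2}$-page in a fixed weight is confined to the band $0\leq 2q-p\leq 6$. It follows that $\dd^{r}=0$ for $r\geq 3$ and that the only potentially nonzero $\dd^{2}$'s originate on the lines $2q-p\in\{0,1\}$. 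The classes on the line $2q-p=0$ are powers of $\tau$, hence permanent cycles by \aref{lem:KQ2-E284} and the pairing \eqref{equation:KQ2-pairing}, so they support no differential; the remaining $\dd^{2}$'s are ruled out by combining the $(8,4,0)$-periodicity induced by $\widetilde{\tau}^{4}\in E^{\infty}_{8,4,0}(\KQ/4)$ through the pairing \eqref{equation:KQ2-pairing} (cf.~\aref{theorem:KQ2-period}) with the vanishing of their targets in a fundamental domain of small degrees, where the dimension and Beilinson--Lichtenbaum bounds on $h^{\ast,\ast}(\OFS)$ apply. Hence $E^{2}=E^{\infty}$.

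With degeneration in hand, each $\pi_{n+2w,w}(\KQ/2)$ is a finite iterated extension of the groups $E^{\infty}_{n+2w+2q',\,q'+w,\,w}(\KQ/2)$ over the finitely many $q'$ with $2q'-n\in[0,6]$, displayed as the columns of the figures accompanying \aref{thm:KQ2-E2}. Reading these off and rewriting them through the low-weight arithmetic identifications from \aref{section:mcatSa} --- $h^{0,q}\cong\Z/2$, $h^{1,q}\cong\OFS^{\times}/2\oplus {_{2}}\Pic(\OFS)$, $h^{2,1}\cong\Pic(\OFS)/2$, $h^{2,q}\cong\Pic(\OFS)/2\oplus {_{2}}\Br(\OFS)$ for $q>1$, the descriptions of $h^{3,q}/\rho^{2}$, $h^{4,q}/\rho^{3}$ and of the kernels $\ker(\rho^{i}_{a,q})$, together with their $2$-ranks in terms of $r_{1}$, $r_{2}$, $s_{\mathcal{S}}$, $t_{\mathcal{S}}$ and $t_{\mathcal{S}}^{+}$ --- yields the four tables; the case $w=0$ is \aref{theorem:mod2hermitiankgroupsintroduction}, and the $8$-periodic shape of each table reflects the isomorphism \eqref{equation:8foldway} induced by $\widetilde{\tau}^{4}$. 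The principal obstacle is the degeneration step: unlike over a field of finite virtual cohomological dimension, the motivic cohomology of $\OFS$ does not vanish in high degrees, so one must carefully track the persistent real-place classes and verify that there is genuinely no room for higher differentials.
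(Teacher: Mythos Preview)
Your proposal is essentially correct and follows the same route as the paper: compute $E^{2}$ via \aref{thm:KQ2-E2}, show $E^{2}=E^{\infty}$ over $\OFS$ by a degree/band argument (the paper compresses this to the phrase ``for degree reasons''), invoke conditional convergence from \aref{theorem:best-conv2} to get strong convergence, and read off the filtration quotients using the $(8,4,0)$-periodicity induced by $\widetilde{\tau}^{4}$.

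Two points in your degeneration argument need sharpening. First, the band $0\leq 2q-p\leq 6$ is correct, but its justification is not that ``$\mathcal{S}$ contains a finite place''. The relevant input is \aref{lem:H2R-surj}: the map $h^{2,q}(\OFS)\to\bigoplus^{r_{1}}h^{2,q}(\R)$ is surjective for $q\geq 2$. Combined with the fact that $h^{p,q}(\OFS)\cong\bigoplus^{r_{1}}h^{p,q}(\R)$ for $p\geq 3$, this forces every $E^{2}$-entry of the form $h^{c,q'}/\rho^{i}$ or $\ker(\rho^{j}_{c,q'})$ with $c\geq 3$ and $c-i\geq 2$ to vanish; tracing through \aref{tbl:KQ2-answer1} this kills everything at $2q-p\geq 7$ and, crucially, all $d^{2}$-targets at $2q-p\in\{5,6\}$. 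The paper cites \aref{lem:H2R-surj} explicitly for this reason.

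Second, your claim that classes on the line $2q-p=0$ are permanent ``by \aref{lem:KQ2-E284}'' is not a valid deduction: that lemma concerns $\widetilde{\tau}^{4}\in E^{\infty}_{8,4,0}(\KQ/4)$, and the pairing \eqref{equation:KQ2-pairing} only tells you that multiplication by $\widetilde{\tau}^{4}$ commutes with differentials, not that arbitrary $\tau$-powers in $E^{2}(\KQ/2)$ are permanent. The honest argument is the one you already sketch for $2q-p=1$: the $d^{2}$-targets at $2q-p=5$ are subquotients of $h^{c,q'}$ with $c\in\{3,4,5\}$ and the specific $\rho$-quotients appearing there all vanish over $\OFS$ by the surjectivity above. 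So both potential $d^{2}$'s die because their targets are zero, not because their sources are a priori permanent.
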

\begin{proof} 
We have $\vcd(F)=\vcd(\OO_{F,\mathcal{S}})=2$.
The proofs of \aref{thm:KQ2-E2} and \aref{theorem:KQ2-period} apply to $\OO_{F,\mathcal{S}}$  since there are no nontrivial differentials exiting or entering $h^{2,1}\in E^{1}_{0,1,0}(\KQ/2)$.
Lemma \ref{lem:H2R-surj} shows the naturally induced map $h^{p,q}(\OO_{F,\mathcal{S}})\to \bigoplus^{r_{1}}h^{p,q}(\R)$ is surjective for $q \geq 2$.
For degree reasons $E^{\infty}(\KQ/2)=E^{2}(\KQ/2)$.
The $8$-periodicity follows as in the proof of \aref{corollary:KQfields}.
\end{proof}

Our next aim is to compute the slice $\dd^{1}$-differentials for $\KW/2^{n}$ and $\KQ/2^{n}$ when $n\geq 2$.
By \eqref{equation:wittheoryslices} the slices of $\KW/2^{n}$ are given by 
\begin{equation}
\label{equation:KW/2nslices}
\s_{q}(\KW/2^{n}) 
\simeq 
\bigvee_{j} \Sigma^{q+j,q}\MZ/2,
\end{equation}
while \eqref{equation:hermitianktheoryslices} identifies the slices of $\KQ/2^{n}$ as
\begin{equation}
\label{equation:KQ2n-slices}
\s_{q}(\KQ/2^{n}) \simeq 
\begin{cases}
\Sigma^{2q,q}\MZ/2^{n} \vee \bigvee_{j < q} \Sigma^{q+j,q}\MZ/2 & q \text{ even } \\
\bigvee_{j \leq q} \Sigma^{q+j,q}\MZ/2 & q \text{ odd. }
\end{cases}
\end{equation}

The $\MZ$-module structure on the wedge product decomposition of $\s_{q}(\E/2^{n})$ is not unique.
For our calculational purposes we may make the exact same choices as in \aref{convention:kq-to-kt-slices}.

\begin{theorem}
\label{thm:KW2n-diff}
The restriction of the slice $\dd^{1}$-differential to the summand $\Sigma^{q + j,q}\MZ/2$ of $\s_{q}(\KW/2^{n})$ in \eqref{equation:KW/2nslices} is given by 
\begin{equation}
\dd^{1}(\KW/2^{n})(q,j)
= 
\begin{cases}
(\Sq^{3}\Sq^{1},0,\Sq^{2},0,0) & j\equiv 0\bmod 4 \\
(\Sq^{3}\Sq^{1},0,\Sq^{2},0,0) & j\equiv 1\bmod 4 \\
(\Sq^{3}\Sq^{1},0,\Sq^{2}+\rho\Sq^{1},0,\tau) & j\equiv 2\bmod 4 \\
(\Sq^{3}\Sq^{1},0,\Sq^{2}+\rho\Sq^{1},0,\tau) & j\equiv 3\bmod 4. 
\end{cases}
\label{equation:KW2n-diff1}
\end{equation}
\end{theorem}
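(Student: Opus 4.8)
The plan is to obtain \eqref{equation:KW2n-diff1} purely by naturality, comparing the slice $\dd^{1}$-differential of $\KW/2^{n}$ with the one for $\KW$ (from \cite[Theorem 6.3]{slices}) and the one for $\KW/2$ (from \aref{thm:KW2-diff}). The structural observation that makes this work is that for $n\geq 2$ multiplication by $2^{n-1}$ acts as zero on every slice $\s_{q}(\KW)\simeq\bigvee_{i\in\Z}\Sigma^{2i+q,q}\MZ/2$, these being wedges of mod $2$ motivic Eilenberg--MacLane spectra; this is precisely what fails for $n=1$ and is responsible for the extra terms in \aref{thm:KW2-diff}. There are natural maps $r\colon\KW/2^{n}\to\KW/2$ and $\iota\colon\KW/2\to\KW/2^{n}$ induced by maps of the defining cofiber sequences $\KW\xrightarrow{2^{n}}\KW\to\KW/2^{n}\to\Sigma^{1,0}\KW$ and $\KW\xrightarrow{2}\KW\to\KW/2\to\Sigma^{1,0}\KW$, compatible with the reductions $\Z/2^{n}\to\Z/2$ and $\Z/2\xrightarrow{2^{n-1}}\Z/2^{n}$ and with the canonical maps $i_{n}\colon\KW\to\KW/2^{n}$ and $\partial_{n}\colon\KW/2^{n}\to\Sigma^{1,0}\KW$. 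The first concrete step is to check, using the vanishing of $2^{n-1}$ on slices together with \aref{convention:kq-to-kt-slices}, that on slices $\s_{q}(i_{n})$ and $\s_{q}(\partial_{n})$ exhibit $\s_{q}(\KW/2^{n})\simeq\s_{q}(\KW)\vee\Sigma^{1,0}\s_{q}(\KW)$ as the inclusion of the ``even'' summands and the projection onto the ``odd'' summands of \eqref{equation:KW/2nslices}, that $\s_{q}(r)$ is the identity on the even summands and zero on the odd ones, and that $\s_{q}(\iota)$ is zero on the even summands and the identity on the odd ones.

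Next I would treat the even summands $\Sigma^{q+j,q}\MZ/2$ with $j$ even. Naturality of the slice $\dd^{1}$ with respect to $r$, together with the fact that $\s_{\ast}(r)$ is the projection onto the even summands, identifies the components of $\dd^{1}(\KW/2^{n})(q,j)$ landing in even target summands with those of $\dd^{1}(\KW/2)(q,j)$, which by \aref{thm:KW2-diff} are $(\Sq^{3}\Sq^{1},\Sq^{2},0)$ for $j\equiv 0$ and $(\Sq^{3}\Sq^{1},\Sq^{2}+\rho\Sq^{1},\tau)$ for $j\equiv 2\bmod 4$. To rule out components landing in odd target summands I would use naturality with respect to $\partial_{n}$: an even source summand maps to zero under $\s_{\ast}(\partial_{n})$, while $\s_{\ast}(\partial_{n})$ is an isomorphism on odd target summands, so those components of $\dd^{1}(\KW/2^{n})(q,j)$ must vanish. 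Reassembling into the quintuple notation yields the $j\equiv 0$ and $j\equiv 2$ lines of \eqref{equation:KW2n-diff1}.

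For the odd summands $\Sigma^{q+j,q}\MZ/2$ with $j$ odd I would instead use $\iota$, since $\s_{\ast}(\iota)$ realises these summands as the image of the odd summands of $\s_{\ast}(\KW/2)$. Naturality of $\dd^{1}$ with respect to $\iota$, together with the fact that $\s_{q+1}(\iota)$ annihilates the even target summands, shows that $\dd^{1}(\KW/2^{n})(q,j)$ equals the part of $\dd^{1}(\KW/2)(q,j)$ that lands in odd target summands. By \aref{thm:KW2-diff} the latter differential is $(\Sq^{3}\Sq^{1},\Sq^{2}\Sq^{1}+\Sq^{3},\Sq^{2},\rho+\tau\Sq^{1},0)$ for $j\equiv 1$ and $(\Sq^{3}\Sq^{1},\Sq^{2}\Sq^{1}+\Sq^{3},\Sq^{2}+\rho\Sq^{1},\tau\Sq^{1},\tau)$ for $j\equiv 3\bmod 4$; discarding in each case the two entries that land in even target summands ($\Sq^{2}\Sq^{1}+\Sq^{3}$ together with $\rho+\tau\Sq^{1}$, respectively $\tau\Sq^{1}$) leaves precisely $(\Sq^{3}\Sq^{1},0,\Sq^{2},0,0)$ and $(\Sq^{3}\Sq^{1},0,\Sq^{2}+\rho\Sq^{1},0,\tau)$, the $j\equiv 1$ and $j\equiv 3$ lines of \eqref{equation:KW2n-diff1}. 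Finally I would transport the formula from the prime field to an arbitrary field of characteristic $\neq 2$, and then to $\OFS$, by the base change argument used in \cite[Lemma 5.1]{slices}.

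The main obstacle is organisational rather than conceptual: one must track how the suspension $\Sigma^{1,0}$ toggles the parity of the slice summands so that each entry of a quintuple $\dd^{1}(\KW/2^{n})(q,j)$ is attributed to the correct target summand, and verify carefully that the slice-level maps $\s_{\ast}(i_{n})$, $\s_{\ast}(\partial_{n})$, $\s_{\ast}(r)$ and $\s_{\ast}(\iota)$ really are the claimed inclusions of, or projections onto, the parity summands under \aref{convention:kq-to-kt-slices}. It is at exactly this point that the hypothesis $n\geq 2$ enters, through the vanishing of $2^{n-1}$ on the mod $2$ slices of $\KW$; for $n=1$ the surviving $2^{0}=1$ contribution is what produces the additional Steenrod-operation terms recorded in \aref{thm:KW2-diff}.
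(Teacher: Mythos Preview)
Your approach is genuinely different from the paper's and, \emph{if} the claimed form of $\s_{q}(\iota)$ could be verified, would be considerably shorter. The difficulty is precisely at the point you flag yourself: \aref{convention:kq-to-kt-slices} only pins down the slice-level behaviour of $i_{n}\colon\KW\to\KW/2^{n}$ and $\partial_{n}\colon\KW/2^{n}\to\Sigma^{1,0}\KW$; it does \emph{not} determine $\s_{q}(\iota)$ uniquely. From $\iota\circ i_{1}=i_{n}\circ 2^{n-1}$ and $\partial_{n}\circ\iota=\partial_{1}$ one indeed gets that $\s_{q}(\iota)$ vanishes on even summands and projects identically to odd summands, but there is still room for an off-diagonal component $\Sigma^{q+j,q}\MZ/2\to\Sigma^{q+j+1,q}\MZ/2$ given by $\Sq^{1}$ (this is the only weight-$0$ operation available, by \aref{lem:steenrod-alg}). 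Any two splittings of $\s_{q}(\KW/2^{n})$ satisfying the convention differ by exactly such an automorphism $\bigl(\begin{smallmatrix}\id&\alpha\\0&\id\end{smallmatrix}\bigr)$, and conjugating the diagonal differential $\bigl(\begin{smallmatrix}D_{EE}&0\\0&D_{OO}\end{smallmatrix}\bigr)$ by it introduces an off-diagonal term $D_{EE}\alpha_{q}-\alpha_{q+1}D_{OO}$. So the vanishing of the even-target components for odd $j$ (the content of \eqref{equation:KW2n-diff1}) is splitting-dependent, and your naturality argument with $\iota$ only yields it after you have established that the off-diagonal $\Sq^{1}$ is absent for the particular splitting in use. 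You have not done this, and it is not a formality.

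The paper avoids this issue by never invoking $r$ or $\iota$. For even $j$ it uses only $i_{n}$ (as you could too). For odd $j$ it uses $\partial_{n}$ to determine the odd-target components, then parametrises the possible even-target components via \aref{lem:steenrod-alg} and $\dd^{1}\circ\dd^{1}=0$ as $a(\Sq^{2}\Sq^{1}+\Sq^{3})$ and $\phi+a\tau\Sq^{1}$ (similarly $a'$). It kills $a,a'$ by comparing with the top $\MZ/2^{n}$-summand of $\s_{q}(\KQ/2^{n})$, whose image in $\s_{q}(\KW/2^{n})$ is $(\partial^{2^{n}}_{2},\pr^{2^{n}}_{2})$, and finally shows $\phi=0$ by computing the resulting $E^{2}$-page over completions of $\Q$ and finite fields and checking against the known mod $2^{n}$ Witt groups. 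That last step is exactly the piece your $\iota$-argument is trying to shortcut; without a verification that $\s_{q}(\iota)$ is diagonal, the shortcut does not go through.
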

\begin{proof}
According to \cite[Theorem 6.3]{slices} the corresponding slice $\dd^{1}$-differential of $\KW$ is given by 
\[
\dd^{1}(\KW)(q,j)
= 
\begin{cases}
(\Sq^3\Sq^{1},\Sq^{2},0) & j \equiv 0 \bmod 4 \\
(\Sq^3\Sq^{1},\Sq^{2}+\rho\Sq^{1},\tau) & j \equiv 2 \bmod 4.
\end{cases}
\]

When $j$ is even it follows that $\dd^{1}(\KW)(q,j)=\dd^{1}(\KW/2)(q,j)$.

When $j$ is odd, $\pr\circ \dd^{1}(\KW)(q,j) = \pr\circ \dd^{1}(\KW/2)(q,j)$, where $\pr$ is the projection of $\Sigma^{1,0}\s_{q+1}(\KW/2)$ to the odd summands.
Hence, by \aref{lem:steenrod-alg} and the vanishing $\dd^{1}\circ\dd^{1}=0$, 
we have
\[
\dd^{1}(\KW/2^{n})(q,j)
=
\begin{cases}
(\Sq^3\Sq^{1},0,\Sq^{2},0,0) & j \equiv 0 \bmod 4 \\
(\Sq^{3}\Sq^{1},a(\Sq^{2}\Sq^{1}+\Sq^{3}),\Sq^{2},\phi+a\tau\Sq^{1},0) & j\equiv 1\bmod 4 \\
(\Sq^3\Sq^{1},0,\Sq^{2}+\rho\Sq^{1},0,\tau) & j \equiv 2 \bmod 4 \\
(\Sq^{3}\Sq^{1},a^{\prime}(\Sq^{2}\Sq^{1}+\Sq^{3}),\Sq^{2}+\rho\Sq^{1},
\phi + a \rho +a\tau\Sq^{1},\tau) & j\equiv 3\bmod 4,
\end{cases}
\]
for some $a, a' \in h^{0,0}$ and $\phi\in h^{1,1}$.
Consider the commutative diagram for $q$ even
\[
\begin{tikzcd}
\s_{q}(\KQ/2^{n}) \ar[r]\ar[d, "\dd^{1}(\KQ/2)(q)"] & \s_{q}(\KW/2^{n})\ar[d, "\dd^{1}(\KW/2)(q)"] \\
\Sigma^{1,0}\s_{q+1}(\KQ/2^{n}) \ar[r] & \Sigma^{1,0}\s_{q+1}(\KW/2^{n}).
\end{tikzcd}
\]
The top summand $\Sigma^{2q,q}\MZ/2^{n}$ maps by $(\partial^{2^{n}}_{2}, \pr^{2^{n}}_{2})$ and hence trivially to the summand $\Sigma^{2q+4,q+1}\MZ/2$ of $\Sigma^{1,0}\s_{q+1}(\KW/2^{n})$, 
i.e.,
\[
0 = 
\begin{cases}
a(\Sq^{2}\Sq^{1} + \Sq^3)\partial^{2^{n}}_{2} + \Sq^3\Sq^{1}\pr^{2^{n}}_{2} & q \equiv 0\bmod 4 \\
a^{\prime}(\Sq^{2}\partial^{2^{n}}_{2} + \Sq^3)\Sq^{1} + \Sq^3\Sq^{1}\pr^{2^{n}}_{2} & q \equiv 2 \bmod 4.
\end{cases}
\]
This implies $a = a^{\prime} = 0$.
Next we show $\phi = 0$.
For $q'=q-w$ the $E^2$-page of the slice spectral sequence for $\KW/2^{n}$ over a finite field or a completion of $\Q$ takes the form
\begin{equation}
\label{equation:witt2n-E2}
E^2_{p,q,w}
\cong 
\begin{cases}
h^{q',q'}/\phi & p\equiv w\bmod 4 \\
\ker(\phi_{q',q'}) & p\equiv w+1\bmod 4 \\
0 & \text{otherwise}.
\end{cases}
\end{equation}
\aref{tbl:wittQ} and \aref{tbl:motQ} show this filtration is too small to produce the mod $2^{n}$ Witt groups if $\phi \neq 0$.
\end{proof}

In \eqref{equation:witt2n-E2}, 
$h^{q',q'}$ identifies with the quotient of $h^{q',q'} \oplus h^{q'-4,q'} \oplus h^{q'-8,q'} \oplus \dots$ by elements $(x_{1}, x_{2},\dots)$, 
where $x_{i}=\Sq^3\Sq^1 x_{i+1}$ for all $i\geq 1$.
To understand the product structure we write
\begin{align}
B &= 
\begin{pmatrix}
\Sq^3\Sq^1 & 0 & \dots \\
\tau & \Sq^3\Sq^1 & 0 & \ddots\\
0 & \tau & \Sq^3\Sq^1 & \ddots \\
\vdots & \ddots & \ddots & \ddots \\
\end{pmatrix}
\end{align}
 
\begin{align}
\hat{H}^{p,q}_n &= \frac{H_n^{p,q}\oplus h^{p-4,q}\oplus h^{p-8,q}\oplus\dots}
    {\begin{pmatrix}\delta\Sq^2\Sq^1 & 0 \\
        \tau & B \\
        0 & 
    \end{pmatrix}(h^{p-4,q-1}\oplus h^{p-8,q-1}\oplus\dots)}
\end{align}

\begin{align}
\hat{h}^{p,q} &= \frac{h^{p,q}\oplus h^{p-4,q}\oplus h^{p-8,q}\oplus\dots}
    {B (h^{p-4,q-1}\oplus h^{p-8,q-1}\oplus\dots)}.
\label{equation:hath2}
\end{align}

With this notation we can describe the $E^2$-page of the slice spectral sequence for $\KW/2^{n}$ over any field of characteristic unequal to $2$ and rings of $\mathcal{S}$-integers in number fields.

\begin{theorem}
\label{thm:KW2-E2n}
For $q'=q-w$ the $E^{2}$-page of the slice spectral sequence for $\KW/2^{n}$ over a field $F$ of characteristic unequal to $2$ is given by 
\[
E^{2}_{p,q,w}(\KW/2^{n})
\cong 
\begin{cases}
\hat{h}^{q',q'} & p\equiv w\bmod 4 \\
\hat{h}^{q',q'} & p\equiv w+1\bmod 4 \\
0 & \text{otherwise}.
\end{cases}
\]
The same identifications hold over the ring of $\mathcal{S}$-integers in a number field with the exceptions
\[
E^{2}_{p,w+2,w}(\KW/2^{n}) 
\cong \begin{cases}
  \hat{h}^{2,2}/\tau & p - w \equiv 0, 1 \bmod 4 \\
  0 & p - w \equiv 2, 3 \bmod 4,
\end{cases}
\]
\[
E^{2}_{p,w+1,w}(\KW/2^{n}) 
\cong \begin{cases}
  \hat{h}^{1,1}\oplus h^{2,1} & p - w \equiv 0 \bmod 4 \\
  \hat{h}^{1,1} & p - w \equiv 1 \bmod 4 \\
  0 & p - w \equiv 2 \bmod 4 \\
  h^{2,1} & p - w \equiv 3 \bmod 4.
\end{cases}
\]
\end{theorem}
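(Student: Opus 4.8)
Following \aref{thm:KW2-E2}, the plan is to compute the homology of the slice $\dd^{1}$-differentials column by column, now using the much simpler differentials for $\KW/2^{n}$, $n\geq 2$, recorded in \aref{thm:KW2n-diff}. First I would apply $\pi_{p,\ast}$ to the slice decomposition \eqref{equation:KW/2nslices}: in weight $w$ this gives an $E^{1}$-page that is a finite direct sum of groups $h^{i,q'}$ with $q'=q-w$, and $\dd^{1}$ becomes a matrix $(d^{1}_{p,q,w}(i,j))_{i,j}$ with $d^{1}_{p,q,w}(i,j)\colon h^{i,q'}\to h^{j,q'+1}$, whose entries are read off from \eqref{equation:KW2n-diff1} together with the action of the motivic Steenrod algebra in \aref{tbl:Sq-action}. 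The key point is that for $n\geq 2$ the second and fourth components of \eqref{equation:KW2n-diff1} vanish, so the matrix is supported on only three diagonals --- coming from $\Sq^{3}\Sq^{1}$ (a $\rho^{4}\tau^{\bullet}$-multiple), from $\Sq^{2}$ respectively $\Sq^{2}+\rho\Sq^{1}$ (a $\rho^{2}\tau^{\bullet}$-multiple, up to a Bockstein), and from $\tau$-multiplication --- and it enjoys the same $(i,j)\mapsto(i+4,j+4)$, $q\mapsto q+1$ periodicity as in \eqref{equation:KW-diff-periodicity}.

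Next I would compute the kernel and image of each column of this banded matrix, exactly as in \aref{fig:KW2outline1} and \aref{tbl:KW2-diffs} but with the degenerate matrices. Because among the low-degree entries only the $\tau$-diagonal is nonzero, and it alternates with the $\Sq^{3}\Sq^{1}$- and $\Sq^{2}$-diagonals as one moves through consecutive columns, a direct check shows $E^{2}_{p,q,w}(\KW/2^{n})$ is concentrated in the residue classes $p\equiv w,w+1\bmod 4$ and there equals the quotient of $h^{q',q'}\oplus h^{q'-4,q'}\oplus\cdots$ by the image of the staircase map $B$ of \eqref{equation:hath2}, i.e. $\hat{h}^{q',q'}$ with $q'=q-w$, while it vanishes for $p\equiv w+2,w+3\bmod 4$. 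Since all slices of $\KW/2^{n}$ are wedges of $\Gm$-suspensions of $\MZ/2$, there is no $\MZ/2^{n}$-summand here, which is why the answer involves $\hat{h}^{p,q}$ rather than $\hat{H}^{p,q}_{n}$ --- the latter will only be needed for $\KQ/2^{n}$ in the next section.

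For the ring of $\mathcal{S}$-integers $\OFS$ one argues as in \aref{thm:KW-OFS} and \aref{thm:KW2-E2}. By a base change argument as in the proof of \cite[Lemma 5.1]{slices} the $\dd^{1}$-differentials take the same form, so the $E^{1}$-pages for $F$ and $\OFS$ agree except for the extra summand $h^{2,1}\cong\Pic(\OFS)/2$ in $E^{1}_{2p+1,w+1,w}(\KW/2^{n})$. No differential enters this class; its exiting differential in the relevant residue class is $\tau$-multiplication $h^{2,1}\to h^{2,2}$, which is injective by \aref{lem:pic-tau}. This, together with the replacement of $h^{2,2}$ by $h^{2,2}/\tau$ coming again from \aref{lem:pic-tau}, accounts for the stated weight-$1$ and weight-$2$ exceptions. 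The inputs $\phi=0$ and $a=a'=0$ needed for the form of $\dd^{1}$ were already established in \aref{thm:KW2n-diff} by comparison with \aref{tbl:wittQ} and \aref{tbl:motQ}, so nothing more is required there.

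I expect the main obstacle to be the bookkeeping of the infinite, $4$-periodically banded $\dd^{1}$-matrix: one must verify carefully that the interleaving of the $\tau$-diagonal with the $\Sq^{3}\Sq^{1}$- and $\Sq^{2}$-diagonals across consecutive columns collapses the homology onto the tower built on $h^{q',q'}$, reproduces precisely the defining quotient of $\hat{h}^{q',q'}$, and that the $\OFS$-exceptions at weights $1$ and $2$ do not propagate to higher weights.
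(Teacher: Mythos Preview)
Your approach is correct and matches the paper's, which in fact does not give an explicit proof for this theorem: the paper establishes the $\dd^{1}$-differentials in \aref{thm:KW2n-diff}, observes in \eqref{equation:witt2n-E2} that the homology over small fields takes the form $h^{q',q'}$ in the two relevant residue classes, then introduces $\hat{h}^{q',q'}$ to record the tower structure and states the theorem. Your plan to rerun the column-by-column computation of \aref{thm:KW2-E2} with the simpler $n\geq 2$ differentials is exactly what is intended. The one structural point worth making explicit is that, because the second and fourth components of \eqref{equation:KW2n-diff1} vanish, the $\dd^{1}$-complex splits as a direct sum of its even-$j$ and odd-$j$ parts, each of which is a copy of the $\KW$-complex (shifted by $0$ respectively $1$ in simplicial degree); this is why the answer for $n\geq 2$ is the \emph{same} group $\hat{h}^{q',q'}$ in both residue classes $p\equiv w$ and $p\equiv w+1$, in contrast to the $n=1$ case where the $\rho$-coupling produced $h^{q',q'}/\rho$ versus $\ker(\rho_{q',q'})$.
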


\begin{remark}
\label{rmk:KQ2n-diffs}
We note that $E^2\neq E^{\infty}$ for $\KW/2^{n}$ over $\R$ and $\Q_{2}$.
Over $\R$ there is a family of differentials on the $E^n$-page, 
see \aref{thm:KW2nR}, 
and over $\Q_{2}$ there is a nontrivial $d^2$-differential.
\end{remark}

\begin{theorem}\label{thm:diff-kq-mod2n}
\label{thm:KQ2n-diff}
For $l=1$ or $n$, 
the restriction of the slice $\dd^{1}$-differential to the summand $\Sigma^{q+j,q}\MZ/2^l$ of $\s_{q}(\KQ/2^{n})$ in \eqref{equation:KQ2n-slices} is given by
\begin{align*}
\dd^{1}(\KQ/2^{n})(q,j) 
& =  
\begin{cases}
(\Sq^{3}\Sq^{1},0,\Sq^{2},0,0) & q-1>j\equiv 0\bmod 4 \\
(\Sq^{3}\Sq^{1},0,\Sq^{2},0,0) & q-1>j\equiv 1\bmod 4 \\
(\Sq^{3}\Sq^{1},0,\Sq^{2}+\rho\Sq^{1},0,\tau) & q-1>j\equiv 2\bmod 4 \\
(\Sq^{3}\Sq^{1},0,\Sq^{2}+\rho\Sq^{1},0,\tau) & q-1>j\equiv 3\bmod 4, 
\end{cases} \\
\dd^{1}(\KQ/2^{n})(q,q-1)  
& =  
\begin{cases}
(\partial^{2}_{2^{n}}\Sq^{2}\Sq^{1},0,\Sq^{2},0,0) & q-1\equiv 0\bmod 4 \\
(\Sq^{3}\Sq^{1},0,\Sq^{2},0,0) & q-1\equiv 1\bmod 4 \\
(\partial^{2}_{2^{n}}\Sq^{2}\Sq^{1},0,\Sq^{2}+\rho\Sq^{1},0,\tau) & q-1\equiv 2\bmod 4 \\
(\Sq^{3}\Sq^{1},0,\Sq^{2}+\rho\Sq^{1},0,\tau) & q-1\equiv 3\bmod 4, 
\end{cases} \\
\dd^{1}(\KQ/2^{n})(q,q) & =  
\begin{cases}
(0,\Sq^{2}\partial^{2^{n}}_{2},\Sq^{2}\circ\pr^{2^{n}}_{2},0,0) & q\equiv 0\bmod 4 \\
(0,\inc^2_{2^{n}} \circ \Sq^{2}\Sq^{1},\Sq^{2},0,0) & q\equiv 1\bmod 4 \\
(0,\Sq^{2}\partial^{2^{n}}_{2},\Sq^{2}\circ\pr^{2^{n}}_{2},\tau\partial^{2^{n}}_{2},\tau\circ\pr^{2^{n}}_{2}) & q\equiv 2\bmod 4 \\
(0,\inc^2_{2^{n}} \circ \Sq^{2}\Sq^{1},\Sq^{2}+\rho\Sq^{1},0,\tau) & q\equiv 3\bmod 4. 
\end{cases} 
\end{align*}
The $i$th component of $\dd^{1}(\KQ/2^{n})(q,j)$ is a map $\Sigma^{q+j,q}\MZ/2^l\to\Sigma^{q+j+i,q+1}\MZ/2^{l'}$, 
$l'= 1$ or $n$. 
\end{theorem}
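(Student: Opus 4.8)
The plan is to run the argument in the proof of \aref{thm:diff-kq-mod2} with $\KW/2$ replaced by $\KW/2^{n}$, keeping track of the mod $2^{n}$ top summands present in the slices \eqref{equation:KQ2n-slices}. The two inputs are \aref{thm:KW2n-diff}, which identifies $\dd^{1}(\KW/2^{n})$, and the commutative square of slices
\[
\begin{tikzcd}
\s_{q}(\KQ/2^{n}) \ar[r] \ar[d, "\dd^{1}(\KQ/2^{n})"] & \s_{q}(\KW/2^{n}) \ar[d, "\dd^{1}(\KW/2^{n})"] \\
\Sigma^{1,0}\s_{q+1}(\KQ/2^{n}) \ar[r] & \Sigma^{1,0}\s_{q+1}(\KW/2^{n}),
\end{tikzcd}
\]
in which, by \aref{convention:kq-to-kt-slices}, every summand $\Sigma^{q+j,q}\MZ/2$ with $j<q$ maps identically to the corresponding summand of $\s_{q}(\KW/2^{n})$, while the mod $2^{n}$ top summand $\Sigma^{2q,q}\MZ/2^{n}$ of $\s_{q}(\KQ/2^{n})$ --- which occurs exactly when $q$ is even --- maps by $(\partial^{2^{n}}_{2},\pr^{2^{n}}_{2})$ onto the summands $\Sigma^{2q+1,q}\MZ/2$ and $\Sigma^{2q,q}\MZ/2$ of $\s_{q}(\KW/2^{n})$; the same holds with $q$ replaced by $q+1$. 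One also uses naturality of the slice differentials for the canonical maps relating $\KQ/2$, $\KQ/2^{n}$ and $\KW/2^{n}$ to control the mod $2$ reductions of the components.

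The verification splits into the ranges $j\le q-2$, $j=q-1$ and $j=q$. When $j\le q-2$ all summands that can carry a component of $\dd^{1}$ are mod $2$ summands on which both horizontal maps are identities, so $\dd^{1}(\KQ/2^{n})(q,j)$ is $\dd^{1}(\KW/2^{n})(q,j)$ from \aref{thm:KW2n-diff} with the components landing in summands $\Sigma^{q+j',q+1}\MZ/2$, $j'>q+1$, deleted. When $j=q-1$, the first (highest-degree) component of $\dd^{1}(\KW/2^{n})(q,q-1)$ lands, after the shift, in the top summand of $\Sigma^{1,0}\s_{q+1}(\KQ/2^{n})$, which is mod $2^{n}$ precisely when $q+1$ is even; commutativity of the square together with the relation $\pr^{2^{n}}_{2}\circ\partial^{2}_{2^{n}}=\Sq^{1}$ then forces that component to be $\partial^{2}_{2^{n}}\Sq^{2}\Sq^{1}$ (up to operations annihilated by $\partial^{2^{n}}_{2}$), and for $q+1$ odd there is no decoration. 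When $j=q$ one combines both effects: for $q$ even the source is the mod $2^{n}$ summand, the first horizontal map is $(\partial^{2^{n}}_{2},\pr^{2^{n}}_{2})$, and one pulls back $\dd^{1}(\KW/2^{n})(q,q)$ and $\dd^{1}(\KW/2^{n})(q,q+1)$ along it; for $q$ odd the source is mod $2$ but the top summand of $\Sigma^{1,0}\s_{q+1}(\KQ/2^{n})$ is mod $2^{n}$, into which a component is pushed along $\inc^{2}_{2^{n}}$, producing the term $\inc^{2}_{2^{n}}\circ\Sq^{2}\Sq^{1}$. In every case the residual ambiguity --- including the possibility of an $h^{1,1}$-valued term --- is removed by the relation $\dd^{1}\circ\dd^{1}=0$, exactly as in the proof of \aref{thm:KW2n-diff}, where it is combined with the numerical comparison of the $E^{2}$-page with the mod $2^{n}$ Witt and Milnor $K$-groups over finite fields and the completions $\Q_{2}$, $\Q_{\ell}$ via \aref{tbl:wittQ} and \aref{tbl:motQ}. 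A base change argument as in \cite[Lemma 5.1]{slices} extends the result from $F$ to $\OFS$.

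The main obstacle is purely one of organization: for each residue class of $j$ and of $q$ modulo $4$, each of the three ranges of $j$, and each parity of $q$, one must determine precisely which summands of $\s_{q}(\KQ/2^{n})$ and $\Sigma^{1,0}\s_{q+1}(\KQ/2^{n})$ are mod $2$ and which are mod $2^{n}$, and then pick the operation-theoretic representative of each component compatible with $\partial^{2^{n}}_{2}$, $\pr^{2^{n}}_{2}$, $\inc^{2}_{2^{n}}$, $\partial^{2}_{2^{n}}$ and $\Sq^{1}$. The genuinely delicate point is the borderline behaviour at $j=q-1$ and $j=q$: an operation that reads off as $\Sq^{3}\Sq^{1}$ or $\Sq^{2}\Sq^{1}$ for $\KW/2^{n}$ must acquire an ``integral decoration'' ($\partial^{2}_{2^{n}}\Sq^{2}\Sq^{1}$ or $\inc^{2}_{2^{n}}\circ\Sq^{2}\Sq^{1}$) as soon as its source or target becomes mod $2^{n}$, and confirming that these decorated operations --- and not merely operations of the same degree --- are the correct ones is where the constraints $\dd^{1}\circ\dd^{1}=0$ and the computations over $\Q_{2}$ and $\Q_{\ell}$ carry the argument.
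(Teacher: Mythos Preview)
Your proposal is correct and follows the paper's approach, which consists of the single sentence ``This follows from \aref{thm:KW2n-diff}'' --- in other words, one reruns the comparison argument of \aref{thm:diff-kq-mod2} with $\KW/2$ replaced by $\KW/2^{n}$, exactly as you describe. Your closing discussion of a residual $h^{1,1}$-valued ambiguity to be removed via $\dd^{1}\circ\dd^{1}=0$ and numerical comparison over $\Q_{2}$, $\Q_{\ell}$ is superfluous at this stage: those arguments were already spent in proving \aref{thm:KW2n-diff} (to show $\phi=0$ there), and once that theorem is in hand the commutative square together with the tables of maps $[\MZ/2,\Sigma^{p,q}\MZ/2^{n}]$ and $[\MZ/2^{n},\Sigma^{p,q}\MZ/2]$ pins down each component of $\dd^{1}(\KQ/2^{n})$ directly.
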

\begin{proof}
This follows from \aref{thm:KW2n-diff}.
\end{proof}

\begin{theorem}
\label{thm:KW2nR}
The only nontrivial $d^{i}$-differentials for $i\geq 2$ in the slice spectral sequence for $\KW/2^{n}$ over the real numbers $\R$ are $d^{n}\colon E^n_{p,q,w}\to E^{n}_{p-1,q+n,w}$ for $p - w \equiv 1 \bmod 4$.
The $E^{\infty}$-page is given by 
\[
E^{\infty}_{p,q,w}(\KW/2^{n})
\cong
\begin{cases}
h^{q',q'} & p - w \equiv 0 \bmod 4, q' = q-w < n \\
0 & \text{otherwise}.
\end{cases}
\]
\end{theorem}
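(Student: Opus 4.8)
The plan is to determine the $E^{2}$-page over $\R$ first, then use $\rho$-multiplication together with strong convergence to force all the higher differentials. For $n=1$ the claim is immediate from \aref{thm:KW2-E2}: over $\R$, where $h^{*,*}=\FF_{2}[\tau,\rho]$, one has $h^{q',q'}/\rho\cong\Z/2$ for $q'=0$ and $0$ for $q'>0$, while $\ker(\rho_{q',q'})=0$ since $\rho$-multiplication is injective. So I would assume $n\geq 2$ from now on.

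First I would compute $E^{2}$. By \aref{thm:KW2-E2n}, $E^{2}_{p,q,w}(\KW/2^{n})\cong\hat h^{q',q'}$ (with $q'=q-w$) when $p-w\equiv 0,1\bmod 4$ and vanishes otherwise, so the task reduces to evaluating $\hat h^{q',q'}$ over $h^{*,*}(\R)$. The numerator of $\hat h^{q',q'}$ in \eqref{equation:hath2} is a sum of $\lfloor q'/4\rfloor+1$ copies of $\Z/2$, one for each nonzero summand $h^{q'-4i,q'}$, and the matrix $B$ appearing there has, on the nonzero part, a subdiagonal of $\tau$-multiplications, each of which is an isomorphism between the relevant one-dimensional groups (see \aref{tbl:mot}, \aref{tbl:Sq-action}). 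Hence $B$ has full column rank, so $\hat h^{q',q'}\cong\Z/2$; moreover the class $[\rho^{q'}]$ of the top summand $h^{q',q'}$ is not in the image of $B$ (that image is spanned by vectors each involving a strictly lower summand), so it generates $\hat h^{q',q'}$. Thus $E^{2}_{p,q,w}(\KW/2^{n})\cong\Z/2$ exactly when $q'\geq 0$ and $p-w\equiv 0,1\bmod 4$, and is $0$ otherwise.

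Next I would record the $\FF_{2}[\rho]$-module structure. By the pairing \eqref{equation:h-mult}, multiplication by $\rho\in h^{1,1}$ gives $d^{r}$-linear maps $E^{r}_{p,q,w}(\KW/2^{n})\to E^{r}_{p,q+1,w}(\KW/2^{n})$; on $E^{2}$ this is cup product with $\rho$, sending $[\rho^{q'}]$ to $[\rho^{q'+1}]$ (nonzero by the same argument as above), hence an isomorphism for $q'\geq 0$. Consequently, for fixed $w$ and each of the two surviving residues $p\bmod 4$, the column $\bigoplus_{q}E^{2}_{p,q,w}(\KW/2^{n})$ is a free $\FF_{2}[\rho]$-module of rank one on a generator in $q'=0$. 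Since $d^{r}\colon E^{r}_{p,q,w}\to E^{r}_{p-1,q+r,w}$ lowers $p$ by one and $E^{2}$ is supported in $p-w\equiv 0,1\bmod 4$, the only possibly nonzero higher differentials run from the $p-w\equiv 1$ column into the $p-w\equiv 0$ column --- nothing enters the former or leaves the latter. So if $r_{0}\geq 2$ is minimal with $d^{r_{0}}\neq 0$, then $E^{r_{0}}=E^{2}$ in both columns, and $d^{r_{0}}$ is a nonzero $\FF_{2}[\rho]$-linear map between free rank-one modules raising the $q'$-grading by $r_{0}$, hence injective with image $\rho^{r_{0}}\FF_{2}[\rho]$. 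Therefore the $p-w\equiv 1$ column dies at $E^{r_{0}+1}$, the $p-w\equiv 0$ column becomes $\FF_{2}[\rho]/\rho^{r_{0}}$ --- that is, $\Z/2$ in each slot $0\leq q'<r_{0}$ --- and no further differentials are possible, so $E^{r_{0}+1}=E^{\infty}$.

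Finally I would use convergence to pin down $r_{0}=n$. Since $\vcd(\R)=1<\infty$, \aref{theorem:KWmod2nconvergence} gives strong convergence of the slice spectral sequence for $\KW/2^{n}$ to $\pi_{*,*}(\KW/2^{n})$; from $\KW\xrightarrow{2^{n}}\KW\to\KW/2^{n}$ and $W(\R)=\Z$ (so ${}_{2^{n}}W(\R)=0$) one gets $\pi_{p,w}(\KW/2^{n})\cong\Z/2^{n}$ for $p\equiv w\bmod 4$ and $0$ otherwise. Comparing orders of the filtration quotients in degree $p\equiv w$ gives $2^{r_{0}}=\#\Z/2^{n}$, hence $r_{0}=n$. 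This yields the asserted $d^{n}\colon E^{n}_{p,q,w}\to E^{n}_{p-1,q+n,w}$ for $p-w\equiv 1\bmod 4$ and the $E^{\infty}$-page $\Z/2\cong h^{q',q'}$ in slots $p-w\equiv 0\bmod 4$, $0\leq q'<n$, and $0$ elsewhere. The main obstacle is the first step: controlling the Steenrod operations entering $B$ over $\R$ well enough to see that $\hat h^{q',q'}\cong\Z/2$ with the top class as generator; once $E^{2}$ and its rank-one $\FF_{2}[\rho]$-module structure are in hand, the remaining steps are essentially formal given strong convergence.
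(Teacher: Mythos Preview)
Your proposal is correct and takes essentially the same approach as the paper: compute the $E^{2}$-page over $\R$ via \aref{thm:KW2-E2n}, use $\rho$-linearity from \eqref{equation:h-mult} to propagate any higher differential along a column, and compare orders with the known abutment $\pi_{p,w}(\KW/2^{n})$ to force $r_{0}=n$. The paper's proof is terser---it does not write out the computation of $\hat h^{q',q'}$ over $\R$ and argues the existence of a nontrivial differential directly from the vanishing of the abutment at $p-w\equiv 1\bmod 4$---but the logic is the same. One small remark: your justification that $[\rho^{q'}]$ is not in the image of $B$ (``each column involves a strictly lower summand'') is not quite a valid argument in general over $\FF_{2}$; the clean reason is that the columns $e_{i}+e_{i+1}$ span the even-weight subspace, or equivalently that $\rho$ is surjective on the numerator and hence on the one-dimensional cokernel.
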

\begin{proof}
Recall the abutment is given by 
\begin{equation}
\label{equation:KW2nabutment}
\KW_{p,w}(\R; \Z/2^{n}) 
\cong
\begin{cases}
\Z/2^{n} &  p - w \equiv 0 \bmod 4 \\
0 & \text{otherwise}.
\end{cases}
\end{equation}
If $E^2_{4k+1+w,w,w} = h^{0,0}$ does not support any differentials then $\KW_{p,w}(\R; \Z/2^{n})\neq 0$ for $p-w \equiv 1 \bmod 4$, 
a contradiction.
By \eqref{equation:h-mult} multiplication by $\rho \in h^{1,1} = \pi_{0,0}\s_{1}(\One)$ 
--- generator in the polynomial algebra $h^{\ast,\ast}$ ---
induces a map $E^r_{p,q,w} \to E^r_{p,q+1,w}$ that commutes with the differentials.
Thus, 
by $\rho$-linearity, 
if $E^r_{4k+1+w,w,w}$ supports a nontrivial $d^r$-differential then so does $E^r_{4k+1+w,w+q,w}$ for every $q \geq 0$.
If $r \neq n$ the terms on the $E^{\infty}$-page cannot produce the groups in \eqref{equation:KW2nabutment} by a cardinality count.
\end{proof}

\begin{remark}
\label{rmk:<-1>}
Since $\langle -1\rangle - \langle 1 \rangle = -2\in W(F)$,
\aref{lem:<-1>} implies $\rho$ maps to $-2\in\pi_{*,*}(\KW)$.
Hence we recover $\KW_{*,*}(\R; \Z/2^{n})$ from the associated graded (all the extensions are nontrivial).
\end{remark}

In the next result we let $a = 2q-p$, $q'=q-w$, $\overline{q}=(q \bmod 4) \in \{0,1,2,3 \}$, and set 
\begin{align}
R_{p,q,w} &= 
\begin{cases}
\hat{h}^{a-3-\overline{q}} & p - q - w \equiv 1 - \overline{q} \bmod 4 \\
\hat{h}^{a-4-\overline{q}} & p - q - w \equiv   - \overline{q} \bmod 4 \\
0 & \text{otherwise},
\end{cases}
\nonumber \\
A_{p,q,w} &= 
\begin{cases}
(\inc^2_{2^{n}}\Sq^2\Sq^1, \Sq^2 + \rho\Sq^1, \tau) \hat{h}^{a-3 - \overline{q},q'-1} \to H_n^{a - \overline{q},q'} \oplus \dots & p - q - w \equiv 1 - \overline{q} \bmod 4, \overline{q} = 0 \\
(\partial\Sq^2\Sq^1, \Sq^2 + \rho\Sq^1, \tau) \hat{h}^{a-4 - \overline{q},q'-1} \to H_n^{a+1 - \overline{q},q'} \oplus \dots & p - q - w \equiv  - \overline{q} \bmod 4, \overline{q} = 0  \\
(\Sq^3\Sq^1, \Sq^2 + \rho\Sq^1, \tau) \hat{h}^{a-3 - \overline{q},q'-1} \to h^{a+1 - \overline{q},q'} \oplus \dots & p - q - w \equiv 1 - \overline{q} \bmod 4, \overline{q} \neq 0  \\
(\Sq^3\Sq^1, \Sq^2 + \rho\Sq^1, \tau) \hat{h}^{a-4 - \overline{q},q'-1} \to h^{a - \overline{q},q'} \oplus \dots & p - q - w \equiv  - \overline{q} \bmod 4, \overline{q} \neq 0  \\
0 & \text{otherwise}.
\end{cases} \nonumber
\end{align}

\begin{remark}
$R_{p,q,w}$ and $\im A_{p,q,w}$ are used to identify $h^{a-\bar{q},q'}$ with $\rho^{4k}\tau^{-4k}h^{a-\bar{q}-4k,q'}$. 
This records the multiplicative structure on the $E^2$-page, 
which is important for determining higher differentials and extensions, 
cf.~\aref{thm:OFS-E8}. 
\end{remark}

\begin{theorem}
\label{thm:E2-KQ2n}
Over fields $F$ of $\Char(F)\neq 2$ and rings of $\mathcal{S}$-integers in number fields we identify the term $E^2_{p,q,w}(\KQ/2^{n})$ as follows.

For $q+w\leq p$ there is a direct sum decomposition 
$$
E^2_{p,q,w}
\cong
(\widetilde{E}^{2}_{p,q,w}
\oplus 
R_{p,q,w})/\im A_{p,q,w},
$$
where $\widetilde{E}^{2}_{p,q,w}$ is the first homology group of the following complexes:
\begin{align}
\label{equation:KQ2nE0}
\widetilde{E}^{2}_{p,q\equiv 0\bmod 4,w} &\cong
H_{1}(0 \to H_n^{a,q'} \xrightarrow{
\begin{pmatrix}
\Sq^2\partial^{2^{n}}_{2} \\
\Sq^2\pr
\end{pmatrix}
} h^{a+3, q'+1}\oplus h^{a+2,q'+1}), 
\\
\widetilde{E}^{2}_{p,q\equiv1\bmod 4,w} &\cong
H_{1}(H_n^{a-3,q'-1} \xrightarrow{
\begin{pmatrix}
\Sq^2\partial^{2^{n}}_{2} \\
\Sq^2\pr
\end{pmatrix}
}
h^{a,q'}\oplus {h}^{a-1,q'} \xrightarrow{
\begin{pmatrix}
\Sq^2 & 0 \\
0 & \Sq^2
\end{pmatrix}
}
h^{a+2,q'+1} \oplus h^{a+3,q'+1}), 
\\
\widetilde{E}^{2}_{p,q\equiv2 \bmod 4,w} &\cong
H_{1}(h^{a-3,q'-1}\oplus h^{a-4,q'-1} \xrightarrow{
\begin{pmatrix}
\inc^2_{2^{n}}\Sq^2\Sq^1 & \partial^2_{2^{n}}\Sq^2\Sq^1 \\
\Sq^2 & 0 \\
0 & \Sq^2
\end{pmatrix}
}
\\ &
H_n^{a,q'}\oplus h^{a-1,q'} \oplus {h}^{a-2,q'} \xrightarrow{
\begin{pmatrix}
\tau\partial^{2^{n}}_{2} & \Sq^2 & 0 \\
\tau\pr^{2^{n}}_{2} & 0 & \Sq^2
\end{pmatrix}
} h^{a+3, q+1}\oplus h^{a+2,q+1}), 
\\
\widetilde{E}^{2}_{p,q\equiv 3\bmod 4,w} & 
\cong 
\label{equation:KQ2nE3}
H_{1}(H_n^{a-3,q'-1}\oplus h^{a-4,q'-1}\oplus h^{a-5,q'-1} \xrightarrow{
\begin{pmatrix}
\tau\partial^{2^{n}}_{2} & \Sq^2 & 0\\
\tau\pr^{2^{n}}_{2} & 0 & \Sq^2
\end{pmatrix}
}
h^{a-2,q'}\oplus h^{a-3,q'} \to 0). 
\end{align}

For $q+w>p$ the canonical map $\KQ/2^{n} \to \KW/2^{n}$ induces an isomorphism of $E^2$-pages for $q + w > p$ over fields and for $q + w - 1 > p$ over rings of $\mathcal{S}$-integers. 
The $E^{2}$-page of $\KW/2^{n}$ is given in \aref{thm:KW2-E2n}.
\end{theorem}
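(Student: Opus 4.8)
The plan is to compute $E^2_{p,q,w}(\KQ/2^n)$ as the homology of the three-term complex
\[
E^1_{p+1,q-1,w}(\KQ/2^n)\xrightarrow{d^1}E^1_{p,q,w}(\KQ/2^n)\xrightarrow{d^1}E^1_{p-1,q+1,w}(\KQ/2^n),
\]
exactly as in the proofs of \aref{thm:KW2-E2} and \aref{thm:KQ2-E2}. Applying $\pi_{p,w}$ to \eqref{equation:KQ2n-slices} presents $E^1_{p,q,w}(\KQ/2^n)$ as a direct sum of copies of $h^{i,q-w}$ from the mod-$2$ summands together with one copy of $H_n^{2q-p,q-w}$ when $q$ is even; the three differentials above are those recorded in \aref{thm:KQ2n-diff}, whose entries are built from $\Sq^1$, $\Sq^2$, $\rho$, $\tau$ and the four Bockstein-type maps $\inc^2_{2^n}$, $\pr^{2^n}_2$, $\partial^2_{2^n}$, $\partial^{2^n}_2$, their action on $h^{\ast,\ast}$ being given by \aref{tbl:Sq-action}.

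First I would treat the range $q+w>p$. By \aref{convention:kq-to-kt-slices} the canonical map $\KQ/2^n\to\KW/2^n$ induces a map of slice spectral sequences which on $E^1$-terms is compatible with the summand-wise comparison of \eqref{equation:KQ2n-slices} with \eqref{equation:KW/2nslices}. When $q+w>p$ one has $2q-p>q-w$, so $H_n^{2q-p,q-w}=0$ and the top mod-$2^n$ summand contributes nothing to $\pi_{p,w}$; hence $E^1_{p,q,w}(\KQ/2^n)\to E^1_{p,q,w}(\KW/2^n)$ is an isomorphism, and by the identical form of the differentials on the summands with $j<q$ in \aref{thm:KQ2n-diff} and \aref{thm:KW2n-diff} the differentials agree. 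Over fields this already gives $E^2_{p,q,w}(\KQ/2^n)\cong E^2_{p,q,w}(\KW/2^n)$; over $\OFS$ the anomalous summand $h^{2,1}\subseteq E^1_{0,1,0}$ forces the slightly stronger hypothesis $q+w-1>p$, guaranteeing that the entering $d^1$ from $(p+1,q-1,w)$ is also of $\KW$-type. In either case \aref{thm:KW2-E2n} finishes this range.

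For $q+w\leq p$ I would split $E^1_{p,q,w}(\KQ/2^n)$ into a finite ``top block'' and a ``$\KW/2^n$-tail''. Ordering summands by simplicial degree and writing $a=2q-p$, $q'=q-w$, the top block consists of $H_n^{a,q'}$ together with the one or two mod-$2$ summands immediately below it which \aref{thm:KQ2n-diff} links to $H_n^{a,q'}$ through the maps $\inc^2_{2^n}\Sq^2\Sq^1$, $\partial^2_{2^n}\Sq^2\Sq^1$, $\Sq^2\partial^{2^n}_2$, $\Sq^2\circ\pr^{2^n}_2$, $\tau\partial^{2^n}_2$ and $\tau\circ\pr^{2^n}_2$; the tail consists of the remaining mod-$2$ summands $h^{i,q'}$ with $i$ below $a-3-\overline{q}$, on which $d^1$ acts precisely as the $\KW/2^n$ differential of \aref{thm:KW2n-diff}. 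The homology of the top block, computed by inspection separately in each residue class $q\equiv0,1,2,3\bmod4$, is the group $\widetilde{E}^2_{p,q,w}$ described by the complexes \eqref{equation:KQ2nE0}--\eqref{equation:KQ2nE3}; the homology of the tail reproduces the $\hat{h}$-description of \aref{thm:KW2-E2n}, namely $R_{p,q,w}/\im A_{p,q,w}$, where $A_{p,q,w}$ records the image of the entering $\KW$-type differential landing among the top-block summands and the matrix block $B$ performs the $\rho^{4k}\tau^{-4k}$ identifications. Since the top block and the tail interact only through $A_{p,q,w}$, the homology of the whole complex is $(\widetilde{E}^2_{p,q,w}\oplus R_{p,q,w})/\im A_{p,q,w}$. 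Over $\OFS$ one invokes \aref{lem:H2R-surj} as in \aref{theorem:mod2hermitiankgroups} to see that no extra differential enters or leaves the anomalous class $h^{2,1}$, and the boundary case $q+w=p+1$ is handled by pairing the $\KQ$-type entering differential with the $\KW$-type exiting differential, as in the proof of \aref{thm:KQ2-E2}.

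The main obstacle will be the combinatorics of the top block: correctly isolating the finite mod-$2^n$ part from the $\KW/2^n$-tail in each of the four residue classes, following the Bockstein maps through the relevant matrix products, and checking that the relations among $\Sq^1$, $\Sq^2$, $\inc^2_{2^n}$, $\pr^{2^n}_2$, $\partial^2_{2^n}$, $\partial^{2^n}_2$ --- together with $d^1\circ d^1=0$ --- make the complexes \eqref{equation:KQ2nE0}--\eqref{equation:KQ2nE3} have the stated homology. A secondary point is pinning down the coefficient of the potential $\rho\Sq^1$-term in the differential by a base-change comparison with finite fields and completions of $\Q$, using \aref{tbl:wittQ} and \aref{tbl:motQ}; this is already carried out in \aref{thm:KW2n-diff} and \aref{thm:KQ2n-diff} and only needs to be invoked.
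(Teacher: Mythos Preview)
Your proposal is correct and follows essentially the same approach as the paper: split $E^1_{p,q,w}(\KQ/2^n)$ into a finite ``top block'' around the $H_n$-summand and a $\KW/2^n$-tail, compute the homology of each, and use the comparison with $\KW/2^n$ in the range $q+w>p$. The paper's proof carries out the explicit matrix computation for the two residue classes $q\equiv 0,3\bmod 4$, using \aref{tbl:Sq-action} to verify that the $\Sq^3\Sq^1$ and $\Sq^2$ entries never act nontrivially simultaneously, which is precisely what makes your top-block/tail decomposition clean; the other two cases are left to the reader, as you do.
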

\begin{proof}
This follows by inspection of the differentials similarly to the proof of \aref{thm:KQ2-E2}.
To give the gist of the argument we discuss a few special cases.

When $q \equiv 0 \bmod 4$ the group $E^2_{p,q,w}$ is the homology of the complex:
\begin{align*}
h^{a-3,q'-1}\oplus h^{a-4,q'-1} \oplus h^{a-5,q'-1} \oplus \dots \\
\xrightarrow  
{\tiny
\begin{pmatrix}
\inc^{2}_{2^{n}}\Sq^2\Sq^1 & \partial^{2}_{2^{n}}\Sq^2\Sq^1 & 0 & \dots \\
\Sq^2 + \rho\Sq^1 & 0 & \Sq^3\Sq^1 & 0  & \dots \\
0 & \Sq^2 + \rho\Sq^1 & 0 & \Sq^3\Sq^1 & 0 & \dots \\
\tau & 0 & \Sq^2 & 0 & \Sq^3\Sq^1 & 0 & \dots \\
0 & \tau & 0 & \Sq^2 & 0 & \Sq^3\Sq^1 & 0 & \dots \\
0 & 0 & 0 & 0 & \Sq^2 + \rho\Sq^1 & 0 & \Sq^3\Sq^1 & 0 & \dots \\
0 & 0 & 0 & 0 & 0 & \Sq^2+\rho\Sq^1 & 0 & \Sq^3\Sq^1 & 0 & \dots \\
0 & 0 & 0 & 0 & \tau & 0 & \Sq^2 & 0 & \Sq^3\Sq^1 & 0 & \dots \\
\vdots & \vdots & \ddots & \ddots & \ddots & \ddots & \ddots & \ddots & \ddots & \ddots
\end{pmatrix}
} \\
H_n^{a,q'} \oplus h^{a-1,q'} \oplus h^{a-2,q'} \oplus \dots  \\
\xrightarrow{\tiny
\begin{pmatrix}
\Sq^2\partial^{2^{n}}_{2} & \Sq^3 \Sq^1 & 0 & \dots \\
\Sq^2\pr^{2^{n}}_{2} & 0 & \Sq^3\Sq^1 & 0  & \dots \\
0 & \Sq^2 + \rho\Sq^1 & 0 & \Sq^3\Sq^1 & 0 & \dots \\
0 & 0 & \Sq^2 + \rho\Sq^1 & 0 & \Sq^3\Sq^1 & 0 & \dots \\
0 & \tau & 0 & \Sq^2 & 0 & \Sq^3\Sq^1 & 0 & \dots \\
0 & 0 & \tau & 0 & \Sq^2 & 0 & \Sq^3\Sq^1 & 0 & \dots \\
0 & 0 & 0 & 0 & 0 & \Sq^2 + \rho\Sq^1 & 0 & \Sq^3\Sq^1 & 0 & \dots \\
\vdots & \vdots & \ddots & \ddots & \ddots & \ddots & \ddots & \ddots & \ddots & \ddots
\end{pmatrix}
} \\
h^{a+3,q'+1} \oplus h^{a+2,q'+1} \oplus h^{a+1,q'+1} \oplus \dots 
\end{align*}
In the latter matrix, 
a $\Sq^2$ to the right of $\tau$ and a $\Sq^3\Sq^1$ above $\tau$ cannot both act nontrivally, 
i.e., 
$\Sq^2(\tau^{p+2})$ and $\Sq^3\Sq^1(\tau^{p})$ are never nontrivial simultaneously,
cf.~\aref{tbl:Sq-action}. 
We find the kernel is given by 
\begin{align}
\label{equation:q=0kerKQ2n}
\ker(\begin{pmatrix}
\Sq^2\partial^{2^{n}}_{2}  \\
\Sq^2\pr^{2^{n}}_{2}
\end{pmatrix} : & \, H_n^{a,q'} \to h^{a+3,q'+1}\oplus h^{a+2,q'+1}) \\
&\oplus(\tau^{-1}\Sq^2,1)(h^{a-3,q'}\oplus h^{a-7,q'}\oplus\dots)  \\
&\oplus(\tau^{-1}\Sq^2,1)(h^{a-4,q'}\oplus h^{a-9,q'}\oplus\dots), 
\end{align}
and the image by
\begin{align}
\label{equation:q=0imKQ2n}
\begin{pmatrix}
\inc^{2}_{2^{n}}\Sq^2\Sq^1 & \partial^{2}_{2^{n}}\Sq^2\Sq^1 & 0 &  \\
\Sq^2 + \rho\Sq^1 & 0 & \Sq^3\Sq^1 & 0\\
0 & \Sq^2+\rho\Sq^1 & 0 & \Sq^3\Sq^1 \\
\tau & 0 & \Sq^2 & 0 \\
0 & \tau & 0 & \Sq^2 
\end{pmatrix}(&h^{a-3,q'-1}\oplus h^{a-4,q'-1}\oplus h^{a-5,q'-1}\oplus h^{a-6,q'-1}) \\
& + (\Sq^3\Sq^1, \Sq^2 + \rho\Sq^1, \tau)(h^{a-7,q'-1} \oplus h^{a-11,q'-1} \oplus \dots)  \\
& + (\Sq^3\Sq^1, \Sq^2 + \rho\Sq^1, \tau)(h^{a-8,q'-1} \oplus h^{a-12,q'-1} \oplus \dots). 
\end{align}
Here $\Sq^2\Sq^1$ above $\tau$ and $\Sq^2$ to the left of $\tau$ cannot both act nontrivially simultaneously, 
hence the image contains $\im A_{p,q,w}$.
Here $\Sq^3\Sq^1$ acts (non)trivially on $h^{a-j- 4k,q'-1}$ depending on $p-q-w \bmod 4$.
The last terms in \eqref{equation:q=0kerKQ2n} either cancel the corresponding terms in \eqref{equation:q=0imKQ2n} or give rise to $\hat{h}^{a-j,q'}$, $j=3-\bar{q}, 4-\bar{q}$ 
(recall $\Sq^3\Sq^1(\tau^{q}) \neq 0$ if and only if $q \equiv 3 \bmod 4$, cf.~\aref{tbl:Sq-action}).
This produces $R_{p,q,w}$ and $\im A_{p,q,w}$ connects it to the first term of the kernel.
In this way we arrive at the complex in \eqref{equation:KQ2nE0}.

When $q \equiv 3 \bmod 4$ the group $E^2_{p,q,w}$ is the homology of the complex:
\begin{align*}
H_n^{a-3,q'-1}\oplus h^{a-4,q'-1} \oplus h^{a-5,q'-1} \oplus \dots \\
\xrightarrow  
{\tiny
\begin{pmatrix}
\Sq^2\partial^{2^{n}}_{2} & \Sq^3 \Sq^1 & 0 & \dots \\
\Sq^2\pr^{2^{n}}_{2} & 0 & \Sq^3\Sq^1 & 0  & \dots \\
\tau\partial^{2^{n}}_{2} & \Sq^2 & 0 & \Sq^3\Sq^1 & 0 & \dots \\
\tau\pr^{2^{n}}_{2} & 0 & \Sq^2 & 0 & \Sq^3\Sq^1 & 0 & \dots \\
0 & 0 & 0 & \Sq^2 + \rho\Sq^1 & 0 & \Sq^3\Sq^1 & 0 & \dots \\
0 & 0 & 0 & 0 & \Sq^2 + \rho\Sq^1 & 0 & \Sq^3\Sq^1 & 0 & \dots \\
0 & 0 & 0 & \tau & 0 & \Sq^2 & 0 & \Sq^3\Sq^1 & 0 & \dots \\
0 & 0 & 0 & 0 & \tau & 0 & \Sq^2 & 0 & \Sq^3\Sq^1 & 0 & \dots \\
\vdots & \vdots & \ddots & \ddots & \ddots & \ddots & \ddots & \ddots & \ddots & \ddots
\end{pmatrix}
} \\
h^{a,q'} \oplus h^{a-1,q'} \oplus h^{a-2,q'} \oplus \dots  \\
\xrightarrow{\tiny
\begin{pmatrix}
\inc^{2}_{2^{n}}\Sq^2\Sq^1 & \partial^{2}_{2^{n}}\Sq^2 \Sq^1 & 0 & \dots \\
\Sq^2 + \rho\Sq^1 & 0 & \Sq^3\Sq^1 & 0  & \dots \\
0 & \Sq^2 + \rho\Sq^1 & 0 & \Sq^3\Sq^1 & 0 & \dots \\
\tau & 0 & \Sq^2 & 0 & \Sq^3\Sq^1 & 0 & \dots \\
0 & \tau & 0 & \Sq^2 & 0 & \Sq^3\Sq^1 & 0 & \dots \\
0 & 0 & 0 & 0 & \Sq^2 + \rho\Sq^1 & 0 & \Sq^3\Sq^1 & 0 & \dots \\
0 & 0 & 0 & 0 & 0 & \Sq^2 + \rho\Sq^1 & 0 & \Sq^3\Sq^1 & 0 & \dots \\
\vdots & \vdots & \ddots & \ddots & \ddots & \ddots & \ddots & \ddots & \ddots & \ddots
\end{pmatrix}
} \\
H_n^{a+3,q'+1} \oplus h^{a+2,q'+1} \oplus h^{a+1,q'+1} \oplus \dots 
\end{align*}
Here a $\Sq^2$ to the right of $\tau$ and a $\Sq^3\Sq^1$ above $\tau$ cannot both act nontrivally;
$\Sq^2(\tau^{p+2})$ and $\Sq^3\Sq^1(\tau^{p})$ are never nontrivial simultaneously, 
cf.~\aref{tbl:Sq-action}.
We find the kernel is given by 
\begin{align}
\label{equation:q=3kerKQ2n}
\ker(\begin{pmatrix}
\inc^{2}_{2^{n}} \Sq^2\Sq^1 & \partial^{2}_{2^{n}}\Sq^2\Sq^1 & 0 & 0 \\
\tau & 0 & \Sq^2 & 0 \\
0 & \tau & 0 & \Sq^2
\end{pmatrix}: & \, h^{a,q'}\oplus h^{a-1,q'}
\oplus h^{a-2,q'}\oplus h^{a-3,q'} \\
&\to H_n^{a+3,q'+1}\oplus h^{a,q'+1}\oplus h^{a-1,q'+1})  \\
&\oplus(\tau^{-1}\Sq^2,1)(h^{a-6,q'}\oplus h^{a-10,q'}\oplus\dots)  \\
&\oplus(\tau^{-1}\Sq^2,1)(h^{a-7,q'}\oplus h^{a-11,q'}\oplus\dots), 
\end{align}
and the image by
\begin{align}
\label{equation:q=3imKQ2n}
\begin{pmatrix}
\Sq^2\partial^{2^{n}}_{2} & \Sq^3\Sq^1 & 0 \\
\Sq^2\pr^{2^{n}}_{2} & 0 & \Sq^3\Sq^1 \\
\tau\partial^{2^{n}}_{2} & \Sq^2 & 0 \\
\tau\pr^{2^{n}}_{2} & 0 & \Sq^2
\end{pmatrix}(&H_n^{a-3,q'-1}\oplus h^{a-4,q'-1}\oplus h^{a-5,q'-1}) \\
& + (\Sq^3\Sq^1, \Sq^2 + \rho\Sq^1, \tau)(h^{a-6,q'-1} \oplus h^{a-10,q'-1} \oplus \dots)  \\
& + (\Sq^3\Sq^1, \Sq^2 + \rho\Sq^1, \tau)(h^{a-7,q'-1} \oplus h^{a-11,q'-1} \oplus \dots). 
\end{align}
The action of $\Sq^3\Sq^1$ on $h^{a-j- 4k,q'-1}$ depends on $p-q-w \bmod 4$.
The last terms in \eqref{equation:q=3kerKQ2n} and \eqref{equation:q=3imKQ2n} either cancel or give rise to $\hat{h}^{a-j,q'}$, 
$j = 3-\bar{q}, 4-\bar{q}$  (recall $\Sq^3\Sq^1(\tau^{q}) \neq 0$ if and only if $q \equiv 3 \bmod 4$, cf.~\aref{tbl:Sq-action}).
This produces $R_{p,q,w}$ and $\im A_{p,q,w}$ connects it to the first term of the kernel.
Hence $E^2_{p,q,w}$ is the homology of the complex
\begin{align*}
&H_n^{a-3,q'-1}\oplus h^{a-4,q'-1}\oplus h^{a-5,q'-1} \\
&\xrightarrow{
\begin{pmatrix}
\Sq^2\partial^{2^{n}}_{2} & \Sq^3\Sq^1 & 0 \\
\Sq^2\pr^{2^{n}}_{2} & 0 & \Sq^3\Sq^1 \\
\tau\partial^{2^{n}}_{2} & \Sq^2 & 0 \\
\tau\pr^{2^{n}}_{2^{n}} & 0 & \Sq^2
\end{pmatrix}
}
\\ 
&h^{a,q'}\oplus h^{a-1,q'}\oplus
h^{a-2,q'}\oplus h^{a-3,q'} \oplus R_{p,q,w} \\
& \xrightarrow{
\begin{pmatrix}
\inc^{2}_{2^{n}} \Sq^2\Sq^1 & \partial^{2}_{2^{n}}\Sq^2\Sq^1 & 0 & 0 \\
\tau & 0 & \Sq^2 & 0 \\
0 & \tau & 0 & \Sq^2
\end{pmatrix}
}\\
&H_n^{a+3, q'+1}\oplus h^{a,q'+1}\oplus h^{a-2,q'+1}
\end{align*}
modulo $\im A_{p,q,w}$.
Here $\Sq^2\Sq^1$ and $\Sq^2$ are connected via $\tau$ in the latter matrix and they are never nontrivial simultaneously.
This yields the complex in \eqref{equation:KQ2nE3}.
Similar arguments apply in the remaining cases.
\end{proof}

Next we specialize the computation in \aref{thm:E2-KQ2n} to the real numbers $\R$ and rings of $\mathcal{S}$-integers $\OFS$ in a number field $F$.
We determine the higher differentials by comparison with $\KW/2^{n}$ over $\R$.
A combination of \aref{lem:coh-R-mod2n} and \aref{thm:E2-KQ2n} yields the following description of the $E^2$-page.

\begin{corollary}
\label{cor:E2-F}
Over a number field $F$ and its ring of $\mathcal{S}$-integers $\OFS$ the $E^2$-page of the slice spectral sequence for $\KQ/2^{n}$ is given as follows.
Let $\epsilon$ be $0$ for a number field $F$ and $1$ for $\OFS$.
When $a = 2q - p < 0$ we have 
\[
E^{2}_{p,q,w}(\KQ/2^{n}) = 0.
\]
When $a = 2q-p\geq 6$ and $q + w - \epsilon \leq p$ we have
\[
E^2_{p,q,w}(\KQ/2^{n}) = \begin{cases}
  \hat{H}_n^{a,q'} & p - w \equiv 0, 1\bmod 4 \text{ and } q \equiv 0 \bmod 4 \\
  \hat{h}^{a - (a - q' \bmod 4),q'} & p - w \equiv 0, 1\bmod 4 \text{ and } q \not\equiv 0 \bmod 4 \\
  0 & \text{otherwise}.
\end{cases}
\]
When $q + w - \epsilon > p$ we have
\[
E^{2}_{p,q,w}(\KQ/2^{n})
\cong 
\begin{cases}
\hat{h}^{q',q'} & p - w \equiv 0,1 \bmod 4 \\
0 & \text{otherwise}.
\end{cases}
\]

The same identifications hold over $\OFS$ with the exceptions
\[
E^{2}_{p,w+2,w}(\KQ/2^{n}) 
\cong \begin{cases}
  \hat{h}^{2,2}/\tau & p - w \equiv 0, 1 \bmod 4 \\
  0 & p - w \equiv 2, 3 \bmod 4,
\end{cases}
\]
\[
E^{2}_{p,w+1,w}(\KQ/2^{n}) 
\cong \begin{cases}
  \hat{h}^{1,1}\oplus h^{2,1} & p - w \equiv 0 \bmod 4 \\
  \hat{h}^{1,1} & p - w \equiv 1 \bmod 4 \\
  0 & p - w \equiv 2 \bmod 4 \\
  h^{2,1} & p - w \equiv 3 \bmod 4.
\end{cases}
\]
The remaining groups in the region $0 \leq a= 2q-p < 6$ are given in \aref{thm:E2-KQ2n}.

The $E^2$-page sorted by the congruence class mod $4$ of $w$ are given in \aref{fig:KQ2n-E2-w0}, 
\aref{fig:KQ2n-E2-w1},
\aref{fig:KQ2n-E2-w2},
and
\aref{fig:KQ2n-E2-w3}.
\end{corollary}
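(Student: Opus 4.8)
The plan is to derive \aref{cor:E2-F} by specializing the general computation of \aref{thm:E2-KQ2n} to the arithmetic base schemes, feeding in the explicit structure of the mod $2^{n}$ motivic cohomology of $\R$ supplied by \aref{lem:coh-R-mod2n} together with the motivic cohomology of a number field $F$ and its ring of $\mathcal{S}$-integers $\OFS$ reviewed in \aref{section:mcatSa}. The case $a = 2q-p < 0$ is immediate: every summand $h^{i,q'}$ or $H_{n}^{i,q'}$ occurring in the complexes \eqref{equation:KQ2nE0}--\eqref{equation:KQ2nE3} lies in a motivic bidegree whose first coordinate exceeds $\min\{2q-p,q'\}<0$, hence vanishes, so $E^{2}_{p,q,w}(\KQ/2^{n}) = 0$.

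For the generic range $a = 2q-p \geq 6$ with $q+w-\epsilon\leq p$, I would first invoke the fact (see \aref{section:mcatSa}) that for $p\geq 3$ the real embeddings induce an isomorphism $H^{p,q}(\OFS;\Z/2^{n})\xrightarrow{\cong}\bigoplus^{r_{1}}H^{p,q}(\R;\Z/2^{n})$, and likewise over $F$, so that it suffices to read off the homology of the complexes in \aref{thm:E2-KQ2n} over $\R$. By \aref{lem:coh-R-mod2n}, in the relevant bidegrees the connecting maps $\partial^{2}_{2^{n}}$, $\partial^{2^{n}}_{2}$, $\inc^{2}_{2^{n}}$ are zero or isomorphisms according to the parity of $a-b$, with $\pr^{2^{n}}_{2}$ behaving complementarily, and $\Sq^{1}$ is zero or an isomorphism; combining this with the action of $\Sq^{2}$ and $\Sq^{3}\Sq^{1}$ on powers of $\tau$ recorded in \aref{tbl:Sq-action}, each off-diagonal entry of the staircase matrices of \aref{thm:E2-KQ2n} either annihilates its source or maps isomorphically. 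The homology then collapses: when $q\equiv 0\bmod 4$ and $p-w\equiv 0,1\bmod 4$ the survivor is the integral-coefficient quotient $\hat{H}_{n}^{a,q'}$ (the one retaining the $\Sq^{3}\Sq^{1}$--$\tau$ relations), when $q\not\equiv 0\bmod 4$ the $\tau^{4}$-identifications built into $R_{p,q,w}$ and $\im A_{p,q,w}$ force the mod $2$ answer $\hat{h}^{a-(a-q'\bmod 4),q'}$, and all other congruence classes give $0$. In the complementary range $q+w-\epsilon>p$ the canonical map $\KQ/2^{n}\to\KW/2^{n}$ is an isomorphism on the relevant slice summands, hence an isomorphism of $E^{2}$-pages by the last clause of \aref{thm:E2-KQ2n}, and the answer is transcribed from \aref{thm:KW2-E2n}.

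The low-weight exceptions at $q=w+1$ and $q=w+2$, and the residual band $0\leq a<6$, I would treat by direct inspection exactly as in \aref{thm:KW2-E2n} and \aref{thm:KQ2-E2}, substituting the concrete groups $h^{1,1}\cong\OFS^{\times}/2\oplus{}_{2}\Pic(\OFS)$, $h^{2,1}\cong\Pic(\OFS)/2$, and $h^{2,2}\cong\Pic(\OFS)/2\oplus{}_{2}\Br(\OFS)$, together with the injectivity of $\tau\colon h^{2,1}\to h^{2,2}$ from \aref{lem:pic-tau}; this yields the displayed quotients $\hat{h}^{1,1}\oplus h^{2,1}$, $\hat{h}^{2,2}/\tau$, and so on. The main obstacle is the combinatorial bookkeeping in the generic range: one must verify, as in the proof of \aref{thm:E2-KQ2n}, that in every bidegree with $a\geq 6$ at most one of $\Sq^{2}(\tau^{\bullet})$ and $\Sq^{3}\Sq^{1}(\tau^{\bullet})$ is nonzero, that the interaction of $\Sq^{1}$ with the connecting homomorphisms is precisely as dictated by \aref{lem:coh-R-mod2n}, and that the infinite staircase matrices degenerate exactly so that the residual homology is packaged by the hatted groups $\hat{H}_{n}^{\bullet}$ and $\hat{h}^{\bullet}$; once this is checked, the figures \aref{fig:KQ2n-E2-w0}--\aref{fig:KQ2n-E2-w3} simply record the outcome.
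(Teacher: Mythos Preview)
Your proposal is correct and follows essentially the same approach as the paper: reduce to \aref{thm:E2-KQ2n}, use the real-embedding isomorphism $H^{p,q}(\OFS;\Z/2^{n})\cong\bigoplus^{r_{1}}H^{p,q}(\R;\Z/2^{n})$ for $p\geq 3$ to transport the computation to $\R$, feed in \aref{lem:coh-R-mod2n}, and handle the low-weight exceptions by direct inspection as in \aref{thm:KW2-E2n}. The paper's own proof is considerably terser (three sentences) and additionally cites \aref{lem:Hhstruct} for the behavior of $\pr^{2^{n}}_{2}$, $\partial^{2^{n}}_{2}$, $\Sq^{2}\pr$, $\Sq^{2}\partial$ over $\OFS$, which your discussion of ``the interaction of $\Sq^{1}$ with the connecting homomorphisms'' covers implicitly.
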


\begin{proof}
When $a = 2q - p \geq 6$ we have $E^2_{p,q,w}(F) \cong \bigoplus^{r_{1}}E^2_{p,q,w}(\R)$ by \aref{thm:E2-KQ2n}.
If $a - 3 \geq 3$ then $H^{a-3,q'}(F;\Z/2^{n}) \cong \bigoplus^{r_{1}} H^{a-3,q'}(\R;\Z/2^{n})$; 
that is, 
the terms in the slice spectral sequences over $F$ and $\R$ are isomorphic in this range.
Combined with \aref{lem:Hhstruct} we obtain the figures.
\end{proof}

We determine the $E^{\infty}$-page for $\KQ/2^{n}$ and $n\geq 2$ by comparison with $\KW/2^{n}$ over the reals. 
It turns out that the higher differentials for $\KQ/2^{n}$ are determined by the $d^n$-differentials for $\KW/2^{n}$ over $\R$.

\begin{theorem}
\label{thm:OFS-E8}
Let $F$ be a number field with ring of $\mathcal{S}$-integers $\OFS$ and let $n\geq 2$.
The slice spectral sequence for $\KQ/2^{n}$ over $\OFS$ has only $d^r$-differentials for $r\geq 2$ when $r=n$, 
and the $E^{\infty} = E^{n+1}$-page is obtained from the $E^2 = E^{n}$-page (see \aref{fig:KQ2n-E2-w0}, \aref{fig:KQ2n-E2-w1}, \aref{fig:KQ2n-E2-w2}, and \aref{fig:KQ2n-E2-w3}) as follows:
\begin{itemize}
\item If $E^2_{p,q,w}(\R;\KW/2^{n})$ supports a nontrivial $d^n$-differential then $E^{\infty}_{p,q,w}(\OFS;\KQ/2^{n})$ identifies with the kernel of 
$E^2_{p,q,w}(\OFS;\KQ/2^{n}) \to \bigoplus^{r_{1}}E^2_{p,q,w}(\R;\KW/2^{n})$.
\item 
If $E^2_{p,q,w}(\R;\KW/2^{n})$ is the target of a nontrivial $d^n$-differential then $E^{\infty}_{p,q,w}(\OFS;\KQ/2^{n})$ identifies with the cokernel of 
$E^2_{p-1,q-n,w}(\OFS;\KQ/2^{n}) \to \bigoplus^{r_{1}}E^2_{p-1,q-n,w}(\R;\KW/2^{n})$.
\item In all other degrees we have $E^{\infty} = E^{2}$.
\end{itemize}
The $E^{\infty}$-page of the weight $0$ slice spectral sequence for $\KQ/2^{n}$ is displayed in \aref{fig:KQ2n-E8-w0}.
In negative degrees it is isomorphic to the $E^{\infty}$-page for Witt-theory, cf.~\aref{thm:KW2nR}.
\begin{figure}[h]
\caption{$E^{\infty}_{8k+p,4k+q,0}(\KQ/2^{n}), k \geq 0$}
\begin{center}
\resizebox{\linewidth}{!}{
\begin{tikzpicture}[font=\tiny,scale=0.8]
        \draw[help lines,xstep=2.9,ystep=1.0] (-0.5,0.0) grid (23.7,10.5);
        \foreach \i in {0,...,5} {\node[label=left:$\i$] at (-.5,1.0*\i) {};}
        \foreach \i in {0,...,8} {\node[label=below:$\i$] at (2.9*\i,0-.2) {};}
        
{\node[label=left:$n+1$] at (-.5,7.0) {};}
{\node[label=left:$n+2$] at (-.5,8.0) {};}
{\node[label=left:$n+3$] at (-.5,9.0) {};}
{\node[label=left:$n+4$] at (-.5,10.0) {};}
{\draw[fill] (0.0,0.0) circle (1pt);}
{\node[above right=0pt] at (-0.145,0.0) {$H^{0}$};}
{\draw[fill] (0.0,1.0) circle (1pt);}
{\node[above right=0pt] at (-0.145,1.0) {$h^{1}$};}
{\node[above right=0pt] at (0.348,1.3) {$\oplus$};}
{\node[above right=0pt] at (0.725,1.3) {$\ker\rho^2_{2}$};}
{\draw[fill] (0.0,2.0) circle (1pt);}
{\node[above right=0pt] at (-0.145,2.0) {$h^{2}$};}
{\draw[fill] (0.0,3.0) circle (1pt);}
{\node[above right=0pt] at (-0.145,3.0) {$h^{3}$};}
{\draw[fill] (0.0,4.0) circle (1pt);}
{\node[above right=0pt] at (-0.145,4.0) {$\hat{H}^{8}$};}
{\draw[fill] (0.0,5.0) circle (1pt);}
{\node[above right=0pt] at (-0.145,5.0) {$\hat{h}^{9}$};}
{\draw[fill] (2.9,1.0) circle (1pt);}
{\node[above right=0pt] at (2.755,1.0) {$h^{1}$};}
{\draw[fill] (2.9,2.0) circle (1pt);}
{\node[above right=0pt] at (2.755,2.0) {$\begin{pmatrix}\tau\partial & 0 \\ \tau\pr & \Sq^{2} \end{pmatrix}_3 \oplus \ker\rho_{2,2}$};}
{\draw[fill] (5.8,1.0) circle (1pt);}
{\node[above right=0pt] at (5.655,1.0) {$h^{0}$};}
{\draw[fill] (5.8,2.0) circle (1pt);}
{\node[above right=0pt] at (5.655,2.0) {${\begin{pmatrix}
\tau\partial & 0 & 0\\
\tau\pr & 0 & \Sq^2\\
\end{pmatrix}}_{2}$};}
{\draw[fill] (5.8,3.0) circle (1pt);}
{\node[above right=0pt] at (5.655,3.0) {$\frac{h^{2} + h^{1}}{\tau\partial , \rho^2 , \tau\pr}$};}
{\draw[fill] (8.7,2.0) circle (1pt);}
{\node[above right=0pt] at (8.555,2.0) {${\begin{pmatrix}
\tau\partial & \Sq^2\\
\tau\pr & 0\\
\end{pmatrix}}_{1}$};}
{\draw[fill] (8.7,3.0) circle (1pt);}
{\node[above right=0pt] at (8.555,3.0) {$\frac{h^{1} + h^{0}}{\tau\partial , \tau\pr}$};}
{\draw[fill] (11.6,2.0) circle (1pt);}
{\node[above right=0pt] at (11.455,2.0) {${\begin{pmatrix}
\tau\partial\\
\tau\pr\\
\end{pmatrix}}_{0}$};}
{\draw[fill] (11.6,3.0) circle (1pt);}
{\node[above right=0pt] at (11.455,3.0) {$h^{0}$};}
{\draw[fill] (11.6,4.0) circle (1pt);}
{\node[above right=0pt] at (11.455,4.0) {$H^{4}$};}
{\draw[fill] (11.6,5.0) circle (1pt);}
{\node[above right=0pt] at (11.455,5.0) {$\hat{h}^{5}$};}
{\draw[fill] (17.4,4.0) circle (1pt);}
{\node[above right=0pt] at (17.255,4.0) {${(\Sq^2\pr)}_{2}$};}
{\draw[fill] (20.3,4.0) circle (1pt);}
{\node[above right=0pt] at (20.155,4.0) {${(\Sq^2\partial)}_{1}$};}
{\draw[fill] (20.3,5.0) circle (1pt);}
{\node[above right=0pt] at (20.155,5.0) {$\ker\rho^2_{2}$};}
{\draw[fill] (23.2,4.0) circle (1pt);}
{\draw[fill] (23.2,5.0) circle (1pt);}
{\draw[fill] (23.2,6.0) circle (1pt);}
{\draw[fill] (23.2,7.0) circle (1pt);}
{\draw[fill] (23.2,8.0) circle (1pt);}
{\draw[fill] (23.2,9.0) circle (1pt);}
{\draw[fill] (23.2,10.0) circle (1pt);}
{\node[above right=0pt] at (0.0,6.0) {$\vdots$};}
{\draw[fill] (0.0,6.0) circle (1pt);}
{\draw[fill] (0.0,7.0) circle (1pt);}
{\node[above right=0pt] at (11.6,6.0) {$\vdots$};}
{\draw[fill] (11.6,6.0) circle (1pt);}
{\draw[fill] (11.6,7.0) circle (1pt);}
{\draw[fill] (0.0,7.0) circle (1pt);}
{\node[above right=0pt] at (0.0,7.0) {$\hat{H}^{n'}/h_1$};}
{\draw[fill] (0.0,8.0) circle (1pt);}
{\node[above right=0pt] at (0.0,8.0) {$\hat{H}^{n'}/h_2$};}
{\draw[fill] (11.6,7.0) circle (1pt);}
{\node[above right=0pt] at (11.6,7.0) {$\hat{h}^{n'-2}$};}
{\draw[fill] (11.6,8.0) circle (1pt);}
{\node[above right=0pt] at (11.6,8.0) {$\hat{h}^{n'-1}$};}
{\draw[fill] (11.6,9.0) circle (1pt);}
{\node[above right=0pt] at (11.6,9.0) {$\hat{H}^{n'}$};}
        \end{tikzpicture}

}
\end{center}
\label{fig:KQ2n-E8-w0}
\end{figure}
\end{theorem}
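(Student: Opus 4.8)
The plan is to compare the slice spectral sequence for $\KQ/2^{n}$ over $\OFS$ with that for $\KW/2^{n}$ over $\R$, exploiting that the real places of $F$ detect the motivic cohomology of $\OFS$ in high degrees. For each real embedding $v\colon F\hookrightarrow\R$, base change along $\OFS\to\R$ followed by the canonical map $\KQ/2^{n}\to\KW/2^{n}$ (compatible with the slice decompositions by \aref{convention:kq-to-kt-slices}) induces a map of slice spectral sequences
\[
\Phi\colon E^{r}_{p,q,w}(\OFS;\KQ/2^{n})\longrightarrow\bigoplus_{v}E^{r}_{p,q,w}(\R;\KW/2^{n}),
\]
the sum running over the $r_{1}$ real embeddings. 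First I would record from \aref{cor:E2-F} that the $E^{2}$-page for $\KQ/2^{n}$ over $\OFS$ is concentrated in the columns $p-w\equiv 0,1\bmod 4$ (up to the low-weight exceptional classes $h^{2,1}\subseteq E^{1}_{0,1,0}$, which neither support nor receive differentials, exactly as in the mod $2$ case \aref{theorem:mod2hermitiankgroups}); since a $d^{r}$-differential lowers $p$ by $1$, both its source and target can be nonzero only when the source lies in the column $p-w\equiv 1\bmod 4$, matching \aref{thm:KW2nR}. Moreover the permanent cycle $\widetilde{\tau}^{4}\in E^{2}_{8,4,0}(\KQ/4)$ of \aref{lem:KQ2-E284} acts on $E^{r}_{\ast,\ast,\ast}(\KQ/2^{n})$ through the pairing \eqref{equation:KQ2-pairing}, and $\rho\in h^{1,1}$ acts through \eqref{equation:h-mult}; both commute with all differentials, so it suffices to pin down the higher differentials on a fundamental domain for the $(8,4,0)$-periodicity, say the strips $0\leq a=2q-p<8$ together with the stable region $q+w\gg p$.

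Next I would carry out the transport via $\Phi$. For $a=2q-p\geq 6$ with $q+w-1\leq p$, the $E^{2}$-groups of \aref{cor:E2-F} are of type $\hat{H}_{n}^{a,q'}$ or $\hat{h}^{\ast,q'}$ and are built from motivic cohomology groups $H^{p',q'}(\OFS;\Z/2^{\bullet})$ with $p'\geq 3$, on which $\Phi$ is an isomorphism onto $\bigoplus_{v}H^{p',q'}(\R;\Z/2^{\bullet})$ (see \aref{section:mcatSa}, \aref{lem:H2R-surj}); thus $\Phi$ identifies these columns with $\bigoplus_{v}$ copies of the corresponding columns of $\KW/2^{n}$ over $\R$, and \aref{thm:KW2nR} then shows the only higher differential is $d^{n}$ and computes it. For $q+w-1>p$ the map $\KQ/2^{n}\to\KW/2^{n}$ over $\OFS$ is already an isomorphism on $E^{2}$ (\aref{thm:E2-KQ2n}, \aref{cor:E2-F}), and comparing $\KW/2^{n}$ over $\OFS$ with $\KW/2^{n}$ over $\R$ via the real places --- the two $E^{2}$-pages (\aref{thm:KW2-E2n}, \aref{thm:KW2nR}) agree outside the exceptional weights $q'=1,2$ --- again reduces us to \aref{thm:KW2nR}. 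The finitely many bidegrees with $0\leq a<6$ I would settle directly from \aref{thm:E2-KQ2n}, using $\rho$- and $\widetilde{\tau}^{4}$-linearity to propagate the value of $d^{n}$ from the classes already detected by $\Phi$, and using conditional convergence (\aref{theorem:best-conv2}) together with the known mod $2^{n}$ higher Witt- and hermitian $K$-groups of $\R$ and of $\OFS$ (the latter via \aref{theorem:milnor-ofs}, \aref{theorem:convergence2}, and the Wood sequence with $\KGL/2^{n}$) to remove any residual ambiguity.

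With all $d^{r}$ for $r\geq 2$, $r\neq n$, shown to vanish and $d^{n}$ identified with $\bigoplus_{v}d^{n}(\R;\KW/2^{n})$ under $\Phi$, the three bullet points fall out by taking kernels and cokernels of $\Phi$ on the relevant $E^{2}$-terms: a surviving source of an $\R$-differential contributes $\ker\Phi$, the target of such a differential contributes $\coker$ of $\Phi$ restricted to the source group, and $E^{\infty}=E^{2}$ in all remaining degrees. The equality $E^{n+1}=E^{\infty}$ then follows from conditional convergence and \cite[Theorem 7.1]{Boardman} (cf.\ \aref{cor:strong-conv}), since after the $d^{n}$-stage the surviving classes again lie only in the columns $p-w\equiv 0,1\bmod 4$ and no $d^{r}$ with $r>n$ has both a possible source and a possible target. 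The main obstacle I anticipate is the borderline bookkeeping near $a=2q-p\in\{0,\dots,5\}$ and the exceptional weights $q'=1,2$: there the comparison $H^{p',q'}(\OFS)\to\bigoplus_{v}H^{p',q'}(\R)$ is only surjective, or only available for $q'\geq 2$, so $\Phi$ need not be injective, and one must check that the multiplicative data recorded by $R_{p,q,w}$ and $\im A_{p,q,w}$ in \aref{thm:E2-KQ2n} is $\Phi$-compatible, so that $d^{n}$ is genuinely $(8,4,0)$-periodic and hence fixed on the fundamental domain --- precisely the subtlety flagged in \aref{rmk:KQ2n-diffs}.
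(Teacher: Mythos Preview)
Your core strategy --- compare $\KQ/2^{n}$ over $\OFS$ to $\bigoplus^{r_{1}}\KW/2^{n}$ over $\R$ via $\Phi$ and import the higher differentials from \aref{thm:KW2nR} --- is precisely what the paper does. The paper, however, short-circuits your case analysis with a single observation: one only needs the comparison (factored as $f\colon\KQ/2^{n}\to\KW/2^{n}$ over $\OFS$ followed by $g\colon$ base change to $\R$) to be an isomorphism on the \emph{target} of each potential $d^{r}$, not on its source. Since a $d^{r}$ with $r\geq 2$ sends $(p,q,w)$ to $(p-1,q+r,w)$, the target has $a'=2(q+r)-(p-1)=a+2r+1\geq 5$; in this range \aref{cor:E2-F} identifies the target $E^{2}$-group with some $h^{b,q'}(\OFS)$ for $b\geq 3$, whence $f$ is an isomorphism there and $g$ is an isomorphism by \aref{lem:number}. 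A short induction on $r$ then closes the loop: iso on targets at $E^{r}$ forces $d^{r}(\OFS;\KQ/2^{n})$ to be determined by $d^{r}(\R;\KW/2^{n})\circ\Phi$, hence to vanish for $r\neq n$, hence $E^{r+1}=E^{r}$ and the hypothesis persists to the next page.

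Your separate handling of $0\leq a<6$ is therefore unnecessary, and the fallback you propose there --- pinning down residual differentials from the abutment via the Wood sequence and ``known'' hermitian $K$-groups of $\OFS$ --- is close to circular: the Wood sequence only relates $\KQ/2^{n}$ to $\KGL/2^{n}$ through another copy of $\KQ/2^{n}$ and the as-yet-undetermined $\eta$-action, so it does not compute the hermitian groups on its own. The $(8,4,0)$-periodicity via $\widetilde{\tau}^{4}$ and $\rho$-linearity is likewise inessential to determining the differentials (it organises the answer, not the argument). In short: keep $\Phi$, drop the case split, and argue only about targets.
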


\begin{remark}
\label{rmk:E2n-convention}
The terms $E^{2}_{8k+p+w, 4k+q+w, w}(\KQ/2^{n})$ in the range $-1 \leq p,q \leq 8, w, k \geq 0$ are displayed in \aref{fig:KQ2n-E2-w0}, \aref{fig:KQ2n-E2-w1}, \aref{fig:KQ2n-E2-w2}, 
and \aref{fig:KQ2n-E2-w3}.
By periodicity it suffices to consider $w \leq p < w + 8$.
We use the shorthand notations
\begin{align*}
H^{p} = H^{p,q+4k}_n,\
h^{p} = h^{p,q+4k},\
\wt{h}^p = \hat{h}^{\min\{p, q+4k\}},
\pr = \pr^{2^{n}}_{2},\
\partial = \partial^{2^{n}}_{2},\
\wt{H}^p = \begin{cases}
\hat{H}^{p, q+4k}_n & p \leq q + 4k \\
\hat{h}^{p, q + 4k} & p > q + 4k.
\end{cases}
\end{align*}
Recall the groups $\hat{H}_n^{p,q}$ and $\hat{h}^{p,q}$ are defined in \eqref{equation:hath2}. 
A bracket $\begin{pmatrix} \cdot \end{pmatrix}_{p}$ denotes the kernel of some matrix 
--- where $p$ refers to top cohomological dimension $p$ in the source --- 
as in  
 \[
\begin{pmatrix}
\tau \partial & 0 & 0 \\
\tau \pr & 0 & \Sq^2  
\end{pmatrix}_{2} :=
\ker\left(
\begin{pmatrix}
\tau \partial^{2^{n}}_{2} & 0 & 0 \\
\tau \pr^{2^{n}}_{2} & 0 & \Sq^2  
\end{pmatrix} : H^{2, q + 4k}_n \oplus h^{1, q+4k} \oplus h^{0,q+4k}
\to
h^{3,q+4k+1}_n \oplus h^{2, q+ 1 + 4k}
\right).
\]
\end{remark}

\begin{proof} (\aref{thm:OFS-E8})
Consider the commutative diagram
\[
\begin{tikzcd}
E^r_{p,q,w}(\OFS;\KQ/2^{n}) \ar[r, "f^r_{p,q,w}"]\ar[d, "d^r(\OFS;\KQ/2^{n})"] & E^r_{p,q,w}(\OFS;\KW/2^{n})\ar[d, "d^r(\OFS; \KW/2^{n})"] \ar[r, "g^r_{p,q,w}"] & \bigoplus^{r_{1}}E^r_{p,q,w}(\R;\KW/2^{n})\ar[d, "d^r(\R;\KW/2^{n})"] \\
E^r_{p-1,q+r,w}(\OFS;\KQ/2^{n}) \ar[r, "f^r_{p-1,q+r,w}"] & E^r_{p-1,q+r,w}(\OFS;\KW/2^{n}) \ar[r, "g^r_{p-1,q+r,w}"] & \bigoplus^{r_{1}}E^r_{p-1,q+r,w}(\R;\KW/2^{n}).
\end{tikzcd}
\]
Inductively the maps $f^r_{p-1,q+r,w}$ and $g^r_{p-1,q+r,w}$ are isomorphisms whenever $E^r_{p-1,q+r,w}$ is the target of a nontrivial differential.
For $f^r_{p-1,q+r,w}$ this is the content of \aref{cor:E2-F}.
For $g^r_{p-1,q+r,w}$ the group $E^r_{p-1,q+r,w}(\OFS;\E/2^{n})$
is isomorphic to $h^{b,q'}(\OFS)$ for some $b \geq 3$,
and $h^{b,q'}(\OFS;\Z/2^{n}) \to \bigoplus^{r_{1}} h^{b,q'}(\R)$ is an isomorphism (see \aref{lem:number}).
Thus $d^r(\OFS;\KQ/2^{n})$ is determined by $d^r(\R;\KW/2^{n})$ and the composite $g^r_{p,q,w}\circ f^r_{p,q,w}$.
Since $d^r(\R;\KW/2^{n})$ is nontrivial only when $r = n$ the same holds true for $d^r(\OFS;\KQ/2^{n})$, 
and the spectral sequence collapses at its $E^{n+1}$-page.
\end{proof}

In the following we give an integral calculation of the hermitian $K$-groups of $\OO_{F,\mathcal{S}}$.
Recall that $\KQ_{\ast,\ast}(\OFS)$ is a finitely generated abelian group when $\mathcal{S}$ is finite \cite[Proposition 3.13]{BKO}.
It follows that the group $\KQ_{\ast,\ast}(\OFS; \Z/\ell^n)$ is finite for all primes $\ell$ (when $\ell=2$ this also follows from the $E^{\infty}$-page in \aref{thm:OFS-E8}).
Consider the commutative diagram of cofiber sequences
\[
\begin{tikzcd}
\KQ \ar[r, "2^{n+1}"]\ar[d, "2"] & \KQ \ar[r]\ar[d, "\id"] & \KQ/2^{n+1} \ar[r]\ar[d] & \Sigma^{1,0}\KQ \ar[d, "2"] \\
\KQ \ar[r, "2^{n}"] & \KQ \ar[r] & \KQ/2^{n} \ar[r] & \Sigma^{1,0}\KQ.
\end{tikzcd}
\]
Here $\KQ/2^{n+1} \to \KQ/2^{n}$ induces an inverse system of the slice filtrations of $\KQ_{p,w}(\OFS;\Z/2^{n})$, and an inverse system of the $E^{\infty}$-pages.
Since the groups $\KQ_{p,w}(\OFS;\Z/2^{n})$ are finitely generated, the $\lim^1$-term in the Milnor exact sequence vanishes and we obtain
\[
\KQ_{p,w}(\OFS;\Z_{2})
\cong
\lim_{n} \KQ_{p,w}(\OFS;\Z/2^{n}).
\]
Hence the limit of the inverse system of filtrations becomes an exhaustive, Hausdorff, and complete filtration of $\KQ_{p,w}(\OFS;\Z_{2})$.
Moreover, 
each filtration quotient is the inverse limit of the corresponding $E^{\infty}$-terms.

\begin{theorem}
\label{theorem:integralKQ}
When $\{2,\infty\}\subset\mathcal{S}$ the $2$-adic hermitian $K$-groups $\KQ_{\ast,\ast}(\OFS;\Z_{2})$ are determined up to filtration quotients in \aref{table:KQ2adicgroupsOF}.

\begin{table}
\begin{tabular}{l|l|l}
\hline $n \geq 0$&$l$&$\KQ_{n,4k}(\OFS;\Z_2)$
\\\hline 
$8k$&4&$\f_{0}/\f_{1} = H^{0,4k}, \f_{1}/\f_{2} = h^{1,4k+1}, \f_{2}/\f_{3} = h^{2,4k+2}, \f_{3}=\Z_2^{r_1}$\\
$8k + 1$&2&$\f_{0}/\f_{1} = h^{0,4k+1}, \f_{1}=\ker(\pr_{3,4k+2})\oplus h^{1,4k+2}$\\
$8k + 2$&2&$\f_{0}/\f_{1} = \ker(\tau\pr, \Sq^2)_{2,4k+2}, \f_{1}=h^{1,4k+3}/\tau\pr$\\
$8k + 3$&2&$\f_{0}/\f_{1} = \ker(\pr_{1,4k+2}), \f_{1}=h^{0,4k+3}$\\
$8k + 4$&1&$\f_{0}=\Z_2^{r_1}$\\
$8k + 5$&0&$0$\\
$8k + 6$&1&$\f_{0}=\ker(\Sq^2\pr)_{2,4k+4}$\\
$8k + 7$&2&$\f_{0}/\f_{1} = \ker(\Sq^2\partial)_{1,4k+4}, \f_{1}=\ker(\rho_{2,4k+5}^2)$
\\\hline 
$n \geq 0$&$l$&$\KQ_{n,4k + 1}(\OFS;\Z_2)$
\\\hline 
$8k$&2&$\f_{0}/\f_{1} = \ker(\rho_{1,4k+4}^2), \f_{1}=\ker(\rho_{2,4k+5}^2)$\\
$8k + 1$&4&$\f_{0}/\f_{1} = h^{0,4k}, \f_{1}/\f_{2} = \ker(\pr_{3,4k+1})\oplus h^{1,4k+1}, \f_{2}/\f_{3} = h^{2,4k+2}/\tau\pr, \f_{3}=\Z_2^{r_1}$\\
$8k + 2$&2&$\f_{0}/\f_{1} = \ker(\pr_{2,4k+1})\oplus h^{0,4k+1}, \f_{1}=h^{1,4k+2}/\tau\pr$\\
$8k + 3$&2&$\f_{0}/\f_{1} = \ker(\pr_{1,4k+1}), \f_{1}=h^{0,4k+2}$\\
$8k + 4$&0&$0$\\
$8k + 5$&1&$\f_{0}=\Z_2^{r_1}$\\
$8k + 6$&1&$\f_{0}=\ker(\pr_{2,4k+3})$\\
$8k + 7$&2&$\f_{0}/\f_{1} = \ker(\Sq^2\pr_{1,4k+3}), \f_{1}=\ker(\rho_{2,4k+4})$
\\\hline 
$n \geq 0$&$l$&$\KQ_{n,4k + 2}(\OFS;\Z_2)$
\\\hline 
$8k$&2&$\f_{0}/\f_{1} = \ker(\rho_{1,4k+3}^2), \f_{1}=h^{2,4k+4}/\rho^2$\\
$8k + 1$&2&$\f_{0}/\f_{1} = \ker(\rho_{0,4k+3}^2), \f_{1}=\ker(\rho_{1,4k+4}^2)$\\
$8k + 2$&3&$\f_{0}/\f_{1} = \ker(\tau\pr_{2,4k})\oplus h^{0,4k}, \f_{1}/\f_{2} = h^{1,4k+1}/\tau\pr, \f_{2}=\Z_2^{r_1}$\\
$8k + 3$&2&$\f_{0}/\f_{1} = \ker(\pr_{1,4k}), \f_{1}=h^{0,4k+1}/\tau\pr$\\
$8k + 4$&1&$\f_{0}=\ker(\pr_{0,4k})$\\
$8k + 5$&0&$0$\\
$8k + 6$&2&$\f_{0}/\f_{1} = H^{2,4k+2}, \f_{1}=\Z_2^{r_1}$\\
$8k + 7$&2&$\f_{0}/\f_{1} = H^{1,4k+2}, \f_{1}=h^{2,4k+3}$
\\\hline 
$n \geq 0$&$l$&$\KQ_{n,4k + 3}(\OFS;\Z_2)$
\\\hline 
$8k$&2&$\f_{0}/\f_{1} = h^{1,4k+1}, \f_{1}=h^{2,4k+3}/\rho^2$\\
$8k + 1$&3&$\f_{0}/\f_{1} = \ker(\rho_{0,4k+2}^2), \f_{1}/\f_{2} = \ker(\pr_{3,4k+3}), \f_{2}=h^{2,4k+4}/\tau\pr$\\
$8k + 2$&2&$\f_{0}/\f_{1} = \ker(\tau\pr, \Sq^2)_{2,4k+3}, \f_{1}=h^{1,4k+4}/\tau\pr$\\
$8k + 3$&3&$\f_{0}/\f_{1} = \ker(\pr_{1,4k-1}), \f_{1}/\f_{2} = h^{0,4k}, \f_{2}=\Z_2^{r_1}$\\
$8k + 4$&0&$0$\\
$8k + 5$&0&$0$\\
$8k + 6$&1&$\f_{0}=\ker(\Sq^2\partial)_{2,4k+1}$\\
$8k + 7$&3&$\f_{0}/\f_{1} = H^{1,4k+1}, \f_{1}/\f_{2} = h^{2,4k+2}, \f_{2}=\Z_2^r$
\\\hline 
\end{tabular}
\caption{The $2$-adic hermitian $K$-groups of $\OO_{F,\mathcal{S}}$}
\label{table:KQ2adicgroupsOF}
\end{table}

When $\{\ell,\infty\}\subset\mathcal{S}$ for an odd prime $\ell$ and $p - w \geq 0$ there are isomorphisms 
\begin{equation}
\label{equation:KQoddprimes}
\KQ_{p, w}(\OFS;\Z_{\ell}) 
\cong
\begin{cases}
  H^{0, \lfloor(p-w)/2\rfloor}(\OFS; \Z_{\ell}) & p-w \equiv 0 \bmod 4 \\
  0 & p-w \equiv 1 \bmod 4 \\
  H^{2, 2+\lfloor(p-w)/2\rfloor}(\OFS; \Z_{\ell}) & p-w \equiv 2 \bmod 4 \\
  H^{1, 2+\lfloor(p-w)/2\rfloor}(\OFS; \Z_{\ell}) & p-w \equiv 3 \bmod 4.
\end{cases}
\end{equation}
\end{theorem}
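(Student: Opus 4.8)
The plan is to feed the collapse result of \aref{thm:OFS-E8} into the limiting procedure already set up in the paragraph preceding the statement. By \aref{theorem:best-conv2} the slice spectral sequence for $\KQ/2^{n}$ over $\OFS$ is conditionally convergent with abutment $\pi_{\ast,\ast}(\KQ/2^{n})$, and by \aref{thm:OFS-E8} it collapses at its $E^{n+1}$-page; hence by \aref{cor:strong-conv} it converges strongly. The maps $\KQ/2^{n+1}\to\KQ/2^{n}$ form a tower of such strongly convergent spectral sequences; finite generation of $\KQ_{\ast,\ast}(\OFS)$ when $\mathcal{S}$ is finite \cite[Proposition 3.13]{BKO} makes the relevant $\lim^{1}$-terms vanish, so $\KQ_{p,w}(\OFS;\Z_{2})\cong\lim_{n}\KQ_{p,w}(\OFS;\Z/2^{n})$ acquires an exhaustive, Hausdorff, complete filtration whose graded pieces are the inverse limits $\lim_{n}E^{\infty}_{p,q,w}(\KQ/2^{n})$. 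The remaining work is to (i) identify the transition maps $E^{\infty}_{p,q,w}(\KQ/2^{n+1})\to E^{\infty}_{p,q,w}(\KQ/2^{n})$ and (ii) compute these inverse limits, slice degree by slice degree, reading off the $E^{n+1}$-page of \aref{thm:OFS-E8} (displayed in \aref{fig:KQ2n-E8-w0} and its weight-shifted analogues).

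\textbf{The inverse limits.}
For step (i) I would trace the maps of slice towers induced by $\KQ/2^{n+1}\to\KQ/2^{n}$ through the explicit slice $\dd^{1}$-differentials of \aref{thm:KQ2n-diff} and the multiplicative structure on $\s_{\ast}(\KQ/2^{n})$ recorded in \aref{thm:KQ2n-mult}. The upshot I expect is: each summand of $E^{\infty}_{p,q,w}(\KQ/2^{n})$ that is a subquotient of the mod $2^{n}$ motivic cohomology $H^{\ast,\ast}(\OFS;\Z/2^{n})$ --- the groups $\hat H^{\ast,\ast}_{n}$ and the kernel/cokernel symbols $\bigl(\,\cdot\,\bigr)_{\ast}$ of \aref{thm:OFS-E8} involving $\pr^{2^{n}}_{2}$ and $\partial^{2^{n}}_{2}$ --- forms an inverse system with limit the corresponding group built from $H^{\ast,\ast}(\OFS;\Z_{2})$, while each purely mod $2$ summand (a subquotient of $h^{\ast,\ast}$ built from $\Sq^{2},\rho$ and $\tau$) has already stabilized; the $\Z_{2}^{r_{1}}$-factors come from the sources and targets of the $d^{n}$-differentials of $\KW/2^{n}$ over $\R$ that govern \aref{thm:OFS-E8}, via the real-embedding comparison maps $H^{p,q}(\OFS;\Z_{2})\to\bigoplus^{r_{1}}H^{p,q}(\R;\Z_{2})$ which are isomorphisms for $p\geq 3$ (see \aref{lem:number}). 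Assembling these limits in each congruence class of $p-w$ and $q-w$ and using the $(8,4,0)$-periodicity of the $E^{n+1}$-page yields the four blocks of \aref{table:KQ2adicgroupsOF}. The main obstacle is precisely step (i): controlling how the coefficient maps $\pr^{2^{n}}_{2}$ and $\partial^{2^{n}}_{2}$ vary with $n$ inside the matrix presentations of \aref{thm:E2-KQ2n}. This is exactly what \aref{section:multstrKQ2n} is designed to supply, and it is also why the theorem is phrased up to the indicated filtration quotients rather than as a direct sum decomposition.

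\textbf{Odd-primary coefficients.}
Let $\ell$ be an odd prime with $\ell\in\mathcal{S}$. Smashing the slices \eqref{equation:hermitianktheoryslices} of $\KQ$ with $\One/\ell^{n}$ kills every $\MZ/2$-summand, since multiplication by $\ell^{n}$ is invertible on $\MZ/2$; hence $\s_{q}(\KQ/\ell^{n})\simeq\Sigma^{2q,q}\MZ/\ell^{n}$ for $q$ even and $\s_{q}(\KQ/\ell^{n})\simeq\ast$ for $q$ odd, so $E^{1}_{p,q,w}(\KQ/\ell^{n})\cong H^{2q-p,q-w}(\OFS;\Z/\ell^{n})$ for $q$ even and vanishes otherwise. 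Every slice differential vanishes --- the odd-length ones for parity reasons, and for even $r\geq 2$ the target lies in cohomological degree at least $(2q-p)+5>2=\cd_{\ell}(\OFS)$ and so is zero by Voevodsky's solution of the Bloch--Kato conjecture \cite{Voevodsky:Z/l} --- and the spectral sequence converges strongly because its slices contribute to each homotopy group in only finitely many filtrations. A routine bookkeeping of the surviving terms, exactly as in the algebraic $K$-theory computation of \aref{theorem:integralKGL}, together with passage to the $\Z_{\ell}$-limit over $n$ via finite generation, then produces the isomorphisms \eqref{equation:KQoddprimes}.
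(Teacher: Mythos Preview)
Your overall strategy---feed \aref{thm:OFS-E8} into the inverse limit machine and then treat the odd prime separately---is the paper's strategy, and your odd-primary argument is essentially correct and matches the paper (which simply notes $\cd_{\ell}(\OFS)\leq 2$ forces collapse at $E^{1}$).

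There are, however, two genuine gaps in your two-primary argument.

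First, your claim that ``each purely mod $2$ summand \ldots\ has already stabilized'' is false. The transition maps $\s_{q}(\KQ/2^{n+1})\to\s_{q}(\KQ/2^{n})$ act on the summands $\Sigma^{q+j,q}\MZ/2$ according to \aref{convention:kq-to-kt-slices}: the identity when $j$ is even, and multiplication by $2=0$ when $j$ is odd. Thus exactly the \emph{odd} mod $2$ summands die in the inverse limit, not none of them. The paper makes this explicit: for instance, in the column for $\KQ_{8k,0}$ the summand $\ker(\rho^{2}_{2,4k+2})$ (an odd summand) vanishes in the limit. Your criterion ``subquotient of $H^{\ast,\ast}_{n}$ versus subquotient of $h^{\ast,\ast}$'' is not the right dichotomy; the right one is the parity of $j$, equivalently of $a-b+p+w$ for a summand $h^{a,b}\subset E^{\infty}_{p,q,w}$.

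Second, your account of the $\Z_{2}^{r_{1}}$-factors is off. They do not arise from the sources and targets of the $d^{n}$-differentials. Rather, for each column with $p-w\equiv 0\bmod 4$ the $E^{\infty}$-page carries an infinite tower of filtration quotients $h^{3,4k+3},\hat H^{8,4k+4},\hat h^{9,4k+5},\ldots$, each isomorphic to $(\Z/2)^{r_{1}}$, and the multiplicative structure (via \eqref{equation:h-mult}) shows that successive quotients are connected by multiplication by $\rho$. The missing ingredient is \aref{lem:<-1>} and \aref{lem:<-1>2}: $\rho$ maps to $\langle -1\rangle-\langle 1\rangle=-2$ in $W(\OFS)$, so these $\rho$-multiplications are $2$-multiplications in the abutment, and the tower assembles into $\Z_{2}^{r_{1}}$. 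This $\rho\mapsto -2$ mechanism is the paper's main tool for resolving the extensions, and it is what \aref{section:multstrKQ2n} is ultimately in service of; you invoke that section but do not extract the point that actually does the work.
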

\begin{proof}
In the decomposition
\begin{equation}
\label{equation:description}
\s_{q}(\KQ/2^{n}) 
\simeq
\Sigma^{q,q}(\MZ/2^l \vee \bigvee_{j < q} \Sigma^{j,0}\MZ/2),
\end{equation}
where $l = n$ if $q$ is even, and $l = 1$ if $q$ is odd,
the slice summand $\Sigma^{j,0}\MZ/2$ maps by the identity to the next stage of the inverse system when $j$ is even, and by $2=0$ when $j$ is odd, if $j < q$
(there is no $\Sq^1$ from an odd slice summand to an even slice summand in the inverse system,
cf.~\aref{convention:kq-to-kt-slices}).
Thus to identify the inverse limits it suffices to consider the summands $h^{a,b}$ of $E^2_{p,q,w}(\KQ/2^{n})$ when $a-b+p+w=i\equiv 0\bmod 2$.

Since $\rho \in \pi_{0,0}\s_{1}(\One)$ on the $E^{\infty}$-page of the slice spectral sequence for the sphere $\One$ maps to $-2 \in \KW(F)$ by \aref{lem:<-1>},
we can use $\rho$ to detect non-split extensions in Witt-theory and hence also in hermitian $K$-theory.
Similarly we can detect non-split extension in Witt-theory and hermitian $K$-theory over $\OFS$ with the help of \aref{lem:<-1>2}.
Using \aref{thm:OFS-E8} we are ready to show the calculations in \aref{table:KQ2adicgroupsOF}.
We give proofs in some special cases.

From \aref{fig:KQ2n-E8-w0} we read off the filtration quotients 
\[
H^{0,4k}, h^{1,4k+1} \oplus \ker (\rho^2_{2,4k+2}), h^{2,4k+2}, h^{3,4k+3}, \hat{H}^{8,4k+4}, \dots
\]
for $\KQ_{8k,0}(\OFS;\Z/2^{n})$. 
Since $\ker (\rho^2_{2,4k+2})$ vanishes in the inverse limit we are left with the groups 
\[
H^{0,4k}, h^{1,4k+1}, h^{2,4k+2}, h^{3,4k+3}, \hat{H}^{8,4k+4}, \dots,
\]
connected via $\rho$-multiplications.
Hence the filtration takes the form 
\[
H^{0,4k}, h^{1,4k+1}, h^{2,4k+2}, \Z_{2}^{r_{1}}.
\]
If $k>0$ the filtration identifies with $\ker (\rho^2_{1,4k+1}), \ker (\rho_{2,4k+2}), \Z_{2}^{r_{1}}$ (see \aref{thm:hOFS}).

The filtration quotients of $\KQ_{8k+1,0}(\OFS;\Z_{2})$ are given by 
\[
h^{1,4k+1}, \ker(\pr_{3,4k+2})\oplus h^{1,4k+2}.
\]
Indeed, these are the only summands surviving in the inverse limit of $\{\KQ_{8k+1,0}(\OFS;\Z/2^{n})\}_{n\geq 1}$.

The filtration quotients for $\KQ_{8k+7,2}(\OFS;\Z_{2})$ are given by
\[
\ker(\pr_{1,4k+2}), h^{2,4k+3}/\Sq^2\pr.
\]
By \aref{thm:hOFS} and \aref{lem:Hhstruct}(3) these terms identify with $H^{2,4k+2}$ and $h^{2,4k+3}$, respectively.

When $\ell$ is an odd prime, 
the slices of $\KQ/\ell^n$ are given by 
\[
\s_{q}(\KQ/\ell^n) 
\simeq
\begin{cases}
\Sigma^{2q,q}\MZ/\ell^n & q \equiv 0 \bmod 2 \\
0 & q \equiv 1 \bmod 2.
\end{cases}
\]
Since $\cd_\ell(\OFS) \leq 2$ the slice spectral sequence for $\KQ/\ell^n$ collapses at its $E^{1}$-page, 
and we are done.
\end{proof}

\section{Special values of Dedekind $\zeta$-functions}
\label{section:svDzf}
In this section we relate special values of $\zeta$-functions of totally real abelian number fields to hermitian $K$-groups of rings of integers.
\begin{theorem}
\label{theorem:nice}
For $k\geq 0$ and $F$ a totally real abelian number field with ring of $2$-integers $\OF[\frac{1}{2}]$, 
the Dedekind $\zeta$-function of $F$ takes the values
\begin{align*}
\zeta_F(-1 - 4k) 
= 
\frac{\# H^{2,4k+2}(\OF[\frac{1}{2}];\Z_{2})}{\# H^{1,4k+2}(\OF[\frac{1}{2}];\Z_{2})} 
&=
\frac{\# h^{2,4k+3}}{\# h^{1,4k+3}} \frac{\#\KQ_{8k+2,0}(\OF[\frac{1}{2}];\Z_{2})}{\#\KQ_{8k+3,0}(\OF[\frac{1}{2}];\Z_{2})} \\
&= 
2^{r_{1}}\# h^{2,4k+3}  \frac{\# \KQ_{8k+6,2}(\OF[\frac{1}{2}];\Z_{2})_{\text{tor}} }{\# \KQ_{7 + 8k,2}(\OF[\frac{1}{2}];\Z_{2}) } \\
\zeta_F(-3 - 4k) 
= 
\frac{\# H^{2,4k+4}(\OF[\frac{1}{2}];\Z_{2})}{\# H^{1,4k+4}(\OF[\frac{1}{2}];\Z_{2})} 
&= 2^{r_{1}}\#\ker (\rho^2_{2,4k+4})  \frac{\# \KQ_{8k+6,0}(\OF[\frac{1}{2}];\Z_{2}) }{\# \KQ_{7 + 8k,0}(\OF[\frac{1}{2}];\Z_{2}) } \\
&= 2^{r_{1}+1}\frac{\# h^{2,4k+3}}{\# h^{1,4k+1} } \frac{\# \KQ_{8k+2,2}(\OF[\frac{1}{2}];\Z_{2})_{\text{tor}}}{\# \KQ_{8k+3,2}(\OF[\frac{1}{2}];\Z_{2})}
\end{align*}
up to odd multiples.
\end{theorem}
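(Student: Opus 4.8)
The plan is to split the argument into an arithmetic input — the leftmost equality in each of the two displays — and a bookkeeping step with the integral hermitian $K$-groups. For the first part, fix $q=4k+2$ for the top display and $q=4k+4$ for the bottom one, so that $\zeta_F(1-q)=\zeta_F(-1-4k)$ resp.\ $\zeta_F(-3-4k)$; since $F$ is totally real and $q$ is even, this is a nonzero rational number by the Siegel--Klingen theorem \cite{MR0133304}. I would start from $\zeta_F(1-q)=\pm 2^{r_1}\#K_{2q-2}(\OF)/\#K_{2q-1}(\OF)$ as in \eqref{equation:zetaKtheory} (Wiles \cite{Wiles} in the shape \eqref{equation:mainconjecture}, together with Voevodsky's theorems \cite{Voevodsky:Z/2}, \cite{Voevodsky:Z/l}; cf.\ \cite[Theorem 0.11]{zbMATH02143527}). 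Because the dyadic residue fields have characteristic $2$, the localization sequences in $K$-theory and in \'etale cohomology contribute only odd torsion, resp.\ nothing, in the degrees at hand, so that $K_n(\OF;\Z_2)\cong K_n(\OF[\tfrac{1}{2}];\Z_2)$ and $H^{m,q}(\OF;\Z_2)\cong H^{m,q}(\OF[\tfrac{1}{2}];\Z_2)$ there. Applying \aref{theorem:integralKGL} over $\OF[\tfrac{1}{2}]$ — none of the relevant degrees falls in its exceptional case $n\equiv 3\bmod 4$ — gives $K_{8k+2}\cong H^{2,4k+2}$ and $\#K_{8k+3}=2^{r_1}\#H^{1,4k+2}$ for $q=4k+2$, and $\#K_{8k+6}=2^{-r_1}\#H^{2,4k+4}$ and $K_{8k+7}\cong H^{1,4k+4}$ for $q=4k+4$; in each case the factor $2^{\pm r_1}$ cancels the $2^{r_1}$ of \eqref{equation:zetaKtheory}, producing the first equality of each display up to odd multiples (all orders being $2$-adic).

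For the remaining equalities I would invoke the integral computation \aref{theorem:integralKQ}: by the inverse-limit argument preceding it, every finite group $\KQ_{p,w}(\OF[\tfrac{1}{2}];\Z_2)$ — and, in the symplectic weights $w\equiv 2\bmod 4$, where \aref{table:KQ2adicgroupsOF} carries a free summand $\Z_2^{r_1}$ at the bottom of the filtration, its torsion subgroup — has order equal to the product of the orders of the filtration quotients listed in \aref{table:KQ2adicgroupsOF}. These quotients are the explicit subquotients of mod $2^n$ motivic cohomology of $\OF[\tfrac{1}{2}]$ appearing there ($\ker(\rho^i_{\ast,\ast})$, $h^{\ast,\ast}/\rho^i$, $\ker(\Sq^2\pr)$, $\ker(\tau\pr,\Sq^2)$, $\hat{H}^{\ast,\ast}_n$, $\hat{h}^{\ast,\ast}$, and so on), whose orders I would express via \aref{section:mcatSa}, in particular \aref{thm:hOFS} and \aref{lem:Hhstruct}, in terms of $h^{\ast,\ast}$ and $H^{\ast,\ast}(\OF[\tfrac{1}{2}];\Z_2)$. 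Substituting these into each quotient of $\KQ$-orders on the right of the two displays, the telescoping contributions collapse and leave, $2$-adically and up to odd multiples, exactly $\#H^{2,q}/\#H^{1,q}$; the surviving prefactors $2^{r_1}\#\ker(\rho^2_{2,4k+4})$, $2^{r_1}\#h^{2,4k+3}$ and $2^{r_1+1}\#h^{2,4k+3}/\#h^{1,4k+1}$ are precisely the $2$-powers and small motivic groups left over after the cancellation. An independent check is to route $\#\KQ_{\ast,\ast}$ through $\#\KGL_{\ast,\ast}$ and $\#\KW_{\ast,\ast}$ using the Wood cofiber sequence \eqref{equation:wood} and $\KW=\KQ[\tfrac{1}{\eta}]$, recomputing the latter from \aref{theorem:integralKGL} and \aref{thm:KW-OFS}; this also recovers \cite[Theorem 5.9]{BKSO} in the cases treated there.

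The hard part is not conceptual but the $2$-power bookkeeping: one must align the $2^{r_1}$-discrepancies between algebraic $K$-theory and motivic cohomology in \aref{theorem:integralKGL}, the $2^{r_1}$ produced by the functional equation in \eqref{equation:zetaKtheory}, the free summands $\Z_2^{r_1}$ hidden inside the symplectic hermitian $K$-groups, and the exact orders of the numerous $\rho$- and $\Sq^2$-subquotients in \aref{table:KQ2adicgroupsOF}. The most delicate point is separating the torsion from the free part in the weights $w\equiv 2\bmod 4$: one needs the torsion-free rank to be exactly $r_1$ and to control how the $\Z_2^{r_1}$ is glued into the relevant extension, so that the finite filtration quotients account for the torsion order on the nose up to odd multiples. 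The odd-primary contributions — governed by the Bloch--Kato conjecture for odd $\ell$ via \eqref{equation:KQoddprimes} and annihilated by the phrase ``up to odd multiples'' — require no further work.
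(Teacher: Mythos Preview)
Your plan is correct and matches the paper's proof: both obtain the leftmost equality from the Wiles--Voevodsky input and then extract the four $\KQ$-ratios by multiplying out the orders of the filtration quotients listed in \aref{theorem:integralKQ}, using exactly the kind of short exact sequences (for $\ker(\tau\pr,\Sq^2)$, $\ker(\Sq^2\pr)$, $h/\tau\pr$, etc.) that you describe. The only cosmetic difference is that you derive the first equality by passing through algebraic $K$-theory via \eqref{equation:zetaKtheory} and \aref{theorem:integralKGL}, while the paper cites Kolster's {\'e}tale formulation \cite[Theorem~A.1]{Rognes-Weibel} directly; your detour has the advantage of making the $2^{r_1}$ cancellation explicit and of staying entirely within results already established in the paper.
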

\begin{proof}
Throughout the proof we write $\KQ_{p,w}$ for $\KQ_{p,w}(\OF[\frac{1}{2}];\Z_{2})$ and $H^{p,w}$ for $H^{p,w}(\OF[\frac{1}{2}];\Z_{2})$.
Based on Wiles's proof of the main conjecture in Iwasawa theory \cite{Wiles}, Kolster \cite[Theorem A.1]{Rognes-Weibel} showed 
\begin{equation}
\label{equation:zetaetalecohomology}
\zeta_F(1 - 2k) 
= 
2^{r_{1}}
\frac{\# H^2_\et(\OF[\frac{1}{2}];\Z_{2}(2k) )}{\# H^1_\et(\OF[\frac{1}{2}];\Z_{2}(2k) )}
\end{equation}
up to odd multiples.
We relate the right hand side of \eqref{equation:zetaetalecohomology} to the hermitian $K$-groups $\KQ_{p,w}$.

By \aref{theorem:integralKQ} the filtration quotients of $\KQ_{8k+2,0}$ are given by 
\[
\ker(\tau \pr, \Sq^2)_{2,2+4k}, h^{1,3+4k}/\tau\pr
\]
and the filtration quotients of $\KQ_{8k+3, 0}$ are given by 
\[
\ker(\pr_{1,2+4k}), h^{0,3+4k}.
\]
We note there are exact sequences
\[
0 \to \ker (\tau\pr^{2^{n}}_{2}\ \Sq^2)_{2,4k+2} \to H_n^{2,4k+2} \oplus h^{0,4k+2}
\to  h^{2,4k+3} \to 0,
\]
\[
0 \to \ker(\tau\pr^{2^{n}}_{2})_{1,4k+2} \to H_n^{1,4k+2} \to h^{1,4k+3} \to h^{1,4k+3}/\tau\pr^{2^{n}}_{2} \to 0,
\]
and
\[
H^{0,2+4k} \xrightarrow{\tau \pr} h^{0,3+4k} \to 0.
\]
Here $\tau\pr^{2^{n}}_{2}$ is surjective on $H_n^{2,4k+2}$, cf.~\aref{lem:Hhstruct}.
These exact sequences imply
\[
\frac{\# h^{1,4k+3} }{\# H^{1,4k+2}}
=
\frac{\# h^{1,4k+3}/\tau\pr^{2^{n}}_{2} }{\# \ker(\tau\pr^{2^{n}}_{2})_{1,4k+2} },
\]
and
\[
\# \ker(\tau\pr^{2^{n}}_{2} \Sq^2)_{2,4k+2} = \frac{2\# H^{2,4k+2} }{\# h^{2,4k+3} }.
\]
In conclusion, 
we find the equalities
\[
\frac{\# \KQ_{8k+2,0}}{\# \KQ_{8k+3,0}}
=
\frac{\frac{2\# H^{2,4k+2} }{\# h^{2,4k+3} } \# h^{1,4k+3}/\tau\pr^{2^{n}}_{2} }
{2 \# \ker(\tau\pr^{2^{n}}_{2})_{1,4k+2} }
=
\frac{\# h^{1,4k+3}  \# H^{2,4k+2} }
{\# h^{2,4k+3} \# H^{1,4k+2} }.
\]

By \aref{theorem:integralKQ} we have
\[
\KQ_{8k+6,0} \cong (\Sq^2\pr^{2^{\infty}}_{2})_{2, 8k+6},
\]
and there is a short exact sequence
\[
0 \to (\ker \rho_{2,4k+5})
\to
\KQ_{8k+7,0}
\to
\ker(\Sq^2\pr^{2^{\infty}}_{2})_{1,4k+4}
\to 0.
\]
Moreover, there is an exact sequence
\[
0 \to \ker(\Sq^2\pr^{2^{\infty}}_{2}) \to H^{2,4k+4} \to h^{4,4k+5} \to 0.
\]
Hence we find
\[
\frac{\# \KQ_{8k+6,0}}{\# \KQ_{8k+7,0}}
=
\frac{\frac{\# H^{2,4k+4} }{\# h^{4,4k+5}}}{\# \ker(\rho_{2,4k+5}) \# H^{1,4+4k} }.
\]

By \aref{theorem:integralKQ} the filtration quotients of $\KQ_{8k+6,2}$ are given by 
\[
H_n^{2,4k+2}, \Z_{2}^{r_{1}} %
\]
Here $\Z_{2}^{r_{1}}$ arises from the tower $\hat{h}^{3,3}, \hat{h}^{4,4}, \dots$.
There is a $\rho$-multiplication between $H_n^{2,4k+2}$ and $\Z_{2}^{r_{1}}$.
The map from $\KQ \to \KW$ induces an isomorphism on the filtration quotients with the exception of $H_n^{2,4k+2} \to h^{2,4k+2}$.
Since $\rho = -2$ in the Witt ring, 
we find 
\[
\# (\KQ_{8k+6,2})_{\tor}  = \# H^{2,4k+2}/\# h^{3,4k+3}.
\]
For $\KQ_{8k+7,2}$ we have the short exact sequence
\[
0
\to
h^{2,4k+2}
\to
\KQ_{8k+7,2}
\to
H_n^{1,4k+2}
\to 
0.
\]
Hence we find
\[
\frac{\# (\KQ_{8k+6,2})_{\tor} }{\#\KQ_{8k+7,2}}
=
\frac{1}{\# h^{3,4k+3}\# h^{2,4k+2} }\frac{\# H^{2,4k+2}  }{\# H^{1,4k+2}}.
\]

The filtration quotients of $(\KQ_{8k+2,2})_\tor$ are given by 
\[
(\tau \pr^{2^{n}}_{2})_{2,4k} \oplus h^{0,4k}, \ker(h^{1,4k+1}/\tau\pr^{2^{\infty}}_{2} \to (h^{5,4k+1}\oplus h^{1,4k+1})/(\Sq^3\Sq^1, \tau)h^{1,4k+4})
\]
obtained as the inverse limit of
\[
(\tau \pr^{2^{n}}_{2})_{2,4k} \oplus h^{0,4k}, h^{1,4k+1}/\tau\pr^{2^{n}}_{2}, \hat{H}_n^{6,4k+2}, \dots.
\]
By comparing with $\KW$ we see the groups $h^{0,4k}$, $h^{1,4k+1}/\tau\pr^{2^{n}}_{2}$, $\hat{H}_n^{6,4k+2}, \dots$ are connected by $2$-multiplications, hence are nontorsion.

For $\KQ_{8k+3,2}$ we have the short exact sequence
\[
0
\to
h^{0,4k+1}
\to
\KQ_{8k+3,2}
\to
\ker(\pr^{2^{\infty}}_{2})_{1,4k}
\to 0.
\]
The exact sequences
\[
0 \to (\tau \pr^{2^{n}}_{2})_{1,4k} \to H_n^{1,4k} \xrightarrow{\tau \pr^{2^{n}}_{2}} h^{1,4k+1} \to h^{1,4k+1}/\tau\pr^{2^{n}}_{2}
\]
and
\begin{align*}
0 \to \ker(h^{1,4k+1}/\tau\pr^{2^{n}}_{2} &\to (h^{5,4k+1}\oplus h^{1,4k+1})/(\Sq^3\Sq^1, \tau)h^{1,4k+4}) \\
&\to h^{1,4k+1}/\tau\pr^{2^{n}}_{2} \to (h^{5,4k+1}\oplus h^{1,4k+1})/(\Sq^3\Sq^1, \tau)h^{1,4k+4} \cong h^{5,4k+1},
\end{align*}
conspire to produce the equalities 
\[
\frac{\# h^{1,4k+1}/\tau\pr^{2^{n}}_{2}}{\ker(\tau \pr^{2^{n}}_{2})_{1,4k}}
= \frac{\# h^{1,4k+1}}{\# H^{1,4k} }
\]
and
\[
\# \ker(h^{1,4k+1}/\tau\pr^{2^{n}}_{2} \to (h^{5,4k+1}\oplus h^{1,4k+1})/(\Sq^3\Sq^1, \tau)h^{1,4k+4}) 
=
\frac{\# h^{1,4k+1}/\tau\pr^{2^{n}}_{2} }{\# h^{5,4k+1} }.
\]
Hence we obtain
\begin{align*}
\frac{\#(\KQ_{8k+2,2})_\tor }{\# \KQ_{8k+3,2}}
&=
\frac{\# (\tau \pr^{2^{n}}_{2})_{2,4k} \# h^{0,4k} \frac{\# h^{1,4k+1}/\tau\pr^{2^{n}}_{2} }{\# h^{5,4k+1} }}{\# (\tau \pr^{2^{n}}_{2})_{1,4k}  \# h^{0,4k+1} } \\
&=
\frac{\# H^{2,4k} }{\# h^{2,4k+1} \# h^{1,4k} }
\frac{\# h^{1,4k+1} }{\# H^{1,4k} \# h^{5,4k+1} }.
\end{align*}
\end{proof}

\begin{remark}
The motivic cohomology groups appearing in \aref{theorem:nice} are given explicitly in terms of $r_{1}$, $r_{2}$, $\sls$, $\ts$, and $\ts^+$ in \aref{lem:struct}.
\end{remark}
\begin{remark}
\aref{theorem:nice} is a vast generalization of \cite[Theorem 5.9]{BKSO} for totally real 2-regular number fields (in which case $\sls=1$, and $r_{2}=\ts=\ts^+=0$).
The grading convention in \cite{BKSO} is such that
\[\
\GW^{[w]}_{p}(\OF[\frac{1}{2}]) 
= 
\KQ_{p-2w,-w}(\OF[\frac{1}{2}]).
\]
\end{remark}

\section{The multiplicative structure on $\s_{*}(\KQ/2^{n})$ for $n>1$}
\label{section:multstrKQ2n}
In this section we determine the multiplicative structure on the slices of $\KQ/2^{n}$ for $n>1$.
The answer shows $\tau^4$ acts periodically.
Our main input is the calculation of the graded slices $\s_{*}(\KQ)$ in \cite{mult}.
In outline, 
we first determine the multiplication on the top summands, 
and then transfer this information to all other summands via multiplying by the Hopf map $\eta$.

\begin{lemma}
\label{lem:sqhopf}
Let $n \geq 1$.
\begin{enumerate}
\item The forgetful map $f : \KQ/2^{n} \to \KGL/2^{n}$ induces the following map on slices
\[
\s_{q}(f) 
= 
\begin{cases}
  (\id, 0, \dots) & q \equiv 0 \bmod 2 \\
  (\inc^{2}_{2^{n}}, \partial^{2}_{2^{n}}, 0, \dots) & q \equiv 1 \bmod 2.
\end{cases}
\]
\item The Hopf map $\eta : \Sigma^{1,1}\KQ/2^{n} \to \KQ/2^{n}$ induces the following map on the top slice summands
\[
\s_{q}(\eta)_{\vert} 
= 
\begin{cases}
  \Sigma^{2q-1,q}\MZ/2 \xrightarrow{\begin{pmatrix}0 \\ \id\end{pmatrix}}
  \Sigma^{2q, q}\MZ/2^{n} \vee   \Sigma^{2q-1, q}\MZ/2 & q \equiv 0 \bmod 2\\
  \Sigma^{2q-1,q}\MZ/2^{n} \xrightarrow{\begin{pmatrix}\partial^{2^{n}}_{2} \\ \pr^{2^{n}}_{2}\end{pmatrix}}
  \Sigma^{2q, q}\MZ/2 \vee   \Sigma^{2q-1, q}\MZ/2 & q \equiv 1 \bmod 2.
\end{cases}
\]
\aref{tbl:Sq-action} shows $\inc^{2}_{2^{n}}$ is the unique nontrivial map $\MZ/2 \to \Sigma^{1,0}\MZ/2^{n}$,
and $\partial^{2}_{2^{n}}$ is the unique nontrivial map $\MZ/2 \to \Sigma^{1,0}\MZ/2^{n}$.
\end{enumerate}
\end{lemma}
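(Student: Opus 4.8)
\emph{Plan.} I would derive both parts by reducing the corresponding integral statements for $\KQ$ modulo $2^{n}$. For (1), apply the triangulated functor $\s_{q}(-)$ to the map of cofiber sequences (rows exact) induced by $f$,
\[
\begin{tikzcd}
\s_{q}(\KQ) \ar[r, "2^{n}"]\ar[d, "\s_{q}(f)"] & \s_{q}(\KQ)\ar[r]\ar[d, "\s_{q}(f)"] & \s_{q}(\KQ/2^{n})\ar[d, "\s_{q}(f/2^{n})"] \\
\s_{q}(\KGL)\ar[r, "2^{n}"] & \s_{q}(\KGL)\ar[r] & \s_{q}(\KGL/2^{n}).
\end{tikzcd}
\]
The integral slice map $\s_{q}(f)$ is $(\id,0,\dotsc)$ when $q$ is even and $(\beta,0,\dotsc)$ when $q$ is odd, where $\beta\colon\MZ/2\to\Sigma^{1,0}\MZ$ is the integral Bockstein on the top summand $\Sigma^{2q-1,q}\MZ/2$ of $\s_{q}(\KQ)$; this is part of the analysis of the Wood cofiber sequence and of the slices of $\KQ$ in \cite[\S3, \S5]{slices}. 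When $q$ is even, multiplication by $2^{n}$ is the zero map on every mod $2$ wedge summand of $\s_{q}(\KQ)$ and the reduction map on $\s_{q}(\KGL)=\Sigma^{2q,q}\MZ$, so passing to cofibers recovers the decomposition \eqref{equation:KQ2n-slices} and pins $\s_{q}(f/2^{n})$ to $\id$ on the top summand. When $q$ is odd, $\s_{q}(\KQ)$ and $\Sigma^{1,0}\s_{q}(\KQ)$ are $2^{n}$-torsion, hence $\s_{q}(\KQ/2^{n})\simeq\s_{q}(\KQ)\vee\Sigma^{1,0}\s_{q}(\KQ)$, and naturality identifies $\s_{q}(f/2^{n})$ on the two top summands with $\inc^{2}_{2^{n}}$ and $\partial^{2}_{2^{n}}$.

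That the remaining components are trivial is a dimension count, using $[\MZ/2,\Sigma^{k,0}\MZ/2^{n}]=0$ for $k\geq 2$ while $[\MZ/2,\MZ/2^{n}]=[\MZ/2,\Sigma^{1,0}\MZ/2^{n}]=\Z/2$, generated by $\inc^{2}_{2^{n}}$ and $\partial^{2}_{2^{n}}$ respectively (compatibly with \aref{tbl:Sq-action}). Nontriviality of these two components follows from $\beta\neq 0$: the mod $2$ Bockstein factors through $\inc^{2}_{2^{n}}$, and $\partial^{2}_{2^{n}}$ is the mod $2^{n}$ reduction of $\beta$. Compatibility with the coeffective connecting maps \eqref{equation:coeffcof2} and with \aref{convention:kq-to-kt-slices} rules out ambiguity in the identifications.

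For (2), reduce the Wood cofiber sequence \eqref{equation:wood} modulo $2^{n}$ and apply $\s_{q}(-)$, using \eqref{equation:fqsigma} to get $\s_{q}(\Sigma^{1,1}\KQ/2^{n})=\Sigma^{1,1}\s_{q-1}(\KQ/2^{n})$; this produces the cofiber sequence
\[
\Sigma^{1,1}\s_{q-1}(\KQ/2^{n})\xrightarrow{\s_{q}(\eta)}\s_{q}(\KQ/2^{n})\xrightarrow{\s_{q}(f/2^{n})}\s_{q}(\KGL/2^{n}).
\]
Restrict to top summands. For $q$ even, the source top summand is $\Sigma^{2q-1,q}\MZ/2$; by (1), $\s_{q}(f/2^{n})$ annihilates the second summand $\Sigma^{2q-1,q}\MZ/2$ of $\s_{q}(\KQ/2^{n})$ and projects the top one onto $\Sigma^{2q,q}\MZ/2^{n}$, so $\s_{q}(\eta)$ must carry the source into the second summand; its component into $\Sigma^{2q,q}\MZ/2^{n}$ vanishes since $f\circ\eta=0$, and exactness forces the surviving component to be $\id$. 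For $q$ odd, the source top summand is $\Sigma^{2q-1,q}\MZ/2^{n}$, the top two summands of $\s_{q}(\KQ/2^{n})$ are $\Sigma^{2q,q}\MZ/2$ and $\Sigma^{2q-1,q}\MZ/2$, and by (1) the matrix of $\s_{q}(f/2^{n})$ there is $(\inc^{2}_{2^{n}},\partial^{2}_{2^{n}})$; the only map into those summands that is killed by this and is nonzero---as demanded by exactness---is $(\partial^{2^{n}}_{2},\pr^{2^{n}}_{2})$, via the identity $\inc^{2}_{2^{n}}\partial^{2^{n}}_{2}=\partial^{2}_{2^{n}}\pr^{2^{n}}_{2}$ of maps $\MZ/2^{n}\to\Sigma^{1,0}\MZ/2^{n}$. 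Alternatively, in this case one may read $\s_{q}(\eta)$ off from the $\eta$-invertibility of $\KW/2^{n}$ \cite[Example 2.3]{slices} together with the comparison of slices in \aref{convention:kq-to-kt-slices}(2).

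\emph{Main obstacle.} The formal step---applying $\s_{q}$ to the two cofiber sequences and computing cofibers---is routine, and the mod $2^{n}$ passage adds only the elementary computation of $\mathrm{Hom}$-groups between suspended mod $2^{\ell}$ Eilenberg--MacLane spectra. The delicate part is the bookkeeping: matching wedge summands across the mod $2^{n}$ slices of $\KQ$, $\KGL$ and $\KW$ consistently with \aref{convention:kq-to-kt-slices}, and in particular certifying that the coefficients of $\inc^{2}_{2^{n}}$, $\partial^{2}_{2^{n}}$, $\partial^{2^{n}}_{2}$ and $\pr^{2^{n}}_{2}$ equal $1$ rather than $0$---precisely where the integral nontriviality inputs from \cite{slices} and the $\eta$-periodicity of $\KW$ enter.
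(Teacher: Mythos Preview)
Your argument is correct, and the route you take differs from the paper's in an interesting way.

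For part~(1) with $q$ even, the paper does \emph{not} pass through the integral forgetful map at all. Instead it exploits that the slice $\dd^{1}$-differentials for $\KQ/2^{n}$ and for $\KGL/2^{n}$, $\KGL/2$ have already been computed (in \aref{thm:KQ2n-diff} and \eqref{equation:KGL-d1}), and simply chases the top summand around the naturality square
\[
\begin{tikzcd}
\s_{q}(\KQ/2^{n}) \ar[r,"\dd^{1}"]\ar[d,"\s_{q}(f)"] & \Sigma^{1,0}\s_{q+1}(\KQ/2^{n})\ar[d] \\
\s_{q}(\KGL/2^{n}) \ar[r,"\dd^{1}"] & \Sigma^{1,0}\s_{q+1}(\KGL/2^{n}),
\end{tikzcd}
\]
together with a further reduction to $\KGL/2$, to force the stated formula for $\s_{q}(f)$. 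For $q$ odd the paper appeals directly to \cite[Corollary~4.13]{slices}, and then deduces~(2) from~(1) via Wood as you do.

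Your approach bypasses the $\dd^{1}$-computation entirely: you take the integral $\s_{q}(f)$---which is $(\id,0,\dotsc)$ or $(\beta,0,\dotsc)$ according to parity, as one reads off from the Wood sequence on slices---and reduce it modulo~$2^{n}$ by analysing the cofiber of multiplication by $2^{n}$ on each wedge summand. This is arguably more transparent, since it does not rely on the explicit form of the differentials; it also makes the appearance of $\inc^{2}_{2^{n}}$ and $\partial^{2}_{2^{n}}$ manifest as the mod~$2^{n}$ shadows of the integral Bockstein. The identity $\inc^{2}_{2^{n}}\partial^{2^{n}}_{2}=\partial^{2}_{2^{n}}\pr^{2^{n}}_{2}$ you invoke in~(2) for $q$ odd is exactly the naturality of connecting maps for the morphism of short exact sequences $0\to\Z/2\to\Z/2^{n+1}\to\Z/2^{n}\to 0$ $\longrightarrow$ $0\to\Z/2^{n}\to\Z/2^{n+1}\to\Z/2\to 0$ given by $(\cdot 2^{n-1},\id,\pr)$, so that step is clean. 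The paper's approach, by contrast, has the advantage of being a consistency check internal to the computations already carried out in \S4, at the cost of depending on those computations.
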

\begin{proof}
When $q$ is odd this follows from \cite[Corollary 4.13]{slices} (the top summand of $\s_{q}(\Sigma^{1,1}\KQ/2^{n})$ is even).
  
When $q$ is even we compare the top summands in the commutative diagram
\[
\begin{tikzcd}
\s_{q}(\KQ/2^{n}) \ar[r, "\dd^1"] \ar[d, "\s_{q}(f)"] & \Sigma^{1,0}\s_{q+1}(\KQ/2^{n}) \ar[d, "\Sigma^{1,0}\s_{q+1}(f)"] \\
\s_{q}(\KGL/2^{n}) \ar[r, "\dd^1"]\ar[d] & \Sigma^{1,0}\s_{q+1}(\KGL/2^{n})\ar[d] \\
\s_{q}(\KGL/2) \ar[r, "\dd^1=\QQ_{1}"] & \Sigma^{1,0}\s_{q+1}(\KGL/2)
\end{tikzcd}
\]
obtained from 
\eqref{equation:KGL-d1} and \aref{thm:KQ2n-diff}.
This forces the formula for $\s_{q}(f)$.

The second claim follows from the first via the Wood cofiber sequence \eqref{equation:wood}.
\end{proof}

\begin{lemma}
\label{lem:sqetam}
On slices the iterated Hopf map $\eta^m\colon\Sigma^{m,m}\KQ/2^{n} \to \KQ/2^{n}$, $m\geq 1$, 
restricts to the top summand $\Sigma^{2q-m,q}\MZ/2^l$ of $\s_{q}(\Sigma^{m,m}\KQ/2^{n})$ as
\[
\s_{q}(\eta^m)_{\vert}
=
\begin{cases}
  \Sigma^{2q-m,q}\MZ/2^{n} \xrightarrow{\begin{pmatrix}\partial^{2^{n}}_{2} \\ \pr^{2^{n}}_{2}\end{pmatrix}}
  \Sigma^{2q-m+1, q}\MZ/2 \vee   \Sigma^{2q-m, q}\MZ/2 & q - m \equiv 0 \bmod 2 \\
  \Sigma^{2q-m,q}\MZ/2 \xrightarrow{\begin{pmatrix}0 \\ \id\end{pmatrix}}
  \Sigma^{2q-m+1, q}\MZ/2^l \vee   \Sigma^{2q-m, q}\MZ/2 & q - m \equiv 1 \bmod 2.\\
\end{cases}
\]
Here $l=n$ if $m=1$, and $l=1$ otherwise.
\end{lemma}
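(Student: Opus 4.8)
The plan is to prove this by induction on $m$, using \aref{lem:sqhopf}(2) as the base case $m=1$ and the multiplicative structure of the slice filtration to compute the composite $\eta^{m} = \eta \circ \Sigma^{1,1}(\eta^{m-1})$. The key observation is that on slices one has a commutative diagram
\[
\begin{tikzcd}
\s_{q}(\Sigma^{m,m}\KQ/2^{n}) \ar[r, "\s_{q}(\Sigma^{1,1}\eta^{m-1})"] \ar[rd, "\s_{q}(\eta^{m})"'] & \s_{q}(\Sigma^{1,1}\KQ/2^{n}) \ar[d, "\s_{q}(\eta)"] \\
& \s_{q}(\KQ/2^{n}),
\end{tikzcd}
\]
so it suffices to track what happens to the top summand under each factor and compose the resulting matrices.

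First I would fix the indexing conventions: by \eqref{equation:fqsigma} and \eqref{equation:KQ2n-slices}, the top summand of $\s_{q}(\Sigma^{m,m}\KQ/2^{n})$ is $\Sigma^{2q-m,q}\MZ/2^{l}$ with $l=n$ when $q-m$ is even and $l=1$ when $q-m$ is odd (since $\Sigma^{m,m}$ shifts weight by $m$, the summand that was the $\MZ/2^{n}$-summand in simplicial-slice-degree $q$ of $\KQ/2^{n}$ sits in the appropriate parity). For $m\geq 2$ the summand is always $\MZ/2$ except in the case $q-m$ even where it remains $\MZ/2^{n}$; when $m=1$ the two cases of \aref{lem:sqhopf}(2) exactly match. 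Then I would carry out the inductive step: applying $\Sigma^{1,1}$ to the inductive formula for $\s_{q}(\eta^{m-1})$ and composing with $\s_{q}(\eta)$ from \aref{lem:sqhopf}(2). In the case $q-m\equiv 0\bmod 2$: here $q-(m-1)$ is odd, so by induction $\s_{q}(\eta^{m-1})$ restricted to the top summand is $\binom{0}{\id}$ landing in an even-then-odd pair; after shifting, $\eta$ acts on the (now even) top summand via $\binom{\partial^{2^{n}}_{2}}{\pr^{2^{n}}_{2}}$ per \aref{lem:sqhopf}(2), and the composite is $\binom{\partial^{2^{n}}_{2}}{\pr^{2^{n}}_{2}}$ as claimed. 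In the case $q-m\equiv 1\bmod 2$: then $q-(m-1)$ is even, by induction the restriction of $\s_{q}(\eta^{m-1})$ is $\binom{\partial^{2^{n}}_{2}}{\pr^{2^{n}}_{2}}$, and $\eta$ acts on the odd top summand via $\binom{0}{\id}$; since $\pr^{2^{n}}_{2}\circ\binom{0}{\id}$-type bookkeeping shows the surviving component is the identity into the $\MZ/2$ landing slot, one recovers $\binom{0}{\id}$. One must be careful that off-diagonal compositions like $\Sq^{1}$-twists between an even and an adjacent odd summand do not contribute — this is controlled by \aref{convention:kq-to-kt-slices} together with the vanishing $[\MZ/2,\Sigma^{2,0}\MZ/2]=0$ recalled in \aref{section:mcatSa}, ensuring the only maps between the relevant $\MZ/2$ and $\MZ/2^{n}$ Eilenberg--MacLane summands are the canonical ones.

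The main obstacle I anticipate is the careful bookkeeping of which slice summands are even versus odd after repeated $\Sigma^{1,1}$-suspension, and verifying that no spurious cross-terms (involving $\Sq^{1}$, or mixing the top summand with lower summands of $\s_{q}$) appear in the composite. Concretely, one needs that $\eta^{m}$ restricted to the \emph{top} summand of $\s_{q}(\Sigma^{m,m}\KQ/2^{n})$ lands only in the top two summands of $\s_{q}(\KQ/2^{n})$; this follows because $\eta$ raises slice-degree-shift compatibly and the forgetful map / Wood cofiber sequence \eqref{equation:wood} constrain the possible components, exactly as in the proof of \aref{lem:sqhopf}. Once that structural point is pinned down, the rest is the two-case matrix computation above, and the identity $\partial^{2^{n}}_{2}\circ\pr^{2^{n}}_{2}=0$ together with $\pr^{2^{n}}_{2}\circ\inc^{2}_{2^{n}}=0$ (and the analogous facts from \aref{lem:coh-R-mod2n}) handle the degenerate products cleanly.
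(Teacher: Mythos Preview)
Your approach differs from the paper's and, as written, has a genuine gap.

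You aim to induct on $m$ by factoring $\eta^m = \eta \circ \Sigma^{1,1}\eta^{m-1}$ and composing on slices. But both \aref{lem:sqhopf}(2) and your inductive hypothesis only describe the restriction to the \emph{top} summand. After applying the first factor $\Sigma^{1,1}\s_{q-1}(\eta^{m-1})$, the image lands in summands of $\s_q(\Sigma^{1,1}\KQ/2^n)$ at $j$-indices $q-m+1$ and $q-m$, which for $m\geq 2$ lie strictly below the top (the top sits at $j=q-1$). Composing with $\s_q(\eta)$ therefore requires its restriction to these lower summands, and nothing you invoke supplies it: \aref{convention:kq-to-kt-slices} concerns the comparison maps to $\KQ$ and $\KW$, not $\eta$ itself, and the vanishing $[\MZ/2,\Sigma^{2,0}\MZ/2]=0$ only rules out a two-step jump --- the component you must exclude is a degree-one map, and $[\MZ/2,\Sigma^{1,0}\MZ/2]=h^{0,0}\{\Sq^1\}$ is nonzero. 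The result that would close this gap, \aref{lem:sqeta-all}, is deduced \emph{from} the present lemma, so you cannot appeal to it here. There is also an indexing slip: your triangle uses $\Sigma^{1,1}\eta^{m-1}$, for which the relevant parity is $(q-1)-(m-1)=q-m$, yet your case analysis quotes the parity $q-(m-1)$, which belongs to the other factorization $\eta^{m-1}\circ\Sigma^{m-1,m-1}\eta$; either choice leaves the same missing-summand problem.

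The paper's argument avoids induction entirely. It uses that $\s_*(\eta^m)$ commutes with the slice $\dd^1$-differential, and then checks against the explicit $\dd^1$ of \aref{thm:diff-kq-mod2n} that a nonzero off-diagonal component $\Sigma^{2q-m,q}\MZ/2\to\Sigma^{2q-m+1,q}\MZ/2$ would violate this commutativity. That pins down the map directly, without ever needing the action of $\eta$ on any summand below the top.
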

\begin{proof}
For $q\equiv m\bmod 4$ this follows since the top summand $\Sigma^{2q-m,q}\MZ/2$ is even.
For $q - m \equiv 1\bmod 4$ we compare with the $\dd^1$-differential.
\aref{lem:sqhopf} shows the claim when $m = 1$.
The following diagram commutes since the $\dd^1$-differential vanishes on $\eta$ (cf.~\cite{April1})
\begin{equation}
\label{equation:vanishesonetacommutativity}
\begin{tikzcd}
\s_{q}(\Sigma^{m,m}\KQ/2^{n}) \ar[r, "\dd^1"] \ar[d, "\s_{q}(\eta)"] & \Sigma^{1,0}\s_{q+1}(\Sigma^{m,m}\KQ/2^{n}) \ar[d, "\Sigma^{1,0}\s_{q+1}(\eta)"] \\
\s_{q}(\KQ/2^{n}) \ar[r, "\dd^1"] & \Sigma^{1,0}\s_{q+1}(\KQ/2^{n}).
\end{tikzcd}
\end{equation}
Nontriviality of $\Sigma^{2q-m,q}\MZ/2 \to \Sigma^{2q-m+1, q}\MZ/2$ would contradict commutativity of \eqref{equation:vanishesonetacommutativity}.
\end{proof}

\begin{lemma}
\label{lem:sqeta-all}
When $q\equiv m\bmod 4$ the restriction of $\s_{q}(\eta^m)$ to the top summand of $\s_*(\KQ/2^{n})$ is given by  
\[
\Sigma^{2q-m,q}\MZ/2^{n} \xrightarrow{\begin{pmatrix}\partial^{2^{n}}_{2} \\ \pr^{2^{n}}_{2} \end{pmatrix}} \Sigma^{2q-m+1,q}\MZ/2 \vee \Sigma^{2q-m,q}\MZ/2.
\]
When $q\not\equiv m\bmod 4$ the map $\s_{q}(\eta^m)$ restricts to the direct summands of $\s_*(\KQ/2^{n})$ as
\[
\Sigma^{2q-m-j,q}\MZ/2 \xrightarrow{\begin{pmatrix}0 \\ \id \end{pmatrix}} \Sigma^{2q-m-j+1,q}\MZ/2^l \vee \Sigma^{2q-m-j,q}\MZ/2.
\]
Here $l$ is $n$ if $m = 1, j = 0$ and $q \equiv 0\bmod 4$, and $1$ otherwise.
\end{lemma}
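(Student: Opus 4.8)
The plan is to reduce the statement for $\s_q(\eta^m)$ on all summands to the known calculation on the top summand together with the known behavior of $\s_q(\eta)$, exploiting the fact that every non-top summand of $\s_q(\KQ/2^n)$ is a $\Gm$-suspension of $\MZ/2$ and arises from the lower slices by a cofiber-sequence/Hopf-map construction. First I would record that for $q\equiv m\bmod 4$ the top summand $\Sigma^{2q-m,q}\MZ/2^l$ has $l=n$ precisely when $m=1$ (so that $\Sigma^{m,m}\KQ/2^n$ has an even top slice summand coming with $\MZ/2^n$-coefficients); in that case the formula is exactly \aref{lem:sqetam} specialized to $q-m\equiv0\bmod 4$, so there is nothing new to prove. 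The content is the non-top summands, and the case $q\equiv m\bmod 4$ for the top summand when $m>1$ where $l=1$ — but again that is \aref{lem:sqetam}.

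Next I would handle the non-top summands. The key structural input is that $\s_q(\KQ/2^n)$ decomposes as $\Sigma^{q,q}(\MZ/2^l\vee\bigvee_{0<j}\Sigma^{j,0}\MZ/2)$ (cf.~\eqref{equation:KQ2n-slices} and the discussion around \aref{convention:kq-to-kt-slices}), and under \aref{convention:kq-to-kt-slices} the map $\s_q(\KQ/2^n)\to\s_q(\KW/2^n)$ is an isomorphism on all summands except possibly the top even one. Over $\KW/2^n$ all slice summands are $\Gm$-suspensions of $\MZ/2$ and the slices are $\eta$-periodic, so $\s_q(\eta)$ for $\KW/2^n$ restricts on the $j$-th summand $\Sigma^{q+j,q}\MZ/2$ to an isomorphism onto the $(j-1)$-st summand of $\s_q(\Sigma^{1,1}\KW/2^n)$, i.e.~to $\begin{pmatrix}0\\\id\end{pmatrix}$ after accounting for the shift. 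Pulling this back along the split inclusion $\s_q(\KQ/2^n)\hookrightarrow\s_q(\KW/2^n)$ (which is compatible with $\eta$ by naturality of the Wood cofiber sequence \eqref{equation:wood}) determines $\s_q(\eta)$ on all non-top summands of $\KQ/2^n$, and then $\s_q(\eta^m)=\s_q(\eta)\circ\cdots\circ\s_{q-m+1}(\eta)$ composed telescopically gives the $\begin{pmatrix}0\\\id\end{pmatrix}$ formula for all $q\not\equiv m\bmod 4$, with the coefficient ring $\MZ/2^l$ in the target being $\MZ/2^n$ only when the target summand is the top even one, i.e.~when $m=1$, $j=0$, $q\equiv 0\bmod 4$; otherwise $\MZ/2$. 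I would verify the coefficient-ring bookkeeping carefully against \aref{lem:sqhopf} and \aref{thm:KQ2n-diff}, since that is where the $\inc^2_{2^n}$, $\partial^{2^n}_2$, $\pr^{2^n}_2$ maps enter.

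The one subtlety — and what I expect to be the main obstacle — is controlling the map on the top even summand $\Sigma^{2q,q}\MZ/2^n$ of $\s_q(\KQ/2^n)$, since this is exactly the summand where $\s_q(\KQ/2^n)\to\s_q(\KW/2^n)$ fails to be an isomorphism (it is $(\Sq^1,\id)$ or an inclusion). For the case $q\equiv m\bmod 4$ this is covered by \aref{lem:sqetam}; for $q\not\equiv m\bmod 4$ the top summand of $\s_q(\KQ/2^n)$ is a source, mapping into $\s_q(\Sigma^{m,m}\KQ/2^n)$ — wait, rather it is the target coefficient ring that matters, and the statement already isolates $l=n$ exactly in the one case $m=1,j=0,q\equiv0\bmod4$. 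I would pin this down by comparing with the $\dd^1$-differential: the diagram \eqref{equation:vanishesonetacommutativity} (vanishing of $\dd^1$ on $\eta$) forces the off-diagonal entry $\Sigma^{2q-m-j,q}\MZ/2\to\Sigma^{2q-m-j+1,q}\MZ/2$ to be trivial, exactly as in the proof of \aref{lem:sqetam}, and the diagonal entry must be an isomorphism because $\eta^m$ followed by the appropriate power of $\eta$ recovers $\eta$-periodicity on $\KW/2^n$ after base change. Assembling these, together with \aref{lem:sqhopf}(1) identifying $\s_q(f)$ and hence fixing the coefficient maps, completes the argument. I would close by remarking that the $l$-bookkeeping is forced: $\Sigma^{m,m}\KQ/2^n$ has a $\MZ/2^n$-top-summand in slice $q$ iff $m\equiv 0$ relative to the parity making it even, which after the $m$-fold shift lands the $\MZ/2^n$-coefficient in the target summand only when $m=1$, $j=0$, $q\equiv 0\bmod 4$.
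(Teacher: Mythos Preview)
The paper's proof is much shorter and does not go through $\KW/2^n$ at all. Its single observation is that for any $m' > m$, \aref{lem:sqetam} already tells you the top summand of $\s_q(\Sigma^{m',m'}\KQ/2^n)$ maps under $\eta^{m'-m}$ surjectively onto the summand $\Sigma^{2q-m',q}\MZ/2$ of $\s_q(\Sigma^{m,m}\KQ/2^n)$ (via $\pr^{2^n}_2$ or $\id$ according to the parity of $q-m'$). Composing with $\s_q(\eta^m)$ recovers $\s_q(\eta^{m'})$ on a top summand, again computed by \aref{lem:sqetam}; since the only possible target summands differ in simplicial degree by $0$ or $1$ (as $\AAA^{\ast,0}$ vanishes above degree $1$), this surjectivity determines $\s_q(\eta^m)$ on every non-top summand directly.

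Your detour through $\KW/2^n$ has a gap: you assert that $\s_q(\eta)$ for $\KW/2^n$ restricts to $(0,\id)$ on each summand, but $\eta$-periodicity only says $\s_q(\eta)$ is \emph{some} automorphism of $\bigvee_j \Sigma^{q+j,q}\MZ/2$, and adjacent summands now differ by one simplicial degree, so off-diagonal $\Sq^1$-entries are not a priori excluded. Compatibility with $\KW \to \KW/2^n \to \Sigma^{1,0}\KW$ pins down the even-to-even and odd-to-odd parts (for $\KW$ itself the summands are two degrees apart and $\s_q(\eta)$ is forced to be diagonal) but not the odd-to-even ones. Your fallback to the $\dd^1$-commutativity square \eqref{equation:vanishesonetacommutativity} could in principle kill these, but that requires running the case analysis of \aref{thm:KQ2n-diff} across all residues of $j\bmod 4$---exactly the work the paper's top-summand surjectivity argument bypasses. (Incidentally, the top summand of $\s_q(\Sigma^{m,m}\KQ/2^n)$ has coefficients $\MZ/2^n$ whenever $q-m$ is even, not only when $m=1$; the condition $l=n$ in the statement concerns a \emph{target} summand in $\s_q(\KQ/2^n)$, which is the top one only when $m+j=1$.)
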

\begin{proof}
Multiplication by $\eta^m$ yields a surjective map from a top summand to another summand.
Our claim follows from \aref{lem:sqetam}.
\end{proof}

Since all slices are $\s_{0}(\One) \simeq \MZ$-modules it follows that $\s_*(\KQ)$ is an $\MZ$-algebra, 
and hence for our purposes we may form smash products of slices over $\MZ$.
This helps us simplify the calculations.

\begin{lemma}
Under the identification
\[
\MZ/2^{n} \wedge_{\MZ}\MZ/2 
\simeq
\Sigma^{1,0}\MZ/2 \vee \MZ/2
\]
we have
\[
\pr^{2^{n}}_{2} \wedge \MZ/2 
= 
\begin{pmatrix}
  0 & 0 \\
  0 & \id
\end{pmatrix} : \Sigma^{1,0}\MZ/2 \vee \MZ/2 \to \Sigma^{1,0}\MZ/2 \vee \MZ/2
\]
and
\[
\partial^{2^{n}}_{2} \wedge \MZ/2 
= 
\begin{pmatrix}
  0 & 0 \\
  \id & 0
\end{pmatrix} : \Sigma^{1,0}\MZ/2 \vee \MZ/2 \to \Sigma^{2,0} \MZ/2 \vee \Sigma^{1,0}\MZ/2.
\]
\end{lemma}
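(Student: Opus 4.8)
The plan is to reduce everything to formal manipulations with the defining cofiber sequences $\MZ \xrightarrow{2^k} \MZ \xrightarrow{\iota_k} \MZ/2^k \xrightarrow{\partial_k} \Sigma^{1,0}\MZ$, combined with the single fact that multiplication by $2$ is null on every $\MZ/2$-module, so that $2^{k-1}$ acts as zero on $\MZ/2$ and on $\Sigma^{1,0}\MZ\wedge_{\MZ}\MZ/2 \simeq \Sigma^{1,0}\MZ/2$ as soon as $k\geq 2$. First I would record the relevant splittings: since $-\wedge_{\MZ}\MZ/2$ is exact, $\MZ\wedge_{\MZ}\MZ/2 = \MZ/2$, and $2^k\simeq 0$ on $\MZ/2$ for $k\geq 1$, smashing the above cofiber sequence with $\MZ/2$ over $\MZ$ produces a \emph{split} cofiber sequence realizing $\MZ/2^k\wedge_{\MZ}\MZ/2\simeq\Sigma^{1,0}\MZ/2\vee\MZ/2$, in which the summand $\MZ/2$ is the image of $\iota_k\wedge\MZ/2$ and the summand $\Sigma^{1,0}\MZ/2$ is split off by $\partial_k\wedge\MZ/2$. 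I fix these identifications for $k=n$ (the source) and for $k=1$ (the target $\MZ/2\wedge_{\MZ}\MZ/2$), consistently with \aref{convention:kq-to-kt-slices}.

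Next I would identify the two maps intrinsically. On $\pi_{0,0}$ the map $\pr^{2^n}_2\colon\MZ/2^n\to\MZ/2$ is reduction, hence $\pr^{2^n}_2\circ\iota_n = \iota_1$ (maps $\MZ\to\MZ/2$ are detected on $\pi_{0,0}$, see \aref{section:mcatSa}); since the equation $c\circ\iota_n=\iota_1$ has at most one solution $c\colon\MZ/2^n\to\MZ/2$ — because $[\Sigma^{1,0}\MZ,\MZ/2]=0$ — the map $\pr^{2^n}_2$ is the middle map of the map of triangles from $\MZ\xrightarrow{2^n}\MZ\xrightarrow{\iota_n}\MZ/2^n\xrightarrow{\partial_n}\Sigma^{1,0}\MZ$ to $\MZ\xrightarrow{2}\MZ\xrightarrow{\iota_1}\MZ/2\xrightarrow{\partial_1}\Sigma^{1,0}\MZ$ whose outer vertical maps are multiplication by $2^{n-1}$, the identity, and multiplication by $2^{n-1}$; commutativity of the rightmost square then gives $\partial_1\circ\pr^{2^n}_2 = 2^{n-1}\cdot\partial_n$. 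By definition $\partial^{2^n}_2$ is the connecting map of the cofiber sequence attached to $0\to\Z/2\to\Z/2^{n+1}\to\Z/2^n\to 0$, and the octahedral axiom applied to the factorization $2^{n+1}=2\cdot 2^n$ identifies it with $\Sigma^{1,0}(\iota_1)\circ\partial_n$.

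With these identifications in hand, I would finish by applying $-\wedge_{\MZ}\MZ/2$. The relation $(\pr^{2^n}_2\wedge\MZ/2)\circ(\iota_n\wedge\MZ/2)=\iota_1\wedge\MZ/2$ says that the $\MZ/2$-column of $\pr^{2^n}_2\wedge\MZ/2$ is $\left(\begin{smallmatrix}0\\ \id\end{smallmatrix}\right)$; and $(\partial_1\wedge\MZ/2)\circ(\pr^{2^n}_2\wedge\MZ/2)=2^{n-1}\,(\partial_n\wedge\MZ/2)=0$ (here $n>1$), so $\pr^{2^n}_2\wedge\MZ/2$ lands in the $\MZ/2$-summand of $\MZ/2\wedge_{\MZ}\MZ/2$; its restriction to the $\Sigma^{1,0}\MZ/2$-summand of the source is therefore a map $\Sigma^{1,0}\MZ/2\to\MZ/2$, which vanishes since $[\Sigma^{1,0}\MZ/2,\MZ/2]=0$. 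Hence $\pr^{2^n}_2\wedge\MZ/2=\left(\begin{smallmatrix}0&0\\0&\id\end{smallmatrix}\right)$. For the other map, $\partial^{2^n}_2\wedge\MZ/2=\Sigma^{1,0}(\iota_1\wedge\MZ/2)\circ(\partial_n\wedge\MZ/2)$ is visibly the composite of the projection $\Sigma^{1,0}\MZ/2\vee\MZ/2\twoheadrightarrow\Sigma^{1,0}\MZ/2$ with the summand inclusion $\Sigma^{1,0}\MZ/2\hookrightarrow\Sigma^{2,0}\MZ/2\vee\Sigma^{1,0}\MZ/2$, i.e.\ $\left(\begin{smallmatrix}0&0\\ \id&0\end{smallmatrix}\right)$. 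The only genuine difficulty is the bookkeeping — matching the notation $\pr^{2^n}_2$, $\partial^{2^n}_2$, $\inc^2_{2^n}$ of \aref{section:mcatSa} with the intrinsic maps $\iota_k$, $\partial_k$, and keeping the $\Sigma^{1,0}$-suspensions and the orderings of the wedge summands of source and target in step. The one computational input, $2^{n-1}\equiv 0$ on $\MZ/2$-modules, is exactly what breaks down at $n=1$ (where $\pr^2_2=\id$ and $\pr^2_2\wedge\MZ/2$ is the identity), which is why this section restricts to $n>1$.
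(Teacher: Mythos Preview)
The paper states this lemma without proof, so there is nothing to compare against directly. Your argument is correct and is the natural one: you realise the splitting $\MZ/2^{k}\wedge_{\MZ}\MZ/2\simeq\Sigma^{1,0}\MZ/2\vee\MZ/2$ from the split cofiber sequence obtained by smashing $\MZ\xrightarrow{2^{k}}\MZ\to\MZ/2^{k}\to\Sigma^{1,0}\MZ$ with $\MZ/2$, identify $\pr^{2^{n}}_{2}$ as the unique fill-in of the obvious map of triangles (using $[\Sigma^{1,0}\MZ,\MZ/2]=h^{-1,0}=0$), and identify $\partial^{2^{n}}_{2}=\Sigma^{1,0}(\iota_{1})\circ\partial_{n}$ from the octahedron for $2^{n+1}=2^{n}\cdot 2$. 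The vanishing $[\Sigma^{1,0}\MZ/2,\MZ/2]=0$ that you invoke is immediate from $h^{-1,0}=h^{-2,0}=0$ and the defining cofiber sequence for $\MZ/2$.

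Two small points. First, your appeal to \aref{convention:kq-to-kt-slices} is misplaced: that convention fixes the decomposition of the slices of $\KQ/2$ and $\KW/2$, not the splitting of $\MZ/2^{n}\wedge_{\MZ}\MZ/2$; the splitting here is canonical once you declare that $\iota_{k}\wedge\MZ/2$ and $\partial_{k}\wedge\MZ/2$ are the summand inclusion and projection. Second, to land on the cofiber sequence $\MZ/2\to\MZ/2^{n+1}\to\MZ/2^{n}$ (whose boundary is $\partial^{2^{n}}_{2}$) the octahedron should be run with the composable pair $\MZ\xrightarrow{2}\MZ\xrightarrow{2^{n}}\MZ$; your phrasing ``$2^{n+1}=2\cdot 2^{n}$'' is harmless since the maps commute, but it is worth being explicit about which order you intend.
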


Recall from \cite{mult} the following description of the multiplication map on the slices of $\KQ$.
\begin{lemma}[\protect{\cite[Theorem 3.3]{mult}}]
\label{lem:KQ-mult}
The multiplication map on the $\MZ/2$-summands of $\s_*(\KQ)\wedge \s_*(\KQ)$ is given by 
\begin{align*}
\Sigma^{m,q}\MZ/2 \wedge_{\MZ} \Sigma^{m',q'}\MZ/2
& \simeq 
\Sigma^{m+m',q+q'}(\Sigma^{1,0} \MZ/2 \wedge \MZ/2) \\
&\xrightarrow{\begin{pmatrix}
\Sq^1 & 0 \\
0 & \id
\end{pmatrix}}
\Sigma^{m+m',q+q'}(\Sigma^{2,0}\MZ/2 \vee \MZ/2)
\end{align*}
and
\begin{align*}
\Sigma^{m,q}\MZ/2 \wedge_{\MZ} \Sigma^{m',q'}\MZ/2
& \simeq
\Sigma^{m+m',q+q'}(\Sigma^{1,0} \MZ/2 \wedge \MZ/2) \\
&\xrightarrow{\begin{pmatrix}
\partial^{2}_{2^{\infty}} & 0 \\
0 & \id    
\end{pmatrix}}
\Sigma^{m+m',q+q'}(\Sigma^{2,0}\MZ \vee \MZ/2).
\end{align*}
The multiplication map on the $\MZ$-summands of $\s_*(\KQ)\wedge \s_*(\KQ)$ is the identity.
\end{lemma}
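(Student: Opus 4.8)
The plan is to derive the formula from the $E_{\infty}$-ring structure on $\KQ$ of \cite{PW}, the identification of the slices in \eqref{equation:hermitianktheoryslices}, and the fact that every slice is an $\MZ$-module \cite[Theorem 2.7]{April1}. Because the induced map $\s_{q}(\KQ)\wedge_{\MZ}\s_{q'}(\KQ)\to\s_{q+q'}(\KQ)$ is a morphism of $\MZ$-modules, one may work entirely with smash products over $\MZ$. The first step is the standard computation
\[
\MZ/2\wedge_{\MZ}\MZ/2\simeq\MZ/2\vee\Sigma^{1,0}\MZ/2,
\qquad
\MZ\wedge_{\MZ}\MZ/2\simeq\MZ/2,
\qquad
\MZ\wedge_{\MZ}\MZ\simeq\MZ,
\]
where the first equivalence is the cofiber of $2=0$ on $\MZ/2$ (the motivic shadow of $\mathrm{Tor}^{\Z}_{*}(\FF_{2},\FF_{2})=\FF_{2}\oplus\FF_{2}[1]$), together with the mapping-group computations from \aref{section:mcatSa}: in the relevant range $[\MZ/2,\Sigma^{a,0}\MZ/2]$ is spanned by $\id$ and by $\Sq^{1}$, the group $[\MZ/2,\Sigma^{a,0}\MZ]$ is spanned by $\partial^{2}_{2^{\infty}}$, and $[\MZ/2,\Sigma^{2,0}\MZ/2]=0$. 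This reduces the problem to pinning down a few $\FF_{2}$-coefficients.

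Next I would reduce to the top slice summands. For even $q$ the $\MZ$-summand $\Sigma^{2q,q}\MZ$ of $\s_{q}(\KQ)$ maps isomorphically, under the ring map $\KQ\to\KGL$, onto $\s_{q}(\KGL)=\Sigma^{2q,q}\MZ$; since the multiplication on $\s_{*}(\KGL)$ is the oriented one, the product of two $\MZ$-summands is the identity and the product of an $\MZ$-summand with an $\MZ/2$-summand is the identity onto the matching $\MZ/2$-summand. The only remaining case is a product of two $\MZ/2$-summands. Here I would use \aref{lem:sqhopf}, \aref{lem:sqetam} and \aref{lem:sqeta-all}, which compute $\s_{q}(\eta^{m})$ on the top summand explicitly and exhibit every $\MZ/2$-summand of $\s_{*}(\KQ)$ as $\eta^{m}$ times a top summand; since the slice multiplication is $\eta$-linear, it then suffices to treat products involving only top summands, and the missing components can be read off from the Wood cofiber sequence \eqref{equation:wood} via the induced cofiber sequence $\s_{*}(\Sigma^{1,1}\KQ)\xrightarrow{\s_{*}(\eta)}\s_{*}(\KQ)\to\s_{*}(\KGL)$.

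The substantive step, and the one I expect to be the main obstacle, is showing that the off-diagonal components do not vanish: the $\Sigma^{1,0}\MZ/2$ factor of $\Sigma^{m+m',q+q'}(\MZ/2\wedge_{\MZ}\MZ/2)$ must hit an $\MZ/2$-target summand by $\Sq^{1}$ rather than $0$, and an $\MZ$-target summand by $\partial^{2}_{2^{\infty}}$ rather than $0$. I would establish this by combining associativity of the product with the $\KGL$-comparison above, so that the component landing in an $\MZ$-summand is forced to reproduce the $\QQ_{1}=\Sq^{3}+\Sq^{2}\Sq^{1}$ behaviour recorded in \eqref{equation:KGL-d1} for $\KGL$; alternatively, one can verify the claim directly in an explicit model of $\MZ/2$ as an $\MZ$-module, using that the $k$-invariant of $\MZ/2\wedge_{\MZ}\MZ/2$ is $\Sq^{1}$. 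Finally, once the formula is known over a field of characteristic unequal to $2$, base change along \cite[\S2.3]{April1} extends it to rings of $\mathcal{S}$-integers.
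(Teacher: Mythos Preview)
The paper does not prove this lemma at all: it is quoted as \cite[Theorem~3.3]{mult} and no argument is given here, so there is nothing in the present paper to compare your proposal against.

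As for the proposal itself, the overall strategy---computing $\MZ/2\wedge_{\MZ}\MZ/2$, handling the $\MZ$-summands by comparison with $\KGL$, and propagating to all summands via $\eta$-multiplication---is the right shape and matches how \cite{mult} proceeds. Two cautions. First, the lemmas you invoke (\aref{lem:sqhopf}, \aref{lem:sqetam}, \aref{lem:sqeta-all}) are stated in this paper for $\KQ/2^{n}$, not for $\KQ$; you need their integral analogues, and those are established in \cite{mult} rather than here. Second, the step you yourself flag as the obstacle---forcing the off-diagonal entries to be $\Sq^{1}$ and $\partial^{2}_{2^{\infty}}$ rather than zero---is not settled by your sketch: the $\dd^{1}$-differential $\QQ_{1}$ in \eqref{equation:KGL-d1} is a differential, not a multiplication, and associativity together with the $\KGL$-comparison does not by itself determine these components. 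In \cite{mult} this is resolved by a direct analysis of the $\MZ$-algebra structure on $\s_{*}(\KQ)$, which is more involved than what you have written; your alternative via the $k$-invariant of $\MZ/2\wedge_{\MZ}\MZ/2$ is closer to the mark but still requires identifying how that $k$-invariant interacts with the slice product, which is again the content of the cited theorem.
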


\begin{lemma}
\label{lem:KQ2n-mult}
For $n>1$ the multiplication map on the even summands of $\s_*(\KQ/2^{n})\wedge_\MZ\s_*(\KQ/2^{n})$ is given as follows:
On the $\MZ/2$-summands it is given by 
\begin{align*}
\Sigma^{m,q}\MZ/2 \wedge_{\MZ} \Sigma^{m',q'}\MZ/2
& \simeq 
\Sigma^{m+m',q+q'}(\Sigma^{1,0} \MZ/2 \wedge_\MZ \MZ/2) \\
&\xrightarrow{\begin{pmatrix}
\Sq^1 & 0 \\
0 & \id    
\end{pmatrix}}
\Sigma^{m+m',q+q'}(\Sigma^{2,0}\MZ/2 \vee \MZ/2)
\end{align*}
and
\begin{align*}
\Sigma^{m,q}\MZ/2 \wedge_{\MZ} \Sigma^{m',q'}\MZ/2
& \simeq
\Sigma^{m+m',q+q'}(\Sigma^{1,0} \MZ/2 \wedge_\MZ \MZ/2) \\
&\xrightarrow{\begin{pmatrix}
\partial^{2}_{2^{n}} & 0 \\
0 & \id    
\end{pmatrix}}
\Sigma^{m+m',q+q'}(\Sigma^{2,0}\MZ/2^{n} \vee \MZ/2).
\end{align*}
On the $\MZ/2^{n} \wedge_{\MZ/2^{n}} \MZ/2$- and $\MZ/2^{n} \wedge_{\MZ/2^{n}} \MZ/2^{n}$-summands it is the identity.
\end{lemma}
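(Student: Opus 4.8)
The plan is to transport the description of the multiplication on $\s_{\ast}(\KQ)$ from \aref{lem:KQ-mult} along the reduction map of motivic ring spectra $\pi\colon\KQ\to\KQ/2^{n}$, and then to propagate the resulting formulas off the top slice summands by multiplication with the Hopf map $\eta$, exactly as in the outline at the start of this section.

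First I would use that every slice is an $\s_{0}(\One)\simeq\MZ$-module, so that $\s_{\ast}(\KQ/2^{n})$ is an $\MZ$-algebra and the smash products over $\MZ$ and the pairing in the statement are defined and $\MZ$-linear. Since $\pi$ is multiplicative, the pairing on $\s_{\ast}(\KQ/2^{n})$ is compatible with that on $\s_{\ast}(\KQ)$ via $\s_{\ast}(\pi)$; on the top summand $\Sigma^{2q,q}\MZ$ of $\s_{q}(\KQ)$ this map is induced by $\MZ\to\MZ/2^{n}$ when $q$ is even and by $\MZ\to\MZ/2$ when $q$ is odd, and it lands in the top summand of $\s_{q}(\KQ/2^{n})$. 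Combining this with the identifications $\MZ/2^{n}\wedge_{\MZ}\MZ/2\simeq\Sigma^{1,0}\MZ/2\vee\MZ/2$ and $\MZ/2^{n}\wedge_{\MZ}\MZ/2^{n}\simeq\MZ/2^{n}\vee\Sigma^{1,0}\MZ/2^{n}$, with the lemma just above that describes $\pr^{2^{n}}_{2}\wedge\MZ/2$ and $\partial^{2^{n}}_{2}\wedge\MZ/2$, and with \aref{lem:KQ-mult}, I would read off the pairing on the top even summands: it is the identity on the $\MZ/2^{n}$-valued part (the integral summands of $\s_{\ast}(\KQ)$ multiply by the identity), and the two $\MZ/2$-valued cases come directly from the corresponding formulas for $\KQ$.

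The one computation I expect to need by hand is the identification of the structure map: when the product of two even $\MZ/2$-summands lands in a top even summand $\Sigma^{2,0}\MZ/2^{n}$ of the target slice it should be $\partial^{2}_{2^{n}}$ rather than the integral Bockstein $\partial^{2}_{2^{\infty}}$ appearing in \aref{lem:KQ-mult}, while when it lands in a lower $\MZ/2$-summand it should be $\Sq^{1}=\partial^{2}_{2}$. Both assertions follow from \aref{tbl:Sq-action}: the composite of $\partial^{2}_{2^{\infty}}\colon\MZ/2\to\Sigma^{1,0}\MZ$ with $\MZ\to\MZ/2^{n}$ is nonzero, $\partial^{2}_{2^{n}}$ is nonzero, and $[\MZ/2,\Sigma^{1,0}\MZ/2^{n}]$ has a unique nonzero element, so they agree; the same argument handles $\Sq^{1}$. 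This is precisely where the hypothesis $n>1$ is used, since for $n=1$ one has $\MZ/2^{n}=\MZ/2$ and $\partial^{2}_{2^{n}}$ collapses to $\Sq^{1}$.

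To finish I would extend the formulas from the top even summands to all even summands. By \aref{lem:sqhopf}, \aref{lem:sqetam}, and \aref{lem:sqeta-all}, every even summand of $\s_{q}(\KQ/2^{n})$ other than the top one is a split image, under multiplication by a power $\eta^{2i}$, of the top summand of $\s_{q}(\Sigma^{2i,2i}\KQ/2^{n})$, and those lemmas also give the matrices for $\eta^{2i}$ on slices in terms of $\id$, $\partial^{2^{n}}_{2}$, and $\pr^{2^{n}}_{2}$. Since $\eta\in\pi_{1,1}$ of the motivic sphere is central, $(\eta^{2i}x)\cdot(\eta^{2j}y)=\eta^{2i+2j}(x\cdot y)$, so the pairing on all even summands is determined by the top-summand case together with this $\eta$-module structure, and feeding in the descriptions of $\partial^{2^{n}}_{2}\wedge\MZ/2$ and $\pr^{2^{n}}_{2}\wedge\MZ/2$ produces the stated matrices. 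The hard part will be the bookkeeping: tracking which summands carry $\MZ/2$ versus $\MZ/2^{n}$ coefficients across the suspensions, and checking that the $\eta$-propagation is consistent with the reduction maps so that no spurious factor of $\Sq^{1}$ or $\partial^{2}_{2^{n}}$ is introduced or lost.
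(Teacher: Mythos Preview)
Your approach is correct and matches the paper's one-line proof (``This is straightforward from \aref{lem:KQ-mult}''): reduce the integral multiplication on $\s_{\ast}(\KQ)$ along $\pi\colon\KQ\to\KQ/2^{n}$, and note that the composite $\MZ/2\xrightarrow{\partial^{2}_{2^{\infty}}}\Sigma^{1,0}\MZ\to\Sigma^{1,0}\MZ/2^{n}$ is the unique nonzero class $\partial^{2}_{2^{n}}$.

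One simplification: your final paragraph on $\eta$-propagation is unnecessary here. \aref{lem:KQ-mult} already describes the multiplication on \emph{all} $\MZ/2$-summands of $\s_{\ast}(\KQ)$, not only the top ones, and the even summands of $\s_{\ast}(\KQ/2^{n})$ are precisely the image of $\s_{\ast}(\KQ)$ under $\s_{\ast}(\pi)$ (by \aref{convention:kq-to-kt-slices} and the remark after \eqref{equation:KQ2-slices}). So the reduction argument in your first two paragraphs already finishes the proof. The $\eta$-propagation technique you describe is exactly what the paper uses \emph{afterwards}, in \aref{lem:mult-top} and \aref{thm:KQ2n-mult}, to handle the odd summands, which are not in the image of $\s_{\ast}(\pi)$.
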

\begin{proof}
This is straightforward from \aref{lem:KQ-mult}.
\end{proof}

It remains to determine the multiplication on the odd summands of $\s_*(\KQ/2^{n})\wedge_\MZ\s_*(\KQ/2^{n})$ for $n>1$.
\begin{lemma}
\label{lem:mult-top}
Let $n>1$ and consider the commutative diagram
\begin{equation}
\label{equation:s1KQKGL}
\begin{tikzcd}
  \s_{1}(\KQ/2^{n}) \wedge_\MZ \s_{1}(\KQ/2^{n}) \ar[r]\ar[d] & \s_{2}(\KQ/2^{n})\ar[d] \\
  \s_{1}(\KGL/2^{n}) \wedge_\MZ \s_{1}(\KGL/2^{n}) \ar[r] & \s_{2}(\KGL/2^{n}).
\end{tikzcd}
\end{equation}
Restricting the multiplication map 
\begin{align*}
\Sigma^{2,1}\MZ/2 \wedge_\MZ \Sigma^{2,1}\MZ/2  
\vee 
(\Sigma^{2,1}\MZ/2 \wedge_\MZ \Sigma^{1,1}\MZ/2)  
\vee 
(\Sigma^{1,1}\MZ/2 \wedge_\MZ \Sigma^{2,1}\MZ/2) 
\vee \dots  \\
\to \Sigma^{4,2}\MZ/2^{n} \vee \Sigma^{3,2} \MZ/2,
\end{align*}
to the top summands yields the trivial map
\[
\Sigma^{2,1}\MZ/2 \wedge_\MZ \Sigma^{2,1}\MZ/2 \rightarrow \Sigma^{4,2}\MZ/2^{n} \vee \Sigma^{3,2} \MZ/2, 
\]
and
\begin{align*}
\Sigma^{2,1}\MZ/2 \wedge_\MZ \Sigma^{1,1}\MZ/2
& \simeq 
\Sigma^{4,2}\MZ/2 \vee \Sigma^{3,2}\MZ/2 \\
&
\xrightarrow{
\begin{pmatrix}
\inc^{2}_{2^{n}} & 0 \\
0 & \id \\
\end{pmatrix}
}
\Sigma^{4,2}\MZ/2^{n} \vee \Sigma^{3,2} \MZ/2.
\end{align*}
Permuting the smash factors $\Sigma^{2,1}\MZ/2$ and $\Sigma^{1,1}\MZ/2$ in the source yields the same map in $\SH$.
\end{lemma}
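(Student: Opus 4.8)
The statement concerns the multiplication map on the top slice summands of $\KQ/2^{n}$ in the case where both factors are the degree-one (odd) summands. The strategy is to leverage the commutative square \eqref{equation:s1KQKGL} comparing the multiplication on $\s_{*}(\KQ/2^{n})$ with that on $\s_{*}(\KGL/2^{n})$, together with the explicit description of the forgetful map on slices from \aref{lem:sqhopf}(1) and the known multiplicative structure of $\s_{*}(\KGL/2^{n})$. First I would record the relevant slice summands: by \eqref{equation:KQ2n-slices} the top summand of $\s_{1}(\KQ/2^{n})$ is $\Sigma^{2,1}\MZ/2$ and the next one is $\Sigma^{1,1}\MZ/2$, while $\s_{2}(\KQ/2^{n})$ has top summand $\Sigma^{4,2}\MZ/2^{n}$ followed by $\Sigma^{3,2}\MZ/2$. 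On the $\KGL$ side all slices are $\MZ/2^{n}$-Eilenberg--MacLane spectra and the ring structure is that of the motivic cohomology of a point with $\Z/2^{n}$ coefficients, so the multiplication $\s_{1}(\KGL/2^{n})\wedge_{\MZ}\s_{1}(\KGL/2^{n})\to\s_{2}(\KGL/2^{n})$ is the identity after the canonical identification $\MZ/2^{n}\wedge_{\MZ}\MZ/2^{n}\simeq\MZ/2^{n}$.

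Next I would chase the diagram. By \aref{lem:sqhopf}(1), the left vertical map $\s_{1}(f)$ is $(\inc^{2}_{2^{n}},\partial^{2}_{2^{n}},0,\dots)$, so on the top summand $\Sigma^{2,1}\MZ/2$ it is $\inc^{2}_{2^{n}}\colon\Sigma^{2,1}\MZ/2\to\Sigma^{2,1}\MZ/2^{n}$ and on the summand $\Sigma^{1,1}\MZ/2$ it is $\partial^{2}_{2^{n}}\colon\Sigma^{1,1}\MZ/2\to\Sigma^{2,1}\MZ/2^{n}$ (landing in the \emph{top} summand of $\s_{1}(\KGL/2^{n})$ after a shift). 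The right vertical map $\s_{2}(f)$ is $(\id,0,\dots)$ since $2$ is even, i.e.\ it is the inclusion of $\Sigma^{4,2}\MZ/2^{n}$ into $\s_{2}(\KGL/2^{n})$. For the product $\Sigma^{2,1}\MZ/2\wedge_{\MZ}\Sigma^{2,1}\MZ/2$, composing down-then-across gives $\inc^{2}_{2^{n}}\wedge\inc^{2}_{2^{n}}$ followed by the identity multiplication on $\KGL$; but $\inc^{2}_{2^{n}}\circ\pr$ applied to $\MZ/2\wedge_{\MZ}\MZ/2\simeq\Sigma^{1,0}\MZ/2\vee\MZ/2$ followed by the $\KGL$-multiplication and then restricted back, combined with the fact that $\s_{2}(f)$ is injective on the top summand, forces the top component of the $\KQ$-multiplication $\Sigma^{2,1}\MZ/2\wedge_{\MZ}\Sigma^{2,1}\MZ/2\to\Sigma^{4,2}\MZ/2^{n}\vee\Sigma^{3,2}\MZ/2$ to be trivial: any nonzero map $\MZ/2\wedge_{\MZ}\MZ/2\to\MZ/2^{n}$ would have to be detected after applying $f$, but the composite through $\KGL$ is $2^{n-1}$-divisible and hence zero for $n>1$. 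For the mixed product $\Sigma^{2,1}\MZ/2\wedge_{\MZ}\Sigma^{1,1}\MZ/2$, the same diagram chase identifies the $\Sigma^{4,2}\MZ/2^{n}$-component with $\inc^{2}_{2^{n}}$ (coming from $\inc^{2}_{2^{n}}\otimes\partial^{2}_{2^{n}}$ composed with the $\KGL$-product, which is the Bockstein-type generator) and the $\Sigma^{3,2}\MZ/2$-component with the identity, using \aref{lem:KQ2n-mult} or \aref{lem:KQ-mult} to pin down the lower summand. Finally, symmetry under permuting the two smash factors follows because $\eta$-multiplication is involved in passing between summands and the graded-commutativity of the slice multiplication up to a sign that is invisible mod $2$; more directly, one invokes the corresponding symmetry statement in \cite{mult} (\aref{lem:KQ-mult}) and the naturality of the forgetful map.

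\textbf{The main obstacle.} The delicate point is not the diagram chase itself but controlling the identifications $\MZ/2^{n}\wedge_{\MZ}\MZ/2\simeq\Sigma^{1,0}\MZ/2\vee\MZ/2$ and the behavior of $\inc^{2}_{2^{n}},\partial^{2}_{2^{n}},\pr^{2^{n}}_{2}$ under smashing over $\MZ$ --- precisely the content isolated in the unnumbered lemma immediately preceding \aref{lem:KQ-mult}. I would apply that lemma to rewrite $\pr^{2^{n}}_{2}\wedge\MZ/2$ and $\partial^{2^{n}}_{2}\wedge\MZ/2$ as explicit $2\times 2$ matrices, then feed these into the two squares obtained from \eqref{equation:s1KQKGL} after smashing with $\MZ/2$. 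The reason $n>1$ is essential appears here: when $n=1$ the maps $\inc^{2}_{2}$ and $\partial^{2}_{2}$ both become $\Sq^{1}$, the distinction between the top and lower summands collapses, and the triviality conclusion for $\Sigma^{2,1}\MZ/2\wedge_{\MZ}\Sigma^{2,1}\MZ/2$ fails --- consistent with \aref{lem:KQ-mult} where a $\Sq^{1}$ appears. So the proof must genuinely use that $\inc^{2}_{2^{n}}\colon\MZ/2\to\Sigma^{1,0}\MZ/2^{n}$ and its companion $\partial^{2}_{2^{n}}$ are the \emph{nonzero} maps (unique by \aref{tbl:Sq-action}) that factor through the larger Moore spectrum, and that their composites relevant to the $\Sigma^{4,2}\MZ/2^{n}$-component differ from the Bockstein by exactly the extension data visible in \aref{lem:coh-R-mod2n}. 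Once the matrix bookkeeping is set up, the rest is forced.
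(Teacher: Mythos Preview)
Your argument for the first claim (that $\Sigma^{2,1}\MZ/2 \wedge_{\MZ}\Sigma^{2,1}\MZ/2 \to \Sigma^{4,2}\MZ/2^{n}\vee\Sigma^{3,2}\MZ/2$ is trivial) is essentially sound: the $\KGL$-diagram controls the $\Sigma^{4,2}\MZ/2^{n}$-component because $\s_{2}(f)$ is the identity there, and $\mu_{\KGL}\circ(\inc^{2}_{2^{n}}\wedge\inc^{2}_{2^{n}})$ vanishes for $n>1$. You should also note that the $\Sigma^{3,2}\MZ/2$-component is trivial for degree reasons, since $\Sigma^{2,1}\MZ/2\wedge_{\MZ}\Sigma^{2,1}\MZ/2\simeq\Sigma^{4,2}\MZ/2\vee\Sigma^{5,2}\MZ/2$ admits no nonzero map to $\Sigma^{3,2}\MZ/2$.

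For the second claim, however, there is a genuine gap. The diagram \eqref{equation:s1KQKGL} only detects the $\Sigma^{4,2}\MZ/2^{n}$-component of the multiplication, because $\s_{2}(f)=(\id,0,\dots)$ annihilates $\Sigma^{3,2}\MZ/2$. Thus the $\KGL$-comparison pins down the first row of the matrix but says nothing about the $(2,2)$-entry, which a priori could be $0$ or $\id$. Your appeal to \aref{lem:KQ-mult} and \aref{lem:KQ2n-mult} does not close this: \aref{lem:KQ2n-mult} treats only products of \emph{even} summands, and the top summand $\Sigma^{2,1}\MZ/2\subset\s_{1}(\KQ/2^{n})$ is odd (it corresponds to $j=1$); moreover it is not in the image of $\s_{1}(\KQ)$, so \aref{lem:KQ-mult} gives no direct information about it either.

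The paper supplies the missing ingredient via a \emph{second} commutative square, built from the $\eta$-action rather than the forgetful map:
\[
\begin{tikzcd}[column sep=55pt]
\Sigma^{1,1}\s_{0}(\KQ/2^{n})\wedge_{\MZ}\s_{1}(\KQ/2^{n})\ar[r,"\s_{1}(\eta)\wedge\id"]\ar[d,"\text{mult}"] & \s_{1}(\KQ/2^{n})\wedge_{\MZ}\s_{1}(\KQ/2^{n})\ar[d,"\text{mult}"]\\
\Sigma^{1,1}\s_{1}(\KQ/2^{n})\ar[r,"\s_{2}(\eta)"] & \s_{2}(\KQ/2^{n}).
\end{tikzcd}
\]
Here $\s_{2}(\eta)$ restricts to $(0,\id)\colon\Sigma^{3,2}\MZ/2\to\Sigma^{4,2}\MZ/2^{n}\vee\Sigma^{3,2}\MZ/2$ by \aref{lem:sqhopf}(2), so this square \emph{does} see the $\Sigma^{3,2}\MZ/2$-component. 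Chasing it (using that the left vertical is $(0,\id)$ on $\Sigma^{1,1}\MZ/2^{n}\wedge_{\MZ}\Sigma^{2,1}\MZ/2$ and that $\s_{1}(\eta)$ is $(\partial^{2^{n}}_{2},\pr^{2^{n}}_{2})$) forces the matrix to be $\bigl(\begin{smallmatrix}? & 0\\ 0 & \id\end{smallmatrix}\bigr)$; only then does one return to \eqref{equation:s1KQKGL} together with \aref{lem:partialpartial} to identify $?=\inc^{2}_{2^{n}}$. Without the $\eta$-square your argument cannot distinguish the claimed matrix from $\bigl(\begin{smallmatrix}\inc^{2}_{2^{n}} & 0\\ 0 & 0\end{smallmatrix}\bigr)$.
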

\begin{proof}
Note that $\s_{2}(\KQ/2^{n}) \to \s_{2}(\KGL/2^{n})$ restricts to an isomorphism on the top summand $\Sigma^{4,2}\MZ/2$.
On the first summand the left vertical map $\partial^{2}_{2^{n}} \wedge \partial^{2}_{2^{n}}$ in \eqref{equation:s1KQKGL} is trivial by \aref{lem:partialpartial}.
For the second map we look at the commutative diagram
\[
\begin{tikzcd}[column sep=70pt]
\Sigma^{1,1}\s_{0}(\KQ/2^{n}) \wedge_\MZ \s_{1}(\KQ/2^{n}) \ar[r, "\s_{1}(\eta)\wedge\s_{q}(\KQ/2^{n})"]\ar[d] & \s_{1}(\KQ/2^{n}) \wedge_\MZ \s_{1}(\KQ/2^{n}) \ar[d] \\
\Sigma^{1,1}\s_{1}(\KQ/2^{n}) \ar[r] & \s_{2}(\KQ/2^{n}).
\end{tikzcd}
\]
Here the left vertical map restricts as $(0, \Sigma^{3,2}\MZ/2)$ on the summand $\Sigma^{1,1}\MZ/2^{n} \wedge_\MZ \Sigma^{2,1}\MZ/2 \simeq \Sigma^{4,2}\MZ/2 \vee \Sigma^{3,2}\MZ/2$.
\aref{lem:sqhopf} shows the horizontal maps agree with $(\partial^{2^{n}}_{2}, \pr^{2^{n}}_{2}) \wedge_\MZ \Sigma^{2,1}\MZ/2$ and $(0, \Sigma^{3,2}\MZ/2)$ 
(incidentally, this also implies $\Sigma^{2,1}\MZ/2 \wedge_\MZ \Sigma^{2,1}\MZ/2 \rightarrow \Sigma^{4,2}\MZ/2^{n} \vee \Sigma^{3,2} \MZ/2$ is trivial).
By commutativity of the diagram we have
\begin{align*}
\Sigma^{1,1}\MZ/2 \wedge_\MZ \Sigma^{2,1}\MZ/2 
& \simeq
\Sigma^{4,2}\MZ/2 \vee \Sigma^{3,2}\MZ/2 \\
&\xrightarrow{
\begin{pmatrix}
? & 0 \\
0 & \id
\end{pmatrix}
}
\Sigma^{4,2}\MZ/2^{n} \vee \Sigma^{3,2}\MZ/2.
\end{align*}
Using \eqref{equation:s1KQKGL} and \aref{lem:partialpartial} we conclude $? = \inc^{2}_{2^{n}}$.
\end{proof}

\begin{lemma}
\label{lem:partialpartial}
The map
\[
\MZ/2 \wedge_\MZ \MZ/2 \xrightarrow{\partial^{2}_{2^{n}}\wedge\partial^{2}_{2^{n}}} \Sigma^{2,0} \MZ/2^{n} \wedge_\MZ \MZ/2^{n}
\]
is trivial, 
while
\[
\MZ/2 \wedge_\MZ \MZ/2 \xrightarrow{\partial^{2}_{2^{n}}\wedge\inc^{2}_{2^{n}}} \Sigma^{1,0} \MZ/2^{n} \wedge_\MZ \MZ/2^{n}
\]
coincides with
\[
\Sigma^{1,0} \MZ/2 \vee \MZ/2 \xrightarrow{\begin{pmatrix}
0 & 0 \\
\inc^{2}_{2^{n}} & 0
\end{pmatrix}}
\Sigma^{2,0} \MZ/2^{n} \vee \Sigma^{1,0}\MZ/2^{n}.
\]
\end{lemma}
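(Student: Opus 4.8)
Throughout write $n>1$, the running hypothesis of this section. The plan is to treat both claims as identities between morphisms in the homotopy category of $\MZ$-modules whose source and target are finite wedges of suspensions of $\MZ/2$ and $\MZ/2^{n}$. Such a morphism is a matrix whose entries lie in bigraded groups $[\MZ/2,\Sigma^{p,q}\MZ/2]$ and $[\MZ/2,\Sigma^{p,q}\MZ/2^{n}]$, and in the low internal degrees that occur here these are spanned by composites of $\Sq^{1}$, $\Sq^{2}$, multiplication by motivic cohomology classes of the base, and the coefficient maps $\pr^{2^{n}}_{2}$, $\inc^{2}_{2^{n}}$, $\partial^{2^{n}}_{2}$, $\partial^{2}_{2^{n}}$; this is the content of \aref{tbl:Sq-action} together with \aref{lem:coh-R-mod2n}. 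Since $\Sq^{1}$, $\Sq^{2}$, $\inc^{2}_{2^{n}}$ and $\partial^{2}_{2^{n}}$ are all pulled back from $\Spec\Z[\tfrac12]$, it suffices to verify the two identities there (equivalently over $\R$, where \aref{lem:coh-R-mod2n} applies verbatim). I would begin by fixing the splittings $\MZ/2\wedge_{\MZ}\MZ/2\simeq\MZ/2\vee\Sigma^{1,0}\MZ/2$, $\MZ/2\wedge_{\MZ}\MZ/2^{n}\simeq\MZ/2\vee\Sigma^{1,0}\MZ/2$, and $\MZ/2^{n}\wedge_{\MZ}\MZ/2^{n}\simeq\MZ/2^{n}\vee\Sigma^{1,0}\MZ/2^{n}$ (from $\MZ\xrightarrow{2}\MZ\to\MZ/2$ and $\MZ\xrightarrow{2^{n}}\MZ\to\MZ/2^{n}$ and the identifications $\Tor(\Z/2,\Z/2)=\Tor(\Z/2,\Z/2^{n})=\Z/2$), with the summand labelling normalised via the unit maps as in \aref{convention:kq-to-kt-slices} and consistently with the choices in \cite{mult}.

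For the first claim I would factor
\[
\partial^{2}_{2^{n}}\wedge\partial^{2}_{2^{n}}
=
\bigl(\Sigma^{1,0}(\partial^{2}_{2^{n}}\wedge\id)\bigr)
\circ
\bigl(\id\wedge\partial^{2}_{2^{n}}\bigr),
\]
where the intermediate object is $\Sigma^{1,0}(\MZ/2\wedge_{\MZ}\MZ/2^{n})$. Smashing the defining cofiber sequence $\MZ/2^{n}\to\MZ/2^{n+1}\to\MZ/2\xrightarrow{\partial^{2}_{2^{n}}}\Sigma^{1,0}\MZ/2^{n}$ with $\MZ/2$, resp.\ with $\MZ/2^{n}$, writes each factor as an explicit matrix of coefficient maps and squares. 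Multiplying the two matrices, every entry of $\partial^{2}_{2^{n}}\wedge\partial^{2}_{2^{n}}$ is forced to begin with one of the composites $\pr^{2^{n}}_{2}\partial^{2}_{2^{n}}$ or $\partial^{2^{n}}_{2}\partial^{2}_{2^{n}}$, possibly twisted by a $\Sq^{1}$ or a base class, or else to factor through $[\MZ/2,\Sigma^{2,0}\MZ/2]$. For $n>1$ the first two composites vanish by \aref{lem:coh-R-mod2n}, and $[\MZ/2,\Sigma^{2,0}\MZ/2]=0$ by \aref{section:mcatSa}, so $\partial^{2}_{2^{n}}\wedge\partial^{2}_{2^{n}}=0$.

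For the second claim the same reductions present $\partial^{2}_{2^{n}}\wedge\inc^{2}_{2^{n}}$ as a morphism $\Sigma^{1,0}\MZ/2\vee\MZ/2\to\Sigma^{2,0}\MZ/2^{n}\vee\Sigma^{1,0}\MZ/2^{n}$, and the inputs $[\MZ/2,\Sigma^{2,0}\MZ/2^{n}]=0$, $\pr^{2^{n}}_{2}\inc^{2}_{2^{n}}=0$, $\partial^{2^{n}}_{2}\partial^{2}_{2^{n}}=0$ (for $n>1$, \aref{lem:coh-R-mod2n}) kill every entry except the $\Tor$-to-$\Tor$ component $\Sigma^{1,0}\MZ/2\to\Sigma^{1,0}\MZ/2^{n}$, which must be $0$ or $\Sigma^{1,0}\inc^{2}_{2^{n}}$ since $\inc^{2}_{2^{n}}$ generates the order-two group $[\MZ/2,\MZ/2^{n}]$. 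To exclude $0$ I would detect this component by evaluating $\partial^{2}_{2^{n}}\wedge\inc^{2}_{2^{n}}$ against the canonical classes of $H^{*,*}(\MZ/2\wedge_{\MZ}\MZ/2;\Z/2^{n})$ --- equivalently, by composing with the multiplication $\MZ/2^{n}\wedge_{\MZ}\MZ/2^{n}\to\MZ/2^{n}$ and with the right unit $\MZ/2\to\MZ/2\wedge_{\MZ}\MZ/2$ --- reducing to a manifestly nonzero product of coefficient maps; nontriviality over $\Spec\Z[\tfrac12]$ then propagates by base change. I expect the main obstacle to be exactly this last normalisation and the bookkeeping it forces: ruling out unit- and $\Sq^{1}$-error terms in the surviving components for general $n>1$, given that the splitting of $\MZ/2\wedge_{\MZ}\MZ/2$ is not canonical; the remainder is a finite check against \aref{tbl:Sq-action}.
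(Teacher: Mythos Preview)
The paper states \aref{lem:partialpartial} without proof, so there is nothing to compare against; I can only assess your argument on its own.

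Your overall strategy---split source and target as wedges and compute the matrix entries, which live in groups $[\MZ/2,\Sigma^{p,0}\MZ/2^{n}]$ for small $p$---is the right one, and the reductions to $\Spec\Z[\tfrac12]$ and to a finite list of operations are sound. There is, however, a concrete error in the first claim. You assert that $\pr^{2^{n}}_{2}\partial^{2}_{2^{n}}$ vanishes for $n>1$, but the paper itself records $\pr^{2^{n}}_{2}\partial^{2}_{2^{n}}=\Sq^{1}$, which is nonzero; you have presumably confused this with $\pr^{2^{n}}_{2}\inc^{2}_{2^{n}}=0$. The reference to \aref{lem:coh-R-mod2n} is also misplaced: that lemma describes the effect of these maps on $H^{*,*}(\R;\Z/2^{n})$, not relations among the operations themselves. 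What actually kills three of the four entries of $\partial\wedge\partial$ is simply that $[\MZ/2,\Sigma^{p,0}\MZ/2^{n}]=0$ for $p\geq 2$ (induct on $n$ using $\AAA^{p,0}=0$ for $p\geq 2$, since $\Sq^{1}\Sq^{1}=0$). This leaves a single potentially nonzero entry in $[\MZ/2,\Sigma^{1,0}\MZ/2^{n}]\cong\Z/2\{\partial^{2}_{2^{n}}\}$, and your factorisation does not by itself show this coefficient is zero; one clean way is to observe that $\partial^{2}_{2^{n}}$ factors through the integral Bockstein $\MZ/2\to\Sigma^{1,0}\MZ$, so $\partial\wedge\partial$ factors through $\Sigma^{2,0}\MZ\wedge_{\MZ}\MZ\simeq\Sigma^{2,0}\MZ$, and $[\MZ/2\vee\Sigma^{1,0}\MZ/2,\Sigma^{2,0}\MZ]$ has only the $\beta$-component, which lands in the $\pi_{0}$-summand of $\MZ/2^{n}\wedge_{\MZ}\MZ/2^{n}$ under the unit, not the $\Tor$-summand.

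For the second claim your listed relations $\pr^{2^{n}}_{2}\inc^{2}_{2^{n}}=0$ and $\partial^{2^{n}}_{2}\partial^{2}_{2^{n}}=0$ are correct, but they do not by themselves eliminate the two off-diagonal entries in $[\MZ/2,\Sigma^{1,0}\MZ/2^{n}]$ (the $\pi_{0}\!\to\!\pi_{0}$ and $\Tor\!\to\!\Tor'$ components both sit there). You need to say why those coefficients vanish; again a factorisation through $\MZ$ on the $\partial$-side, together with the unit description of $\MZ\to\MZ/2^{n}\wedge_{\MZ}\MZ/2^{n}$, pins them down. Your detection of the surviving $\inc^{2}_{2^{n}}$ via the multiplication $\MZ/2^{n}\wedge_{\MZ}\MZ/2^{n}\to\MZ/2^{n}$ is fine once the normalisation of splittings is fixed.
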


With notation inspired by \cite{mult} $\s_*(\KQ/2^{n})$ is a polynomial algebra with generators and relations
\[
\MZ/2^{n}[\sqrt{\alpha}^{\pm 1}, \eta, \gamma]/(2\eta = 0, \eta^{2} \xrightarrow{\partial^{2}_{2^{n}}} \sqrt{\alpha}, \gamma^2 = 0, \gamma\eta \xrightarrow{\inc^{2}_{2^{n}}} \sqrt{\alpha} ).
\]
Here the bidegrees of $\sqrt{\alpha}$, $\eta$, and $\gamma$ are $(4,2)$, $(1,1)$, and $(2,1)$, respectively.

\begin{theorem}
\label{thm:KQ2n-mult}
Let $n\geq 2$. For $i \geq 1$ and $j >1$ the multiplicative structure on $\s_*(\KQ/2^{n})$ is given by
\begin{align*}
\eta^{2} = \begin{pmatrix}
  \partial^{2}_{2^{n}} & 0 \\
  0 & 0 \\
  0 & \id
\end{pmatrix},  
\gamma\eta = \begin{pmatrix}
  \inc^{2}_{2^{n}} & 0 \\
  0 & \id
\end{pmatrix}, 
\gamma^{2} = 0, 
\eta^i \eta^j = \begin{pmatrix}
  \Sq^1 & 0 \\
  0 & 0 \\
  0 & \id
\end{pmatrix}, 
\gamma\eta^j = \begin{pmatrix}
  \Sq^1 & 0 \\
  0 & 0 \\
  0 & \id
\end{pmatrix}.
\end{align*}
\end{theorem}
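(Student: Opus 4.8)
The plan is to assemble the statement from data that is already in place: the multiplication on the even summands from \aref{lem:KQ2n-mult}, the base-case product on the top summands from \aref{lem:mult-top}, the formulas for the forgetful map and the iterated Hopf maps on slices from \aref{lem:sqhopf}, \aref{lem:sqetam}, and \aref{lem:sqeta-all}, and the description of the multiplication on $\s_{*}(\KQ)$ from \cite{mult} (\aref{lem:KQ-mult}). The only genuinely new content sits in \aref{lem:mult-top}; the remaining work is to transfer the base cases to all slice degrees and to read off which summand each product lands in.

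First I would settle the three relations involving the top summand, namely $\gamma^{2}=0$, the formula for $\eta^{2}$, and the formula for $\gamma\eta$. All three of these products originate from the pairing $\s_{1}(\KQ/2^{n})\wedge_{\MZ}\s_{1}(\KQ/2^{n})\to\s_{2}(\KQ/2^{n})$, so they are pinned down once and for all by \aref{lem:mult-top}: that lemma gives the vanishing of the restriction to $\Sigma^{2,1}\MZ/2\wedge_{\MZ}\Sigma^{2,1}\MZ/2$, hence $\gamma^{2}=0$; it gives the $\inc^{2}_{2^{n}}$ entry for $\Sigma^{2,1}\MZ/2\wedge_{\MZ}\Sigma^{1,1}\MZ/2$, hence the $\gamma\eta$ matrix; and, together with \aref{lem:partialpartial}, it gives the triviality of $\partial^{2}_{2^{n}}\wedge\partial^{2}_{2^{n}}$ on the lowest summand, after which the $\partial^{2}_{2^{n}}$ entry of $\eta^{2}$ is forced by naturality under the forgetful map $\KQ/2^{n}\to\KGL/2^{n}$ of \aref{lem:sqhopf} (whose $\dd^{1}$-differential is $\QQ_{1}=\Sq^{3}+\Sq^{2}\Sq^{1}$, so that the $\MZ/2^{n}$-summand must map nontrivially).

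Second, I would propagate to arbitrary slice degree and treat the higher products $\eta^{i}\eta^{j}$ and $\gamma\eta^{j}$ with $i\geq 1$, $j>1$. Since every slice of $\KQ/2^{n}$ is an $\s_{0}(\One)\simeq\MZ$-module, multiplication by the invertible class $\sqrt{\alpha}$ of bidegree $(4,2)$ identifies each summand of $\s_{q}(\KQ/2^{n})$ with a summand of $\s_{q+2}(\KQ/2^{n})$ (top with top, outer with outer), which reduces the verification of the displayed matrices to finitely many slice degrees and carries the base-case products above unchanged to all bidegrees. For $j>1$ one checks, using \aref{lem:sqeta-all} and connectivity, that the relevant source summands and their images are all copies of $\MZ/2$ and that the top $\MZ/2^{n}$-summand is never hit; the product is then governed by the standard identification $\MZ/2\wedge_{\MZ}\MZ/2\simeq\Sigma^{1,0}\MZ/2\vee\MZ/2$ and \aref{lem:KQ-mult}, which is exactly the source of the $\Sq^{1}$ entry, just as in \aref{lem:KQ2n-mult}.

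The main obstacle is the bookkeeping: matching simplicial degrees across the pairing $\s_{a}(\KQ/2^{n})\wedge_{\MZ}\s_{b}(\KQ/2^{n})\to\s_{a+b}(\KQ/2^{n})$ in order to decide, for each pair of summands, which target summand receives the product and with which of the maps $\partial^{2}_{2^{n}}$, $\inc^{2}_{2^{n}}$, $\Sq^{1}$, or $\id$. Once \aref{lem:mult-top} is available this is the only remaining step, and I expect it to be handled entirely by the parity analysis of \aref{lem:sqeta-all} together with naturality of the multiplication under $\KQ/2^{n}\to\KGL/2^{n}$ and $\KQ/2^{n}\to\KW/2^{n}$.
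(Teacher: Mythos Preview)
Your overall plan matches the paper's one-line proof, which reads in full: ``Use \aref{lem:sqeta-all} and the multiplicative structure on the top summands given in \aref{lem:mult-top}.'' The strategy of first settling the base relations in $\s_{1}\wedge\s_{1}\to\s_{2}$ and then propagating via $\sqrt{\alpha}$-periodicity and the slice of the iterated Hopf map is exactly right.

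There is, however, a misattribution in your treatment of $\eta^{2}$. The lemma \aref{lem:mult-top} concerns only the \emph{top} (odd) summand $\gamma=\Sigma^{2,1}\MZ/2$ of $\s_{1}(\KQ/2^{n})$; it says nothing about the bottom (even) summand $\eta=\Sigma^{1,1}\MZ/2$. The product $\eta\cdot\eta$ is an even-by-even pairing and is already computed in \aref{lem:KQ2n-mult}: the second displayed formula there, with target containing a $\MZ/2^{n}$-summand, supplies the $\partial^{2}_{2^{n}}$ entry directly. Your alternative justification via naturality under the forgetful map $f\colon\KQ/2^{n}\to\KGL/2^{n}$ does not work, because the Hopf map becomes zero on $\KGL$ and hence this comparison places no constraint on $\eta^{2}$; the aside about the $\dd^{1}$-differential $\QQ_{1}$ is unrelated, since the multiplication and the slice differential are independent structures. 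Once you reroute $\eta^{2}$ through \aref{lem:KQ2n-mult} (or equivalently through \aref{lem:sqeta-all}, reading $\eta\cdot\eta$ as the slice of the Hopf map applied to the $\eta$-summand), your argument goes through and coincides with the paper's.
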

\begin{proof}
Use \aref{lem:sqeta-all} and the multiplicative structure on the top summands given in \aref{lem:mult-top}.
\end{proof}

\section{Motivic cohomology and the Steenrod algebra}
\label{section:mcatSa}
\label{appendixB}
In this section we review the motivic cohomology groups of rings of integers in number fields and properties of the mod $2$ motivic Steenrod algebra.
Suppose $F$ is a field of $\Char(F)\neq 2$.
Recall that $H^{*,*}(-;\Z/2)$ is a contravariant functor defined on the category $\Sm_{F}$ of smooth separated schemes of finite type over $F$, 
taking values in bigraded commutative rings of characteristic $2$ \cite[\S3]{Suslin-Voevodsky:banff}.  
Let $h^{*,*}$ be short for the mod $2$ motivic cohomology ring $\bigoplus_{p,q} h^{p,q}$ of $F$.
The structure map $X\to\spec(F)$ turns $H^{*,*}(X;\Z/2)$ into a bigraded commutative $h^{*,*}$-algebra for every $X\in\Sm_{F}$.  

By change of topology there is a naturally induced map between motivic and \'etale cohomology
\begin{equation}
\label{equation:motivictoetale}
h^{p,q} \to H^{p}_{\et}(F;\mu_{2}^{\otimes q}). 
\end{equation}
The map in \eqref{equation:motivictoetale} is an isomorphism if $p\leq q$ and $q\geq 0$ by the solution of the Beilinson-Lichtenbaum conjecture \cite[\S6]{Suslin-Voevodsky:banff} 
in Voevodsky's proof of Milnor's conjecture on Galois cohomology \cite{Voevodsky:Z/2}.
Here $\tau\in h^{0,1}$ maps to $-1$ in $H^{0}_{\et}(F;\mu_{2})\cong \mu_{2}(F)\cong\{\pm 1\}$;
multiplication by this class is an isomorphism on \'etale cohomology.
It follows that $\tau^{i}\neq 0$ in $h^{0,*}$.
By \cite[Theorem 3.4]{Suslin-Voevodsky:banff} there is an isomorphism $h^{p,p}\cong k^{\Mil}_{p}:=K^{\Mil}_{p}(F)/2$ for all $p\geq 0$, 
and we conclude 
\begin{equation}
\label{equation:motivicisomorphism}
h^{*,*}
\cong 
k^{\Mil}_{*}[\tau].
\end{equation}
That is, 
$h^{p,q}=0$ if $p>q$ or if $p<0$, 
$h^{p,p}\cong k^{\Mil}_{p}$ and multiplication by $\tau\in h^{0,1}$ is an isomorphism on $h^{*,*}$.
Note that when $0\leq p \leq q$,  
every element of $h^{p,q}$ is a $\tau^{q-p}$-multiple of an element of $h^{p,p}$.
Formally inverting $\tau$ yields an isomorphism \cite[Theorem 1.1, Remark 6.3]{MR1740880}, \cite[Corollary 3.6]{HKO}
\begin{equation}
\label{equation:motivictoetaletau}
h^{*,*}[\tau^{-1}] 
\overset{\cong}{\rightarrow}
H^{\ast}_{\et}(F;\mu_{2}^{\otimes \ast}). 
\end{equation}

We write $\AAA^{\ast,\ast}$ for the mod $2$ Steenrod algebra of bistable motivic cohomology operations on $\Sm_{F}$ generated by the Steenrod squares $\Sq^{i}$ 
and multiplication by elements in $h^{*,*}$, 
see \cite{HKO}, \cite{Voevodsky:steenrod}.
Every map $\MZ/2\to\Sigma^{p,q}\MZ/2$ in the stable motivic homotopy category $\SH(F)$ induces a bistable operation of bidegree $(p,q)$;
in fact, 
every operation arises in this way. 
For degree reasons the action of any Steenrod square on $h^{p,p}$ is zero.
By \eqref{equation:motivicisomorphism} and the Cartan formula  \cite[Proposition 9.6]{Voevodsky:steenrod} it essentially remains to determine the actions on $\tau$.
Recall that $\rho$ denotes the class of $-1$ in $h^{1,1}\cong F^{\times}/2$ and $\Sq^{1}(\tau)=\rho$.
In the main body of the paper we make use of \aref{tbl:Sq-action}, which is the content of \cite[Corollary 6.2]{slices}.
\begin{table}[h]
\begin{center}
\begin{tabular}{>{$}l<{$}|>{$}l<{$}>{$}l<{$}>{$}l<{$}>{$}l<{$}}
& \tau^{4k} & \tau^{4k+1} & \tau^{4k+2} & \tau^{4k+3} \\
\hline 
\Sq^{1} & 0 & \rho\tau^{4k} & 0 & \rho\tau^{4k+2} \\
\Sq^{2} & 0 & 0 & \rho^{2}\tau^{4k+1} & \rho^{2}\tau^{4k+2} \\
\Sq^{2} + \rho\Sq^{1} & 0 & \rho^{2}\tau^{4k} & \rho^{2}\tau^{4k+1} & 0 \\
\Sq^{3} & 0 & 0 & \rho^{3}\tau^{4k} & 0 \\
\Sq^{2} \Sq^{1} & 0 & 0 & 0 & \rho^{3}\tau^{4k+1} \\
\Sq^{2} \Sq^{1} + \Sq^{3} & 0 & 0 & \rho^{3}\tau^{4k} & \rho^{3}\tau^{4k+1} \\
\Sq^{3} \Sq^{1} & 0 & 0 & 0 & \rho^{4}\tau^{4k} 
\end{tabular}
\caption{Steenrod operations acting on $\tau$-powers, $k\geq 0$.}
\label{tbl:Sq-action}
\end{center}
\end{table} 

The localization sequences for motivic cohomology \cite[\S14.4]{Levine99} and \'etale cohomology \cite[III.1.3]{MR553999} show that \eqref{equation:motivictoetale}
and \eqref{equation:motivictoetaletau} generalize to the ring of $\mathcal{S}$-integers $\OO_{F,\mathcal{S}}$.
One notable difference compared to the case of fields is the possible non-vanishing of $h^{2,1}\cong\Pic(\OO_{F,\mathcal{S}})/2$.
By the work of Spitzweck \cite{Spitzweck},
the mod $2$ Steenrod algebra $\AAA^{\ast,\ast}$ over the Dedekind domain $\OO_{F,\mathcal{S}}$ has the same structure as over fields.
In particular, 
\aref{tbl:Sq-action} remains valid over rings of $\mathcal{S}$-integers.

\begin{lemma} (\cite[Lemma 11.1]{Voevodsky:steenrod}, \cite[Theorem 1.1]{HKO}, \cite[\S11.2]{Spitzweck})
\label{lem:steenrod-alg}
In weight $0$ and $1$ the motivic Steenrod algebra $\AAA^{\ast,\ast}$ is generated by $\Sq^{1}$, $\Sq^{2}$, $\Sq^3$, $\Sq^{2}\Sq^{1}$ and $\Sq^3\Sq^{1}$ as a free left $h^{*,*}$-module. 
The nontrivial elements are:
\[
\begin{tabular}{>{$} l <{$} | >{$} l <{$}}
(p,q) & \AAA^{p,q} \\ \hline
(0,0) & h^{0,0} \\
(1,0) & h^{0,0} \{ \Sq^{1} \} \\
(0,1) & h^{0,0} \{\tau\} \\
(1,1) & h^{1,1} \oplus h^{0,0}\{ \tau \Sq^{1} \} \\
(2,1) & h^{2,1} \oplus h^{1,1}\{ \Sq^{1}\} \oplus h^{0,0}\{ \Sq^{2} \} \\
(3,1) & h^{2,1} \{\Sq^1\} \oplus h^{0,0}\{ \Sq^{2}\Sq^{1}\} \oplus h^{0,0}\{ \Sq^3 \} \\
(4,1) & h^{0,0}\{ \Sq^3\Sq^{1}\}
\end{tabular}
\]
\end{lemma}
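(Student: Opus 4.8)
The statement to prove is \aref{lem:steenrod-alg}: over a field of characteristic different than $2$ (and, by Spitzweck's work, over $\OO_{F,\mathcal{S}}$), the motivic Steenrod algebra $\AAA^{\ast,\ast}$ in weights $0$ and $1$ is a free left $h^{\ast,\ast}$-module on the admissible monomials $1$, $\Sq^1$, $\Sq^2$, $\Sq^3$, $\Sq^2\Sq^1$, $\Sq^3\Sq^1$, with the stated groups in each bidegree. The plan is to \emph{not} reprove this from scratch — it is essentially a bookkeeping consequence of results already cited in the excerpt — but rather to assemble it from the structural description of $\AAA^{\ast,\ast}$ over a base with $\tfrac12$ in it.

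First I would invoke the structure theorem for the motivic Steenrod algebra: by \cite[Theorem 1.1]{HKO} (building on \cite{Voevodsky:steenrod}) and, over Dedekind domains, \cite[\S11.2]{Spitzweck}, $\AAA^{\ast,\ast}$ is a free left $h^{\ast,\ast}$-module with basis the admissible monomials $\Sq^I = \Sq^{i_1}\Sq^{i_2}\cdots\Sq^{i_k}$ (with $i_j \ge 2 i_{j+1}$, no $\beta$-free constraints beyond the standard ones), each $\Sq^I$ sitting in bidegree $(\sum i_j, \sum \lceil i_j/2 \rceil)$ — equivalently, the motivic degree of $\Sq^{2r}$ is $(2r,r)$ and that of $\Sq^{2r+1}$ is $(2r+1,r)$. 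Then the lemma reduces to a finite enumeration: list all admissible $\Sq^I$ whose weight is $\le 1$, and for each target bidegree $(p,q)$ with $q \in \{0,1\}$ collect the $h^{\ast,\ast}$-multiples of those monomials, using that $h^{p',q'} = 0$ unless $0 \le p' \le q'$ (together with the possible $h^{2,1} = \Pic(\OO_{F,\mathcal{S}})/2$ term in the $\OO_{F,\mathcal{S}}$ case; this is exactly the ``same structure as over fields'' remark just above the lemma, the only difference being $h^{2,1}$ may be nonzero).

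The enumeration itself is the routine part. The admissible monomials of weight $0$ are $1$ (bidegree $(0,0)$) and $\Sq^1$ (bidegree $(1,0)$); of weight $1$ they are $\Sq^2$ $(2,1)$, $\Sq^3$ $(3,1)$, $\Sq^2\Sq^1$ $(3,1)$, and $\Sq^3\Sq^1$ $(4,1)$ — any longer admissible string, e.g. $\Sq^4\Sq^2$ or $\Sq^4\Sq^1$, already has weight $\ge 2$. Now for each listed bidegree one multiplies by the appropriate piece of $h^{\ast,\ast}$: in $(1,1)$ the basis monomials contributing are $1$ (giving $h^{1,1}$) and $\Sq^1$ (giving $h^{0,0}\{\tau\Sq^1\}$, since $h^{1,0}=0$ but $h^{0,1}=h^{0,0}\{\tau\}$); in $(2,1)$ they are $1$ (giving $h^{2,1}$), $\Sq^1$ (giving $h^{1,1}\{\Sq^1\}$), $\Sq^2$ (giving $h^{0,0}\{\Sq^2\}$); in $(3,1)$ they are $\Sq^1$ (giving $h^{2,1}\{\Sq^1\}$), $\Sq^2\Sq^1$ and $\Sq^3$ (each giving $h^{0,0}$); in $(4,1)$ only $\Sq^3\Sq^1$ survives, giving $h^{0,0}\{\Sq^3\Sq^1\}$. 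All other $(p,q)$ with $q\le 1$ are forced to be $0$ or to be $h^{p,0}=h^{0,0}\delta_{p,0}$, matching the table. Freeness as an $h^{\ast,\ast}$-module is inherited directly from the cited structure theorem — there is nothing to check beyond quoting it.

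\textbf{Main obstacle.} There is no real obstacle here: the content is entirely in the cited theorems \cite{Voevodsky:steenrod}, \cite{HKO}, \cite{Spitzweck}, and the proof is a weight-by-weight truncation of the Milnor basis for $\AAA^{\ast,\ast}$. The one point deserving a sentence of care is the passage from fields to $\OO_{F,\mathcal{S}}$: one should note that the generator list and the freeness statement hold verbatim over the Dedekind domain by \cite[\S11.2]{Spitzweck}, and that the only change in the explicit table is the appearance of $h^{2,1}\cong\Pic(\OO_{F,\mathcal{S}})/2$ in the $(2,1)$ and $(3,1)$ rows — which is already accounted for by writing $h^{2,1}$ rather than $0$ there. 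Thus the proof is simply: quote the structure theorem, enumerate admissible monomials of weight $\le 1$, and read off each bidegree using $h^{p,q}=0$ for $p\notin[0,q]$.
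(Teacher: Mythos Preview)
Your proposal is correct and matches the paper's treatment: the paper gives no proof at all beyond the citations in the lemma header, so the content is exactly what you say --- quote the free-module structure theorem for $\AAA^{\ast,\ast}$ from \cite{Voevodsky:steenrod}, \cite{HKO}, \cite{Spitzweck} and enumerate the admissible monomials of weight $\leq 1$. Your remark about the $h^{2,1}$ term over $\OO_{F,\mathcal{S}}$ is also in line with the surrounding discussion in \aref{section:mcatSa}.
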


\begin{lemma}
[\protect{\cite[Theorem A.5]{April1}}]
There are naturally induced cofiber sequences
\[
\MZ/2^{n-1} \to \MZ/2^{n} \xrightarrow{\pr^{2^{n}}_{2}} \MZ/2 \xrightarrow{\partial^{2}_{2^{n-1}}} \Sigma^{1,0}\MZ/2^{n-1}
\]
and 
\[
\MZ/2 \xrightarrow{\inc^{2}_{2^{n}}} \MZ/2^{n} \to \MZ/2^{n-1} \xrightarrow{\partial^{2^{n-1}}_{2}} \Sigma^{1,0}\MZ/2.
\]
In weight $0$ and $1$ there are the following nontrivial maps:
\[
\begin{tabular}{>{$} l <{$} | >{$} l <{$}}
(p,q) & [\MZ/2^{n}, \Sigma^{p,q}\MZ/2] \\ \hline
(0,0) & h^{0,0}\{\pr^{2^{n}}_{2} \} \\
(1,0) & h^{0,0}\{\partial^{2^{n}}_{2} \} \\
(0,1) & h^{0,0} \{\tau \pr^{2^{n}}_{2} \} \\
(1,1) & h^{1,1}\{\pr^{2^{n}}_{2} \}\oplus h^{0,1} \{\partial^{2^{n}}_{2} \} \\
(2,1) & h^{2,1}\{\pr^{2^n}_{2}\} \oplus h^{0,0}\{ \Sq^{2} \pr^{2^{n}}_{2} \} \oplus h^{1,1}\{\partial^{2^{n}}_{2}\} \\
(3,1) & h^{2,1}\{ \partial^{2^{n}}_{2} \} \oplus h^{0,0}\{ \Sq^3 \pr^{2^{n}}_{2}, \Sq^2\partial^{2^{n}}_{2} \} \\
(4,1) & h^{0,0}\{ \Sq^1\Sq^2\partial^{2^{n}}_{2} \}
\end{tabular}
\begin{tabular}{>{$} l <{$} | >{$} l <{$}}
(p,q) & [\MZ/2, \Sigma^{p,q}\MZ/2^{n}] \\ \hline
(0,0) & \inc^{2}_{2^{n}}  h^{0,0} \\
(1,0) & \partial^{2}_{2^{n}}  h^{0,0} \\
(0,1) & \inc^{2}_{2^{n}} h^{0,1} \\
(1,1) & \inc^{2}_{2^{n}} h^{1,1} \oplus \partial^{2}_{2^{n}} h^{0,0} \\
(2,1) & \inc^{2}_{2^{n}}h^{2,1} \oplus \inc^{2}_{2^{n}} \Sq^2 h^{0,0} \oplus \partial^{2}_{2^{n}} h^{1,1} \\
(3,1) & \inc^{2}_{2^{n}} \Sq^2\Sq^1 h^{0,0} \oplus \partial^{2}_{2^{n}} h^{2,1} \oplus \partial^{2}_{2^{n}} \Sq^2 h^{0,0} \\
(4,1) &  \partial^{2}_{2^{n}}\Sq^2\Sq^1 h^{0,0}
\end{tabular}
\]
We note the equalities $\pr^{2^{n}}_{2}\partial^{2}_{2^{n}} = \Sq^1 = \partial^{2^{n}}_{2} \inc^{2}_{2^{n}}$.
\end{lemma}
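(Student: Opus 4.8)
The statement is essentially \cite[Theorem A.5]{April1}, and the argument adapts verbatim to $\OFS$ via Spitzweck's work \cite{Spitzweck}; here is the plan. The assertion has three parts — the two cofiber sequences together with the identification of their maps, the Bockstein relations $\pr^{2^{n}}_{2}\partial^{2}_{2^{n}} = \Sq^{1} = \partial^{2^{n}}_{2}\inc^{2}_{2^{n}}$, and the explicit description of the low-weight mapping groups — and I would treat them in that order. The first two are formal; the third is an induction on $n$ with base case \aref{lem:steenrod-alg}.

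First I would recall that, since $\Z \xrightarrow{2^{m}} \Z \to \Z/2^{m}$ is a free resolution, the motivic Eilenberg--MacLane spectrum $\MZ/2^{m}$ is the cofiber of multiplication by $2^{m}$ on $\MZ$. Applying the octahedral axiom to the two factorizations $2^{n} = 2\cdot 2^{n-1}$ and $2^{n} = 2^{n-1}\cdot 2$ of the self-map $2^{n}\colon\MZ\to\MZ$ then yields the two cofiber sequences and identifies their outgoing maps with the reduction $\pr^{2^{n}}_{2}$, the inclusion $\inc^{2}_{2^{n}}$, and the connecting maps $\partial^{2}_{2^{n-1}}$, $\partial^{2^{n-1}}_{2}$ attached to the coefficient sequences $0\to\Z/2^{n-1}\to\Z/2^{n}\to\Z/2\to 0$ and $0\to\Z/2\to\Z/2^{n}\to\Z/2^{n-1}\to 0$. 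For the Bockstein relations I would map these coefficient sequences to $0\to\Z/2\to\Z/4\to\Z/2\to 0$ — by reduction mod $4$ in the first case and by the evident maps in the second — and use the induced maps of cofiber sequences: both composites then factor through the connecting map $\MZ/2\to\Sigma^{1,0}\MZ/2$ of the length-two coefficient sequence, which is the mod $2$ Bockstein and hence equals $\Sq^{1}$ by \cite{Voevodsky:steenrod}, \cite{Spitzweck}.

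For the mapping groups the plan is to induct on $n$, the case $n=1$ (where $\MZ/2^{1}=\MZ/2$ and $[\MZ/2,\Sigma^{p,q}\MZ/2]=\AAA^{p,q}$) being \aref{lem:steenrod-alg}. For the inductive step I would apply $[-,\Sigma^{p,q}\MZ/2]$ and $[\MZ/2,\Sigma^{p,q}-]$ to the two cofiber sequences; the resulting long exact sequences present the groups for $n$ as extensions of a submodule of $\AAA^{p,q}$ by a quotient of the corresponding groups for $n-1$, with connecting homomorphisms induced by $\partial^{2^{n-1}}_{2}$ respectively $\partial^{2}_{2^{n-1}}$, hence — by the Bockstein relations, modulo the reduction $\pr$ — by composition with $\Sq^{1}$. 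Since $\Sq^{1}\Sq^{1}=0$ and in weights $q\le 1$ the modules in play are the small explicit ones of \aref{lem:steenrod-alg} (together with $h^{2,1}\cong\Pic(\OFS)/2$ over $\OFS$), checking the finitely many nonzero degrees $p\le 4$ shows the connecting maps vanish on the relevant summands, that the resulting short exact sequences of $h^{*,*}$-modules split, and that the displayed elements — composites of Steenrod squares with $\pr^{2^{n}}_{2}$, $\inc^{2}_{2^{n}}$ or $\partial$ — generate freely; the asserted relations $\pr^{2^{n}}_{2}\partial^{2}_{2^{n}}=\Sq^{1}=\partial^{2^{n}}_{2}\inc^{2}_{2^{n}}$ are exactly the Bockstein relations above. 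I expect the main obstacle to be the bookkeeping in this last step: keeping the simplicial-suspension shifts in the long exact sequences consistent, tracking which summand of $\AAA^{p,q}$ survives to $[\MZ/2^{n},\Sigma^{p,q}\MZ/2]$, and verifying the $h^{*,*}$-linear splittings degree by degree.
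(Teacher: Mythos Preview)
The paper does not supply its own proof of this lemma: it is stated with the citation \cite[Theorem A.5]{April1} and no argument is given. Your plan is a correct reconstruction of the standard proof --- octahedral axiom for the cofiber sequences, comparison with the length-two coefficient sequence for the Bockstein identities, and induction on $n$ via the long exact sequences in $[-,\Sigma^{p,q}\MZ/2]$ and $[\MZ/2,\Sigma^{p,q}-]$ with base case \aref{lem:steenrod-alg} --- so there is nothing to contrast.
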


\begin{lemma}
\label{lem:k1-surj}
For every number field $F$ there is a naturally induced surjective map 
\[
F^\times/2 \to \bigoplus^{r_{1}} \R^\times/2.
\]
\end{lemma}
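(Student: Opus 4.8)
\textbf{Proof proposal for Lemma \ref{lem:k1-surj}.}

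The plan is to use the $r_1$ real embeddings $\sigma_1,\dots,\sigma_{r_1}\colon F\hookrightarrow\R$ to obtain the map $F^\times/2\to\bigoplus^{r_1}\R^\times/2$, $a\mapsto(\sigma_1(a),\dots,\sigma_{r_1}(a))$, and then prove surjectivity. Since $\R^\times/2\cong\{\pm1\}$, the target is $(\Z/2)^{r_1}$ and the $i$th coordinate of the image of $a$ records the sign of $\sigma_i(a)$. So the claim is precisely that the sign map $F^\times\to\{\pm1\}^{r_1}$ is surjective --- i.e.\ every prescribed pattern of signs at the real places is realized by some element of $F^\times$. First I would reduce to this statement and fix the target identification via the isomorphism $H^0_{\et}(\R;\mu_2)\cong\R^\times/2\cong\{\pm1\}$ already recorded in \aref{section:mcatSa}.

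The main step is weak approximation for the field $F$ at the finite set of real places $\{\sigma_1,\dots,\sigma_{r_1}\}$. Concretely, the diagonal embedding $F\hookrightarrow\prod_{i=1}^{r_1}F_{\sigma_i}=\R^{r_1}$ has dense image (distinct archimedean absolute values are inequivalent, so the approximation theorem applies). Given a sign vector $(\varepsilon_1,\dots,\varepsilon_{r_1})\in\{\pm1\}^{r_1}$, choose the target point $(\varepsilon_1,\dots,\varepsilon_{r_1})\in\R^{r_1}$ and pick $a\in F$ with $|\sigma_i(a)-\varepsilon_i|<\tfrac12$ for all $i$; then $\sigma_i(a)$ lies in the same component of $\R^\times$ as $\varepsilon_i$, so $a\in F^\times$ and $\mathrm{sign}(\sigma_i(a))=\varepsilon_i$ for every $i$. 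This produces the required preimage and establishes surjectivity. Naturality (in $F$, or in the sense of compatibility with the \'etale-cohomology description used later, e.g.\ in \aref{lem:H2R-surj}) is immediate since the map is induced by the embeddings $\sigma_i$ functorially.

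I do not expect a serious obstacle here: the only substantive input is the weak approximation theorem for archimedean places of a number field, which is classical. The one point to be careful about is the bookkeeping translating ``surjective on $F^\times/2$'' into ``all sign patterns occur'', and making sure the identification $\R^\times/2\cong\{\pm1\}$ is the same one used elsewhere in \aref{section:mcatSa} so that the lemma feeds correctly into \aref{lem:H2R-surj} and the computations of $h^{p,q}(\OFS)\to\bigoplus^{r_1}h^{p,q}(\R)$; this is routine. An alternative, if one prefers to avoid invoking approximation directly, is to note that the signs of a basis of units together with the signs of enough principal primes already generate $\{\pm1\}^{r_1}$ --- but the approximation argument is cleanest and I would present that.
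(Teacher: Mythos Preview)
Your proposal is correct and takes essentially the same approach as the paper: the paper's one-line proof simply cites the (strong) approximation theorem for valuations in number fields, which is exactly the density input you use to realize an arbitrary sign pattern at the real places. Your write-up just unpacks this reference, so there is nothing substantively different.
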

\begin{proof}
Follows from the strong approximation theorem for valuations in number fields \cite[\S15]{Cassels}.
\end{proof}

\begin{lemma}
\label{lem:kn}
For every number field $F$ the naturally induced map
\[
k_n(F) \to \bigoplus^{r_{1}} k_n(\R)
\]
is surjective for $n=1,2$, and bijective for $n\geq 3$.
\end{lemma}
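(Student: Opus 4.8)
\textbf{Proof proposal for \aref{lem:kn}.}

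The plan is to reduce the statement about mod $2$ Milnor $K$-theory to the known structure of the mod $2$ Milnor $K$-theory of the real numbers together with Galois cohomology computations for $F$. Recall that $k_n(\R) = k^{\Mil}_n(\R)/2 \cong \Z/2$, generated by $\{-1\}^n$, for all $n \geq 0$, since $\R^\times/(\R^\times)^2 \cong \Z/2$. For $n = 1$ the map in question is exactly the map $F^\times/2 \to \bigoplus^{r_1}\R^\times/2$, whose surjectivity is \aref{lem:k1-surj}. So the real content is the cases $n = 2$ (surjectivity) and $n \geq 3$ (bijectivity), and I would handle these via the isomorphism $h^{n,n} \cong k^{\Mil}_n$ of \eqref{equation:motivicisomorphism} (valid for $F$ and, by the discussion in \aref{section:mcatSa}, for $\OFS$) combined with the Beilinson-Lichtenbaum comparison \eqref{equation:motivictoetale}, which identifies $k_n = h^{n,n}$ with $H^n_{\et}(F;\mu_2^{\otimes n})$.

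First I would translate everything into Galois cohomology: the map $k_n(F)\to\bigoplus^{r_1}k_n(\R)$ becomes the restriction map $H^n_{\et}(F;\Z/2)\to\bigoplus^{r_1}H^n_{\et}(\R;\Z/2)$ induced by the $r_1$ real places $v\mid\infty$, where $H^n_{\et}(\R;\Z/2)=H^n(\Z/2;\Z/2)\cong\Z/2$ for all $n$. For a number field $F$ of virtual cohomological dimension $2$, the Poitou--Tate style results on the cohomology of the arithmetic curve (equivalently, the long exact localization sequence relating $H^*_{\et}(\OFS;\Z/2)$ to $H^*_{\et}(F;\Z/2)$ and the local contributions at the real places) show that for $n$ large the cohomology of $F$ is concentrated at the real places. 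Concretely, I would invoke the fact that $H^n_{\et}(F;\Z/2)\to\bigoplus_{v\text{ real}}H^n_{\et}(F_v;\Z/2)$ is an isomorphism for $n\geq 3$ and surjective for $n=2$; this is classical (see e.g. the references used around \eqref{equation:motivictoetale} and in \aref{section:mcatSa}, and it is implicit in \aref{lem:H2R-surj}). Since $F_v = \R$ for real $v$, this is precisely bijectivity for $n\geq 3$ and surjectivity for $n=2$ of the stated map. For $n = 2$ injectivity fails in general — the kernel measures $2$-torsion in $\Br(F)$ and class-group data — which is why the lemma only claims surjectivity there.

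The step I expect to be the main obstacle is pinning down the exact cohomological-dimension input: one must be careful that vanishing $H^n_{\et}(\OFS;\Z/2) = 0$ for $n > 2$ (true because $\vcd(F) = 2$ and $2$ is invertible in $\OFS$ once $\mathcal S \supseteq \{2,\infty\}$, or after passing to such an $\mathcal S$) forces the localization sequence
\[
\cdots \to H^n_{\et}(\OFS;\Z/2) \to H^n_{\et}(F;\Z/2) \to \bigoplus_{v} H^{n-1}_{\et}(k(v);\Z/2) \to \cdots
\]
together with the contributions of the archimedean places to degenerate in the claimed range. The finite places contribute via residue fields $k(v)$ with $\cd_2 = 1$, so $H^{n-1}_{\et}(k(v);\Z/2) = 0$ for $n \geq 3$, leaving only the real places; and the real places contribute $H^n(\R;\Z/2) = \Z/2$ in every degree, which is what produces the target. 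Assembling this carefully — and checking that enlarging $\mathcal S$ does not affect $k_n(F)$ since Milnor $K$-theory of the field $F$ is insensitive to $\mathcal S$ — is the only delicate point; the rest is a direct application of \eqref{equation:motivicisomorphism}, \eqref{equation:motivictoetale}, \aref{lem:k1-surj}, and the structure of $H^*_{\et}(\R;\Z/2)$.
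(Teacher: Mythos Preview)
Your approach diverges from the paper's and contains a genuine gap.

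The paper's proof is much shorter and more elementary. For $n=1$ it is exactly \aref{lem:k1-surj}, as you say. For $n=2$ the paper observes that $k_2(F)$ is generated by products under the multiplication map $k_1(F)\otimes k_1(F)\to k_2(F)$; since the map to $\bigoplus^{r_1}k_n(\R)$ is a ring homomorphism and is surjective on $k_1$ by \aref{lem:k1-surj}, it is automatically surjective on $k_2$. No \'etale cohomology is needed. For $n\geq 3$ the paper simply cites Milnor's classical result \cite[Theorem~A.2]{Milnor}, which predates Voevodsky's theorem and is proved directly via quadratic forms.

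Your argument for $n\geq 3$ has a real error. You assert that $H^n_{\et}(\OFS;\Z/2)=0$ for $n>2$ because $\vcd(F)=2$. This is false precisely when $r_1>0$, which is the only case with content: virtual cohomological dimension $2$ means $\cd_2(F(\sqrt{-1}))\leq 2$, not $\cd_2(F)\leq 2$, and in fact $H^n_{\et}(\OFS;\Z/2)\cong H^n_{\et}(F;\Z/2)\cong(\Z/2)^{r_1}$ for all $n\geq 3$ (this is the very statement you are trying to prove, transported along the localization sequence). Relatedly, the localization sequence you wrote down relates $\Spec\OFS$ to its generic point $\Spec F$ via the \emph{finite} closed points $v\notin\mathcal S$; the real places do not appear there at all, so the phrase ``together with the contributions of the archimedean places'' has no hook in that sequence. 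What you actually need is the global-to-local map in Galois cohomology $H^n(G_F;\Z/2)\to\bigoplus_v H^n(G_{F_v};\Z/2)$ over all places, together with the observation that for $n\geq 3$ the $p$-adic and complex places contribute nothing (since $\cd_2(\Q_p)\leq 2$ and $\cd_2(\C)=0$); the isomorphism for $n\geq 3$ then follows from Tate's theorems (Poitou--Tate), not from the scheme-theoretic localization sequence. Even so, once you unwind this you are essentially reproving Milnor's Theorem~A.2 through the norm residue isomorphism, so the route through Voevodsky buys nothing here and risks circularity.
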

\begin{proof}
Corollary \ref{lem:k1-surj} implies this for $n=1,2$ since $k_{2}(F)$ is generated by products $k_{1}(F) \otimes k_{1}(F) \to k_{2}(F)$.
The bijection for $n\geq 3$ is shown in \cite[Theorem A.2]{Milnor}.
\end{proof}

We can recast these results in terms of motivic cohomology by using the identification of Milnor $K$-groups with the diagonal part of motivic cohomology \cite[Theorem 3.4]{Suslin-Voevodsky:banff}.
\begin{lemma}
\label{lem:number}
For every number field $F$ the naturally induced map
\[
h^{p,q}(F)\to\bigoplus^{r_{1}} h^{p,q}(\R) 
\]
is injective for $p=0$, surjective for $p=1,2$, and an isomorphism for $p \geq 3$.
\end{lemma}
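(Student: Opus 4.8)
The plan is to deduce \aref{lem:number} from its diagonal special case \aref{lem:kn} by exploiting $\tau$-periodicity of mod $2$ motivic cohomology. First I would record the structural input: by \eqref{equation:motivicisomorphism} we have $h^{*,*}(F)\cong k^{\Mil}_*(F)[\tau]$ with $\tau\in h^{0,1}$ the class of $-1$, so that $h^{p,q}(F)=0$ whenever $p<0$ or $p>q$, while for $0\le p\le q$ multiplication by $\tau^{q-p}$ defines an isomorphism $k^{\Mil}_p(F)=h^{p,p}(F)\xrightarrow{\cong}h^{p,q}(F)$. The same statements hold verbatim over $\R$, where in addition $h^{*,*}(\R)\cong\FF_2[\tau,\rho]$ and $k^{\Mil}_p(\R)\cong\FF_2\{\rho^p\}$.

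Next I would observe that each of the $r_1$ real embeddings $F\hookrightarrow\R$ induces a bigraded ring homomorphism $h^{*,*}(F)\to h^{*,*}(\R)$ carrying $\tau$ to $\tau$, since $\tau$ is defined over the base (indeed over $\One$) and $-1\mapsto -1$. Hence, in bidegree $(p,q)$ with $0\le p\le q$, the map of the lemma is obtained from the diagonal map $k^{\Mil}_p(F)\to\bigoplus^{r_1}k^{\Mil}_p(\R)$ of \aref{lem:kn} by multiplying source and target by the isomorphism $\tau^{q-p}$; this gives a commutative square with vertical isomorphisms, so the $(p,q)$-map is injective (resp.\ surjective, resp.\ bijective) exactly when the $(p,p)$-map is. For $p<0$ or $p>q$ both groups vanish and there is nothing to prove.

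Then I would conclude as follows. For $p\ge 1$ the diagonal map is precisely the one analyzed in \aref{lem:kn}, namely surjective for $p=1,2$ and bijective for $p\ge 3$, which yields the claim in these ranges. For $p=0$ and $q\ge 0$ one has $h^{0,q}(F)\cong\Z/2\{\tau^q\}$ (and $h^{0,q}(F)=0$ for $q<0$), and the diagonal class $(\tau^q,\dots,\tau^q)$ is nonzero because $\tau^q\ne 0$ in $h^{0,q}(\R)$; hence the map is injective, completing the proof.

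I do not expect a genuine obstacle here: essentially all of the content is carried by \aref{lem:kn}, and the only point meriting explicit justification is the compatibility of the $\tau^{q-p}$-multiplication isomorphisms with the base-change maps along the real embeddings, which is immediate since $\tau$ originates from the base scheme. It is worth noting that in the application to \aref{thm:OFS-E8} the lemma is invoked in the range $p\ge 3$, where it asserts an isomorphism, so the reduction to the bijectivity clause of \aref{lem:kn} is all that is really needed there.
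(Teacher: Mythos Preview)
Your proposal is correct and follows exactly the approach indicated in the paper: the paper's entire justification is the single sentence preceding the lemma, which says these results are recast in motivic cohomology via the identification of Milnor $K$-groups with the diagonal $h^{p,p}$, and your argument is simply a careful unpacking of that using \eqref{equation:motivicisomorphism} and naturality of $\tau$ under base change along the real embeddings. There is nothing to add.
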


For $a\in\OO_{F}\smallsetminus\{0\}$ we have $\OO_{F,\mathcal{S}}:=\{x\in F\vert \lVert x \rVert_{v}\leq 1\text{ for all } v\not\in\mathcal{S}\}=\OO_{F}[\frac{1}{a}]$ 
if and only if the prime factors of $a\OO_{F}$ are precisely the primes ideals $\mathfrak{p}$ for which the corresponding place $\mathfrak{p}_{v}\in\mathcal{S}\smallsetminus\mathcal{S}_{\infty}$.
Note that $\OO_{F,\mathcal{S}}^{\times}=\{x\in F\vert \lVert x \rVert_{v}=1\text{ for all } v\not\in\mathcal{S}\}$ is a finitely generated abelian group of rank 
$\#\mathcal{S}-1=r_{1}+r_{2}+\#(\mathcal{S}\smallsetminus\mathcal{S}_{\infty})-1$.
By the solution of the  Bloch-Kato conjecture \cite{Voevodsky:Z/l} the results on the motivic cohomology groups of $\OO_{F,\mathcal{S}}$ with $\Z_{(2)}$-coefficients in \cite{Levine99} extend to integral coefficients.

\begin{theorem}(\cite[Theorems 14.5, 14.6]{Levine99})
\label{thm:hOFS}
The integral motivic cohomology group $H^{p,q}(\OO_{F,\mathcal{S}})$ is trivial outside the range $1\leq p\leq q$ except in bidegrees $(0,0)$ and $(2,1)$.

The naturally induced map 
$H^{p,q}(\OO_{F,\mathcal{S}})\to H^{p,q}(F)$ is:
\begin{itemize}
\item bijective for $p=0$ and all $q$, and also for $p=1$ and $q\geq 2$ (14.6 (3)).
\item injective for $(p,q)=(1,1),(2,2)$ (14.6 (1)).
\item surjective for $(p,q) = (2,1)$ (the target is the trivial group).
\item injective for $p=2, q \geq 3$ (14.6 (4)), and there is a short exact localization sequence
\[
0 
\to 
H^{2,q}(\OO_{F,\mathcal{S}}) 
\to 
H^{2,q}(F) 
\to 
\bigoplus_{x\notin\mathcal{S}} H^{1,q-1}(k(x)) 
\to 
0.
\]
\item bijective for $p \geq 3$, $p\leq q$ (14.6 (3), 14.5 (2), 14.5 (3)), and 
\[
H^{p,q}(\OO_{F,\mathcal{S}})
\cong 
\begin{cases}
(\Z/2)^{r_{1}} & p \equiv q \bmod 2 \\
0 & p \not\equiv q \bmod 2.
\end{cases}
\]
\end{itemize}
\end{theorem}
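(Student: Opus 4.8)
\textbf{Proof proposal for \aref{thm:hOFS}.}
The statement is essentially a re-packaging of Levine's results in \cite[Theorems 14.5, 14.6]{Levine99}, with the caveat that those results are stated for $\Z_{(2)}$-coefficients (or prime-by-prime) and we wish to assert them integrally. The plan is therefore to reduce the integral statement to the known $\ell$-local statements for all primes $\ell$, using that $H^{p,q}(\OO_{F,\mathcal{S}})$ is finitely generated (indeed a finitely generated abelian group in each bidegree, by Quillen's finite generation for $K$-theory of rings of $\mathcal{S}$-integers together with the motivic spectral sequence, or by Levine's own argument), so that it is determined by its rationalization and its $\ell$-completions. First I would recall that rationally $H^{p,q}(\OO_{F,\mathcal{S}};\Q)$ agrees with $H^{p,q}(F;\Q)$ and is computed by Borel's theorem: it is $\Q^{r_1+r_2}$ for $p=1$, $q$ odd $>1$, it is $\Q^{r_2}$ for $p=1$, $q$ even, $\Q$ in bidegree $(0,0)$ and $(1,1)$, and $0$ otherwise; this pins down the free part and in particular forces $H^{p,q}(\OO_{F,\mathcal{S}})$ to be finite for $p\geq 2$ and for $p<0$ or $p>q$.

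Next I would handle the odd-primary torsion. By Voevodsky's proof of the Bloch–Kato conjecture \cite{Voevodsky:Z/l}, the Beilinson–Lichtenbaum comparison map identifies $H^{p,q}(\OO_{F,\mathcal{S}};\Z/\ell^m)$ with $\et$ale cohomology $H^p_{\et}(\OO_{F,\mathcal{S}};\mu_{\ell^m}^{\otimes q})$ in the range $p\leq q$, and the $\et$ale cohomological dimension of $\OO_{F,\mathcal{S}}[\tfrac1\ell]$ is $2$; feeding this into the localization sequences for motivic and $\et$ale cohomology \cite[\S14.4]{Levine99}, \cite[III.1.3]{MR553999} gives the asserted vanishing and the short exact localization sequence for $p=2$, for every odd $\ell$. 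For $\ell=2$ this is exactly what Levine proves in \cite[Theorems 14.5, 14.6]{Levine99} (the statement there is already essentially integral once $2$-completed, since odd torsion is handled separately). Assembling the $\ell$-local and rational information via the arithmetic fracture square yields the integral statement: bijectivity of $H^{p,q}(\OO_{F,\mathcal{S}})\to H^{p,q}(F)$ in the stated bidegrees, the short exact localization sequence in degree $p=2$, $q\geq 3$, and the identification $H^{p,q}(\OO_{F,\mathcal{S}})\cong(\Z/2)^{r_1}$ for $p\geq 3$ with $p\equiv q\bmod 2$ (and $0$ otherwise) — this last being a pure $2$-torsion statement since odd-primary $\et$ale cohomology of $\OO_{F,\mathcal{S}}$ vanishes in cohomological degree $\geq 3$ while the $2$-primary part is governed by the real places, contributing one copy of $\Z/2$ for each of the $r_1$ archimedean real completions.

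The main obstacle, such as it is, is bookkeeping rather than mathematics: one must check that Levine's hypotheses (notably that $\mathcal{S}$ contains the primes over $2$, or more precisely that the relevant localization sequences are exact and the Beilinson–Lichtenbaum range is as claimed) are met for an arbitrary set $\mathcal{S}\supseteq\{2,\infty\}$, and that passing from $\Z_{(2)}$ to $\Z_2$ to $\Z$ introduces no $\lim^1$ or extension subtleties — here finite generation of each $H^{p,q}(\OO_{F,\mathcal{S}})$ is exactly what is needed, so I would make sure to cite that (e.g.\ via \cite[Proposition 3.13]{BKO} applied to $\KGL$, or directly Quillen's theorem plus the degeneration of the motivic spectral sequence for a Dedekind domain). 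The only genuinely geometric point is the identification of $H^{p,q}(\OO_{F,\mathcal{S}})$ for $p\geq 3$ with $(\Z/2)^{r_1}$: I would deduce it from the fact that $H^p_{\et}(\OO_{F,\mathcal{S}};\Z/2(q))\to\bigoplus^{r_1}H^p_{\et}(\R;\Z/2(q))$ is an isomorphism for $p\geq 3$ (a standard consequence of Tate–Poitou duality / the Galois-cohomological computation over number fields), which also gives the surjectivity statement of \aref{lem:number} used elsewhere in the paper.
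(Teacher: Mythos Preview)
The paper does not give a proof of this theorem: it is stated as a citation to \cite[Theorems 14.5, 14.6]{Levine99}, with the single sentence preceding it noting that ``By the solution of the Bloch-Kato conjecture \cite{Voevodsky:Z/l} the results on the motivic cohomology groups of $\OO_{F,\mathcal{S}}$ with $\Z_{(2)}$-coefficients in \cite{Levine99} extend to integral coefficients.'' Your proposal is a correct and detailed expansion of exactly that one-sentence remark --- you use Bloch--Kato at odd primes plus finite generation to pass from Levine's $\Z_{(2)}$-local statements to the integral ones, which is precisely what the paper intends but does not spell out.
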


\begin{corollary}
\label{corollary:tau-iso-ints}
For $0 \leq p \leq q$, multiplication by $\tau \in h^{0,1}(\OO_{F,\mathcal{S}})$ induces an isomorphism
\[
\tau: h^{p,q}(\OO_{F,\mathcal{S}}) \overset{\cong}{\to} h^{p,q+1}(\OO_{F,\mathcal{S}}).
\]
\end{corollary}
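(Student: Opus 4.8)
The plan is to reduce the corollary to the case of the field $F$, where the analogous statement is part of \eqref{equation:motivicisomorphism}, namely that multiplication by $\tau\in h^{0,1}(F)$ is an isomorphism on $h^{\ast,\ast}(F)$ for motivic cohomology concentrated in the range $0\le p\le q$. First I would record the elementary case $p=0$: by \aref{thm:hOFS} the map $h^{0,q}(\OO_{F,\mathcal S})\to h^{0,q}(F)$ is bijective for all $q$, and over $F$ multiplication by $\tau$ is an isomorphism on $h^{0,\ast}$ by \eqref{equation:motivicisomorphism} (equivalently, $\tau$ corresponds to $-1$ under the isomorphism with \'etale cohomology, on which $\tau$-multiplication is invertible). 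So the $p=0$ row follows by naturality of cup product with $\tau$.

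Next I would handle $1\le p\le q$ using the comparison maps in \aref{thm:hOFS}. The key observation is that for $p\ge 3$ with $p\le q$ the map $h^{p,q}(\OO_{F,\mathcal S})\to h^{p,q}(F)$ is an isomorphism (the integral statement in \aref{thm:hOFS}, reduced mod $2$ via the universal coefficient sequence together with the explicit values $(\Z/2)^{r_1}$ or $0$), and over $F$ multiplication by $\tau$ is an isomorphism by \eqref{equation:motivicisomorphism}; a commuting square then gives the claim in this range. For $p=1$ and $q\ge 2$ the map to $h^{1,q}(F)$ is again bijective by \aref{thm:hOFS}, and $\tau$ is invertible downstairs, so $\tau\colon h^{1,q}(\OO_{F,\mathcal S})\to h^{1,q+1}(\OO_{F,\mathcal S})$ is an isomorphism for $q\ge 2$; the remaining case $p=q=1$, i.e.\ $\tau\colon h^{1,1}\to h^{1,2}$, needs the injection $h^{1,1}(\OO_{F,\mathcal S})\hookrightarrow h^{1,1}(F)$ together with surjectivity, which can be deduced from the localization sequence for $\OO_{F,\mathcal S}$ (the mod $2$ reduction of the one in \aref{thm:hOFS}, or directly from \cite[\S14.4]{Levine99}) since $h^{1,1}\cong\OO_{F,\mathcal S}^\times/2\oplus{}_2\Pic(\OO_{F,\mathcal S})$ and $h^{1,2}$ has the matching description, with $\tau$-multiplication inducing the identity on the common quotient. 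The only subtlety is that $p=2$ is genuinely exceptional: $h^{2,1}\cong\Pic(\OO_{F,\mathcal S})/2$ need not vanish, so $\tau\colon h^{2,1}\to h^{2,2}$ is \emph{not} asserted to be an isomorphism (indeed the whole point of \aref{lem:pic-tau} is that this map is merely injective), which is exactly why the statement is restricted to $0\le p\le q$ and why the case $q=1$ forces $p\le 1$.

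The main obstacle I anticipate is bookkeeping rather than anything deep: one must carefully match up which bidegrees are covered by the field-comparison isomorphisms of \aref{thm:hOFS} versus which require the localization sequence, and in the latter cases confirm that $\tau$-multiplication is compatible with the residue maps $h^{p,q}(F)\to\bigoplus_{x\notin\mathcal S}h^{p-1,q-1}(k(x))$, so that it induces the claimed isomorphism on the kernel. A clean way to package this is to note that $\tau$-multiplication is an isomorphism on \'etale cohomology $H^{\ast}_{\et}(-;\mu_2^{\otimes\ast})$ by \eqref{equation:motivictoetaletau}, and that for $\OO_{F,\mathcal S}$ the comparison $h^{p,q}(\OO_{F,\mathcal S})\to H^{p}_{\et}(\OO_{F,\mathcal S};\mu_2^{\otimes q})$ is an isomorphism in the Beilinson--Lichtenbaum range $p\le q$ (as noted in \aref{section:mcatSa}, since change-of-topology and Beilinson--Lichtenbaum extend to $\OO_{F,\mathcal S}$ via the localization sequences), with the solitary failure at $(p,q)=(2,1)$ where $p=q$ is on the boundary. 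Thus in the range $0\le p\le q$ the motivic group agrees with the \'etale one and $\tau$-multiplication is invertible; this gives a uniform argument avoiding case-by-case reductions, and I would present the proof in this form, citing \eqref{equation:motivictoetaletau}, \aref{thm:hOFS}, and \aref{corollary:2adicsss}-style localization only as needed.
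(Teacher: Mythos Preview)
Your final ``clean way'' via Beilinson--Lichtenbaum over Dedekind domains is correct and is exactly the paper's alternative proof (it cites \cite[Theorem 1.2(2)]{MR2103541}). The paper's primary argument is even shorter and avoids invoking Beilinson--Lichtenbaum over $\OFS$: it writes the mod~$2$ localization sequence as a commutative ladder
\[
\begin{tikzcd}
\bigoplus_{x\notin\mathcal S} h^{p-2,q-1}(k(x)) \ar[r]\ar[d,"\tau"] & h^{p,q}(\OFS)\ar[r]\ar[d,"\tau"] & h^{p,q}(F)\ar[d,"\tau"] \\
\bigoplus_{x\notin\mathcal S} h^{p-2,q}(k(x)) \ar[r] & h^{p,q+1}(\OFS)\ar[r] & h^{p,q+1}(F),
\end{tikzcd}
\]
notes that the outer $\tau$'s are isomorphisms by the field case \eqref{equation:motivicisomorphism}, and concludes using finiteness of all groups involved. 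This is the compatibility with residue maps you mention in passing but do not carry out.

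Two small issues with your case-by-case analysis. First, it never actually treats $p=2$, $q\ge 2$: by \aref{thm:hOFS} the map $h^{2,q}(\OFS)\to h^{2,q}(F)$ is only injective there, so the simple comparison-to-$F$ trick fails, and you pivot to the uniform argument without closing this gap in the casework. Second, the parenthetical ``$(p,q)=(2,1)$ where $p=q$ is on the boundary'' is a slip: here $p>q$, so $(2,1)$ lies outside the hypothesis $0\le p\le q$ altogether, which is why the merely-injective map of \aref{lem:pic-tau} causes no trouble.
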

\begin{proof}
This follows from \eqref{equation:motivictoetale}, 
finiteness of $h^{p,q}(\OFS)$ for all $p,q\in\Z$,
and the exact localization sequence \cite[\S14.4]{Levine99} commutative diagram
\[
\begin{tikzcd}
\bigoplus_{x \not\in\mathcal S} h^{p-2,q-1}(k(x)) \ar[r]\ar[d, "\tau"] & h^{p,q}(\OFS)\ar[r]\ar[d, "\tau"] & h^{p,q}(F)\ar[d, "\tau"] \\
\bigoplus_{x \not\in\mathcal S} h^{p-2,q}(k(x)) \ar[r] & h^{p,q+1}(\OFS)\ar[r] & h^{p,q+1}(F).
\end{tikzcd}
\]
Alternatively, 
apply the Beilinson-Lichtenbaum conjecture for Dedekind domains \cite[Theorem 1.2 (2)]{MR2103541}.
\end{proof}

\begin{lemma}
\label{lem:pic-tau}
Over $\OFS$ the map $\tau\colon h^{2,1} \to h^{2,2}$ is injective with image contained in $\ker(\rho_{2,2})$.
\end{lemma}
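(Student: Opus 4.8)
The plan is to work with the change-of-topology map and the Picard group description to pin down both the injectivity of $\tau$ and the containment of its image in $\ker(\rho_{2,2})$. First I would recall from \aref{section:mcatSa} that over $\OFS$ we have $h^{2,1}\cong\Pic(\OFS)/2$, and by the Beilinson--Lichtenbaum identifications and the localization sequences the map $h^{2,2}(\OFS)\to H^2_{\et}(\OFS;\mu_2)$ is an isomorphism onto its image with the \'etale cohomology fitting in the Kummer sequence
\[
0 \to \Pic(\OFS)/2 \to H^2_{\et}(\OFS;\mu_2) \to {}_2\Br(\OFS) \to 0.
\]
The key point is that multiplication by $\tau\in h^{0,1}$ corresponds under \eqref{equation:motivictoetaletau} to the canonical isomorphism on \'etale cohomology induced by $\mu_2^{\otimes q}\cong\mu_2^{\otimes q+1}$ (twisting by the class of $-1$). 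Thus, precomposing the isomorphism $h^{2,1}\cong\Pic(\OFS)/2\hookrightarrow H^2_{\et}(\OFS;\mu_2)$ with the \'etale twisting isomorphism and comparing with $h^{2,2}(\OFS)\hookrightarrow H^2_{\et}(\OFS;\mu_2)$, one sees that $\tau\colon h^{2,1}\to h^{2,2}$ is identified with the inclusion $\Pic(\OFS)/2\hookrightarrow H^2_{\et}(\OFS;\mu_2)$ appearing in the Kummer sequence, hence injective. This is essentially the content already used in the proof of \aref{lem:lemma69}, where the commutative square of short exact sequences relating $h^{2,1}\xrightarrow{\tau} h^{2,2}$ to $H^2_{\et}(\OFS;\Gm)$ appears; I would reuse that diagram.

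For the second assertion, that $\im(\tau\colon h^{2,1}\to h^{2,2})\subseteq\ker(\rho_{2,2})$, I would argue as follows. We must show $\rho\cdot\tau\cdot x = 0$ in $h^{3,3}$ for every $x\in h^{2,1}$. Since $h^{2,1}\cong\Pic(\OFS)/2$ and $\tau x$ lands in the image of $\Pic(\OFS)/2$ in $H^2_{\et}(\OFS;\mu_2)$, and since $h^{3,3}(\OFS)\cong h^{3,3}(F)\cong k^{\Mil}_3(F)/2$ by \aref{thm:hOFS} (the map $h^{3,3}(\OFS)\to h^{3,3}(F)$ is bijective), it suffices to show the composite $\Pic(\OFS)/2\xrightarrow{\tau} h^{2,2}(\OFS)\xrightarrow{\rho} h^{3,3}(\OFS)\to h^{3,3}(F)$ is zero. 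But the image of $h^{2,2}(\OFS)\to h^{2,2}(F)$ — which is injective by \aref{thm:hOFS} — consists exactly of the part killed by the residue maps at primes outside $\mathcal{S}$, and the composition $h^{2,1}(\OFS)\xrightarrow{\tau}h^{2,2}(\OFS)\to h^{2,2}(F)$ is zero since $h^{2,1}(F)=0$; that is, $\tau x$ maps to $0$ in $h^{2,2}(F)$. Therefore $\rho\cdot\tau x$ maps to $0$ in $h^{3,3}(F)$, and by the bijectivity $h^{3,3}(\OFS)\cong h^{3,3}(F)$ we conclude $\rho\cdot\tau x = 0$ in $h^{3,3}(\OFS)$, i.e.\ $\tau x\in\ker(\rho_{2,2})$.

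I expect the main obstacle to be bookkeeping the precise commutative diagram identifying $\tau$-multiplication $h^{2,1}\to h^{2,2}$ with the Kummer-sequence inclusion $\Pic(\OFS)/2\hookrightarrow H^2_{\et}(\OFS;\mu_2)$, and making sure the localization/residue sequences for motivic cohomology over $\OFS$ and for \'etale cohomology are compatible with the twist by $-1$ in the relevant degrees. Concretely, one needs that the square
\[
\begin{tikzcd}
h^{2,1}(\OFS) \ar[r, "\tau"]\ar[d] & h^{2,2}(\OFS) \ar[d] \\
H^2_{\et}(\OFS;\mu_2) \ar[r, "\cong"] & H^2_{\et}(\OFS;\mu_2^{\otimes 2})
\end{tikzcd}
\]
commutes, which follows from \eqref{equation:motivictoetale}, \eqref{equation:motivictoetaletau} and the multiplicativity of the change-of-topology map, together with the fact that $\tau$ maps to the distinguished generator of $H^0_{\et}(\OFS;\mu_2)$. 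Once this is in place, both claims are immediate from the Kummer sequence and \aref{thm:hOFS}. The argument is parallel to, and can cite, the diagram chase in the proof of \aref{lem:lemma69} and the injectivity of Brauer groups $H^2_{\et}(\OFS;\Gm)\to H^2_{\et}(F;\Gm)$ used there.
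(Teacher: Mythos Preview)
Your injectivity argument is essentially the paper's: both use the change-of-topology square and the identification $h^{2,1}\cong \Pic(\OFS)/2\cong H^{2,1}/2$ together with the quasi-isomorphism $\Z(1)\simeq\Gm[-1]$ to embed $h^{2,1}$ into $H^2_{\et}(\OFS;\mu_2)$ via $\tau$. One caveat: you cite the diagram from the proof of \aref{lem:lemma69}, but that lemma \emph{uses} \aref{lem:pic-tau}, so the reference is circular; you should set up the square directly, which you essentially do anyway.

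For the containment in $\ker(\rho_{2,2})$ your route works but is more elaborate than necessary, and it contains a slip. You assert parenthetically that $h^{2,2}(\OFS)\to h^{2,2}(F)$ is injective ``by \aref{thm:hOFS}''; that theorem is about \emph{integral} coefficients, and for mod $2$ the map is \emph{not} injective --- its kernel is precisely $\tau\cdot h^{2,1}(\OFS)$, which is exactly what you go on to use. Fortunately the slip is harmless for your argument, since you only need that $\tau x$ dies in $h^{2,2}(F)$ and then invoke the bijection $h^{3,3}(\OFS)\cong h^{3,3}(F)$.

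The paper's argument for the second claim is a one-liner you may have missed: $\rho\cdot(\tau x)=\tau\cdot(\rho x)$ and $\rho x\in h^{3,2}(\OFS)=0$. The vanishing $h^{3,2}(\OFS)=0$ follows from \aref{thm:hOFS} (integral cohomology vanishes outside $1\le p\le q$ except $(0,0)$ and $(2,1)$, so $H^{3,2}=H^{4,2}=0$, hence $h^{3,2}=0$). This avoids the detour through $F$ entirely.
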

\begin{proof}
As in \cite[\S6]{Suslin-Voevodsky:banff} the change of topology adjunction between the Nisnevich and {\'e}tale sites over $\OFS$ (see also \cite[p.~9]{Spitzweck}, \cite[Theorem 1.2.4]{MR2103541}) 
yields a commutative diagram with vertical product maps
\begin{equation}
\label{equation:taudiagram}
\begin{tikzcd}[column sep=10pt]
h^{2,1}\tensor h^{0,1}
\ar[r]\ar[d] & H^{2}_{\et}(\OFS;\mu_{2}(1)) \tensor H^{0}_{\et}(\OFS;\mu_{2}(1)) \ar[d, "\cong"]\\
h^{2,2}
\ar[r, "\cong"] & H^{2}_{\et}(\OFS;\mu_{2}(2)).
\end{tikzcd}
\end{equation}
The lower horizontal and right vertical maps in \eqref{equation:taudiagram} are isomorphisms (use the localization sequence for the horizontal map).
Injectivity of the top horizontal map in \eqref{equation:taudiagram} follows from the commutative diagram of exact coefficient sequences obtained from the change of topology adjunction
\[
\begin{tikzcd}
H^{2,1}/2 \ar[r, "\cong"]\ar[d, "\cong"] & H^{2}_{\et}(\OFS;\Z(1))/2 \ar[d, hook] \\
h^{2,1} \ar[r] & H^{2}_{\et}(\OFS;\Z/2(1)) \cong H^{2}_{\et}(\OFS;\mu_{2}(2)).
\end{tikzcd}
\]
Here the maps exiting the top left corner are isomorphisms (use $H^{3,1}=0$ and the quasi-isomorphism $\Z(1) \simeq \Gm[-1]$ in \cite[Theorem 7.10]{Spitzweck}).
This shows $\tau: h^{2,1} \to h^{2,2}$ is injective.
Its image is contained in $\ker(\rho_{2,2})$ because $\rho h^{2,1} = 0$.
\end{proof}

When $q\geq 2$ the positive motivic cohomology groups $h^{p,q}_{+}$ of $\OO_{F,\mathcal{S}}$ fit into an exact sequence
\begin{equation}
\label{equation:pmc}
0
\to 
h^{0,q}
\overset{\psi_{0,q}}{\to}
(\Z/2)^{r_{1}+r_{2}}
\to 
h^{1,q}_{+}
\to 
h^{1,q}
\overset{\psi_{1,q}}{\to}
(\Z/2)^{r_{1}}
\to 
h^{2,q}_{+}
\to 
h^{2,q}
\overset{\psi_{2,q}}{\to}
(\Z/2)^{r_{1}}
\to
0.
\end{equation}
Here we consider the canonically induced map 
\[
\psi_{p,q}
\colon
h^{p,q}(\OO_{F,\mathcal{S}})
\to 
\bigoplus^{r_{1}} h^{p,q}(\R)
\oplus
\bigoplus^{r_{2}} h^{p,q}(\C).
\]
We refer to \cite{CKPS} for the definition of positive \'etale cohomology groups; %
see \cite[(9)]{MR1928650} for the indexing in \eqref{equation:pmc}.
The exactness of \eqref{equation:pmc} follows by identifying \'etale and motivic cohomology groups in the given range, 
see \cite[Theorem 14.5]{Levine99}.
Recall the subgroup of totally positive units of $\OO_{F,\mathcal{S}}$ is defined by 
\[
\OO_{F,\mathcal{S}}^{\times,+}
:=
\{\alpha\in \OO_{F,\mathcal{S}}^{\times}\,\vert\, \sigma(\alpha)>0, \, \forall \sigma\colon F\to \R\}.
\]
The narrow Picard group of $\OO_{F,\mathcal{S}}$ is defined by the exact sequence
\begin{equation*}
0
\to
\OO_{F,\mathcal{S}}^{\times,+}
\to
F^{\times,+}
\to
\Div(\OO_{F,\mathcal{S}})
\to
\Pic_{+}(\OO_{F,\mathcal{S}})
\to
0.
\end{equation*}
Here $\Div(\OO_{F,\mathcal{S}})$ denotes the group of divisors.
By \cite[Lemma 7.6(a)]{Rognes-Weibel} there is an exact sequence
\begin{equation}
\label{equation:sesPic}
0
\to
\im (h^{1,q}_{+}\to h^{1,q})
\to
h^{1,q}
\overset{\psi_{1,q}}{\to}
(\Z/2)^{r_{1}}
\to 
\Pic_{+}(\OO_{F,\mathcal{S}})
\to
\Pic(\OO_{F,\mathcal{S}})
\to
0.
\end{equation}

Using \eqref{equation:pmc} we note the following result.
\begin{lemma}
\label{lem:H2R-surj}
The naturally induced map 
\[
\psi_{2,q}
\colon
h^{2,q}(\OO_{F,\mathcal{S}})
\to 
\bigoplus^{r_{1}} h^{2,q}(\R)
\]
is surjective for $q\geq 2$.
\end{lemma}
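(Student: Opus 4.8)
The plan is to read off the claim directly from the exact sequence \eqref{equation:pmc}. Recall that for $q \geq 2$ this sequence has the shape
\[
0 \to h^{0,q} \to (\Z/2)^{r_1+r_2} \to h^{1,q}_{+} \to h^{1,q} \xrightarrow{\psi_{1,q}} (\Z/2)^{r_1} \to h^{2,q}_{+} \to h^{2,q} \xrightarrow{\psi_{2,q}} (\Z/2)^{r_1} \to 0,
\]
so in particular its last nontrivial arrow $\psi_{2,q}\colon h^{2,q}(\OO_{F,\mathcal{S}}) \to \bigoplus^{r_1} h^{2,q}(\R)$ is surjective. Thus the only thing to justify is that the map denoted $\psi_{2,q}$ in \eqref{equation:pmc} — which a priori is the map into $\bigoplus^{r_1} h^{2,q}(\R) \oplus \bigoplus^{r_2} h^{2,q}(\C)$ — agrees, after composing with the evident projection, with the map named in the statement of the lemma, and that the codomain $\bigoplus^{r_2} h^{2,q}(\C)$ contributes nothing. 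The latter is immediate: for $q \geq 2$ one has $h^{2,q}(\C) \cong H^2_{\et}(\C; \mu_2^{\otimes q}) = 0$ since $\C$ is algebraically closed, so the restriction map into the complex places is automatically zero and the target $\bigoplus^{r_1} h^{2,q}(\R) \oplus \bigoplus^{r_2}h^{2,q}(\C)$ is just $\bigoplus^{r_1}h^{2,q}(\R)$.

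Concretely, the steps I would carry out are: (i) recall from the discussion preceding \eqref{equation:pmc} (and \cite[Theorem 14.5]{Levine99}, \cite{CKPS}, \cite[(9)]{MR1928650}) that \eqref{equation:pmc} is exact for $q \geq 2$, obtained by identifying motivic and positive \'etale cohomology groups in the relevant range; (ii) observe that surjectivity of the final arrow in \eqref{equation:pmc} is part of that exactness statement; (iii) note $h^{2,q}(\C) = 0$ for $q \geq 2$, so that the map $\psi_{2,q}$ appearing in \eqref{equation:pmc} is literally the map $h^{2,q}(\OO_{F,\mathcal{S}}) \to \bigoplus^{r_1} h^{2,q}(\R)$ of the lemma; (iv) conclude.

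There is no real obstacle here — the content is entirely packaged in \eqref{equation:pmc}, which the paper has already set up. The only point requiring a word of care is the bookkeeping of which archimedean places contribute to the target, and that is dispatched by the vanishing of mod $2$ motivic (equivalently \'etale) cohomology of $\C$ in degree $2$ and weight $\geq 2$. One could alternatively phrase the argument via \cite[Theorem 14.5]{Levine99} directly, identifying $h^{2,q}(\OO_{F,\mathcal{S}})$ and its archimedean restriction with positive \'etale cohomology and invoking the known structure of the latter, but routing through \eqref{equation:pmc} is cleanest since the sequence is already on the page.

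\begin{proof}
For $q\geq 2$ the sequence \eqref{equation:pmc} is exact, and its last arrow is the map
$\psi_{2,q}\colon h^{2,q}(\OO_{F,\mathcal{S}})\to \bigoplus^{r_1}h^{2,q}(\R)\oplus\bigoplus^{r_2}h^{2,q}(\C)$,
which is therefore surjective. Since $\C$ is algebraically closed, $h^{2,q}(\C)\cong H^2_{\et}(\C;\mu_2^{\otimes q})=0$ for $q\geq 2$, so the target is just $\bigoplus^{r_1}h^{2,q}(\R)$ and $\psi_{2,q}$ is the map in the statement. Hence it is surjective for $q\geq 2$.
\end{proof}
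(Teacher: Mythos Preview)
Your proof is correct and takes essentially the same approach as the paper, which simply states that the result follows from \eqref{equation:pmc}. Your additional remark that $h^{2,q}(\C)=0$ is the bookkeeping implicit in the paper's writing of the target in \eqref{equation:pmc} as $(\Z/2)^{r_1}$ rather than $(\Z/2)^{r_1+r_2}$.
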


\begin{proposition}(\cite[Theorem14.5 (3), (4), (5)]{Levine99}, \cite[Propositions 6.12, 6.13]{Rognes-Weibel})
\label{lem:RW}
Suppose $\mathcal{S}$ is a set of places of $F$ containing the archimedean and dyadic ones.
\begin{enumerate}
\item $H^{0,q}(\OFS)=\Z$ if $q=0$ and trivial if $q\neq 0$.
\item $H^{1,q}(\OFS)=\Z^{d_{q}}\oplus\Z/w_{q}(F)$,
where $d_{q}=r_{2}$ if $q\geq 2$ is even and $d_{q}=r_{1}+r_{2}$ if $q>1$ is odd.
\item $H^{2,q}(\OFS)$ is a torsion group for all $q\in\Z$.  
If $\# \mathcal{S}<\infty$ then $H^{2,q}(\OFS)$ is finite and the $2$-rank $\rk_{2}H^{2,q}(\OFS;\Z_{2})/2$ equals $r_{1}+\sls+\ts-1$ if $q$ is even and $\sls+\ts-1$ if $q$ is odd.
\end{enumerate}
\end{proposition}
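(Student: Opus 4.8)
The plan is to assemble the three assertions from \cite[Theorem 14.5]{Levine99} and \cite[Propositions 6.12, 6.13]{Rognes-Weibel}, the one subtlety being the passage from the $\Z_{(2)}$-coefficient statements of \cite{Levine99} to integral coefficients, which is licensed by the Rost--Voevodsky solution of the Bloch--Kato conjecture \cite{Voevodsky:Z/l} at every odd prime. Two standard inputs would be used throughout: the localization triangle in motivic cohomology, which over $\OFS$ produces exact sequences $\cdots \to \bigoplus_{x\not\in\mathcal{S}} H^{p-2,q-1}(k(x)) \to H^{p,q}(\OFS) \to H^{p,q}(F) \to \bigoplus_{x\not\in\mathcal{S}} H^{p-1,q-1}(k(x)) \to \cdots$ by \cite[\S14.4]{Levine99}, \cite{MR2103541}; and the change-of-topology map $H^{p,q}(\OFS) \to H^{p}_{\et}(\OFS;\Z(q))$, an isomorphism in the Beilinson--Lichtenbaum range over the Dedekind domain $\OFS$ by \cite{MR2103541}, \cite{Spitzweck} (this comparison is already invoked in \aref{thm:hOFS} and \aref{lem:pic-tau}).

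Statement (1) is formal and is contained in \aref{thm:hOFS}: the motivic complex $\Z(q)$ vanishes for $q<0$, equals $\Z$ in cohomological degree $0$ for $q=0$, and has no cohomology in degree $0$ for $q\geq 1$ on a connected regular scheme. For (2) I would first use the localization sequence (the residue groups $H^{-1,q-1}(k(x))$ and $H^{0,q-1}(k(x))$ vanish for $q\geq 2$) to reduce to computing $H^{1,q}(\OF)$; rationally this is the weight-$q$ graded piece of $K_{2q-1}(\OF)\otimes\Q$, whose rank $d_q$ was determined by Borel \cite{borel} to be $r_2$ for $q$ even and $r_1+r_2$ for $q$ odd --- matching the multiplicities $d_q$ recalled in \aref{section:introduction}. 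The torsion subgroup is identified, prime by prime, with $H^{1}_{\et}(\OF[\tfrac{1}{\ell}];\Z_\ell(q))_{\mathrm{tors}}\cong H^{0}_{\et}(\OF[\tfrac{1}{\ell}];\Q_\ell/\Z_\ell(q))$, of total order $w_q(F)$ by definition, and is cyclic since each $\ell$-part is; that it is a direct summand exhausting the torsion is \cite[Theorem 14.5]{Levine99} at $\ell=2$ and follows from Bloch--Kato at odd $\ell$.

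For (3), the torsion statement is part of \aref{thm:hOFS}; finiteness when $\#\mathcal{S}<\infty$ follows from it together with finite generation of $\KGL_{\ast,\ast}(\OFS)$ (or directly from \cite[\S14]{Levine99}). The content is the $2$-rank formula. Here I would feed the Beilinson--Lichtenbaum identifications of $h^{p,q}(\OFS)$ and of the positive motivic cohomology groups into the eight-term exact sequence \eqref{equation:pmc}, using \eqref{equation:sesPic} to relate $\coker(\psi_{1,q})$ to the narrow and ordinary Picard groups. Reading off $\rk_2\coker(\psi_{1,q})$ and $\rk_2\coker(\psi_{2,q})$ in terms of $r_1$, $r_2$, the number $s_{\mathcal{S}}$ of finite primes in $\mathcal{S}$, and $t_{\mathcal{S}}=\rk_2\Pic(\OFS)$, $t_{\mathcal{S}}^{+}=\rk_2\Pic_{+}(\OFS)$ then yields $\rk_2 H^{2,q}(\OFS;\Z_2)/2 = r_1+s_{\mathcal{S}}+t_{\mathcal{S}}-1$ for $q$ even and $s_{\mathcal{S}}+t_{\mathcal{S}}-1$ for $q$ odd; this is exactly the bookkeeping of \cite[Propositions 6.12, 6.13]{Rognes-Weibel}, compare \cite[Lemma 13]{MR1928650}.

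The main obstacle is organizational rather than conceptual: assembling the $2$-rank count in (3) from the two interlocking exact sequences \eqref{equation:pmc} and \eqref{equation:sesPic} while correctly accounting for the real and complex places and for the difference between the narrow and the ordinary class group, and verifying at each stage that the two-local computations of \cite{Levine99} globalize to integral statements --- precisely the point at which Voevodsky's proof of Bloch--Kato \cite{Voevodsky:Z/l} enters.
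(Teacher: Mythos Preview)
The paper does not supply its own proof of this proposition: it is stated as a citation of \cite[Theorem 14.5 (3), (4), (5)]{Levine99} and \cite[Propositions 6.12, 6.13]{Rognes-Weibel}, with the passage from $\Z_{(2)}$- to integral coefficients already explained two paragraphs above \aref{thm:hOFS} via the Bloch--Kato conjecture \cite{Voevodsky:Z/l}. Your outline is a faithful expansion of what those references contain and how they fit together, so there is nothing to compare against beyond the citations themselves; your sketch is correct and matches the intended sources.
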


\begin{corollary}
\label{lem:struct}
For the ring $\OFS$ of $\mathcal{S}$-integers in a number field $F$ we have
\begin{equation*}
\# h^{1,1} = 2^{r_{1} + r_{2} + \sls + \ts^+}, 
\# h^{2,2} = 2^{r_{1} + \sls + \ts - 1}, 
\# h^{q,q} = 2^{r_{1}} \text{ if } q \geq 3,
\# \ker(\rho_{1,1}^{2}) = 2^{r_{2} + \sls + \ts^+},  
\end{equation*}
\begin{equation*}
\# \ker(\rho_{2,2}^{2}) = \# \ker(\rho_{2,2}) = 2^{\sls + \ts - 1}, 
\# h^{2,2}/\rho^{2}       = \# h^{2,2}/2 \text{ if } r_{1}>0.
\end{equation*}
\end{corollary}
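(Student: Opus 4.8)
The plan is to read off each cardinality from the structural description of the motivic cohomology of $\OFS$ assembled in \aref{section:mcatSa} --- namely \aref{thm:hOFS}, \aref{lem:RW}, \aref{lem:H2R-surj}, and the exact sequences \eqref{equation:pmc} and \eqref{equation:sesPic} --- combined with the two-primary rank data recorded in the Remark following \aref{theorem:mod2hermitiankgroupsintroduction} and the Remark following \aref{theorem:KGLnumberfields}, and with Dirichlet's $\mathcal{S}$-unit theorem.

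First I would pass to a convenient weight. Since multiplication by $\tau\in h^{0,1}(\OFS)$ commutes with multiplication by $\rho$, \aref{corollary:tau-iso-ints} shows that for $q\geq p$ the groups $h^{p,q}$, their positive variants $h^{p,q}_{+}$, and the groups $\ker(\rho^{i}_{p,q})$ are all independent of $q$ up to canonical isomorphism; in particular $h^{1,1}\cong h^{1,q}$, $h^{2,2}\cong h^{2,q}$, $\ker(\rho^{2}_{1,1})\cong\ker(\rho^{2}_{1,q})$, and $\ker(\rho_{2,2})\cong\ker(\rho^{2}_{2,2})\cong\ker(\rho^{2}_{2,q})\cong\ker(\rho_{2,q})$ for every $q$ large enough that \eqref{equation:pmc}, \eqref{equation:sesPic}, and the cited rank formulas apply. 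The two kernel cardinalities then follow at once: the $2$-rank of $\ker(\rho^{2}_{1,q})\cong\im(h^{1,q}_{+}\to h^{1,q})$ is $r_{2}+\sls+\ts^{+}$ and the $2$-rank of $\ker(\rho_{2,q})\cong\ker(\rho^{2}_{2,q})\cong\im(h^{2,q}_{+}\to h^{2,q})$ is $\sls+\ts-1$, both by the Remarks, so $\#\ker(\rho^{2}_{1,1})=2^{r_{2}+\sls+\ts^{+}}$ and $\#\ker(\rho^{2}_{2,2})=\#\ker(\rho_{2,2})=2^{\sls+\ts-1}$.

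For the diagonal groups I would argue as follows. When $q\geq 3$, \aref{thm:hOFS} gives $H^{q,q}(\OFS)\cong(\Z/2)^{r_{1}}$ and $H^{q+1,q}(\OFS)=0$, so the universal coefficient sequence yields $h^{q,q}\cong H^{q,q}(\OFS)/2\cong(\Z/2)^{r_{1}}$, that is $\# h^{q,q}=2^{r_{1}}$. For $h^{2,2}$ I would use the short exact sequence $0\to\ker\psi_{2,2}\to h^{2,2}\xrightarrow{\psi_{2,2}}\bigoplus^{r_{1}}h^{2,2}(\R)\to 0$, in which surjectivity is \aref{lem:H2R-surj} and $h^{2,2}(\R)=\Z/2$; by exactness of \eqref{equation:pmc} the kernel is $\im(h^{2,2}_{+}\to h^{2,2})=\ker(\rho_{2,2})$, of $2$-rank $\sls+\ts-1$, whence $\# h^{2,2}=2^{r_{1}+\sls+\ts-1}$. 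For $h^{1,1}$ I would instead invoke the quasi-isomorphism $\Z(1)\simeq\Gm[-1]$ over Dedekind domains \cite[Theorem 7.10]{Spitzweck}, which gives $H^{1,1}(\OFS)=\OFS^{\times}$, $H^{2,1}(\OFS)=\Pic(\OFS)$, and $H^{p,1}(\OFS)=0$ otherwise, hence a universal coefficient sequence $0\to\OFS^{\times}/2\to h^{1,1}\to{}_{2}\Pic(\OFS)\to 0$; feeding in Dirichlet's $\mathcal{S}$-unit theorem (with $-1$ accounting for the torsion contribution), the narrow-class-group exact sequence $0\to\OFS^{\times,+}\to\OFS^{\times}\to(\Z/2)^{r_{1}}\to\Pic_{+}(\OFS)\to\Pic(\OFS)\to 0$, and \eqref{equation:sesPic} then yields $\# h^{1,1}=2^{r_{1}+r_{2}+\sls+\ts^{+}}$. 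Finally, if $r_{1}>0$ then $F$ has a real embedding, so $\rho^{2}\neq 0$ in $h^{2,2}(F)$ by the Remark following \aref{theorem:mod2hermitiankgroupsintroduction}, and since $h^{2,2}(\OFS)\to h^{2,2}(F)$ is injective by \aref{thm:hOFS} we get $\rho^{2}\neq 0$ in $h^{2,2}(\OFS)$; as $h^{0,0}=\Z/2$, the map $\rho^{2}\colon h^{0,0}\to h^{2,2}$ is injective, so $\#\bigl(h^{2,2}/\rho^{2}\bigr)=\tfrac{1}{2}\# h^{2,2}$.

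I expect the hard part to be the $h^{1,1}$ computation: reconciling the unit-group and Picard-group contributions forces one to track the discrepancy between the ordinary and narrow $2$-ranks $\ts$ and $\ts^{+}$ through the sign map $\OFS^{\times}\to(\Z/2)^{r_{1}}$ and through \eqref{equation:sesPic}, and one must also verify that the $\tau$-periodicity used to transport \eqref{equation:pmc}, \eqref{equation:sesPic}, and the rank formulas down to weights $1$ and $2$ is compatible with the positive-motivic-cohomology sequences, which \aref{section:mcatSa} records only for $q\geq 2$.
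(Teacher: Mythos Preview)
The paper states this result as a corollary of \aref{lem:RW} without proof, so your approach---reading off each cardinality from the structural results and exact sequences assembled in \aref{section:mcatSa}---is exactly what is intended, and your identification of the relevant ingredients is correct.

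One comment on the $h^{1,1}$ step, which you rightly flag as the delicate one: your direct computation via $0\to\OFS^{\times}/2\to h^{1,1}\to{}_{2}\Pic(\OFS)\to 0$ and Dirichlet's $\mathcal S$-unit theorem naturally produces the exponent $r_{1}+r_{2}+\sls+\ts$, and passing from $\ts$ to $\ts^{+}$ through the narrow-class-group sequence is not as immediate as your sketch suggests (the extension $0\to(\Z/2)^{a}\to\Pic_{+}\to\Pic\to 0$ need not be split, so $\ts^{+}-\ts$ is not simply $a$). A cleaner route using only ingredients already in the paper is to combine the surjectivity of $\rho^{2}\colon h^{1,1}\to h^{3,3}$ from \aref{lem:rhoh11} with the formula $\#\ker(\rho^{2}_{1,1})=2^{r_{2}+\sls+\ts^{+}}$ that you have already established, which immediately gives $\#h^{1,1}=2^{r_{1}}\cdot 2^{r_{2}+\sls+\ts^{+}}$.
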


\begin{lemma}
\label{lem:rhoh11}
For the ring of $\mathcal{S}$-integers $\OFS$ in a number field $F$ the multiplication map $\rho^{q-1}\colon h^{1,1}\to h^{q,q}$ is surjective when $q \geq 3$.
\end{lemma}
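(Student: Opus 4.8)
The plan is to factor $\rho^{q-1}$ as $\rho^{q-3}\circ\rho^{2}$ and treat the two factors separately, the point being that the weight drops into the ``stable range'' already at weight $3$, where $h^{q,q}(\OFS)$ has reached its eventual size $2^{r_{1}}$. First I would check that $\rho^{q-3}\colon h^{3,3}(\OFS)\to h^{q,q}(\OFS)$ is an isomorphism for every $q\geq 3$ (the identity when $q=3$). By \aref{thm:hOFS} the restriction $h^{q,q}(\OFS)\to h^{q,q}(F)$ is an isomorphism for $q\geq 3$ (to pass from the integral statement of \aref{thm:hOFS} to mod $2$ coefficients one uses the universal coefficient sequence together with the vanishing of $H^{q+1,q}$), and by \aref{lem:number} the restriction $h^{q,q}(F)\to\bigoplus^{r_{1}}h^{q,q}(\R)$ is an isomorphism for $q\geq 3$. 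Over a real closed field $h^{*,*}\cong\FF_{2}[\tau,\rho]$ (see \aref{ex:KGL2-R}), so multiplication by $\rho^{q-3}$ is an isomorphism $h^{3,3}(\R)\to h^{q,q}(\R)$; since restriction sends $\rho$ to $\rho$, the commuting square of restriction maps forces $\rho^{q-3}\colon h^{3,3}(\OFS)\to h^{q,q}(\OFS)$ to be an isomorphism.

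It then suffices to prove that $\rho^{2}\colon h^{1,1}(\OFS)\to h^{3,3}(\OFS)$ is surjective. This I would settle by a cardinality count using \aref{lem:struct}: there $\#h^{1,1}(\OFS)=2^{r_{1}+r_{2}+\sls+\ts^+}$, $\#\ker(\rho^{2}_{1,1})=2^{r_{2}+\sls+\ts^+}$, and $\#h^{3,3}(\OFS)=2^{r_{1}}$, so the image of the homomorphism $\rho^{2}\colon h^{1,1}(\OFS)\to h^{3,3}(\OFS)$ between finite abelian groups has order $\#h^{1,1}(\OFS)/\#\ker(\rho^{2}_{1,1})=2^{r_{1}}=\#h^{3,3}(\OFS)$ and is therefore all of $h^{3,3}(\OFS)$. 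Composing with the isomorphism $\rho^{q-3}$ from the previous step shows $\rho^{q-1}\colon h^{1,1}(\OFS)\to h^{q,q}(\OFS)$ is surjective for all $q\geq 3$.

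The only genuine content is the arithmetic input packaged in \aref{lem:struct}, above all the value of $\#\ker(\rho^{2}_{1,1})$, which records the $2$-rank of the narrow Picard group and is precisely what makes the stabilization of $h^{q,q}$ compatible with $\rho$-multiplication; I would not expect any difficulty in the homotopy-theoretic part. The main thing to be careful about is circularity: one must verify that the cardinalities quoted from \aref{lem:struct} are derived from \aref{lem:RW}, \aref{thm:hOFS}, and the positive-cohomology sequences \eqref{equation:pmc}, \eqref{equation:sesPic}, and do not themselves invoke \aref{lem:rhoh11}; this is indeed so. A minor secondary point is the mod $2$ versus integral comparison used in the first step, which relies on the vanishing of the relevant $H^{q+1,q}$ groups so that the universal coefficient sequence degenerates.
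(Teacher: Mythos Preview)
Your approach is genuinely different from the paper's. The paper does not argue by a cardinality count; instead it invokes the Milnor conjecture for $\OFS$ (\aref{theorem:milnor-ofs}) to identify $h^{1,1}(\OFS)$ and $h^{q,q}(\OFS)$ with successive quotients $I_q(\OFS)/I_{q+1}(\OFS)$ of the slice filtration on the Witt ring, and then shows that multiplication by $(\langle 1\rangle-\langle -1\rangle)^{q-1}$ is surjective from $I(\OFS)/I_{2}(\OFS)$ onto $I^{q}(F)/I^{q+1}(F)\cong h^{q,q}(\OFS)$ by appealing to a classical result of Milnor--Husemoller \cite[Corollary IV.4.5]{MilnorHusemoller} on the image of the total signature of $W(\OFS)\cap I^{2}(F)$. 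So the arithmetic input in the paper's argument enters through quadratic-form theory rather than through $2$-rank formulas.

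On the circularity question: your instinct to worry is well-placed, and the dismissal ``this is indeed so'' is too quick. Observe that $\ker(\rho^{2}_{1,1})=\ker(\psi_{1,1})$, where $\psi_{1,1}$ is the comparison map to $\bigoplus^{r_{1}}h^{1,1}(\R)$: indeed $h^{3,3}(\OFS)\to\bigoplus^{r_{1}}h^{3,3}(\R)$ is an isomorphism and $\rho^{2}$ is invertible over $\R$, so surjectivity of $\rho^{2}$ is exactly surjectivity of $\psi_{1,1}$. Dividing $\#h^{1,1}$ by $\#\ker(\psi_{1,1})$ therefore tautologically returns $\#\im(\psi_{1,1})$, and your count has content only if $\#\ker(\rho^{2}_{1,1})$ is computed by a route genuinely independent of that surjectivity---say via \eqref{equation:sesPic} together with an independent determination of the narrow Picard data. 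Moreover, the value $\#h^{1,1}=2^{r_{1}+r_{2}+\sls+\ts^{+}}$ you quote from \aref{lem:struct} appears to conflict with the description $h^{1,q}\cong\OFS^{\times}/2\oplus{}_{2}\Pic(\OFS)$ given after \aref{theorem:mod2hermitiankgroupsintroduction}, which yields $2^{r_{1}+r_{2}+\sls+\ts}$; with the latter value your quotient becomes $2^{r_{1}+\ts-\ts^{+}}$ rather than $2^{r_{1}}$, and surjectivity would fail whenever $\ts<\ts^{+}$. You would need to reconcile these formulas and trace exactly how $\#\ker(\rho^{2}_{1,1})$ is obtained before the count can stand on its own. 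The paper's Witt-theoretic route sidesteps this bookkeeping entirely by locating the arithmetic input in the signature map on quadratic forms.
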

\begin{proof}
By \aref{theorem:milnor-ofs} there is a commutative diagram
\[
\begin{tikzcd}
I(\OFS)/I_{2}(\OFS) \ar[r, "\cong"]\ar[d, hook] & h^{1,1}(\OFS) \ar[d, hook]\\
I(F)/I^2(F) \ar[r, "\cong"]\ar[d, "(\langle 1 \rangle - \langle -1\rangle)^{q-1}"] & h^{1,1}(F) \ar[d, "\rho^{q-1}"] \\
I^{q}(F)/I^{q+1}(F) \cong I^{q}(\OFS)/I^{q+1}(\OFS) \ar[r, "\cong"] & h^{q,q}(F) \cong h^{q,q}(\OFS).
\end{tikzcd}
\]
The left vertical composite is surjective.
Indeed, by \cite[Corollary IV.4.5]{MilnorHusemoller} the image of $W(\OFS) \cap I^2(F)$ by the signature map is $4\Z^{r_{1}}$, 
hence $\sigma(I(\OFS)) \supset 4\Z^{r_{1}}$, 
so multiplication by the element $\langle 1 \rangle - \langle -1 \rangle$ corresponding to $\rho$ induces $I^3(\OFS) = I^3(F) \cong 8\Z^{r_{1}}$.
\end{proof}

\begin{lemma}
\label{lem:Hhstruct}
The following holds over the ring of $\mathcal{S}$-integers $\OFS$ in any number field.
\begin{enumerate}
\item
$\pr^{2^{n}}_{2}\colon H_n^{2,q} \to h^{2,q}$ is surjective if $q \equiv 0 \bmod 2$, and it has cokernel of rank $2^{r_{1}}$ if $q \equiv 1 \bmod 2$.
\item
$\partial^{2^{n}}_{2}\colon H_n^{2,q} \to h^{3,q}$ is surjective if $q \equiv 1 \bmod 2$, and trivial if $q \equiv 0 \bmod 2$.
\item 
$H_n^{1,q} \xrightarrow{\pr^{2^{n}}_{2}} h^{1,q} \xrightarrow{\Sq^2} h^{3,q+1}$ is trivial if $q \equiv 0, 1, 2 \bmod 4$.
\item
$H_n^{1,q} \xrightarrow{\partial^{2^{n}}_{2}} h^{2,q} \xrightarrow{\Sq^2} h^{4,q+1}$ is trivial if $q \equiv 1, 2, 3 \bmod 4$, and surjective if $q \equiv 0 \bmod 4$.
\end{enumerate}
\end{lemma}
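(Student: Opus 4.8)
The plan is to deduce all four statements from the behaviour of the relevant maps at the real places of $F$. The key point is that the target of every composite under consideration is motivic cohomology of $\OFS$ in cohomological degree $\geq 3$: namely $h^{3,q}$ in (2), $h^{3,q+1}$ in (3), $h^{4,q+1}$ in (4). By \aref{thm:hOFS} the restriction along $\OFS\to F$ is an isomorphism on $h^{p,\bullet}$ for $p\geq 3$, and by \aref{lem:number} so is the restriction along any real embedding $F\to\R$; hence for $p\geq 3$ the restriction $h^{p,w}(\OFS)\xrightarrow{\ \cong\ }\bigoplus^{r_1}h^{p,w}(\R)$ is an isomorphism (both sides vanishing in the handful of small-weight edge cases, which I would dispatch separately). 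Combined with naturality of $\Sq^2$, $\pr^{2^n}_2$ and $\partial^{2^n}_2$ under the real embeddings, this reduces the vanishing assertions in (2)--(4) to the same assertions over $\R$, and the surjectivity assertions to those over $\R$ together with surjectivity of the source restriction $H^{p,q}_n(\OFS)\to\bigoplus^{r_1}H^{p,q}_n(\R)$.

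Over $\R$ everything is at most one-dimensional in each bidegree, with $h^{p,q}(\R)=\FF_2\{\rho^p\tau^{q-p}\}$ for $0\leq p\leq q$. There \aref{lem:coh-R-mod2n} controls the transition maps purely by the parity of $a-b$: in degree $(1,q)$ with $q$ even, $\pr^{2^n}_2$ vanishes while $\partial^{2^n}_2$ and $\inc^2_{2^n}$ are isomorphisms, and with $q$ odd the roles are reversed; similarly in degree $(2,q)$. The action of $\Sq^2$ is read off from \aref{tbl:Sq-action} and the Cartan formula, using that Steenrod squares vanish on $h^{p,p}$: one gets $\Sq^2(\rho\tau^{q-1})=\rho\,\Sq^2(\tau^{q-1})$, which is $0$ exactly for $q\equiv 1,2\bmod 4$, and $\Sq^2(\rho^2\tau^{q-2})=\rho^2\,\Sq^2(\tau^{q-2})$, which is $0$ exactly for $q\equiv 2,3\bmod 4$. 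Matching the parity of $a-b$ for $\pr^{2^n}_2$ (resp. $\partial^{2^n}_2$) against the vanishing range of $\Sq^2$ reproduces precisely the case divisions in (3) (resp. (4)), and likewise for (2), where $\Sq^2$ is absent and one only uses that $\partial^{2^n}_2$ vanishes on $H^{2,q}_n(\R)$ for $q$ even and is an isomorphism for $q$ odd. For the surjectivity clauses ($q\equiv 0\bmod 4$ in (4), $q$ odd in (2)) I would verify that $H^{p,q}_n(\OFS)\to\bigoplus^{r_1}H^{p,q}_n(\R)$ is onto: in the bidegree in question $\inc^2_{2^n}$ is an isomorphism over $\R$, the mod $2$ restriction $h^{p,q}(\OFS)\to\bigoplus^{r_1}h^{p,q}(\R)$ is surjective by \aref{lem:number} (and \aref{lem:H2R-surj} for $p=2$), and the naturality square for $\inc^2_{2^n}$ transports surjectivity upstairs.

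For (1) the real places are insufficient, since $h^{2,q}$ is not detected by them, so I would argue with integral coefficients. The universal coefficient sequences attached to $\Z\xrightarrow{2^n}\Z$ and $\Z\xrightarrow{2}\Z$ exhibit $H^{2,q}_n(\OFS)$ and $h^{2,q}(\OFS)$ as extensions of ${}_{2^n}H^{3,q}(\OFS;\Z)$ by $H^{2,q}(\OFS;\Z)/2^n$ and of ${}_2H^{3,q}(\OFS;\Z)$ by $H^{2,q}(\OFS;\Z)/2$ respectively. By \aref{thm:hOFS} the group $H^{3,q}(\OFS;\Z)$ vanishes for $q$ even and is $(\Z/2)^{r_1}$ for $q$ odd (with $q\geq 3$; the value $q=1$ is handled by hand using $h^{2,1}\cong\Pic(\OFS)/2$). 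For $q$ even this makes $\pr^{2^n}_2$ the evident surjection $H^{2,q}(\OFS;\Z)/2^n\to H^{2,q}(\OFS;\Z)/2$. For $q$ odd and $n\geq 2$ the integral Bockstein relation $\delta_2\circ\pr^{2^n}_2=2^{n-1}\cdot\delta_{2^n}$ vanishes, since $H^{3,q}(\OFS;\Z)$ is $2$-torsion; hence the image of $\pr^{2^n}_2$ lies in, and in fact equals, the subgroup $H^{2,q}(\OFS;\Z)/2\subseteq h^{2,q}$, so its cokernel is ${}_2H^{3,q}(\OFS;\Z)\cong(\Z/2)^{r_1}$, of rank $2^{r_1}$.

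The only input beyond bookkeeping is \aref{thm:hOFS} for part (1); conceptually the argument is light once the reduction to real places is in place, and the main obstacle is purely organisational — keeping the four transition maps, their $a-b$ parities, and the residues $q\bmod 4$ aligned so that the case divisions in the statement come out exactly. I would also flag that the second clause of (1) is intended for $n\geq 2$: for $n=1$ it is vacuous, since $\pr^{2}_2=\mathrm{id}$.
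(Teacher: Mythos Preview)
Your argument is correct. For parts (1) and (2) it coincides with the paper's: both identify the cokernel of $\pr^{2^n}_2$ (respectively the image of $\partial^{2^n}_2$) via the long exact coefficient sequence and the isomorphism $H^{3,q}_\bullet(\OFS)\cong\bigoplus^{r_1}H^{3,q}_\bullet(\R)$; your phrasing in terms of the integral Bockstein and $H^{3,q}(\OFS;\Z)$ is an equivalent packaging.

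For parts (3) and (4) your route is genuinely different. You push the composite down to $\R$ using that the target sits in cohomological degree $\geq 3$, then read off the vanishing from the explicit table for $\Sq^2$ on $\tau$-powers combined with the parity rule for $\pr^{2^n}_2$ and $\partial^{2^n}_2$ from \aref{lem:coh-R-mod2n}. The paper instead argues intrinsically: it uses the Adem relation $\Sq^2\Sq^2=\tau\Sq^3\Sq^1$ together with $\Sq^1\circ\pr^{2^n}_2=0$ and $\Sq^1\circ\partial^{2^n}_2=0$ (both consequences of the cofiber sequences defining these maps), and then observes that $\Sq^2\colon h^{3,q+1}\to h^{5,q+2}$ (resp.\ $h^{4,q+1}\to h^{6,q+2}$) is an isomorphism in the relevant residues, so the composite $\Sq^2\circ\Sq^2\circ\pr=0$ forces $\Sq^2\circ\pr=0$; the remaining residue is handled by $\Sq^2$ already vanishing on the source. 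Your approach has the virtue of uniformity with (2) and needs only naturality and the $\R$-table; the paper's approach is base-independent and would work over any field, not just $\OFS$. Your observation that the $q$ odd clause of (1) tacitly assumes $n\geq 2$ is also correct.
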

\begin{proof}
In the proof we make use of the motivic squaring operation $\Sq^2$ and \aref{tbl:Sq-action}.
(1) and (2) follow from the commutative diagrams with exact rows
\[
\begin{tikzcd}
H_n^{2,q}(\OFS) \ar[r, "\pr^{2^{n}}_{2}"] & h^{2,q}(\OFS) \ar[r, "\partial^{2}_{2^{n-1}}"] & H^{3,q}_{n-1}(\OFS) \ar[r]\ar[d, "\cong"]  & H^{3,q}_n(\OFS)\ar[d, "\cong"]  \\
& & \bigoplus^{r_{1}} H^{3,q}_{n-1}(\R) \ar[r, "\cong"]&  \bigoplus^{r_{1}} H^{3,q}_{n}(\R),
\end{tikzcd}
\]
\[
\begin{tikzcd}
H^{2,q}_n(\OFS) \ar[r, "\partial^{2^{n}}_{2}"] & h^{3,q}(\OFS) \ar[r, "\inc^{2}_{2^{n+1}}"] & H^{3,q}_{n+1}(\OFS) \ar[r]\ar[d, "\cong"]  & H^{3,q}_n(\OFS)\ar[d, "\cong"]  \\
& & \bigoplus^{r_{1}} H^{3,q}_{n+1}(\R) \ar[r] &  \bigoplus^{r_{1}} H^{3,q}_{n}(\R).
\end{tikzcd}
\]
The lower horizontal map is trivial if $q \equiv 0\bmod 2$ and an isomorphism if $q \equiv 1\bmod 2$ (see \aref{tbl:Sq-action}).

(3) The Steenrod square $\Sq^2\colon h^{3,q+1}\to h^{5,q+2}$ is an isomorphism if $q \equiv 0, 1\bmod 4$ (see \aref{tbl:Sq-action}).
Thus the composite 
$$
H_n^{1,q} \xrightarrow{\pr^{2^{n}}_{2}} h^{1,q} \xrightarrow{\Sq^2} h^{3,q+1}
$$ 
is trivial if $q \equiv 0, 1\bmod 4$, 
since $\Sq^2\Sq^2\pr^{2^{n}}_{2} = \tau\Sq^3\Sq^1\pr^{2^{n}}_{2} = 0$.
If $q \equiv 2\bmod 4$, $\Sq^2$ acts trivially on $h^{1,q}$ (see \aref{tbl:Sq-action}).

(4) The Steenrod square $\Sq^2\colon h^{4,q+1}\to h^{6,q+2}$ is an isomorphism if $q \equiv 1, 2\bmod 4$ (see \aref{tbl:Sq-action}).
Thus the composite 
$$
H_n^{1,q} \xrightarrow{\partial^{2^{n}}_{2}} h^{2,q} \xrightarrow{\Sq^2} h^{4,q+1}
$$
is trivial if $q \equiv 1, 2\bmod 4$, 
since $\Sq^2\Sq^2\partial^{2^{n}}_{2} = \tau\Sq^3\Sq^1\partial^{2^{n}}_{2} = 0$.
If $q \equiv 3\bmod 4$, $\Sq^2$ acts trivially on $h^{2,q}$ (see \aref{tbl:Sq-action}).
Finally, 
$\Sq^2\partial^{2^{n}}_{2}\inc^{2}_{2^{n}} = \Sq^2\Sq^1 : h^{1,q} \to h^{4,q+1}$ is surjective if $q \equiv 0\bmod 4$ (see \aref{tbl:Sq-action}).
\end{proof}

\begin{lemma}[\protect{\cite[p.~528]{HO}}]
\label{lem:KQ2}
The second hermitian $K$-group of any field $F$ is given by
\[
\KQ_{2,0}(F) 
\cong 
K_{2}(F) \oplus h^{0,1}.
\]
\end{lemma}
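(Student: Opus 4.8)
The plan is to deduce the statement from the classical structure of the low-degree hermitian $K$-groups of fields, following \cite{HO}. I would deliberately avoid the slice spectral sequence: its convergence requires $\vcd(F)<\infty$, and more importantly the order of $\KQ_{2,0}(F;\Z/2)$ is used as independent input in the proof of \aref{lem:phi=rho}, so a slice-theoretic argument would be circular there.

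Concretely, apply $\pi_{\ast,\ast}$ to the Wood cofiber sequence \eqref{equation:wood} $\Sigma^{1,1}\KQ\xrightarrow{\eta}\KQ\xrightarrow{F}\KGL$. In bidegree $(2,0)$ this gives the exact sequence
\[
\pi_{1,-1}\KQ\xrightarrow{\ \eta\ }\KQ_{2,0}(F)\xrightarrow{\ F\ }\pi_{2,0}\KGL\xrightarrow{\ \partial\ }\pi_{0,-1}\KQ\xrightarrow{\ \eta\ }\KQ_{1,0}(F),
\]
where $\pi_{2,0}\KGL\cong K_2(F)$ by \cite[\S6.2]{Voevodsky:icm} and $F$ is the forgetful map. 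By the $(8,4)$-periodicity of $\KQ$ and the identifications of $\pi_{p,q}\KQ$ with Karoubi's hermitian $K$-groups recalled in the introduction \cite{Karoubi}, one has $\pi_{1,-1}\KQ\cong\U_3(F)$, $\pi_{0,-1}\KQ\cong\U_2(F)$ and $\KQ_{1,0}(F)\cong\KO_1(F)$. It then remains to show: (i) $\partial=0$, so that $F$ is surjective; (ii) $\mathrm{im}\bigl(\eta\colon\U_3(F)\to\KQ_{2,0}(F)\bigr)\cong h^{0,1}\cong\Z/2$, with generator an $\eta$-multiple of a lift of $\langle-1\rangle-\langle1\rangle$ (cf.\ \aref{lem:<-1>}); and (iii) that the resulting extension $0\to h^{0,1}\to\KQ_{2,0}(F)\to K_2(F)\to0$ splits, which one obtains by building a section of $F$ from the Pfister symbol forms $(a,b)\mapsto\langle\!\langle a\rangle\!\rangle\otimes\langle\!\langle b\rangle\!\rangle$ and Matsumoto's presentation $K_2(F)=K^{\Mil}_2(F)$.

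The main obstacle is exactly the form-theoretic content of (i)--(iii): tracking which classes of $\U_3(F)$ are hit by $\partial$ and which survive $\eta$. This is the computation carried out in \cite{HO}, and I would simply import it. An essentially equivalent and more transparent route is the fundamental fiber sequence $K(F)_{hC_2}\to\GW(F)\to L(F)$: for $\Char(F)\neq2$ the symmetric $L$-theory of a field is $4$-periodic with $\pi_0L=W(F)$ and $\pi_1L=\pi_2L=\pi_3L=0$, so $\KQ_{2,0}(F)\cong\pi_2\bigl(K(F)_{hC_2}\bigr)$. Running the homotopy-orbit spectral sequence $E^2_{p,q}=H_p\bigl(C_2;K_q(F)\bigr)\Rightarrow\pi_{p+q}K(F)_{hC_2}$ for the duality involution --- trivial on $K_0(F)=\Z$ and $K_2(F)=K^{\Mil}_2(F)$, and $u\mapsto u^{-1}$ on $K_1(F)=F^{\times}$ --- the contributions in total degree $2$ are $H_2(C_2;\Z)=0$, $H_1(C_2;F^{\times})\cong\mu_2(F)\cong h^{0,1}$ and $H_0\bigl(C_2;K^{\Mil}_2(F)\bigr)\cong K_2(F)$; the remaining work is the vanishing of $d^2\colon F^{\times}/(F^{\times})^2\to K_2(F)$ and the splitting, once more supplied by \cite{HO}.
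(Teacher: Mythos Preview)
The paper does not prove this lemma at all: it is stated with a direct citation to \cite[p.~528]{HO} and no argument is given. Your proposal is consistent with this --- you correctly identify that the result must be imported from the literature rather than derived from the slice machinery (which would indeed be circular, as \aref{lem:KQ2} is input to \aref{lem:phi=rho}), and you ultimately defer the substantive steps (i)--(iii) to \cite{HO} as well. The Wood-sequence framing and the homotopy-orbit alternative you sketch are useful context for what lies behind the citation, but since both routes still invoke \cite{HO} for the decisive form-theoretic computations, they do not constitute an independent proof; they amount to an expanded gloss on the same citation the paper makes.
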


\begin{example}
Examples of mod $2$ motivic cohomology rings used in the main body of the paper.
\begin{center}
% [inline block 0: 27 envs, 62291 chars -> data_tex | \begin{tabular}{>{$} l <{$} | >{$} l <{$}} F & h^{*,*} \\ \hline...]
 H^{1,q'} + \rho^2 h^{0,q'}}\]

\end{landscape}

\bibliographystyle{plain}
\bibliography{jop}
\end{document}